\newtheorem{maintheorem}{Theorem} %maintheorems
\newtheorem{maintheoremintro}{Theorem} %maintheorems intro
\newtheorem{theorem}{Theorem}[chapter]
\newtheorem*{theorem*}{Theorem}
\newtheorem{corollary}[theorem]{Corollary}
\newtheorem{lemma}[theorem]{Lemma}
\newtheorem*{lemma*}{Lemma}
\newtheorem{proposition}[theorem]{Proposition}
\newtheorem*{proposition*}{Proposition}
\theoremstyle{definition} %% COSI DEF E REMARK NON SONO SCRITTI IN ITALIC
\newtheorem{definition}[theorem]{Definition} %definition numerate
\newtheorem*{definition*}{Definition} %definition non numerate
\newtheorem{example}[theorem]{Example}
\theoremstyle{remark}
\newtheorem{remark}[theorem]{Remark}
\newtheorem*{remark*}{Remark}
\numberwithin{section}{chapter}
\numberwithin{equation}{chapter}
\newcommand{\bt}{\begin{theorem}}
\newcommand{\et}{\end{theorem}}
\newcommand{\bl}{\begin{lemma}}
\newcommand{\el}{\end{lemma}}
\newcommand{\bp}{\begin{proposition}}
\newcommand{\ep}{\end{proposition}}
\newcommand{\bc}{\begin{corollary}}
\newcommand{\ec}{\end{corollary}}
\newcommand{\bdeff}{\begin{definition}}
\newcommand{\edeff}{\end{definition}}
\newcommand{\brem}{\begin{remark}}
\newcommand{\erem}{\end{remark}}
\newcommand{\bex}{\begin{example}}
\newcommand{\eex}{\end{example}}
\newcommand{\bcen}{\begin{center}}
\newcommand{\ecen}{\end{center}}
\newcommand{\lp}{\left(}
\newcommand{\rp}{\right)}
\newcommand{\bi}{\begin{itemize}}
\newcommand{\iii}{\item}
\newcommand{\ei}{\end{itemize}}
\newcommand{\bd}{\begin{description}}
\newcommand{\ed}{\end{description}}
\newcommand{\bqn}{\begin{eqnarray}}
\newcommand{\eqn}{\end{eqnarray}}
\newcommand{\lam}{\lambda}
\newcommand{\g}{\gamma}
\newcommand{\al}{\alpha}
\newcommand{\eps}{\varepsilon}
\newcommand{\R}{\mathbb{R}}
\newcommand{\N}{\mathbb{N}}
\newcommand{\mb}[1]{\mathbb{ #1 }}
\newcommand{\mc}[1]{\mathcal{ #1 }}
\newcommand{\all}{\forall\,}
\newcommand{\ex}{\exists\,}
\newcommand{\la}{\langle}
\newcommand{\ra}{\rangle}
\newcommand{\virg}[1]{``#1''}
\newcommand{\tx}[1]{\mathrm{#1}}
\newcommand{\til}[1]{\widetilde{#1}}
\newcommand{\wh}[1]{\widehat{#1}}
\newcommand{\wt}[1]{\widetilde{#1}}
\newcommand{\review}{}
\newcommand{\finereview}{}
\newcommand{\VecM}{\mathrm{Vec}(M)}
\newcommand{\distr}{\mathscr{D}}
\newcommand{\metr}[2]{\langle #1|#2\rangle}
\newcommand{\Pg}[1]{\left\{ #1 \right\}}
\newcommand{\EXP}{\mc{E}}
\newcommand{\im}{\tx{Im}\,}
\newcommand{\lapl}{\Delta}
\newcommand{\dive}{\tx{div}}
\newcommand{\grad}{\nabla}
\newcommand{\LL}{L}
\newcommand{\A}{A}
\newcommand{\DD}{\mathscr{F}}
\newcommand{\J}{J}
\newcommand{\HH}{\mathcal{H}}
\renewcommand{\SS}{S}
\newcommand{\cc}{c}
\newcommand{\m}{\mathfrak{m}}
\newcommand{\cb}{\mathbb{U}}
\newcommand{\fib}{\cb}
\newcommand{\U}{\mc{U}}
\newcommand{\End}{E}
\newcommand{\gt}{\gamma_1(t)}
\newcommand{\gs}{\gamma_2(s)}
\newcommand{\n}{\mc{N}}
\newcommand{\Ric}{\tx{Ric}}
\newcommand{\QQ}{\mc{Q}}
\newcommand{\Qz}{\mc{I}}
\newcommand{\RR}{\mc{R}}
\newcommand{\ve}{\mathcal{V}}
\newcommand{\hor}{\mathcal{H}}
\newcommand{\f}{\mathfrak{f}}
\newcommand{\uu}{\bar{u}}
\newcommand{\gd}{\mathcal{N}} %geodesic dimension
\newcommand{\id}{\mathbb{I}} % identita
\newcommand{\Sred}{S^\flat} % S ridotta
\newcommand{\DDa}{\DD} % definizione alternativa di flag
\newcommand{\tanf}{\mathsf{T}} % tangent field
\newcommand{\dist}{\mathsf{d}}
\newcommand{\popp}{\mu}
\newcommand{\ad}{\tx{ad}}
\newcommand{\euler}{\mathfrak{e}} % Euler vector field
\DeclareMathOperator{\rank}{\tx{rank}}
\DeclareMathOperator{\spn}{\tx{span}}
\DeclareMathOperator{\spec}{\tx{spec}}
\DeclareMathOperator{\trace}{\tx{tr}}
\DeclareMathOperator{\discr}{\tx{discr}}
\begin{document}

\frontmatter

\title{Curvature: a variational approach}

\author{A. Agrachev}
\address{SISSA, Italy, MI RAS and IM SB RAS, Russia.}
\email{agrachev@sissa.it}

\author{D. Barilari}
\address{CNRS, CMAP \'Ecole Polytechnique and \'Equipe INRIA GECO Saclay \^Ile-de-France, Paris, France.}
\curraddr{IMJ-PRG, UMR CNRS 7586, Universit\'e Paris-Diderot,
Paris, France.}
\email{davide.barilari@imj-prg.fr}

\author{L. Rizzi}
\address{SISSA, Trieste, Italy.}
\curraddr{CNRS, CMAP \'Ecole Polytechnique and \'Equipe INRIA GECO Saclay \^Ile-de-France, Paris, France.}
\email{luca.rizzi@cmap.polytechnique.fr}

\thanks{The first author has been supported by the grant of the Russian Federation for the state support of research, Agreement No 14 B25 31 0029. The second author has been supported by the European Research Council, ERC StG 2009 \virg{GeCoMethods}, contract number 239748, by the ANR Project GCM, program \virg{Blanche}, project number NT09-504490. The third author has been supported by INdAM (GDRE CONEDP) and Institut Henri Poincar\'e, Paris, where part of this research has been carried out. We warmly thank Richard Montgomery and Ludovic Rifford for their careful reading of the manuscript. We are also grateful to Igor Zelenko and Paul W.Y.\ Lee for very stimulating discussions.}

\date{\today}

\subjclass[2010]{Primary:  49-02, 53C17, 49J15, 58B20}

\keywords{sub-Riemannian geometry, affine control systems, curvature, Jacobi curves}

\begin{abstract}
The curvature discussed in this paper is a  far reaching generalisation of the Riemannian sectional curvature.
We give a unified definition of curvature which applies to a wide class of geometric structures whose geodesics arise from optimal control problems, including Riemannian, sub-Riemannian, Finsler and sub-Finsler spaces. Special attention is paid to the sub-Riemannian
(or Carnot--Carath\'eodory) metric spaces. Our construction of curvature is direct and naive, and similar to the original approach of Riemann. In particular, we extract geometric invariants from the asymptotics of the cost of optimal control problems. Surprisingly, it works in a very general setting and, in particular, for all sub-Riemannian spaces.
\end{abstract}

\maketitle

\tableofcontents

\chapter{Introduction}

%The curvature discussed in this paper is a rather far going generalization of the Riemannian sectional curvature.
%We define it for a wide class of optimal control problems; a special attention is paid to the sub-Riemannian
%(or Carnot--Caratheodory) metric spaces.
%
%Our construction of the curvature is direct and naive, it is similar to the original approach
%of Riemann \cite{Ri}. Surprisingly, it works in a very general setting and,
%in particular, for {\bf all} sub-Riemannian spaces.
%

The curvature discussed in this paper is a  far reaching generalisation of the Riemannian sectional curvature.
We give a unified definition of curvature which applies to a wide class of geometric structures whose geodesics arise from optimal control problems, including Riemannian, sub-Riemannian, Finsler and sub-Finsler spaces. Special attention is paid to the sub-Riemannian
(or Carnot--Carath\'eodory) metric spaces. Our construction of curvature is direct and naive, and similar to the original approach of Riemann. Surprisingly, it works in a very general setting and, in particular, for \emph{all} sub-Riemannian spaces. 

\medskip Interesting metric spaces often appear as limits of families of Riemannian metrics. We first try
to explain  our curvature by describing it in  the case of a contact sub-Riemannian structure arising as such a limit and then we move to the general construction.

Let $M$ be an odd-dimensional Riemannian manifold endowed with a contact vector distribution $\distr\subset TM$.
Given $x_0,x_1\in M$, the contact sub-Riemannian distance $\dist(x_0,x_1)$ is the infimum of the lengths of
Legendrian curves connecting $x_0$ and $x_1$ (Legendrian curves are  integral curves of the
distribution $\distr$). The metric $\dist$ is easily realized as the limit of a family of Riemannian metrics $\dist^\varepsilon$
as $\varepsilon\to 0$. To define $\dist^\varepsilon$ we start from the original Riemannian structure on $M$,
keep fixed the length of vectors from $\distr$ and multiply by $\frac 1\varepsilon$ the length of the orthogonal
to $\distr$ tangent vectors to $M$, thus defining a Riemannian metric $g^{\eps}$, whose distance is $\dist^{\eps}$. It is easy to see that $\dist^\varepsilon\to\dist$ uniformly on compacts in $M\times M$ as $\varepsilon\to 0$.

The distance converges. What about the curvature? Let $\omega$ be a contact one-form that annihilates
$\distr$, i.e. $\distr=\omega^\perp$. Given $v_1,v_2\in T_xM,\ v_1\wedge v_2\ne 0,$ we denote by
$K^\varepsilon(v_1\wedge v_2)$ the sectional curvature of the two-plane $\tx{span}\{v_1,v_2\}$ with respect to the metric $g^\varepsilon$. It is not hard to show that $K^\varepsilon(v_1\wedge v_2)\to-\infty$ if $v_1,v_2\in \distr$
and $d\omega(v_1,v_2)\ne 0$. Moreover, $\mathrm{Ric}^\varepsilon(v)\to-\infty$ as $\varepsilon\to 0$ for any nonzero
vector $v\in \distr$, where $\mathrm{Ric}^\varepsilon$ is the Ricci curvature for the metric $\dist^\varepsilon$.
On the other hand, the distance between $x$ and the conjugate locus of $x$ tends to 0 as $\varepsilon\to 0$
so $K^\varepsilon(v_1\wedge v_2)$ tends to $+\infty$ for some $v_1,v_2\in T_xM$, as well as $\mathrm{Ric}^\eps(v)\to +\infty$ for
some $v\in T_xM$.

What about the geodesics? For any $\varepsilon>0$ and any $v\in T_xM$ there is a unique geodesic of the
 Riemannian metric $\dist^\varepsilon$ that starts from $x$ with velocity $v$. On the other hand, the velocities of all geodesics of the limit metric $\dist$ belong to $\distr$ and for any nonzero vector $v\in \distr$ there exists a
 one-parametric family of geodesics whose initial velocity is equal to $v$. So when written on the tangent bundle the convergence of the flows behave poorly. However, the family of geodesic flows converges if we rewrite it as a family of flows on the cotangent bundle.

Indeed, any Riemannian structure on $M$ induces a self-adjoint isomorphism $G:TM\to T^*M$, where $\langle Gv,v\rangle$ is the square of the length of the vector $v\in TM$, and $\langle \cdot,\cdot \rangle$ denotes the standard pairing between tangent and cotangent vectors. The geodesic flow, treated as flow on $T^*M$, is a Hamiltonian flow associated with the Hamiltonian function $H:T^*M\to\mathbb R$, where $H(\lam)=\tfrac{1}{2}\langle \lam,G^{-1}\lam\rangle,\ \lam\in T^*M$. Let $(\lam(t),\g(t))$
be a trajectory of the Hamiltonian flow, with $\lam(t)\in T^*_{\g(t)}M$. The square of the Riemannian distance from $x_0$
is a smooth function on a neighborhood of $x_0$ in $M$ and the differential of this function at $\g(t)$ is equal
to $2t\lam(t)$ for any small $t\ge 0$. Let $H^\varepsilon$ be the Hamiltonian corresponding to the metric $\dist^\varepsilon$. It is easy to see that $H^\varepsilon$ converges with all derivatives to a Hamiltonian $H^0$.
Moreover, geodesics of the limit sub-Riemannian metric are just projections to $M$ of the trajectories of the
Hamiltonian flow on $T^*M$ associated to $H^0$.

We will recover the Riemannian curvature from the asymptotic expansion of the square of the distance from $x_0$
along a geodesic: this is essentially what Riemann did. Then we can write a similar expansion for the square of the limit sub-Riemannian distance to get an idea of the curvature in this case.
Note that the metrics $\dist^\varepsilon$ converge to $\dist$ with all derivatives in any point of $M\times M$,
where $\dist$ is smooth. The metrics $\dist^\varepsilon$ are not smooth at the diagonal but their squares are
smooth. The point is that no power of $\dist$ is smooth at the diagonal! Nevertheless, the desired asymptotic
expansion can be controlled.

Fix a point $x_0\in M$ and $\lam_0\in T^*_{x_0}M$ such that $\langle \lam_0,\distr\rangle\neq 0$. Let
$(\lam^\varepsilon(t),\g^\varepsilon(t))$, for $\varepsilon\ge 0$, be the trajectory of the Hamiltonian flow associated
to the Hamiltonian $H^\varepsilon$ and initial condition $(\lam_{0},x_{0})$. We set:
\begin{equation}
c_t^\varepsilon(x)\doteq -\frac 1{2t}(\dist^\varepsilon)^2(x,\g^\varepsilon(t))\ \mathrm{if}\ \varepsilon>0,\qquad
c^0_{t}(x)\doteq -\frac 1{2t}\dist^2(x,\g^{0}(t)).
\end{equation}
There exists an interval $(0,\delta)$ such that the functions $c^\varepsilon_t$ are smooth at $x_0$ for all
$t\in(0,\delta)$ and all $\varepsilon\ge 0$. Moreover, $d_{x_0}c^\varepsilon_t=\lam_0$. Let
$\dot c_t^\varepsilon=\frac\partial{\partial t}c_t^\varepsilon$, then $d_{x_0}\dot c^\varepsilon_t=0$.
In other words, $x_0$ is a critical point of the function $\dot c^\varepsilon_t$ so its Hessian
$d^2_{x_0}\dot c^\varepsilon_t$ is a well-defined quadratic form on $T_{x_0}M$. Recall that $\varepsilon=0$ is available, but $t$ must be positive.
We are going to study the asymptotics of the family of quadratic forms
$d^2_{x_0}\dot c^\varepsilon_t$ as $t\to 0$ for fixed $\varepsilon$. This asymptotics is  different
for $\varepsilon>0$ and $\varepsilon=0$. The change reflects the structural difference of the Riemannian
and sub-Riemannian metrics and emphasizes the role of the curvature.
{\review
In this approach, the curvature is encoded in the function $\dot c_t(x)$. A geometrical interpretation of such a function can be found in Appendix~\ref{s:ctdot}.
}

Given $v,w\in T_xM,\ \varepsilon>0$, we denote $\langle v|w\rangle_\varepsilon=\langle G^\varepsilon v,w\rangle$ the inner product generating $\dist^{\eps}$. Recall that $\langle v|v\rangle_\varepsilon$ does not depend on $\varepsilon$ if $v\in \distr$ and
$\langle v|v\rangle_\varepsilon\to\infty\ (\varepsilon\to 0)$ if $v\notin \distr$; we will write
$|v|^2\doteq \langle v|v\rangle_\varepsilon$ in the first case. For fixed $\varepsilon>0$, we have:
\begin{equation}
d^2_{x_0}\dot c^\varepsilon_t(v)=\frac 1{t^2}\langle v|v\rangle_\varepsilon+
\frac 13\langle R^\varepsilon(\dot \gamma^\varepsilon,v)\dot \g^\varepsilon)|v\rangle_\varepsilon+O(t),\qquad
v\in T_{x_0}M,
\end{equation}
where $\dot\g^{\eps} = \dot\g^{\eps}(0)$ and $R^\varepsilon$ is the Riemannian curvature tensor of the metric $\dist^\varepsilon$.
For $\varepsilon=0$, only vectors $v\in \distr$ have a finite length and the above expansion is modified as follows:
\begin{equation}
d^2_{x_0}\dot c^0_t(v)=\frac 1{t^2}\Qz_{\g}(v)+
\frac{1}{3} \RR_{\g}(v)+O(t),\qquad
v\in \distr\cap T_{x_0}M,
\end{equation}
where $\Qz_{\g}(v)\ge|v|^2$ and $\RR_{\g}$ is the \emph{sub-Riemannian curvature} at $x_0$ along the geodesic $\g = \g^0$. Both $\Qz_{\g}$ and $\RR_{\g}$ are quadratic forms on $\distr_{x_0}\doteq  \distr\cap T_{x_0}M$. The principal ``structural'' term $\Qz_{\g}$ has the following properties: let $K_{\g}$  be the linear hyperplane inside $\distr_{x_{0}}$ defined as the $d\omega$-orthogonal to $\dot\g(0)$, namely $K_{\g}=\{v\in \distr_{x_{0}} \, |\, d\omega(v,\dot{\g}(0))=0\}$ and let $K_{\g}^{\perp}$ be its sub-Riemannian orthogonal inside $\distr_{x_{0}}$. Then 
\begin{gather}
\Qz_\g(v)=\begin{cases}
 |v|^{2} & \text{if} \ v\in K_{\gamma},\\
4|v|^{2} & \text{if} \ v\in K_{\gamma}^{\perp}.
\end{cases}
\end{gather}
In other words, the symmetric operator on $\distr_{x_0}$  associated with the quadratic form $\Qz_\gamma$ has eigenvalue $1$
of multiplicity $\dim \distr_{x_0}-1$ and eigenvalue $4$ of multiplicity $1$. The trace of this operator, which, in this case, does not depend on $\gamma$, equals
$\dim \distr_{x_0}+3$. This trace has a simple geometric interpretation, it is equal to the \emph{geodesic dimension}
of the sub-Riemannian space.

The geodesic dimension is defined as follows. Let $\Omega\subset M$ be a bounded and measurable subset of positive volume and let $\Omega_{x_{0},t}$, for $0\le t\le 1$, be a family of  subsets obtained from $\Omega$ by the homothety of $\Omega$ with respect to a fixed point $x_{0}$
along the shortest geodesics connecting $x_{0}$ with the points of $\Omega$, so that $\Omega_{x_{0},0}=\{x_{0}\},\ \Omega_{x_{0},1}=\Omega$. The volume
of $\Omega_{x_{0},t}$ has order $t^{\mathcal{N}_{x_{0}}}$, where $\mathcal{N}_{x_{0}}$ is the geodesic dimension at $x_{0}$ (see Section~\ref{s:gd} for details).

Note that the geodesic dimension is $\dim \distr_{x_0}+3$, while the topological dimension of our contact sub-Riemannian space is $\dim \distr_{x_0}+1$, the Hausdorff dimension is $\dim \distr_{x_0}+2$. All three dimensions are obviously equal for Riemannian or Finsler manifolds. The structure of the
term $\Qz_\g$ and comparison of the asymptotic expansions of $d^2_{x_0}\dot c^\varepsilon_t$ for
$\varepsilon>0$ and $\varepsilon=0$ explains why sectional curvature goes to $-\infty$ for certain sections.

\medskip  The curvature operator which we define can be computed in terms of the symplectic invariants of the Jacobi curve,  a curve in the Lagrange Grassmannian related to the linearisation of the Hamiltonian flow. These symplectic invariants can be computed, via an algorithm which is, however, quite hard to implement. Explicit computations of the contact sub-Riemannian curvature in dimension three appears in Section \ref{s:3Dcomputations}, while the computations of the curvature in the higher dimensional contact case will be the object of a forthcoming paper. The current
paper deals with the presentation of the general setting and the study of the structure of the asymptotic of $c_{t}$ in its generality. All the details are presented in the forthcoming sections but, since the paper is long, we find it worth to briefly describe the main ideas in the introduction (beware to the slightly different notation with respect to the rest of the paper).

Let $M$ be a smooth manifold, $\distr\subset TM$ be a vector distribution (not necessarily contact), $f_{0}$ be a vector field on $M$ and $L:TM\to M$ be a Tonelli Lagrangian (see Section \ref{s:affcs} for precise definitions). \emph{Admissible paths} on $M$ are curves whose velocities belong to the \virg{affine distribution}
$f_{0}+\distr$. Let $\mc{A}_t$ be the space of admissible paths defined on the segment $[0,t]$ and
$N_t=\{(\gamma(0),\gamma(t)): \gamma\in\mc{A}_t\}\subset M\times M$. The optimal cost (or action) function
$\SS_t:N_t\to\mathbb R$ is defined as follows:
\begin{equation}
\SS_t(x,y)=\inf\left\{\int_0^tL(\dot\gamma(\tau))\,d\tau: \gamma\in\mc{A}_t,\ \gamma(0)=x,\
\gamma(t)=y\right\}.
\end{equation}
The space $\mc{A}_t$ equipped with the $W^{1,\infty}$-topology is a smooth Banach manifold. The functional
$J_t:\gamma\mapsto\int_0^t L(\dot\gamma(\tau))\,d\tau$ and the evaluation maps
$F_\tau:\gamma\mapsto\gamma(\tau)$ are smooth on $\mc{A}_t$.

The optimal cost $\SS_t(x,y)$ is the solution of the conditional minimum problem for the functional
$J_t$ under conditions $F_0(\gamma)=x,\ F_t(\gamma)=y$. The Lagrange multipliers rule for this problem
reads:
\begin{equation}\label{eq:1}
d_\gamma J_t=\lambda_t D_\gamma F_t-\lambda_0 D_\gamma F_0.
\end{equation}
Here $\lambda_t$ and $\lambda_0$ are ``Lagrange multipliers'', $\lambda_t\in T^*_{\gamma(t)}M,\
\lambda_0\in T^*_{\gamma(0)}M$. We have:
\begin{equation}
D_\gamma F_t:T_\gamma\mc{A}_t\to T_{\gamma(t)}M,\qquad \lambda_t:T_{\gamma(t)}M\to\mathbb R,
\end{equation}
and the composition $\lambda_t D_{\gamma}F_t$ is a linear functional on $T_\gamma\mc{A}_t$. Moreover, Eq.~\eqref{eq:1} implies that
\begin{equation}\label{eq:2}
d_\gamma J_{\tau}=\lambda_{\tau} D_\gamma F_{\tau}-\lambda_0 D_\gamma F_0,
\end{equation}
for some $\lambda_{\tau}\in T^*_{\gamma(\tau)}M$ and any $\tau\in[0,t]$ (see for instance \cite[Proposition I.2]{cime}). The curve $\tau\mapsto\lambda_{\tau}$
is a trajectory of the Hamiltonian system associated to the Hamiltonian $H:T^*M\to\mathbb R$ defined by
\begin{equation}
H(\lambda)=\max_{v\in f_{0}(x)+\distr_x}\left(\langle\lambda,v\rangle-L(v)\right),\qquad \lam\in T^*_xM,\, x\in M. \label{eq:Hamham}
\end{equation}
Moreover, any trajectory of this Hamiltonian system satisfies relation \eqref{eq:2}, where $\gamma$ is the projection
of the trajectory to $M$. Trajectories of the Hamiltonian system are called \emph{normal extremals} and their projections to $M$ are called  \emph{normal extremal trajectories}.

We recover the sub-Riemannian setting by taking $f_0=0$, and $L(v)=\frac 12\langle Gv,v\rangle$. Then, the optimal cost $S_{t}$ is related with the sub-Riemannian distance $\dist(x,y)$ by $\SS_t(x,y)=\frac 1{2t}\dist^2(x,y)$, and normal extremal trajectories are normal sub-Riemannian geodesics.

Let $\gamma$ be an admissible path. The germ of $\gamma$ at the point $x_{0}=\gamma(0)$ defines a flag in $T_{x_{0}}M$ 
$\{0\}=\DD^0_\gamma\subset\DD^1_\gamma\subset\DD^2_\gamma\subset\ldots\subset T_{x_{0}}M$
in the following way. Let $V$ be a section of the vector distribution $\distr$ such that
$\dot\gamma(t)=f_{0}(\gamma(t))+V(\gamma(t)),\ t\ge 0,$ and $P^{t}$ be the local flow on $M$ generated by the vector field
$f_{0}+V$; then $\gamma(t)=P^{t}(\gamma(0))$. We set:
\begin{equation}
\DD^i_\gamma=\spn\left\{\left.\frac{d^j}{dt^j}\right|_{t=0}P^{-t}_{*}\distr_{\gamma(t)} : j=0,\ldots,i-1\right\}.
\end{equation}
The flag $\DD^i_\gamma$ depends only on the germs of $f_{0}+\distr$ and $\gamma$ at the initial point $x_{0}$.

A normal extremal trajectory $\gamma$ is called \emph{ample} if $\DD^m_\gamma=T_{x_{0}}M$ for some
$m> 0$. If $\gamma$ is  ample, then $J_t(\gamma)=\SS_t(x_{0},\gamma(t))$ for all
sufficiently small $t>0$ and $\SS_t$ is a smooth function in a neighborhood of $(\gamma(0),\gamma(t))$. Moreover,
$\frac{\partial \SS_t}{\partial y}\bigr|_{y=\gamma(t)}=\lambda_t,\ \frac{\partial \SS_t}{\partial x}\bigr|_{x=\gamma(0)}=-\lambda_0,$ where $\lambda_t$ is the normal extremal whose projection is $\gamma$.

We set $c_t(x)\doteq -\SS_t(x,\gamma(t))$; then $d_{x_{0}}c_t=\lambda_0$ for any $t> 0$ and $x_{0}$ is a critical point of the function $\dot c_t$. The Hessian of this function $d^2_{x_{0}}\dot c_t$ is a well-defined
quadratic form on $T_{x_{0}}M$. We are going to write an asymptotic expansion of
$d^2_{x_{0}}\dot c_t\bigr|_{\distr_{x_{0}}}$ as $t\to 0$  (see Theorem~\ref{t:main}):

\begin{equation}
d^2_{x_{0}}\dot c_t(v)=\frac 1{t^2}\Qz_\gamma(v)+\frac 13 \RR_\gamma(v)+O(t),\qquad \forall v\in \distr_{x_{0}}.
\end{equation}

Now we introduce a natural Euclidean structure on $T_{x_{0}}M$. Since $L$ is Tonelli,  $\left. L \right|_{T_{x_0}M}$ is a smooth strictly convex function, and $d^2_w(\left. L \right|_{T_{x_0}M})$ is a positive definite quadratic form on $T_{x_0}M,\ \all w\in T_{x_0}M$. If we
 set $|v|_\gamma^2=d^2_{\dot\gamma(0)}(\left. L \right|_{T_{x_0}M})(v),\ v\in T_{x_{0}}M$ we have the inequality
\begin{equation}
\Qz_{\g}(v)\ge|v|^2_\gamma,\qquad \all v\in \distr_{x_{0}}.
\end{equation}
The inequality
$\Qz_\gamma(v)\ge|v|^2_\gamma$ means that the eigenvalues of the symmetric operator
on $\distr_{x_0}$ associated with the quadratic form $\Qz_\gamma$ with respect to $|\cdot|_{\g}$ are greater or equal than $1$. The quadratic form $\RR_\gamma$ is the \emph{curvature}
of our constrained variational problem along the extremal trajectory $\gamma$. 

A mild regularity assumption allows us to explicitly compute the eigenvalues of $\Qz_{\g}$. We set $\gamma_\varepsilon(t)=\gamma(\varepsilon+t)$ and assume that $\dim\DD^i_{\gamma_\varepsilon}=\dim\DD^i_\gamma$ for all sufficiently small $\varepsilon\geq 0$ and all $i$.
Then $d_i=\dim\DD^i_\gamma-\dim\DD^{i-1}_\gamma$, for $i\geq 1$ is a non-increasing sequence of
natural numbers with $d_1= \dim \distr_{x_{0}} =k$. We draw a Young tableau with $d_i$ blocks in the $i$-th column and we define $n_1,\ldots,n_k$ as the lengths of its rows (that may depend on $\gamma$).
\begin{equation}
\ytableausetup{boxsize=2.2em}
\begin{ytableau}
\none[n_1] & \empty & \empty & \none[\dots] & \empty & \empty \\
\none[n_2] & \empty & \empty & \none[\dots] & \empty & \none[d_m] \\
\none & \none[\vdots] & \none[\vdots] & \none & \none[d_{m-1}] \\
\none[n_{k-1}] & \empty & \empty \\
\none[n_k] & \empty & \none[d_2] \\
\none & \none[d_1]
\end{ytableau}
\end{equation}
The eigenvalues of the symmetric operator $\Qz_\gamma$ are $n_1^2,\ldots,n_{k}^2$ (see Theorem~\ref{t:main2}). 
%Some of these numbers may be equal (in the case of multiple eigenvalues) and are 
All $n_{i}$ are equal to 1 in the Riemannian case. In the sub-Riemannian setting, the trace of $\Qz_{\g}$ is
\begin{equation}
\trace\Qz_{\g}=n_1^2+\cdots+n_{k}^2=\sum_{i=1}^{m}(2i-1)d_i,
\end{equation}
along an ample normal sub-Riemannian geodesic. This trace is equal to the geodesic dimension of the space (see
Theorem~\ref{t:main4}).

\medskip The construction of the curvature presented here was preceded by a rather long research line
(see \cite{AAPL,cime,agrafeedback,geometryjacobi1,lizel2,lizel}). For alternative approaches to curvatures, one can see  \cite{garofalob,baudoincontact} and references therein for a heat equation approach to the generalization of the curvature-dimension inequality and \cite{agsmms,lottvillaniannals,sturm1,sturm2} and references therein for an optimal transport approach to the generalization of Ricci curvature to metric measure spaces. These works are in part motivated by the lack of classical Riemannian tools, such as the Levi-Civita connection and the theory of Jacobi fields. For a more recent discussion on these last topics, see \cite{BR-jacobi}.
           
%           The construction of the curvature presented here was preceded by a rather long research line
%(see \cite{AAPL,cime,agrafeedback,geometryjacobi1,lizel2,lizel}). For what concerns the alternative approaches to this topic, in recent years, several efforts have been made to introduce a notion of curvature to non-Riemannian situations, such as sub-Riemannian manifolds and, more in general, metric measure spaces.  Motivated by the lack of classical Riemannian tools (such as the Levi-Civita connection and the theory of Jacobi fields) different approaches have been explored in order to extend some classical results in geometric analysis to such structures. In particular, to this extent, different notions of generalized Ricci curvature bound have been introduced. 
%
%For instance, one can see  \cite{garofalob,baudoincontact} and references therein for a heat equation approach to the generalization of the curvature-dimension inequality and \cite{agsmms,lottvillaniannals,sturm1,sturm2} and references therein for an optimal transport approach to the generalization of Ricci curvature.
          
% where the ``curvature'' of a wide class of Hamiltonian systems
%is introduced by means of symplectic invariants of the (linearized) Hamiltonian flow. In this paper we present a new approach in which the geometrical invariants of the structure are extracted from the second derivative of the cost function.

\section{Structure of the paper}
In Chapters~\ref{c:affcs}--\ref{c:geodcost} we give a detailed exposition of the main constructions in a more general and flexible
setting than in this introduction. Chapter~\ref{c:srg} is devoted to the specification to the case of sub-Riemannian spaces
and to some further results: the proof that ample geodesics always exist (Theorem~\ref{t:bound}), an asymptotic expansion of the sub-Laplacian applied to the square of the distance (Theorem~\ref{t:main3}), the computation of the geodesic dimension (Theorem~\ref{t:main4}). 

Before entering into details of the proofs, we end Chapter~\ref{c:srg} by repeating our construction for one of the simplest sub-Riemannian structures: the Heisenberg group. In particular, we recover by a direct computation the results of Theorems~\ref{t:main}, \ref{t:main2} and \ref{t:main3}.

The proofs of the main results are concentrated in Chapters~\ref{c:jac}--\ref{c:sublapproof} where we introduce the main technical tools: Jacobi curves, their symplectic invariants and Li--Zelenko structural equations.

\section{Statements of the main theorems}
The main results, namely Theorems~\ref{t:main}, \ref{t:main2}, \ref{t:main3} and \ref{t:main4}, are spread in Part I of the paper. For convenience of the reader we collect them here, without any pretence at completeness. To be consistent with the original statements, in this section we express the dependence  of the operators and the scalar product on $\gamma$ through the associated initial covector $\lambda$.

\medskip

Let $\gamma:[0,T]\to M$ be an ample geodesic with initial covector $\lam\in T^{*}_{x_0}M$, and let $\QQ_\lam(t)$ be the symmetric operator associated with the second derivative $d^{2}_{x_{0}}\dot{c}_t$ via the scalar product $\langle \cdot|\cdot\rangle_\lambda$, defined for sufficiently small $t>0$.
\begin{maintheoremintro}[Section~\ref{s:2dctdot}]
The map $t\mapsto t^{2}\QQ_\lam(t)$ can be extended to a smooth family of operators on $\distr_{x_0}$ for small $t\geq 0$, symmetric with respect to $\langle\cdot|\cdot\rangle_\lam$. Moreover,
\begin{equation}
\Qz_{\lam}\doteq \displaystyle \lim_{t\to 0^+}t^{2}\QQ_\lam(t) \geq \id > 0,
\end{equation}
as operators on $(\distr_{x_0},\langle\cdot|\cdot\rangle_\lam)$. Finally
\begin{equation}
\left.\dfrac{d}{dt}\right|_{t=0}t^{2}\QQ_\lam(t)=0.
\end{equation}
\end{maintheoremintro}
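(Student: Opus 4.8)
The plan is to reduce the whole statement to the behaviour, as $t\to 0^{+}$, of a single symmetric operator attached to the Jacobi curve, and then to a question about the order of contact of a curve in the Lagrange Grassmannian with the vertical subspace. First I would recall the identification, furnished by the theory of Jacobi curves, that the family of one-forms $x\mapsto d_{x}c_{t}$ sweeps out a Lagrangian submanifold $\mathcal{L}_{t}\subset T^{*}M$ through the fixed point $\lambda_{0}=d_{x_{0}}c_{t}$, and that the tangent space $T_{\lambda_{0}}\mathcal{L}_{t}$ is exactly the Jacobi curve $\mathcal{J}_{\lambda}(t)\subset T_{\lambda_{0}}(T^{*}M)$, the pull-back of the vertical Lagrangian $\ve$ along the Hamiltonian flow. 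Fixing a Darboux frame with $\ve$ as vertical Lagrangian and choosing a horizontal complement, for small $t\neq 0$ the subspace $\mathcal{J}_{\lambda}(t)$ is the graph of a symmetric operator $B(t)$, namely the Hessian $d^{2}_{x_{0}}c_{t}$ read in these coordinates. Since $d_{x_{0}}c_{t}=\lambda_{0}$ does not depend on $t$, exchanging the $t$- and $x$-derivatives shows that the coordinate-free Hessian of $\dot c_{t}$ at its critical point $x_{0}$ is $d^{2}_{x_{0}}\dot c_{t}=\dot B(t)$, so that $\QQ_{\lambda}(t)=\dot B(t)$ once both are read through $\langle\cdot|\cdot\rangle_{\lambda}$. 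Everything is therefore governed by the singularity of $B(t)$ at $t=0$.

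Because $\mathcal{J}_{\lambda}(0)=\ve$, the operator $B(t)$ blows up, and the crux is to control how. Here the ample hypothesis enters: the flag $\DD^{i}_{\gamma}$ measures the order of contact of $\mathcal{J}_{\lambda}$ with $\ve$, and the first derivative of the Jacobi curve at $t=0$ is a quadratic form whose range is precisely $\distr_{x_{0}}=\DD^{1}_{\gamma}$. I would make this quantitative by passing to a frame adapted to the flag, supplied by the Li--Zelenko structural equations, in which $B(t)$ acquires a block Laurent expansion, the block attached to the $i$-th level carrying a pole whose order grows with $i$ and equals one on the first level. The decisive point is that, restricted to $\distr_{x_{0}}$, only the first level contributes, so that $B(t)\big|_{\distr_{x_{0}}}$ has a \emph{simple} pole:
\begin{equation}
t\,B(t)\big|_{\distr_{x_{0}}}=-\Qz_{\lambda}+t\,B_{0}+t^{2}B_{1}+\cdots
\end{equation}
extends smoothly to $t=0$, with no higher-order pole and no logarithmic terms, and $\Qz_{\lambda}=-\lim_{t\to 0^{+}}t\,B(t)\big|_{\distr_{x_{0}}}$. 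Proving this smooth simple-pole expansion, and in particular ruling out logarithms and controlling the remainder uniformly, is the technical heart of the argument and the step I expect to be the main obstacle; it rests entirely on the normal form for ample monotone curves.

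Granting the expansion, the three assertions fall out by differentiation. On $\distr_{x_{0}}$ one has
\begin{equation}
t^{2}\QQ_{\lambda}(t)=t^{2}\dot B(t)\big|_{\distr_{x_{0}}}=\Qz_{\lambda}+B_{1}t^{2}+2B_{2}t^{3}+\cdots,
\end{equation}
which is manifestly a smooth family of operators for small $t\geq 0$; symmetry with respect to $\langle\cdot|\cdot\rangle_{\lambda}$ is inherited from the symmetry of each $B(t)$. Evaluation at $t=0$ gives $\lim_{t\to 0^{+}}t^{2}\QQ_{\lambda}(t)=\Qz_{\lambda}$, while the absence of a linear term is exactly $\frac{d}{dt}\big|_{t=0}t^{2}\QQ_{\lambda}(t)=0$; this vanishing is automatic once the pole is simple, since no monomial $t^{j}$ differentiates to $t^{-1}$.

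It remains to prove the lower bound $\Qz_{\lambda}\geq\id$. I would identify the leading coefficient $\Qz_{\lambda}=-\lim_{t\to 0^{+}}t\,B(t)\big|_{\distr_{x_{0}}}$ with the first derivative of the Jacobi curve at $t=0$, restricted to $\distr_{x_{0}}$ and normalised by $\langle\cdot|\cdot\rangle_{\lambda}$. Monotonicity of the Jacobi curve, forced by the variational origin of the problem, already gives $\Qz_{\lambda}>0$; the sharper bound $\Qz_{\lambda}\geq\id$ follows from the normalisation of the frame by the metric, equivalently from comparing the short-time cost $c_{t}$ with its quadratic model, whose leading Hessian coefficient is exactly the metric operator. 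On $\distr_{x_{0}}$ the constraint can only make the cost grow at least as fast as this model, so the eigenvalues of $\Qz_{\lambda}$ are bounded below by $1$. Carrying out this comparison directly, rather than through the full eigenvalue computation of Theorem~\ref{t:main2}, is the last delicate point.
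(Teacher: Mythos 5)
Your scaffolding coincides with the paper's: the Hessian of $c_t$ is read as the Jacobi curve $J_\lam(t)=e^{-t\vec H}_*\ve_{\lam(t)}$, written for $t\neq 0$ as a graph over a horizontal complement, so that $\QQ_\lam(t)=\frac{d}{dt}[S(t)^{-1}]_{11}$ restricted to the distribution directions; the vanishing of $\frac{d}{dt}\big|_{t=0}t^2\QQ_\lam(t)$ is then indeed automatic from a simple pole, exactly as you argue. The genuine gap is in the step you yourself flag as the technical heart: you propose to obtain the block Laurent expansion from ``the normal form for ample monotone curves'' supplied by the Li--Zelenko structural equations. No such normal form exists at this level of generality: the canonical frame and its structural equations require the curve to be \emph{equiregular} (a Young diagram must be defined), whereas Theorem A is stated for geodesics that are merely ample, and the growth vector may well jump at $t=0$. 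The paper uses the canonical frame only later, for the equiregular Theorem B. For Theorem A it proves the simple pole of $\Sred(t)^{-1}=[S(t)^{-1}]_{11}$ by an elementary device that survives non-equiregularity: after truncating $S(t)$ to a Taylor polynomial, one factors $\dot S(t)=-V(t)V(t)^*$ with $V$ analytic (Kato's analytic perturbation of eigenvectors), builds a flag in $\R^n$ from the derivatives $V^{(\ell)}(0)$, and in flag-adapted coordinates gets $S_{ij}(t)=\chi_{ij}t^{i+j-1}+O(t^{i+j})$ with $\chi$ negative definite, whence $[S(t)^{-1}]_{ij}=(\chi^{-1})_{ij}t^{-(i+j-1)}+O(t^{-(i+j-2)})$ and the $11$-block has a simple pole with negative definite residue. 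To repair your plan in the ample non-equiregular case you would need this (or an equivalent) substitute for the canonical frame.

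Two secondary points. First, your assertion that ``restricted to $\distr_{x_0}$, only the first level contributes'' is not a restriction of an operator to a subspace but a compression of the inverse, and the compression of an inverse is not the inverse of a compression; the paper handles this through the Schur complement identity $\Sred=S_{11}-S_{12}S_{22}^{-1}S_{12}^*$, which is also what yields the smooth extension of $\Sred$ at $t=0$. Second, your lower bound $\Qz_\lam\geq\id$ is argued by a heuristic comparison of $c_t$ with its quadratic model; the paper makes this precise with the same Schur complement, via the two-sided matrix inequality $S_{11}(t)\leq\Sred(t)<0$, hence $\Sred(t)^{-1}\leq S_{11}(t)^{-1}$ and $\Qz_\lam=-\lim_{t\to 0^+}t\,\Sred(t)^{-1}\geq -\dot S_{11}(0)^{-1}$, where $-\dot S_{11}(0)^{-1}$ is exactly the matrix of $\id$ in the Hamiltonian inner product since $\dot J_\lam(0)=-d^2_\lam H_x$. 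Your identification of the normalization is correct, but the monotonicity you invoke must be channelled through this inequality (or an equivalent quantitative statement) to be a proof.
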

The \emph{curvature} is the symmetric operator $\RR_{\lam}:\distr_{x_0}\to \distr_{x_0}$ defined by
\begin{equation}
\RR_{\lam}\doteq\dfrac{3}{2}\left.\dfrac{d^{2}}{dt^{2}}\right|_{t=0}t^2\QQ_\lam(t).
\end{equation}
Moreover, the \emph{Ricci curvature} at $\lam\in T_{x_0}^*M$ is the scalar function defined by $\Ric(\lam)\doteq\trace \RR_{\lam}$. In particular,  we have the following Laurent expansion for the family of symmetric operators $\QQ_\lam(t): \distr_{x_0} \to \distr_{x_0}$
\begin{equation}\label{eq:mainexpintro}
\QQ_\lam(t)= \frac{1}{t^{2}}\Qz_{\lam}+ \frac{1}{3}\RR_{\lam}+O(t), \qquad t >0. \tag{$*$}
\end{equation}
The operators $\Qz_{\lam}$ and $\RR_{\lam}$ satisfy the following homogeneity properties
\bqn
\Qz_{\al\lam}=\Qz_{\lam},\qquad \RR_{\al\lam}=\al^{2}\RR_{\lam},\qquad \all \al>0.
\eqn
{\review 
\begin{remark*}
Eq.~\eqref{eq:mainexpintro} is crucial in our approach to curvature. As we will see, on a Riemannian manifold $\distr_{x_0} = T_{x_0} M$ and $\langle\cdot|\cdot\rangle_\lambda =\langle\cdot|\cdot\rangle$ is the Riemannian scalar product for all $\lambda \in T_{x_0}^*M$. The specialization of Eq.~\eqref{eq:mainexpintro} leads to the following identities:
\begin{equation}
\Qz_{\lam} = \mathbb{I}, \qquad \RR_{\lam}w = R^\nabla(w,v)v, \qquad \forall\, w \in T_{x_0}M,
\end{equation}
where $v = \dot{\gamma}(0)$ is the initial vector of the fixed geodesic dual to the initial covector $\lambda$, while $R^\nabla$ is the Riemannian curvature tensor (see Section~\ref{s:riemann}). The operator $\RR_{\lam}$ is symmetric with respect to the Riemannian scalar product and, seen as a quadratic form on $T_{x_{0}}M$, it computes the sectional curvature of the planes containing the direction of the geodesic. As such it is basic in the Jacobi equation of Riemannian geometry.
\end{remark*}
}

\begin{maintheoremintro}[Section~\ref{s:spec}]
Let  $\gamma:[0,T]\to M$ be an ample and equiregular geodesic. Then the symmetric operator $\Qz_{\lam}: \distr_{x_{0}}\to \distr_{x_{0}}$ satisfies
\bi
\iii[(i)] $\spec \Qz_{\lam}=\{n_{1}^{2},\ldots,n_{k}^{2}\}$,
\iii[(ii)] $\trace \Qz_{\lam}=n_{1}^{2}+\ldots +n_{k}^{2}$.
\ei
\end{maintheoremintro}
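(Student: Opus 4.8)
The plan is to reduce the computation of $\spec\Qz_\lambda$ to a linear-algebraic model by passing to the Jacobi curve of the extremal $\gamma$. First I would recall the construction of Chapter~\ref{c:jac}: linearising the Hamiltonian flow along the lift of $\gamma$ produces a curve $t\mapsto J_\gamma(t)$ in the Lagrange Grassmannian of $T_\lambda(T^*M)$, and the family $\QQ_\lambda(t)$ is a symplectic invariant of this curve. The flag $\DD^i_\gamma$ appearing in the statement is the flag generated by the successive derivatives of $J_\gamma$ at $t=0$; under the ample and equiregular hypotheses the curve $J_\gamma$ is itself ample and equiregular with the same growth vector, so the Young diagram it carries is exactly the one built from $d_i=\dim\DD^i_\gamma-\dim\DD^{i-1}_\gamma$.

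Next I would make the leading Laurent coefficient explicit. Since $\Qz_\lambda=\lim_{t\to 0^+}t^2\QQ_\lambda(t)$ is the value at $t=0$ of the smooth family $t^2\QQ_\lambda(t)$, it is determined solely by the behaviour of $J_\gamma$ at $t=0$, that is, by the nilpotent part (symbol) of the structural equations; the curvature $\RR_\lambda$ enters only at order $t^2$, as recorded in the first main theorem by $\frac{d}{dt}\big|_{t=0}t^2\QQ_\lambda(t)=0$. To exploit this I would introduce the Li--Zelenko canonical moving frame adapted to the Young diagram. In such a frame the symplectic space decomposes into chains indexed by the rows of the diagram, a row of length $n_a$ contributing a chain on which the flat model acts by the nilpotent shift; each chain singles out one direction of $\distr_{x_0}$, and $t^2\QQ_\lambda(t)$ becomes block-diagonal with respect to this decomposition up to terms vanishing as $t\to 0$.

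The heart of the argument is the chain computation: on the direction of $\distr_{x_0}$ generated by a chain of length $n$, I would evaluate $\Qz_\lambda$ in the flat (monomial) model and show that this direction is an eigenvector with eigenvalue $n^2$. In the canonical frame the restriction of $t^2\QQ_\lambda(t)$ to such a chain is an explicit matrix-valued function, built from the structural constants and polynomial in $t$; its value at $t=0$ is computed directly for the nilpotent shift of length $n$ and equals the scalar $n^2$. I expect this step, together with the proof that the chains decouple at leading order, to be the main obstacle: one must check that the off-diagonal couplings produced by the subleading terms of the canonical frame do not perturb the leading eigenvalues, and it is precisely here that equiregularity is indispensable, since it guarantees a constant Young-diagram structure along $\gamma$ and hence a clean block decomposition of the symbol.

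Assembling the chains proves (i): the spectrum of $\Qz_\lambda$ is $\{n_1^2,\ldots,n_k^2\}$, one eigenvalue per row of the Young diagram, with the multiplicity of a given value equal to the number of rows of that length; the count is consistent since $\dim\distr_{x_0}=d_1=k$ is precisely the number of rows. Part (ii) is then immediate, the trace of a symmetric operator being the sum of its eigenvalues, whence $\trace\Qz_\lambda=n_1^2+\cdots+n_k^2$.
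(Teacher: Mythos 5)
Your plan is correct and follows essentially the same route as the paper: Proposition~\ref{p:twoflags} transfers ampleness, equiregularity and the Young diagram to the Jacobi curve, the Li--Zelenko canonical frame reduces the leading Laurent coefficient to the flat rank-one model on each row (the paper's Theorem~\ref{t:Sasymptotic} and Corollary~\ref{c:Sridasymptotic} give $\Sred(t)^{-1}_{ab} = -\delta_{ab}\,n_a^2/t + O(t)$), and Lemma~\ref{l:orthframe} identifies the projections $X_a = \pi_* F_{a1}(0)$ with an orthonormal basis of $(\distr_{x_0},\langle\cdot|\cdot\rangle_\lam)$, yielding $\Qz_\lam X_a = n_a^2 X_a$ and hence (i) and (ii). The decoupling of the chains at leading order, which you rightly flag as the main obstacle, is exactly what the paper establishes through the Neumann-series inversions and remainder estimates in the proof of Theorem~\ref{t:Sasymptotic}.
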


Let $M$ be a sub-Riemannian manifold and let $\lapl_{\mu}$ be the sub-Laplacian associated with a smooth volume $\mu$. The next result is an explicit expression for the asymptotics of the sub-Laplacian of the squared distance from a geodesic, computed at the initial point $x_{0}$ of the geodesic $\gamma$. Let $\f_t\doteq \frac{1}{2}\dist^2(\,\cdot\,,\gamma(t))$.

\begin{maintheoremintro}[Section~\ref{s:asymptotic}]
Let $\gamma$ be an equiregular geodesic with initial covector $\lam \in T_{x_0}^*M$. Assume also that $\dim \distr$ is constant in a neighborhood of $x_0$. Then there exists a smooth $n$-form $\omega$ defined along $\gamma$, such that for any volume form $\mu$ on $M$, $\mu_{\gamma(t)} = e^{g(t)}\omega_{\gamma(t)}$, we have
\begin{equation}
\lapl_\mu \f_t|_{x_0} = \trace \Qz_\lam - \dot{g}(0) t -  \frac{1}{3} \Ric(\lam)t^2 + O(t^3).
\end{equation}
\end{maintheoremintro}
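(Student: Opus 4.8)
The plan is to reduce everything to the geodesic cost function, whose Laurent expansion \eqref{eq:mainexpintro} is already available, and then to isolate the first-order-in-$t$ coefficient as a volume-derivative term.

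First I would exploit the sub-Riemannian identity $\SS_t(x,\gamma(t))=\tfrac{1}{2t}\dist^2(x,\gamma(t))$, which gives $\f_t=-t\,c_t$ with $c_t(x)=-\SS_t(x,\gamma(t))$. Since $\lapl_\mu$ acts only in the space variable it commutes with $\partial_t$, so it suffices to expand $\lapl_\mu c_t|_{x_0}$ in $t$ and multiply by $-t$ at the end. The key computational identity is $\lapl_\mu \dot c_t|_{x_0}=\trace \QQ_\lam(t)$. To prove it, fix a local orthonormal frame $X_1,\dots,X_k$ of $\distr$ near $x_0$ and use $\lapl_\mu f=\sum_i\bigl(X_i^2 f+(\dive_\mu X_i)\,X_i f\bigr)$. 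Because $x_0$ is a critical point of $\dot c_t$ (recall $d_{x_0}\dot c_t=0$), the terms $X_i\dot c_t|_{x_0}$ vanish and the $X_i^2\dot c_t|_{x_0}$ coincide with the intrinsic Hessian $d^2_{x_0}\dot c_t(X_i,X_i)$; summing over the orthonormal frame produces exactly the trace of the operator $\QQ_\lam(t)$ associated with $d^2_{x_0}\dot c_t|_{\distr_{x_0}}$.

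Next I would integrate in $t$. By \eqref{eq:mainexpintro} and $\Ric(\lam)=\trace\RR_\lam$,
\begin{equation}
\partial_t\bigl(\lapl_\mu c_t|_{x_0}\bigr)=\trace\QQ_\lam(t)=\frac{1}{t^2}\trace\Qz_\lam+\frac{1}{3}\Ric(\lam)+O(t),
\end{equation}
so $\lapl_\mu c_t|_{x_0}=-\tfrac{1}{t}\trace\Qz_\lam+C+\tfrac{1}{3}\Ric(\lam)\,t+O(t^2)$ for some integration constant $C$, whence
\begin{equation}
\lapl_\mu \f_t|_{x_0}=-t\,\lapl_\mu c_t|_{x_0}=\trace\Qz_\lam-C\,t-\frac{1}{3}\Ric(\lam)\,t^2+O(t^3).
\end{equation}
The constant and quadratic coefficients already match the claim; the entire remaining issue is to show $C=\dot g(0)$.

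To handle the linear term I would first record how $C$ transforms under a change of volume. Using $\dive_{e^{h}\mu}X=\dive_\mu X+Xh$ and the fact that $\grad c_t|_{x_0}$ is the $t$-independent horizontal vector $\dot\gamma(0)$ dual to $\lam$, one gets $\lapl_{e^h\mu} c_t|_{x_0}=\lapl_\mu c_t|_{x_0}+\langle\grad h,\dot\gamma(0)\rangle|_{x_0}$, a shift by $\tfrac{d}{dt}(h\circ\gamma)|_{0}$; hence $C\mapsto C+\tfrac{d}{dt}(h\circ\gamma)|_{0}$. Since simultaneously $g\mapsto g+h\circ\gamma$ and so $\dot g(0)\mapsto\dot g(0)+\tfrac{d}{dt}(h\circ\gamma)|_0$, the difference $C-\dot g(0)$ is a universal constant $C_0$ depending only on $(\gamma,\lam)$. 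It then suffices to exhibit a single smooth $n$-form $\omega$ along $\gamma$ for which $C_0=0$, and the coefficient $-\dot g(0)\,t$ in the statement follows for every $\mu$.

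The main obstacle is precisely this construction and the vanishing $C_0=0$. Here I would build $\omega$ canonically from the Jacobi curve of $\gamma$: via the Li--Zelenko structural equations I would produce a canonical moving frame along the extremal and let $\omega$ be the $n$-form dual to its projection to $M$. The genuinely technical point to verify is that, with respect to this canonically transported volume, the first-order term of $\lapl_\omega c_t|_{x_0}$ vanishes, i.e.\ that the canonical frame is volume-preserving to first order along $\gamma$; this is where the finer symplectic invariants of the Jacobi curve, rather than the mere trace expansion \eqref{eq:mainexpintro}, enter the argument. Finally, the smoothness and well-definedness of $\omega$ along $\gamma$ have to be checked, using that $\gamma$ is equiregular and $\dim\distr$ is locally constant near $x_0$.
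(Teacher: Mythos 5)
Your reduction of the $t^{0}$ and $t^{2}$ coefficients is sound, and it is essentially how the paper organizes the easy part: the identity $\lapl_\mu \dot c_t|_{x_0}=\trace\QQ_\lam(t)$ (via Proposition~\ref{p:critlam0} and Lemma~\ref{l:d2lapl}) and its integration in $t$ appear as the unnumbered theorem in Section~\ref{s:asymptotic}, and your observation that a change of volume $\mu\mapsto e^{h}\mu$ shifts both the integration constant $C$ and $\dot g(0)$ by $\tfrac{d}{dt}\big|_{t=0}(h\circ\gamma)$ --- so that the whole theorem reduces to a single canonical choice of $\omega$ --- matches Remark~\ref{r:laplacianexpansion}, where the paper notes that only the linear term depends on $\mu$. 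Up to that point your argument is correct, and the integration step is legitimate because $t^{2}\QQ_\lam(t)$ extends smoothly to $t=0$ with vanishing first derivative.

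The gap is that the step you label ``the genuinely technical point to verify'' is not a verification but the entire substance of Theorem~\ref{t:main3}, and nothing in the trace expansion \eqref{eq:mainexpintro} supplies it. The paper proves $C_{0}=0$ for the canonical $\omega$ through two ingredients that your sketch does not produce. First, Lemma~\ref{l:hopefully} identifies the vertical canonical frame with coordinate lifts, $E_{ai}(t)=e^{-t\vec{H}}_{*}\partial_{ai}|_{\lambda(t)}$, and shows that the projections $X_{ai}(t)=\pi_{*}\circ e^{t\vec{H}}_{*}F_{ai}(t)$ form an adapted frame with $\{X_{a1}\}$ orthonormal on $\distr$; with this one computes $\sum_a X_a^2(c_t)|_{x_0}=\sum_a \Sred(t)^{-1}_{aa}+\sum_{a}\sum_{\rho\in D}h_a c_{a\rho}^{\rho}$, and the structure-constant sum cancels \emph{exactly} against the divergence term of $\lapl_\mu$ --- a cancellation that itself uses the structural equations (for $\rho$ outside the first column, $\dot E_\rho(0)$ has no $E_\rho(0)$-component, forcing $\sum_a h_a c_{a\rho}^{\rho}=0$), yielding the exact identity $\lapl_\mu c_t|_{x_0}=\sum_a\Sred(t)^{-1}_{aa}+\dot g(0)$ for small $t>0$. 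Second, Corollary~\ref{c:Sridasymptotic}, resting on the full asymptotics of the canonical frame (Theorem~\ref{t:Sasymptotic}, i.e.\ all of Chapter~\ref{c:proof}), gives $\Sred(t)^{-1}_{ab}=-\delta_{ab}\,n_a^{2}/t+R_{ab,11}(0)\,\Omega(n_a,n_b)\,t+O(t^{2})$: the \emph{absence of a constant term} in this expansion is precisely your statement $C_{0}=0$. Note also that your formulation ``the canonical frame is volume-preserving to first order along $\gamma$'' is not the right target: what is needed is the vanishing of the $t^{0}$ term of $\trace\Sred(t)^{-1}$ in canonical coordinates, a symplectic property of the reduced Jacobi curve, not a first-order volume statement about the frame. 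As written, your proof establishes only what the preliminary theorem of Section~\ref{s:asymptotic} already gives, and leaves the identification of the linear coefficient --- the actual content of Theorem~\ref{t:main3} --- unproven.
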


 Let $x_0 \in M$ and let $\Sigma_{x_0} \subset M$  be the set of points $x$ such that 
there exists a unique minimizer $\gamma :[0,1]\to M$ joining $x_0$ with $x$, which is not abnormal and $x$ is not conjugate to $x_0$ along $\gamma$. 
\brem A fundamental result states that the set $\Sigma_{x_0}$ is precisely the set of smooth points for the function $x \mapsto \dist^2(x_0,x)$. Another central result asserts that $\Sigma_{x_0}$ is open and dense in $M$ (see \cite{agrachevsmooth,trelatrifford} or also Theorem~\ref{t:d2sr}). This partially answer the question addressed in \cite{montgomerybook}: is the Sard theorem true for the endpoint map? The result just stated only implies that the image of the set of \emph{minimizing} critical points under the endpoint map based at $x_{0}$ is contained in the complement of the open dense set $\Sigma_{x_{0}}$.  It remains a major open problem to determine whether the set $\Sigma_{x_{0}}$ has full measure.
\erem
Let $\Omega_{x_0,t}$ be the homothety of a set $\Omega\subset \Sigma_{x_{0}}$ with respect to $x_{0}$ along the geodesics connecting $x_{0}$ with the points of $\Omega$.
\begin{maintheoremintro}[Section~\ref{s:gd}]
Let $\mu$ be a smooth volume. For any bounded, measurable set $\Omega \subset \Sigma_{x_0}$, with $0<\mu(\Omega)<+\infty$ we have
\begin{equation}
\mu(\Omega_{x_0,t}) \sim t^{\gd_{x_0}},\qquad\text{for } t\to 0.
\end{equation}
where $\gd_{x_0}$ is the geodesic dimension at the point $x_0$.
\end{maintheoremintro}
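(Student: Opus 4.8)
The plan is to reduce the statement to a pointwise asymptotic analysis of the Jacobian of the geodesic homothety, and then to read off the leading order from the canonical structure of the Jacobi curve along an equiregular geodesic. First I would express the homothety through the sub-Riemannian exponential map $\exp_{x_0}:T^*_{x_0}M\to M$, $\exp_{x_0}(\lam)=\gamma_\lam(1)$, where $\gamma_\lam$ is the normal extremal trajectory with initial covector $\lam$. By homogeneity of the Hamiltonian flow one has $\gamma_\lam(t)=\exp_{x_0}(t\lam)$, so the homothety is $\phi_t=\exp_{x_0}\circ\,\delta_t\circ\exp_{x_0}^{-1}$ with $\delta_t(\lam)=t\lam$. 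Since $\Omega\subset\Sigma_{x_0}$, the map $\exp_{x_0}$ is a diffeomorphism onto a neighbourhood of $\Omega$, so $\phi_t$ is injective on $\Omega$; setting $U=\exp_{x_0}^{-1}(\Omega)$ the change of variables gives
\begin{equation}
\mu(\Omega_{x_0,t})=\int_{U}\left|\det \partial_\eta\gamma_\eta(t)\right|\,d\eta,
\end{equation}
the determinant being taken with respect to $\mu$ on $M$ and a fixed linear volume $d\eta$ on $T^*_{x_0}M$. It then suffices to establish the pointwise asymptotics $|\det\partial_\eta\gamma_\eta(t)|\sim C(\eta)\,t^{\gd_{x_0}}$ for a.e.\ $\eta\in U$, with $C(\eta)>0$, together with enough uniformity to pass the limit inside the integral.

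Next I would identify the columns of $\partial_\eta\gamma_\eta(t)$ with projected Jacobi fields. For a basis of the vertical space $\ve_\eta=T_\eta(T^*_{x_0}M)$, the vector fields $t\mapsto\partial_{\eta_a}\gamma_\eta(t)$ are the projections to $M$ of the solutions of the linearised Hamiltonian system with vertical initial data; they all vanish at $t=0$ since $\gamma_\eta(0)=x_0$ for every $\eta$. Their orders of vanishing are governed by the Jacobi curve in the Lagrange Grassmannian and, for an ample equiregular geodesic, by the flag $\DD^i_\gamma$ through its growth vector $d_i=\dim\DD^i_\gamma-\dim\DD^{i-1}_\gamma$. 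Using the Li--Zelenko structural equations one constructs a canonical frame adapted to this flag, in which the vertical basis splits into chains indexed by the rows of the Young tableau: the chain of length $n_j$ produces projected Jacobi fields whose leading terms have orders $t^{1},t^{3},\ldots,t^{2n_j-1}$, with linearly independent leading coefficients.

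The determinant then factorises, to leading order, as a product over chains, so that
\begin{equation}
\left|\det\partial_\eta\gamma_\eta(t)\right|\sim C(\eta)\prod_{j=1}^{k}\prod_{i=1}^{n_j}t^{\,2i-1}=C(\eta)\,t^{\sum_{j=1}^{k}n_j^2},\qquad C(\eta)>0.
\end{equation}
The exponent is exactly the geodesic dimension: by the conjugate-partition identity $\sum_j n_j^2=\sum_i(2i-1)d_i$, which together with Theorem~B (the spectrum of $\Qz_\lam$) gives $\sum_j n_j^2=\trace\Qz_\lam=\gd_{x_0}$, evaluated along the generic growth vector defining $\gd_{x_0}$. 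Since almost every $\eta\in U$ is the initial covector of an ample, equiregular geodesic realising this generic growth vector, the pointwise exponent equals $\gd_{x_0}$ for a.e.\ $\eta$.

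Finally I would pass to the limit. Dividing by $t^{\gd_{x_0}}$ and letting $t\to0$, dominated convergence would yield $\mu(\Omega_{x_0,t})\sim t^{\gd_{x_0}}\int_U C(\eta)\,d\eta$ with a finite positive constant, provided the ratio $t^{-\gd_{x_0}}|\det\partial_\eta\gamma_\eta(t)|$ is dominated by a fixed integrable function on the bounded set $U$. This uniform control is the main obstacle: the expansion above is a priori only pointwise, its coefficients and remainder must be estimated uniformly on compact subsets of the equiregular locus, and one must rule out that the non-equiregular covectors---although forming a set of measure zero in $U$---cause the Jacobian ratio to blow up. Their vanishing order is at least $\gd_{x_0}$, hence they contribute only to higher order, but turning this into a genuine integrable domination requires a lower-semicontinuity/uniform-bound statement on the order of vanishing near the bad set. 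Establishing this uniformity of the structural-equation asymptotics, rather than the leading-order bookkeeping itself, is where the real work lies.
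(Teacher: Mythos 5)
Your strategy coincides with the paper's proof of Theorem~\ref{t:main4}: reduce the volume asymptotics to the pointwise order in $t$ of the Jacobian of the homothety (whether written on the fiber as $\det\partial_\eta\gamma_\eta(t)$ or, as in the paper, as $\det(d_x\phi_t)$ with $d_x\phi_t=\pi_*\circ e^{(t-1)\vec{H}}_*\circ d^2_x\f$, cf.\ Eq.~\eqref{eq:homothetymap}), identify that order with $\gd_\lam$ through the Jacobi curve, and conclude by the a.e.\ constancy $\gd_\lam=\gd_{x_0}$ (Propositions~\ref{p:ampledense} and~\ref{p:costanza}). One caveat on your bookkeeping: you invoke the Li--Zelenko canonical frame, hence equiregularity of the generic geodesic along an interval, which the paper never establishes (Proposition~\ref{p:ampledense} only gives a.e.\ ampleness with maximal growth vector \emph{at} $t=0$); moreover the orders $t,t^3,\dots,t^{2n_j-1}$ with independent leading coefficients hold only after an adapted (echelon) re-choice of the vertical basis, since in natural coordinates the columns of the chain vanish to orders $t^1,\dots,t^{n_j}$ with collinear leading terms. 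The paper sidesteps both issues: Lemma~\ref{l:ordS} gives $\det S(t)\sim t^{\gd_\lam}$ for \emph{merely ample} curves, via the analytic factorization $\dot{S}(t)=-V(t)V(t)^*$, the flag-adapted coordinates and the direct determinant computation of Eq.~\eqref{eq:cappuccio}, with no canonical frame and no equiregularity hypothesis.

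The gap you flag at the end --- integrable domination of $t^{-\gd_{x_0}}\lab\det\partial_\eta\gamma_\eta(t)\rab$ --- is genuine in your formulation, but it is closed by a structural observation rather than by uniform remainder estimates on the structural equations, and this is what the paper exploits. First, by Theorem~\ref{t:d2sr} and Eq.~\eqref{eq:homothetymap}, the map $(t,x)\mapsto\phi_t(x)$ is smooth on $[0,1]\times\Sigma_{x_0}$ \emph{including} $t=0$, so the Jacobian is a single function $F(t,x)$ smooth there. Second, the order of $t\mapsto F(t,x)$ is at least $\gd_{x_0}$ at \emph{every} point $x\in\Sigma_{x_0}$, not merely a.e.: by definition $\gd_{x_0}$ is the minimum of $\gd_\lam$ over the \emph{whole} fiber, and non-ample covectors have order $+\infty$ (Lemma~\ref{l:ordS}), i.e.\ \emph{all} their Taylor coefficients vanish. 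Consequently $\partial_t^i F(0,\cdot)\equiv 0$ on $\Sigma_{x_0}$ for every $i<\gd_{x_0}$, and Taylor's formula with integral remainder shows that $t^{-\gd_{x_0}}F(t,x)$ extends to a function smooth on $[0,1]\times\Sigma_{x_0}$, hence locally uniformly bounded; the non-generic covectors cannot cause the blow-up you worry about, because they only \emph{raise} the order of vanishing. This is precisely the lower-semicontinuity/uniform-bound statement you identified as the missing piece: with it, dominated convergence applies and yields $\mu(\Omega_{x_0,t})\sim t^{\gd_{x_0}}$ with a finite positive constant, as in Eq.~\eqref{eq:ultimaformula}.
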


\section{The Heisenberg group}
Here we specify the result obtained above in the case of the Heisenberg group. All details are presented in Section \ref{s:Heis} and here we present the main computations.

The Heisenberg group $\mathbb{H}$ is the sub-Riemannian structure on $\R^3$ defined by the global orthonormal frame
\begin{equation}
X = \partial_x - \frac{y}{2} \partial_z, \qquad Y = \partial_y + \frac{x}{2} \partial_z.
\end{equation}
Let us introduce the linear on fibers functions $h_x,h_y,h_z:T^*\R^3 \to \mathbb{R}$
%\begin{equation}
%h_x(\lambda) = \langle \lambda, X(q) \rangle,\qquad h_y(\lambda) = \langle \lambda, Y(q) \rangle,\qquad h_z(\lambda) = \langle \lambda, Z(q) \rangle,
%\end{equation}
\begin{equation}
h_x \doteq  p_x - \frac{y}{2} p_z ,\qquad h_y \doteq  p_y +\frac{x}{2}p_z,\qquad h_z \doteq  p_z,
\end{equation}
where $(x,y,z,p_x,p_y,p_z)$ are canonical coordinates on $T^*\R^3$ induced by coordinates $(x,y,z)$ on $\R^3$. 

The Hamiltonian \eqref{eq:Hamham} takes the form $H = \tfrac{1}{2}(h_x^2 + h_y^2)$ and the coordinates $(x,y,z,h_x,h_y, h_z)$ define a global chart for $T^*\R^{3}$.
It is useful to introduce the identification $\mathbb{R}^3 = \mathbb{C}\times \mathbb{R}$, by defining the complex variable $w \doteq  x+iy$ and the complex ``momentum'' $h_w\doteq  h_x + i h_y$. Let $q = (w, z)$ and $q' = (w',z')$ be two points in $\mathbb{H}$. The Heisenberg group law, in complex coordinates, is given by
\begin{equation}
q \cdot q' = \left(w + w', z+z' - \frac{1}{2}\Im\left(w\overline{w'}\right)\right).
\end{equation}
where $\Im$ denotes the imaginary part of a complex number.
Every non constant geodesic $\g(t)=(w(t),z(t))$ starting from $(w_0,z_0)\in \mb{H}$ corresponds to an initial covector $\lam=(h_{w,0}, h_z)$, with $h_{w,0}\neq 0$. They are explicitly given by
\bqn
\begin{cases}
w(t) = w_0 + \frac{h_{w,0}}{i h_z}\left(e^{i h_z t}-1\right),\\
z(t) = z_0 + \frac{1}{2}\int_0^t \Im(\overline{w} dw),
\end{cases}\qquad \text{if} \quad h_{z}\neq0,
\eqn
or by
\bqn
\begin{cases}
w(t) = w_0 + h_{w,0} t, \\
z(t) = z_0 + \frac{1}{2}\Im(h_{w,0} \overline{w_0}) t,
\end{cases}\qquad \text{if} \quad h_{z}=0.
\eqn
In the first case the component $w(t)$ draw a circle on the complex plane, while in the second one it is a straight line. It is easy to see that in both cases the geodesic is ample with geodesic growth vector $\mathcal{G}_{\g}=\{2,3\}$. Thus the Heisenberg group has geodesic dimension equal to 5.

We are now ready to compute explicitly the asymptotic expansion of $\QQ_\lam$, for  $\lambda = (h_{w,0},h_z) \in T_{x_0}^*M$. Fix $v\in T_{x_0}\R^{3}$ and let $\alpha(s)$ be any curve in $\mb{H}$ such that $\dot{\al}(0)=v$. 
Then we compute the quadratic form $d^{2}_{x_0}\dot c_{t}(v)$ for $t>0$
\begin{equation}
\metr{\QQ_{\lam}(t) v}{v} =d^{2}_{x_0}\dot c_{t}(v)=\left.\frac{\partial^{2}}{\partial s^{2}}\right|_{s=0}\frac{\partial}{\partial t} c_{t}(\alpha(s)) .
\end{equation}
It is possible to compute explicitly the value of $\QQ_{\lam}(t)$ on the orthonormal basis $v\doteq \dot{\g}(0)$ and $v^{\perp}\doteq \dot{\g}(0)^{\perp}$:
\begin{equation}
\metr{\QQ_{\lam}(t) v}{v}=\frac{1}{t^{2}}+O(t),\qquad \metr{\QQ_{\lam}(t)v^{\perp}}{v^{\perp}}= \frac{4}{t^{2}}+\frac{2}{15}h_{z}^{2}+O(t).
\end{equation}
By polarization one also obtain $\metr{\QQ_{\lam}(t) v}{v^{\perp}}=O(t)$. Thus the matrices representing the symmetric operators $\Qz_{\lam}$ and $\RR_{\lam}$ in the basis $\{v^\perp,v\}$ of $\distr_{x_0}$ are
\begin{equation}
\Qz_{\lam}=
\begin{pmatrix}
4&0\\0&1
\end{pmatrix},
\qquad
\RR_{\lam}=\frac{2}{5}
\begin{pmatrix}
h_{z}^{2}&0\\0&0
\end{pmatrix},
\end{equation}
where, we recall, $\lam$ has coordinates $(h_{w,0},h_z)$.

In terms of the orthonormal frame, the sub-Laplacian in the Heisenberg group is expressed as the sum of squares $\lapl=X^{2}+Y^{2}$ and Theorem D reads
\begin{equation}
\lapl\f_t|_{x_0} = 5 - \frac{2}{15}h_{z}^2 t^2 + O(t^3),
\end{equation}
where, we recall, $\f_t\doteq \frac{1}{2}\dist^2(\,\cdot\,,\gamma(t))$ and the initial covector associated with the geodesic $\gamma$ is $\lambda = (h_{w,0},h_z) \in T_{x_0}^*\R^{3}$.

 %intro

\mainmatter

\part{Statements of the results}

\chapter{General setting}\label{c:affcs}

In this chapter we introduce a general framework that allows to treat smooth control system on a manifold in a coordinate free way, i.e. invariant under state and feedback transformations. For the sake of simplicity, we will restrict our definition to the case of nonlinear affine control systems, although the construction of this section can be extended to any smooth control system (see \cite{cime}). 

\section{Affine control systems} \label{s:affcs}
\bdeff \label{d:cs}
Let $M$ be a connected smooth $n$-dimensional manifold. 
An \emph{affine control system} \index{affine control system}
 on $M$ is a pair $(\cb,f)$ where:
\bi
\iii[$(i)$] $\cb$ is a smooth rank $k$ vector bundle with base $M$ and fiber $\fib_x$ i.e., for every $x\in M$, $\fib_x$ is a $k$-dimensional vector space,
\iii[$(ii)$] $f:\cb\to TM$ is a smooth affine morphism of vector bundles, i.e. the diagram \eqref{eq:diagr1} is commutative and $f$ is \emph{affine} on fibers.
\begin{equation}\label{eq:diagr1}
\xymatrix{
\cb \ar[dr]_{\pi_{\cb}} \ar[r]^{f}
& TM \ar[d]^{\pi} \\
 & M }
\end{equation}
\ei
The maps $\pi_{\cb}$ and $\pi$ are the canonical projections of the vector bundles $\cb$ and $TM$, respectively.
\edeff
We denote points in $\cb$ as pairs $(x,u)$, where $x\in M$ and $u\in \cb_x$ is an element of the fiber. 
According to this notation, the image of the point $(x,u)$ through $f$ is $f(x,u)$ or $f_u(x)$ and we prefer the second one when we want to emphasize $f_{u}$ as a vector on $T_{x}M$.  Finally, let $L^\infty([0,T],\cb)$ be the set of measurable, essentially bounded functions $u:[0,T]\to \cb$.
\bdeff
A Lipschitz curve $\g:[0,T]\to M$ is said to be \emph{admissible} \index{admissible curve} for the control system if there exists a \emph{control} $u \in L^\infty([0,T],\cb)$ such that $\pi_{\cb}\circ u=\gamma$ and
\begin{equation} \label{eq:intcurve}
\dot \g(t)=f(\gamma(t),u(t)), \qquad \text{for a.e. }t\in [0,T].
\end{equation} 
The pair $(\gamma,u)$ of an admissible curve $\gamma$ and its control $u$ is called \emph{admissible pair}.\index{admissible pair}
\edeff

We denote by $\overline{f}:\cb\to TM$ the linear bundle morphism induced by $f$. In other words we write $f(x,u)=f_{0}(x)+\overline f(x,u)$, where $f_{0}(x)\doteq f(x,0)$ is the image of the zero section. In terms of a local frame for $\cb$, $\overline{f}(x,u) = \sum_{i=1}^k u_i f_i(x)$.
\bdeff \label{def:iso0}
The \emph{distribution} \index{distribution} $\distr \subset TM$ is the family of subspaces
\begin{equation} 
\distr=\{ \distr_x\}_{x\in M},\qquad  \tx{where} \qquad \distr_{x}\doteq \overline f(\fib_{x})\subset T_{x}M. 
\end{equation}
The family of \emph{horizontal vector fields}  \index{horizontal vector fields}$\overline{\distr}\subset\VecM$ is
\begin{equation} 
\overline{\distr}=\tx{span}\left\{ \overline f\circ \sigma,\,  \sigma:M\to \cb \tx{\ is\ a\ smooth\  section\  of \ } \cb \right\}. 
\end{equation}
%Notice that $\overline{\distr}$ is a finitely generated modulus whose elements are vector fields tangent to the distribution at each point, i.e.
%$\distr_{x}=\{X(x) |\, X\in \overline{\distr}\}.$
\edeff
Observe that, if the rank of $\overline{f}$ is not constant, $\distr$ is not a sub-bundle of $TM$. Therefore the dimension of $\distr_x$, in general, depends on $x \in M$.

Given a smooth function $\LL:\cb \to \R$,  called a \emph{Lagrangian}, \index{Lagrangian function} the \emph{cost functional at time }$T$, called $\J_{T}:L^{\infty}([0,T],\cb)\to \R$, is defined by \index{cost functional}
\begin{equation} 
\J_{T}(u)\doteq \int_{0}^{T} \LL(\gamma(t),u(t))dt,
\end{equation} 
where $\g(t) = \pi(u(t))$. We are interested in the problem of minimizing the cost among all admissible pairs $(\gamma,u)$ that join two fixed points $x_{0},x_{1}\in M$ in time $T$. This corresponds to the optimal control problem
\begin{equation}\label{eq:ocp}
\begin{aligned}
&\dot{x}=f(x,u)=f_{0}(x)+\sum_{i=1}^k u_i f_i(x),\qquad x \in M, \\
&x(0)=x_{0}, \ x(T)=x_{1}, \qquad J_{T}(u)\to \text{min},
\end{aligned}
\end{equation}
where we have chosen some local trivialization of $\cb$.
\begin{definition}\label{d:value}
Let $M'\subset M$ be an open subset with compact closure. For $x_{0},x_{1}\in M'$ and $T>0$, we define the \emph{value function} \index{value function}
\begin{equation}
\SS_{T}(x_{0},x_{1})\doteq \inf \{\J_{T}(u)\,|\, (\gamma,u) \text{ admissible pair, }\g(0)=x_{0},\,\g(T)=x_{1},\, \g \subset M'\}.
\end{equation}
\end{definition}
The value function depends on the choice of a relatively compact subset $M'\subset M$. This choice, which is purely technical, is related with Theorem~\ref{t:smoothness}, concerning the regularity properties of $\SS$. We stress that all the objects defined in this paper by using the value function do not depend on the choice of $M'$.

\paragraph{Assumptions.} In what follows we make the following general assumptions:
\bi
\iii[(A1)] The affine control system is \emph{bracket generating},\index{bracket generating} namely
\begin{equation}\label{eq:X0pure}
\text{Lie}_{x}\Pg{(\ad\, f_{0})^{i}\,\overline \distr\,|\, i\in \N}=T_{x}M, \qquad \all x \in M,
\end{equation}
where $(\ad\, X) Y=[X,Y]$ is the Lie bracket of two vector fields and $\text{Lie}_{x}\mc{F}$ denotes the Lie algebra generated by a family of vector fields $\mc{F}$, computed at the point $x$. Observe that the vector field $f_{0}$ is not included in the generators of the Lie algebra \eqref{eq:X0pure}.
\iii[(A2)] The function $\LL:\cb \to \R$ is a \emph{Tonelli Lagrangian}, \index{Tonelli Lagrangian} i.e. it satisfies
\bi
\iii[(A2.a)] The Hessian of $\LL|_{\fib_{x}}$ is positive definite for all $x\in M$. In particular, $\LL|_{\fib_{x}}$ is strictly convex.
\iii[(A2.b)] $\LL$ has  superlinear growth, i.e. $\LL(x,u)/|u|\to +\infty$ when $|u|\to +\infty$.
\ei
\ei
%The mild technical assumption (A0) is needed in order to prove the forthcoming regularity theorem for the value function, but it's not required for sub-Riemannian structures (see Remark~\ref{r:ipsr}). 
Assumptions (A1) and (A2) are necessary conditions in order to have a nontrivial set of strictly normal minimizer and allow us to introduce a well defined smooth Hamiltonian (see Chapter~\ref{c:geodesic}).

\subsection{State-feedback equivalence}
All our considerations will be local. Hence, up to restricting our attention to a trivializable neighbourhood of $M$, we can assume that $\cb\simeq M\times \R^{k}$. By choosing a basis of $\mathbb{R}^k$, we can write $f(x,u)=f_{0}(x)+\sum_{i=1}^{k}u_{i}f_{i}(x)$. Then, a Lipschitz curve $\gamma:[0,T]\to M$ %in $H^{1}([0,T],M)$ 
is admissible if there exists a measurable, essentially bounded control $u :[0,T]\to \R^{k}$ such that \index{control}
\begin{equation} \label{eq:cs2}
\dot \gamma(t)=f_{0}(\gamma(t))+\sum_{i=1}^{k}u_{i}(t)f_{i}(\gamma(t)), \qquad \text{for a.e.}\  t\in [0,T].
\end{equation}
We use the notation $u \in L^\infty([0,T],\R^k)$ to denote a measurable, essentially bounded control with values in $\R^k$. By choosing another (local) trivialization of $\cb$, or another basis of $\mathbb{R}^k$, we obtain a different \emph{presentation} of the same affine control system. Besides, by acting on the underlying manifold $M$ via diffeomorphisms, we obtain equivalent affine control system starting from a given one. The following definition formalizes the concept of equivalent control systems.

\bdeff\label{d:sf}
Let $(\cb,f)$ and $(\cb',f')$ be two affine control systems on the same manifold $M$. A \emph{state-feedback transformation} \index{state-feedback transformation} is a pair $(\phi,\psi)$, where $\phi:M\to M$ is a diffeomorphism and $\psi:\cb\to \cb'$ an invertible affine bundle map, such that
the following diagram is commutative.
\begin{equation}\label{eq:diagr4}
\xymatrix{
\cb \ar[d]_{\psi} \ar[r]^{f}
& TM \ar[d]^{\phi_{*}}\\  \cb' \ar[r]_{f'} & TM\\}
\end{equation} 
In other words, $\phi_{*}f(x,u)=f'(\phi(x),\psi(x,u))$ for every $(x,u)\in \cb$. In this case $(\cb,f)$ and $(\cb',f')$ are said \emph{state-feedback equivalent}.      
\edeff

Notice that, if $(\cb,f)$ and $(\cb',f')$ are state-feedback equivalent, then $\text{rank}\,\cb=\text{rank}\,\cb'$. Moreover, different presentations of the same control systems are indeed feedback equivalent (i.e. related by a state-feedback transformation with $\phi = \id$). Definition \ref{d:sf} corresponds to the classical notion of point-dependent reparametrization of the controls. The next lemma states that a state-feedback transformation preserves admissible curves.
\begin{lemma} \label{l:admtoamd} 
Let $\gamma_{x_{0},u}$ be the admissible curve starting from $x_{0}$ and associated with $u$. Then 
\begin{equation}
\phi(\gamma_{x_{0},u}(t))=\gamma_{\phi(x_{0}),v}(t),
\end{equation}
where $v(t)=\psi(x(t),u(t))$.
\end{lemma}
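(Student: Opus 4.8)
The plan is to exhibit the curve $\eta(t)\doteq \phi(\gamma_{x_{0},u}(t))$ as the unique admissible curve of the target system $(\cb',f')$ issued from $\phi(x_{0})$ with control $v$, and then to conclude by uniqueness. First I would set $\gamma(t)\doteq\gamma_{x_{0},u}(t)$, so that $\eta=\phi\circ\gamma$ is again Lipschitz (as $\phi$ is a diffeomorphism) and satisfies $\eta(0)=\phi(x_{0})$. Before differentiating, I would check that $v(t)\doteq\psi(\gamma(t),u(t))$ is a legitimate control for $(\cb',f')$ along $\eta$: since $\psi$ covers $\phi$ as a bundle map, it sends the fiber $\cb_{\gamma(t)}$ into $\cb'_{\phi(\gamma(t))}=\cb'_{\eta(t)}$, so $\pi_{\cb'}(v(t))=\eta(t)$; moreover $v$ is measurable and essentially bounded because $\psi$ is smooth (affine on fibers), $\gamma([0,T])$ is compact, and $u\in L^{\infty}([0,T],\cb)$.

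Next I would differentiate $\eta$. For almost every $t$ the velocity $\dot\gamma(t)$ exists and equals $f(\gamma(t),u(t))$ by admissibility, and the chain rule yields $\dot\eta(t)=(d_{\gamma(t)}\phi)\,\dot\gamma(t)=\phi_{*}f(\gamma(t),u(t))$. At this point I invoke the defining commutativity relation of the state-feedback transformation from Definition~\ref{d:sf}, namely $\phi_{*}f(x,u)=f'(\phi(x),\psi(x,u))$ for all $(x,u)\in\cb$. Evaluating at $(x,u)=(\gamma(t),u(t))$ turns the right-hand side into $f'(\phi(\gamma(t)),\psi(\gamma(t),u(t)))=f'(\eta(t),v(t))$. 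Hence $\dot\eta(t)=f'(\eta(t),v(t))$ for a.e.\ $t$, with $\eta(0)=\phi(x_{0})$; that is, $\eta$ is an admissible curve of $(\cb',f')$ issued from $\phi(x_{0})$ with control $v$.

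To identify $\eta$ with $\gamma_{\phi(x_{0}),v}$ I would appeal to uniqueness of the Cauchy problem: the equation $\dot\eta=f'(\eta,v(\cdot))$ is a non-autonomous ODE whose right-hand side is, for each fixed admissible value of the control, a smooth vector field, and which depends measurably and boundedly on $t$ through $v$. On the compact interval $[0,T]$ the trajectory remains in a compact set, where $f'$ is Lipschitz in the state uniformly in the (essentially bounded) control, so the Carath\'eodory existence-uniqueness theorem applies and two admissible curves sharing the same initial point and control must coincide. This last uniqueness bookkeeping is the only step that requires any care; the algebraic heart of the argument — the substitution into the commutative diagram — is immediate, so I expect no genuine obstacle beyond writing the regularity of $v$ and the uniqueness theorem cleanly.
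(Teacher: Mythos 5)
Your proposal is correct and follows essentially the same route as the paper: define $\eta = \phi\circ\gamma_{x_0,u}$, differentiate via the chain rule, and substitute the commutativity relation $\phi_*f(x,u)=f'(\phi(x),\psi(x,u))$ from Definition~\ref{d:sf} to obtain $\dot\eta = f'(\eta,v)$. The paper leaves the regularity of $v$ and the final identification implicit (the curve $\gamma_{\phi(x_0),v}$ is by definition the unique Carath\'eodory solution, as recalled at the end of Section~1.1), so your extra bookkeeping is a harmless elaboration rather than a different argument.
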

\begin{proof} 
Denote $x(t)=\gamma_{x_{0},u}(t)$ and set $y(t)\doteq \phi(x(t))$. Then, by definition, $\dot{x}(t)=f(x(t),u(t))$ and $x(0)=x_{0}$. Hence $y(0)=\phi(x_{0})$ and
\begin{equation}
\dot{y}(t)=\phi_{*} f(x(t),u(t))=f'(\phi(x(t)),\psi(x(t),u(t)))=f'(y(t),v(t)).\qedhere
\end{equation}
\end{proof}

\brem \label{r:sf} 
Notice that every state-feedback transformation $(\phi,\psi)$ can be written as a composition of a pure state one, i.e. with $\psi=\id$, and a pure feedback one, i.e. with $\phi=\id$.
For later convenience, let us discuss how two feedback equivalent systems are related. Consider a presentation of an affine control system
\begin{equation}
\dot{x}=f(x,u)=f_{0}(x)+\sum_{i=1}^{k} u_{i} f_{i}(x) .
\end{equation}
By the commutativity of diagram~\eqref{eq:diagr4}, a feedback transformation writes
\begin{equation}
\begin{cases}
u'=\psi(x,u)\\
x'=\phi(x)
\end{cases}\qquad u'_{i}=\psi_{i}(x,u)=\psi_{i,0}(x)+\sum_{j=1}^{k} \psi_{i,j}(x)u_{j}, \qquad i=1,\ldots,k,
\end{equation}
where $\psi_{i,0}$ and $\psi_{i,j}$ denote, respectively, the affine and the linear part of the $i$-th component of $\psi$. In particular, for a pure feedback transformation, the original system is equivalent to 
\begin{equation} 
\dot{x}=f'(x,u')=f'_{0}(x)+\sum_{i=1}^{k} u'_{i}f'_{i}(x),
\end{equation}
where $f_{0}(x)\doteq f_{0}'(x)+\sum_{i=1}^{k} \psi_{i,0}(x)f_{i}'(x)$ and $f_{i}(x)\doteq \sum_{j=1}^{k}\psi_{j,i}(x)f_{j}'(x)$.
\erem 

We conclude recalling some well known facts about non-autonomous flows. By Caratheodory Theorem, for every control $u\in L^{\infty}([0,T],\R^k)$ and every initial condition $x_{0}\in M$, there exists a unique Lipschitz solution to the Cauchy problem
\begin{equation}\label{eq:ex}
\begin{cases}
\dot \g(t)=f_0(\gamma(t)) + \sum_{i=1}^k u_i(t) f_i(\gamma(t)), \\
\gamma(0)=x_{0},
\end{cases}
\end{equation}
defined for small time (see, e.g. \cite{agrachevbook,pontbook}). We denote such a solution by $\gamma_{x_{0},u}$ (or simply $\gamma_{u}$ when the base point $x_{0}$ is fixed). Moreover, for a fixed control $u \in L^\infty([0,T],\R^k)$, it is well defined the family of diffeomorphisms $P_{0,t} : M \to M$, given by $P_{0,t}(x)\doteq \gamma_{x,u}(t)$, which is  Lipschitz with respect to $t$.
Analogously one can define the flow $P_{s,t}:M\to M$, by solving the Cauchy problem with initial condition given at time $s$.
Notice that $P_{t,t}=\id$ for all $t\in \R$ and $P_{t_{1},t_{2}}\circ P_{t_{0},t_{1}} =P_{t_{0},t_{2}}$, whenever they are defined. In particular $(P_{t_{1},t_{2}})^{-1}=P_{t_{2},t_{1}}$.
%\erem
\section{End-point map}

In this section, for convenience, we assume to fix some (local) presentation of the affine control system, hence $L^{\infty}([0,T],\cb) \simeq L^{\infty}([0,T],\R^k)$. For a more intrinsic approach see \cite[Sec. 1]{cime}.
 
\bdeff Fix a point $x_{0}\in M$ and $T>0$.  The \emph{end-point map at time $T$}  \index{end-point map}  of the system \eqref{eq:ex} is the map 
\begin{equation} \End_{x_{0},T}: \U\to M,\qquad u\mapsto \gamma_{x_{0},u}(T), \end{equation}
where $\U\subset L^{\infty}([0,T],\R^k)$ is the open subset  of controls such that the solution $t\mapsto \gamma_{x_{0},u}(t)$ of the Cauchy problem \eqref{eq:ex} is defined on the whole interval $[0,T]$. 
\edeff 
The end-point map is smooth. Moreover, its Fr\'echet differential is computed by the following well-known formula (see, e.g. \cite{agrachevbook}).

\bp 
The differential \index{end-point map!differential of}of $E_{x_{0},T}$ at $u\in \U$, i.e. 
$D_{u}\End_{x_0,T}: L^{\infty}([0,T],\R^{k})\to T_{x}M$, where $x=\gamma_{u}(T)$, is 
\begin{equation}\label{eq:duev}
D_{u}\End_{x_{0},T}(v)=\int_{0}^{T}(P_{s,T})_{*}\overline{f}_{v(s)}(\gamma_{u}(s)) ds,\qquad \all v\in L^{\infty}([0,T],\R^{k}).
\end{equation}
\ep
In other words the differential $D_{u}\End_{x_{0},T}$ applied to the control $v$ computes the integral mean of the linear part $\overline f_{v(t)}$ of the vector field $f_{v(t)}$ along the trajectory defined by $u$, by pushing it forward to the final point of the trajectory through the flow $P_{s,T}$ (see Fig.~\ref{fig:diffend}).

\begin{figure}
\centering
\scalebox{0.6} % Change this value to rescale the drawing.
{
\begin{pspicture}(0,-2.8988476)(12.48291,2.8788476)
\definecolor{color133}{rgb}{0.0,0.2,1.0}
\definecolor{color222}{rgb}{0.8,0.0,0.0}
\fontsize{14}{0}
\psbezier[linewidth=0.04,dotsize=0.07055555cm 2.0]{*-o}(0.28101563,-2.3211524)(1.0010157,-1.0011524)(1.9688088,-0.15527576)(3.3810155,-0.20115234)(4.7932224,-0.24702893)(5.381016,-0.44115233)(6.0810156,-0.28115234)(6.7810154,-0.12115234)(8.361015,0.95884764)(8.401015,1.3988477)
\usefont{T1}{ptm}{m}{n}
\rput(0.4124707,-2.6761522){$x_0$}
\usefont{T1}{ptm}{m}{n}
\rput(1.4324707,-0.21615234){$\gamma_u(s)$}
\psline[linewidth=0.04cm,linecolor=color222,arrowsize=0.05291667cm 2.0,arrowlength=1.4,arrowinset=0.4]{->}(2.0810156,-0.46115234)(3.8010156,-1.0411524)
\usefont{T1}{ptm}{m}{n}
\rput(4.372471,-1.1761523){$\overline{f}_{v(s)}$}
\pspolygon[linewidth=0.04](5.9210157,0.51884764)(7.2410154,2.8388476)(11.081016,2.8588476)(9.741015,0.51884764)
\psbezier[linewidth=0.04,linecolor=color133,arrowsize=0.05291667cm 2.0,arrowlength=1.4,arrowinset=0.4]{->}(2.6010156,0.55884767)(2.6010156,0.55884767)(3.3010156,1.3588476)(3.9299412,1.4388477)(4.558867,1.5188477)(4.841016,1.4588476)(5.321016,1.3188477)
\usefont{T1}{ptm}{m}{n}
\rput(4.142471,1.8638476){$(P_{s,T})_*$}
\usefont{T1}{ptm}{m}{n}
\rput(9.4247,2.4038476){$(P_{s,T})_*\overline{f}_{v(s)}$}
\usefont{T1}{ptm}{m}{n}
\rput(8.8247,1.2038476){$x$}
\psline[linewidth=0.04cm,linecolor=color222,arrowsize=0.05291667cm 2.0,arrowlength=1.4,arrowinset=0.4]{->}(8.401015,1.3788476)(8.121016,2.5788476)
\usefont{T1}{ptm}{m}{n}
\rput(9.872471,0.20384766){$T_xM$}
\end{pspicture} 
}
\caption{Differential of the end-point map.}\label{fig:diffend}
\end{figure}
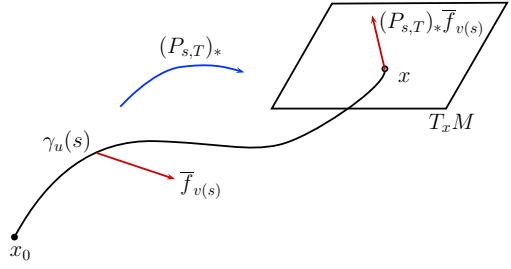

More explicitly, $f(x,u)=f_{0}(x)+\sum_{i=1}^{k}u_{i}f_{i}(x)$, and Eq. \eqref{eq:duev} is rewritten as follows
\begin{equation}  
D_{u}\End_{x_{0},T}(v)=\int_{0}^{T}\sum_{i=1}^{k}v_{i}(s) (P_{s,T})_{*}f_{i}(\gamma_{u}(s)) ds,\qquad \all v\in L^{\infty}([0,T],\R^{k}). 
\end{equation}

\section{Lagrange multipliers rule}
Fix $x_{0},x\in M$. The problem of finding the infimum of the cost $J_T$ for all admissible curves connecting the endpoints $x_0$ and $x$, respectively, in time $T$, can be naturally reformulated via the end-point map as a constrained extremal problem
\begin{equation}\label{eq:ccp}
\SS_{T}(x_{0},x)=\inf\{ \J_{T}(u)\,|\, \End_{x_{0},T}(u)=x\}=\inf_{\End_{x_{0},T}^{-1}(x)} \J_{T} .
\end{equation}
\begin{definition}\label{d:optimalcontrol}
We say that $u \in \U$ is an \emph{optimal control} \index{control!optimal} if it is a solution of Eq.~\eqref{eq:ccp}.
\end{definition}
\begin{remark}\label{r:mincontr}
When $f$ is not injective, a curve $\gamma$ may be associated with multiple controls. Nevertheless, among all the possible controls $u$ associated with the same admissible curve, there exists a unique minimal control $u^*$ which, for a.e. $t \in [0,T]$, minimizes the Lagrangian function. Then, since we are interested in optimal controls, we assume that any admissible curve $\gamma$ is always associated with the control $u^*$ which minimizes the Lagrangian, and in this way we have a one-to-one correspondence between admissible curves and controls. With this observation, we say that the admissible curve $\gamma$ is an \emph{optimal trajectory} (or \emph{minimizer}) if the associated control $u^*$ is optimal according to Definition~\ref{d:optimalcontrol}. \index{control!minimal}
\end{remark}
Notice that, in general, $D_{u}\End_{x_{0},T}$ is not surjective and the set $\End_{x_{0},T}^{-1}(x) \subset M$ is not a smooth submanifold.
The Lagrange multipliers rule provides a necessary condition to be satisfied by a control $u$ which is a constrained critical point for \eqref{eq:ccp}.

\bp\label{p:lmr} \index{Lagrange multipliers rule} Let $u\in \U$ be an optimal control, with $x=\End_{x_{0},T}(u)$. Then (at least) one of the two following statements holds true \bi
\iii[(i)] $\ex \lam_{T} \in T^{*}_{x}M$ s.t. $\lam_{T} \,D_{u}\End_{x_{0},T}=d_{u}\J_{T}$,
\iii[(ii)] $\ex \lam_{T} \in T^{*}_{x}M,\, \lam_{T} \neq 0,$ s.t. $\lam_{T} \,D_{u}\End_{x_{0},T}=0$,
\ei
where $\lam_{T} \,D_{u}\End_{x_{0},T}$ denotes the composition of  linear maps
\begin{equation}\label{eq:diagr2}
\xymatrixcolsep{5pc}\xymatrix{
\hspace{-1cm}L^{\infty}([0,T],\R^{k}) \ar[dr]_-{d_{u}\J_{T}} \ar[r]^-{D_{u}\End_{x_{0},T}}
& T_{x}M \ar[d]^{\lam_{T}} \\
 & \R }
\end{equation}
\ep
\begin{definition} \label{d:normal}
A control $u$, satisfying the necessary conditions for optimality of Proposition~\ref{p:lmr}, is called \emph{normal} in case (i), while it is called \emph{abnormal} in case (ii). We use the same terminology to classify the associated \emph{extremal trajectory}\index{extremal trajectory} $\gamma_u$. \index{control!normal}\index{control!abnormal}\index{extremal trajectory!normal}\index{extremal trajectory!abnormal}
\end{definition}
Notice that a single control $u\in \mc{U}$ can be associated with two different covectors (or \emph{Lagrange multipliers}) such that both (i) and (ii) are satisfied. In other words, an optimal trajectory may be simultaneously normal and abnormal.
We now introduce a key definition for what follows.
\bdeff A normal extremal trajectory $\gamma:[0,T]\to M$ is called \emph{strictly normal}  \index{extremal trajectory!strictly normal}if it is not abnormal. Moreover, if for all $s\in [0,T]$ the restriction $\gamma|_{[0,s]}$ is also strictly normal, then $\gamma$ is called \emph{strongly normal}.\index{extremal trajectory!strongly normal}
\edeff
\brem \label{r:abnormal}
A trajectory is abnormal if and only if the differential $D_{u}\End_{x_{0},T}$ is not surjective. By linearity of the integral, it is easy to show from Eq.~\eqref{eq:duev} that this is equivalent to the relation
\begin{equation} 
\tx{span}\{(P_{s,T})_*\distr_{\gamma(s)}, s\in[0,T]\}\neq T_{\gamma(T)}M. 
\end{equation}
In particular $\gamma$ is strongly normal if and only if a short segment $\gamma|_{[0,\eps]}$ is strongly normal, for some $\eps \leq T$.

\erem

\section{Pontryagin Maximum Principle}\label{s:pmp}
In this section we recall a weak version of the Pontryagin Maximum Principle (PMP) for the optimal control problem, which rewrites the necessary conditions satisfied by normal optimal solutions in the Hamiltonian formalism. In particular it states that every normal optimal trajectory of problem \eqref{eq:ocp} is the projection of a solution of a fixed Hamiltonian system defined on $T^{*}M$. 
 
Let us denote by $\pi: T^{*}M \to M$ the canonical projection of the cotangent bundle, and by $\la\lam,v\ra$ the  pairing between a cotangent vector $\lam\in T^{*}_{x}M$ and a vector $v\in T_{x}M$. 
The Liouville 1-form $\varsigma\in \Lambda^{1}(T^{*}M)$ is defined as follows: $\varsigma_{\lam}=\lam \circ \pi_{*}$, for every $\lam \in T^{*}M$. The canonical symplectic structure on $T^{*}M$ is defined by the non degenerate closed 2-form $\sigma=d\varsigma$. In canonical coordinates $(p,x)\in T^{*}M$ one has
\begin{equation} 
\varsigma= \sum_{i=1}^{n} p_{i} dx_{i}, \qquad \sigma = \sum_{i=1}^{n} dp_{i} \wedge dx_{i}. 
\end{equation}
We denote by $\vec h$ the Hamiltonian vector field associated with a function $h\in C^{\infty}(T^{*}M)$. Namely, $d_{\lam}h=\sigma(\cdot,\vec{h}(\lam))$ for every $\lam\in T^{*}M$ and the coordinates expression of $\vec{h}$ is 
\begin{equation} 
\vec{h}=\sum_{i=1}^{n} \frac{\partial h}{\partial p_i}\frac{\partial}{\partial x_i}-\frac{\partial h}{\partial x_i}\frac{\partial}{\partial p_i}. 
\end{equation}
Let us introduce the smooth control-dependent Hamiltonian on $T^{*}M$: 
\begin{equation}
\HH(\lam,u)=\la \lam, f(x,u)\ra- \LL(x,u), \qquad \lam\in T^{*}M, \ x=\pi(\lam).
\end{equation}
Assumption (A2) guarantees that, for each $\lam \in T^*M$, the restriction $u\mapsto\HH(\lam,u)$ to the fibers of $\cb$ has a unique maximum $\bar{u}(\lambda)$. Moreover, the fiber-wise strong convexity of the Lagrangian and an easy application of the implicit function theorem prove that the map $\lambda \mapsto \bar{u}(\lam)$ is smooth. Therefore, it is well defined the \emph{maximized Hamiltonian} (or simply, \emph{Hamiltonian}) $H: T^*M \to \R$
\begin{equation} \index{Hamiltonian} \index{maximized Hamiltonian}
H(\lam)\doteq  \max_{v \in U_x} \HH(\lam,v) = \HH(\lam,\bar{u}(\lam)), \qquad \lam \in T^*M, x = \pi(\lam).
\end{equation}
\brem
When $f(x,u)=f_{0}(x)+\sum_{i=1}^{k}u_{i}f_{i}(x)$ is written in a local frame, then $\bar u=\bar u(\lam)$ is characterized as the solution of the system
\begin{equation} \label{eq:as}
\frac{\partial \HH}{\partial u_{i}}(\lam,u)=\la\lam,f_{i}(x)\ra-\frac{\partial \LL}{\partial u_{i}}(x,u)=0, \qquad i=1,\ldots,k.
\end{equation}
\erem
\bt[PMP,\cite{agrachevbook,pontbook}]\label{t:pmp} \index{Pontryagin Maximum Principle (PMP)} The admissible curve $\gamma:[0,T]\to M$ is a normal extremal trajectory if and only if there exists a Lipschitz lift $\lam:[0,T] \to T^*M$, such that $\gamma(t) = \pi(\lam(t))$ and
\begin{equation}
\dot{\lam}(t) = \vec{H}(\lam(t)),\qquad t\in [0,T].
\end{equation}
In particular, $\gamma$ and $\lambda$  are smooth. Moreover, the associated control can be recovered from the lift as $u(t) = \bar{u}(\lam(t))$, and the final covector $\lambda_T = \lambda(T)$ is a normal Lagrange multiplier associated with $u$, namely $\lam_T \,D_{u}\End_{x_{0},T}=d_{u}\J_{T}$.
\et
Thus, every normal extremal trajectory $\gamma:[0,T]\to M$ can be written as $\gamma(t)=\pi \circ e^{t\vec{H}}(\lam_{0})$, for some initial covector $\lam_{0}\in T^{*}M$ (although it may be non unique). 
This observation motivates the next definition. For simplicity, and without loss of generality, we assume that $\vec{H}$ is complete.

\bdeff Fix $x_{0}\in M$. The \emph{exponential map} \index{exponential map}with base point $x_{0}$ is the map $\EXP_{x_{0}}:\R^{+}\times T^{*}_{x_{0}}M\to M$, defined by $\EXP_{x_{0}}(t,\lam_{0})=\pi \circ e^{t\vec{H}}(\lam_{0})$. 
\edeff

When the first argument is fixed, we employ the notation $\EXP_{x_0,t} :T_{x_0}^*M \to M$ to denote the exponential map with base point $x_0$ and time $t$, namely $\EXP_{x_0,t}(\lambda) = \EXP_{x_0}(t,\lambda)$. Indeed, the exponential map is smooth.

From now on, we call \emph{geodesic} \index{geodesic} any trajectory that satisfies the normal necessary conditions for optimality. In other words, geodesics are admissible curves associated with a normal Lagrange multiplier or, equivalently, projections of integral curves of the Hamiltonian flow.

\section{Regularity of the value function}

The next well known regularity property of the value function is crucial for the forthcoming sections (see Definition~\ref{d:value}).

\begin{theorem}\label{t:smoothness} \index{value function!regularity of}
Let $\gamma:[0,T] \to M'$ be a strongly normal trajectory. Then there exist  $\eps >0$ and an open neighbourhood $U \subset (0,\eps) \times M' \times M'$ such that:
\begin{itemize}
\item[(i)] $(t,\gamma(0),\gamma(t)) \in U$ for all $t \in (0,\eps)$,
\item[(ii)] For any $(t,x,y) \in U$ there exists a unique (normal) minimizer of the cost functional $J_t$, among all the admissible curves that connect $x$ with $y$ in time $t$, contained in $M'$,
\item[(iii)] The value function $(t,x,y)\mapsto \SS_t(x,y)$ is smooth on $U$.
\end{itemize}
\end{theorem}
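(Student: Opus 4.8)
The plan is to establish the smoothness of the value function near a strongly normal trajectory by exploiting the structure of the Lagrange multipliers rule (Proposition~\ref{p:lmr}) together with the Hamiltonian formalism of the PMP (Theorem~\ref{t:pmp}). The essential idea is that strong normality forces the differential of the end-point map to be surjective, so that the exponential map becomes a local diffeomorphism and the value function can be recovered as a smooth composition. First I would fix the strongly normal trajectory $\gamma$ with its associated lift $\lambda(t) = e^{t\vec H}(\lambda_0)$, noting that by Remark~\ref{r:abnormal} strong normality of $\gamma|_{[0,s]}$ for all $s$ means $D_u \End_{x_0,s}$ is surjective for every $s\in (0,T]$, and in particular for all small $s>0$.

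\smallskip

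The key step is to analyze the map $\Phi_t : T^*_{x_0}M \to M\times M$, $\Phi_t(\lambda_0) = (\pi(\lambda_0), \EXP_{x_0,t}(\lambda_0)) = (x_0, \gamma(t))$, or more precisely a suitable extended map incorporating the base point $x$. Since the exponential map is smooth, I would compute the differential of $\EXP_{x_0,t}$ at the covector $\lambda_0$ and show it is surjective (hence, by dimension count, a local diffeomorphism) precisely when $\gamma$ is strongly normal and $\gamma(t)$ is not conjugate to $x_0$. The surjectivity of $D_u\End_{x_0,t}$ controls exactly the linearization of the flow along $\gamma$, and one relates the Jacobian of $\EXP_{x_0,t}$ to the nondegeneracy of the second variation of $J_t$ restricted to the fiber $\End_{x_0,t}^{-1}(y)$. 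Once $\EXP_{x_0,t}$ is a local diffeomorphism onto a neighborhood of $\gamma(t)$, for each $(x,y)$ near $(\gamma(0),\gamma(t))$ there is a unique covector $\lambda_0 = \lambda_0(t,x,y)$ depending smoothly on its arguments producing the unique minimizer, giving (i) and (ii); the value function is then
\begin{equation}
\SS_t(x,y) = \int_0^t \LL(\gamma_{x,u}(\tau), u(\tau))\,d\tau,
\end{equation}
a smooth function of $(t,x,y)$ because the integrand depends smoothly on the smoothly-varying initial covector. This yields (iii).

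\smallskip

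I expect two points to require care. The first, and genuinely delicate, obstacle is passing from the \emph{necessary} condition (the Lagrange multiplier exists and the linearized map is surjective) to the \emph{sufficiency} of the critical point as a true local minimizer: one must verify that the second variation of $J_t$ is positive definite for small $t$, so that the normal extremal is actually optimal and the uniqueness in (ii) holds. This is where the strong normality assumption, combined with the Tonelli condition (A2) guaranteeing strict fiber-wise convexity of $\LL$, does the real work—short arcs of strongly normal extremals are strict minimizers because conjugate points cannot accumulate at $t=0$. The second point is purely technical: one works locally in the fixed relatively compact $M'$ from Definition~\ref{d:value}, which ensures that minimizers stay in a controlled region and that the infimum in $\SS_t$ is attained, so that the value function genuinely coincides with the cost of the smooth family of normal extremals constructed via the exponential map. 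Assembling these, the neighborhood $U$ is taken as the image under $(t,x,y)$-coordinates of the region where $\EXP$ is a diffeomorphism and the second variation is positive, which is open and contains $(t,\gamma(0),\gamma(t))$ for all small $t$ by construction.
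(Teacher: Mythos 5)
Your outline for (i) and (iii) matches the paper's proof in Appendix~\ref{a:proofsmoothness}: absence of conjugate points for small $t$ (Proposition~\ref{p:strongnoconj}) gives maximal rank of the extended map $\phi(t,\lam)=(t,\pi(\lam),\EXP_{\pi(\lam)}(t,\lam))$, the inverse function theorem produces the smooth covector $\lam_0(\tau,x,y)$, and $\SS$ is smooth as a composition. But there is a genuine gap in your treatment of (ii). You claim that positivity of the second variation of $J_t$ (equivalently, no conjugate points accumulating at $t=0$) makes the normal extremal ``actually optimal'' and unique. This is false as stated: nondegenerate second variation yields only that the extremal is a \emph{strict local} minimizer of $J_t$ in a neighbourhood of the reference trajectory, whereas the value function $\SS_t(x,y)$ is a \emph{global} infimum over all admissible curves in $M'$ joining $x$ to $y$. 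Nothing in the second-variation argument excludes a distant competitor — another normal extremal with a far-away initial covector, or an abnormal minimizer — with strictly smaller cost; nor does it by itself guarantee that the infimum is attained, which is needed to identify $\SS_t$ with the cost of your smooth family of extremals. (Already in Riemannian geometry, absence of conjugate points along an arc does not imply global minimality; short arcs minimize globally for a different reason.)

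The paper closes exactly this gap with a calibration (fields of extremals) argument: Lemma~\ref{l:suffcond} shows that if the Lagrangian submanifolds $\mc{L}_\tau = e^{\tau\vec{H}}(\mc{L}_0)$, with $\mc{L}_0$ the graph of $d_x a$ for a smooth function $a$, project diffeomorphically onto $M$ for $\tau\in[0,\eps]$, then the corresponding normal trajectory is a \emph{strict global} minimum of $J_\eps$ among all admissible curves joining its endpoints. The proof then constructs a family of functions $a^{(\lam_0)}$ parametrized by initial covectors $\lam_0$ in a compact set $K\subset T^*M$ (locally, $a(p_0,x_0;y)=p_0^*y$), and uses compactness to extract a \emph{uniform} $\eps(K)>0$ valid for all covectors in $K$; shrinking $W_t\subset K$ in the inverse function theorem step then guarantees that every normal extremal produced by $\lam_0(\tau,x,y)$ is the strict, hence unique, minimizer — which is what (ii) asserts. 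To repair your proof you would need to supply this calibration step (or an equivalent compactness-plus-existence argument via Filippov's theorem together with a localization of all possible minimizers near $\gamma$); the Tonelli condition and strong normality alone, routed only through the second variation, do not deliver global optimality or uniqueness.
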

According to Definition~\ref{d:value}, the function $\SS$, and henceforth $U$, depend on the choice of a relatively compact $M'\subset M$. For different relatively compacts, the correspondent value functions $\SS$ agree on the intersection of the associated domains $U$: they define the same germ.

The proof of this result can be found in Appendix~\ref{a:proofsmoothness}.
We end this section with a useful lemma about the differential of the value function at a smooth point.

\bl\label{l:lambdat} Let $x_{0},x\in M$ and $T>0$. Assume that the function $x \mapsto S_{T}(x_{0},x)$  is smooth at $x$ and there exists an optimal trajectory $\gamma:[0,T]\to M$ joining $x_{0}$ to $x$. Then 
\bi
\iii[(i)] $\gamma$ is the unique minimizer  of the cost functional $J_T$, among all the admissible curves that connect $x_{0}$ with $x$ in time $T$, and it is strictly normal,
\iii[(ii)]  $d_{x}S_{T}(x_{0},\cdot)=\lambda_{T}$, where $\lambda_{T}$ is the final covector of the normal lift of $\gamma$.
\ei
\el
\begin{proof} Under the above assumptions the function
\begin{equation} 
v\mapsto J_{T}(v)-S_{T}(x_{0},E_{x_{0},T}(v)) , \qquad v \in L^\infty([0,T],\R^k),
\end{equation}
is smooth and non negative. For every optimal trajectory $\gamma$, associated with the control $u$, that connects $x_0$ with $x$ in time $T$, one has  
\begin{equation} 
0=d_{u}\big(J_{T}(\cdot)-S_{T}(x_{0},E_{x_{0},T}(\cdot)\big)=d_{u}J_{T}-d_{x}S_{T}(x_{0},\cdot) \circ D_{u}E_{x_{0},T}.
\end{equation}
Thus, $\gamma$ is a normal extremal trajectory, with Lagrange multiplier $\lambda_{T}=d_{x}S_{T}(x_{0},\cdot)$. By Theorem~\ref{t:pmp}, we can recover $\gamma$ by the formula $\gamma(t) = \pi\circ e^{(t-T)\vec{H}}(\lambda_T)$. Then, $\gamma$ is the unique minimizer of $J_T$ connecting its endpoints.

Next we show that $\gamma$ is not abnormal. For $y$ in a neighbourhood of $x$, consider the map
\begin{equation}
\Theta:y\mapsto e^{-T\vec{H}}(d_{y}S_{T}(x_{0},\cdot)).
\end{equation}
The map $\Theta$, by construction, is a smooth right inverse for the exponential map at time $T$. This implies that $x$ is a regular value for the exponential map and, a fortiori, $u$ is a regular point for the end-point map at time $T$.
\end{proof}

 %affine control system and pontryagin
\chapter{Flag and growth vector of an admissible curve}\label{c:geodesic}

For each smooth admissible curve, we introduce a family of subspaces, which is related with a micro-local characterization of the control system along the trajectory itself.

\section{Growth vector of an admissible curve}
Let $\gamma:[0,T]\to M$ be an admissible, smooth curve such that $\gamma(0)=x_{0}$, associated with a smooth control $u$. Let $P_{0,t}$ denote the flow defined by $u$. We define the family of subspaces of $T_{x_{0}}M$
\begin{equation} \label{eq:DD}
\DD_{\gamma}(t)\doteq (P_{0,t})^{-1}_{*}\distr_{\gamma(t)}.
\end{equation}
In other words, the family $\DD_{\gamma}(t)$ is obtained by collecting the distributions along the trajectory at the initial point, by using the flow $P_{0,t}$ (see Fig.~\ref{fig:DD}).
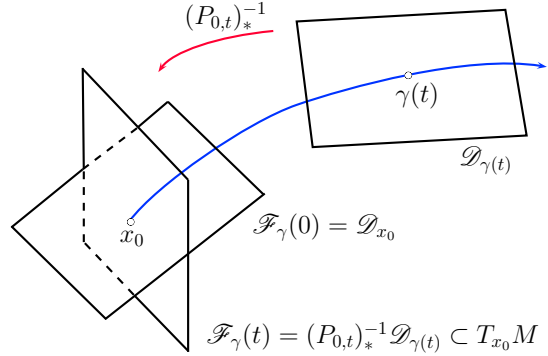
\begin{figure}[ht]
\centering
\scalebox{0.7} % Change this value to rescale the drawing.
{
\begin{pspicture}(0,-3.2691991)(10.521894,3.289199)
\definecolor{color236}{rgb}{0.0,0.2,1.0}
\definecolor{color246}{rgb}{1.0,0.0,0.2}
\psbezier[linewidth=0.04,linecolor=color236,arrowsize=0.05291667cm 2.0,arrowlength=1.4,arrowinset=0.4]{->}(2.22,-0.7691992)(2.72,-0.089199215)(4.4285975,1.1068101)(5.36,1.4308008)(6.291403,1.7547915)(8.56,2.3908007)(10.06,2.1508007)
\pspolygon[linewidth=0.04](5.36,3.0108008)(9.28,3.2308009)(9.68,0.85080075)(5.66,0.6308008)(5.66,0.6308008)
\psline[linewidth=0.04cm](1.3594159,0.22425534)(1.3396081,2.130801)
\psline[linewidth=0.04cm](1.34,2.1108007)(3.32,0.010800782)
\psline[linewidth=0.04cm](3.32,0.010800782)(3.32,-3.2491992)
\psline[linewidth=0.04cm,linestyle=dashed,dash=0.16cm 0.16cm](1.36,0.17080078)(1.36,-1.1691992)
\psline[linewidth=0.04cm,linestyle=dashed,dash=0.16cm 0.16cm](1.38,-1.1891992)(2.3,-2.1691992)
\psbezier[linewidth=0.04,linecolor=color246,arrowsize=0.05291667cm 2.0,arrowlength=1.4,arrowinset=0.4]{->}(4.92,2.8308008)(3.86,2.7308009)(3.02,2.4308007)(2.76,2.0508008)
\fontsize{14}{0}
\usefont{T1}{ptm}{m}{n}
\rput(4.031455,3.0958009){$(P_{0,t})_{*}^{-1}$}
\usefont{T1}{ptm}{m}{n}
\rput(5.881455,-0.88419924){$\DD_{\gamma}(0)=\distr_{x_0}$}
\psdots[dotsize=0.14,fillstyle=solid,dotstyle=o](2.24,-0.78919923)
\psdots[dotsize=0.14,fillstyle=solid,dotstyle=o](7.46,1.9908007)
\usefont{T1}{ptm}{m}{n}
\rput(7.631455,1.6158007){$\gamma(t)$}
\usefont{T1}{ptm}{m}{n}
\rput(8.911455,0.3758008){$\distr_{\gamma(t)}$}
\psline[linewidth=0.04cm](1.34,0.21080078)(0.0,-0.8891992)
\psline[linewidth=0.04cm](0.0,-0.8691992)(1.78,-2.6291993)
\psline[linewidth=0.04cm,linestyle=dashed,dash=0.16cm 0.16cm](2.36,0.9908008)(1.34,0.21080078)
\psline[linewidth=0.04cm](4.76,-0.24919921)(1.76,-2.6291993)
\psline[linewidth=0.04cm](2.94,1.4908007)(2.38,1.0108008)
\psline[linewidth=0.04cm](3.32,-3.2291992)(2.32,-2.1891992)
\psline[linewidth=0.04cm](2.94,1.4908007)(4.74,-0.26919922)
\usefont{T1}{ptm}{m}{n}
\rput(2.271455,-1.1041992){$x_0$}
\usefont{T1}{ptm}{m}{n}
\rput(6.7914553,-2.975){$\DD_{\gamma}(t)=(P_{0,t})^{-1}_{*}\distr_{\gamma(t)}\subset T_{x_{0}}M$}
\end{pspicture} 
}
\caption{The family of subspaces $\DD_{\gamma}(t)$.}\label{fig:DD}
\end{figure}

Given a family of subspaces in a linear space it is natural to consider the associated flag.
\bdeff\label{d:flag} 
The \emph{flag of the admissible curve $\gamma$} \index{flag of an admissible curve} \index{admissible curve!flag of} is the sequence of subspaces
\begin{equation}
\DD^{i}_{\gamma}(t)\doteq \spn\left\{\frac{d^{j}}{dt^{j}}\, v(t)\, \bigg| \,v(t)\in \DD_{\gamma}(t) \text{ smooth} ,\, j\leq i  -1 \right\}\subset T_{x_{0}}M, \qquad i\geq 1.
\end{equation}
Notice that, by definition, this is a filtration of $T_{x_{0}}M$, i.e. $\DD^{i}_{\gamma}(t)\subset \DD^{i+1}_{\gamma}(t)$, for all $i \geq 1$.
\edeff

\bdeff \label{d:ample} Let $k_{i}(t)\doteq \dim \DD^{i}_{\gamma}(t)$. The \emph{growth vector of the admissible curve $\gamma$} \index{growth vector of an admissible curve} \index{admissible curve!growth vector}is the sequence of integers $\mc{G}_{\gamma}(t)=\{k_{1}(t),k_{2}(t),\ldots\}$. 

An admissible curve is \emph{ample at $t$} \index{admissible curve!ample} if there exists an integer $m=m(t)$ such that $\DD^{m(t)}_{\gamma}(t)=T_{x_{0}}M$. We call the minimal $m(t)$ such that the curve is ample the \emph{step at $t$ of the admissible curve}. An admissible curve is called \emph{equiregular at $t$} \index{admissible curve!equiregular} if its growth vector is locally constant at $t$. Finally, an admissible curve is \emph{ample} (resp. \emph{equiregular}) if it is ample (resp. equiregular) at each $t\in[0,T]$.
\edeff

\begin{remark}\label{r:durezza}
One can analogously introduce the family of subspaces (and the relevant filtration) at any base point $\gamma(s)$, for every $s\in [0,T]$, by defining the shifted curve $\gamma_s(t)\doteq  \gamma(s+t)$. Then $\DD_{\gamma_s}(t)\doteq (P_{s,s+t})_{*}^{-1}\distr_{\gamma(s+t)}$. Notice that the relation $\DD_{\gamma_s}(t)=(P_{0,s})_{*} \DD_{\gamma}(s+t)$ implies that the growth vector of the original curve at $t$ can be equivalently computed via the growth vector at time $0$ of the curve $\gamma_t$, i.e. $k_{i}(t)=\dim \DD_{\gamma_t}^{i}(0)$, and $\mc{G}_\gamma(t) = \mc{G}_{\gamma_t}(0)$.
\end{remark}

Let us stress that the the family of subspaces \eqref{eq:DD} depends on the choice of the local frame (via the map  $P_{0,t}$). However, we will prove that the flag of an admissible curve at $t=0$ and its growth vector (for all $t$) are invariant by state-feedback transformation and, in particular, independent on the particular presentation of the system (see Section \ref{s:inv}).

\begin{remark} \label{r:di} The following properties of the growth vector of an \emph{ample} admissible curve highlight the analogy with the \virg{classical} growth vector of the distribution.
\begin{itemize}
\iii[(i)] The functions $t\mapsto k_{i}(t)$, for $i=1,\ldots,m(t)$, are lower semicontinuous. In particular, being integer valued functions, this implies that the set of points $t$ such that the growth vector is locally constant is open and dense on $[0,T]$.

\iii[(ii)] The function $t\mapsto m(t)$ is upper semicontinuous. As a consequence, the step of an admissible curve is bounded on $[0,T]$. \iii[(iii)] If the admissible curve is equiregular at $t$, then $k_{1}(t)<\ldots<k_{m}(t)$ is a strictly increasing sequence. Let $i < m$. If $k_{i}(t)=k_{i+1}(t)$ for all $t$ in a open neighbourhood then, using a local frame, it is easy to see that this implies $k_{i}(t)=k_{i+1}(t)=\ldots = k_m(t)$ contradicting the fact that the admissible curve is ample at $t$.
\end{itemize}
\end{remark}
{\review 
\begin{lemma} \label{l:flag0}
Assume that the curve is equiregular with step $m$. For every $i=1,\ldots,m-1$, the derivation of sections of $\DD_{\gamma}(t)$ induces a linear surjective map on the quotients
\begin{equation} 
\delta_{i}:\DD^{i}_{\gamma}(t) / \DD^{i-1}_{\gamma}(t) \longrightarrow \DD^{i+1}_{\gamma} (t)/\DD^{i}_{\gamma}(t),\qquad \all t \in [0,T]. 
\end{equation}
In particular we have the following inequalities for $k_{i}=\dim \DD^{i}_{\gamma}(t)$
\begin{equation} 
k_{i}-k_{i-1}\leq k_{i+1}-k_{i}, \qquad \all i=1,\ldots,m-1.
\end{equation}
\end{lemma}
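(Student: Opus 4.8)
The plan is to realize $\delta_i$ as the map induced on the associated graded spaces of the flag by time-differentiation of sections of the family $\DD_\gamma(t)$, to prove its surjectivity by producing explicit preimages of a generating set of $\DD^{i+1}_\gamma(t)/\DD^i_\gamma(t)$, and then to record the inequality between the $k_i$ by a dimension count for $\delta_i$. Throughout I fix $t\in[0,T]$ and use the equiregularity hypothesis: since the growth vector is locally constant, each $\DD^j_\gamma(t)$ has $t$-independent dimension $k_j$ and is a smooth subbundle of the trivial bundle $[0,T]\times T_{x_0}M$. I choose smooth sections $V_1,\ldots,V_k$ of $\DD_\gamma$ framing $\DD^1_\gamma(t)=\DD_\gamma(t)=(P_{0,t})^{-1}_*\distr_{\gamma(t)}$ (which has constant rank $k$), so that by Definition~\ref{d:flag} and the Leibniz rule one has $\DD^j_\gamma(t)=\spn\{V_a^{(l)}(t)\mid 1\le a\le k,\ 0\le l\le j-1\}$, where $V_a^{(l)}\doteq\frac{d^l}{dt^l}V_a$.

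First I would establish the auxiliary fact that differentiation respects the filtration: for any smooth section $w$ of $\DD^j_\gamma$ one has $\dot w(t)\in\DD^{j+1}_\gamma(t)$. Using equiregularity I write $w(t)=\sum_{a,l}c_{a,l}(t)V_a^{(l)}(t)$ with $0\le l\le j-1$ and smooth coefficients $c_{a,l}$ (the evaluation map from the coefficients into $T_{x_0}M$ has constant rank $k_j$, so a smooth section of its image lifts to smooth coefficients). Differentiating gives $\dot w=\sum_{a,l}\dot c_{a,l}V_a^{(l)}+\sum_{a,l}c_{a,l}V_a^{(l+1)}$, where the first sum lies in $\DD^j_\gamma(t)$ and the second in $\DD^{j+1}_\gamma(t)$ since $l+1\le j$. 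Taking $j=i$ shows that differentiation carries sections of $\DD^i_\gamma$ into $\DD^{i+1}_\gamma$, and taking $j=i-1$ shows it carries sections of $\DD^{i-1}_\gamma$ into $\DD^i_\gamma$; hence $[w(t)]\mapsto[\dot w(t)]$ is a well-defined linear map $\delta_i\colon\DD^i_\gamma(t)/\DD^{i-1}_\gamma(t)\to\DD^{i+1}_\gamma(t)/\DD^i_\gamma(t)$, independent of the chosen representative.

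Next I would prove surjectivity. The quotient $\DD^{i+1}_\gamma(t)/\DD^i_\gamma(t)$ is generated by the classes $[V_a^{(i)}(t)]$, $a=1,\ldots,k$, because $\DD^i_\gamma(t)$ together with the vectors $V_a^{(i)}(t)$ spans $\DD^{i+1}_\gamma(t)$. Since $V_a^{(i)}=\frac{d}{dt}V_a^{(i-1)}$ and $V_a^{(i-1)}(t)\in\DD^i_\gamma(t)$, we obtain $[V_a^{(i)}(t)]=\delta_i([V_a^{(i-1)}(t)])$, so every generator lies in the image of $\delta_i$ and $\delta_i$ is surjective for all $t\in[0,T]$.

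Finally, a dimension count for the surjection $\delta_i$, together with the identities $\dim\bigl(\DD^i_\gamma(t)/\DD^{i-1}_\gamma(t)\bigr)=k_i-k_{i-1}$ and $\dim\bigl(\DD^{i+1}_\gamma(t)/\DD^i_\gamma(t)\bigr)=k_{i+1}-k_i$, yields the asserted inequality $k_i-k_{i-1}\le k_{i+1}-k_i$ for each $i=1,\ldots,m-1$. I expect the main obstacle to be the well-definedness step, namely showing uniformly in $t$ that a smooth section of $\DD^j_\gamma$ differentiates into $\DD^{j+1}_\gamma$: this is exactly where equiregularity is indispensable, since it provides the constant-rank and subbundle structure needed to select the smooth coefficients $c_{a,l}$ and to guarantee that the graded pieces $\DD^{j}_\gamma(t)/\DD^{j-1}_\gamma(t)$ are genuine vector bundles on which $\delta_i$ acts; without it the quotients need not be smooth and $\delta_i$ might fail to be defined.
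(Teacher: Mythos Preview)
Your overall strategy coincides with the paper's (Appendix~\ref{a:flag}): show that time-differentiation of sections descends to a well-defined surjective linear map on the graded pieces, then read off the dimension inequality. The surjectivity argument via the generating classes $[V_a^{(i)}(t)]=\delta_i([V_a^{(i-1)}(t)])$ is exactly right.

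There is, however, a genuine slip in your final line. A surjective linear map satisfies $\dim(\text{source})\geq\dim(\text{target})$, so what you actually prove is
\[
k_i-k_{i-1}\;\geq\;k_{i+1}-k_i,
\]
i.e.\ the sequence $d_i=k_i-k_{i-1}$ is \emph{non-increasing}. The inequality as printed in the statement is a typo in the paper: compare Lemma~\ref{l:flag}, Remark~\ref{r:di}~(iii), the Young-diagram discussion after~\eqref{eq:tableaugeneral}, and the proposition in Appendix~\ref{a:flag}, all of which use $h_{i+1}-h_i\leq h_i-h_{i-1}$. You should not claim that surjectivity yields the reversed inequality.

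A smaller point on well-definedness: showing that differentiation carries sections of $\DD^{i-1}_\gamma$ into $\DD^i_\gamma$ is not by itself enough to conclude that $[w(t_0)]\mapsto[\dot w(t_0)]$ is independent of the chosen section through $w(t_0)$. You still need the tensoriality step: if $w$ is a smooth section of $\DD^i_\gamma$ with $w(t_0)=0$, then $\dot w(t_0)\in\DD^i_\gamma(t_0)$. The clean way (as in the paper's Lemma~\ref{l:tgr}) is to pick a genuine \emph{frame} $e_1,\dots,e_{k_i}$ of the constant-rank subbundle $\DD^i_\gamma$ rather than your overcomplete generating set $\{V_a^{(l)}\}$; then $w=\sum\alpha_r e_r$ with $\alpha_r(t_0)=0$, and $\dot w(t_0)=\sum\dot\alpha_r(t_0)e_r(t_0)\in\DD^i_\gamma(t_0)$. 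With that adjustment and the corrected inequality direction, your argument is complete and matches the paper's.
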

The proof of Lemma \ref{l:flag0} is contained in Appendix \ref{a:flag}.
\finereview}
Next, we show how the family $\DD_{\gamma}(t)$ can be conveniently employed to characterize strictly and strongly normal geodesics.

\begin{proposition}\label{p:amplestrn}
 Let $\gamma:[0,T]\to M$ be a geodesic. Then
\begin{itemize}
\item[(i)] $\gamma$ is strictly normal if and only if $\tx{span}\{\DD_{\gamma}(s),s\in[0,T]\}=T_{x_{0}}M$,
\item[(ii)] $\gamma$ is strongly normal if and only if $\tx{span}\{\DD_{\gamma}(s),s\in[0,t]\}=T_{x_{0}}M$ for all $0<t\leq T$,
\item[(iii)] If $\gamma$ is ample at $t=0$, then it is strongly normal. 
\end{itemize}
\end{proposition}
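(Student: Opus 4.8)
The plan is to pull every spanning condition back to the initial point by the flow $P_{0,t}$ and invoke the abnormality criterion of Remark~\ref{r:abnormal}; part (iii) then requires one extra duality argument to pass from an infinitesimal condition at $t=0$ to a spanning condition on a whole interval. For (i), first I would record the flow identity $P_{0,T}=P_{s,T}\circ P_{0,s}$, which gives $(P_{s,T})_*=(P_{0,T})_*\circ(P_{0,s})^{-1}_*$ and hence
\begin{equation}
(P_{s,T})_*\distr_{\gamma(s)}=(P_{0,T})_*\,\DD_\gamma(s),\qquad s\in[0,T].
\end{equation}
Taking spans over $s$ and using that $(P_{0,T})_*\colon T_{x_0}M\to T_{\gamma(T)}M$ is a linear isomorphism, the subspace $\spn\{(P_{s,T})_*\distr_{\gamma(s)}:s\in[0,T]\}$ equals $T_{\gamma(T)}M$ if and only if $\spn\{\DD_\gamma(s):s\in[0,T]\}=T_{x_0}M$. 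By Remark~\ref{r:abnormal} the former is exactly the condition that $\gamma$ be not abnormal, i.e.\ strictly normal, which proves (i).

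For (ii), I would apply (i) to each restriction $\gamma|_{[0,t]}$, $t\in(0,T]$. Since $\DD_\gamma(s)$ depends only on the flow up to time $s$, the family associated with $\gamma|_{[0,t]}$ coincides with $\DD_\gamma(s)$ for $s\in[0,t]$; hence $\gamma|_{[0,t]}$ is strictly normal iff $\spn\{\DD_\gamma(s):s\in[0,t]\}=T_{x_0}M$. As $\gamma$ is strongly normal precisely when every such restriction is strictly normal, the stated equivalence follows.

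For (iii), assume $\gamma$ is ample at $0$, so $\DD^m_\gamma(0)=T_{x_0}M$ for some $m$. By (ii) it suffices to show $\spn\{\DD_\gamma(s):s\in[0,\varepsilon]\}=T_{x_0}M$ for every $\varepsilon\in(0,T]$. Arguing by contradiction, if this span were a proper subspace $W$, I would choose a nonzero covector $\lambda\in T^*_{x_0}M$ annihilating $W$. Any smooth section $v(\cdot)$ of the family $\DD_\gamma(\cdot)$ satisfies $v(s)\in\DD_\gamma(s)\subset W$ for $s\in[0,\varepsilon]$, so $s\mapsto\langle\lambda,v(s)\rangle$ vanishes identically on $[0,\varepsilon]$ and therefore all its derivatives vanish at $s=0$, giving $\langle\lambda,\tfrac{d^j}{ds^j}v(s)|_{s=0}\rangle=0$ for every $j$. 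By the definition of the flag, the vectors $\tfrac{d^j}{ds^j}v(s)|_{s=0}$ with $j\le m-1$, as $v$ ranges over smooth sections, span $\DD^m_\gamma(0)=T_{x_0}M$; hence $\lambda=0$, a contradiction.

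The main obstacle is precisely this last step, where ampleness is infinitesimal at $t=0$ whereas strong normality is a spanning condition on an interval; the annihilator argument is the clean bridge between them. I expect the only technical point to verify is that the flag is realized by genuine smooth sections: in a local frame $f_1,\dots,f_k$ for $\distr$, the curves $v_i(t)\doteq(P_{0,t})^{-1}_*f_i(\gamma(t))$ are smooth and span $\DD_\gamma(t)$ pointwise, so the derivatives entering the definition of $\DD^m_\gamma(0)$ are well defined and the argument above is legitimate.
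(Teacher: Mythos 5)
Your proposal is correct and follows essentially the same route as the paper: part (i) by transporting the abnormality criterion of Remark~\ref{r:abnormal} back to $x_0$ via the flow, part (ii) by applying (i) to the restrictions $\gamma|_{[0,t]}$, and part (iii) by the annihilator/contradiction argument differentiating $\langle\lambda,v(s)\rangle\equiv 0$ at $s=0$. Your extra remark that the flag is realized by the genuine smooth sections $v_i(t)=(P_{0,t})^{-1}_*f_i(\gamma(t))$ is a correct (and slightly more explicit) justification of a step the paper leaves implicit.
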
 
\begin{proof}
Recall that a geodesic $\gamma:[0,T]\to M$ is abnormal on $[0,T]$ if and only if the differential $D_{u}\End_{x_{0},T}$ is not surjective, which implies  (see Remark \ref{r:abnormal})
\begin{equation} 
\tx{span}\{(P_{s,T})_*\distr_{\gamma(s)}, s\in[0,T]\}\neq T_{\gamma(T)}M. 
\end{equation}
By applying the inverse flow $(P_{0,T})^{-1}_{*}:T_{\gamma(T)}M\to T_{\gamma(0)}M$, we obtain
\begin{equation} 
\tx{span}\{\DD_{\gamma}(s),s\in[0,T]\}\neq T_{x_{0}}M. 
\end{equation} 
This proves (i). In particular, this implies that a geodesic is strongly normal if and only if 
\begin{equation} 
\spn\{\DD_{\gamma}(s),s\in[0,t]\}=T_{x_{0}}M, \qquad \all 0<t\leq T,
\end{equation} 
which proves (ii). We now prove (iii). We argue by contradiction. If the geodesic is not strongly normal, there exists some $\lam \in T_{x_0}^*M$ such that $\langle \lam, \DD_\gamma(t)\rangle = 0$, for all $0 < t \leq T$. Then, by taking derivatives at $t=0$, we obtain that $\langle \lam, \DD^i_\gamma(0)\rangle = 0$, for all $i\geq 0$, which is impossible since the curve is ample at $t=0$ by hypothesis.
\end{proof}

\begin{remark}
Ample geodesics play a crucial role in our approach to curvature, as we explain in Chapter~\ref{c:geodcost}. By Proposition~\ref{p:amplestrn}, these geodesics are strongly normal. %Let us recall that, according to the PMP, for any strongly normal geodesic $\gamma$ (and, a fortiori, any ample one) there exists a unique covector $\lambda_0 \in T^*_{x_0}M$ such that $\gamma(t) = \EXP(t,\lam_0)$. 
One may wonder whether the generic covector $\lam_0 \in T_{x_0}^*M$ corresponds to a strongly normal (or even ample) geodesic. The answer to this question is trivial when there are no abnormal trajectories (e.g. in Riemannian geometry), but the matter is quite delicate in general. For this reason, in order to define the curvature of an affine control system, we assume in the following that the set of ample geodesics is non empty. Eventually, we address the problem of existence of ample geodesics for linear quadratic control systems and sub-Riemannian geometry. In these cases, we will prove that a generic normal geodesic is ample.
\end{remark}

\section{Linearised control system and growth vector}\label{s:crit}

It is well known that the differential of the end-point map at a point $u\in \U$ is related with the linearisation of the control system along the associated trajectory. The goal of this section is to discuss the relation between the controllability of the linearised system and the ampleness of the geodesic.

\subsection{Linearisation of a control system in \texorpdfstring{$\R^{n}$}{Rn}} \label{s:lins} \index{linearisation of a control system}

We start with some general considerations. Consider the nonlinear control system in $\R^{n}$ 
\begin{equation} 
\dot{x}=f(x,u),\qquad x\in \R^{n},\ u\in \R^{k},
\end{equation} 
where $f:\R^n\times\R^k \to \R^n$ is smooth. Fix $x_{0}\in \R^{n}$, and consider the end-point map $\End_{x_{0},t}:\U\to \R^{n}$ for $t\geq 0$. Consider a smooth solution $x_u(t)$, associated with the control $u(t)$, such that $x_u(0)=x_0$. The differential of the end-point map $D_{u}\End_{x_{0},t}:L^{\infty}([0,T],\R^{k})\to \R^{n}$ at $u$ is related with the end-point map of the \emph{linearised system} at the pair $(x_{u}(t),u(t))$. More precisely, for every $v\in L^{\infty}([0,T],\R^{k})$ the trajectory  $y(t)\doteq D_{u}\End_{x_{0},t}(v)\in \R^{n}$ is the solution of the non-autonomous linear system
\begin{equation} \label{eq:lins}
 \begin{cases}
 \dot{y}(t)=A(t)y(t)+B(t)v(t),\\ 
 y(0)=0,
 \end{cases}
\end{equation}
where $A(t)\doteq \dfrac{\partial f}{\partial x}(x_{u}(t),u(t))$ and $B(t)\doteq \dfrac{\partial f}{\partial u}(x_{u}(t),u(t))$  are smooth families of $n\times n$ and $n\times k$ matrices, respectively. We have the formula
\begin{equation} 
y(t)=D_{u}\End_{x_{0},t}(v)=M(t)\int_{0}^{t}M(s)^{-1}B(s)v(s)ds, 
\end{equation}
where $M(t)$ is the solution of the matrix Cauchy problem $\dot{M}(t)=A(t)M(t)$, with $M(0)=\id$. Indeed the solution $M(t)$ is defined on the whole interval $[0,T]$, and it is invertible therein.
\begin{definition}
The linear control system~\eqref{eq:lins} is \emph{controllable} in time $T>0$ if, for any $y \in \R^n$, there exists $v \in L^\infty([0,T],\R^k)$ such that the associated solution $y_v(t)$ satisfies $y_v(T) = y$.
\end{definition}
Let us recall the following classical controllability condition for a linear non-autonomous system, which is the non-autonomous generalization of the Kalman condition (see e.g. \cite{Coronbook}). \index{Kalman condition}
For a set $\{M_i\}$ of $n\times k$ matrices, we denote with $\spn\{M_i\}$ the vector space generated by the columns of the matrices in $\{M_i\}$.
\begin{proposition} \label{p:Kalman}
Consider the control system \eqref{eq:lins}, with $A(t),B(t)$ smooth, and define
\begin{equation} \label{eq:BB}
B_{1}(t)\doteq B(t),\qquad B_{i+1}(t)\doteq A(t)B_{i}(t)-\dot B_{i}(t).
\end{equation}
Assume that there exist $t\in [0,T]$ and $m>0$ such that $\spn\{B_{1}(t),B_{2}(t),\ldots,B_{m}(t)\}=\R^{n}$. Then the system \eqref{eq:lins} is controllable in time $T$.
\end{proposition}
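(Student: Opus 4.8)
The plan is to reduce controllability to a dual (annihilator) condition on the fundamental solution $M$, and then to recognize the Kalman matrices $B_i$ as the successive derivatives of $s\mapsto M(s)^{-1}B(s)$. First I would use the explicit solution formula recalled just before the statement, namely $y_v(T)=M(T)\int_0^T M(s)^{-1}B(s)v(s)\,ds$. The reachable set $\mathcal{R}_T\doteq\{y_v(T): v\in L^\infty([0,T],\R^k)\}$ is a linear subspace of $\R^n$, and since $M(T)$ is invertible, $\mathcal{R}_T=\R^n$ if and only if the linear map $v\mapsto\int_0^T M(s)^{-1}B(s)v(s)\,ds$ is onto. Arguing by contradiction, suppose the system is not controllable; then there is a nonzero row covector $p$ annihilating $\mathcal{R}_T$, so that $p\,M(T)\int_0^T M(s)^{-1}B(s)v(s)\,ds=0$ for every $v$. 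Setting $q\doteq p\,M(T)\neq 0$, the fundamental lemma of the calculus of variations (choosing controls supported where the integrand is nonzero) yields $q\,M(s)^{-1}B(s)=0$ for a.e. $s$, hence for all $s\in[0,T]$ by smoothness.

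The key step is the computation that links the $B_i$ to the derivation. From $\dot M=AM$, $M(0)=\id$, I get $\frac{d}{ds}M(s)^{-1}=-M(s)^{-1}A(s)$. Combining this with the recursion $B_{i+1}=A B_i-\dot B_i$ defining the matrices in \eqref{eq:BB}, I obtain
\[
\frac{d}{ds}\big(M(s)^{-1}B_i(s)\big)=-M(s)^{-1}A(s)B_i(s)+M(s)^{-1}\dot B_i(s)=-M(s)^{-1}B_{i+1}(s).
\]
An immediate induction then gives $\frac{d^{\,i-1}}{ds^{\,i-1}}\big(M(s)^{-1}B_1(s)\big)=(-1)^{i-1}M(s)^{-1}B_i(s)$ for every $i\geq 1$.

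To conclude, I differentiate the identity $q\,M(s)^{-1}B_1(s)\equiv 0$: since all data are smooth, every derivative vanishes identically, so by the induction above $q\,M(s)^{-1}B_i(s)=0$ for all $i\geq 1$ and all $s\in[0,T]$. Evaluating at the distinguished time $s=t$ gives $q\,M(t)^{-1}B_i(t)=0$ for $i=1,\dots,m$, and the hypothesis $\spn\{B_1(t),\dots,B_m(t)\}=\R^n$ forces $q\,M(t)^{-1}=0$; since $M(t)^{-1}$ is invertible this means $q=0$, contradicting $q\neq 0$. Hence $\mathcal{R}_T=\R^n$ and the system is controllable in time $T$. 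I expect the only genuinely delicate points to be the duality reduction (that non-surjectivity produces a nontrivial annihilating covector) and the passage from the integral identity to the pointwise identity $q\,M(s)^{-1}B(s)=0$; the derivative bookkeeping identifying $B_i$ with $(-1)^{i-1}$ times the $(i-1)$-th derivative of $M^{-1}B_1$ is the conceptual heart but is computationally routine.
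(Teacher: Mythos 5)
Your proof is correct, and it is the standard classical argument for this non-autonomous Kalman condition: the paper itself does not prove Proposition~\ref{p:Kalman} (it cites the literature, e.g.\ Coron's book), so there is no in-paper proof to compare against. Note that your key bookkeeping identity $\frac{d^{\,i-1}}{ds^{\,i-1}}\big(M(s)^{-1}B_1(s)\big)=(-1)^{i-1}M(s)^{-1}B_i(s)$ is precisely the identity $C^{(i)}(t)=(-1)^{i}M(t)^{-1}B_{i+1}(t)$ that the paper records in Remark~\ref{r:mt}, and your duality step (non-surjectivity of the reachable map yields a nonzero annihilating covector $q$, then $q\,M(s)^{-1}B(s)\equiv 0$ by choosing $v(s)=\big(q\,M(s)^{-1}B(s)\big)^{*}$ and using continuity) is exactly the classical route, so the argument is complete as written.
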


\begin{remark} \label{r:mt} Notice that, using $M(t)$ as a time-dependent change of variable, the new curve $\zeta(t)\doteq M(t)^{-1}y(t)\in \R^{n}$ satisfies 
\begin{equation} \label{eq:lins2}
 \begin{cases}
\dot{\zeta}(t)=M(t)^{-1}B(t)v(t), \\ \zeta(0)=0. 
 \end{cases}
\end{equation}
\end{remark}
If the controllability condition of Proposition~\eqref{p:Kalman} is satisfied for the pair $(A(t),B(t))$, then it is satisfied also for the pair $(0,C(t))$, with $C(t)=M(t)^{-1}B(t)$, as a consequence of the identity $C^{(i)}(t)=(-1)^{i}M(t)^{-1}B_{i+1}(t)$. Therefore, the controllability conditions for the control systems~\eqref{eq:lins} and~\eqref{eq:lins2} are equivalent. Moreover, both systems are controllable if and only if one of them is controllable.

\subsection{Linearisation of a control system in the general setting}
Let us go back to the general setting. Let $\gamma$ be a smooth admissible trajectory associated with the control $u$ such that $\gamma(0) = x_0$. We are interested in the linearisation of the affine control system at $\gamma$.
Consider the image of a fixed control $v\in L^{\infty}([0,T],\R^{k})$ through the differential of the end-point map $\End_{x_{0},t}$, for every $t \geq 0$:
\begin{equation} 
D_{u}\End_{x_{0},t}: L^{\infty}([0,T],\R^{k})\to T_{\gamma(t)}M,\qquad \gamma(t)=\End_{x_{0},t}(u). 
\end{equation}
In this case, for each $t\geq0$, the image of $v$ belongs to a different tangent space. In order to obtain a well defined differential equation, we collect the family of vectors in a single vector space through the composition with the push forward $(P_{0,t})^{-1}_{*}:T_{\gamma(t)}M\to T_{x_0}M$: 
\begin{equation} 
(P_{0,t})^{-1}_{*} \circ D_{u}\End_{x_{0},t}: L^{\infty}([0,T],\R^{k})\to T_{x_0}M. 
\end{equation}
Using formula \eqref{eq:duev} one easily finds
\begin{equation}  
(P_{0,t})^{-1}_{*} \circ D_{u}\End_{x_{0},t}(v)=\int_{0}^{t}(P_{0,s})^{-1}_{*}\overline{f}_{v(s)}(\gamma(s)) ds.
\end{equation}
Denoting $\zeta(t)\doteq  (P_{0,t})^{-1}_{*} \circ D_{u}\End_{x_{0},t}(v)\in T_{x_{0}}M$ one has that, in a local frame, this curve satisfies
\begin{equation}
\dot{\zeta}(t)=(P_{0,t})^{-1}_{*}\overline{f}_{v(t)}(\gamma(t))=\sum_{i=1}^{k}v_{i}(t) (P_{0,t})^{-1}_{*}f_{i}(\gamma(t)).
\end{equation}
Therefore, $\zeta(t)$ is a solution of the control system
\begin{equation}\label{eq:lins3}
\begin{cases}
\dot{\zeta}(t)=C(t)v(t), \\
\zeta(0) = 0,
\end{cases}
\end{equation}
where the $n\times k$ matrix $C(t)$ has columns $C_{i}(t)\doteq (P_{0,t})^{-1}_{*}f_{i}( \gamma(t))$ for $i=1,\ldots,k$. Eq.~\eqref{eq:lins3} is the \emph{linearised system along the admissible curve $\gamma$}. By hypothesis, $\gamma$ is smooth. Then the linearised system is also smooth.

\begin{remark} \label{r:AB}Notice that the composition of the end-point map with $(P_{0,t})^{-1}_{*}$ corresponds to the time dependent transformation $M(t)^{-1}$ of Remark \ref{r:mt}.
\end{remark}

\subsection{Growth vector and controllability}
From the definition of growth vector of an admissible curve, it follows that
\begin{equation} 
 \DD^{i}_{\gamma}(t)=\spn\{C(t),\dot C (t)\ldots,C^{(i-1)}(t)\},\qquad i\geq 1.
\end{equation}

This gives an efficient criterion to compute the geodesic growth vector of the admissible curve $\gamma_u$ associated with the control $u$.
Define in any local frame $f_{1},\ldots,f_{k}$ and any coordinate system in a neighbourhood of $\gamma$, the $n\times n$ and $n\times k$ matrices, respectively:
\begin{gather}
A(t)\doteq \dfrac{\partial f}{\partial x}(\gamma_{u}(t),u(t))=\frac{\partial f_{0}}{\partial x}(\gamma_{u}(t))+\sum_{i=1}^{k}u_{i}(t)\frac{\partial f_{i}}{\partial x}(\gamma_{u}(t)), \label{eq:AAA} \\
B(t)\doteq \dfrac{\partial f}{\partial u}(\gamma_{u}(t),u(t))=\left[f_{i}(\gamma_{u}(t))\right]_{i=1,\ldots,k}. \label{eq:BBB}
\end{gather}
Denoting by $B_{j}(t)$ the matrices defined as in \eqref{eq:BB}, and recalling Remark~\ref{r:mt}, we have
\begin{equation}
k_{i}(t)=\dim \DD^{i}_{\gamma}(t)=\rank\{B_{1}(t),\ldots,B_{i}(t)\}. 
\end{equation}

Assume now that the admissible curve $\gamma$ is actually a normal geodesic of the optimal control system. As a consequence of this discussion and Proposition~\ref{p:Kalman}, we obtain the following characterisation in terms of the controllability of the linearised system.
{\review \begin{proposition} \label{p:amplecontr} Let $\gamma:[0,T]\to M$ be a geodesic. Then
\begin{itemize}
\item[(i)] $\gamma$ is strictly normal $\Leftrightarrow$ the linearised system is controllable in time $T$,
\item[(ii)] $\gamma$ is strongly normal  $\Leftrightarrow$ the linearised system is controllable in time $t$,  $\all t\in(0, T]$,
\item[(iii)$_{t}$] $\gamma$ is ample at $t\in [0,T]$  $\Leftrightarrow$ the controllability condition of Proposition~\ref{p:Kalman} is satisfied at $t\in [0,T]$.
\end{itemize}
In particular (iii)$_{0}\,\Rightarrow$(ii)$\,\Rightarrow$(i). Moreover (i)$\,\Rightarrow$(ii)$\,\Rightarrow$(iii)$_{t}$ for all $t
\in [0,T]$ in the analytic case.
\end{proposition}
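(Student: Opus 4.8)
The plan is to reduce the three equivalences (i)--(iii)$_t$ to statements already proved in the preceding sections, so that each becomes a mere reformulation, and then to isolate the single genuinely new ingredient — a unique-continuation argument that is needed only for the analytic implications at the end. I would organize the argument around one translation lemma relating controllability of the linearised system to surjectivity of the differential of the end-point map, after which (i) and (ii) fall out of Remark~\ref{r:abnormal} and Proposition~\ref{p:amplestrn}, and (iii)$_t$ falls out of the rank identity established just above the statement.

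First I would fix the dictionary between controllability and the end-point map. By Remark~\ref{r:AB}, the time-dependent change of variables $M(t)^{-1}$ of Remark~\ref{r:mt} coincides with $(P_{0,t})^{-1}_*$, so the linearised system \eqref{eq:lins} and the reduced system \eqref{eq:lins3}, $\dot\zeta=C(t)v$, have the same controllability. For the latter one has $\zeta_v(t)=\int_0^t C(s)v(s)\,ds$, so the reachable set in time $t$ is the finite-dimensional (hence closed) subspace $\spn\{C_i(s):s\in[0,t],\,i=1,\ldots,k\}=\spn\{\DD_\gamma(s):s\in[0,t]\}$, using $C_i(s)=(P_{0,s})^{-1}_*f_i(\gamma(s))$. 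Since $(P_{0,t})_*$ is an isomorphism, controllability in time $t$ is equivalent to surjectivity of $D_u\End_{x_0,t}$, equivalently to $\spn\{\DD_\gamma(s):s\in[0,t]\}=T_{x_0}M$.

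The three equivalences then follow formally. For (i): as $\gamma$ is a geodesic it is normal, so strict normality means exactly non-abnormality, which by Remark~\ref{r:abnormal} is surjectivity of $D_u\End_{x_0,T}$, i.e. controllability in time $T$ by the previous step (in agreement with Proposition~\ref{p:amplestrn}(i)). Applying the same equivalence to each restriction $\gamma|_{[0,t]}$, whose linearised system is \eqref{eq:lins3} restricted to $[0,t]$, yields (ii) and matches Proposition~\ref{p:amplestrn}(ii). For (iii)$_t$ I would invoke the identity $k_i(t)=\dim\DD^i_\gamma(t)=\rank\{B_1(t),\ldots,B_i(t)\}$ proved just before the statement: ampleness at $t$ means $\DD^m_\gamma(t)=T_{x_0}M$ for some $m$, i.e. $\rank\{B_1(t),\ldots,B_m(t)\}=n$, i.e. $\spn\{B_1(t),\ldots,B_m(t)\}=\R^n$, which is precisely the hypothesis of Proposition~\ref{p:Kalman} at $t$. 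The chain (iii)$_0\Rightarrow$(ii)$\Rightarrow$(i) is then immediate: ampleness at $0$ gives strong normality by Proposition~\ref{p:amplestrn}(iii) (alternatively, Proposition~\ref{p:Kalman} applied on each $[0,t]$ gives controllability in time $t$ for all $t$), and (ii)$\Rightarrow$(i) is the case $t=T$.

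The hard — and only non-formal — part is the analytic implications (i)$\Rightarrow$(ii)$\Rightarrow$(iii)$_t$, where one must propagate controllability in the largest time to smaller times and to pointwise ampleness; this fails for merely smooth systems, so analyticity is essential and enters through unique continuation. The key lemma I would prove is: if $C(t)$ is real-analytic on $[0,T]$ and a covector $\lambda\in(\R^n)^*$ satisfies either $\lambda C|_J\equiv 0$ on a subinterval $J$ of positive length, or $\lambda C^{(j)}(t_0)=0$ for all $j\geq 0$ at a single $t_0$, then $\lambda C\equiv 0$ on all of $[0,T]$. Granting this: if (i) held but controllability failed at some time $t_0<T$, there would be $\lambda\neq 0$ annihilating $C$ on $[0,t_0]$, forcing $\lambda C\equiv 0$ and contradicting controllability in time $T$; hence (ii). Likewise, if $\gamma$ were not ample at some $t_0$, there would be $\lambda\neq 0$ with $\lambda C^{(j)}(t_0)=0$ for all $j$, again forcing $\lambda C\equiv 0$ and contradicting (i); hence (iii)$_t$ for every $t$, and (ii)$\Rightarrow$(iii)$_t$ follows since (ii)$\Rightarrow$(i). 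I expect the only delicate bookkeeping to be checking that the reduced matrix $C(t)$ is genuinely analytic (it is, being assembled from the flow $P_{0,t}$ and the analytic frame $f_i$) and that the Kalman matrices $B_i(t)$ and the derivatives $C^{(i-1)}(t)$ span the same flag — which is exactly the identity already used for (iii)$_t$.
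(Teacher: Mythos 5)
Your proof is correct and takes essentially the same route as the paper, which obtains (i)--(iii)$_t$ exactly as you do — by identifying the reachable set of the linearised system with $\spn\{\DD_\gamma(s),\,s\in[0,t]\}$ via the time-dependent change of variables of Remarks~\ref{r:mt} and~\ref{r:AB}, then invoking Remark~\ref{r:abnormal}, Proposition~\ref{p:amplestrn} and the rank identity $k_i(t)=\rank\{B_1(t),\ldots,B_i(t)\}$ — and simply cites \cite[Sec.~1.3]{Coronbook} for the analytic implications, the classical fact there being precisely your unique-continuation lemma for $t\mapsto\lambda C(t)$. The only detail worth making explicit in your last step is that the analyticity of $C(t)$ along a \emph{geodesic} rests on the analyticity of the normal control $u(t)=\bar u(\lambda(t))$ (since $\gamma$ is the projection of an integral curve of the analytic field $\vec{H}$), without which the flow $P_{0,t}$, and hence $C(t)$, would only be as regular as an arbitrary $L^\infty$ control allows.
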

The implications in the analytic case are a classical fact about the controllability of non autonomous analytic linear systems. See, for example, \cite[Sec. 1.3]{Coronbook}.
\finereview}

\section{State-feedback invariance of the flag of an admissible curve}\label{s:inv}

In this section we prove that, albeit the family $\DD_{\gamma}(t)$ depends on the choice of the local trivialization, the flag of an admissible curve at $t=0$ is invariant by state-feedback transformation, hence it does not depend on the presentation. This also implies that the growth vector of the admissible curve is well-defined (for all $t$). In this section we use the shorthand $\DD^{i}_{\gamma}=\DD^{i}_{\gamma}(0)$, when the flag is evaluated at $t=0$.

\begin{proposition} \label{p:durezza} The flag $\DD^{1}_{\gamma}\subset \DD^{2}_{\gamma}\subset \ldots \subset T_{x_{0}}M$ is state-feedback invariant. In particular it does not depend on the presentation of the control system. 
\end{proposition}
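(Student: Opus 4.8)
The plan is to pass to the linear-algebraic description of the flag and to exploit the factorisation of Remark~\ref{r:sf}. First I would record that, by the identities $\DD^{i}_{\gamma}(t)=\spn\{C(t),\dot C(t),\ldots,C^{(i-1)}(t)\}$ and $C^{(l)}(t)=(-1)^{l}M(t)^{-1}B_{l+1}(t)$ of Section~\ref{s:crit}, together with $M(0)=\id$, the flag at $t=0$ is
\begin{equation}
\DD^{i}_{\gamma}=\spn\{B_{1}(0),\ldots,B_{i}(0)\},
\end{equation}
where $B_{1}=B$ and $B_{j+1}=AB_{j}-\dot B_{j}$, and $A(t),B(t)$ are the matrices \eqref{eq:AAA}--\eqref{eq:BBB} of the linearised system along $\gamma$. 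Hence it suffices to show that the filtration $\mathcal{B}_{i}(t)\doteq\spn\{B_{1}(t),\ldots,B_{i}(t)\}$ transforms correctly under the linearisation of a state-feedback transformation. By Remark~\ref{r:sf} I may treat separately a pure state transformation ($\psi=\id$) and a pure feedback one ($\phi=\id$).

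Next I would compute the linearised transformations. For a pure state transformation, set $T(t)\doteq d_{\gamma(t)}\phi$; the linearised matrices become $A'=TAT^{-1}+\dot T T^{-1}$ and $B'=TB$, and an immediate induction gives $B'_{j}=TB_{j}$, so $\mathcal{B}'_{i}(t)=T(t)\mathcal{B}_{i}(t)$. At $t=0$ this is exactly $\DD^{i}_{\phi\circ\gamma}=d_{x_{0}}\phi\,(\DD^{i}_{\gamma})$: the flag is carried onto the corresponding flag by the canonical identification of tangent spaces, which is the desired invariance. For a pure feedback transformation I would write $u'=\psi(x,u)=\psi_{\mathrm{lin}}(x)u+\psi_{0}(x)$, invert the fibre-wise affine map to express $u$ as a function of $(x,u')$, and differentiate $f'(x,u')=f(x,u(x,u'))$ along $\gamma$. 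The chain rule yields $B'=B\beta$ with $\beta(t)\doteq\psi_{\mathrm{lin}}(\gamma(t))^{-1}$ invertible, and $A'=A+BK$ with $K(t)$ the $k\times n$ matrix coming from the $x$-dependence of $\psi$; there is no change of state coordinates. In other words a presentation change linearises precisely to a time-dependent input transformation followed by a linear state feedback.

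The core step is then to verify that $\mathcal{B}_{i}(t)$ is invariant under $A\mapsto A+BK$, $B\mapsto B\beta$. I would prove by induction on $i$ the sharper statement
\begin{equation}
B'_{i}=B_{i}\,\beta+\sum_{m<i}B_{m}L_{i,m}
\end{equation}
for suitable matrices $L_{i,m}(t)$, which, since $\beta$ is invertible, forces $\mathcal{B}'_{i}=\mathcal{B}_{i}$. The base case is $B'_{1}=B\beta$. For the inductive step one expands $B'_{i+1}=(A+BK)B'_{i}-\dot B'_{i}$ using the hypothesis; the leading term reorganises as $(AB_{i}-\dot B_{i})\beta=B_{i+1}\beta$, while every remaining contribution---those of $BK$, of $\dot\beta$, of the $\dot L_{i,m}$, and of $AB_{m}-\dot B_{m}=B_{m+1}$ with $m<i$---has columns lying in $\mathcal{B}_{i}=\spn\{B_{1},\ldots,B_{i}\}$ and is therefore absorbed into the sum. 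At $t=0$ this yields $\DD^{i}_{\gamma'}=\DD^{i}_{\gamma}$; combined with the pure state case and Remark~\ref{r:sf}, the proposition follows.

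The main obstacle I expect is concentrated in the pure feedback step: correctly identifying the linearisation of the affine map $\psi$ as the pair $(\beta,K)$, and then checking in the induction that the crucial cancellation $AB_{i}-\dot B_{i}=B_{i+1}$ lets one reabsorb every derivative term produced by differentiating $B_{i}\beta$ and the lower-order tail into the already controlled subspace $\mathcal{B}_{i}$. The pure state computation and the identification $\DD^{i}_{\gamma}=\spn\{B_{1}(0),\ldots,B_{i}(0)\}$ are, by comparison, routine bookkeeping.
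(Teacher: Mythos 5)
Your proposal is correct and follows essentially the same route as the paper's proof: reduce the flag at $t=0$ to $\spn\{B_{1}(0),\ldots,B_{i}(0)\}$ via $C^{(i)}(t)=(-1)^{i}M(t)^{-1}B_{i+1}(t)$ and $M(0)=\id$, factor the transformation through Remark~\ref{r:sf}, and run the same induction showing that a pure feedback change multiplies $B_{i}$ by an invertible matrix modulo $\spn\{B_{1},\ldots,B_{i-1}\}$ (you write $B'_{i}=B_{i}\beta+\sum_{m<i}B_{m}L_{i,m}$ while the paper writes the mirror relation $B_{i}=B'_{i}\Psi+\sum_{j<i}B'_{j}\Theta_{j}$, which is the same statement since $\beta=\Psi^{-1}$ is invertible). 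Your explicit computation of the pure state case, which the paper dismisses as trivial, is accurate routine bookkeeping.
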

The next corollary is a direct consequence of Proposition~\ref{p:durezza} and Remark~\ref{r:durezza}.
\begin{corollary} \label{c:durezza}
The growth vector of an admissible curve $\mc{G}_{\gamma}(t)$ is state-feedback invariant. \index{growth vector of an admissible curve!state-feedback invariance}
\end{corollary}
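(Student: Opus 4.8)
The plan is to prove state-feedback invariance of the flag $\DD^1_\gamma \subset \DD^2_\gamma \subset \ldots$ at $t=0$ by reducing to the two elementary cases identified in Remark~\ref{r:sf}: a pure state transformation (with $\psi = \id$) and a pure feedback transformation (with $\phi = \id$). Since every state-feedback transformation is a composition of these two, and the flag is built out of linear subspaces of $T_{x_0}M$, it suffices to check that each type separately preserves the filtration.

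\begin{proof}
By Remark~\ref{r:sf}, any state-feedback transformation $(\phi,\psi)$ factors as a composition of a pure state transformation and a pure feedback one, so we treat the two cases in turn. Throughout we use the characterization from Definition~\ref{d:flag}, namely that $\DD^i_\gamma(0)$ is spanned by the derivatives up to order $i-1$ at $t=0$ of smooth sections $v(t) \in \DD_\gamma(t) = (P_{0,t})^{-1}_*\distr_{\gamma(t)}$.

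First consider a pure state transformation, i.e. a diffeomorphism $\phi:M\to M$ with $\psi = \id$. By Lemma~\ref{l:admtoamd}, $\phi$ maps the admissible curve $\gamma_{x_0,u}$ to the admissible curve $\gamma_{\phi(x_0),v}$ with the same control $v = u$; correspondingly the flow transforms as $P'_{0,t} = \phi \circ P_{0,t} \circ \phi^{-1}$, and since $f_i' = \phi_* f_i$ the distribution satisfies $\distr'_{\phi(x)} = \phi_* \distr_x$. A direct computation then gives $\DD_{\gamma'}(t) = (P'_{0,t})^{-1}_*\distr'_{\gamma'(t)} = \phi_* (P_{0,t})^{-1}_* \distr_{\gamma(t)} = \phi_* \DD_\gamma(t)$. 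Since $\phi_*|_{x_0}$ is a fixed linear isomorphism $T_{x_0}M \to T_{\phi(x_0)}M$ independent of $t$, it commutes with taking $t$-derivatives and with spans, so $\DD^i_{\gamma'}(0) = (\phi_*)\DD^i_\gamma(0)$ for every $i$. Thus the flag is carried isomorphically to the flag of the transformed curve, which is exactly the asserted invariance.

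Next consider a pure feedback transformation, i.e. $\phi = \id$ and $u' = \psi(x,u)$ with $u'_i = \psi_{i,0}(x) + \sum_j \psi_{i,j}(x) u_j$. Here the underlying curve $\gamma$ in $M$ is unchanged, as is the flow $P_{0,t}$, so $\DD_\gamma(t) = (P_{0,t})^{-1}_*\distr_{\gamma(t)}$ is literally the same family of subspaces: indeed by the relations in Remark~\ref{r:sf} the new fields $f_i = \sum_j \psi_{j,i} f'_j$ and $f_0 = f_0' + \sum_i \psi_{i,0} f_i'$ merely re-express the same affine distribution $f_0 + \distr$, so $\distr_{\gamma(t)}$ itself is unchanged pointwise. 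Consequently $\DD_\gamma(t)$ and hence every $\DD^i_\gamma(0)$ is identical for the two presentations, so invariance is immediate in this case. The only point requiring care is that the feedback relabeling does not alter the horizontal distribution $\distr = \overline{f}(\cb)$, which follows because the $\psi_{i,j}$ form an invertible matrix at each point (as $\psi$ is an invertible bundle map), so the span of the $f_i$ equals the span of the $f'_i$. Combining the two cases completes the proof.
\end{proof}

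The main obstacle I anticipate is bookkeeping in the pure state case: one must verify carefully that the conjugated flow $P'_{0,t} = \phi\circ P_{0,t}\circ\phi^{-1}$ indeed arises from the transformed control system (this is the content of Lemma~\ref{l:admtoamd}) and that $\phi_*|_{x_0}$, being $t$-independent, genuinely commutes with the iterated $t$-derivatives defining the flag. The feedback case is softer, reducing to the invertibility of the linear part $(\psi_{i,j})$ guaranteeing that the horizontal distribution is presentation-independent.
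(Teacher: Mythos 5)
Your pure state case is correct and coincides with the paper's (which dismisses it as a change of variables). The pure feedback case, however, contains a genuine gap: the claim that the flow $P_{0,t}$, and hence the family $\DD_{\gamma}(t)$, is ``literally the same'' is false. Recall that $P_{0,t}$ is the flow of the time-dependent vector field $x \mapsto f_0(x)+\sum_{i} u_i(t)f_i(x)$, applied \emph{open-loop} to every initial point $x$, not just to $x_0$. After the feedback transformation the new control is $u'(t)=\psi(\gamma(t),u(t))$, i.e.\ $\psi$ frozen along the curve, so the new flow $P'_{0,t}$ integrates $x\mapsto f'_0(x)+\sum_i u'_i(t)f'_i(x)$. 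The feedback identity $f(x,u)=f'(x,\psi(x,u))$ holds with $\psi$ evaluated at the \emph{running} point $x$; hence the two time-dependent fields agree along $\gamma$ itself but not in a neighbourhood of it, so $(P'_{0,t})_*\neq(P_{0,t})_*$ in general and $\DD'_\gamma(t)\neq\DD_\gamma(t)$ as families of subspaces of $T_{x_0}M$. This is exactly the point the paper stresses right after Definition~\ref{d:ample}: the family \eqref{eq:DD} \emph{does} depend on the presentation; what is invariant is only the flag at $t=0$ and the growth vector, and that is a nontrivial statement, not an identity of families.

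The paper's proof of Proposition~\ref{p:durezza} closes precisely this gap: from $f_i=\sum_j \psi_{j,i}f'_j$ one derives $A(t)=A'(t)+B'(t)\Phi(t)$ and $B(t)=B'(t)\Psi(t)$ with $\Psi(t)$ invertible, and then proves by induction that $B_{i+1}(t)=B'_{i+1}(t)\Psi(t) \bmod \spn\{B'_1(t),\ldots,B'_i(t)\}$. Since $M(0)=M'(0)=\id$, the identification $\DD^i_\gamma=\spn\{B_1(0),\ldots,B_i(0)\}$ holds at $t=0$ \emph{and only there} (the paper explicitly notes Eq.~\eqref{eq:flagz} is true only at $t=0$), whence equality of the flags at the initial time. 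Finally, to obtain invariance of the growth vector at every $t$ --- which is what Corollary~\ref{c:durezza} asserts --- one must invoke Remark~\ref{r:durezza}, namely $\mc{G}_\gamma(t)=\mc{G}_{\gamma_t}(0)$ for the shifted curve $\gamma_t(s)=\gamma(t+s)$; your proposal skips this step as well, and it only appeared dispensable because of the incorrect identical-families claim.
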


\begin{proof}[Proof of Proposition \ref{p:durezza}]

Recall that every state-feedback transformation is the composition of pure state and a pure feedback one. For pure state transformations the statement is trivial, since it is tantamount to a change of variables on the manifold. Thus, it is enough to prove the proposition for pure feedback ones. Recall that the subspaces $\DD^{i}_{\gamma}$ are defined, in terms of a given presentation, as
\begin{equation} \label{eq:lemmaB}
\DD^{i}_{\gamma}=\spn\{C(0),\ldots,C^{(i-1)}(0)\},\qquad i\geq 1,
\end{equation}
where the columns of the matrices $C(t)$ are given by the vectors $C_i(t) = (P_{0,t})^{-1}_*f_i(\gamma(t))$. 
A pure feedback transformation corresponds to a change of presentation. Thus, let
\begin{equation} 
\dot{x}=f(x,u)=f_{0}(x)+\sum_{i=1}^{k} u_{i} f_{i}(x), \qquad \dot{x}=f'(x,u')=f'_{0}(x)+\sum_{i=1}^{k} u'_{i}f'_{i}(x),
\end{equation}
related by the pure feedback transformation $u'_{i}=\psi_{i}(x,u)=\psi_{i,0}(x)+\sum_{j=1}^{k} \psi_{i,j}(x)u_{j}$. In particular (see also Remark \ref{r:sf}) 
\begin{equation}\label{eq:ffprimo2}
f_{0}(x)=f_{0}'(x)+\sum_{i=1}^{k} \psi_{i,0}(x)f_{i}'(x),\qquad f_{i}(x)=\sum_{j=1}^{k}\psi_{j,i}(x)f_{j}'(x).
\end{equation}
Denote by $A(t), A'(t)$ and $B(t),B'(t)$ the matrices \eqref{eq:AAA} and \eqref{eq:BBB} associated with the two presentations, in some set of coordinates. According to Remark~\ref{r:mt}, $C(t) = M(t)^{-1} B(t)$, where $M(t)$ is the solution of $\dot{M}(t) = A(t) M(t)$, with $M(0) = \id$, and analogous formulae for the \virg{primed} counterparts. In particular, since $C^{(i)}(t) = (-1)^i M(t)^{-1} B_{i+1}(t)$ and $M(0) = M(0)' = \id$, we get
\begin{equation}\label{eq:flagz}
\DD_\gamma^i = \spn\{B_{1}(0),\ldots, B_{i}(0)\}, \qquad (\DD_\gamma^i)' = \spn\{B'_{1}(0),\ldots, B'_{i}(0)\},
\end{equation}
where $B_i(t)$ and $B_i'(t)$ are the matrices defined in Proposition~\ref{p:Kalman} for the two systems. Notice that Eq.~\eqref{eq:flagz} is true only at $t=0$. We prove the following property, which implies our claim: there exists an invertible matrix $\Psi(t)$ such that
\begin{equation}\label{eq:lemmaBB}
B_{i+1}(t)=B'_{i+1}(t)\Psi(t) \bmod \spn\{B'_{1}(t),\ldots,B'_{i}(t)\},
\end{equation}
where Eq.~\eqref{eq:lemmaBB} is meant column-wise. Indeed, from Eq.~\eqref{eq:ffprimo2} we obtain the relations
\begin{equation} \label{eq:AABBprime}
A(t)=A'(t)+ B'(t) \Phi(t),\qquad B(t)=B'(t)\Psi(t),
\end{equation}
where $\Psi(t)$ and $\Phi(t)$ are $k\times k$ and $k\times n$ matrices, respectively, with components
\begin{equation} 
\Psi(t)_{i\ell}\doteq \psi_{i,\ell}(x(t)) ,\qquad \Phi(t)_{i\ell}\doteq \frac{\partial \psi_{i,0}}{\partial x_\ell}(x(t))+\sum_{j=1}^{k}u_{j}(t) \frac{\partial \psi_{i,j}}{\partial x_\ell}(x(t)).
\end{equation}
Notice that, by definition of feedback transformation, $\Psi(t)$ is invertible. We prove Eq. \eqref{eq:lemmaBB} by induction. For $i=0$, it follows from \eqref{eq:AABBprime}. The induction assumption is (we omit $t$)
\begin{equation}
B_{i}=B'_{i}\Psi + \sum_{j=0}^{i-1} B'_{j}\Theta_{j},\qquad \text{for some time dependent $k\times k$ matrices } \Theta_{j}.%\in M_{n\times k}.
\end{equation}
Let $X\simeq Y$ denote $X=Y \bmod \spn\{B'_{1},\ldots,B'_{i}\}$, column-wise. Then 
\begin{equation}
\begin{split}
B_{i+1} &= AB_{i}-\dot B_{i} \simeq \\
&\simeq(A'B'_{i}-\dot B'_{i})\Psi+\sum_{j=0}^{i-1}(A'B'_{j}-\dot B'_{j})\Theta_{j}\simeq B'_{i+1}\Psi. 
\end{split}
\end{equation}
We used that $A=A' \bmod \spn\{ B'\}$, hence we can replace $A$ by $A'$. Moreover all the terms with the derivatives of $\Theta_{j}$ belong to $\spn\{B'_{1},\ldots,B'_{i}\}$.
\end{proof}

\section{An alternative definition} \label{s:altdef}
In this section we present an alternative definition for the flag of an admissible curve, at $t=0$. 
The idea is that the flag $\DD_\gamma = \DD_\gamma(0)$ of a smooth, admissible trajectory $\gamma$ can be obtained by computing the Lie derivatives along the direction of $\gamma$ of sections of the distribution, namely elements of $\overline{\distr}$. In this sense, the flag of an admissible curve carries informations about the germ of the distribution along the given trajectory.

Let $\gamma:[0,T] \to M$ be a smooth admissible trajectory, such that $x_0 = \gamma(0)$. By definition, this means that there exists a smooth map $u : [0,T] \to \cb$ such that $\dot{\gamma}(t) = f(\gamma(t),u(t))$. 
\begin{definition}
We say that $\tanf \in f_0 + \overline\distr$ is a smooth admissible extension of $\dot{\gamma}$ if there exists a smooth section $\sigma : M \to \cb$ such that $\sigma(\gamma(t)) = u(t)$ and $\tanf = f\circ\sigma$.
\end{definition}
In other words $\tanf$ is a vector field extending $\dot{\gamma}$ obtained through the bundle map $f: \cb \to TM$ from an extension of the control $u$ (seen as a section of $\cb$ over the curve $\gamma$). Notice that, if $\dot{\gamma}(t) = f_0(\gamma(t)) + \sum_{i=1}^k u_i(t)f_i(\gamma(t))$, an admissible extension of $\dot{\gamma}$ is a smooth field of the form $\tanf = f_0 + \sum_{i=1}^k \alpha_i f_i$, where $\alpha_i \in C^{\infty}(M)$ are such that $\alpha_i(\gamma(t)) = u_i(t)$ for all $i=1,\ldots,k$.

%{\red Let $\tanf$ be any admissible extension of $\dot{\gamma}$, i.e. $\tanf \in f_0 + \overline\distr$ and $\tanf|_{\gamma(t)} = \dot{\gamma}(t)$ for all $t \in [0,T]$.}

With abuse of notation, we employ the same symbol $\DDa_\gamma^i$ for the following alternative definition.

\begin{definition}\label{d:flagalt}
The flag of the admissible curve $\gamma$ \index{flag of an admissible curve} is the sequence of subspaces
\begin{equation}
\DDa_\gamma^i \doteq  \spn\{\mc{L}_\tanf^j (X)|_{x_0}|\, X \in \overline{\distr},\, j \leq i-1\} \subset T_{x_0} M, \qquad i \geq 1,
\end{equation}
where $\mc{L}_{\tanf}$ denotes the Lie derivative in the direction of $\tanf$.
\end{definition}
Notice that, by definition, this is a filtration of $T_{x_0}M$, i.e. $\DDa_\gamma^i \subset \DDa_\gamma^{i+1}$, for all $i \geq 1$. Moreover, $\DDa_\gamma^1 = \distr_{x_0}$. In the rest of this section, we show that Definition~\ref{d:flagalt} is well posed, and is equivalent to the original Definition~\ref{d:flag} at $t=0$.

\begin{proposition}\label{p:doesnotdepend}
Definition~\ref{d:flagalt} does not depend on the admissible extension of $\dot{\gamma}$.
\end{proposition}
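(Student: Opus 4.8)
The plan is to fix two admissible extensions $\tanf$ and $\tanf'$ of $\dot\gamma$ and to show they define the same flag; since the roles of $\tanf,\tanf'$ are symmetric it suffices to establish the inclusion $\DDa^i_\gamma(\tanf)\subseteq\DDa^i_\gamma(\tanf')$ for every $i\geq 1$. The first thing I would record is that, by the very definition of an admissible extension, $\tanf(\gamma(t))=\tanf'(\gamma(t))=\dot\gamma(t)$, so $\gamma$ is an integral curve of both fields and their difference
\[
W\doteq \tanf-\tanf'=\sum_{i=1}^k\beta_i f_i,\qquad \beta_i\doteq\alpha_i-\alpha_i',
\]
is a \emph{horizontal} field (an element of $\overline\distr$) whose coefficients vanish identically along $\gamma$, because $\alpha_i(\gamma(t))=\alpha_i'(\gamma(t))=u_i(t)$.

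The heart of the argument is a stability property of the class $\mc{N}$ of vector fields vanishing identically along $\gamma$. Since $\tanf$ and $\tanf'$ coincide with $\dot\gamma$ on the curve, a direct coordinate computation gives, for $V\in\mc{N}$, that $[\tanf,V]|_{\gamma(t)}=\tfrac{d}{dt}V(\gamma(t))=0$, and likewise for $\tanf'$ and for $W$ (the latter since both $W$ and $V$ vanish on $\gamma$). Hence $\mc{N}$ is preserved by each of $\mc{L}_\tanf,\mc{L}_{\tanf'},\mc{L}_W$, and every element of $\mc{N}$ vanishes at $x_0$; consequently any iterated Lie derivative of a field in $\mc{N}$, taken along an arbitrary word in these three operators, still vanishes at $x_0$.

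With this in hand I would expand $\mc{L}_\tanf^j=(\mc{L}_{\tanf'}+\mc{L}_W)^j$ into the $2^j$ ordered words in $\{\tanf',W\}$ and prove, by induction on the word length $d$, that $\mc{L}_{A_1}\cdots\mc{L}_{A_d}X|_{x_0}\in\DDa^{d+1}_\gamma(\tanf')$ for every $X\in\overline\distr$. Words consisting only of $\tanf'$ yield the defining generators of $\DDa^{d+1}_\gamma(\tanf')$. For a word containing a $W$, I would isolate its rightmost occurrence and use the Leibniz identity $[\beta_i f_i,Z]=\beta_i[f_i,Z]-(Z\beta_i)f_i$ to split $\mc{L}_W Z$, with $Z=\mc{L}_{\tanf'}^{m}X$, into a piece $\sum_i\beta_i[f_i,Z]\in\mc{N}$ and a genuinely horizontal piece $X_1\doteq-\sum_i (Z\beta_i)f_i\in\overline\distr$. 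The $\mc{N}$-piece is annihilated at $x_0$ by the stability property above, while the horizontal piece $X_1$ is acted on by a strictly shorter word and is controlled by the inductive hypothesis (it lands in a lower step of the filtration, hence in $\DDa^{d+1}_\gamma(\tanf')$). Summing over all words gives $\mc{L}_\tanf^jX|_{x_0}\in\DDa^{j+1}_\gamma(\tanf')$, and taking spans over $j\leq i-1$ and $X\in\overline\distr$ yields the claimed inclusion.

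The main obstacle — and the reason a naive induction directly on the flag index $i$ fails — is that $\mc{L}_\tanf$ depends on the \emph{germ} of a field rather than on its value at $x_0$, so knowing $\mc{L}_\tanf^{i-2}X|_{x_0}\in\DDa^{i-1}_\gamma(\tanf')$ gives no control after applying one further Lie derivative. The structural lemma on $\mc{N}$ together with the horizontal/flat splitting of $\mc{L}_W Z$ is exactly what replaces this broken induction: it converts the discrepancy between the two extensions into terms that either die at $x_0$ or re-enter $\overline\distr$ at a strictly lower word length. One should also check the harmless bookkeeping points — that $\overline\distr$ is a $C^\infty(M)$-module so that $X_1\in\overline\distr$, and that $\DDa^\bullet_\gamma(\tanf')$ is a filtration so that lower steps sit inside $\DDa^{i}_\gamma(\tanf')$ — and then conclude equality by exchanging the roles of $\tanf$ and $\tanf'$.
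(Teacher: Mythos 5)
Your proof is correct, and while it rests on the same two basic facts as the paper's proof --- the difference of two admissible extensions is horizontal with coefficients vanishing identically along $\gamma$ (which, as you correctly note, uses that both extensions interpolate the \emph{same} control through sections of $\cb$), and derivatives of such data along fields tangent to $\gamma$ vanish on $\gamma$ --- it organizes the cancellation genuinely differently. The paper inducts on the flag index $i$ and isolates the \emph{leftmost} occurrence of the difference field, so the error terms take the form $\mc{L}^\ell_\tanf([V,Y])$ with a pure power of $\mc{L}_\tanf$ outside and an unspecified inner field $Y\in\VecM$; it then distributes that power by the binomial formula for derivations, kills the coefficient terms via $\mc{L}^h_\tanf(v_j)|_{x_0}=\frac{d^h}{dt^h}\big|_{t=0}v_j(\gamma(t))=0$, and absorbs the survivors, which are multiples of $\mc{L}^{\ell-h}_\tanf(f_j)|_{x_0}$, into $\DDa^i_\gamma$. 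You instead induct on the length of words in $\{\mc{L}_{\tanf'},\mc{L}_W\}$, isolate the \emph{rightmost} occurrence of $W$, and replace the binomial bookkeeping with a structural lemma: the class $\mc{N}$ of fields vanishing along $\gamma$ is invariant under all three Lie derivatives (a correct coordinate check, since for $V\in\mc{N}$ one has $[A,V]|_{\gamma(t)}=\frac{d}{dt}V(\gamma(t))=0$ when $A(\gamma(t))=\dot\gamma(t)$, and trivially $0$ when $A|_\gamma=0$). This lemma disposes of the entire remaining prefix in one stroke --- even when the prefix contains further $W$'s --- while the genuinely horizontal remainder $X_1=-\sum_i(Z\beta_i)f_i\in\overline{\distr}$ is controlled by the shorter-word inductive hypothesis together with the filtration property. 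The trade-off: the paper's binomial route produces an explicit expression for the error at $x_0$, whereas your $\mc{N}$-stability argument is cleaner and more uniform, precisely because mixed words (which in the paper hide inside the unspecified $Y$) are handled transparently by your rightmost isolation.
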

\begin{proof}
Let $\DDa_\gamma^i$ and $\wt{\DDa}_\gamma^i$ the subspaces obtained via Definition~\ref{d:flagalt} with two different extensions $\tanf$ and $\wt{\tanf}$ of $\dot\gamma$, respectively. In particular, the field $V\doteq \wt{\tanf}-\tanf \in \overline\distr$ vanishes on the support of $\gamma$. We prove that $\wt{\DDa}_\gamma^i = \DDa_\gamma^i$ by induction. For $i=1$ the statement is trivial. Then, assume $\wt{\DDa}_\gamma^i = \DDa_\gamma^i$. Since $\wt{\DDa}_\gamma^{i+1} =\wt{\DDa}_\gamma^{i} + \spn\{\mc{L}_{\wt{\tanf}}^{i}(X)|_{x_0}|\, X \in \overline\distr\}$, it sufficient to prove that
\begin{equation}\label{eq:induzio}
\mc{L}^i_{\wt{\tanf}}(X) = \mc{L}^i_{\tanf}(X) \bmod \DDa_\gamma^i, \qquad X \in \overline{\distr}.
\end{equation}
Notice that $\mc{L}^{i}_{\wt{\tanf}}(X) = \mc{L}^{i}_{\tanf}(X) + W$, where $W \in \VecM$ is the sum of terms of the form 
\begin{equation}
W = \mc{L}_{\tanf}^\ell([V,Y]),\qquad \text{for some} \quad Y \in \VecM,\quad 0 \leq \ell \leq i-1.
\end{equation}
In terms of a local set of generators $f_1,\ldots,f_k$ of $\overline{\distr}$, $V = \sum_{i=1}^k v_j f_j$, where the functions $v_i$ vanish identically on the support of $\gamma$, namely $v_j(\gamma(t)) = 0$ for $t \in [0,T]$. Then, an application of the binomial formula for derivations leads to
\begin{equation}
W = \sum_{j=1}^k \mc{L}_{\tanf}^\ell(v_j[f_j,Y]) - \mc{L}_{\tanf}^\ell (Y(v_j)f_j) = \sum_{j=1}^k \sum_{h=0}^\ell\binom{\ell}{h} \left(\mc{L}^h_{\tanf}(v_j)\mc{L}^{\ell -h}_{\tanf}([f_i,Y]) - \mc{L}_{\tanf}^h(Y(v_j))\mc{L}_{\tanf}^{\ell -h} (f_i)\right).
\end{equation}
Observe that $\mc{L}^h_{\tanf}(v_j)|_{x_0} = \left.\frac{d^h v_j}{dt^h}\right|_{t=0} =0$, for all $h \geq 0$. Then, if we evaluate $W$ at $x_0$, we obtain
\begin{equation}
W|_{x_0} = -\sum_{j=1}^k \sum_{h=0}^\ell\binom{\ell}{h} \mc{L}_{\tanf}^h(Y(v_j))|_{x_0}\mc{L}_{\tanf}^{\ell -h} (f_i).
\end{equation}
Then, since $0 \leq \ell \leq i-1$, and by the induction hypothesis, $W|_{x_0} \in \DDa_\gamma^i$ and Eq.~\eqref{eq:induzio} follows.
%\begin{equation}
%\wt{\DDa}_\gamma^{i+1} = \wt{\DDa}_\gamma^{i} + \spn\{\mc{L}_{\wt{\tanf}}^{i}(X)|_{x_0}|\, X \in \overline\distr\} = \DDa_\gamma^{i} + \spn\{\mc{L}_\tanf^{i}(X)|_{x_0}|\, X \in \overline\distr\} = \DDa_\gamma^{i+1}.
%\end{equation}
\end{proof}

\begin{proposition}
Definition~\ref{d:flagalt} is equivalent to Definition~\ref{d:flag} at $t=0$.
\end{proposition}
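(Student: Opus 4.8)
The plan is to show that the two definitions of the flag coincide by relating the Lie derivatives $\mc{L}_\tanf^j(X)|_{x_0}$ (appearing in Definition~\ref{d:flagalt}) to the time derivatives $\frac{d^j}{dt^j}|_{t=0}(P_{0,t})_*^{-1} X_{\gamma(t)}$ (appearing in Definition~\ref{d:flag}). The key classical identity I would invoke is that, if $\tanf$ is a complete vector field generating the flow $P_{0,t}$, then for any vector field $X$ one has
\begin{equation}
\left.\frac{d^j}{dt^j}\right|_{t=0}(P_{0,t})_*^{-1} X_{\gamma(t)} = \left(\mc{L}_\tanf^j X\right)\big|_{x_0}.
\end{equation}
This is the standard formula expressing the Lie derivative as the derivative of the pullback along a flow. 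First I would recall this identity (or the more basic $\frac{d}{dt}(P_{0,t})^{-1}_* X = (P_{0,t})^{-1}_*[\tanf,X]$ and iterate it), noting that here $\tanf = f_0 + \sum_i \alpha_i f_i$ is precisely an admissible extension of $\dot\gamma$, so the flow it generates agrees with the flow $P_{0,t}$ of Definition~\ref{d:flag} along $\gamma$.

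The subtle point, and the step I expect to require the most care, is that in Definition~\ref{d:flag} one differentiates \emph{smooth sections} $v(t) \in \DD_\gamma(t) = (P_{0,t})^{-1}_*\distr_{\gamma(t)}$, whereas Definition~\ref{d:flagalt} only uses pullbacks of globally defined horizontal fields $X \in \overline\distr$. I would argue that these give the same span at $t=0$ as follows. On one hand, every $X \in \overline\distr$ yields a smooth section $t \mapsto (P_{0,t})^{-1}_* X_{\gamma(t)}$ of $\DD_\gamma(t)$, so by the identity above $\DDa_\gamma^i \subset \DD_\gamma^i(0)$. On the other hand, any smooth section $v(t)$ of $\DD_\gamma(t)$ can be written, using a local frame $f_1,\dots,f_k$ of $\overline\distr$, as $v(t) = \sum_i a_i(t)(P_{0,t})^{-1}_* f_i(\gamma(t))$ for smooth functions $a_i$; applying the Leibniz rule, its derivatives at $t=0$ up to order $i-1$ lie in the span of the derivatives of the $(P_{0,t})^{-1}_* f_i(\gamma(t))$ up to the same order, hence in $\DDa_\gamma^i$. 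This gives the reverse inclusion $\DD_\gamma^i(0) \subset \DDa_\gamma^i$.

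To make the second inclusion rigorous, the main obstacle is handling the scalar coefficients $a_i(t)$: after the Leibniz expansion one obtains terms $a_i^{(h)}(0)$ times lower-order derivatives of the frame pullbacks, and one must check that each such term already lies in $\DDa_\gamma^i$ — which holds because those pullback derivatives are exactly the $\mc{L}_\tanf^\ell(f_i)|_{x_0}$ for $\ell \leq i-1$, generators of $\DDa_\gamma^i$. Since Proposition~\ref{p:doesnotdepend} guarantees independence of the chosen extension $\tanf$, it suffices to carry out this computation for one convenient admissible extension, e.g.\ the one built from a fixed local frame. Combining the two inclusions yields $\DDa_\gamma^i = \DD_\gamma^i(0)$ for all $i \geq 1$, which is the claim.
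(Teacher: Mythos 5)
There is a genuine gap at the central step of your argument. The identity you invoke,
\begin{equation}
\left.\frac{d^j}{dt^j}\right|_{t=0}(P_{0,t})_*^{-1}X|_{\gamma(t)}=\mc{L}^j_\tanf(X)|_{x_0},
\end{equation}
is the standard flow formula only when $P_{0,t}$ is the flow of the autonomous field $\tanf$ itself. In Definition~\ref{d:flag}, however, $P_{0,t}$ is the \emph{non-autonomous} flow of $Y_t\doteq f_0+\sum_i u_i(t)f_i$, and your justification --- that the flow of $\tanf$ ``agrees with $P_{0,t}$ along $\gamma$'' --- does not suffice: $(P_{0,t})^{-1}_*$ is the differential of the flow, which depends on the germ of the flow in a neighbourhood of $\gamma$, not on its restriction to $\gamma$. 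The correct iteration for the non-autonomous flow is $\frac{d}{dt}(P_{0,t})^{-1}_*X|_{\gamma(t)}=(P_{0,t})^{-1}_*[Y_t,X]|_{\gamma(t)}$, so the $j$-th derivative at $t=0$ is $\bigl((\mc{L}_{Y_t}+\partial_t)^jX\bigr)\big|_{t=0,\,x_0}$, and for $j\geq 2$ the $\partial_t$-terms produce contributions in $\dot u,\ddot u,\dots$ absent from $\mc{L}^j_\tanf(X)|_{x_0}$. A concrete failure: on $\R^2$ take $f_0=0$, $f_1=\partial_x$, $f_2=\partial_y$, $u(t)=(1,t)$, so $\gamma(t)=(t,t^2/2)$ and $P_{0,t}$ is a translation; with $X=x\partial_y$ and the admissible extension $\tanf=\partial_x+(x+y-x^2/2)\partial_y$ one computes $(P_{0,t})^{-1}_*X|_{\gamma(t)}=t\,\partial_y$, whose second derivative vanishes, while $\mc{L}^2_\tanf(X)|_{0}=-2\partial_y\neq 0$. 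The two sides differ by an element of a lower piece of the flag --- which is what ultimately saves the proposition --- but your proof never establishes this, and as an exact identity the formula is false. Your Leibniz argument for the reverse inclusion is sound in structure, but it rests on the same identity.

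The paper circumvents the issue using the state-feedback invariance already proved (Proposition~\ref{p:durezza}): since the flag at $t=0$ is invariant under pure feedback transformations, one may assume the control $u$ is \emph{constant}; then $P_{0,t}=e^{t\tanf}$ exactly, with $\tanf=f_0+\sum_i u_i f_i$ an autonomous admissible extension, and the identity above becomes the very definition of the Lie derivative, with no correction terms. Your route can be repaired without the feedback trick, but only by proving the weaker statement that the identity holds modulo $\DD^j_\gamma$: an induction showing that the $\partial_t$-correction terms, being brackets involving the horizontal fields $\sum_i u_i^{(h)}(t)f_i$, land in the lower flag pieces --- a computation parallel to the one in Proposition~\ref{p:doesnotdepend}, where the difference of two extensions likewise vanishes along $\gamma$. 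Either supply that inductive estimate, or reduce to constant control via feedback invariance as the paper does.
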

\begin{proof}
Recall that, according to Definition~\ref{d:flag}, at $t=0$
\begin{equation}
\DD^{i}_{\gamma} =\DD^{i}_{\gamma}(0) =\spn\left\{\left.\frac{d^{j}}{dt^{j}}\right|_{t=0}\, v(t)\, \bigg| \,v(t)\in \DD_{\gamma}(t) \text{ smooth} ,\, j\leq i  -1 \right\}\subset T_{x_{0}}M, \qquad i\geq 1.
\end{equation}
where $\DD_\gamma(t) = (P_{0,t})^{-1}_*\distr_{\gamma(t)}$. By Proposition~\ref{p:durezza}, the flag at $t=0$ is state-feedback invariant. Then, up to a (local) pure feedback transformation, we assume that the fixed smooth admissible trajectory $\gamma:[0,T] \to M$ is associated with a constant control, namely $\dot{\gamma}(t) = f_0(\gamma(t)) + \sum_{i=1}^k u_i f_i(\gamma(t))$, where $u \in L^\infty([0,T],\R^k)$ is constant. In this case, the flow $P_{0,t} : M\to M$ is actually the flow of the autonomous vector field $\tanf\doteq f_0 + \sum_{i=1}^k u_i f_i$, that is $P_{0,t} = e^{t\tanf}$. 

Indeed $\tanf \in f_0+\overline{\distr}$ is an admissible extension of $\dot{\gamma}$. Moreover, any smooth $v(t) \in \DD_{\gamma}(t)$ is of the form $v(t) = (P_{0,t})_*^{-1} X|_{\gamma(t)}$, where $X \in \overline{\distr}$. Then
\begin{equation}
\left.\frac{d^j}{dt^j}\right|_{t=0} v(t) = \left.\frac{d^j}{dt^j}\right|_{t=0}(P_{0,t})^{-1}_* X|_{\gamma(t)} = \left.\frac{d^j}{dt^j}\right|_{t=0} e^{-t\tanf}_* X|_{\gamma(t)} = \mc{L}_\tanf^j (X)|_{x_0},
\end{equation}
where in the last equality we have employed the definition of Lie derivative.
\end{proof}

\begin{remark}\label{r:family}
To end this section, observe that, for any equiregular smooth admissible curve $\gamma :[0,T] \to M$, the Lie derivative in the direction of the curve defines surjective linear maps
\begin{equation}
\mc{L}_\tanf : \DD^{i}_{\gamma(t)} / \DD^{i-1}_{\gamma(t)} \to \DD^{i+1}_{\gamma(t)} / \DD^i_{\gamma(t)}, \qquad i \geq 1,
\end{equation}
for any fixed $t \in [0,T]$ as follows. Let $\tanf \in \VecM$ be any admissible extension of $\dot{\gamma}$. Similarly, for $X \in \DD^i_{\gamma(t)}$, consider a smooth extension of $X$ along the curve $\gamma$ such that $X|_{\gamma(s)} \in \DD^{i}_{\gamma(s)}$ for all $s \in [0,T]$. Then we define 
\begin{equation}
\mc{L}_\tanf(X) := [T,X]|_{\gamma(t)} \bmod \DD^{i}_{\gamma(t)}, \qquad t \in [0,T].
\end{equation}
The proof that $\mc{L}_\tanf$ does not depend on the choice of the admissible extension $\tanf$ is the same of Proposition~\ref{p:doesnotdepend} and for this reason we omit it. The fact that it depends only on the value of $X \bmod \DD^{i-1}_{\gamma(t)}$ at the point $\gamma(t)$ is similar, under the equiregularity assumption.

In particular, notice that the maps $\mc{L}_\tanf^i : \DD_{\gamma(t)} \to \DD^{i+1}_{\gamma(t)} / \DD^i_{\gamma(t)}$, for $i \geq 1$, are well defined, surjective linear maps from the distribution $\distr_{\gamma(t)}= \DD_{\gamma(t)}$.
\end{remark}
 %Geodesic growth vector and linearisation of the control system
\chapter{Geodesic cost and its asymptotics} \label{c:geodcost}
In this chapter we define the \emph{geodesic cost function} and we state the main result about the existence of its asymptotics (see Theorems~\ref{t:main}-\ref{t:main2}). We anticipate that, in the Riemannian setting, the cost function is the squared Riemannian distance. In this case one can recover the Riemannian sectional curvature from its asymptotics, as we explain in Section~\ref{s:motivation} (see also the Riemannian example in Section~\ref{s:riemann}). This connection paves the way for the definition of curvature of an affine optimal control system that follows.

\section{Motivation: a Riemannian interlude}\label{s:motivation} \index{geodesic cost!Riemannian case}

Let $M$ be an $n$-dimensional Riemannian manifold. In this case, $\cb = T M$, and $f:TM \to TM$ is the identity bundle map. Let $f_{1},\ldots,f_{n}$ be a local orthonormal frame for the Riemannian structure. Any Lipschitz curve on $M$ is admissible, and is a solution of the control system
\begin{equation} 
\dot{x}=\sum_{i=1}^{n} u_{i} f_{i}(x), \qquad x\in M,\, u \in \R^n.
\end{equation}
The cost functional, whose extremals are the classical Riemannian geodesics, is
\begin{equation} 
J_{T}(u)=\frac{1}{2}\int_{0}^{T}\sum_{i=1}^{n} u_{i}(t)^{2} dt. 
\end{equation}
The value function $S_{T}$ can be written in terms of the Riemannian distance $\dist :M\times M \to \R$ as follows:
\begin{equation} 
S_{T}(x,y)=\frac{1}{2T}\dist^{2}(x,y),  \qquad x,y \in M.
\end{equation}
Let $\gamma_{v}(t)$, $\gamma_{w}(s)$ be two arclength parametrized geodesics, with initial vectors $v,w\in T_{x_0}M$, respectively, starting from $x_0$. Let us define the function $C(t,s)\doteq\frac12 \dist^{2}(\gamma_{v}(t),\gamma_{w}(s))$. It is well known that $C$ is smooth at $(0,0)$ (this is not true in more general settings, such as sub-Riemannian geometry). The next formula, due to Loeper and Villani provides, \emph{a posteriori}, the geometrical motivation of our approach (see Lemma~\ref{l:lll} in Section~\ref{s:riemann} for a proof and more detailed explanation):
\begin{equation}
C(t,s)= \frac{1}{2}\left(t^{2}+s^{2}-2 \la v|w\ra ts\right) - \frac{1}{6}\la R^\nabla(v,w)v|w\ra t^{2}s^{2}+t^{2}s^{2}o(|t|+|s|),
\end{equation}
where $\langle\cdot|\cdot\rangle$ denotes the Riemannian inner product and $R^\nabla$ is the Riemann curvature tensor. In particular, the Riemannian curvature tensor can be recovered from the derivatives of $C(t,s)$:
\begin{equation}
\langle R^\nabla(v,w)v|w\rangle = -\frac{3}{2}\frac{\partial^4 C}{\partial t^2 \partial s^2}(0,0).
\end{equation}
Then \emph{\virg{the Riemannian curvature is the second order term in the Taylor expansion (w.r.t. the variable $t$) of the Hessian of $C(t,s)$ (w.r.t. the variable $s$) computed at $(t,s)=(0,0)$}}.

\section{Geodesic cost}
\begin{definition} \label{d:geodesiccost}
Let $x_{0}\in M$ and consider a strongly normal geodesic $\gamma:[0,T]\to M$ such that $\gamma(0)=x_{0}$. The \emph{geodesic cost} \index{geodesic cost}associated with $\gamma$ is the family of functions 
\begin{equation}
\cc_{t}(x)\doteq-\SS_{t}(x,\gamma(t)),\qquad  x\in M,\,  t>0,
\end{equation}
\end{definition}

\begin{figure}[!ht]
\centering
\scalebox{0.7} % Change this value to rescale the drawing.
{
\begin{pspicture}(0,-3.7)(12.561894,3.68)
\definecolor{color193}{rgb}{0.0,0.2,1.0}
\definecolor{color192}{rgb}{0.2,0.0,1.0}
\psbezier[linewidth=0.04,linecolor=color193,linestyle=dashed,dash=0.16cm 0.16cm,dotsize=0.07055555cm 2.0]{-*}(10.54,2.16)(9.14,2.44)(5.3795967,1.3622835)(4.42,1.04)(3.4604032,0.7177165)(1.96,-0.1)(1.74,-1.06)
\psbezier[linewidth=0.04,dotsize=0.07055555cm 2.0]{-*}(11.6,3.66)(11.219311,2.6917758)(9.175264,0.7630423)(7.88,-0.14)(6.5847354,-1.0430423)(5.088138,-1.9508411)(1.7,-2.02)
\fontsize{14}{0}
\usefont{T1}{ptm}{m}{n}
\rput(1.671455,-2.435){$x_0$}
\usefont{T1}{ptm}{m}{n}
\rput(11.591455,1.805){$\gamma(t)$}
\pscircle[linewidth=0.04,linestyle=dotted,dotsep=0.16cm,dimen=scalar](1.71,-1.95){1.71}
\usefont{T1}{ptm}{m}{n}
\rput(6.5214553,-2.575){$x\mapsto -S_{t}(x,\gamma(t))$}
\psdots[dotsize=0.12,linecolor=color193](10.52,2.16)
\usefont{T1}{ptm}{m}{n}
\rput(1.171455,-1.095){$x$}
\end{pspicture} 
}
\caption{The geodesic cost function.}
\end{figure}
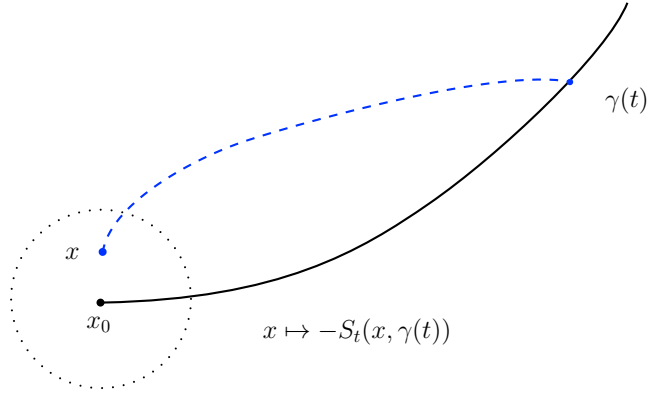
The geodesic cost function is smooth in a neighbourhood of $x_0$, and for $t>0$ sufficiently small. More precisely, Theorem~\ref{t:smoothness}, applied to the geodesic cost, can be rephrased as follows.
\begin{theorem} \label{t:mst} 
Let $x_{0}\in M$ and $\gamma:[0,T]\to M$ be a strongly normal geodesic such that $\gamma(0)=x_{0}$. Then there exist $\eps >0$ and an open set $U\subset (0,\eps) \times M$ such that 
\begin{itemize}
\item[(i)] $(t,x_0) \in U$ for all $t \in (0,\eps)$,
\item[(ii)] The geodesic cost function $(t,x)\mapsto \cc_{t}(x)$ is smooth on $U$.
\end{itemize}
Moreover, for any $(t,x) \in U$, there exists a unique (normal) minimizer of the cost functional $J_t$, among all the admissible curves that connect $x$ with $\gamma(t)$.
\end{theorem}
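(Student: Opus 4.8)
This statement is a direct specialisation of Theorem~\ref{t:smoothness} to the case in which the second endpoint is constrained to move along $\gamma$. The plan is to apply that theorem to the strongly normal geodesic $\gamma$, and then to pull the resulting open set back along the smooth map that sends $(t,x)$ to the triple $(t,x,\gamma(t))$.

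Concretely, I would first fix a relatively compact open set $M'\subset M$ containing the image of $\gamma$ and apply Theorem~\ref{t:smoothness}, obtaining $\eps>0$ and an open set $\wt{U}\subset(0,\eps)\times M'\times M'$ for which properties (i)--(iii) there hold. By Theorem~\ref{t:pmp} the geodesic $\gamma$ is smooth, so the map
\begin{equation}
\iota:(0,\eps)\times M'\to(0,\eps)\times M'\times M',\qquad \iota(t,x)\doteq(t,x,\gamma(t)),
\end{equation}
is smooth, hence continuous; therefore $U\doteq\iota^{-1}(\wt{U})$ is an open subset of $(0,\eps)\times M$.

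It then remains to read off the three conclusions. Property (i) follows from property (i) of Theorem~\ref{t:smoothness}: for each $t\in(0,\eps)$ one has $\iota(t,x_0)=(t,\gamma(0),\gamma(t))\in\wt{U}$, whence $(t,x_0)\in U$. For the smoothness claimed in (ii), I would observe that on $U$ the geodesic cost factors as $\cc_t(x)=-\SS_t(x,\gamma(t))=-\SS\circ\iota(t,x)$; since $\SS$ is smooth on $\wt{U}$ by property (iii) and $\iota$ is smooth, the composition $(t,x)\mapsto\cc_t(x)$ is smooth on $U$. Finally, the uniqueness of the normal minimiser joining $x$ to $\gamma(t)$ for $(t,x)\in U$ is precisely property (ii) of Theorem~\ref{t:smoothness} evaluated at $y=\gamma(t)$.

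The argument carries no genuine obstacle beyond this bookkeeping: the single point that must be verified is that $t\mapsto\gamma(t)$ is smooth, which is what makes $\iota$ smooth and thereby transfers both openness and smoothness from $\wt{U}$ to its slice $U$. All the substantive content is already contained in Theorem~\ref{t:smoothness}, whose proof is deferred to Appendix~\ref{a:proofsmoothness}.
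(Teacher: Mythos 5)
Your proposal is correct and follows exactly the route the paper takes: the paper gives no separate proof of Theorem~\ref{t:mst}, stating only that it is Theorem~\ref{t:smoothness} ``applied to the geodesic cost,'' and your pullback along $\iota(t,x)=(t,x,\gamma(t))$ (smooth since $\gamma$ is smooth by Theorem~\ref{t:pmp}) is precisely the implicit bookkeeping, including reading off (i), the smoothness of $\cc_t = -\SS\circ\iota$, and the uniqueness of the minimizer from properties (i)--(iii) of that theorem.
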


%In particular, the restriction $\gamma|_{[0,t]}$ is the unique minimizer of the cost functional $J_t$ connecting $x_0$ with $\gamma(t)$ in time $t$, for all $t \in (0,\eps)$.

In the following, $\dot{\cc}_t$ denotes the derivative of the geodesic cost with respect to $t$. 

\begin{proposition} \label{p:critlam0} Under the assumptions above, $d_{x_{0}}c_{t}=\lambda_{0}$, for all $t \in (0,\eps)$. In particular $x_{0}$ is a critical point for the function $\dot{\cc}_{t}$ for all $t \in (0,\eps)$.
\end{proposition}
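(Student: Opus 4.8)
The plan is to reduce everything to the single identity $d_{x_{0}}\cc_{t}=\lam_{0}$, valid for all $t\in(0,\eps)$. Granting this, the ``in particular'' statement is immediate: by Theorem~\ref{t:mst} the geodesic cost $(t,x)\mapsto \cc_{t}(x)$ is jointly smooth near $(t,x_{0})$, so in local coordinates the mixed partials commute and, for each coordinate direction, $\partial_{x_{i}}\dot{\cc}_{t}(x_{0})=\partial_{t}\big(\partial_{x_{i}}\cc_{t}(x_{0})\big)=\partial_{t}(\lam_{0})_{i}=0$, since $\lam_{0}$ does not depend on $t$. Hence $d_{x_{0}}\dot{\cc}_{t}=0$ and $x_{0}$ is a critical point of $\dot{\cc}_{t}$.

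To prove the identity, recall $\cc_{t}=-\SS_{t}(\cdot,\gamma(t))$, so it suffices to show $d_{x_{0}}\SS_{t}(\cdot,\gamma(t))=-\lam_{0}$; I would obtain this by a first variation of the value function in its \emph{initial} argument. By Theorem~\ref{t:mst}, for $x$ near $x_{0}$ there is a unique normal minimizer joining $x$ to $\gamma(t)$, with minimizing control $u_{x}$ and $u_{x_{0}}=u$, the control of $\gamma$; moreover $\SS_{t}(x,\gamma(t))=\J_{t}(u_{x})$. Fixing $w\in T_{x_{0}}M$ and a curve $x(\sigma)$ with $x(0)=x_{0}$, $\dot{x}(0)=w$, I differentiate $\SS_{t}(x(\sigma),\gamma(t))$ at $\sigma=0$. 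The variation of the minimizing control $u_{x(\sigma)}$ is annihilated by the Lagrange multiplier rule (Proposition~\ref{p:lmr}), whose multiplier is the \emph{final} covector $\lam_{t}$: indeed $\gamma$ is the unique minimizer from $x_{0}$ to $\gamma(t)$, so by Theorem~\ref{t:pmp} its normal lift is $\lam_{s}=e^{s\vec{H}}(\lam_{0})$ and $\lam_{t}=\lam(t)$.

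There remain two contributions, best seen by writing $\End_{x,t}(u)=P_{0,t}(x)$ for fixed control $u$, where $P_{0,t}$ is the flow of $f(\cdot,u(s))$, and setting $w(s)\doteq (P_{0,s})_{*}w\in T_{\gamma(s)}M$, so that $\partial_{x}\End_{x,t}|_{x_{0}}w=w(t)$. The first variation then reads
\[
d_{x_{0}}\SS_{t}(\cdot,\gamma(t))\,w=\int_{0}^{t}\partial_{x}\LL(\gamma(s),u(s))\,w(s)\,ds-\langle\lam_{t},w(t)\rangle,
\]
where the integral is the variation of the running cost produced by displacing the base point of the trajectory at fixed control, and the last term comes from the endpoint constraint $\End_{x,t}(u)=\gamma(t)$ paired with $\lam_{t}$. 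The crucial step is the Hamiltonian identity
\[
\frac{d}{ds}\langle\lam_{s},w(s)\rangle=\partial_{x}\LL(\gamma(s),u(s))\,w(s),
\]
which follows from $\dot{\lam}_{s}=\vec{H}(\lam_{s})$ together with the envelope relations $\partial_{p}H=f(\cdot,\bar{u})$ and $\partial_{x}H=\partial_{x}\big(\langle\lam,f\rangle-\LL\big)$ holding at the maximizer $\bar{u}(\lam_{s})$, and from $\dot{w}(s)=\partial_{x}f(\gamma(s),u(s))\,w(s)$. Integrating it turns the running-cost term into $\langle\lam_{t},w(t)\rangle-\langle\lam_{0},w\rangle$, which cancels the endpoint term and leaves $d_{x_{0}}\SS_{t}(\cdot,\gamma(t))\,w=-\langle\lam_{0},w\rangle$, i.e. $d_{x_{0}}\cc_{t}=\lam_{0}$.

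The main obstacle is exactly this reconciliation of covectors: the multiplier supplied by the optimality conditions is the final covector $\lam_{t}$, whereas the answer must be the initial covector $\lam_{0}$. A careless envelope argument that treats $\J_{t}$ as depending on $x$ only through the endpoint constraint would wrongly yield $-\lam_{t}$; the correct accounting of the running-cost variation, collapsed through the adjoint (Hamiltonian) equation above, is what produces the required $-\lam_{0}$. Everything else---smoothness, uniqueness of the minimizer, vanishing of the control variation---is routine once Theorem~\ref{t:mst} and Proposition~\ref{p:lmr} are in hand.
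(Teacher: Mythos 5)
Your proof is correct, but it takes a genuinely different route from the paper's. The paper never differentiates the value function in its first argument directly: it introduces the control system with reversed dynamics $\til{f}(x,u)=-f(x,u)$ and shows (Lemma~\ref{l:ll}) that its value function satisfies $\til{S}_{T}(x_{0},x_{1})=S_{T}(x_{1},x_{0})$; since the reversed Hamiltonian flow is conjugate to $-\vec{H}$ under the fiberwise map $\lam\mapsto-\lam$, the normal lift of the reversed geodesic $\til{\gamma}(t)=\gamma(T-t)$ is $\til{\lam}(t)=-\lam(T-t)$, and applying Lemma~\ref{l:lambdat} to the reversed problem gives $d_{x_{0}}c_{T}=-\til{\lam}(T)=\lam_{0}$ with essentially no computation. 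You instead compute the first variation of $x\mapsto \SS_{t}(x,\gamma(t))$ directly and cancel the final multiplier $\lam_{t}$ against the running-cost term by integrating the adjoint identity $\frac{d}{ds}\langle\lam_{s},w(s)\rangle=\partial_{x}\LL(\gamma(s),u(s))\,w(s)$ along the extremal; your identification of $\lam_{t}$ with $e^{t\vec{H}}(\lam_{0})$ is legitimate because $\gamma$ is strongly normal, so the restriction $\gamma|_{[0,t]}$ is strictly normal and its multiplier is unique. Your approach buys transparency — it exhibits the mechanism that converts the final covector into the initial one, which the paper's trick hides inside the anti-symplectic symmetry — at the price of one tacit step: the envelope/chain-rule argument needs differentiability of $x\mapsto u_{x}$ as an $L^{\infty}$-valued map. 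This is available (in the proof of Theorem~\ref{t:smoothness} the minimizing control is $\bar{u}(e^{s\vec{H}}\lam_{0}(t,x,\gamma(t)))$ with $\lam_{0}(\cdot)$ smooth by the inverse function theorem), and can even be avoided entirely by differentiating in $x$, at fixed control $u$, the smooth nonnegative function $x\mapsto \J_{t}(x,u)-\SS_{t}(x,\End_{x,t}(u))$ — the first-argument analogue of the trick used in Lemma~\ref{l:lambdat} — together with $d_{\gamma(t)}\SS_{t}(x_{0},\cdot)=\lam_{t}$; this reproduces exactly your first-variation formula without differentiating the minimizer. Your reduction of the ``in particular'' claim to commuting mixed partials via the joint smoothness of Theorem~\ref{t:mst} coincides with the paper's implicit argument.
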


\begin{proof}
First observe that, in general, if $\gamma(t)$ is an admissible curve for an affine control system, the \virg{reversed} curve  $\til \gamma(t)\doteq\gamma(T-t)$ is no longer admissible. As a consequence, the value function $(x_{0},x_{1})\mapsto S_{T}(x_{0},x_{1})$ is not symmetric and we cannot directly apply Lemma~\ref{l:lambdat} To compute the differential of the value function $x\mapsto -S_{t}(x,\gamma(t))$ at $x_{0}$.  Nevertheless, we can still exploit Lemma~\ref{l:lambdat}, by passing to an associated control problem with reversed dynamic.
\begin{lemma} \label{l:ll} Consider the control system with reversed dynamic
\begin{equation} \label{eq:srcpa}
\begin{aligned}
&\dot{x}=\til{f}(x,u),\qquad x\in M, \qquad \til f(x,u)\doteq-f(x,u), \\
&J_{T}(u)\to \min.
\end{aligned}
\end{equation}
Let $\til S_{T}$ be the value function of this problem. Then $\til S_{T}(x_{0},x_{1})=S_{T}(x_{1},x_{0})$, for all $x_{0},x_{1}\in M$.
\end{lemma}
\begin{proof}[Proof of Lemma \ref{l:ll}]
It is easy to see that the map  $\gamma(t)\mapsto \til\gamma(t)\doteq\gamma(T-t)$ defines a one-to-one correspondence between admissible curves for the two problems. Moreover, if $\gamma$ is associated with the control $u$, then $\til \gamma$ is associated with control $\til u(t)\doteq u(T-t)$. Since the cost is invariant by this transformation, one has $\til S_{T}(x_{1},x_{0})=S_{T}(x_{0},x_{1})$. Notice that this transformation preserves normal and abnormal trajectories and minimizers.
\end{proof}

The Hamiltonian of the reversed system is $\wt H(\lam)=H(-\lam)$. Let $i:T^{*}M\to T^{*}M$ be the fiberwise linear map $\lam\mapsto-\lam$. Then, $i_{*}\vec{\til{H}}(\lam)=-\vec{H}(-\lam)$ (i.e. $\vec{\til{H}}$ is $i_*\text{-related}$ with $-\vec{H}$).
This implies that, if $\lam(t)$ is the lift of the geodesic $\gamma(t)$ for the original system, then $\til{\lam}(t)\doteq-\lam(T-t)$ is the lift of the geodesic $\til{\gamma}(t)=\gamma(T-t)$ for the reversed system. In particular, the final covector of the reversed geodesic $\til{\lam}_T = \til\lam(T)=-\lam(0)=-\lam_{0}$ is equal to minus the initial covector of the original geodesic.
Thus, we can apply Lemma~\ref{l:lambdat} and obtain
\begin{equation} 
d_{x_{0}}c_{T}=-d_{x_{0}}S_{T}(\cdot,\gamma(T))=-d_{x_{0}}(\til S_{T}(\gamma(T),\cdot))=- \tilde{\lam}(T) = \lam_{0}. 
\end{equation}
where $\tilde{\gamma}:[0,T]\to M$ is the unique strictly normal minimizer of the cost functional $\wt{J}_T = J_T$ of the reversed system such that $\tilde{\gamma}(0) = \gamma(T)$ and $\tilde{\gamma}(T) = x_0$.
\end{proof}
 
%Notice that, when $f(x,u)$ is linear with respect to $u$, and the cost functional is homogeneous of degree $2$ (basta grado pari forse), the proof of Proposition~\ref{p:critlam0} can be simplified. Indeed, $\til{f}(x,u)=f(x,-u)$, and the reversed curve is admissible also for the original system with the same cost.
 
\section{Hamiltonian inner product}\label{s:Hpp} \index{Hamiltonian inner product}

In this section we introduce an inner product on the distribution, which depends on a given geodesic. Namely, it is induced by the second derivative of Hamiltonian of the control system at a point $\lambda\in T^{*}M$, associated with a geodesic. 

A non-negative definite quadratic form, defined on the dual of a vector space $V^*$, induces an inner product on a subspace of $V$ as follows. Recall first that a quadratic form can be defined as a self-adjoint linear map $B: V^* \to V$. $B$ is non-negative definite if, for all $\lam \in V^*$, $\langle \lam , B(\lam)\rangle \geq 0$. Let us define a bilinear map on $\im(B) \subset V$ by the formula
\begin{equation} 
\langle w_{1}|w_{2}\rangle_{B}\doteq\la \lambda_{1},B(\lambda_{2})\ra,\qquad \text{where }w_{i}=B(\lam_{i}).  
\end{equation}
It is easy to prove that $\langle\cdot|\cdot\rangle_{B}$ is symmetric and does not depend on the representatives $\lam_{i}$. Moreover, since $B$ is non-negative definite, $\langle\cdot|\cdot\rangle_{B}$ is an inner product on $\im(B)$.

Now we go back to the general setting. Fix a point $x\in M$, consider the restriction of the Hamiltonian $H$ to the fiber $H_{x}\doteq H|_{T^{*}_{x}M}$ and denote by $d^{2}_{\lam}H_{x}$ its second derivative at the point $\lam\in T^{*}_{x}M$. We show that $d^{2}_{\lam}H_x$ is a non-negative quadratic form and, as a self-adjoint linear map $d^2_\lam H_{x}:T^{*}_{x}M\to T_{x}M$, its image is exactly the distribution at the base point.

\begin{lemma}\label{l:image} 
For every $\lam\in T^{*}_{x}M$,  $d^2_\lam H_x$ is non-negative definite and $\im(d^{2}_{\lam}H_{x} )= \distr_{x}$.
\end{lemma}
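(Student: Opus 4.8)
The plan is to compute $d^2_\lam H_x$ explicitly in a local frame and read off both assertions from the resulting factorised formula. First I would fix a local frame $f_1,\dots,f_k$ of $\overline{\distr}$, so that $\HH(\lam,u) = \langle \lam, f_0(x)\rangle + \sum_{i=1}^k u_i \langle \lam, f_i(x)\rangle - \LL(x,u)$ and $\distr_x = \spn\{f_1(x),\dots,f_k(x)\}$. Since $x$ is fixed throughout, I introduce the linear \emph{momentum map} $\Phi:T^*_xM \to \R^k$, $\Phi(\lam)_i = \langle \lam, f_i(x)\rangle$, whose transpose $\Phi^T:\R^k \to T_xM$ is $\Phi^T(a) = \sum_i a_i f_i(x)$, so that $\im \Phi^T = \distr_x$. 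Writing $\ell(u) := \LL(x,u)$ and denoting by $\ell^*$ its fiberwise Legendre transform (well defined and smooth by assumption (A2)), one has $H_x(\lam) = \langle \lam, f_0(x)\rangle + \ell^*(\Phi(\lam))$.

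Next I would differentiate twice. The optimality condition \eqref{eq:as} says precisely $d\ell(\bar{u}(\lam)) = \Phi(\lam)$, i.e. $\bar{u}(\lam) = d\ell^*(\Phi(\lam))$; by the envelope identity the first derivative is clean, $d_\lam H_x = f_0(x) + \sum_i \bar{u}_i(\lam) f_i(x) = f(x,\bar{u}(\lam)) \in T_xM$. Differentiating once more, and using the standard inverse-Hessian identity $d^2\ell^*(p) = (d^2\ell(u))^{-1}$ at the dual point $u = d\ell^*(p)$, yields the closed form
\begin{equation}
d^2_\lam H_x = \Phi^T \circ \mathbf{H}^{-1} \circ \Phi, \qquad \mathbf{H} := d^2_{\bar{u}(\lam)}\ell > 0,
\end{equation}
where positivity of $\mathbf{H}$ (hence of $\mathbf{H}^{-1}$) is exactly assumption (A2.a). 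The main care needed here is handling the implicit $\lam$-dependence of $\bar{u}$; the envelope theorem disposes of it at first order, and Legendre duality supplies the second-order term.

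From this formula both claims follow. Non-negativity is immediate, since for $\xi \in T^*_xM$
\begin{equation}
\langle \xi, d^2_\lam H_x(\xi)\rangle = \langle \Phi(\xi), \mathbf{H}^{-1}\Phi(\xi)\rangle_{\R^k} \geq 0.
\end{equation}
The inclusion $\im(d^2_\lam H_x) \subseteq \im \Phi^T = \distr_x$ is clear from the factorisation. For the reverse inclusion I would argue by dimension: because $\mathbf{H}^{-1}$ is positive definite, the radical of the non-negative form above is exactly $\ker\Phi$ (vanishing of $\langle \Phi\xi, \mathbf{H}^{-1}\Phi\xi\rangle$ forces $\Phi\xi = 0$), and for a self-adjoint non-negative operator the radical of its quadratic form coincides with its kernel (Cauchy--Schwarz); hence $\ker(d^2_\lam H_x) = \ker\Phi$ and
\begin{equation}
\rank(d^2_\lam H_x) = \dim T^*_xM - \dim\ker\Phi = \rank\Phi = \dim \distr_x.
\end{equation}
Together with $\im(d^2_\lam H_x) \subseteq \distr_x$, this forces equality. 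The only genuinely delicate point is this last dimension count, where strict convexity of the Lagrangian (A2.a) is essential to pin down the kernel; everything preceding it is a direct Legendre-transform computation.
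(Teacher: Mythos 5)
Your proof is correct and follows essentially the same route as the paper's: the factorisation $d^2_\lam H_x = \Phi^T\circ \mathbf{H}^{-1}\circ\Phi$ is exactly the paper's coordinate formula $\frac{\partial^{2} H}{\partial p^{2}} = \sum_{i,j} f_i \left(\frac{\partial^2 L}{\partial u_i \partial u_j}\right)^{-1} f_j^*$, obtained there via the implicit differentiation of the maximality condition rather than your Legendre-duality packaging. The only substantive difference is that you spell out the rank count (radical equals kernel via Cauchy--Schwarz, hence $\ker d^2_\lam H_x = \ker\Phi$) which the paper dismisses as ``easy to see''; that step is handled correctly.
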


\begin{proof}
We prove the result by computing an explicit expression for $d^2_\lam H_x$ in coordinates $\lam=(p,x)$ on $T^*M$. Recall that the maximized Hamiltonian $H$ is defined by the identity 
\begin{equation} 
H(p,x)=\mc{H}(p,x,\bar u)=\la p, f_{0}(x)\ra+\sum_{i=1}^{k} \bar u_{i} \la p,f_{i}(x)\ra- L(x,\bar u),
\end{equation} 
where $\bar u=\bar u (p,x)$ is the solution of the maximality condition
\begin{equation}\label{eq:maxi}
\la p,f_{i}(x)\ra=\frac{\partial \LL}{\partial u_{i}}(x,\bar u(p,x)), \qquad i=1,\ldots,k.
\end{equation}
By the chain rule, we obtain
\begin{equation} 
\frac{\partial H}{\partial p}(p,x)=f_{0}(x)+\sum_{i=1}^{k} \bar u_{i}f_{i}(x)+\underbrace{\frac{\partial \bar u_{i}}{\partial p}\la p,f_{i}(x)\ra-\frac{\partial \LL}{\partial u_{i}} \frac{\partial \bar u_{i}}{\partial p}}_{=0}. 
\end{equation} 
By differentiating Eq.~\eqref{eq:maxi} with respect to $p$, we get
\begin{equation}
f_{i}(x)=\sum_{j=1}^{k}\frac{\partial^{2} \LL}{\partial u_{i}\partial u_{j}}\frac{\partial \bar u_{j}}{\partial p}, \qquad  i=1,\ldots,k.
\end{equation}
Finally, we compute the second derivatives matrix
\begin{equation}\label{eq:hpp}
\frac{\partial^{2} H}{\partial p^{2}}(p,x)=\sum_{i=1}^{k} \frac{\partial \bar u_{i}}{\partial p} f^*_{i}(x)= \sum_{i,j=1}^k f_i(x) \left(\frac{\partial^2 L}{\partial u_i \partial u_j}\right)^{-1} f_j^*(x).
\end{equation}
Since the Hessian of $L$ (with respect to $u$) is positive definite, Eq.~\eqref{eq:hpp} implies that $d^2_\lam H_x$ is non-negative definite and $\im d^2_\lam H_x \subset \distr_x$. Moreover, it is easy to see that $\rank \frac{\partial^{2} H}{\partial p^{2}} = \dim \distr_x$, therefore $\im (d^2_\lam H_x) = \distr_x$.
\end{proof}

\begin{definition}
For any $\lam \in T_x^*M$, the \emph{Hamiltonian inner product} (associated with $\lam$) is the inner product $\langle\cdot |\cdot\rangle_{\lambda}$ induced by $d^{2}_{\lam}H_{x} $ on $\distr_{x}$.
\end{definition}

\begin{remark} \label{r:hamsub} 
We stress that, for any fixed $x \in M$, the subspace $\distr_x \subset T_x M$, where the inner product $\langle\cdot |\cdot\rangle_{\lambda}$ is defined, does not depend on the choice of the element $\lambda$ in the fiber $T_x^* M$. When $H_x$ itself is a quadratic form, $d^{2}_{\lam}H_{x}= 2 H_{x}$ for every $\lambda\in T^{*}_{x}M$. Therefore, the inner product $\langle\cdot|\cdot\rangle_\lam$ does not depend on the choice of $\lambda \in T_x M$. This is the case, for example, of an optimal control system defined by a sub-Riemannian structure, in which the inner product just defined is precisely the sub-Riemannian one (see Chapter~\ref{c:srg}).
\end{remark}

\section{Asymptotics of the geodesic cost function and curvature} \label{s:2dctdot}

Let $f:M\to \R$ be a smooth function defined on a smooth manifold $M$. Its first differential at a point $x\in M$ is the linear map $d_{x}f:T_{x}M\to \R$. The \emph{second differential} \index{second differential} of $f$, as a symmetric bilinear form, is well defined only at a critical point, i.e. at those points $x \in M$ such that $d_{x}f=0$. Indeed, in this case, the map
\begin{equation}
d^{2}_{x}f: T_{x}M\times T_{x}M\to \R,\qquad d^{2}_{x}f(v,w)=V(W(f))(x),
\end{equation}
where $V,W$ are vector fields such that $V(x)=v$ and $W(x)=w$, respectively, is a well defined symmetric bilinear form which does not depend on the choice of the extensions.

The quadratic form associated with the second differential of $f$ at $x$ which, for simplicity, we denote by the same symbol $d^2_x f: T_x M \to \mathbb{R}$, is
\begin{equation}
d^{2}_{x}f(v)=\frac{d^{2}}{dt^{2}}\bigg|_{t=0} f(\gamma(t)),\qquad \gamma(0)=x,\quad \dot \gamma(0)=v.
\end{equation}

Now, for $\lam \in T_{x_0}^* M$, consider the geodesic cost function associated with the strongly normal geodesic $\gamma(t) = \EXP_{x_0}(t,\lam)$, starting from $x_0$. By Proposition \ref{p:critlam0}, for every $t\in (0,\eps)$, the function $x\mapsto \dot c_{t}(x)$ has a critical point at $x_{0}$. 
Hence we can consider the family of quadratic forms defined on the distribution
\begin{equation} 
d^{2}_{x_0}\dot c_{t}\big|_{\distr_{x_0}}: \distr_{x_0} \to \R,\qquad t\in(0,\eps), 
\end{equation}
obtained by the restriction of the second differential of $\dot{c}_t$ to the distribution $\distr_{x_0}$.
Then, using the inner product $\langle\cdot |\cdot\rangle_{\lambda}$ induced by $d^{2}_{\lam}H_{x}$ on $\distr_{x}$ introduced in Section~\ref{s:Hpp}, we associate with this family of quadratic forms the family of symmetric operators on the distribution
$\QQ_\lam(t):\distr_{x_0} \to \distr_{x_0}$ defined by the identity
\begin{equation}\label{eq:prscl}
d^{2}_{x_0}\dot c_{t}(v)\doteq\langle \QQ_\lam(t)v  |v\rangle_{\lam},\qquad   t\in(0,\eps),\, v\in \distr_{x_0}.
\end{equation}
The assumption that the geodesic is strongly normal ensures the smoothness of $\QQ_\lam(t)$ for small $t>0$. If the geodesic is also ample, we have a much stronger statement about the asymptotic behaviour of $\QQ_\lam(t)$ for $t\to 0$.

\begin{maintheorem}\label{t:main} \index{geodesic cost!asymptotics}
Let $\gamma:[0,T]\to M$ be an ample geodesic with initial covector $\lam\in T^{*}_{x_0}M$, and let $\QQ_\lam(t):\distr_{x_0} \to \distr_{x_0}$ be defined by~\eqref{eq:prscl}. Then $t\mapsto t^{2}\QQ_\lam(t)$ can be extended to a smooth family of operators on $\distr_{x_0}$ for small $t\geq 0$, symmetric with respect to $\langle\cdot|\cdot\rangle_\lam$. Moreover,
\begin{equation}
\Qz_{\lam}\doteq\displaystyle \lim_{t\to 0^+}t^{2}\QQ_\lam(t) \geq \id > 0,
\end{equation}
as operators on $(\distr_{x_0},\langle\cdot|\cdot\rangle_\lam)$. Finally
\begin{equation}
\left.\dfrac{d}{dt}\right|_{t=0}t^{2}\QQ_\lam(t)=0.
\end{equation}
\end{maintheorem}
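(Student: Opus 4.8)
The plan is to reduce the whole statement to the study of a single curve of Lagrangian subspaces — the \emph{Jacobi curve} — in the symplectic vector space $\Sigma_\lam = T_\lam(T^*M)$, and then to read off the three assertions from its Laurent expansion at $t=0$. Write $\lam(t) = e^{t\vec{H}}(\lam)$ for the normal extremal, let $\mathcal{V}_{\lam(t)} = T_{\lam(t)}(T^*_{\gamma(t)}M)$ be the (Lagrangian) vertical subspace, and define the Jacobi curve by pulling the vertical back along the flow,
\begin{equation}
\mathfrak{J}_\lam(t) \doteq (e^{-t\vec{H}})_*\,\mathcal{V}_{\lam(t)} \subset \Sigma_\lam, \qquad \mathfrak{J}_\lam(0) = \mathcal{V}_\lam.
\end{equation}
First I would connect this curve to the geodesic cost. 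By Theorem~\ref{t:mst} the section $x \mapsto d_x c_t$ is smooth near $x_0$, and by the Pontryagin Maximum Principle its image is the germ of the Lagrangian submanifold $e^{-t\vec{H}}(T^*_{\gamma(t)}M)$; hence its tangent space at $\lam = d_{x_0}c_t$ is exactly $\mathfrak{J}_\lam(t)$. Since $d_{x_0}c_t = \lam$ for all $t$ (Proposition~\ref{p:critlam0}), differentiating the section in $t$ yields a $1$-form $d\dot c_t$ vanishing at $x_0$, so its covariant differential at this critical point is intrinsic and equals $d^2_{x_0}\dot c_t$.

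In a Darboux frame of $\Sigma_\lam$ adapted to $\mathcal{V}_\lam$ and a horizontal complement $\cong T_{x_0}M$, the curve $\mathfrak{J}_\lam(t)$ is, for small $t>0$, the graph of a symmetric operator $S(t)$, and the computation above gives the identity
\begin{equation}
d^2_{x_0}\dot c_t = \dot S(t),
\end{equation}
so that $\QQ_\lam(t)$ is precisely $\dot S(t)$, read as an operator on $\distr_{x_0}$ through the Hamiltonian inner product $\langle\cdot|\cdot\rangle_\lam$ of Section~\ref{s:Hpp} (Lemma~\ref{l:image}). This reduces everything to the behaviour of $S(t)$ as $t \to 0^+$, which I would analyse using the ampleness hypothesis (Definition~\ref{d:ample}) and the structural equations for curves in the Lagrange Grassmannian developed in Chapter~\ref{c:jac}. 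Ampleness means exactly that $\mathfrak{J}_\lam(t)$ becomes transversal to $\mathfrak{J}_\lam(0)=\mathcal{V}_\lam$ for small $t\neq 0$, with order of tangency governed by the flag $\DD^i_\gamma$; the Li--Zelenko canonical frame then shows that, \emph{restricted to $\distr_{x_0}=\DD^1_\gamma$}, the representative $S(t)$ is meromorphic with only a simple pole,
\begin{equation}
S(t) = -\frac{1}{t}\,\Qz_\lam + a_0 + a_1 t + O(t^2),
\end{equation}
with no logarithmic term and no higher-order pole (higher poles occur only on the complementary parts of the flag, which the restriction discards).

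Differentiating gives $\dot S(t) = \tfrac{1}{t^2}\Qz_\lam + a_1 + O(t)$; the crucial point is that no $1/t$ term can appear, since the constant $a_0$ differentiates away and a $1/t$ term in $\dot S$ would require a $\log t$ in $S$, which the canonical frame excludes. Consequently $t^2\QQ_\lam(t) = t^2\dot S(t) = \Qz_\lam + a_1 t^2 + O(t^3)$ extends smoothly to $t=0$; it is symmetric with respect to $\langle\cdot|\cdot\rangle_\lam$ (as each $\QQ_\lam(t)$ is, by construction); its value at $t=0$ is $\Qz_\lam$; and $\tfrac{d}{dt}\big|_{0}t^2\QQ_\lam(t)=0$ \emph{automatically}, because the coefficient of $t^1$ in $t^2\dot S(t)$ vanishes. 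This simultaneously yields the Laurent expansion $\QQ_\lam(t)=\tfrac{1}{t^2}\Qz_\lam+\tfrac13\RR_\lam+O(t)$ with $\RR_\lam = \tfrac32\tfrac{d^2}{dt^2}\big|_{0}t^2\QQ_\lam(t)=3a_1$. It remains to prove $\Qz_\lam\geq\id>0$, which I would obtain by identifying the residue $\Qz_\lam$ with the leading coefficient of the Jacobi curve and comparing it with the flat (nilpotent) model: its eigenvalues are controlled by the row lengths of the Young diagram of the flag (the content of Theorem~\ref{t:main2} in the equiregular case, giving $\geq\id$ in general by semicontinuity) and are squares of positive integers, hence $\geq 1$.

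I expect the main obstacle to lie in the second step: controlling the precise pole structure of $S(t)$ on $\distr_{x_0}$, i.e.\ proving that ampleness forces exactly a simple pole with no logarithmic corrections, and isolating the genuinely invariant residue $\Qz_\lam$ from the frame-dependent lower-order data $a_0$. This is where the full strength of the Li--Zelenko normal form for ample monotone curves is needed, and where restricting to $\distr_{x_0}$ rather than to all of $T_{x_0}M$ is essential: the monotonicity $\dot S(t)>0$ (equivalently $\QQ_\lam(t)>0$) guarantees the curve is regular enough for the normal form to apply, while the restriction to the first level of the flag is precisely what makes the pole of order two (and not higher).
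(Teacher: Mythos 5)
Your overall architecture coincides with the paper's: reduce to the Jacobi curve $J_\lam(t)=e^{-t\vec{H}}_*\ve_{\lam(t)}$, identify $\QQ_\lam(t)$ with the $t$-derivative of the distribution block of the coordinate representative of $J_\lam$ (this is exactly Eq.~\eqref{eq:sing} and the reduced matrix $\Sred(t)^{-1}=[S(t)^{-1}]_{11}$; note that $\QQ_\lam(t)$ is the derivative of a block of the \emph{inverse} of the graph map $S(t):\ve_\lam\to\hor_\lam$, not of $S(t)$ itself), establish a simple pole with definite residue, and read off the three assertions. However, the two load-bearing steps are not justified as you state them. The first gap: you invoke ``the full strength of the Li--Zelenko normal form for ample monotone curves'', but the canonical frame exists only for ample \emph{and equiregular} curves — it presupposes a Young diagram, i.e.\ locally constant $\dim J^{(i)}_\lam(t)$ — whereas Theorem~\ref{t:main} assumes only ampleness, and an ample geodesic need not be equiregular (the growth vector is merely lower semicontinuous, Remark~\ref{r:di}); monotonicity does not substitute for equiregularity here. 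The paper's proof of the simple pole (Theorem~\ref{t:aared}) avoids the canonical frame entirely: it uses the Schur complement $\Sred=S_{11}-S_{12}S_{22}^{-1}S_{12}^*$, the smooth extension of $\Sred$ at $t=0$, an analytic factorization $\dot{S}(t)=-V(t)V(t)^*$ obtained from Kato's analytic perturbation theory, coordinates adapted to the flag $E_i=\spn\{V(0),\dots,V^{(i-1)}(0)\}$, and an explicit inversion of the principal part $\wh{S}(t)$, yielding the residue $(\chi^{-1})_{11}<0$; the Li--Zelenko frame only enters in Chapter~\ref{c:proof}, for Theorem~\ref{t:main2}, under the additional equiregularity hypothesis. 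Incidentally, your worry about logarithmic terms dissolves without any normal form: $\Sred(t)$ is smooth at $t=0$ with $\Sred(0)=0$, so once the pole of $\Sred(t)^{-1}$ is shown to be simple, $t\,\Sred(t)^{-1}$ extends smoothly and differentiating the Laurent expansion produces no $t^{-1}$ term in $\QQ_\lam(t)$, which is exactly how the paper obtains $\frac{d}{dt}\big|_{t=0}t^2\QQ_\lam(t)=0$.

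The second gap is circularity in the bound $\Qz_\lam\geq\id$. You derive it from the spectral description $\spec\Qz_\lam=\{n_1^2,\dots,n_k^2\}$ ``in general by semicontinuity'', but that description is Theorem~\ref{t:main2}, proved \emph{after} Theorem~\ref{t:main} and only for equiregular geodesics, and no continuity of $\lam\mapsto\Qz_\lam$ across strata with different growth vectors is established (nor is it obvious how to approximate a non-equiregular ample geodesic by equiregular ones while controlling $\Qz$). The paper's argument is direct and two lines long: Lemma~\ref{l:SGamma2} gives $S_{11}(t)\leq\Sred(t)<0$, hence $\Sred(t)^{-1}\leq S_{11}(t)^{-1}<0$, so that $\Qz_\lam=-\lim_{t\to0^+}t\,\Sred(t)^{-1}\geq-\dot{S}_{11}(0)^{-1}$; and $-\dot{S}_{11}(0)^{-1}$ is precisely the matrix representing $\id$ in the Hamiltonian inner product, by Proposition~\ref{p:Jproperties}(iii) (namely $\dot{J}_\lam(0)=-d^2_\lam H_x$) together with Remark~\ref{r:iden}. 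I would also flag a sign slip: with the paper's conventions the Jacobi curve is monotone \emph{decreasing} ($\dot{S}(t)\leq0$), and what ampleness plus monotonicity buys directly is the transversality $J_\lam(t)\cap J_\lam(0)=0$ of Lemma~\ref{l:ampletrasv}, not any Li--Zelenko-type regularity.
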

As a consequence of Theorem~\ref{t:main} we are allowed to introduce the following definitions.
\begin{definition}\label{d:curv}
Let $\lam\in T^{*}_{x_0}M$ be the initial covector associated with an ample geodesic. The \emph{curvature} is the symmetric operator $\RR_{\lam}:\distr_{x_0}\to \distr_{x_0}$ defined by
\begin{equation}
\RR_{\lam}\doteq\dfrac{3}{2}\left.\dfrac{d^{2}}{dt^{2}}\right|_{t=0}t^2\QQ_\lam(t).
\end{equation}
The \emph{Ricci curvature} at $\lam\in T_{x_0}^*M$ is defined by $\Ric(\lam)\doteq\trace \RR_{\lam}$. \index{curvature operator} \index{Ricci curvature}
\end{definition}
In particular,  we have the following Laurent expansion for the family of symmetric operators $\QQ_\lam(t): \distr_{x_0} \to \distr_{x_0}$:
\begin{equation}\label{eq:mainexp}
\QQ_\lam(t)= \frac{1}{t^{2}}\Qz_{\lam}+ \frac{1}{3}\RR_{\lam}+O(t),\qquad t>0.
\end{equation}
The normalization factor $1/3$ appearing in \eqref{eq:mainexp} in front of the operator $\RR_{\lam}$ is necessary for recovering the sectional curvature in the case of a control system defined by a Riemannian structure (see Section~\ref{s:riemann}).
We stress that, by construction, $\Qz_{\lam}$ and $\RR_{\lam}$ are operators on the distributions, symmetric with respect to the inner product $\langle \cdot |\cdot\rangle_{\lam}$.

\begin{remark}
Theorem~\ref{t:main} states that the curvature is encoded in the time derivative of the geodesic cost, namely the function $\dot c_t(x)$, for small $t$ and $x$ close to $x_{0}$. A geometrical interpretation of such a function and an insight of its relation with the curvature can be found in Appendix~\ref{s:ctdot}.
\end{remark}

\subsection{Spectrum of \texorpdfstring{$\Qz_{\lam}$}{I} for  equiregular geodesics}\label{s:spec}

Under the assumption that the geodesic is also equiregular, we can completely characterize the operator $\Qz_{\lam}$, namely compute its spectrum.

Let us consider the growth vector $\mc{G}_{\gamma}=\{k_{1},k_{2},\ldots,k_{m}\}$ of the geodesic $\gamma$ which, by the equiregularity assumption, does not depend on $t$. Let $d_{i}\doteq\dim \DD^{i}_{\gamma}-\dim \DD^{i-1}_{\gamma}=k_{i}-k_{i-1}$, for $i=1,\ldots,m$ (where $k_{0}\doteq0$). Recall that $d_{i}$ is a non increasing  sequence (see Lemma~\ref{l:flag0}). Then we can build a tableau with $m$ columns of length $d_{i}$, for $i=1,\ldots,m$, as follows:
\begin{equation}\label{eq:tableaugeneral}
\ytableausetup{boxsize=2em}
\begin{ytableau}
\none[n_1] & \empty & \empty & \none[\dots] & \empty & \empty \\
\none[n_2] & \empty & \empty & \none[\dots] & \empty & \none[d_m] \\
\none & \none[\vdots] & \none[\vdots] & \none & \none[d_{m-1}] \\
\none[n_{k-1}] & \empty & \empty \\
\none[n_k] & \empty & \none[d_2] \\
\none & \none[d_1]
\end{ytableau}
\qquad\qquad \begin{aligned}
\sum_{i=1}^m d_i = n = \dim M, \\
d_1 = k_1 = k \doteq \dim \distr_{x_0}.
\end{aligned}
\end{equation}
Finally, for $j=1,\dots,k$, let $n_j$ be the integers denoting the length of the $j$-th row of the tableau.

\begin{maintheorem} \label{t:main2} 
Let $\gamma:[0,T]\to M$ be an ample and equiregular geodesic with initial covector $\lam\in T^{*}_{x_0}M$. Then the symmetric operator $\Qz_{\lam}: \distr_{x_{0}}\to \distr_{x_{0}}$ satisfies
\begin{itemize}
\iii[(i)] $\spec \Qz_{\lam}=\{n_{1}^{2},\ldots,n_{k}^{2}\}$,
\iii[(ii)] $\trace \Qz_{\lam}=n_{1}^{2}+\ldots +n_{k}^{2}$. 
\end{itemize}
\end{maintheorem}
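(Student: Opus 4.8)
The plan is to compute $\Qz_{\lam}$ through the symplectic geometry of the Jacobi curve attached to the extremal $\gamma$, since by Theorem~\ref{t:main} the operator $\Qz_\lam = \lim_{t\to 0^+}t^2\QQ_\lam(t)$ is precisely the leading Laurent coefficient of the family $\QQ_\lam(t)$, and this family is governed by such a curve. Concretely, I would represent $d^2_{x_0}\dot c_t|_{\distr_{x_0}}$ through the curve of Lagrangian subspaces $t\mapsto \mathscr{J}_\lam(t) = e^{-t\vec H}_*\,\Pi_{e^{t\vec H}(\lam)}$ in the symplectic vector space $T_\lam(T^*M)$, where $\Pi_\mu \subset T_\mu(T^*M)$ is the vertical subspace tangent to the fiber $T^*_{\pi(\mu)}M$. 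The point of this reduction is that $\Qz_\lam$ depends only on the \emph{principal (nilpotent) part} of $\mathscr{J}_\lam$, which is in turn encoded in the flag $\DD^1_\gamma \subset \DD^2_\gamma \subset \cdots \subset T_{x_0}M$ of Definition~\ref{d:flag}: the iterated derivatives of $\mathscr{J}_\lam$ at $t=0$ reproduce exactly the subspaces $\DD^i_\gamma$, so the asymptotic datum we need is entirely combinatorial, carried by the dimensions $d_i = \dim\DD^i_\gamma - \dim\DD^{i-1}_\gamma$.

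Second, I would exploit the ample and equiregular hypotheses to put $\mathscr{J}_\lam$ in a canonical normal form. Ampleness makes it a monotone curve of finite step $m$, while equiregularity makes its growth vector $\{k_1,\dots,k_m\}$ — equivalently the differences $d_i$ — constant in $t$. Using the Li--Zelenko structural equations I would build a canonical Darboux frame along $\mathscr{J}_\lam$ adapted to the flag. This frame organises a basis into the boxes of the Young tableau~\eqref{eq:tableaugeneral}: the $i$-th column (of height $d_i$) corresponds to the frame vectors appearing at depth $i$ in the flag, and the rows organise these vectors into chains under the derivation $\mc{L}_\tanf$ of Remark~\ref{r:family}. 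A chain of length $n_j$ (the $j$-th row) is a single Jordan string of $\mc{L}_\tanf$, and the surjectivity statement of Lemma~\ref{l:flag0} guarantees that $d_i$ is non-increasing, so the diagram is a genuine Young tableau with $k = d_1$ rows of lengths $n_1\ge\cdots\ge n_k$. Since $\distr_{x_0}=\DD^1_\gamma$ is the first column, each of the $k$ chains contributes exactly one vector to $\distr_{x_0}$, giving a decomposition of the distribution into $k$ one-dimensional, chain-indexed subspaces.

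Third comes the model computation. In the adapted frame the principal part of the structural equations does not couple distinct chains, so the matrix of $t^2\QQ_\lam(t)$, and hence of its limit $\Qz_\lam$, is diagonal with respect to the chain decomposition of $\distr_{x_0}$. On the block of the $j$-th row the Jacobi curve reduces to the monomial (Veronese-type) model of order $n_j$, for which the value at $t=0$ of $t^2\QQ_\lam(t)$ is obtained by solving the corresponding scalar structural equation. This yields the single eigenvalue
\begin{equation}
n_j^2 = 1 + 3 + 5 + \cdots + (2n_j - 1) = \sum_{i=1}^{n_j}(2i-1),
\end{equation}
so each row of length $n_j$ contributes one eigenvalue $n_j^2$ to $\Qz_\lam$. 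Reading off the diagonal proves (i). Summing over rows and regrouping the weight $2i-1$ of each box by columns gives $\trace\Qz_\lam = \sum_{j=1}^k n_j^2 = \sum_{i=1}^m (2i-1)d_i$, which is (ii).

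The main obstacle is the passage from the abstract Jacobi curve to this explicit normal form, that is, constructing the canonical frame and proving that its principal part decouples into independent chains indexed only by the row lengths. Establishing that $\Qz_\lam$ is genuinely diagonal in the chain basis — and not merely block-triangular — is where the full strength of equiregularity is needed, together with the symmetry of $\QQ_\lam(t)$ with respect to $\langle\cdot|\cdot\rangle_\lam$ from Theorem~\ref{t:main}; this symmetry is what pins the eigenvalues to the exact values $n_j^2$ and rules out off-diagonal coupling, which instead surfaces only at the next order in the curvature operator $\RR_\lam$. The scalar model computation itself is elementary once the reduction is in place.
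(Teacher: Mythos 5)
Your proposal follows essentially the same route as the paper: represent $\QQ_\lam(t)$ through the Jacobi curve $J_\lam(t)=e^{-t\vec{H}}_*\ve_{\lam(t)}$, use ampleness and equiregularity to invoke the Li--Zelenko canonical frame adapted to the Young diagram, observe that at leading order the curve decouples into $k$ rank-one (monomial) models indexed by the rows, and read off the eigenvalue $n_j^2$ from each row block of the reduced matrix $\Sred(t)^{-1}$ --- exactly the content of Theorem~\ref{t:Sasymptotic}, Corollary~\ref{c:Sridasymptotic} and Lemma~\ref{l:orthframe} in the paper. The only slight inaccuracy is attributing the leading-order diagonality to the symmetry of $\QQ_\lam(t)$: in the paper it comes from the explicit order-by-order computation with the structural equations (the $\delta_{ab}$ in the principal part of $S^{-1}$), while symmetry plays no special role, and the off-diagonal coupling indeed only enters at the next order through $\RR_\lam$ via the coefficients $\Omega(n_a,n_b)$.
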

\begin{remark}
Although the family $\QQ_\lam(t)$ depends on the cost function, the operator $\Qz_\lam$ depends only on the growth vector $\mc{G}_\gamma$, which is a state-feedback invariant (see Section~\ref{s:inv}). Hence the integers $n_{1},\ldots,n_{k}$ do not depend on the cost.
\end{remark}
\begin{remark}\label{r:gd}
By the classical identity $\sum_{i=1}^n (2i-1) = n^2$, we rewrite the trace of $\Qz_\lam$ as follows:
\begin{equation}
\trace \Qz_\lam = \sum_{i=1}^{m}(2i-1)(\dim \DD^{i}_{\gamma}-\dim \DD^{i-1}_{\gamma}).
\end{equation}
Notice that the right hand side of the above equation makes sense also for a non-equiregular (tough still ample) geodesic, where the dimensions are computed at $t=0$. This number also appears in Chapter~\ref{c:srg}, under the name of \emph{geodesic dimension}, in connection with the asymptotics of the volume growth in sub-Riemannian geometry. \index{geodesic dimension}
\end{remark}

The proofs of Theorems~\ref{t:main} and~\ref{t:main2} are postponed to Chapter~\ref{c:proof}, upon the introduction of the required technicals tools. 

\section{Examples}\label{s:examples}

In this section we discuss three relevant examples: Riemannian structures, Finsler structures and an autonomous linear control system on $\R^n$ with quadratic cost. In particular, in the first and second example we show how our construction recovers the classical Riemannian and Finsler flag curvature, respectively. In the third example we show how to compute $\QQ_\lam$ and its expansion, through a direct manipulation of the cost geodesic function. Examples of Sub-Riemannian structures are discussed in Sections~\ref{s:Heis} and~\ref{s:3Dcontact}.

\subsection{Riemannian geometry}\label{s:riemann}
In this example we characterize the family of operators $\QQ_\lam$ and $\Qz_\lam$ for an optimal control system associated with a Riemannian structure. In particular, we show that $I_\lambda$ is the identity operator and $\RR_{\lambda}$ recovers the classical sectional curvature.

Let $M$ be an $n$-dimensional Riemannian manifold. In this case, $\cb = T M$, and $f:TM \to TM$ is the identity bundle map. Let $f_{1},\ldots,f_{n}$ be a local orthonormal frame for the Riemannian structure. Any Lipschitz curve on $M$ is admissible, and is a solution of 
\begin{equation} 
\dot{x}=\sum_{i=1}^{n} u_{i} f_{i}(x), \qquad x\in M,\, u \in \R^n.
\end{equation}
The cost functional, whose extremals are the classical Riemannian geodesics, is
\begin{equation} 
J_{T}(u)=\frac{1}{2}\int_{0}^{T}\sum_{i=1}^{n} u_{i}(t)^{2} dt. 
\end{equation}
Every geodesic is ample and equiregular, and has trivial growth vector $\mc{G}_\gamma =\{n\}$ since, for all $x \in M$, $\distr_{x} = T_{x} M$. 
Then, the tableau associated with $\gamma$ has only one column:
\begin{equation}\label{eq:youngriemann}
\ytableausetup{boxsize=2em}
\begin{ytableau}
\empty \\
\empty \\
\none[\vdots] \\
\empty \\
\end{ytableau}
\end{equation}
and all the rows have length $n_j=1$ for all $j=1,\ldots,\dim M$. Moreover, the Hamiltonian inner product $\langle\cdot|\cdot\rangle_\lambda$ coincides to the Riemannian inner product $\langle\cdot|\cdot\rangle$ for every $\lambda \in T_{x} M$.
As a standard consequence of the Cauchy-Schwartz inequality, and the fact that Riemannian geodesics have constant speed, the value function $S_{T}$ can be written in terms of the Riemannian distance $\dist :M\times M \to \R$ as follows
\begin{equation} 
S_{T}(x,y)=\frac{1}{2T}\dist^{2}(x,y),  \qquad x,y \in M.
\end{equation}
The Riemannian structures realises an isomorphism between $T_{x_0}M$ and $T_{x_0}^*M$, that associates with any $v \in T_{x_0}M$ the covector $\lambda \in T_{x_0}^*M$ such that $\langle\lambda, \cdot\rangle = \langle v|\cdot\rangle$. In particular to any initial covector $\lam \in T_{x_0}^*M$ corresponds an initial vector $v \in T_{x_0}^*M$. We call $\gamma_v:[0,T] \to M$ the associated geodesic, such that $\gamma_v(0) = x_0$ and $\dot{\gamma}_v(0) = v$. Thus, the geodesic cost function associated with $\gamma_v$ is
\begin{equation}
c_t(x) = -\frac{1}{2t}\dist^2(x,\gamma_v(t)).
\end{equation}
Then, in order to compute the operators $\Qz_{\lam}$ and $\RR_{\lam}$ we essentially need an asymptotic expansion of the \virg{squared distance from a geodesic}. We now perform explicitly the expansion of Eq.~\eqref{eq:mainexp}.

Let $\gamma_{v}(t)$, $\gamma_{w}(s)$ be two arclength parametrized geodesics, with initial vectors $v,w\in T_{x_0}M$, respectively, starting from $x_0$. Let us define the function $C(t,s)\doteq\frac{1}{2} \dist^{2}(\gamma_{v}(t),\gamma_{w}(s))$. It is well known that $C$ is smooth at $(0,0)$.

\begin{lemma} \label{l:lll}
The following formula holds true for the Taylor expansion of $C(t,s)$ at $(0,0)$
\begin{equation}\label{eq:villani}
C(t,s)= \frac{1}{2}\left(t^{2}+s^{2}-2 \la v|w\ra ts\right) - \frac{1}{6}\la R^\nabla(v,w)v|w\ra t^{2}s^{2}+t^{2}s^{2}o(|t|+|s|),
\end{equation}
where $\langle\cdot|\cdot\rangle$ denotes the Riemannian inner product and $R^\nabla$ is the Riemann curvature tensor.
\end{lemma}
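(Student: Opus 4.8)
The plan is to reduce the statement to a Riemann normal-coordinate (Synge world-function) expansion of the squared distance and then to substitute the two geodesics. Since $C$ is smooth at $(0,0)$, it suffices to determine its Taylor coefficients up to total order four. Fix normal coordinates centered at $x_0$, so that the unit-speed geodesics read $\gamma_v(t)\leftrightarrow tv$ and $\gamma_w(s)\leftrightarrow sw$ with $|v|=|w|=1$, and the metric at $x_0$ is Euclidean. Writing $X=tv$ and $Y=sw$, the lemma becomes equivalent to the expansion
\begin{equation}
\dist^2(\exp_{x_0}X,\exp_{x_0}Y)=|X|^2+|Y|^2-2\langle X|Y\rangle-\tfrac13\langle R^\nabla(X,Y)X|Y\rangle+O(5),
\end{equation}
where $O(5)$ collects terms of total order at least five in $(X,Y)$; substituting $X=tv$, $Y=sw$ and multiplying by $\tfrac12$ reproduces exactly \eqref{eq:villani}, since $|X|^2=t^2$, $\langle X|Y\rangle=ts\langle v|w\rangle$ and $\langle R^\nabla(X,Y)X|Y\rangle=t^2s^2\langle R^\nabla(v,w)v|w\rangle$.

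First I would pin down the low-order terms. As $\gamma_v$ is a unit-speed minimizing geodesic for small time, $C(t,0)=\tfrac12\dist^2(\gamma_v(t),x_0)=\tfrac12 t^2$ \emph{exactly}, and likewise $C(0,s)=\tfrac12 s^2$; this forces every pure power $t^k$ and $s^k$ with $k\neq 2$ to have vanishing coefficient. The quadratic part is governed by the Hessian of $\tfrac12\dist^2$ at the diagonal, giving $\partial_t^2 C(0,0)=|v|^2=1$, $\partial_s^2 C(0,0)=|w|^2=1$ and $\partial_t\partial_s C(0,0)=-\langle v|w\rangle$, which reproduces $\tfrac12(t^2+s^2-2\langle v|w\rangle ts)$.

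The crux is the fourth-order part, equivalently the coefficient of $t^2s^2$, i.e. $\partial_t^2\partial_s^2 C(0,0)$. I would extract it from the first variation formula: setting $u(t,s)=\exp_{\gamma_w(s)}^{-1}(\gamma_v(t))\in T_{\gamma_w(s)}M$, one has $\partial_s C=-\langle u|\dot\gamma_w(s)\rangle$, and since $q=\gamma_w(s)$ is fixed as $t$ varies,
\begin{equation}
\partial_t\partial_s C(t,s)=-\big\langle \partial_t u(t,s)\,\big|\,\dot\gamma_w(s)\big\rangle,\qquad \partial_t u=\big[(d\exp_{q})_{u}\big]^{-1}\dot\gamma_v(t).
\end{equation}
The differential of the exponential map is computed by Jacobi fields, with a second-order expansion of the form $(d\exp_q)_u\xi=\xi-\tfrac16 R^\nabla(\xi,u)u+O(|u|^3)$ (the exact curvature sign being pinned a posteriori by the model below). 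The structure of the normal-coordinate metric, $g_{ij}=\delta_{ij}-\tfrac13 R^\nabla_{ikjl}x^kx^l+O(|x|^3)$, shows that the correction to $\dist^2$ begins only at order four and equals $-\tfrac13\langle R^\nabla(X,Y)X|Y\rangle$, a pure $t^2s^2$ monomial; this simultaneously kills the would-be terms $t^2s$, $ts^2$, $t^3s$, $ts^3$ and isolates the sought coefficient. \textbf{The main obstacle} is precisely this fourth-order bookkeeping: the naive leading substitution $u\approx tv-sw$, $\dot\gamma_v(t)\approx v$, $\dot\gamma_w(s)\approx w$ captures only part of the curvature coefficient, and to recover the full factor one must expand the log-map $u(t,s)$, the endpoint velocities, and the metric along the connecting geodesic to second order in $(t,s)$ and sum all $O(t^2s^2)$ contributions (equivalently, carry the world-function expansion to fourth order).

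Finally I would fix and verify the coefficient on a model. On the unit sphere with $v\perp w$ the spherical law of cosines gives $\cos\dist(\gamma_v(t),\gamma_w(s))=\cos t\cos s$, whence an elementary expansion yields $C=\tfrac12(t^2+s^2)-\tfrac16 t^2s^2+O(6)$, consistent with $\langle R^\nabla(v,w)v|w\rangle=1$ for orthonormal $v,w$. Reading off the coefficient then gives $\langle R^\nabla(v,w)v|w\rangle=-\tfrac32\,\partial_t^2\partial_s^2 C(0,0)$ and completes \eqref{eq:villani}.
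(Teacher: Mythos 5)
Your proposal is correct and, at its core, follows the same reduction as the paper: the exact identities $C(t,0)=t^2/2$, $C(0,s)=s^2/2$, together with $\partial_s C(t,0)=-t\la v|w\ra$ and $\partial_t C(0,s)=-s\la v|w\ra$, eliminate every monomial $t^n$, $t^ns$, $s^n$, $ts^n$, so the lemma reduces to the single identity $\la R^\nabla(v,w)v|w\ra=-\tfrac{3}{2}\,\partial_t^2\partial_s^2C(0,0)$. Where you genuinely diverge is in the treatment of this identity: the paper does not prove it but cites Loeper, Villani and Gallou\"et (it is essentially Riemann's original definition of curvature), whereas you sketch a self-contained derivation --- the first-variation formula $\partial_sC=-\la u|\dot\gamma_w(s)\ra$ with $u=\exp^{-1}_{\gamma_w(s)}(\gamma_v(t))$, inversion of $d\exp$ via the Jacobi-field expansion, and the normal-coordinate metric $g_{ij}=\delta_{ij}-\tfrac13 R^\nabla_{ikjl}x^kx^l+O(|x|^3)$ --- and then calibrate the constant on the round sphere, where the law of cosines gives $C=\tfrac12(t^2+s^2)-\tfrac16 t^2s^2+O(6)$. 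This buys independence from the cited references, at the cost of the fourth-order bookkeeping that you candidly flag as the main obstacle and defer. Two points to watch if you carry it out: first, the $S^2$ check with $v\perp w$ pins the constant only once you know the quartic term is a universal multiple of $\la R^\nabla(X,Y)X|Y\ra$ with no trace-type contractions ($\mathrm{Ric}$ or scalar terms are also of bidegree $(2,2)$ in $(X,Y)$, hence not excluded by the boundary identities); this does follow from your metric expansion, because the first-order perturbation of the energy of the connecting segment contracts $R^\nabla$ only against the segment and its velocity, but it is part of the omitted computation rather than a consequence of the model check alone. Second, the remainder in the lemma is $t^2s^2\,o(|t|+|s|)$, finer than a bare $O(5)$: it follows from the boundary identities by Hadamard division (the error vanishes to second order on both coordinate axes, hence equals $t^2s^2$ times a smooth function vanishing at the origin), an argument your exact identities make available but which you should state explicitly.
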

\begin{proof}
Since the geodesics $\gamma_{v}$ and $\gamma_{w}$ are parametrised by arclength, we have
\begin{equation}\label{eq:no31}
C(t,0)=t^{2}/2, \qquad C(0,s)=s^2/2,\qquad \all t,s\geq0. 
\end{equation}
Moreover, by standard computations, we obtain
\begin{equation}  \label{eq:no32}
\frac{\partial C}{\partial s}(t,0)=-t\la v|w\ra,\qquad \frac{\partial C}{\partial t}(0,s)=-s\la v|w\ra,\qquad \all t,s\geq0. 
\end{equation}
Eqs.~\eqref{eq:no31} and~\eqref{eq:no32} imply that the monomials $t^{n}$, $st^{n}$, $s^{n}$, $ts^{n}$ with $n\geq 2$ do not appear in the Taylor polynomial. The statement is then reduced to the following identity:
\begin{equation}\label{eq:no33}
-\frac{3}{2}\frac{\partial^{4}C}{\partial t^{2}{\partial s^{2}}} (0,0)=\la R^\nabla(w,v)v|w\ra.
\end{equation}
This identity appeared for the first time in \cite[Th. 8.3]{loeper}, in the context of the Ma-Trudinger-Wang curvature tensor, and also in~\cite[Eq. 14.1]{villani}. For a detailed proof one can see also \cite[Prop. 1.5.1]{gallouet}. Essentially, this is the very original definition of curvature introduced by Riemann in his famous \emph{Habilitationsvortrag} (see \cite{Ri}).
\end{proof}
Finally we compute the quadratic form $\langle\QQ_\lam(t)  w |w\rangle = d^{2}_{x_0}\dot c_{t}(w)$ for any $w\in T_{x_0}M$
\begin{equation}\label{eq:mainexpriem}
\begin{split}
d^{2}_{x_0}\dot c_{t}(w)&=\left.\frac{\partial^{2}}{\partial s^{2}}\right|_{s=0}\frac{\partial}{\partial t}\left(-\frac{1}{2t}d^{2}(\gamma_{v}(t),\gamma_{w}(s))\right) 
=\frac{\partial}{\partial t}\left(-\frac1t \frac{\partial^{2}C}{\partial s^{2}}(t,0)\right) = \\
& = \frac{1}{t^{2}}\frac{\partial^{2}C}{\partial s^{2}}(0,0)+\frac13 \left(-\frac32 \frac{\partial^{4}C}{\partial t^{2}{\partial s^{2}}} (0,0)\right)+O(t) = \\
& = \frac{1}{t^2} + \frac{1}{3} \langle R^\nabla(w,v) v|  w\rangle + O(t),
\end{split}
\end{equation}
where, in the first equality, we can exchange the order of derivations by the smoothness of $C(t,s)$ and, in the last equality, we used Eqs.~\eqref{eq:no31}-\eqref{eq:no33}. Now compare Eq.~\eqref{eq:mainexpriem} with the general expansion of Eq.~\eqref{eq:mainexp} and we obtain:
\begin{equation}
\Qz_\lambda  = \mathbb{I}, \qquad  \RR_{\lam} = R^\nabla(\cdot,v)v.
\end{equation}
where $\lam$ is the initial covector associated with the geodesic $\gamma$. For any fixed $\lambda \in T_{x_0}^*M$, $\RR_\lam$ is a linear operator on $T_{x_0} M$, symmetric with respect to the Riemannian scalar product. As a quadratic form on $T_{x_{0}}M$, it computes the sectional curvature of the planes containing the direction of the geodesic, namely
\begin{equation}
\langle \RR_{\lam} w | w\rangle = \|v\|^2\|w\|^2(1-\cos\theta) \mathrm{Sec}(v,w), \qquad \forall w \in T_{x_0}M,
\end{equation}
where $\theta$ is the Riemannian angle between $v$ and $w$. Moreover, since the correspondence $\lambda \leftrightarrow v$ is linear, $\RR_\lambda$ is quadratic with respect to $\lambda$. In particular, it is homogeneous of degree $2$: for any $\alpha >0$ we have $\RR_{\alpha\lambda} = \alpha^2 \RR_\lambda$. The last property remains true for the curvature of any optimal control problem with fiber-wise quadratic Hamiltonian (such as sub-Riemannian structures, see Section~\ref{s:homog}).

Finally, for what concerns the Ricci curvature, we observe that
\begin{equation}
\Ric(\lambda) = \trace \RR_\lambda = \sum_{i=1}^{n} \langle R^\nabla(w_i,v)v| w_i\rangle = \Ric^\nabla(v),
\end{equation}
where $w_1,\ldots,w_n$ is any orthonormal basis of $T_{x_0}M$ and $\Ric^\nabla$ is the classical Ricci curvature associated with the Riemannian structure. Indeed $\Ric(\lambda)$ is homogeneous of degree $2$ in $\lambda$.

\begin{remark}
In Chapter~\ref{c:srg}, we apply our theory to the sub-Riemannian setting, where an analogue approach, leading to the Taylor expansion of Eq.~\eqref{eq:villani} is not possible, for two major differences between the Riemannian and sub-Riemannian setting. First, geodesics cannot be parametrized by their initial tangent vector. Second, and crucial, for every $x_0 \in M$, the sub-Riemannian squared distance $x\mapsto \dist^2(x_0,x)$ is \emph{never} smooth at $x_0$.
\end{remark}

\subsection{Finsler geometry}

The notion of curvature introduced in this paper recovers not only the classical sectional curvature of Riemannian manifolds, but also the notion of \emph{flag curvature} of Finsler manifolds. These structures can be realized as optimal control problems (in the sense of Chapter~\ref{c:affcs}) by the choice $\cb = TM$ and $f: TM \to TM$ equal to the identity bundle map. Moreover the Lagrangian is of the form $L = F^2/2$, where $F \in C^\infty(TM\setminus 0_{TM})$ ($0_{TM}$ is the zero section), is non-negative and positive-homogeneous, i.e. $F(cv) = c F(v)$ for all $v \in TM$ and $c > 0$. Finally $L$ satisfies the Tonelli assumption (A2).

In this setting, it is common to introduce the isomorphism $\tau^* : T^*M \to TM$ (the inverse \emph{Legendre transform}) defined by 
\begin{equation}
\tau^*(\lambda) \doteq d_\lambda H_x, \qquad \lambda \in T_x^* M ,
\end{equation}
where $H_x$ is the restriction to the fiber $T_x^*M$ of the Hamiltonian $H$ of the system.

In this case for all $x \in M$, $\distr_x = T_x M$, hence  every geodesic is ample and equiregular, with trivial growth vector $\mc{G}_\gamma =\{n\}$. The tableau associated with $\gamma$ is the same one as for a Riemannian geodesic~\eqref{eq:youngriemann} with only one column whose rows have length $n_j=1$ for all $j=1,\ldots,\dim M$. The operator $\RR_\lam : T_x M \to T_x M$ can be identified with the Finsler flag curvature operator $R^F_v: T_xM \to T_x M$, where $v = \tau^*(\lambda)$ is the flagpole. A more detailed discussion of Finsler structure and the aforementioned correspondence one can see, for instance, the recent work~\cite[Example 5.1]{Ohta2013}.

\subsection{Sub-Riemannian geometry}
Since sub-Riemannian geometry is extensively treated in the forthcoming Chapter~\ref{c:srg}, we postpone two relevant examples, the Heisenberg group and three-dimensional contact structures, to Sections~\ref{s:Heis} and~\ref{s:3Dcomputations}, respectively.

\subsection{Linear-quadratic control problems} \label{s:lq}
Let us consider a classical linear-quadratic control system. Namely $M = \R^n$, $\cb = \R^n\times \R^k$ and $f(x,u) = Ax + Bu$ is linear both in the state and in the control variables. Admissible curves are solutions of
\begin{equation} \label{eq:lq1}
\dot{x}(t)=Ax(t)+Bu(t), \qquad x\in \R^{n},\,u \in \R^{k},
\end{equation}
where $A$ and $B$ are two $n\times n$ and $n\times k$ matrices, respectively. The cost of an admissible trajectory associated with $u$ is proportional to the square of the $L^{2}$-norm of the control
\begin{equation}\label{eq:lqc}
J_{T}(u)=\frac12 \int_{0}^{T}u(t)^{*}u(t) dt.
\end{equation}
Since $u:[0,T]\to \R^{k}$ is measurable and essentially bounded, the trajectory $x(t;x_{0})$ associated with $u$ such that $x(0;x_{0})=x_{0}$ is explicitly computed by the Cauchy formula
\begin{equation}\label{eq:lq2}
x(t;x_{0})=e^{tA}x_{0}+\int_{0}^{t}e^{(t-s)A}Bu(s)ds.
\end{equation}
In this case, the bracket-generating condition (A1) is the classical Kalman controllability condition:
\begin{equation}\label{eq:lqcontr}
\spn\{B,AB,\ldots,A^{m-1}B\}=\R^{n}.
\end{equation}
Since the system is linear, the linearisation along any admissible trajectory coincides with the system itself. Hence it follows that any geodesic is ample and equiregular. In fact, the geodesic growth vector is the same for any non-trivial geodesic, and is equal to $\mc{G}=\{k_1,\ldots,k_m\}$ where:
\begin{equation} 
k_i=\dim \DD^{i}=\rank\{B,AB,\ldots,A^{i-1}B\},\qquad i = 1,\ldots,m. 
\end{equation}
The associated tableau is the same for any non-trivial geodesic and is built as in~\eqref{eq:tableaugeneral}. The lengths of the rows $n_j$, for $j=1,\ldots,k$ are classically referred to as the \emph{controllability indices} (or Kronecker indices) of the linear control systems (see~\cite[Chapter 9]{agrachevbook} and~\cite[Chapter 1]{Coronbook}).

A standard computation shows that, under the assumption \eqref{eq:lqcontr}, there are no abnormal trajectories. Let us introduce canonical coordinates $(p,x)\in T^{*}\R^{n}\simeq \R^{n*}\times\R^{n}$. Here, it is convenient to treat $p\in\R^{n*}$ as a row vector, and $x\in \R^n$, $u \in \R^k$ as column vectors. The Hamiltonian of the system for normal extremals is
\begin{equation} 
\mc{H}(p,x,u)=pAx+pBu-\frac{1}{2}u^{*}u. 
\end{equation} 
The maximality condition gives $\bar{u}(p,x) = B^{*}p^*$. Then, the maximized Hamiltonian is
\begin{equation} 
H(p,x)=pAx+\frac12 pBB^*p^{*}.
\end{equation}
For a normal trajectory with initial covector $\lam = (p_{0},x_0)$, we have $p(t;x_0,p_0)=p_{0}e^{-tA}$ and
\begin{equation}\label{eq:lq4}
x(t;x_{0},p_{0})=e^{tA} x_{0}+e^{tA} \int_{0}^{t} e^{-sA}BB^{*}e^{-sA^{*}}ds\, p^{*}_{0}.
\end{equation}
Let us denote by $C(t)$ the controllability matrix
\begin{equation} 
C(t)\doteq\int_{0}^{t} e^{-sA}BB^{*}e^{-sA^{*}}ds. 
\end{equation}
By Eq.~\eqref{eq:lq4}, we can compute the optimal cost to reach the point $\wt x(t)=x(t;x_{0},p_{0})$, starting at point $x$ (close to $x_{0}$), in time $t$, as follows
\begin{equation} 
c_{t}(x)=-S_{t}(x,\wt x(t))=-\frac{1}{2}p_{0}C(t)p_{0}^{*}+p_{0}(x-x_{0})-\frac12 (x-x_{0})^{*}C(t)^{-1}(x-x_{0}).
\end{equation}
Thus, $d^{2}_{x}\dot c_{t}=-\frac{d}{dt}C(t)^{-1}$, and the family of quadratic forms $\QQ_\lam$, written in terms of the basis defined by the columns of $B$, is represented by the matrix
\begin{equation} 
\QQ_{\lam}(t)=- B^{*}\frac{d}{dt}C(t)^{-1}B.
\end{equation}
The operator $\Qz_{\lam}$ is completely determined by Theorem \ref{t:main2}. Its eigenvalues coincide with the squares of the Kronecker indices (or controllability indices) of the control system (see \cite{agrachevbook,Coronbook}). Moreover, the curvature $\RR_{\lam}$ is
\begin{equation}
\RR_{\lam} =-\frac{3}{2}\left.\frac{d^{2}}{dt^{2}}\right|_{t=0}\left(t^{2} B^{*}\frac{d}{dt}C(t)^{-1}B\right)=-\frac{3}{2}\left.\frac{d^{2}}{dt^{2}}\right|_{t=0}\left(t B^{*}C(t)^{-1}B\right).
\end{equation}
We stress that, for this specific case, the operators $\Qz_{\lam}$ and $\RR_{\lam}$ do not depend neither on the geodesic nor on the initial point since the system is linear (hence it coincides with its linearisation along any geodesic starting at any point).

\begin{remark}
With straightforward but long computations one can generalize these formulae to the case of a quadratic cost with a potential of the form
\begin{equation}
J_T(u) = \frac{1}{2}\int_{0}^T u(t)^* u(t) + x_u(t)^* Q x_u(t) dt,
\end{equation}
where $Q$ is a symmetric $n\times n$ matrix, and $x_u(t)$ is the trajectory associated with the control $u$.
\end{remark}
 %Geodesic cost and its asymptotic
\chapter{Sub-Riemannian geometry} \label{c:srg}

In this chapter we focus on the sub-Riemannian setting. After a brief introduction, we discuss the existence of ample geodesics, the regularity  of the geodesic cost and the homogeneity properties of the family $\QQ_{\lam}$. Then we state the main result of this chapter about the sub-Laplacian of the sub-Riemannian distance. Finally, we define the concept of geodesic dimension and we investigate the asymptotic rate of growth of the volume of measurable set under sub-Riemannian geodesic homotheties.

\section{Basic definitions}

Sub-Riemannian structures are particular affine optimal control system, in the sense of Definition~\ref{d:cs}, where the \virg{drift} vector field is zero and the Lagrangian $\LL$ is induced by an Euclidean structure on the control bundle $\cb$. For a general introduction to sub-Riemannian geometry from the control theory viewpoint we refer to \cite{nostrolibro}. Other classical references are \cite{bellaiche,montgomerybook}.
\begin{definition}
Let $M$ be a connected, smooth $n$-dimensional manifold. A \emph{sub-Riemannian structure} \index{sub-Riemannian!structure} on $M$ is a pair $(\cb,f)$ where:
\begin{itemize}
\iii[$(i)$] $\cb$ is a smooth rank $k$ \emph{Euclidean} vector bundle with base $M$ and fiber $\fib_x$, i.e. for every $x\in M$, $\fib_x$ is a $k$-dimensional vector space endowed with an inner product.
\iii[$(ii)$] $f:\cb\to TM$ is a smooth \emph{linear} morphism of vector bundles, i.e. $f$ is \emph{linear} on fibers and the following diagram is commutative:
\begin{equation}
\xymatrix{
\cb \ar[dr]_{\pi_{\cb}} \ar[r]^{f}
& TM \ar[d]^{\pi} \\
 & M }
\end{equation}
\end{itemize}
The maps $\pi_{\cb}$ and $\pi$ are the canonical projections of the vector bundles $\cb$ and $TM$, respectively. Notice that once we have chosen a local trivialization for the vector bundle $\cb$, i.e. $\cb\simeq M\times \R^{k}$, we can choose a basis in the fibers and the map $f$ reads $f(x,u)=\sum_{i=1}^{k}u_{i}f_{i}(x)$.
\end{definition}

\brem There is no assumption on the rank of the function $f$. In other words if we consider, in some choice of the trivialization of $\cb$, the vector fields $f_{1},\ldots,f_{k}$, they could be  linearly dependent at some (or even at every) point. The structure is Riemannian if and only if $\dim \distr_{x}=n$ for all $x\in M$. 
\erem
\begin{remark}[On the notation]
Throughout this chapter, to adhere to the standard notation of the sub-Riemannian literature, we use the notation $X_i = f_i$ for the set of (local) vector fields which define the sub-Riemannian structure.
\end{remark}
The Euclidean structure on the fibers induces a metric structure on the \emph{distribution} $\distr_{x}=f(\fib_x)$  for all $x\in M$ as follows:
\begin{equation}\label{eq:min}
\|v\|_x^{2}\doteq \min\Pg{\|u\|^2\,\bigg| \ v=f(x,u)},\qquad \all v\in \distr_{x}.
\end{equation}
It is possible to show that $\|\cdot\|_x$ is a norm on $\distr_x$ that satisfies the parallelogram law, i.e.
it is actually induced by an inner product $\metr{\cdot}{\cdot}_{x}$ on $\distr_x$. Notice that the minimum in \eqref{eq:min} is always attained since we are minimizing an Euclidean norm in $\R^{k}$ on an affine subspace.

It is always possible to reduce to the case when the control bundle $\cb$ is trivial without changing the sub-Riemannian inner product (see \cite{nostrolibro,noterifford}). In particular it is not restrictive to assume that the vector fields $X_{1},\ldots,X_{k}$ are globally defined. 

An admissible trajectory for the sub-Riemannian structure is also called \emph{horizontal}, i.e. a Lipschitz curve $\gamma:[0,T]\to M$ such that
\begin{equation}
\dot \gamma(t)=f(\gamma(t),u(t)), \qquad \text{a.e.}\  t\in [0,T],
\end{equation}
for some measurable and essentially bounded map $u:[0,T]\to \R^{k}$. 
\begin{remark} Given an admissible trajectory it is pointwise defined its \emph{minimal control} $u:[0,T]\to \R^{k}$ such that $\|\dot\gamma(t)\|^{2}=\|u(t)\|^{2}=\sum_{i=1}^{k} u_{i}^{2}(t)$ for a.e. $t\in [0,T]$. In what follows, whenever we speak about the control associated with a horizontal trajectory, we implicitly assume to consider its minimal control. This is the sub-Riemannian implementation of Remark~\ref{r:mincontr}
\end{remark}

For every admissible curve $\gamma$, it is natural to define its \emph{length} by the formula \index{sub-Riemannian!length}
\begin{equation}
\ell(\gamma)=\int_{0}^{T}\|\dot \gamma (t)\| dt= \int_{0}^{T} \left(\sum_{i=1}^{k} u_{i}^{2}(t)\right)^{1/2} dt.
\end{equation}
Since the length is invariant by reparametrization, we can always assume that $\|\dot\gamma(t)\|$ is constant. The \emph{sub-Riemannian (or Carnot-Carath\'eodory) distance} \index{sub-Riemannian!distance} \index{Carnot-Carath\'eodory distance} between two points $x,y\in M$ is
\begin{equation}
\dist(x,y)\doteq \inf\{\ell(\gamma)\, |\, \gamma \text{ horizontal, }\g(0)=x,\g(T)=y\}.
\end{equation}
It follows from the Cauchy-Schwartz inequality that, if the final time $T$ is fixed, the minima of the length (parametrized with constant speed) coincide with the minima of the energy functional:
\begin{equation}
J_{T}(\gamma)=\frac12\int_{0}^{T}\|\dot \gamma (t)\|^{2} dt=\frac12 \int_{0}^{T}\sum_{i=1}^{k} u_{i}^{2}(t) dt.
\end{equation}
Moreover, if $\gamma$ is a minimizer with constant speed, one has the identity $\ell^{2}(\gamma)=2T J_{T}(\gamma)$.

In particular, the problem of finding the sub-Riemannian geodesics, i.e. curves on $M$ that minimize the distance between two points, coincides with the optimal control problem 
\begin{equation}
\begin{aligned}\label{eq:srocp}
&\dot{x}=\sum_{i=1}^{k} u_{i} X_{i}(x),\qquad x \in M, \\
&x(0)=x_{0}, \, x(T)=x_{1}, \qquad J_{T}(u)\to \min.
\end{aligned}
\end{equation}
Thus, a sub-Riemannian structure corresponds to an affine optimal control problem~\eqref{eq:ocp} where $f_{0} = 0$ and the Lagrangian $\LL(x,u)=\frac12 \|u\|^{2}$ is induced by the Euclidean structure on $\cb$. Extremal trajectories for the sub-Riemannian optimal control problem can be normal or abnormal according to Definition~\ref{d:normal}.

\begin{remark} \label{r:ipsr} The assumption (A1) on the control system in the sub-Riemannian case reads  $\text{Lie}_{x} \overline \distr=T_{x}M$, for every $x\in M$. This is the classical \emph{bracket-generating} (or \emph{H\"ormander}) condition on the distribution $\distr$, which implies the controllability of the system, i.e.  $\dist(x,y)<\infty$ for all $x,y\in M$. Moreover one can show that $\dist$ induces on $M$ the original manifold's topology. When $(M,\dist)$ is complete as a metric space,  Filippov Theorem guarantees the existence of minimizers joining $x$ to $y$, for all $x,y\in M$ (see \cite{agrachevbook,nostrolibro}).
\end{remark}

The maximality condition \eqref{eq:as} of PMP reads $u_{i}(\lam)=\la \lambda,X_{i}(x)\ra$, where $x=\pi(\lam)$.
Thus the maximized Hamiltonian is
\begin{equation} 
H(\lam)=\frac12 \sum_{i=1}^{k}\la\lam,X_{i}(x)\ra^{2},\qquad \lam\in T^{*}M. 
\end{equation}
It is easily seen that $H: T^*M \to \R$ is also characterized as the dual of the norm on the distribution
\begin{equation} 
H(\lam)=\frac12\|\lam\|^{2},\qquad \|\lam\|=\sup \{\la\lam,v\ra \mid v \in \distr_x, \; \|v\|=1\}.
\end{equation}

Since, in this case, $H$ is quadratic on fibers, we obtain immediately the following properties for the exponential map
\begin{equation}
\EXP_{x_0}(t,s\lam_{0})=\EXP_{x_0}(ts,\lam_{0}), \qquad  \lam_0 \in T_{x_0}^*M, \quad t,s\geq 0,
\end{equation}
which is tantamount to the fact that the normal geodesic associated with the covector $\lam_{0}$ is the image of the ray $\{t\lam_{0},t\geq 0\} \subset T_{x_0}^* M$ through the exponential map: $\EXP_{x_0}(1,t\lam_{0})=\gamma(t)$.

\bdeff Let $\gamma(t)=\pi \circ e^{t\vec{H}}(\lam_{0})$ be a strictly normal geodesic. We say that $\gamma(s)$ is \emph{conjugate} to $\gamma(0)$ along $\gamma$ if $\lam_{0}$ is a critical point for $\EXP_{x_0,s}$, i.e. $D_{\lam_0}\EXP_{x_0,s}$ is not surjective.\index{conjugate point}
\edeff
\brem The sub-Riemannian maximized Hamiltonian is a quadratic function on fibers, which implies $d^{2}_{\lam}H_{x}=2H_{x}$, where $H_x = H|_{T_x^*M}$ and $\lam\in T^{*}_{x}M$. In particular $d^2_\lam H_x$ does not depend on $\lam$ and the inner product $\la \cdot |\cdot \ra_{\lam}$ induced on the distribution $\distr_{x}$ coincides with the sub-Riemannian inner product (see Section \ref{s:Hpp}).
\erem
 
The value function \index{value function!sub-Riemannian}at time $T>0$ of the sub-Riemannian optimal control problem~\eqref{eq:srocp} is closely related with the sub-Riemannian distance as follows:
\begin{equation}
\SS_{T}(x,y)=\frac{1}{2T}\dist^{2}(x,y),\qquad x,y\in M,
\end{equation}
Notice that, with respect to Definition~\ref{d:value} of value function, we choose $M'=M$, even if the latter is not compact. Indeed, the proof of the regularity of the value function in Appendix~\ref{a:proofsmoothness} can be adapted by using the fact that small sub-Riemannian balls are compact.% The smoothness properties of the sub-Riemannian square distance are discussed in Section~\ref{s:ample} (see also \cite{agrachevsmooth}).

Next, we provide a fundamental characterization for smooth points of the squared distance. Let $x_0 \in M$, and let $\Sigma_{x_{0}}\subset M$  be the set of points $x$ such that 
there exists a unique minimizer $\gamma :[0,1]\to M$ joining $x_0$ with $x$, which is not abnormal and $x$ is not conjugate to $x_0$ along $\gamma$. 
\begin{theorem}[see \cite{agrachevsmooth,trelatrifford}] \label{t:d2sr} \index{}
Let $x_{0}\in M$ and set $\f\doteq 	\frac{1}{2}\dist^{2}(x_{0},\cdot)$. The set $\Sigma_{x_0}$ is open, dense and $\f$ is smooth precisely on $\Sigma_{x_{0}}$.
\end{theorem}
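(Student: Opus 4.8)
The plan is to establish the theorem in three stages: the inclusion ``smooth point $\Rightarrow$ point of $\Sigma_{x_0}$'', the reverse inclusion (which will also yield openness), and finally density. Throughout I read ``$\f$ is smooth at $x$'' as ``$\f\in C^\infty$ on a neighbourhood of $x$'', so that the set of smooth points is open by definition and openness of $\Sigma_{x_0}$ is a byproduct of the characterization. First I would record the elementary reductions: in the sub-Riemannian case $\f=\SS_1(x_0,\cdot)$ since $\dist^2(x_0,\cdot)=2\,\SS_1(x_0,\cdot)$, and by completeness together with Filippov's theorem (Remark~\ref{r:ipsr}) every $x\in M$ is joined to $x_0$ by at least one minimizer. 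The two tools I would lean on are Lemma~\ref{l:lambdat} and the inverse function theorem applied to $\EXP_{x_0,1}$.

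For the first inclusion, suppose $\f$ is smooth at $x$. Then $\SS_1(x_0,\cdot)$ is smooth at $x$ and a minimizer exists, so Lemma~\ref{l:lambdat} applies directly and gives that the minimizer $\gamma$ is unique, strictly normal, with $d_x\f=\lambda_1$ equal to its final covector. Moreover the proof of that lemma exhibits the smooth map $\Theta(y)=e^{-\vec H}(d_y\f)$ as a right inverse of $\EXP_{x_0,1}$; hence $D_{\lambda_0}\EXP_{x_0,1}$ is surjective at the initial covector $\lambda_0=\Theta(x)$, i.e.\ $\gamma$ is non-conjugate. Therefore $x\in\Sigma_{x_0}$.

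For the reverse inclusion, let $x\in\Sigma_{x_0}$ with unique, strictly normal, non-conjugate minimizer $\gamma=\gamma_{\lambda_0}$. Non-conjugacy means $D_{\lambda_0}\EXP_{x_0,1}$ is invertible, so the inverse function theorem provides a local inverse $\Phi$, and $g\doteq H\circ\Phi$ is smooth with $g\ge\f$ and $g(x)=\f(x)$ (because $\gamma_{\Phi(y)}$ is an admissible geodesic to $y$, hence no shorter than the minimizer, with equality at $x$). I would then upgrade this to $g=\f$ near $x$ by a compactness argument: were it false, minimizers $\sigma_n$ to points $y_n\to x$ with $\f(y_n)<g(y_n)$ would subconverge to a minimizer of $x$, which by uniqueness is $\gamma$; strict normality being stable under the perturbation involved (Remark~\ref{r:abnormal}), the $\sigma_n$ are eventually strictly normal with covectors tending to $\lambda_0$, and injectivity of $\EXP_{x_0,1}$ near $\lambda_0$ forces $\sigma_n=\gamma_{\Phi(y_n)}$, contradicting $\f(y_n)<g(y_n)$. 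Thus $\f=g$ is smooth near $x$; this shows $x$ is a smooth point and, since smoothness is a neighbourhood property, that $\Sigma_{x_0}$ is open.

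Density is the substantial point. I would show the complement of $\Sigma_{x_0}$ has empty interior: fix an open ball $B$ with $x_0\notin\overline B$, on which $\f$ is locally semiconcave (a standard fact, obtained by writing $\f$ locally as a minimum of the smooth geodesic-cost functions of short strongly normal segments, cf.\ Theorem~\ref{t:mst}), hence twice differentiable almost everywhere by Alexandrov's theorem. At any differentiability point $x$, the nonnegative smooth function $v\mapsto J_1(v)-\SS_1(x_0,\End_{x_0,1}(v))$ attains a minimum at the optimal control, which forces every minimizer to $x$ to be normal with the same final covector $d_x\f$; hence the minimizer is unique and normal. By Sard's theorem the set of critical values of $\EXP_{x_0,1}$ has measure zero, so almost every $x\in B$ is also non-conjugate. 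The remaining, genuinely delicate step---the technical core of \cite{agrachevsmooth}---is to promote ``normal'' to ``strictly normal'' and to rule out abnormal minimizers at second-order regular (Alexandrov) points, using the existence of the second differential $D^2_x\f$ to control the associated Jacobi curve and exclude both conjugacy and abnormality. Granting this, such points lie in $\Sigma_{x_0}$ and are dense in $B$, so $B$ meets $\Sigma_{x_0}$; as $B$ is arbitrary, $\Sigma_{x_0}$ is dense. I expect this last step to be the \emph{main obstacle}: the first-order Lagrange-multiplier analysis is blind to abnormal minimizers, which is precisely why the density assertion is nontrivial and is credited to \cite{agrachevsmooth}.
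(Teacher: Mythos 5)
First, a point of comparison: the paper does not prove this theorem at all --- it is imported verbatim from \cite{agrachevsmooth}, so there is no internal proof to measure you against. On their own merits, your first two stages are essentially correct and are the natural route given the paper's toolkit: the inclusion ``smooth point $\Rightarrow x\in\Sigma_{x_0}$'' is exactly an application of Lemma~\ref{l:lambdat} (whose proof already exhibits the right inverse $\Theta$ of $\EXP_{x_0,1}$, giving non-conjugacy), and the converse via the inverse function theorem, the comparison function $g=H\circ\Phi$, and a limiting argument on minimizers is standard. The step you pass over quickly --- that minimizers $\sigma_n$ to $y_n\to x$ are eventually strictly normal with covectors converging to $\lambda_0$ --- does require the usual argument (strong $L^2$-subconvergence of minimizing controls, closedness of the abnormality relation $\eta_n D_{u_n}\End_{x_0,1}=0$ under this convergence, then uniqueness and continuity of the normal multiplier once $D_{u_n}\End_{x_0,1}$ is uniformly surjective), but it is fixable and Remark~\ref{r:abnormal} points in the right direction.

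The genuine gap is the opening move of your density argument. Local semiconcavity of $\f$ on $M\setminus\{x_0\}$ is \emph{not} a standard fact in sub-Riemannian geometry: it is known (Cannarsa--Rifford) only under the assumption that there are no singular minimizing controls, and your proposed derivation via Theorem~\ref{t:mst} fails precisely where it is needed, since that theorem covers only strongly normal geodesics --- near a point reached by an abnormal minimizer you cannot write $\f$ as a locally uniform minimum of smooth geodesic costs. Worse, without semiconcavity even the a.e.\ differentiability you invoke is unavailable: $\f$ is $1$-Lipschitz only with respect to $\dist$, hence merely H\"older in coordinates, so neither Rademacher nor Alexandrov applies. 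Since the whole difficulty of the theorem is the possible presence of abnormal minimizers, assuming semiconcavity begs the question. The argument of \cite{agrachevsmooth} avoids it entirely: smooth \emph{lower} supports of $\f$ exist on a dense set by pure penalization (minimize $\f+C\lvert\cdot-x_c\rvert^2$ over a small ball; for $C$ large the minimum is interior, and the penalty touches $\f$ from below there, forcing every minimizer to that point to be normal with final covector prescribed by the support, hence unique) --- this uses only continuity of $\f$ --- after which the hard second-order analysis of the index form/Jacobi curve at such support points rules out conjugacy and abnormality. Your Sard step is sound but auxiliary; the step you explicitly defer to the reference is, as you yourself note, the core, so as it stands the density assertion is not proved.
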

This result can be seen as a ``global'' version of Theorem~\ref{t:smoothness}. Finally, as a consequence of Lemma~\ref{l:lambdat}, if $x\in \Sigma_{x_{0}}$ then $d_{x}\f=\lambda(1)$, where $\lambda(t)$ is the normal lift of $\gamma(t)$.

\subsection{Nilpotent approximation and privileged coordinates} \label{s:deltaeps} \index{nilpotent approximation}
In this section we briefly recall the concept of nilpotent approximation. For more details we refer to \cite{agrachevlocal,agragauthiersubanal,notejean,bellaiche}. See also \cite{mitchell} for equiregular structures. The classical presentation that follows relies on the introduction of a set of privileged coordinates; an intrinsic construction can be found in \cite{nostrolibro}.

Let $M$ be a bracket-generating sub-Riemannian manifold. The \emph{flag} of the distribution at a point $x\in M$ is the sequence of subspaces
$\distr_x^0\subset\distr_x^{1}\subset\distr_x^{2}\subset\ldots\subset T_xM$ defined by
\begin{equation}
\distr^0_x\doteq  \{0\},\qquad\distr_x^{1}\doteq \distr_x,\qquad\distr_x^{i+1}\doteq \distr_x^{i}+[\distr^{i},\distr]_x,
\end{equation}
where, with a standard abuse of notation, we understand that $[\distr^i,\distr]_x$ is the vector space generated by the iterated Lie brackets, up to length $i+1$, of local sections of the distribution, evaluated at $x$. We denote by $\m=\m_{x}$ the \emph{step of the distribution} at $x$, i.e. the smallest integer such that $\distr_x^{\m}=T_{x}M$. The sub-Riemannian structure is called \emph{equiregular} if $\dim \distr^i_x$ does not depend on $x\in M$, for every $i\geq1$.

Let $O_{x}$ be an open neighbourhood of the point $x\in M$. We say that a system of coordinates $\psi: O_{x}\to \R^{n}$ is \emph{linearly adapted} to the flag if, in these coordinates, $\psi(x)=0$ and
\begin{equation} 
\psi_{*}(\distr^{i}_{x})=\R^{h_{1}}\oplus \ldots \oplus \R^{h_{i}}, \qquad \all i=1,\ldots,\m, 
\end{equation}
where $h_i=\tx{dim}\, \distr^{i}_{x}-\tx{dim}\, \distr^{i-1}_{x}$ for $i=1,\ldots,\m$. Indeed $h_{1}+\ldots+h_{\m}=n$.

In these coordinates, $x=(x_{1},\ldots,x_{\m})$, where $x_{i}=(x_{i}^{1},\ldots,x_{i}^{h_{i}})\in \R^{h_{i}}$, and $T_x M =\R^{h_{1}}\oplus \ldots \oplus \R^{h_{\m}}$.
The space of all differential operators in $\R^{n}$ with smooth coefficients forms an associative
algebra with composition of operators as multiplication. The differential operators with
polynomial coefficients form a subalgebra of this algebra with generators $1, x_{i}^{j} ,\partial_{x_{i}^{j}},$ where
$i=1,\ldots,\m;\  j = 1,\ldots,k_{i}$. We define weights of generators as follows:
$\nu(1)=0,\,  \nu(x_{i}^{j})=i,\,  \nu(\partial_{x_{i}^{j}})=-i,$ and the weight of monomials accordingly.
Notice that a polynomial differential operator homogeneous with respect to $\nu$ (i.e. whose monomials are all of same weight) is homogeneous with respect to dilations $\delta_{\al}:\R^{n}\to \R^{n}$ defined by
\begin{equation}\label{eq:dilations0}
 \delta_{\al}(x_{1},\ldots,x_{\m})=(\al x_{1},\al^{2}x_{2},\ldots, \al^{\m}x_{\m}),\qquad \al>0.
\end{equation}
In particular for a homogeneous vector field $X$ of weight $h$ it holds $\delta_{\al*}X=\al^{-h}X
$.

Let $X\in \tx{Vec}(\R^{n})$, and consider its Taylor expansion at the origin as a first order differential operator. Namely, we can write the formal expansion
\begin{equation} 
X\approx \sum_{h=-\m}^{\infty} X^{(h)}, 
\end{equation}
where $X^{(h)}$ is the homogeneous part of degree $h$ of $X$ (notice that every monomial of a first order differential operator has weight not smaller than $-\m$).
Define the filtration of  $\tx{Vec}(\R^{n})$ 
\begin{equation} 
\tx{Vec}^{(h)}(\R^{n})=\{X\in \tx{Vec}(\R^{n}): X^{(i)}=0, \all i<h\}, \qquad h\in \mb{Z}. 
\end{equation}
\bdeff A system of coordinates $\psi: O_{x}\to \R^{n}$ is called \emph{privileged} for the sub-Riemannian structure if they are linearly adapted and $\psi_{*}X_{i}\in \tx{Vec}^{(-1)}(\R^{n})$ for every $i=1,\ldots,k$. \index{privileged coordinates}
\edeff
The existence of privileged coordinates is proved, e.g. in \cite{agrachevlocal,bellaiche}. Notice, however, that privileged coordinates are not unique. Now we are ready to define the sub-Riemannian tangent space of $M$ at $x$.
\bdeff
Given a set of privileged coordinates, the \emph{nilpotent approximation at $x$} is the sub-Riemannian structure on $T_x M = \R^{n}$ defined by the set of vector fields $\wh{X}_{1}, \ldots, \wh{X}_{k}$, where $\wh{X}_{i}\doteq (\psi_{*}X_{i})^{(-1)} \in \mathrm{Vec}(\R^n)$.
\edeff
The definition is well posed, in the sense that the structures obtained by different sets of privileged coordinates are isometric (see \cite[Proposition 5.20]{bellaiche}). Then, in what follows we omit the coordinate map in the notation above, identifying $T_x M = \R^n$ and a vector field with its coordinate expression in $\R^{n}$. The next proposition also justifies the name of the sub-Riemannian tangent space (see \cite[Proposition 5.17]{bellaiche}).
\begin{proposition}
The vector fields $\wh{X}_1,\ldots \wh{X}_k$ generate a nilpotent Lie algebra $\mathrm{Lie}(\wh{X}_1,\ldots,\wh{X}_k)$ of step $\m$. At any point $z \in \R^n$ they satisfy the bracket-generating assumption, namely $\mathrm{Lie}_z(\wh{X}_1,\ldots,\wh{X}_k) = \R^n$. %The associated sud-Riemannian distance $\wh{d}(z,w)$ is finite for all $z,w \in \R^n$. 
\end{proposition}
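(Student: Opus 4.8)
The plan is to reduce everything to the homogeneity of the fields $\wh X_i$ under the dilations $\delta_\al$. By the very definition of privileged coordinates, $\psi_* X_i \in \tx{Vec}^{(-1)}(\R^n)$ and $\wh X_i = (\psi_* X_i)^{(-1)}$ is its homogeneous component of weight $-1$; thus each $\wh X_i$ is a polynomial vector field with $\delta_{\al *}\wh X_i = \al\,\wh X_i$. Two elementary facts drive the argument. First, since $\delta_{\al *}[X,Y] = [\delta_{\al *}X,\delta_{\al *}Y]$, the weight is additive under Lie brackets, so an $\ell$-fold iterated bracket of the $\wh X_i$ is homogeneous of weight $-\ell$. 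Second, every monomial of a first order differential operator has weight at least $-\m$ (the polynomial coefficient has non-negative weight and $\nu(\partial_{x_i^j}) = -i \geq -\m$); hence a homogeneous vector field of weight $< -\m$ vanishes identically. Together these give nilpotency of step at most $\m$, since any bracket of length $\ell > \m$ is homogeneous of weight $-\ell < -\m$.

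To pin down the step and prove bracket-generation I would first work at the origin. Writing $\psi_* X_i = \wh X_i + R_i$ with $R_i \in \tx{Vec}^{(0)}(\R^n)$, multilinearity of the bracket and additivity of weights show that an $i$-fold bracket $B$ of the $\psi_* X_j$ splits as $B = \wh B + (\text{terms of weight} > -i)$, where $\wh B$ is the corresponding bracket of the $\wh X_j$. Evaluating at $0$, a homogeneous field of weight $-\ell$ ($1\le \ell\le\m$) lies in the layer $\R^{h_\ell}$ while fields of non-negative weight vanish; therefore $B(0) \equiv \wh B(0) \pmod{\distr^{i-1}_0}$, where in linearly adapted coordinates $\distr^{i-1}_0 = \R^{h_1}\oplus\dots\oplus\R^{h_{i-1}}$. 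An induction on $i$, using the flag recursion $\distr^i = \distr^{i-1} + [\distr^{i-1},\distr]$ of the original structure, then yields
\[
\spn\{\wh B(0) : \wh B \text{ iterated bracket of length} \le i\} = \distr^i_0 = \R^{h_1}\oplus\dots\oplus\R^{h_i}.
\]
For $i=\m$ this is $\R^n$, giving $\mathrm{Lie}_0(\wh X_1,\dots,\wh X_k)=\R^n$; moreover the flag of the $\wh X_i$ at $0$ reaches $\R^n$ precisely at length $\m$ and carries a nonzero $\m$-fold bracket (as $\R^{h_\m}\neq 0$), so the nilpotency step is exactly $\m$.

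It remains to upgrade bracket-generation from the origin to an arbitrary $z\in\R^n$, and this is the delicate point: for a generic polynomial system, bracket-generation at one point says nothing about the others. Here homogeneity saves the day. Collecting the finitely many nonzero iterated brackets $Y_1,\dots,Y_N$ (of length $\le\m$), each $Y_\beta$ is $\delta$-homogeneous, and $\delta_{\al *}Y_\beta = \al^{-w_\beta}Y_\beta$ shows that $D\delta_\al(z)$ carries $\spn\{Y_\beta(z)\}$ isomorphically onto $\spn\{Y_\beta(\delta_\al z)\}$; hence $\dim\mathrm{Lie}_z$ is constant along each dilation orbit $\{\delta_\al z\}_{\al>0}$. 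Since the rank of a continuous family of vector fields is lower semicontinuous, the set where it equals $n$ is open and contains $0$; as $\delta_\al z\to 0$ when $\al\to 0^+$, every $z$ can be pushed into this neighbourhood, whence $\dim\mathrm{Lie}_z = \dim\mathrm{Lie}_{\delta_\al z}=n$. This proves $\mathrm{Lie}_z(\wh X_1,\dots,\wh X_k)=\R^n$ for all $z$. (Alternatively, one may invoke that the $\wh X_i$ are left-invariant for the homogeneous nilpotent group structure they generate, cf.\ \cite{bellaiche}, which makes translation-invariance of the growth vector manifest.)
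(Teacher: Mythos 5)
Your proof is correct, but note that the paper itself offers no proof of this proposition: it simply cites Bell\"aiche (Proposition 5.17 of \emph{The tangent space in sub-Riemannian geometry}), so there is no internal argument to compare against. Your reconstruction is essentially the standard one from that literature, organized cleanly around $\delta_\alpha$-homogeneity: additivity of weights under brackets plus the bound $\nu \geq -\m$ for monomials of first-order operators gives nilpotency of step $\leq \m$; the graded comparison $B = \wh B + (\text{weight} > -i)$ with evaluation at the origin (where weight-$(-\ell)$ fields land in the layer $\R^{h_\ell}$ and weight-$\geq 0$ fields vanish) gives, by induction on the flag $\distr^{i} = \distr^{i-1} + [\distr^{i-1},\distr]$, bracket-generation at $0$ and step exactly $\m$ since $h_\m \neq 0$. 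Your globalization step is the one genuinely nontrivial point and you handle it correctly and rather elegantly: a generic polynomial family bracket-generating at one point need not do so elsewhere, but here $D\delta_\alpha(z)$ intertwines $\spn\{Y_\beta(z)\}$ with $\spn\{Y_\beta(\delta_\alpha z)\}$, so the rank of $\mathrm{Lie}_z$ is constant along dilation orbits, and lower semicontinuity of rank pushes the full-rank condition from a neighbourhood of $0$ to every $z$ via $\delta_\alpha z \to 0$; Bell\"aiche instead exploits the left-invariance of the $\wh X_i$ for the induced homogeneous group structure, which you correctly flag as an alternative. Two minor points you use implicitly and could state: the value $B(0)$ of a smooth (non-polynomial) field is determined by its formal Taylor series, so the weight-filtration argument applies to the smooth remainders $R_i \in \mathrm{Vec}^{(0)}(\R^n)$, whose value at $0$ vanishes because weight-$\geq 0$ monomials have non-constant coefficients; and the flag $\distr^i_x$, defined via brackets of arbitrary horizontal sections, is already spanned at a point by iterated brackets of the frame fields $X_1,\ldots,X_k$ (a routine Leibniz expansion). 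Neither is a gap.
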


\begin{remark}
The sub-Riemannian distance $\widehat{\dist}$ on the nilpotent approximation is homogeneous with respect to dilations $\delta_{\al}$, i.e. $\widehat{\dist}(\delta_{\al}(x),\delta_{\al}(y))=\al \,\widehat{\dist}(x,y)$.
\end{remark}

\bdeff 
Let $X_{1},\ldots,X_{k}$ be a set of vector fields which defines the sub-Riemannian structure on $M$ and fix a system of privileged coordinates at $x\in M$. The $\eps$-approximating system at $x$ is the sub-Riemannian structure induced by the vector fields $X_{1}^{\eps},\ldots,X_{k}^{\eps}$ defined by
\begin{equation}
X_{i}^{\eps}\doteq \eps \delta_{1/\eps*}X_{i}, \qquad 
%X_{2}^{\eps}\doteq \eps \delta_{\frac{1}{\eps}*}X_{2}. 
i=1,\ldots,k.
\end{equation}
\edeff
The following lemma is a consequence of the definition of $\eps$-approximating system and privileged coordinates.
\begin{lemma}\label{l:convframe}
$X_{i}^{\eps}\to \wh{X}_{i}$ in the $C^{\infty}$ topology of uniform convergence of all derivatives on compact sets in $\R^{n}$ when $\eps \to 0$, for $i=1,\ldots,k$.
\end{lemma}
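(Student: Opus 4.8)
The plan is to carry out the whole argument inside the fixed privileged chart, identifying $X_i$ with its coordinate expression $\psi_*X_i=\sum_{\ell=1}^n a^\ell(x)\,\partial_{x_\ell}$ on a neighbourhood of $0\in\R^n$, where each coordinate $x_\ell$ has a weight $w_\ell\in\{1,\ldots,\m\}$ (so $\partial_{x_\ell}$ has weight $-w_\ell$). First I would compute $X_i^\eps$ explicitly. Since $\delta_\eps$ scales the weight-$w_\ell$ coordinate by $\eps^{w_\ell}$ and $\delta_\eps^{-1}=\delta_{1/\eps}$, the pushforward formula $(\delta_{1/\eps*}X)_y=(d\delta_{1/\eps})_{\delta_\eps(y)}X_{\delta_\eps(y)}$ together with the prefactor $\eps$ gives
\[
X_i^\eps(x)=\sum_{\ell=1}^n \eps^{\,1-w_\ell}\,a^\ell(\delta_\eps(x))\,\partial_{x_\ell}.
\]
This reduces the statement to a coordinatewise claim: for each fixed coefficient $b\doteq a^\ell$, the function $\eps^{1-w_\ell}b(\delta_\eps(x))$ converges, in the $C^\infty$ topology of uniform convergence of all derivatives on compact sets, to the weight-$(w_\ell-1)$ homogeneous part of $b$, which is exactly the coefficient of $\partial_{x_\ell}$ in $\wh X_i=(\psi_*X_i)^{(-1)}$ (a term $c_\beta x^\beta\partial_{x_\ell}$ has weight $\nu(\beta)-w_\ell$, equal to $-1$ precisely when $\nu(\beta)=w_\ell-1$).

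Next I would extract the homogeneous part via a weighted Taylor expansion. A monomial $x^\beta$ has weight $\nu(\beta)=\sum_\ell w_\ell\beta_\ell$ obeying $|\beta|\le\nu(\beta)\le\m\,|\beta|$, and $\delta_\eps$ rescales it by $\eps^{\nu(\beta)}$. Splitting $b$ into its Taylor polynomial $P_D$ of ordinary degree $D$ plus a remainder $R_D$ yields
\[
\eps^{1-w_\ell}b(\delta_\eps(x))=\sum_{|\beta|\le D}c_\beta\,\eps^{\,\nu(\beta)-w_\ell+1}\,x^\beta+\eps^{1-w_\ell}R_D(\delta_\eps(x)).
\]
Here the privileged-coordinates hypothesis $\psi_*X_i\in\tx{Vec}^{(-1)}(\R^n)$ is decisive: it forces $c_\beta=0$ whenever $\nu(\beta)<w_\ell-1$, so no negative powers of $\eps$ survive. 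The monomials with $\nu(\beta)=w_\ell-1$ carry $\eps^0$ and assemble precisely the desired homogeneous part, while those with $\nu(\beta)>w_\ell-1$ carry a strictly positive power of $\eps$; as these are fixed polynomials, their contribution tends to the homogeneous part in $C^\infty_{\mathrm{loc}}$ automatically.

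The main obstacle, and the only genuinely analytic point, is the uniform control of the remainder together with all its derivatives. For a multi-index $\gamma$ the chain rule gives $\partial^\gamma_x[R_D(\delta_\eps(x))]=\eps^{\nu(\gamma)}(\partial^\gamma R_D)(\delta_\eps(x))$; on a compact $K$ one has $|\delta_\eps(x)|\le C_K\eps$ for $\eps\le 1$, and Taylor's theorem for the smooth function $b$ gives $|\partial^\gamma R_D(z)|\le C|z|^{\,D+1-|\gamma|}$ for $|\gamma|\le D$. Combining these produces a bound of the form $C'\,\eps^{\,(D+2-w_\ell)+(\nu(\gamma)-|\gamma|)}$, and since $\nu(\gamma)\ge|\gamma|$ and $w_\ell\le\m$, choosing $D\ge\max(\m,p)$ forces the exponent to be $\ge 2$ for every derivative of order $|\gamma|\le p$. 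Hence both the polynomial part and the remainder converge as required in each $C^p(K)$ norm; letting $p$ and $K$ vary gives $C^\infty_{\mathrm{loc}}$ convergence of each coefficient, and summing over $\ell$ against the fixed coordinate frame $\partial_{x_\ell}$ yields $X_i^\eps\to\wh X_i$, for every $i=1,\ldots,k$.
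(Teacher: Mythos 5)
Your proof is correct, and it is precisely the argument the paper leaves implicit: the lemma is stated there without proof, as ``a consequence of the definition of $\eps$-approximating system and privileged coordinates'', and your computation — the coordinate formula $X_i^\eps(x)=\sum_\ell \eps^{1-w_\ell}a^\ell(\delta_\eps(x))\partial_{x_\ell}$, the weighted Taylor splitting in which the privileged-coordinates condition $\psi_*X_i\in\tx{Vec}^{(-1)}(\R^n)$ kills all negative powers of $\eps$, and the remainder estimate via $\partial^\gamma[R_D\circ\delta_\eps]=\eps^{\nu(\gamma)}(\partial^\gamma R_D)\circ\delta_\eps$ with $\nu(\gamma)\geq|\gamma|$ — is exactly the standard way to make that assertion rigorous. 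All the key points check out, including the observation that $|\beta|\leq\nu(\beta)=w_\ell-1\leq\m-1$ guarantees the weight-homogeneous part is captured by any Taylor polynomial of degree $D\geq\m$, so nothing further is needed.
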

Therefore, the  nilpotent approximation $\wh{X}$ of a vector field $X$ at a point $x$ is the \virg{principal part} in the expansion when one considers the blown up coordinates near the point $x$, with rescaled distances. 

\subsection{Approximating trajectories}\label{s:approxtraj}

In this subsection we show, in a system of privileged coordinates $\psi: O_x \to \mathbb{R}^n$, how the normal trajectories of the $\eps$-approximating system converge to corresponding normal trajectories of the nilpotent approximation.

Let $H^\eps :T^*\mathbb{R}^n \to \mathbb{R}$ be the maximized Hamiltonian for the $\eps$-approximating system, and $\EXP^\eps : T^*_0\mathbb{R}^n\to \mathbb{R}^n$ the corresponding exponential map (starting at $0$). We denote by the symbols $\wh{H}$ and $\wh{\EXP}$ the analogous objects for the nilpotent approximation. The $\eps$-approximating normal trajectory $\gamma^\eps(t)$ converges to the corresponding nilpotent trajectory $\wh{\gamma}(t)$.

\begin{proposition}\label{p:catapulta} \index{nilpotent approximation!convergence lemma}
Let $\lam_0 \in T_0^*\mathbb{R}^n$. Let $\gamma^\eps:[0,T] \to \mathbb{R}^n$ and $\wh{\gamma}:[0,T]\to \mathbb{R}^n$ be the normal geodesics associated with $\lam_0$ for the $\eps$-approximating system and for the nilpotent system, respectively. Let $u^\eps :[0,T] \to \mathbb{R}^k$ and $\wh{u}:[0,T] \to \mathbb{R}^k$ be the associated controls. Then there exists a neighbourhood $O_{\lam_0} \subset T_0^*\mathbb{R}^n$ of $\lam_0$ such that for $\eps \to 0$
\begin{itemize}
\item[(i)] $\EXP^\eps \to \widehat{\EXP}$ in the $C^{\infty}$ topology of uniform convergence of all derivatives on $O_{\lam_0}$,
\item[(ii)] $\gamma^\eps \to \wh{\gamma}$ in the $C^{\infty}$ topology of uniform convergence of all derivatives on $[0,T]$,
\item[(iii)] $u^\eps \to \wh{u}$ in the $C^{\infty}$ topology of uniform convergence of all derivatives on $[0,T]$.
\end{itemize}
\end{proposition}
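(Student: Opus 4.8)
The plan is to reduce everything to the smooth dependence of the Hamiltonian flow on the vector fields defining the sub-Riemannian structure, and then to invoke the classical theorem on the dependence of solutions of ordinary differential equations on parameters and initial data; the only input specific to the $\eps$-approximating scheme is Lemma~\ref{l:convframe}.

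First I would record the convergence of the Hamiltonians. The maximized Hamiltonian of the $\eps$-approximating system is $H^\eps(\lam)=\frac12\sum_{i=1}^k\langle\lam,X_i^\eps(x)\rangle^2$, with $x=\pi(\lam)$, and likewise $\wh H(\lam)=\frac12\sum_{i=1}^k\langle\lam,\wh X_i(x)\rangle^2$. These depend on the defining vector fields through operations that are polynomial in the cotangent variable, so the $C^\infty$-convergence $X_i^\eps\to\wh X_i$ on compact sets of $\mathbb{R}^n$ furnished by Lemma~\ref{l:convframe} immediately gives $H^\eps\to\wh H$ in the $C^\infty$ topology of uniform convergence of all derivatives on compact sets of $T^*\mathbb{R}^n$. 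Since $\vec H$ is obtained from $H$ by one differentiation, the Hamiltonian vector fields also converge, $\vec H^\eps\to\vec{\wh H}$, in the same topology.

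Next I would pass from vector fields to flows. The lift $\wh\lam(t)=e^{t\vec{\wh H}}(\lam_0)$ of the nilpotent geodesic is defined on $[0,T]$ by hypothesis, and its image is contained in a compact set $K\subset T^*\mathbb{R}^n$. Because the maximal existence time of a flow depends lower-semicontinuously on the vector field and the initial datum, the $C^\infty$-convergence $\vec H^\eps\to\vec{\wh H}$ yields a threshold $\eps_0>0$ and a neighbourhood $O_{\lam_0}\subset T_0^*\mathbb{R}^n$ such that, for $\eps<\eps_0$ and $\lam\in O_{\lam_0}$, the flow $e^{t\vec H^\eps}(\lam)$ is defined for all $t\in[0,T]$ and stays in a fixed compact neighbourhood of $K$. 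On this common domain the standard theorem on smooth dependence of ODE solutions gives $e^{t\vec H^\eps}\to e^{t\vec{\wh H}}$ in the $C^\infty$ topology, uniformly for $t\in[0,T]$.

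The three assertions then follow by composing the convergent flow with smooth maps. Since $H^\eps$ is fibre-wise quadratic one has $\EXP^\eps(\lam)=\pi\circ e^{\vec H^\eps}(\lam)$, so composing with the smooth projection $\pi$ proves (i); specialising to the fixed covector $\lam_0$ and keeping $t$ as the variable gives (ii) for $\gamma^\eps(t)=\pi\circ e^{t\vec H^\eps}(\lam_0)$; and for (iii) the minimal control is recovered from the lift by the maximality condition of Theorem~\ref{t:pmp} as $u_i^\eps(t)=\langle\lam^\eps(t),X_i^\eps(\gamma^\eps(t))\rangle$ with $\lam^\eps(t)=e^{t\vec H^\eps}(\lam_0)$, and the $C^\infty$-convergence of both $\lam^\eps(\cdot)$ and $X_i^\eps$ forces $u^\eps\to\wh u$.

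The hard part is the last step of the passage from vector fields to flows: controlling the existence time uniformly in $\eps$ and propagating convergence to \emph{all} derivatives. Confinement to $[0,T]$ for small $\eps$ follows from a Gr\"onwall estimate for the difference of the flows. The convergence of derivatives of every order is handled by induction on the order: the derivatives of $e^{t\vec H^\eps}$ in the initial covector solve linear variational equations whose coefficients are derivatives of $\vec H^\eps$ evaluated along the already-convergent trajectory, and the $C^\infty$-convergence of $\vec H^\eps$ together with the inductive hypothesis makes these linear systems---and hence their solutions---converge, closing the induction.
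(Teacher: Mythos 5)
Your proof is correct, and it reaches the conclusion by a genuinely different (more hands-on) route than the paper's. The paper's proof (Appendix~\ref{a:proofcatapulta}) does not run a perturbation argument at all: it suspends the parameter, considering on $\mathbb{R}^{2n+1}$ the augmented Cauchy problem $\dot{\lambda}=\vec{H}(\eps,\lambda)$, $\dot{\eps}=0$, where $H(\eps,\lambda)\doteq H^{\eps}(\lambda)$ is smooth \emph{jointly} in $(\eps,\lambda)$ by the very construction of the $\eps$-approximating fields in privileged coordinates --- a stronger input than the mere $C^\infty$-convergence of Lemma~\ref{l:convframe} that you use. Classical smooth dependence of the augmented flow $\Phi(t;\lambda_0,\eps)$ on initial data then produces, in one stroke, a common domain $I_0\times O_{\lam_0}$ and convergence of $\phi^{\eps}$ to $\wh{\phi}$ together with all derivatives: the Gr\"onwall confinement and the induction on variational equations that you carry out by hand are absorbed into the single statement that $\Phi$ is smooth in all its arguments (restricting to compact $I_0$, $O_{\lam_0}$ makes the convergence uniform). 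The paper then passes from the small interval $I_0$ to $[0,T]$ by homogeneity of the fibre-wise quadratic Hamiltonian, $\EXP(t,s\lambda)=\EXP(ts,\lambda)$, whereas you reach $[0,T]$ directly by exploiting that the nilpotent extremal lift is defined on all of $[0,T]$ and confining the perturbed lifts to a compact neighbourhood of it. What your route buys is generality: it needs only $C^\infty$-convergence of the frames, not smooth dependence on $\eps$, so it would apply verbatim to any family of control systems converging in that topology. What the paper's route buys is brevity and a slightly stronger conclusion, namely smoothness of the flow in $\eps$ up to $\eps=0$, from which statements (i)--(iii) follow by restriction, projection and the expression $u_i^{\eps}(t)=\langle\lam^{\eps}(t),X_i^{\eps}(\gamma^{\eps}(t))\rangle$, exactly as in your final step.
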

The proof of Proposition~\ref{p:catapulta} is a consequence of a more general statement for the Hamiltonian flow of the approximating systems, which can be found in Appendix~\ref{a:proofcatapulta}.

\section{Existence of ample geodesics}\label{s:ample}
In this section we discuss the properties of the growth vector in the sub-Riemannian setting. Even though we defined the growth vector for any admissible curve, here we restrict our attention to (possibly abnormal) geodesics. Thus, we employ the terminology \emph{geodesic flag} and \emph{geodesic growth vector} to denote the flag and growth vector of a geodesic, respectively. We start with a basic estimate, which is a direct consequence of the alternative definition of the geodesic flag given in Section~\ref{s:altdef}.
\begin{lemma} \label{l:upbound} Let $\g:[0,T]\to M$ be a normal geodesic.
For every $t\in[0,T]$ and every $i\geq 1$ one has 
\begin{equation}
\dim \DD_{\g}^{i}(t)\leq \dim \distr^{i}_{\gamma(t)}.
\end{equation}
\end{lemma}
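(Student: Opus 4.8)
The plan is to use the alternative, Lie-theoretic description of the geodesic flag from Section~\ref{s:altdef} together with the defining feature of the sub-Riemannian setting, namely the absence of a drift ($f_0=0$). First I would reduce to the case $t=0$: by Remark~\ref{r:durezza} one has $\dim \DD_\gamma^i(t)=\dim \DD_{\gamma_t}^i(0)$ for the shifted geodesic $\gamma_t(s)=\gamma(t+s)$, whose flag at time $0$ is computed at the base point $\gamma(t)$. Hence it suffices to establish $\dim \DD_\gamma^i(0)\leq \dim \distr^i_{x_0}$ with $x_0=\gamma(0)$, and then apply this to each shifted geodesic $\gamma_t$.

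Next I would invoke Definition~\ref{d:flagalt}. Since $\gamma$ is a normal geodesic it is smooth (Theorem~\ref{t:pmp}), so it admits a smooth admissible extension $\tanf$ of $\dot\gamma$, and
\begin{equation}
\DD_\gamma^i(0)=\spn\{\mc{L}_\tanf^j(X)|_{x_0} \mid X \in \overline{\distr},\ j \leq i-1\}.
\end{equation}
The key observation, and the whole point of the sub-Riemannian hypothesis, is that with $f_0=0$ any admissible extension is horizontal: $\tanf=\sum_{\ell=1}^k \alpha_\ell X_\ell \in \overline{\distr}$. Consequently each generator $\mc{L}_\tanf^j(X)=\ad_\tanf^j(X)$ is an iterated Lie bracket of length $j+1$ of sections of the distribution $\distr$.

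I would then compare with the flag of the distribution $\distr^0_x\subset\distr^1_x\subset\cdots$ introduced in Section~\ref{s:deltaeps}, where by definition $\distr^{j+1}_x$ is spanned by all iterated brackets of length at most $j+1$ of local sections of $\distr$, evaluated at $x$. Thus $\mc{L}_\tanf^j(X)|_{x_0}\in \distr^{j+1}_{x_0}$, and for $j\leq i-1$ this gives $\mc{L}_\tanf^j(X)|_{x_0}\in \distr^i_{x_0}$ since the flag is increasing. Therefore $\DD_\gamma^i(0)\subseteq \distr^i_{x_0}$, and taking dimensions yields the claim.

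The argument is essentially a bookkeeping of bracket lengths, so I do not expect a serious analytic obstacle. The only points requiring care are the well-posedness of the alternative flag (that $\DD_\gamma^i(0)$ does not depend on the chosen extension $\tanf$), which is already guaranteed by Proposition~\ref{p:doesnotdepend}, and the smoothness of $\gamma$ needed to apply Definition~\ref{d:flagalt}, which follows from the Pontryagin Maximum Principle for normal extremals.
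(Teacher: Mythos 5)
Your proof is correct and follows exactly the route the paper intends: Lemma~\ref{l:upbound} is stated there as a direct consequence of the alternative definition of the geodesic flag (Definition~\ref{d:flagalt}), since in the sub-Riemannian case ($f_0=0$) any admissible extension $\tanf$ is horizontal, so each generator $\mc{L}_\tanf^j(X)|_{\gamma(t)}$ is an iterated bracket of sections of $\distr$ of length $j+1$ and hence lies in $\distr^{j+1}_{\gamma(t)}$. Your added bookkeeping (reduction to $t=0$ via Remark~\ref{r:durezza}, smoothness of normal geodesics via PMP, well-posedness via Proposition~\ref{p:doesnotdepend}) just makes explicit what the paper leaves implicit.
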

Next we prove the existence of ample geodesics on every sub-Riemannian manifold. 
\begin{theorem} \label{t:bound} Let $M$ be a sub-Riemannian manifold and $x_{0}\in M$. Then there exists at least one geodesic $\g:[0,T]\to M$ starting at $x_{0}$ that is ample at every $t\in[0,T]$.
\end{theorem}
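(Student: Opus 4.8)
The plan is to reduce the claim, in two steps, to a property of the nilpotent approximation, where the crucial gain will be analyticity. First I would observe that it suffices to produce a geodesic $\gamma$ issuing from $x_0$ that is ample at the single time $t=0$, i.e. with $\dim\DD^m_\gamma(0)=n$ for some $m$. Indeed, the functions $t\mapsto k_i(t)=\dim\DD^i_\gamma(t)$ are lower semicontinuous (Remark~\ref{r:di}), so from $k_m(0)=n$ and $k_m(t)\le n$ one gets $k_m(t)=n$ on a neighbourhood $[0,\delta)$ of $0$; restricting to $[0,T]$ with $T<\delta$ then yields a geodesic ample at every $t\in[0,T]$, as required.

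Next I would fix privileged coordinates at $x_0$, identify $T_{x_0}M=\R^n$, and transfer ampleness at $t=0$ between the original structure and its nilpotent approximation through the $\eps$-approximating systems $X^\eps_i=\eps\,\delta_{1/\eps*}X_i$. The key point is that ampleness at $t=0$ is insensitive to the parameter $\eps>0$: the dilation $\delta_{1/\eps}$ is a pure state transformation, under which the flag $\DD^i_\gamma(0)$ is carried to its image by the linear isomorphism $(\delta_{1/\eps})_*$ (Proposition~\ref{p:durezza}), while the overall rescaling by $\eps$ leaves the subspaces $\DD^i_\gamma(0)=\spn\{B_1(0),\dots,B_i(0)\}$ unchanged. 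Hence a geodesic of the original system is ample at $t=0$ if and only if the corresponding geodesic of the $\eps$-system is. Fixing a covector $\lam_0\in T^*_0\R^n$ and letting $\gamma^\eps,\wh\gamma$ be the associated geodesics of the $\eps$-approximating and nilpotent systems, Lemma~\ref{l:convframe} and Proposition~\ref{p:catapulta} give $X^\eps\to\wh X$ and $u^\eps\to\wh u$ with all derivatives, so the matrices $[B^\eps_1(0)\,|\,\cdots\,|\,B^\eps_m(0)]$ converge to the nilpotent one. Since the rank is lower semicontinuous, if the nilpotent matrix has rank $n$ (that is, if $\wh\gamma$ is ample at $t=0$) then $\gamma^\eps$ is ample at $t=0$ for all small $\eps>0$, and therefore so is the corresponding original geodesic. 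This reduces everything to exhibiting a geodesic of the nilpotent approximation that is ample at $t=0$.

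Finally I would settle the nilpotent case using analyticity. The fields $\wh X_1,\dots,\wh X_k$ are polynomial, so the nilpotent approximation is an analytic, bracket-generating, complete sub-Riemannian structure on $\R^n$. Applying Theorem~\ref{t:d2sr} to it, the set $\Sigma_0$ of smooth points of $\tfrac12\wh{\dist}^2(0,\cdot)$ is dense; choosing any $x\in\Sigma_0$, the unique minimizer $\wh\gamma:[0,1]\to\R^n$ from $0$ to $x$ is, by the very definition of $\Sigma_0$, strictly normal. Because the system is analytic, Proposition~\ref{p:amplecontr} furnishes the implication ``strictly normal $\Rightarrow$ ample at every $t$'', whence $\wh\gamma$ is ample at $t=0$. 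Together with the two reductions above, this proves the existence of an ample geodesic from $x_0$.

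The main obstacle — and the very reason for introducing the nilpotent approximation — is that for a merely smooth structure ``strictly normal'' does not imply ``ample'', so one cannot argue directly on $M$; the analyticity of the nilpotent approximation is exactly what unlocks Proposition~\ref{p:amplecontr}. The remaining delicate points are the dilation invariance of the flag dimension and the lower-semicontinuous (rank) transfer of ampleness along the family $\eps\to0$, which must be organized so that the good values of $\eps$ accumulate at $0$.
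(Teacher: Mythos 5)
Your proposal is correct and follows essentially the same route as the paper's proof: the $\eps$-approximating systems with convergence of frames and controls (Lemma~\ref{l:convframe}, Proposition~\ref{p:catapulta}), lower semicontinuity of the rank of the matrices $B_i$ (the paper's Lemma~\ref{l:situa1}), the dilation correspondence between geodesics (the paper's Lemma~\ref{l:situa2}, which you recover via state-feedback invariance, Proposition~\ref{p:durezza}), and the analytic nilpotent argument combining Theorem~\ref{t:d2sr} with Proposition~\ref{p:amplecontr} (the paper's Lemma~\ref{l:situa3}). The only deviation is a harmless streamlining: by first reducing to ampleness at $t=0$ via lower semicontinuity of $t\mapsto k_i(t)$ along a fixed geodesic, you bypass the paper's uniform-in-$t$ choice of $\bar\eps$, which relies on joint lower semicontinuity of $(\eps,t)\mapsto\dim\DD^i_{\gamma^\eps}(t)$ on $[0,1]\times[0,T]$ to get ampleness at every $t$ directly.
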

\begin{proof} 
Consider privileged coordinates on a neighbourhood $O_{x_{0}}$ of $x_{0}$ and let $\lam_{0}\in T^{*}_{x_{0}}M$. As in Proposition \ref{p:catapulta}, for every $\eps>0$ sufficiently small, we define the curve $\g^{\eps}(t)=\EXP^{\eps}(t,\lambda_{0})$ which is a normal geodesic for the $\eps$-approximating system. Let $\wh \gamma=\wh\EXP(t,\lam_{0})$ be the normal geodesic associated with $\lam_{0}$ in the nilpotent approximation at $x_{0}$. Recall that $\gamma^{\eps}\to \wh\gamma$ uniformly with all derivatives on some common neighbourhood of definition $[0,T]$.

\begin{lemma}\label{l:situa3} There always exists $\lam_{0}\in T^{*}_{x_{0}}M$ such that $\wh{\g}(t)=\wh{\EXP}(t,\lam_{0})$ is ample at every $t\in[0,T]$.
\end{lemma}
\begin{proof}[Proof of Lemma \ref{l:situa3}]
The nilpotent approximation at $x_{0}$ is an analytic sub-Riemannian structure. By Proposition~\ref{p:amplecontr}, every strictly normal geodesic is ample at every $t\in [0,T]$. The existence of at least one strictly normal geodesic (on any smooth sub-Riemannian manifold)  follows by Theorem~\ref{t:d2sr}.
\end{proof}

We now show that, for $\eps$ small enough, the growth vector of the geodesic $\wh \gamma$ controls (more precisely, bounds from below) the growth vector of the geodesic $\gamma^{\eps}$ of the $\eps$-approximating system.

\bl \label{l:situa1}
Let $\DD_{\gamma^{\eps}}^{i}(t)$ and $\DD^{i}_{\hat{\gamma}}(t)$ be the $i$-th element of the geodesic flag at time $t$ of $\gamma^{\eps}$ and $\wh{\g}$, respectively. Then, for every $i\geq 1$ and $t\in [0,T]$ we have 
\begin{equation} 
 \dim \DD^{i}_{\hat{\gamma}}(t)\leq \liminf_{\eps\to 0} \dim \DD_{\gamma^{\eps}}^{i}(t).
\end{equation}
%Moreover there exists $\bar \eps$ such that $\dim \DD^{i}_{\hat{\gamma}}(t)\leq \dim \DD_{\gamma^{\eps}}^{i}(t)$ for all $\eps\leq \bar \eps$.
\el
\begin{proof}[Proof of Lemma \ref{l:situa1}]
To compute the dimension of the geodesic flag, we use the criterion of Section~\ref{s:crit}. For any normal geodesic $\gamma$, associated with the control $u$, of the control system
\begin{equation} 
\dot{x}=f(x,u)=\sum_{i=1}^{k} u_{i} X_{i}(x), \qquad x\in \R^{n} ,
\end{equation}
we define the matrices
\begin{gather} 
A(t)= \dfrac{\partial f}{\partial x}(\gamma(t),u(t)),\qquad B(t)= \dfrac{\partial f}{\partial u}(\gamma(t),u(t)),  
\end{gather}
which, in turn, define the matrices
\begin{equation} \label{eq:BB0}
B_{1}(t)= B(t),\qquad B_{i+1}(t)= A(t)B_{i}(t)-\dot B_{i}(t),\qquad \all i\geq 1.
\end{equation}
Then
\begin{equation} 
\dim \DD^{i}_{\gamma}(t)=\rank\{B_{1}(t),\ldots,B_{i}(t)\}. 
\end{equation}
We apply the criterion to the geodesics $\g^{\eps}$ and $\wh\gamma$ of the $\eps$-approximating and nilpotent systems, respectively:
\begin{equation} 
\dot{x}=f^{\eps}(x,u)=\sum_{i=1}^{k} u_{i} X^{\eps}_{i}(x),\qquad \dot{x}=\wh{f}(x,u)=\sum_{i=1}^{k} u_{i} \wh{X}_{i}(x),\qquad x\in \R^{n} .
\end{equation}
Lemma~\ref{l:convframe} and Proposition~\ref{p:catapulta} imply that, for $\varepsilon \to 0$ 
\begin{gather} 
A^{\eps}(t)\doteq \dfrac{\partial f^{\eps}}{\partial x}(\gamma^{\eps}(t),u^{\eps}(t))\longrightarrow \wh{A}(t)\doteq \dfrac{\partial \wh{f}}{\partial x}(\wh{\gamma}(t),\wh{u}(t)), \\
B^{\eps}(t)\doteq \dfrac{\partial f^{\eps}}{\partial u}(\gamma^{\eps}(t),u^{\eps}(t))\longrightarrow \wh{B}(t)\doteq \dfrac{\partial \wh{f}}{\partial u}(\wh{\gamma}(t),\wh{u}(t)),
\end{gather}
uniformly with all derivatives on $[0,T]$.
Here $u^\eps$ and $\wh u$ are the controls associated with the geodesics $\gamma^{\eps}$ and $\wh\gamma$, respectively. In particular
\begin{equation}
B^{\eps}_{i}(t)\to \wh B_{i}(t),\qquad \all i\geq 1,
\end{equation}
uniformly on $[0,T]$. As a consequence, the maps $(\eps,t)\mapsto B^{\eps}_{i}(t)$ are continuous on $[0,1]\times [0,T]$. Hence, the functions $(\eps,t)\mapsto \dim \DD^{i}_{\g^{\eps}}(t)$ are lower semicontinuous on the compact set $[0,1]\times [0,T]$ by semicontinuity of the rank of a continuous family on matrices. This implies the statement.
\end{proof}
In the next lemma, we denote by $\delta_{\eps}$ the dilation with parameter $\eps$ defined by \eqref{eq:dilations0}.
\begin{lemma} \label{l:situa2} Fix $\eps>0$ and let $\gamma$ be a normal geodesic for the $\eps$-approximating system. Then the curve $\eta\doteq \delta_{\eps}(\gamma)$ is a normal geodesic for the original system with the same growth vector of $\g$.
\end{lemma}
Lemma~\ref{l:situa2} is a direct consequence of the invariance of the growth vector by the change of coordinates given by $\delta_\varepsilon$. For the reader's convenience we give a detailed proof in Appendix~\ref{a:proofsitua2}.

Let us now apply Lemma \ref{l:situa2} to the family $\gamma^{\eps}$ of geodesics converging to $\wh{\g}$ in the nilpotent approximation. In other words we define the family of curves $\eta_{\eps}\doteq\delta_{\eps}(\gamma^{\eps})$. By Lemma \ref{l:situa2} $\eta_{\eps}$ is a geodesic of the original system with the same growth vector of $\gamma^{\eps}$. Then, by Lemma \ref{l:situa1} we get, for every  $t$
 \begin{equation} 
 \dim \DD^{i}_{\hat{\gamma}}(t)\leq \liminf_{\eps\to 0} \dim \DD_{\gamma^{\eps}}^{i}(t)=\liminf_{\eps\to 0} \dim \DD_{\eta_{\eps}}^{i}(t).
\end{equation}
In particular, there exists $\bar\eps=\bar \eps(t)$ such that 
 \begin{equation} 
 \dim \DD^{i}_{\hat{\gamma}}(t)\leq \dim \DD_{\gamma^{\eps}}^{i}(t)=\dim \DD_{\eta_{\eps}}^{i}(t),\qquad \all \eps\leq\bar \eps.
\end{equation}
Actually, since the map $(\eps,t)\mapsto \dim \DD^{i}_{\g^{\eps}}(t)$ is lower semicontinuous on $[0,1]\times [0,T]$,  $\bar \eps$ can be chosen independent on $t$ (see the proof of Lemma~\ref{l:situa1}).

If we choose, by Lemma \ref{l:situa3}, the geodesic $\wh{\g}$ to be ample at every $t$, it follows that, for $\eps\leq\bar\eps$, the curve $\eta_{\eps}$ is a geodesic for the original sub-Riemannian structure, ample at every $t$ . 
\end{proof}

\subsection{The maximal geodesic growth vector}\label{s:maximalgrowth} \index{growth vector of an admissible curve!maximal} \index{maximal growth vector}

In what follows we are interested in the behaviour of a strongly normal geodesic for small $t$. For this reason we focus on the growth vector at $t=0$. Let us define the maximal geodesic growth vector.

\begin{definition}
Let $x_0 \in M$. The \emph{maximal geodesic growth vector} at $x_0$ is
\begin{equation}
\mathcal{G}_{x_0}\doteq \{k_1(x_0),k_2(x_0),\ldots\}, \qquad k_i(x_0)\doteq \max_\gamma \dim \DD^i_{\gamma}(0), \qquad \all i \geq 0,
\end{equation}
where the maximum is taken over all the geodesics $\gamma$ such that $\gamma(0) = x_0$.
\end{definition}

Indeed $\mathcal{G}_{x_0}$ depends only on the germ of the sub-Riemannian structure at $x_0$. In the proof of Theorem~\ref{t:bound}, we proved more than the simple existence of an ample geodesic: the maximal geodesic growth vector of the nilpotent approximation at $x_0$ controls the maximal geodesic growth vector at $x_0$ of the original structure.

\begin{proposition}\label{p:estimate}
Let $\mathcal{G}_{x_0}$ and $\wh{\mathcal{G}}_{x_0}$ be the maximal geodesic growth vectors at $x_0$ for the sub-Riemannian structure and for its nilpotent approximation at $x_0$, respectively. Then
\begin{equation}
 \wh{\mathcal{G}}_{x_0} \leq \mathcal{G}_{x_0},
\end{equation}
where the inequality between the two sequences of integer numbers is meant element-wise.
\end{proposition}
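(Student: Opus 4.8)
The plan is to extract the quantitative content already implicit in the proof of Theorem~\ref{t:bound}, making explicit that the dilation-plus-convergence argument bounds not merely the existence of an ample geodesic but the whole maximal geodesic growth vector. Fix an arbitrary normal geodesic $\wh{\g}$ of the nilpotent approximation at $x_0$, with initial covector $\lam_0 \in T^*_{x_0}M$ (identified with $T^*_0\R^n$ in privileged coordinates). By Proposition~\ref{p:catapulta}, the family $\g^\eps(t) = \EXP^\eps(t,\lam_0)$ of normal geodesics for the $\eps$-approximating systems converges to $\wh{\g}$ with all derivatives on a common interval $[0,T]$.

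First I would invoke Lemma~\ref{l:situa1} at the single time $t=0$, which gives, for every $i \geq 1$,
\begin{equation}
\dim \DD^i_{\wh{\g}}(0) \leq \liminf_{\eps\to 0} \dim \DD^i_{\g^\eps}(0).
\end{equation}
Next I would apply Lemma~\ref{l:situa2}: for each $\eps>0$ the dilated curve $\eta_\eps \doteq \delta_\eps(\g^\eps)$ is a normal geodesic for the \emph{original} sub-Riemannian structure, starting at $x_0$ (since $\delta_\eps$ fixes the origin), and with the same growth vector as $\g^\eps$. In particular $\dim \DD^i_{\g^\eps}(0) = \dim \DD^i_{\eta_\eps}(0)$ for all $i$ and all $\eps>0$. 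Since each $\eta_\eps$ is a genuine geodesic of the original structure issued from $x_0$, the definition of $k_i(x_0)$ as a maximum over such geodesics yields $\dim \DD^i_{\eta_\eps}(0) \leq k_i(x_0)$; hence $\dim \DD^i_{\g^\eps}(0) \leq k_i(x_0)$ for every $\eps>0$, and taking the $\liminf$ preserves this bound.

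Combining the two estimates gives $\dim \DD^i_{\wh{\g}}(0) \leq k_i(x_0)$ for every $i$. Since $\wh{\g}$ was an arbitrary geodesic of the nilpotent approximation, I would conclude by taking the maximum over all such $\wh{\g}$, obtaining $\wh{k}_i(x_0) \leq k_i(x_0)$ for all $i$, which is exactly the element-wise inequality $\wh{\mathcal{G}}_{x_0} \leq \mathcal{G}_{x_0}$. I do not expect a serious analytic obstacle here, as all the hard work is already packaged in Lemmas~\ref{l:situa1} and~\ref{l:situa2}; the only point requiring care is the bookkeeping of quantifiers—one fixes a single nilpotent geodesic, transfers its flag dimensions to bona fide geodesics $\eta_\eps$ of the original structure through the dilation $\delta_\eps$, and only at the very end passes to the maximum over nilpotent geodesics on the left-hand side.
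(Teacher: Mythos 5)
Your proof is correct and takes essentially the same route as the paper: the paper's proof of Proposition~\ref{p:estimate} simply cites the final part of the proof of Theorem~\ref{t:bound}, which is precisely the combination of Lemma~\ref{l:situa1} (at $t=0$) and Lemma~\ref{l:situa2} that you spell out, followed by the definition of the maximal geodesic growth vector. Your only, harmless, variation is to bound each $\dim\DD^i_{\gamma^\eps}(0)$ by $k_i(x_0)$ via $\eta_\eps$ before passing to the $\liminf$, whereas the paper first extracts a single geodesic of the original structure realizing the inequality for all $i$ simultaneously.
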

\begin{proof}
In the final part of the proof of Theorem~\ref{t:bound} we proved that, for any fixed geodesic $\hat\gamma$ in the nilpotent approximation, there exists a geodesic $\gamma$, in the original structure, such that
\begin{equation}
\dim\DD_{\hat\gamma}^i(0) \leq \dim\DD_{\gamma}^i(0), \qquad \all i\geq 0.
\end{equation}
Then, the statement follows by the definition of maximal geodesic growth vector.
\end{proof}

The next proposition implies that the the generic normal geodesic for sub-Riemannian structures is ample, and its geodesic growth vector at $t=0$ is equal to the maximal one.

\begin{proposition}\label{p:ampledense}
The set $A_{x_0} \subseteq T_{x_0}^*M$ of initial covectors such that the corresponding geodesic growth vector (at $t=0$) is maximal is an open, non-empty Zariski subset. In particular, for any $\lambda \in A_{x_0}$, the corresponding geodesic $\gamma$ is ample and has maximal growth vector, namely $\mathcal{G}_\gamma(0) = \mathcal{G}_{x_0}$.
\end{proposition}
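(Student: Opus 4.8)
The plan is to show that, with the base point $x_0$ fixed, the functions $\lambda \mapsto \dim \DD^i_\gamma(0)$ are controlled by the ranks of matrices whose entries depend \emph{polynomially} on the initial covector $\lambda \in T^*_{x_0}M$. The locus where each such rank attains its maximal value $k_i(x_0)$ is then Zariski-open, and the proposition follows from the irreducibility of the fiber $T^*_{x_0}M \cong \mathbb{R}^n$ together with the fact (Theorem~\ref{t:bound}) that the maximal growth vector reaches $n$.

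First I would fix linear coordinates on $T^*_{x_0}M$ and recall, from the criterion of Section~\ref{s:crit}, that the geodesic $\gamma$ with initial covector $\lambda$ satisfies $\dim\DD^i_\gamma(0) = \rank\{B_1(0),\ldots,B_i(0)\}$, where $B_1(t)=B(t)$, $B_{i+1}(t)=A(t)B_i(t)-\dot B_i(t)$, and $A(t),B(t)$ are the matrices \eqref{eq:AAA}--\eqref{eq:BBB} built along $\gamma$. The central claim is that, for fixed $x_0$, the entries of each $B_i(0)$ are polynomials in $\lambda$. Indeed, in canonical coordinates $\lambda=(p,x)$ the sub-Riemannian Hamiltonian $H(p,x)=\tfrac12\sum_i \langle p, X_i(x)\rangle^2$ is quadratic in $p$ with smooth coefficients in $x$; hence the components of $\vec H$ are polynomial in $p$, and every derivative $\left.\frac{d^j}{dt^j}\right|_{t=0}(p(t),x(t))$ of the normal extremal issuing from $(p_0,x_0)$ is a universal polynomial in the partial derivatives of $\vec H$ evaluated at $(p_0,x_0)$, and therefore a polynomial in $p_0$ once $x_0$ is fixed. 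Since $B_i(0)$ is assembled from the $(i-1)$-jet at $t=0$ of $(\gamma(t),u(t))=(x(t),\bar u(p(t),x(t)))$, and $\bar u_j=\langle p, X_j(x)\rangle$ is itself polynomial in $p$, its entries are polynomial in $\lambda$.

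Next, for each $i$ the value $k_i(x_0)=\max_\gamma \dim\DD^i_\gamma(0)$ is attained by definition, and the set $U_i \doteq \{\lambda\in T^*_{x_0}M : \dim\DD^i_\gamma(0)=k_i(x_0)\}$ is precisely the locus where some $k_i(x_0)\times k_i(x_0)$ minor of the polynomial matrix $[B_1(0)|\cdots|B_i(0)]$ is nonzero; thus each $U_i$ is a non-empty Zariski-open subset of the vector space $T^*_{x_0}M$. By Theorem~\ref{t:bound} there is a geodesic ample at $t=0$, so $k_m(x_0)=n$ for $m$ equal to its step. For $\lambda\in\bigcap_{i=1}^{m}U_i$ we then have $\dim\DD^m_\gamma(0)=n$, which forces $\dim\DD^i_\gamma(0)=n=k_i(x_0)$ for all $i\ge m$, so $A_{x_0}=\bigcap_{i\ge1}U_i=\bigcap_{i=1}^{m}U_i$ is a finite intersection. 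Since $T^*_{x_0}M\cong\mathbb{R}^n$ is irreducible for the Zariski topology, each non-empty $U_i$ is dense, and a finite intersection of dense Zariski-open sets is again a non-empty Zariski-open set; hence $A_{x_0}$ is Zariski-open (in particular open in the standard topology) and non-empty.

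Finally, for any $\lambda\in A_{x_0}$ the associated geodesic satisfies $\dim\DD^i_\gamma(0)=k_i(x_0)$ for all $i$, that is $\mc{G}_\gamma(0)=\mc{G}_{x_0}$; in particular $\dim\DD^m_\gamma(0)=k_m(x_0)=n$, so $\gamma$ is ample at $t=0$ and therefore strongly normal by Proposition~\ref{p:amplestrn}(iii). I expect the main obstacle to be the polynomial-dependence claim: one must argue precisely that, for a fixed base point, the entire jet of the normal extremal at $t=0$, and hence the matrices $B_i(0)$, depends polynomially rather than merely smoothly on the initial covector. This rests crucially on the fiberwise-quadratic form of the sub-Riemannian Hamiltonian, which is exactly what upgrades the statement from open-and-dense to Zariski-open.
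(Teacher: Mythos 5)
Your proof is correct, and its set-theoretic skeleton is exactly the paper's: define for each $i$ the locus where $\dim\DD^i_\gamma(0)$ is maximal, show it is non-empty Zariski-open, use Theorem~\ref{t:bound} to get a step $m$ with $k_m(x_0)=\dim M$ so that the infinite intersection stabilizes to $K_1^c\cap\ldots\cap K_m^c$ (your $U_1\cap\ldots\cap U_m$), and conclude by irreducibility of the fiber. The one genuine divergence is the algebraicity input. The paper invokes Remark~\ref{r:rational}: via Proposition~\ref{p:twoflags} one has $\dim\DD^i_\gamma(0)=\rank\{\dot S_\lambda(0),\ldots,S^{(i)}_\lambda(0)\}$, and the jets $S^{(i)}_\lambda(0)$ of the coordinate representation of the Jacobi curve are \emph{rational} in $\lambda$ for any fibre-wise polynomial Hamiltonian (rational rather than polynomial, because representing $J_\lambda(t)$ as a graph over a splitting involves a matrix inversion). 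You instead bypass the Jacobi curve entirely and work with the controllability matrices $B_i(0)$ of the linearised system from Section~\ref{s:crit}, proving by jet analysis of the Hamiltonian ODE that, for fixed $x_0$ and fibre-wise quadratic $H$, their entries are \emph{polynomial} in $\lambda$ — a correct and slightly stronger statement, since the jets of $(p(t),x(t))$ at $t=0$ and the normal controls $u_i=\langle p,X_i(x)\rangle$ are manifestly polynomial in $p_0$, and the recursion $B_{i+1}=AB_i-\dot B_i$ preserves this. What each buys: your route is more elementary and self-contained for the sub-Riemannian case (no Lagrange Grassmannian coordinates, and vanishing loci of polynomials rather than rational functions, so no care is needed about denominators); the paper's Remark~\ref{r:rational} is phrased for arbitrary fibre-wise polynomial Hamiltonians and recycles machinery the paper needs anyway for Theorems~\ref{t:main} and~\ref{t:main2}. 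Either way the conclusion, including ampleness and maximality of $\mc{G}_\gamma(0)$ for $\lambda\in A_{x_0}$, follows as you state.
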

\begin{proof}
For any $\lambda \in T_{x_0}^*M$ and $i \geq 0$, let us denote by $\DD^i_\lambda\doteq \DD^i_\gamma(0)$ the flag of the normal geodesic $\gamma$ with initial covector $\lambda$.  Moreover, let $k_i(\lambda) = \dim\DD_\lambda^i(0)$. Thus the maximal geodesic growth vector is
\begin{equation}
\mathcal{G}_{x_0} = \{\bar{k}_1,\bar{k}_2,\ldots\}, \qquad \bar{k}_i = \max\{\dim \DD^i_{\lambda}(0)\mid \lambda \in T_{x_0}^*M \}.
\end{equation}
For all $i\geq 0$, let $K_i \subset T_{x_0}^*M$ be the set of covectors $\lambda$ where $k_i(\lambda)$ is not maximal, namely
\begin{equation}
K_i =\{\lambda \in T_{x_0}^*M \mid k_i(\lambda) <  \bar{k}_i\}.
\end{equation}
By Remark~\ref{r:rational}, the integers $k_i$ are computed as the rank of matrices whose entries are rational in the covector $\lambda$. Thus $K_i$ is a closed Zariski subset of $T_{x_0}^*M$ (that has zero measure). Let $K_i^c = T_{x_0}^*M \setminus K_i$ the complement of $K_i$. Notice that each one of the $K_i^c$ is non-empty. Then consider the set
\begin{equation}
A_{x_0} \doteq \bigcap_{i\geq 0} K_i^c.
\end{equation}
By Theorem~\ref{t:bound}, there always exists at least one geodesic ample at $t=0$. Let $m$ be the geodesic step of such a geodesic. This means that, for any $\lambda \in K_m^c$ we have $k_m(\lambda)= \dim M$. Thus, by definition of growth vector, for all $\lambda \in K_m^c$ also $k_{m+i}(\lambda) = \dim M$ for all $i \geq 0$. Since this is indeed the maximal possible value for the $k_i$, this means that $K_m^c \subseteq K_{m+i}^c$ for all $i\geq 0$. Thus
\begin{equation}
A_{x_0} =  K_1^c\cap \ldots \cap K_m^c.
\end{equation}
It follows that $A_{x_0}$ is Zariski open, non-empty and $\mathcal{G}_{\gamma}(0) = \mathcal{G}_{x_0}$ for every normal geodesic with initial covector $\lambda \in A_{x_0}$.
\end{proof}

\section{Reparametrization and homogeneity of the curvature operator}\label{s:homog}
We already explained that a geodesic is not ample on a proper Zariski closed subset of the fibre. This set includes covectors associated to abnormal geodesics, since $\distr_x^\perp \subset T_x^*M \setminus \A_x$. On the other hand, for $\lam \in \A_x$, the curvature $\RR_\lam$ is well defined.
Observe that $\A_x$ is invariant by rescaling, i.e. if $\lambda \in \A_x$, then for $\alpha\neq 0$, also $\alpha\lambda \in \A_x$. Therefore, we have the following:
\begin{proposition}
The operators $\Qz_{\lam}$ and $\RR_{\lam}$ are homogeneous of degree $0$ and $2$ with respect to $\lam$, respectively. Namely, for $\lam \in \A_x$ and $\alpha >0$
\begin{equation}\label{eq:homog}
\Qz_{\al\lam}=\Qz_{\lam},\qquad \RR_{\al\lam}=\alpha^2\RR_\lambda.
\end{equation}
\end{proposition}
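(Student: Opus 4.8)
The plan is to exploit the fiber-wise quadratic nature of the sub-Riemannian Hamiltonian, which makes rescaling the covector equivalent to a linear reparametrization of the geodesic together with a rescaling of the cost. First I would note that both $\Qz_{\al\lam}$ and $\RR_{\al\lam}$ are well defined: since $\A_x$ is invariant under rescaling, $\al\lam \in \A_x$ whenever $\lam \in \A_x$, so the geodesic with initial covector $\al\lam$ is again ample. By the homogeneity of the exponential map, $\EXP_{x_0}(t,\al\lam) = \EXP_{x_0}(\al t,\lam)$, the corresponding geodesics are related by $\gamma_{\al\lam}(t) = \gamma_\lam(\al t)$.

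The core of the argument is a scaling law for the geodesic cost. Writing $\cc_t^\lam$ for the cost attached to $\gamma_\lam$ and using $\SS_t(x,y) = \tfrac{1}{2t}\dist^2(x,y)$, I would compute
\begin{equation}
\cc_t^{\al\lam}(x) = -\frac{1}{2t}\dist^2(x,\gamma_\lam(\al t)) = \al\,\cc_{\al t}^\lam(x).
\end{equation}
Differentiating in $t$ and using the chain rule gives $\dot\cc_t^{\al\lam}(x) = \al^2\,\dot\cc_{\al t}^\lam(x)$, and, since $x_0$ is a critical point of both $\dot\cc_t^{\al\lam}$ and $\dot\cc_{\al t}^\lam$ by Proposition~\ref{p:critlam0}, the second differentials satisfy $d^2_{x_0}\dot\cc_t^{\al\lam} = \al^2\, d^2_{x_0}\dot\cc_{\al t}^\lam$ on $\distr_{x_0}$. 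Here I would invoke Remark~\ref{r:hamsub}: in the sub-Riemannian case $d^2_\lam H_x = 2H_x$, so the Hamiltonian inner product $\langle\cdot|\cdot\rangle_\lam$ equals the sub-Riemannian one and is independent of $\lam$. Thus $\langle\cdot|\cdot\rangle_{\al\lam} = \langle\cdot|\cdot\rangle_\lam$, and the definition~\eqref{eq:prscl} of $\QQ_\lam(t)$ together with polarization yields the operator identity
\begin{equation}
\QQ_{\al\lam}(t) = \al^2\,\QQ_\lam(\al t), \qquad t > 0.
\end{equation}

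The two homogeneity statements then follow at once from Theorem~\ref{t:main}. Setting $G_\lam(t) \doteq t^2\QQ_\lam(t)$, which extends to a smooth family on $\distr_{x_0}$ for $t \geq 0$, the identity above reads $G_{\al\lam}(t) = (\al t)^2 \QQ_\lam(\al t) = G_\lam(\al t)$. Evaluating at $t = 0$ gives $\Qz_{\al\lam} = G_{\al\lam}(0) = G_\lam(0) = \Qz_\lam$, so $\Qz$ is homogeneous of degree $0$; differentiating twice at $t=0$ produces a factor $\al^2$, whence
\begin{equation}
\RR_{\al\lam} = \frac{3}{2}\left.\frac{d^2}{dt^2}\right|_{t=0} G_{\al\lam}(t) = \al^2\cdot\frac{3}{2}\left.\frac{d^2}{dt^2}\right|_{t=0} G_\lam(t) = \al^2\,\RR_\lam.
\end{equation}

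I do not expect a serious obstacle: the argument is essentially a bookkeeping of two reparametrizations. The only points demanding care are (i) checking that the critical-point structure at $x_0$ survives the reparametrization, so that the compared Hessians are genuinely the well-defined second differentials of Section~\ref{s:2dctdot}, and (ii) using the $\lam$-independence of $\langle\cdot|\cdot\rangle_\lam$ to pass from the scaling of the quadratic forms $d^2_{x_0}\dot\cc_t$ to the scaling of the associated operators $\QQ_\lam(t)$. This second step is precisely where the sub-Riemannian hypothesis enters, and it would fail for a general Tonelli Lagrangian whose Hamiltonian inner product genuinely depends on $\lam$.
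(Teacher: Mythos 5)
Your proof is correct and follows essentially the same route as the paper: the scaling law $c_t^{\al\lam} = \al\, c_{\al t}^{\lam}$ (the paper derives it from degree-two homogeneity of the Hamiltonian rather than the explicit distance formula, but in the sub-Riemannian case these are the same), leading to $\QQ_{\al\lam}(t) = \al^2 \QQ_\lam(\al t)$ and then extraction of the coefficients via Theorem~\ref{t:main}. Your concluding step (evaluating and differentiating the smooth extension $t^2\QQ_\lam(t)$ at $t=0$) is only a cosmetic variant of the paper's comparison of Laurent expansions.
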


\begin{proof} 
Let $c_{t}^{\lam}$ be the geodesic cost  associated with the covector $\lam\in T^{*}_{x}M$. By homogeneity of the sub-Riemannian Hamiltonian, for $\al > 0$ we have
\begin{equation}
c_t^{\alpha\lambda} = \alpha \,c_{\alpha t}^\lambda.
\end{equation}
In particular, this implies %that $\dot{c}_t^{\alpha\lambda} = \alpha^2 \dot{c}_{\alpha t}^\lambda$ and 
$d^{2}_{x}\dot{c}_t^{\alpha\lambda} = \alpha^2 d^{2}_{x}\dot{c}_{\alpha t}^\lambda$.
The same relation is true for the restrictions to the distribution $\distr_{x}$, therefore $\QQ_{\al\lam}(t)=\al^{2}\QQ_{\lam}(\al t)$ as symmetric operators on $\distr_{x}$.
Applying Theorem  \ref{t:main} to both families one obtains
\begin{equation} 
\frac{1}{ t^2}\Qz_{\al\lam} + \frac{1}{3}\RR_{\al\lambda} + O(t)=\al^{2}\left(\frac{1}{\al^{2} t^2}\Qz_{\lam} + \frac{1}{3}\RR_\lambda + O(\al t)\right),
\end{equation}
which, in particular, implies Eq.~\eqref{eq:homog}.
\end{proof}
Notice that the same proof applies also to a general affine optimal control system, such that the  Hamiltonian (or, equivalently, the Lagrangian) is homogeneous of degree two. 

\section{Asymptotics of the sub-Laplacian of the geodesic cost} \label{s:asymptotic}

In this section we discuss the asymptotic behaviour of the sub-Laplacian of the sub-Riemannian geodesic cost.
On a Riemannian manifold, the Laplace-Beltrami operator is defined as the divergence of the gradient. This definition can be easily generalized to the sub-Riemannian setting. We will denote by $\metr{\cdot}{\cdot}$ the inner product defined on the distribution.

\bdeff
Let $f\in C^{\infty}(M)$. The \emph{horizontal gradient} \index{horizontal gradient} \index{sub-Riemannian!gradient} of $f$ is the unique horizontal vector field $\grad f$ such that
\begin{equation}
\metr{\grad f}{X} = X(f), \qquad \all X \in \overline{\distr}.
\end{equation}
\edeff
For $x \in M$,  the restriction of the sub-Riemannian Hamiltonian to the fiber  $H_x : T_x^*M \to \R$ is a quadratic form. Then, as a consequence of the formula $\la d_\lam H_x | X \ra = \la \lam, X\ra$, we obtain
\begin{equation}\label{eq:grad}
\grad f =\sum_{i=1}^k X_i(f) X_i.
\end{equation}
We want to stress that Eq.~\eqref{eq:grad} is true in full generality, also when $\dim \distr_x$ is not constant or the vectors $X_1,\ldots,X_k$ are not independent.

\bdeff
Let $\mu \in \Omega^n(M)$ be a volume form, and $X \in \VecM$. The \emph{$\mu$-divergence} \index{divergence} of $X$ is the smooth function $\dive_\mu (X)$ defined by 
\begin{equation}
\mathcal{L}_X \mu \doteq \dive_\mu (X)\mu ,
\end{equation}
where, we recall, $\mathcal{L}_X$ is the Lie derivative in the direction of $X$.
\edeff
Notice that the definition of divergence does not depend on the orientation of $M$, namely the sign of $\mu$. The divergence measures the rate at which the volume of a region changes under the integral flow of a field. Indeed, for any compact $\Omega\subset M$ and $t$ sufficiently small, let $e^{tX} : \Omega \to M$ be the flow of $X \in \VecM$, then
\begin{equation}
\left.\frac{d}{dt}\right|_{t=0} \int_{e^{tX}(\Omega)} \mu = - \int_\Omega \dive_\mu (X) \mu .
\end{equation}
The next proposition is an easy consequence of the definition of $\mu$-divergence and is sometimes employed as an alternative definition of the latter.
\begin{proposition}\label{t:divtheo}
Let $C^\infty_0(M)$ be the space of smooth functions with compact support. For any $f\in C_0^\infty(M)$ and $X\in \VecM$
\begin{equation}
\int_M f \dive_\mu (X) \mu = - \int_M X(f) \mu.
\end{equation}
\end{proposition}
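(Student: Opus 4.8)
The plan is to reduce the identity to Stokes' theorem by combining Cartan's magic formula with the Leibniz rule for the Lie derivative. First I would recall that $\mu$ is a top-degree form, so $d\mu = 0$, and hence Cartan's formula $\mathcal{L}_X = d\iota_X + \iota_X d$ collapses to $\mathcal{L}_X\mu = d\iota_X\mu$. This is just the intrinsic reformulation of the defining relation $\mathcal{L}_X\mu = \dive_\mu(X)\mu$, and it is the observation that will eventually produce an exact form to integrate.

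Next, I would apply the same reasoning to the top form $f\mu$, where $f \in C^\infty_0(M)$. On one hand, the Leibniz rule for the Lie derivative gives $\mathcal{L}_X(f\mu) = X(f)\mu + f\,\mathcal{L}_X\mu = \left(X(f) + f\dive_\mu(X)\right)\mu$. On the other hand, since $f\mu$ is again a top-degree form, $d(f\mu)=0$ and Cartan's formula yields $\mathcal{L}_X(f\mu) = d\iota_X(f\mu)$. Thus the same quantity $\mathcal{L}_X(f\mu)$ is expressed both as a pointwise algebraic combination and as an exact form.

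Then I would integrate over $M$. Because $f$ has compact support, the $(n-1)$-form $\iota_X(f\mu)$ is compactly supported, so Stokes' theorem on the boundaryless manifold $M$ forces $\int_M d\iota_X(f\mu) = 0$. Equating the two expressions for $\int_M \mathcal{L}_X(f\mu)$ then gives $\int_M X(f)\mu + \int_M f\dive_\mu(X)\mu = 0$, which is exactly the desired identity.

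The only delicate point — the step I would treat with care — is orientability, since the integral $\int_M(\cdot)\mu$ and the form of Stokes' theorem used above are stated for oriented manifolds. As noted in the remark preceding the definition of $\mu$-divergence, $\dive_\mu$ does not depend on the sign (orientation) of $\mu$; when $M$ is non-orientable one replaces $\mu$ by the associated density $|\mu|$ (or passes to the orientation double cover), and the computation goes through unchanged because $d$, $\iota_X$, and $\mathcal{L}_X$ are all local operators compatible with the covering. Beyond this bookkeeping, the argument is a completely routine application of Cartan's formula and Stokes' theorem, so I expect no genuine obstacle.
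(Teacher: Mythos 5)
Your proof is correct, and it is the standard argument for this identity: from $d\mu=0$ and Cartan's formula you get $\mathcal{L}_X(f\mu)=d\iota_X(f\mu)$, the Leibniz rule identifies $\mathcal{L}_X(f\mu)$ with $\bigl(X(f)+f\dive_\mu(X)\bigr)\mu$, and Stokes' theorem applied to the compactly supported $(n-1)$-form $\iota_X(f\mu)$ kills the integral of the exact form. The paper in fact gives no proof at all --- it states the proposition as ``an easy consequence of the definition of $\mu$-divergence'' immediately after the flow interpretation $\left.\frac{d}{dt}\right|_{t=0}\int_{e^{tX}(\Omega)}\mu=-\int_\Omega\dive_\mu(X)\mu$ --- so the implicit alternative route would be the diffeomorphism-invariance argument, $\int_M\mathcal{L}_X(f\mu)=\left.\frac{d}{dt}\right|_{t=0}\int_M (e^{tX})^*(f\mu)=0$, which requires only the local flow of $X$ on the compact support of $f$; your Cartan--Stokes version is equally valid and entirely equivalent in content. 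One remark on your ``delicate point'': the orientability hedge is unnecessary here, since the standing hypothesis $\mu\in\Omega^n(M)$ is a nowhere-vanishing top-degree form, whose existence already forces $M$ to be orientable (and $\mu$ itself fixes an orientation); the paper's remark about independence of the sign of $\mu$ is consistent with your identity, since replacing $\mu$ by $-\mu$ flips both sides simultaneously while leaving $\dive_\mu(X)$ unchanged.
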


With a divergence and a gradient at our disposal, we are ready to define the sub-Laplacian associated with the volume form $\mu$.
\bdeff
Let $\mu \in \Omega^n(M)$, $f \in C^{\infty}(M)$. The \emph{sub-Laplacian} associated with $\mu$ is the second order differential operator
\begin{equation}
\lapl_{\mu} f \doteq  \dive_{\mu}\lp \grad f\rp,
\end{equation}
\edeff
On a Riemannian manifold, when $\mu$ is the Riemannian volume, this definition reduces to the Laplace-Beltrami operator. As a consequence of Eq.~\eqref{eq:grad} and the Leibniz rule for the divergence $\dive_{\mu}(fX)=X(f)+f\,\dive_{\mu}(X)$, we can write the sub-Laplacian in terms of the fields $X_1,\dots,X_k$:
\begin{equation}
\dive_{\mu} \lp \grad f \rp = \sum_{i=1}^k \dive_{\mu}\lp X_i(f)X_i\rp =\sum_{i=1}^k X_i(X_i(f)) + \dive_{\mu} (X_i) X_i(f).
\end{equation}
Then \index{sub-Riemannian!Laplacian} \index{sub-Laplacian}
\begin{equation}\label{eq:sublaplframe}
\lapl_{\mu} = \sum_{i=1}^k X_i^2 + \dive_{\mu} (X_i) X_i.
\end{equation}

\begin{remark}
If we apply Proposition~\ref{t:divtheo} to the horizontal gradient $\grad g$, we obtain
\begin{equation}
\int_M f \lapl g \mu= -\int_M \metr{\grad f}{\grad g} \mu, \qquad \all f,g \in C^\infty_0(M).
\end{equation}
Then $\lapl_{\mu}$ is symmetric and negative on $C^\infty_0(M)$. It can be proved that it is also essentially self-adjoint (see \cite{strichartz}). Hence it admits a unique self-adjoint extension to $L^{2}(M,\mu)$.
\end{remark}

Observe that the principal symbol of $\lapl_\mu$, which is a function on $T^*M$, does not depend on the choice of $\mu$, and is proportional to the sub-Riemannian Hamiltonian, namely $2H: T^*M \to \R$.
The sub-Laplacian depends on the choice of the volume $\mu$ according to the following lemma.

\begin{lemma}\label{l:divtrans}
Let $\mu, \mu' \in \Omega^n(M)$ be two volume forms such that $\mu' = e^{a} \mu$ for some $a \in C^{\infty}(M)$. Then
%, for any $ X\in \VecM$
\begin{equation}%\label{eq:divtrans}
\lapl_{\mu'}f   = \lapl_{\mu}f+ \la \grad a|\grad f\ra. 
\end{equation}
\end{lemma}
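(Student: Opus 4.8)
The plan is to first establish the transformation rule for the $\mu$-divergence under a conformal change of volume, and then to specialise it to the horizontal gradient field. First I would compute $\dive_{\mu'}(X)$ for an arbitrary $X \in \VecM$ directly from the defining relation $\mathcal{L}_X \mu' = \dive_{\mu'}(X)\mu'$. Writing $\mu' = e^a \mu$ and using the Leibniz rule for the Lie derivative together with the defining relation $\mathcal{L}_X \mu = \dive_\mu(X)\mu$, one obtains
\begin{equation}
\mathcal{L}_X(e^a\mu) = X(e^a)\mu + e^a \mathcal{L}_X\mu = e^a\big(X(a) + \dive_\mu(X)\big)\mu.
\end{equation}
Comparing this with $\mathcal{L}_X\mu' = \dive_{\mu'}(X)\,e^a\mu$ and cancelling the nowhere-vanishing factor $e^a$ yields the key identity
\begin{equation}
\dive_{\mu'}(X) = \dive_\mu(X) + X(a), \qquad \all X \in \VecM.
\end{equation}

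Next I would apply this identity to the horizontal vector field $X = \grad f$, which gives directly
\begin{equation}
\lapl_{\mu'} f = \dive_{\mu'}(\grad f) = \dive_\mu(\grad f) + (\grad f)(a) = \lapl_\mu f + (\grad f)(a).
\end{equation}
It then remains to identify the term $(\grad f)(a)$ with $\la \grad a | \grad f\ra$. This is immediate from the defining property of the horizontal gradient, namely $\metr{\grad a}{X} = X(a)$ for every $X \in \overline{\distr}$: since $\grad f$ is itself horizontal, setting $X = \grad f$ gives $\la \grad a|\grad f\ra = (\grad f)(a)$, which closes the argument.

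There is no genuine obstacle here, as the computation is routine; the only two points requiring a moment of care are that $e^a$ is nowhere zero, so that the cancellation producing the divergence identity is legitimate, and that $\grad f \in \overline{\distr}$, so that the gradient characterisation applies with $X = \grad f$. Both hold by construction. Moreover, the symmetry of the inner product $\metr{\cdot}{\cdot}$ makes the final expression $\la \grad a|\grad f\ra$ manifestly symmetric in $a$ and $f$, so it is irrelevant which of the two functions one chooses to differentiate.
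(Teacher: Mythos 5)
Your proof is correct and takes essentially the same approach as the paper: the paper's proof is a one-line application of the Leibniz rule, $\mathcal{L}_X(e^a\mu)=\big(X(a)+\dive_\mu(X)\big)e^a\mu$, which is exactly your key divergence identity, with the specialisation to $X=\grad f$ and the identification $(\grad f)(a)=\la \grad a|\grad f\ra$ left implicit. Your write-up simply makes those routine steps explicit.
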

\begin{proof}
It follows from the Leibniz rule $\mc{L}_X(a\mu)=X(a)\mu+a\mc{L}_X\mu=(X(\log a)+\dive_\mu (X)) a\mu$ for every $a\in C^{\infty}(M)$.
\end{proof}

The sub-Laplacian, computed at critical points, does not depend on the choice of the volume.
\begin{lemma}\label{l:laplcrit}
Let $f \in C^{\infty}(M)$, and let $x\in M$ be a critical point of $f$. Then, for any choice of the volume $\mu$,
\begin{equation}
\lapl_\mu f|_x = \sum_{i=1}^k X_i^2 (f)|_x.
\end{equation}
\end{lemma}
\begin{proof}
The proof follows from Eq.~\eqref{eq:sublaplframe}, and the fact that $X_i(f)|_x = 0$.
\end{proof}
From now on, when computing the sub-Laplacian of a function at a critical point, we employ the notation $\lapl_\mu f|_x = \lapl f|_x$, since it does not depend on the volume.
\bl \label{l:d2lapl}
Let $f \in C^{\infty}(M)$, and let $x \in M$ be a critical point of $f$. Then $\lapl f|_{x} =\trace d^{2}_{x}f|_{\distr_{x}}$.
\el
\begin{proof} Recall that if $x$ is a critical point of $f$, then the second differential $d^{2}_{x}f$ is the quadratic form associated with the symmetric bilinear form
\begin{equation} 
d^{2}_{x}f:T_{x}M\times T_{x}M\to \R, \qquad (X,Y)\mapsto X(Y(f))|_{x}. 
\end{equation}
The restriction of $d^2_x f$ to the distribution can be associated, via the inner product, with a symmetric operator defined on $\distr_{x}$, whose trace is computed in terms of $X_{1},\ldots,X_{k}$ as follows
\begin{equation}\label{eq:hessian} 
\trace d^{2}_{x}f|_{\distr_x}=\sum_{i=1}^{k}X_{i}^{2}(f)|_{x},
\end{equation}
We stress that Eq.~\eqref{eq:hessian} holds true for any set of generators, not necessarily linearly independent, of the sub-Riemannian structure $X_1,\ldots,X_k$ such that $H(\lam) = \frac{1}{2}\sum_{i=1}^k \langle \lam, X_i\rangle^2$.
The statement now is a direct consequence of Lemma~\ref{l:laplcrit}.
\end{proof}

Remember that the derivative of the geodesic cost function $\dot{c}_t$ has a critical point at $x_0 = \gamma(0)$. As a direct consequence of Theorem~\ref{t:main},~\ref{t:main2}, Lemma~\ref{l:d2lapl} and the fact that, in the sub-Riemannian case, the Hamiltonian inner product is the sub-Riemannian one (see Remark~\ref{r:hamsub}), we get the following asymptotic expansion:
\bt
Let $c_t$ be the geodesic cost associated with a geodesic $\gamma$ such that $\gamma(0) = x_0$. Then
\begin{equation}
\lapl \dot c_{t}|_{x_0} =  \frac{\trace \Qz_\lam}{t^{2}}+\frac{1}{3}\Ric(\lam)+O(t),
\end{equation}
where $\Ric(\lam)= \trace \RR_\lam$.
\et

The next result is an explicit expression for the asymptotic of the sub-Laplacian of the geodesic cost computed at the initial point $x_0$ of the geodesic $\gamma$. In the sub-Riemannian case, the geodesic cost is essentially the squared distance from the geodesic, i.e. the function
\begin{equation}
\f_t(\cdot)\doteq  - t c_t(\cdot) = \frac{1}{2}\dist^2(\,\cdot\,,\gamma(t)), \qquad t \in (0,1].
\end{equation}
For this reason, we may state the theorem equivalently in terms of $\f_t$ or the geodesic cost $c_t$. Remember also that, since $x_0$ is not a critical point of $\f_t$, its sub-Laplacian depends on the choice of the volume form $\mu$.

\begin{maintheorem}\label{t:main3} 
Let $\gamma:[0,T]\to M$ be an equiregular geodesic with initial covector $\lam \in T_{x_0}^*M$. Assume also that $\dim \distr$ is constant in a neighbourhood of $x_0$. Then there exists a smooth $n$-form $\omega$, defined along $\gamma$, such that for any volume form $\mu$ on $M$, we have:
\begin{equation}\label{eq:main3}
\lapl_\mu \f_t|_{x_0} = \trace \Qz_\lam - \dot{g}(0) t -  \frac{1}{3} \Ric(\lam)t^2 + O(t^3),	
\end{equation}
where $g:[0,T] \to M$ is a smooth function defined implicitly by $\mu_{\gamma(t)} = e^{g(t)}\omega_{\gamma(t)}$.
\end{maintheorem}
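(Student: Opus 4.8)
The plan is to reduce everything to the asymptotics of $\lapl\dot c_t|_{x_0}$, which is already under control, and to isolate the $t$-linear term — the only place where the volume enters — by a soft transformation argument. Since in the sub-Riemannian case $\f_t=-t\,c_t$ and the sub-Laplacian is linear, I would first record the intrinsic identity
\begin{equation}
\lapl_\mu\f_t|_{x_0}=-t\,\lapl_\mu c_t|_{x_0},\qquad D_\mu(t):=\lapl_\mu c_t|_{x_0}.
\end{equation}
The function $D_\mu(t)$ depends on $\mu$ because $d_{x_0}c_t=\lam$ is nonzero, so $x_0$ is not a critical point of $c_t$. However, by Proposition~\ref{p:critlam0} the point $x_0$ \emph{is} critical for $\dot c_t$, and since $\lapl_\mu$ has $t$-independent coefficients it commutes with $\partial_t$; thus $\lapl_\mu\dot c_t|_{x_0}=\dot D_\mu(t)$ is volume-independent by Lemma~\ref{l:d2lapl}. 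Combining Lemma~\ref{l:d2lapl} with Theorems~\ref{t:main} and~\ref{t:main2} (the latter using equiregularity to yield the smooth expansion of $\trace\QQ_\lam(t)$) gives
\begin{equation}
\dot D_\mu(t)=\lapl\dot c_t|_{x_0}=\trace\QQ_\lam(t)=\frac{\trace\Qz_\lam}{t^2}+\frac13\Ric(\lam)+O(t).
\end{equation}

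Next I would integrate this expansion in $t$. Because $t^2\QQ_\lam(t)$ extends smoothly to $t=0$ with vanishing first derivative (Theorem~\ref{t:main}), the displayed $O(t)$ remainder is the restriction of a smooth function and may be integrated term by term. This yields, for a $\mu$-dependent integration constant $c_\mu=\lim_{t\to0^+}\bigl(D_\mu(t)+\trace\Qz_\lam/t\bigr)$,
\begin{equation}
D_\mu(t)=-\frac{\trace\Qz_\lam}{t}+c_\mu+\frac13\Ric(\lam)\,t+O(t^2),
\end{equation}
and hence
\begin{equation}
\lapl_\mu\f_t|_{x_0}=-t\,D_\mu(t)=\trace\Qz_\lam-c_\mu\,t-\frac13\Ric(\lam)\,t^2+O(t^3).
\end{equation}
This already produces the leading coefficient $\trace\Qz_\lam$ and the $t^2$-coefficient $-\tfrac13\Ric(\lam)$; it remains only to identify the linear coefficient $c_\mu$ with $\dot g(0)$ for a suitable $\omega$.

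For the volume dependence I would invoke Lemma~\ref{l:divtrans}: for $\mu'=e^{a}\mu$ one has $\lapl_{\mu'}c_t=\lapl_\mu c_t+\metr{\grad a}{\grad c_t}$, and since $\grad c_t|_{x_0}=\sum_i X_i(c_t)X_i|_{x_0}=\sum_i\la\lam,X_i\ra X_i=\dot\gamma(0)$, the correction $\metr{\grad a}{\grad c_t}|_{x_0}=d_{x_0}a(\dot\gamma(0))$ is independent of $t$. Hence $c_{\mu'}=c_\mu+d_{x_0}a(\dot\gamma(0))$. On the other hand, if $\mu_{\gamma(t)}=e^{g(t)}\omega_{\gamma(t)}$, replacing $\mu$ by $e^a\mu$ replaces $g$ by $g+a\circ\gamma$, so $\dot g(0)$ also changes by $d_{x_0}a(\dot\gamma(0))$. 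Consequently $c_\mu-\dot g(0)$ is independent of $\mu$ for fixed $\omega$, while rescaling $\omega\mapsto e^{b}\omega$ along $\gamma$ shifts $\dot g(0)$ by $-\dot b(0)$, which can be prescribed arbitrarily. Choosing $\omega$ so that this common value vanishes produces a smooth $n$-form along $\gamma$ with $\dot g(0)=c_\mu$ for every $\mu$, completing the argument.

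The main obstacle, and the part carrying the geometric content, is precisely this identification: the transformation argument only delivers \emph{existence} of $\omega$, whereas a canonical choice — the volume induced along $\gamma$ by the normal frame of the Jacobi curve — is what endows $\dot g(0)$ with intrinsic meaning. Two technical points require care and should be checked: the interchange of $\partial_t$ with $\lapl_\mu$ and the term-by-term integration, both justified by the smoothness of $(t,x)\mapsto c_t(x)$ on the set $U$ of Theorem~\ref{t:mst} and by the smoothness of $t^2\QQ_\lam(t)$ at $t=0$ in Theorem~\ref{t:main}; and the standing hypothesis that $\dim\distr$ is constant near $x_0$, which ensures that $\grad$, $\dive_\mu$ and $\lapl_\mu$ are genuine smooth operators in a neighbourhood of $x_0$, so that every manipulation above is legitimate.
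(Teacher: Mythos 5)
Your proposal is correct, but it reaches the statement by a genuinely different route than the paper. The paper computes $\lapl_\mu c_t|_{x_0}$ \emph{exactly}: working in a coordinate lifted frame built from the projections $X_{ai}(t)=\pi_*\circ e^{t\vec{H}}_* F_{ai}(t)$ of the canonical frame of the Jacobi curve (Lemma~\ref{l:hopefully}), it shows that the structure-function contributions of the sum-of-squares term and of the divergence term cancel, leaving the identity $\lapl_\mu c_t|_{x_0}=\sum_{a=1}^k \Sred(t)^{-1}_{aa}+\dot{g}(0)$, where $\omega$ is the wedge of the coframe dual to $\{X_{ai}(t)\}$ (Remark~\ref{r:parallelotopo}); the expansion then follows from Corollary~\ref{c:Sridasymptotic}. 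You instead integrate the volume-independent expansion $\lapl \dot{c}_t|_{x_0}=\trace\QQ_\lam(t)$ (legitimate: $x_0$ is critical for $\dot{c}_t$, so Lemma~\ref{l:d2lapl} applies, and the interchange $\partial_t\lapl_\mu=\lapl_\mu\partial_t$ follows from joint smoothness of $(t,x)\mapsto c_t(x)$ on $U$; the term-by-term integration is justified exactly as you say, since $t^2\QQ_\lam(t)$ is smooth at $t=0$ with vanishing first derivative), and then pin down the $\mu$-dependence of the integration constant $c_\mu$ via Lemma~\ref{l:divtrans} together with $\grad c_t|_{x_0}=\dot{\gamma}(0)$. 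This is softer and shorter, and it isolates in one stroke the fact that only the linear term depends on $\mu$ (cf.\ Remark~\ref{r:laplacianexpansion}); since the statement constrains only $\dot{g}(0)$ — effectively the $1$-jet of $\omega$ at $\gamma(0)$ up to scale — your normalization argument does establish the literal existence claim. What it does not deliver, as you honestly note, is the canonical $\omega$ defined along the whole geodesic, which is the content actually exploited downstream: the explicit identification of $\dot{g}(0)$ through the canonical frame is what makes the linear term computable (Theorem~\ref{t:slowgrowth}, and the vanishing of the linear term for the Riemannian volume). Two minor points: Theorem~\ref{t:main2} is not needed for the trace expansion — Theorem~\ref{t:main} and ampleness suffice, equiregularity entering the paper's proof only through the canonical frame, which requires it; and when comparing arbitrary volume forms one should observe that $\lapl_\mu=\lapl_{-\mu}$, so the ratio $e^a$ may always be taken positive.
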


We stress that, in the statement of Theorem~\ref{t:main3}, $\mu$ is the fixed volume form used to define the Laplace operator $\lapl_\mu$, while the $n$-form $\omega$ depends on the choice of the geodesic $\gamma$. As we will see, $\omega$ is obtained by taking the wedge product of a Darboux frame in the cotangent bundle $T^*M$ that is related with a generalization of the parallel transport along the geodesic (see Chapter~\ref{c:sublapproof}). 

On a Riemannian manifold it turns out that $\omega$ is the restriction to $\gamma$ of the Riemannian volume form (up to a sign). Thus, if one chooses $\mu$ as the standard Riemannian volume, $\omega$ coincides with $\mu$ and $g(t) \equiv 0$ for any geodesic. Therefore the first order term in Eq.~\eqref{eq:main3} vanishes. 

This is \emph{not} true, in general, for sub-Riemannian manifolds, where $\omega$ is not the restriction to $\gamma$ of a global volume form (such as, e.g., the Popp's volume defined in Section~\ref{s:poppvolume}).

\begin{remark}\label{r:laplacianexpansion}
As a consequence of Theorem~\ref{t:main3}, for any choice of the volume form $\mu$, we have:
\begin{equation}
\trace \Qz_\lam = \lim_{t\to 0} \lapl_\mu \f_{t}\big|_{x_0}, \qquad
\Ric(\lam) = -\frac{3}{2}\left.\frac{d^{2}}{dt^{2}}\right|_{t=0}\lapl_\mu \f_{t}\big|_{x_0}.
\end{equation}
In particular the zeroth and second order term in $t$ of Eq.~\eqref{eq:main3} do not depend on the choice of $\mu$. On the other hand, the first order term \emph{does} depend on the choice of the volume. Indeed one con prove, using Lemma~\ref{l:divtrans}, that this is actually the \emph{only} term depending on the choice of $\mu$ in the whole expansion. 
\end{remark}

The proof Theorem~\ref{t:main3} is postponed to Chapter~\ref{c:sublapproof}.

\section{Equiregular distributions}\label{s:slowgrowth}

In this section we focus on equiregular sub-Riemannian structures, endowed with a smooth, intrinsic volume form, called Popp's volume. Then we introduce a special class of equiregular distributions, that we call \emph{slow growth}. In this case, we define a family of smooth operators in terms of which the asymptotic expansion of Theorem~\ref{t:main3} (and in particular its linear term) can be expressed explicitly.

Recall that a bracket generating sub-Riemannian manifold $M$ is \emph{equiregular} \index{distribution!equiregular} \index{sub-Riemannian!equiregular structure} if $\dim \distr^i_x$ does not depend on $x\in M$, for every $i\geq0$, where $\distr_x^0\subset\distr_x^{1}\subset\distr_x^{2}\subset\ldots\subset T_xM$ is the flag  of the distribution at a point $x\in M$ (see Chapter~\ref{c:srg}).

\subsection{Popp's volume}\label{s:poppvolume}

In this section we provide the definition of Popp's volume \index{Popp's volume} for an equiregular sub-Riemannian structure. Our presentation follows closely the one of \cite{montgomerybook,nostropopp}. The definition rests on the following lemmas, whose proof is not repeated here.

\begin{lemma} \label{l:mont1} 
Let $E$ be an inner product space, and let $\pi:E\to V$ be a surjective linear map. Then $\pi$ induces an inner product on $V$ such that the norm of $v \in V$ is
\begin{equation} \label{eq:final}
\|v\|_V = \min\{ \|e\|_E \text{ s.t. } \pi(e) = v \}.
\end{equation}
\end{lemma}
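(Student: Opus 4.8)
The plan is to exploit the orthogonal splitting of $E$ determined by the kernel of $\pi$. First I would set $K \doteq \ker \pi \subset E$ and use the inner product on $E$ to write the orthogonal decomposition $E = K \oplus K^\perp$. The restriction $\pi|_{K^\perp} : K^\perp \to V$ is then a linear isomorphism: it is injective because its kernel is $K \cap K^\perp = \{0\}$, and it is surjective because $\pi(E) = \pi(K^\perp) = V$ by surjectivity of $\pi$. This isomorphism is the central object of the argument.

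Next I would \emph{define} the inner product on $V$ by transporting the one on $K^\perp$ through this isomorphism, i.e.
\[
\langle v_1 | v_2\rangle_V \doteq \langle (\pi|_{K^\perp})^{-1} v_1 \,|\, (\pi|_{K^\perp})^{-1} v_2 \rangle_E, \qquad v_1, v_2 \in V.
\]
Being the pullback of an inner product along a linear isomorphism, this is automatically symmetric, bilinear and positive definite, hence a genuine inner product on $V$; the induced norm is $\|v\|_V = \|(\pi|_{K^\perp})^{-1} v\|_E$.

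It then remains to identify this norm with the minimum in the statement. Given $v \in V$, I would set $e_0 \doteq (\pi|_{K^\perp})^{-1} v \in K^\perp$, so that $\pi^{-1}(v) = e_0 + K$. For an arbitrary preimage $e = e_0 + \kappa$ with $\kappa \in K$, the orthogonality $e_0 \perp \kappa$ and the Pythagorean identity give
\[
\|e\|_E^2 = \|e_0\|_E^2 + \|\kappa\|_E^2 \geq \|e_0\|_E^2,
\]
with equality if and only if $\kappa = 0$. Hence the minimum over $\pi^{-1}(v)$ is attained precisely at $e_0$ and equals $\|e_0\|_E = \|v\|_V$, which is exactly formula~\eqref{eq:final}.

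I do not expect any serious obstacle: the statement is a clean piece of finite-dimensional linear algebra, and the only point requiring a word of care is the existence of the orthogonal complement $K^\perp$, which is guaranteed here since the spaces involved are finite-dimensional (in the applications $E$ is a fibre of the control bundle). Should one wish to phrase the construction more invariantly, one could equivalently describe the induced inner product through $\pi$ and its adjoint, but the splitting argument above is the most transparent route and the one I would write up.
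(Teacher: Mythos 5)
Your proof is correct and complete: the orthogonal splitting $E = \ker\pi \oplus (\ker\pi)^\perp$, the transport of the inner product through the isomorphism $\pi|_{(\ker\pi)^\perp}$, and the Pythagorean identity to identify the minimum over the affine fibre $e_0 + \ker\pi$ is exactly the standard argument; the paper itself omits the proof, deferring to its references, and the argument given there is this same one. Your remark on finite-dimensionality is the right caveat and is satisfied in the paper's setting, where $E$ is a fibre of the control bundle (or a tensor power thereof).
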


\begin{lemma} \label{l:mont2} 
Let $E$ be a vector space of dimension $n$ with a flag of linear subspaces $\{0\} = F^0 \subset F^1\subset F^2 \subset \ldots\subset F^m = E$. Let $\tx{gr}(F) \doteq F^1\oplus F^2/F^1\oplus \ldots \oplus F^m/F^{m-1}$ be the associated graded vector space. Then there is a canonical isomorphism $\theta: \wedge^n E \to \wedge^n \tx{gr}(F)$. 
\end{lemma}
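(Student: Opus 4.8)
The plan is to exploit that both $\wedge^n E$ and $\wedge^n \tx{gr}(F)$ are one-dimensional. Writing $d_i \doteq \dim(F^i/F^{i-1})$, one has $\dim \tx{gr}(F) = \sum_{i=1}^m d_i = \dim E = n$, so both top exterior powers are lines, and producing a canonical isomorphism amounts to a canonical prescription for sending one generator to another. The natural device is a basis \emph{adapted} to the flag, i.e. a basis $(e_1,\ldots,e_n)$ of $E$ such that $(e_1,\ldots,e_{n_i})$ is a basis of $F^i$ for every $i$, where $n_i \doteq \dim F^i$.

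First I would fix such an adapted basis and record the two generators it produces. On the one hand, $e_1 \wedge \cdots \wedge e_n$ generates $\wedge^n E$. On the other hand, for $n_{i-1} < j \le n_i$ let $\bar e_j \in F^i/F^{i-1}$ denote the class of $e_j$; then $(\bar e_1,\ldots,\bar e_n)$ is a basis of $\tx{gr}(F)$, and $\bar e_1 \wedge \cdots \wedge \bar e_n$ generates $\wedge^n \tx{gr}(F)$. I would then define $\theta$ by declaring $\theta(e_1 \wedge \cdots \wedge e_n) \doteq \bar e_1 \wedge \cdots \wedge \bar e_n$ and extending linearly; since both sides are lines, this is automatically an isomorphism once it is shown to be well defined.

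The crux — and the only real content — is checking that $\theta$ does not depend on the choice of adapted basis. Given a second adapted basis $(e_1',\ldots,e_n')$, I would write $e_j' = \sum_k A_{kj} e_k$. Adaptedness of both bases forces $A$ to be block upper-triangular with respect to the partition of $\{1,\ldots,n\}$ into the blocks $(n_{i-1},n_i]$, since $e_j' \in F^i$ whenever $j \le n_i$. Hence $e_1' \wedge \cdots \wedge e_n' = \det(A)\, e_1 \wedge \cdots \wedge e_n$ with $\det(A) = \prod_{i=1}^m \det(A_{ii})$, the product over the diagonal blocks $A_{ii}$. The key observation is that, upon passing to the quotient $F^i/F^{i-1}$, every off-diagonal-block entry dies modulo $F^{i-1}$, so $\bar e_j' = \sum_{n_{i-1} < k \le n_i} A_{kj}\, \bar e_k$; thus the induced change of basis on $\tx{gr}(F)$ is block-diagonal with blocks $A_{ii}$ and determinant $\prod_i \det(A_{ii}) = \det(A)$. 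Therefore $\bar e_1' \wedge \cdots \wedge \bar e_n' = \det(A)\, \bar e_1 \wedge \cdots \wedge \bar e_n$, so the two recipes for $\theta$ agree and $\theta$ is canonical. I expect no genuine obstacle here: the whole argument reduces to the multiplicativity of the determinant for block-triangular matrices, and the only points requiring care are verifying block-triangularity of $A$ and identifying its diagonal blocks with the transition matrices induced on the quotients.
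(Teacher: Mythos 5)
Your proof is correct and complete: choosing an adapted basis, observing that the transition matrix between two adapted bases is block upper-triangular, and matching its determinant with that of the induced block-diagonal map on $\tx{gr}(F)$ is exactly the standard argument. Note that the paper itself does not prove Lemma~\ref{l:mont2} (it explicitly says the proof ``is not repeated here'' and defers to \cite{montgomerybook,nostropopp}); your argument coincides with the one given in those references, so there is nothing to add.
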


The idea behind Popp's volume is to define an inner product on each $\distr^i_x/\distr^{i-1}_x$ which, in turn, induces an inner product on the orthogonal direct sum 
\begin{equation}
\text{gr}_x(\distr) = \distr_x \oplus \distr_x^2/\distr_x \oplus \ldots \oplus \distr_x^\m/\distr_x^{\m-1}.
\end{equation}
The latter has a natural volume form, which is the canonical volume of an inner product space obtained by wedging the elements an orthonormal dual basis. Then, we employ Lemma~\ref{l:mont2} to define an element of $(\wedge^n T_x M)^*\simeq \wedge^n T_x^* M$, which is Popp's volume form computed at $x$.

Fix $x \in M$. Then, let $v,w \in \distr_x$, and let $V,W$ be any horizontal extensions of $v,w$. Namely, $V,W \in \overline\distr$ and $V(x) =v$, $W(x) = w$. The linear map $\pi : \distr_x\otimes\distr_x \to \distr_x^2/\distr_x$
\begin{equation} \label{eq:ad1}
\pi(v\otimes w)\doteq  [V,W]_x \mod \distr_x,
\end{equation} 
is well defined, and does not depend on the choice the horizontal extensions. 
Similarly, let $1\leq i \leq \m$. The linear maps $\pi_i: \otimes^i \distr_x \to \distr_x^i/\distr_x^{i-1}$
\begin{equation}\label{eq:ad2}
\pi_i(v_1\otimes\dots\otimes v_i) = [V_1,[V_2,\dots,[V_{i-1},V_i]]]_x \mod \distr^{i-1}_x,
\end{equation}
are well defined and do not depend on the choice of the horizontal extensions $V_1,\dots,V_i$ of $v_1,\dots,v_i$.

By the bracket-generating condition, the maps $\pi_i$ are surjective and, by Lemma~\ref{l:mont1}, they induce an inner product space structure on $\distr_x^i/\distr_x^{i-1}$. Therefore, the nilpotentization of the distribution at $x$, namely $\text{gr}_x(\distr)$,
is an inner product space, as the orthogonal direct sum of a finite number of inner product spaces. As such, it is endowed with a canonical volume (defined up to a sign) $\eta_x \in \wedge^n\text{gr}_x(\distr)^*$, which is the volume form obtained by wedging the elements of an orthonormal dual basis.

Finally, Popp's volume (computed at the point $x$) is obtained by transporting the volume of $\text{gr}_x(\distr)$ to $T_x M$ through the map $\theta_x :\wedge^n T_x M \to \wedge^n \text{gr}_x(\distr)$ defined in Lemma~\ref{l:mont2}. Namely 
\begin{equation}\label{eq:popppoint}
\popp_x = \eta_x \circ \theta_x,
\end{equation}
where we employ the canonical identification $(\wedge^n T_x M)^* \simeq \wedge^n T^*_x M$. Eq.~\eqref{eq:popppoint} is defined only in the domain of the chosen local frame. If $M$ is orientable, with a standard argument, these $n$-forms can be glued together to obtain Popp's volume $\popp \in \Omega^n(M)$. Notice that Popp's volume is smooth by construction.

\begin{remark}
From Eq. \eqref{eq:ad1} and \eqref{eq:ad2} it follows that, for any $i\geq0$ and $V\in \distr_{x}$ the linear maps $\ad_x^{i}V:\distr_{x}\to \distr_{x}^{i+1}/\distr^{i}_{x}$ given by
\begin{equation}
\ad^{i}_x V(W)\doteq\underbrace{[V,[V,\dots,[V}_{i \text{ times}},W]]]_x \mod \distr^{i}_x, \qquad  W\in \distr_{x},
\end{equation}
are well-defined.
\end{remark}
\subsection{Slow growth distributions}
Now we are ready to introduce the following class of equiregular distributions.
\begin{definition}
An equiregular distribution is \emph{slow growth} \index{slow growth distribution} \index{distribution!slow growth} at $x\in M$ if there exists a vector $\tanf\in \distr_{x}$ such that the linear map $\ad^{i}_x\tanf$ is surjective for all $i\geq0$.
\end{definition}
This condition is actually generic in $\tanf$, as stated by the following proposition.
\begin{proposition}\label{p:slowgrowthgeneric}
Let $\distr$ be a slow growth distribution at $x$. Then, for $\tanf$ in a non-empty open Zariski subset of $\distr_{x}$, all the linear maps $\ad^{i}_x\tanf$ are surjective. 
\end{proposition}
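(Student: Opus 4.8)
The plan is to exploit the fact that, once a local frame is fixed, the maps $\ad^i_x\tanf$ depend polynomially on $\tanf$, so that surjectivity becomes a Zariski-open condition. First I would fix a local frame $X_1,\ldots,X_k$ of $\distr$ and identify $\distr_x\cong\R^k$ by writing $\tanf=\sum_{j=1}^k v_j X_j(x)$. The constant-coefficient field $V=\sum_j v_j X_j$ is then a horizontal extension of $\tanf$, and since the value of $\ad^i_x\tanf$ is independent of the chosen extensions (the Remark preceding the statement), I may compute with $V$. Using that $\ad V=\sum_j v_j\,\ad X_j$ is linear in the scalars $v_j$, the $i$-fold composition expands as
\[
(\ad V)^i=\sum_{j_1,\ldots,j_i=1}^k v_{j_1}\cdots v_{j_i}\,\ad X_{j_1}\cdots\ad X_{j_i}.
\]
Applying this to the basis $\{X_\ell(x)\}$ of $\distr_x$, evaluating at $x$ and reducing modulo $\distr^i_x$, one sees that in this basis and any fixed basis of $\distr^{i+1}_x/\distr^i_x$ the matrix of $\ad^i_x\tanf$ has entries that are homogeneous polynomials of degree $i$ in $v=(v_1,\ldots,v_k)$.

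Next I would observe that $\ad^i_x\tanf$ is surjective precisely when its rank equals $\dim(\distr^{i+1}_x/\distr^i_x)$, a constant independent of $x$ by equiregularity; call it $r_i$. The locus where the rank drops below $r_i$ is cut out by the simultaneous vanishing of all $r_i\times r_i$ minors of the above matrix, and these minors are polynomials in $v$. Hence
\[
U_i\doteq\{\tanf\in\distr_x:\ \ad^i_x\tanf\ \text{is surjective}\}
\]
is Zariski-open in $\distr_x$. Only finitely many indices matter: if $\m$ is the step of the distribution at $x$, then $\distr^{i+1}_x/\distr^i_x=\{0\}$ for all $i\geq\m$, so $\ad^i_x\tanf$ is trivially surjective there. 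Consequently the set of $\tanf$ for which \emph{all} the maps $\ad^i_x\tanf$ are surjective is the finite intersection $U=\bigcap_{i=0}^{\m-1}U_i$, which is again Zariski-open.

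Finally I would invoke the slow-growth hypothesis: by definition there is at least one $\tanf_0\in\distr_x$ for which every $\ad^i_x\tanf_0$ is surjective, so $\tanf_0\in U$ and $U$ is non-empty. A non-empty Zariski-open subset of the vector space $\distr_x$ is automatically open and dense in the Euclidean topology, which yields the claim. I do not anticipate a serious obstacle; the only step requiring genuine care is verifying that the entries of $\ad^i_x\tanf$ are truly polynomial in $\tanf$ after passing to the quotient and evaluating at $x$, which is exactly what the well-definedness statement in the Remark (independence of the horizontal extensions) makes legitimate.
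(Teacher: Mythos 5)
Your proof is correct and follows essentially the same route as the paper's: both fix a frame, write $\tanf=\sum_j \alpha_j X_j$ with constant coefficients, note that $\ad^i_x\tanf=\bigl(\sum_j \alpha_j\,\ad_x X_j\bigr)^i$ depends polynomially on the coefficients so that the maximal-rank condition cuts out a Zariski-open set, and use the slow-growth hypothesis for non-emptiness. Your write-up merely makes explicit the details (well-definedness via the constant-coefficient extension, the minors argument, and the finite intersection up to the step $\m$) that the paper leaves implicit.
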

\begin{proof} 
Let $X_{i}$ be an orthonormal basis for $\distr_x$ and write $\tanf=\sum_{j=1}^{k} \al_{j}X_{j}$, where $k=\dim \distr_{x}$ and the $\alpha_j$ are constant. The definition of slow growth is a maximal rank condition on the operators $\ad_x^{i}\tanf =(\sum_{j=1}^{k}\al_{j}\ad_x X_{j})^{i}$, which is satisfied by at least one element of $\distr_{x}$. Then, the result follows from the fact that $\ad_x^i \tanf$ depends polynomially on the $\al_{j}$.
\end{proof}
We say that a distribution $\distr$ is \emph{slow growth} if it is slow growth at every point $x \in M$. Familiar sub-Riemannian structures such as contact, quasi-contact, fat, Engel, Goursat-Darboux distributions (see~\cite{Extdiffsyst}) are examples of slow growth distributions.

Now, for any fixed equiregular, ample (of step $m$) geodesic $\gamma:[0,T] \to M$, with flag $0=\DD^{0}_{\gamma(t)}\subset \DD^{1}_{\gamma(t)} \subset \ldots \subset \DD_{\gamma(t)}^{m} = T_{\gamma(t)} M$ recall the smooth families of operators 
\begin{equation}
\mc{L}^{i}_\tanf :\DD_{\gamma(t)} \to \DD^{i+1}_{\gamma(t)}/\DD^{i}_{\gamma(t)}, \qquad i=0,\ldots,m-1,
\end{equation}
defined for all $t \in [0,T]$ in terms of an admissible extension $\tanf$ of $\dot\gamma$ (see Remark~\ref{r:family}). If the distribution is slow growth, we have the identities $\mc{L}^{i}_\tanf=\ad^{i}_{\gamma(t)}\tanf$ which, in particular, say that $\mc{L}^{i}_\tanf$ depend only on the value of $\tanf$ at $\gamma(t)$. Moreover, the following growth condition is satisfied
\begin{equation}\label{eq:gc}
\dim \DD^{i}_{\gamma}=\dim \distr^i, \qquad  \all i\geq 0.
\end{equation}
As a consequence of Proposition~\ref{p:slowgrowthgeneric} it follows that, for a non-empty Zariski open set of initial covectors, the corresponding geodesic is ample (of step $m=\m$, the step of the distribution), equiregular and satisfies the growth condition of Eq.~\eqref{eq:gc}.

Next, recall that given $V, W$ inner product spaces, any surjective linear map $L: V \to W$ descends to an isomorphism $L : V / \ker L \to W$. Then, thanks to the inner product structure, we can consider the map $L^* \circ L : V/ \ker L \to V / \ker L$ obtained by composing $L$ with its adjoint $L^*$, which is a symmetric invertible operator. Applying this construction to our setting, we define the smooth families of symmetric operators
\begin{equation}\label{eq:Mt}
M_i(t) \doteq  (\mc{L}_\tanf^{i-1})^* \circ \mc{L}_\tanf^{i-1}: \distr_{\gamma(t)}/ \ker \mc{L}_\tanf^{i-1} \to \distr_{\gamma(t)}/ \ker \mc{L}_\tanf^{i-1}, \qquad i=1,\ldots,m.
\end{equation}

We are now ready to specify Theorem~\ref{t:main3} for any ample, equiregular geodesic satisfying the growth condition of Eq.~\eqref{eq:gc}. First, let us discuss the zeroth order term of the expansion. Recall that the Hausdorff dimension of an equiregular sub-Riemannian manifold is computed by Mitchell's formula (see \cite{mitchell,bellaiche}), namely
\begin{equation}
Q=\sum_{i=1}^{m} i(\dim \distr^{i}-\dim \distr^{i-1}).
\end{equation}
Thus, for a slow growth distribution and a geodesic $\gamma$ with initial covector $\lambda \in T_{x_0}^*M$ satisfying the growth condition of Eq.~\eqref{eq:gc}, we have the following identity (see also Remark \ref{r:gd})
\begin{equation}
\begin{aligned}
\trace \Qz_\lam &=\sum_{i=1}^{m}(2i-1)(\dim \DD^{i}_{\gamma}-\dim \DD^{i-1}_{\gamma}) =\\
&=\sum_{i=1}^{m}(2i-1)(\dim \distr^{i}-\dim \distr^{i-1})=2Q-n.
\end{aligned}
\end{equation}
This formula gives the zeroth order term of the following theorem.
\begin{theorem}\label{t:slowgrowth} Let $M$ be a sub-Riemannian manifold with a slow growth distribution $\distr$.
Let $\gamma$ be an ample, equiregular geodesic with initial covector $\lambda \in T_{x_0}^* M$ satisfying the growth condition~\eqref{eq:gc}. Then
\begin{equation}\label{eq:2Q-n}
\lapl_\mu \f_t|_{x_0} = (2Q-n) - \frac{1}{2}\sum_{i=1}^m \trace\left( M_i(0)^{-1}\dot{M}_{i}(0)\right) t -  \frac{1}{3} \Ric(\lam)t^2 + O(t^3).
\end{equation}
where the smooth families of operators $M_i(t)$ are defined by Eq.~\eqref{eq:Mt}.
\end{theorem}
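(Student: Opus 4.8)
The plan is to specialise Theorem~\ref{t:main3} and evaluate its three coefficients. Throughout, $\mu$ is Popp's volume $\popp$ of the equiregular structure (Section~\ref{s:poppvolume}): this is the intrinsic volume for which the first-order coefficient is the $\mu$-free expression appearing in the statement, a different volume $e^{a}\popp$ contributing only the additional first-order term $-t\sum_i X_i(a)|_{x_0}\langle\lam,X_i(x_0)\rangle$ dictated by Lemma~\ref{l:divtrans} (here $d_{x_0}\f_t=-t\lam$). Theorem~\ref{t:main3} gives
\begin{equation}
\lapl_\popp \f_t|_{x_0} = \trace \Qz_\lam - \dot{g}(0)\, t - \tfrac{1}{3}\Ric(\lam)t^2 + O(t^3), \qquad \popp_{\gamma(t)} = e^{g(t)}\omega_{\gamma(t)},
\end{equation}
so the quadratic term is already the one we want, and it remains to identify the other two.

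The constant term is immediate from the growth condition~\eqref{eq:gc}. Since $\dim\DD^i_\gamma = \dim\distr^i$ for all $i$, Remark~\ref{r:gd} and Mitchell's formula yield
\begin{equation}
\trace\Qz_\lam = \sum_{i=1}^m(2i-1)(\dim\DD^i_\gamma - \dim\DD^{i-1}_\gamma) = \sum_{i=1}^m(2i-1)(\dim\distr^i - \dim\distr^{i-1}) = 2Q - n,
\end{equation}
exactly as observed just before the statement.

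The core of the argument is the first-order coefficient $-\dot g(0)$, equivalently the computation of $g(t) = \log(\popp_{\gamma(t)}/\omega_{\gamma(t)})$. I would use the explicit description of $\omega$ produced in the proof of Theorem~\ref{t:main3} (Chapter~\ref{c:sublapproof}): $\omega_{\gamma(t)}$ is dual to the projection onto $T_{\gamma(t)}M$ of the canonical Darboux frame along $\gamma$, a frame adapted to the flag whose level-one part is a sub-Riemannian orthonormal basis $e_1,\dots,e_k$ of $\distr_{\gamma(t)}$ and whose level-$i$ part projects, modulo $\DD^{i-1}_{\gamma(t)}$, onto the vectors $\mc{L}^{i-1}_\tanf(e_a)$ with $e_a \notin \ker \mc{L}^{i-1}_\tanf$. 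Slow growth and~\eqref{eq:gc} guarantee $\mc{L}^{i-1}_\tanf = \ad^{i-1}_{\gamma(t)}\tanf$ and that these images form a basis of $\DD^i_{\gamma(t)}/\DD^{i-1}_{\gamma(t)} = \distr^i_{\gamma(t)}/\distr^{i-1}_{\gamma(t)}$. Evaluating $\popp_{\gamma(t)}$ on this frame and passing to $\mathrm{gr}_{\gamma(t)}(\distr)$ through the isomorphism of Lemma~\ref{l:mont2}, the value factorises over the graded pieces; on the $i$-th piece it equals the volume of $\{\mc{L}^{i-1}_\tanf(e_a)\}$ for the induced inner product, that is $\sqrt{\det M_i(t)}$, since the Gram matrix of these vectors in the orthonormal basis $\{e_a\}$ of $\distr_{\gamma(t)}/\ker\mc{L}^{i-1}_\tanf$ is exactly the operator $M_i(t)=(\mc{L}^{i-1}_\tanf)^*\circ\mc{L}^{i-1}_\tanf$ of~\eqref{eq:Mt}. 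As $\omega_{\gamma(t)}$ evaluates to $1$ on the same frame, this gives $\popp_{\gamma(t)}/\omega_{\gamma(t)} = \prod_{i=1}^m\sqrt{\det M_i(t)}$, whence $g(t) = \tfrac12\sum_{i=1}^m\log\det M_i(t) + \mathrm{const}$ and, by Jacobi's formula,
\begin{equation}
\dot g(0) = \tfrac{1}{2}\sum_{i=1}^m \left.\frac{d}{dt}\right|_{t=0}\log\det M_i(t) = \tfrac{1}{2}\sum_{i=1}^m \trace\big(M_i(0)^{-1}\dot{M}_i(0)\big).
\end{equation}
Inserting the three coefficients into the expansion produces~\eqref{eq:2Q-n}.

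The main obstacle is the identification of $\omega$, namely the claim that the level-$i$ part of the projected canonical frame is $\mc{L}^{i-1}_\tanf$ applied to the orthonormal horizontal frame modulo $\DD^{i-1}_{\gamma(t)}$; this relies on the Li--Zelenko structural equations for the canonical frame of Chapter~\ref{c:sublapproof} together with the slow-growth identity $\mc{L}^{i-1}_\tanf=\ad^{i-1}_{\gamma(t)}\tanf$ of Section~\ref{s:slowgrowth}. One must also verify that the inner product Popp's construction places on $\distr^i/\distr^{i-1}$ coincides with the one used to form the adjoint $(\mc{L}^{i-1}_\tanf)^*$ in $M_i(t)$, so that the graded volume is genuinely $\sqrt{\det M_i(t)}$. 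Granting these, the factorisation over graded pieces and the application of Jacobi's formula are routine.
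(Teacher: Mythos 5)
Your proposal is correct and follows essentially the same route as the paper's proof: specialise Theorem~\ref{t:main3} with Popp's volume, get the constant term $2Q-n$ from the growth condition, and compute $\dot g(0)$ by identifying $g(t)=\log|\mu(P_t)|$ for the parallelotope of the projected canonical frame (Remark~\ref{r:parallelotopo}), rewriting it via $\mc{L}^{i-1}_\tanf$ applied to the orthonormal horizontal frame (Lemma~\ref{l:adrecover}), factorising Popp's volume over the graded pieces to obtain $\prod_{i=1}^m\sqrt{\det M_i(t)}$, and finishing with Jacobi's formula. The two verification points you flag are precisely the paper's Lemma~\ref{l:adrecover} and its appeal to ``a standard linear algebra argument and the very definition of Popp's volume''.
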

\begin{remark} Equivalently we can write Eq.~\eqref{eq:2Q-n} in the following form
\begin{equation}
\lapl_\mu \f_t|_{x_0} = (2Q-n) - \frac{1}{2}\left(\left.\frac{d}{ds}\right|_{s=0} \sum_{i=1}^m \log \det M_i(s)\right) t -  \frac{1}{3} \Ric(\lam)t^2 + O(t^3).
\end{equation}
\end{remark}
The proof of Theorem~\ref{t:slowgrowth} is postponed to the end of Chapter~\ref{c:sublapproof}. We end this section with an example. 
\begin{example}[Riemannian structures]
In a Riemannian structure (see Section~\ref{s:riemann}), any non-trivial geodesic has the same flag $\DD_{\gamma(t)} = \distr_{\gamma(t)} = T_{\gamma(t)} M$. In particular, it is a trivial example of slow growth distribution. Notice that Popp's volume reduces to the usual Riemannian volume form. Since every geodesic is ample with step $m=1$, there is only one family of operators associated with $\gamma(t)$, namely the constant operator $M_1(t) = \mathbb{I}|_{T_{\gamma(t)}M}$. Thus, in this case, the linear term of Theorem~\ref{t:slowgrowth} vanishes, and we obtain
\begin{equation}
\lapl \f_t|_{x_0} = n -  \frac{1}{3} \Ric(\lam)t^2 + O(t^3),
\end{equation}
where $\Ric(\lam)$ is the classical Ricci curvature in the direction of the geodesic.
\end{example}

In Section~\ref{s:Heis} we compute explicitly the asymptotic expansion of Theorem~\ref{t:slowgrowth} in the case of the Heisenberg group, endowed with its canonical volume. A more general class of slow growth sub-Riemannian distributions are contact structures, where the operators $M_i(t)$ are not trivial and can be computed explicitly.

\section{Geodesic dimension and sub-Riemannian homotheties}\label{s:gd}

In this section, $M$ is a complete, connected, orientable sub-Riemannian manifold, endowed with a smooth volume form $\mu$. With a slight abuse of notation, we denote by the same symbol the induced measure on $M$.  We are interested in sub-Riemannian homotheties, namely contractions along geodesics.
To this end, let us fix $x_0 \in M$, which will be the center of the homothety. Recall that $\Sigma_{x_0}$ is the set of points $x$ such that 
there exists a unique minimizer $\gamma :[0,1]\to M$ joining $x_0$ with $x$, which is not abnormal and $x$ is not conjugate to $x_0$ along $\gamma$. Recall also that, by Theorem~\ref{t:d2sr}, $\Sigma_{x_0} \subset M$ is the open and dense set where the function $\f=\frac{1}{2}\dist^2(x_0,\cdot)$ is smooth.

\begin{definition}
For any $x \in \Sigma_{x_0}$ and $t \in [0,1]$, the \emph{sub-Riemannian geodesic homothety of center $x_0$ at time $t$} is the map $\phi_t : \Sigma_{x_0} \to M$ that associates $x$ with the point at time $t$ of the unique geodesic connecting $x_0$ with $x$. \index{sub-Riemannian!geodesic homothety}
\end{definition}

As a consequence of Theorem~\ref{t:d2sr} and the smooth dependence on initial data, it is easy to prove that $(t,x) \mapsto \phi_t(x)$ is smooth on $[0,1]\times \Sigma_{x_0}$, and is given by the explicit formula
\begin{equation}\label{eq:homothetymap}
\phi_t(x) = \pi \circ e^{(t-1)\vec{H}}(d_x \f).
\end{equation}
%where, we recall, $\f = \frac{1}{2}\dist^2(x_0,\cdot)$.

Let now $\Omega \subset \Sigma_{x_0}$ be a bounded, measurable set, with $0<\mu(\Omega)<+\infty$, and let $\Omega_{x_0,t} \doteq  \phi_t(\Omega)$. The map $t\mapsto\mu(\Omega_{x_0,t})$ is smooth on $[0,1]$. As shown in Fig.~\ref{fig:homothety}, the homothety shrinks $\Omega$ to the center $x_0$. Indeed $\Omega_{x_0,0} = \{x_0\}$, and $\mu(\Omega_{x_0,t}) \to 0$ for $t\to 0$. %The main result of this section is the asymptotic behaviour of $\mu(\Omega_{x_0,t})$, for $t$ close to $0$. More precisely, we are interested in the order of $\mu(\Omega_{x_0,t})$ at $t=0$. 
For a Riemannian structure, a standard computation in terms of Jacobi fields shows that 
\begin{equation}\label{eq:powerlaw}
\mu(\Omega_{x_0,t}) \sim t^{\dim M},\qquad \text{for } t \to 0,
\end{equation}
where we write $f(t) \sim g(t)$ if there exists $C \neq 0$ such that $f(t) = g(t)(C+o(1))$.
\begin{figure}
\centering
\psscalebox{1 1} % Change this value to rescale the drawing.
{
\begin{pspicture}(0,-2)(9.805715,2.1720803)
\definecolor{colour0}{rgb}{0.8,0.8,1.0}
\psbezier[linecolor=black, linewidth=0.03, fillstyle=solid,fillcolor=colour0](4.103176,-0.14610136)(3.906352,-0.60185575)(2.4073117,-0.38586113)(2.3245804,0.09984902)(2.2418492,0.5855592)(2.640162,0.53313017)(3.1615665,0.49336964)(3.6829712,0.45360908)(4.3,0.30965307)(4.103176,-0.14610136)
\psbezier[linecolor=black, linewidth=0.03, fillstyle=solid,fillcolor=colour0](8.9,0.7981442)(8.523747,-0.12837279)(5.6581507,0.31072927)(5.5,1.2981442)(5.3418493,2.2855592)(6.103272,2.1789746)(7.1,2.0981443)(8.096728,2.0173137)(9.276253,1.7246612)(8.9,0.7981442)
\psbezier[linecolor=black, linewidth=0.02, linestyle=dashed, dash=0.17638889cm 0.10583334cm](0.2,-1.7018558)(1.5,-0.5018558)(5.0,1.1981442)(7.2,1.2113022)
\rput[bl](-0.3,-1.8018558){$x_0$}
\rput[bl](7.3485713,1.13243){$x$}
\rput[bl](8.325714,-0.03614149){$\Omega$}
\rput[bl](3.0514287,-0.30757006){$\phi_t(x)$}
\psbezier[linecolor=black, linewidth=0.02, linestyle=dashed, dash=0.17638889cm 0.10583334cm](0.2,-1.7018558)(1.14,-0.60185575)(1.88,0.098144226)(2.46,0.49814424)(3.04,0.89814425)(5.18,1.8981442)(5.9,2.0981443)
\psbezier[linecolor=black, linewidth=0.02, linestyle=dashed, dash=0.17638889cm 0.10583334cm](0.2,-1.7018558)(1.6068493,-1.0018557)(2.6268957,-0.6202739)(3.68,-0.3818558)(4.733104,-0.14343767)(5.96,0.29814422)(7.64,0.27814424)
\psdots[linecolor=black, dotsize=0.08](7.2,1.1981442)
\psdots[linecolor=black, dotsize=0.08](0.2,-1.7018558)
\psdots[linecolor=black, dotsize=0.08](3.22,0.13814422)
\rput[bl](3.78,-0.93899864){$\Omega_{x_0,t} = \phi_t(\Omega)$}
\end{pspicture}
}
\caption{Sub-Riemannian homothety of the set $\Omega$ with center $x_0$.}\label{fig:homothety}
\end{figure}
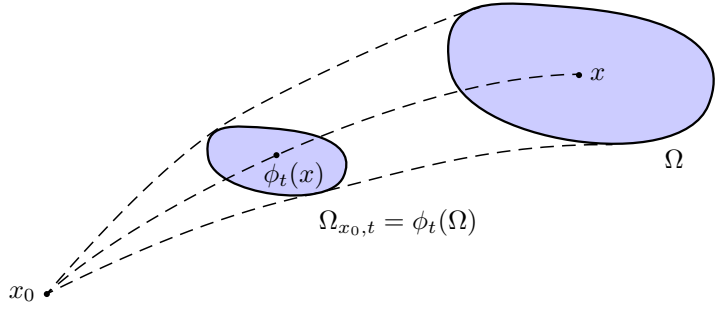

In the sub-Riemannian case, we have a similar power-law behaviour, but the exponent is a different dimensional invariant, which we call \emph{geodesic dimension}. The main result of this section is a formula for the geodesic dimension, in terms of the growth vector of the geodesic.

\begin{definition}
Let $\lam \in T_{x_0}^*M$. Assume that the corresponding geodesic $\gamma:[0,1]\to M$ is ample (at $t=0$) of step $m$, with growth vector $\mc{G}_\g=\{k_1,k_2,\ldots,k_m\}$ (at $t=0$). Then we define
\begin{equation}\label{eq:gdformula}
\gd_\lam \doteq  \sum_{i=1}^m (2i-1)(k_i - k_{i-1}) = \sum_{i=1}^m(2i-1)d_i,
\end{equation}
and $\gd_\lam \doteq  +\infty$ if the geodesic is not ample. 
\end{definition}
Observe that Eq.~\eqref{eq:gdformula} closely resembles the formula for Hausdorff dimension of an equiregular sub-Riemannian manifold (see~\cite{bellaiche,notejean}). In the latter, each direction has a weight according to the flag of the distribution, while in Eq.~\eqref{eq:gdformula}, the weights depend on the flag of the geodesic.
\begin{remark}\label{r:gd=trI}
Assume that $\lam$ is associated with an equiregular geodesic $\gamma$. Then, by Remark~\ref{r:gd} and Eq.~\eqref{eq:gdformula} it follows that
\begin{equation}
\n_{\lam} = \trace \Qz_\lam.
\end{equation}
Moreover, as a consequence of Theorem~\ref{t:main3} (see Remark~\ref{r:laplacianexpansion}), under these assumption $\gd_\lam$ can be recovered from the sub-Laplacian of $\f_t$ by the following formula:
\begin{equation}
\n_\lam = \lim_{t\to 0} \lapl_\mu \f_{t}\big|_{x_0}.
\end{equation}
\end{remark}
Recall that $A_{x_0} \subset T_{x_0}^*M$ is the set of initial covectors such that the corresponding geodesic is ample, with maximal geodesic growth vector (see Section~\ref{s:maximalgrowth}). The next proposition is a direct consequence of Proposition~\ref{p:ampledense}.
\begin{proposition}\label{p:costanza}
The function $\lam \mapsto \gd_\lam$ is constant on the open Zariski set $A_{x_0}\subset T_{x_0}^*M$, assuming its minimum value. \end{proposition}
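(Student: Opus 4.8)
The plan is to deduce Proposition~\ref{p:costanza} directly from Proposition~\ref{p:ampledense} together with the definition of $\gd_\lam$. First I would recall that, by definition, $\gd_\lam$ depends only on the geodesic growth vector $\mc{G}_\gamma(0) = \{k_1(\lambda),k_2(\lambda),\ldots,k_m(\lambda)\}$ of the geodesic associated with $\lambda$, through the formula
\begin{equation}
\gd_\lam = \sum_{i=1}^m (2i-1)(k_i(\lambda) - k_{i-1}(\lambda)).
\end{equation}
Hence $\gd_\lam$ is a function of $\lambda$ only through the integers $k_i(\lambda) = \dim\DD^i_\gamma(0)$. The key observation is that on the set $A_{x_0}$ these integers are \emph{constant}: by Proposition~\ref{p:ampledense}, every $\lambda \in A_{x_0}$ has maximal geodesic growth vector, i.e. $\mc{G}_\gamma(0) = \mc{G}_{x_0}$, the maximal geodesic growth vector at $x_0$. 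Since the right-hand side above is completely determined by $\mc{G}_\gamma(0)$, it takes the same value for all $\lambda \in A_{x_0}$; this proves that $\lam\mapsto\gd_\lam$ is constant on $A_{x_0}$, and that $A_{x_0}$ is open and Zariski (again by Proposition~\ref{p:ampledense}).

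It remains to check that this common value is the \emph{minimum} of $\gd_\lam$ over all $\lambda \in T_{x_0}^*M$. Here I would argue as follows. For a covector $\lambda$ whose geodesic is not ample, $\gd_\lam = +\infty$ by definition, so such covectors cannot compete for the minimum. For an ample covector $\lambda \notin A_{x_0}$, the growth vector $\mc{G}_\gamma(0)$ is not maximal, so $k_i(\lambda) \leq \bar k_i$ for all $i$, with strict inequality for at least one index (where $\bar k_i$ denotes the maximal values achieved on $A_{x_0}$). The point is that decreasing the $k_i$ \emph{increases} the weighted sum $\sum_{i}(2i-1)(k_i - k_{i-1})$: intuitively, when the flag fails to grow as fast as possible, the ``missing'' dimensions are pushed to higher indices $i$, which carry larger weights $2i-1$. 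I would make this precise using Abel summation, rewriting
\begin{equation}
\gd_\lam = \sum_{i=1}^m (2i-1)(k_i - k_{i-1}) = (2m-1)n - 2\sum_{i=1}^{m-1} k_i,
\end{equation}
where $n = \dim M = k_m$ is fixed for every ample geodesic. From this reformulation the claim is immediate: since $k_i(\lambda) \leq \bar k_i$ for all $i$ and the step $m$ of a non-maximal ample geodesic is at least the maximal step (as the flag reaches $T_{x_0}M$ no sooner), the partial sums $\sum_{i} k_i(\lambda)$ are maximized precisely on $A_{x_0}$, hence $\gd_\lam$ is minimized there.

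The main obstacle, and the step requiring the most care, is the comparison of the step $m$ across different covectors: the number of terms in the defining sum depends on $\lambda$ through the step $m(\lambda)$, so the Abel-summation rewriting must be handled uniformly. The clean way around this is to observe that, once a geodesic is ample, one can pad the growth vector by repeating the final value $n$ (which contributes zero increments $k_i - k_{i-1} = 0$), so that all sums may be taken up to a common upper index $\bar m = \max_\lambda m(\lambda)$ without changing the value of $\gd_\lam$. With this normalization the inequality $k_i(\lambda)\le\bar k_i$ holds for every index in the common range, and the conclusion $\gd_\lam \geq \gd_{\lambda_0}$ for $\lambda_0 \in A_{x_0}$ follows term-by-term from the second displayed identity. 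This completes the proof that $\gd_\lam$ attains its minimum, constantly, on $A_{x_0}$.
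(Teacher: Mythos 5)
Your proposal is correct and follows essentially the same route as the paper, which simply notes that Proposition~\ref{p:costanza} is a direct consequence of Proposition~\ref{p:ampledense}: constancy on $A_{x_0}$ comes from the maximality of the growth vector there, while your Abel-summation identity $\gd_\lam=(2m-1)\dim M-2\sum_{i=1}^{m-1}k_i(\lam)$ makes explicit the monotonicity (element-wise larger growth vector $\Rightarrow$ smaller $\gd_\lam$, with non-ample covectors excluded since $\gd_\lam=+\infty$) that the paper leaves implicit. One cosmetic fix: instead of a global $\bar m=\max_\lam m(\lam)$, whose finiteness you do not establish, pad pairwise to $\max\{m(\lam),m(\lam_0)\}$ for each fixed ample $\lam$ and $\lam_0\in A_{x_0}$; the term-by-term comparison $k_i(\lam)\le \bar k_i$ then goes through verbatim.
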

Proposition~\ref{p:costanza} motivates the next definition.
\begin{definition}
Let $M$ be a sub-Riemannian manifold. The \emph{geodesic dimension} \index{geodesic dimension}  at $x_0 \in M$ is
\begin{equation}
\gd_{x_0}\doteq  \min \{\gd_\lam \mid \lam \in T_{x_0}^*M\} <+\infty.
\end{equation}
\end{definition}
\begin{remark}
As a consequence of Proposition~\ref{p:costanza} we notice that, in order to compute $\gd_{x_0}$ it is sufficient to employ formula~\eqref{eq:gdformula} for the \emph{generic} choice of the covector $\lambda$, namely for $\lambda \in A_{x_0}$.
\end{remark}

For every $x_0 \in M$ we have the inequality $\gd_{x_0} \geq \dim M$ and the equality holds if and only if the structure is Riemannian at $x_{0}$. Notice that, if the distribution is equiregular at $x_0$, it follows from Lemma~\ref{l:upbound} and Mitchell's formula for Hausdorff dimension (see \cite{mitchell}) that $\gd_{x_0} > \dim_{\mathcal{H}}M$.
We summarize these statements in the following proposition.
\begin{proposition}
Let $M$ be an equiregular sub-Riemannian manifold. Let $\dim M$ be its topological dimension and $\dim_\mathcal{H}M$ its Hausdorff dimension. For any point $x_0 \in M$ we have the following inequality:
\begin{equation}
\gd_{x_0} \geq \dim_{\mathcal{H}} M \geq \dim M,
\end{equation}
and the equality holds if and only if the structure is Riemannian at $x_0$.
\end{proposition}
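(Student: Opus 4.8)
The plan is to establish the chain in two independent steps and then read off the equality case from both. For the right-hand inequality $\dim_{\mathcal{H}} M \geq \dim M$ I would invoke Mitchell's formula, which for an equiregular structure gives the Hausdorff dimension as $Q = \sum_{i=1}^{\m} i\,h_i$, where $h_i \doteq \dim \distr^i_{x_0} - \dim \distr^{i-1}_{x_0}$ and $\m$ is the step of the distribution at $x_0$. Since $\sum_i h_i = n = \dim M$ and every weight satisfies $i \geq 1$, one has $Q = \sum_i i\, h_i \geq \sum_i h_i = n$, with equality exactly when $h_i = 0$ for all $i \geq 2$, that is when $\distr_{x_0} = T_{x_0}M$. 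This is precisely the condition that the structure be Riemannian at $x_0$.

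For the left-hand inequality, recall that $\gd_{x_0} = \min_\lam \gd_\lam$, the minimum effectively ranging over covectors whose geodesic is ample (such covectors exist by Theorem~\ref{t:bound}, so $\gd_{x_0}<+\infty$). It therefore suffices to prove $\gd_\lam \geq Q$ for every ample geodesic $\gamma$ with initial covector $\lam$. Writing $k_i \doteq \dim \DD^i_\gamma(0)$ and $d_i \doteq k_i - k_{i-1}$, so that $\gd_\lam = \sum_i (2i-1)\,d_i$, the crucial input is Lemma~\ref{l:upbound} evaluated at $t=0$, which yields the partial-sum domination
\begin{equation}
\sum_{j\leq i} d_j = k_i \leq \dim \distr^i_{x_0} = \sum_{j \leq i} h_j, \qquad \all i \geq 1,
\end{equation}
while the total sums agree, both being $n$. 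A summation by parts then produces the clean identity
\begin{equation}
\gd_\lam - Q = \sum_{i \geq 2}(i-1)\,d_i + \sum_{i\geq 1}\left(\dim \distr^i_{x_0} - k_i\right),
\end{equation}
in which the first sum accounts for the elementary bound $2i-1 \geq i$ and the second sum is exactly the defect $\sum_i(\dim\distr^i_{x_0}-k_i)$ produced by the domination above. Both sums consist of non-negative terms, whence $\gd_\lam \geq Q$, and taking the minimum over $\lam$ gives $\gd_{x_0} \geq \dim_{\mathcal{H}}M$.

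Finally I would analyse equality. Equality in the second displayed identity forces both sums to vanish: the first vanishes iff $d_i = 0$ for all $i \geq 2$, i.e. $k_1 = n$, which since $\DD^1_\gamma(0) = \distr_{x_0}$ means $\distr_{x_0} = T_{x_0}M$; once this holds the second sum vanishes automatically and the structure is Riemannian at $x_0$. Thus $\gd_\lam = Q$ iff the structure is Riemannian at $x_0$, and correspondingly $\gd_{x_0} = \dim_{\mathcal{H}}M = \dim M$ iff this holds. The computation is short, so the only genuine obstacle is the bookkeeping: one must align the geodesic flag (step $m$) with the distribution flag (step $\m$), which may have different lengths, pad both to a common length with the stabilised value $n$, and confirm that Lemma~\ref{l:upbound} supplies the domination precisely at the base point $t=0$; after that the summation by parts and the equality discussion are routine.
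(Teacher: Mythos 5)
Your proof is correct and follows essentially the same route as the paper, which establishes this proposition by exactly the two ingredients you use: Lemma~\ref{l:upbound} evaluated at $t=0$ (giving $k_i \leq \dim\distr^i_{x_0}$ for every ample geodesic) together with Mitchell's formula $Q=\sum_i i\,h_i$, compared against $\gd_\lam=\sum_i (2i-1)d_i$. Your summation-by-parts identity $\gd_\lam - Q = \sum_{i\geq 2}(i-1)d_i + \sum_{i\geq 1}(\dim\distr^i_{x_0}-k_i)$ checks out (both sides equal $Nn-n-2\sum_{i<N}k_i+\sum_{i<N}\dim\distr^i_{x_0}$ after padding to a common length $N$) and is simply an explicit bookkeeping of the argument the paper leaves implicit, including the correct equality analysis via $k_1=\dim\distr_{x_0}$.
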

For genuine sub-Riemannian structures then, the geodesic dimension is a new invariant, related with the structure of the distribution along geodesics.

The geodesic dimension is the exponent of the sub-Riemannian analogue of Eq.~\eqref{eq:powerlaw}: namely it represents the critical exponents that describes the contraction of volumes along geodesic homotheties.
\begin{maintheorem}\label{t:main4}
Let $\mu$ be a smooth volume. For any bounded, measurable set $\Omega \subset \Sigma_{x_0}$, with $0<\mu(\Omega)<+\infty$ we have
\begin{equation}
\mu(\Omega_{x_0,t}) \sim t^{\gd_{x_0}},\qquad\text{for } t\to 0.
\end{equation}
\end{maintheorem}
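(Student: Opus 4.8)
The plan is to reduce the volume asymptotics to a pointwise analysis of the Jacobian of the homothety $\phi_t$ and then to integrate. By Theorem~\ref{t:d2sr} the function $\f=\tfrac12\dist^2(x_0,\cdot)$ is smooth on $\Sigma_{x_0}$, so by \eqref{eq:homothetymap} the map $(t,x)\mapsto\phi_t(x)$ is smooth on $[0,1]\times\Sigma_{x_0}$; moreover the homogeneity $\EXP_{x_0}(t,s\lam_0)=\EXP_{x_0}(ts,\lam_0)$ gives $\phi_t=\EXP_{x_0,t}\circ\EXP_{x_0,1}^{-1}$ on $\Sigma_{x_0}$. Fixing a smooth reference density on the fibres of $T^*M$, the change of variables formula yields
\begin{equation}
\mu(\Omega_{x_0,t})=\int_\Omega J^{\phi_t}(x)\,d\mu(x),\qquad J^{\phi_t}(x)=\frac{\det D_{\lam_0}\EXP_{x_0,t}}{\det D_{\lam_0}\EXP_{x_0,1}},
\end{equation}
where $\lam_0=\EXP_{x_0,1}^{-1}(x)$ is the initial covector of the minimizer joining $x_0$ to $x$; the denominator is nonzero since $x$ is not conjugate to $x_0$. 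Everything is thus reduced to the behaviour of $t\mapsto\det D_{\lam_0}\EXP_{x_0,t}$ as $t\to 0^+$.

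The core of the proof is the pointwise claim that, whenever the geodesic with initial covector $\lam_0$ is ample and equiregular,
\begin{equation}
\det D_{\lam_0}\EXP_{x_0,t}=c(\lam_0)\,t^{\gd_\lam}\bigl(1+o(1)\bigr),\qquad t\to 0^+,
\end{equation}
with $c(\lam_0)\neq 0$ and $\gd_\lam=\sum_{i}(2i-1)d_i$ as in \eqref{eq:gdformula}. I would establish this through the Jacobi curve and the canonical (Darboux) moving frame built from the Li--Zelenko structural equations of Chapter~\ref{c:jac}: in such a frame the differential $D_{\lam_0}\EXP_{x_0,t}$, seen as the projection to $M$ of the image of the vertical subspace under the linearised Hamiltonian flow, is block-triangular with respect to the flag $\DD^1_\gamma\subset\DD^2_\gamma\subset\cdots$, and the diagonal block acting on the level $\DD^i_\gamma/\DD^{i-1}_\gamma$ (of dimension $d_i$) is invertible with leading order $t^{2i-1}$. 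Multiplying the determinants of the diagonal blocks produces the exponent $\sum_i(2i-1)d_i=\gd_\lam$, in agreement with $\trace\Qz_\lam$ (Remark~\ref{r:gd}), while the product of the top-order coefficients of the blocks, expressible through the invertible operators $M_i(0)$ of \eqref{eq:Mt}, gives $c(\lam_0)\neq 0$. The linear-quadratic model of Section~\ref{s:lq} is an instructive check: there $\EXP_{x_0,t}(p_0)=e^{tA}x_0+e^{tA}C(t)p_0^*$ by \eqref{eq:lq4}, so $\det D_{\lam_0}\EXP_{x_0,t}=e^{t\trace A}\det C(t)$, and a direct computation of the controllability Gramian $C(t)$ gives exactly $\det C(t)\sim c\,t^{\gd_\lam}$. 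This step, requiring the full normal-form machinery and the precise scaling of the canonical frame, is the one I expect to be the main obstacle.

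It remains to integrate. By Propositions~\ref{p:ampledense} and~\ref{p:costanza} the set $A_{x_0}\subset T_{x_0}^*M$ of initial covectors with maximal, hence ample and equiregular, growth vector is Zariski-open and of full measure, and $\gd_\lam=\gd_{x_0}$ for $\lam_0\in A_{x_0}$. Consequently, for $\mu$-almost every $x\in\Omega$ the integrand satisfies
\begin{equation}
\lim_{t\to0^+}\frac{J^{\phi_t}(x)}{t^{\gd_{x_0}}}=\frac{c(\lam_0)}{\det D_{\lam_0}\EXP_{x_0,1}}=:C(x)>0.
\end{equation}
I would first prove the theorem for $\Omega$ compactly contained in $\Sigma_{x_0}$, where the expansion above is uniform and the interchange of limit and integral is immediate, and then pass to a general bounded measurable $\Omega\subset\Sigma_{x_0}$ by a dominated convergence argument, exhausting $\Omega$ from inside by compact sets and controlling the remaining contribution uniformly in $t$ by an integrable bound for $t^{-\gd_{x_0}}J^{\phi_t}$. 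Since $C(x)>0$ almost everywhere and $0<\mu(\Omega)<\infty$, the constant $\int_\Omega C(x)\,d\mu(x)$ is finite and strictly positive, which gives $\mu(\Omega_{x_0,t})\sim t^{\gd_{x_0}}$ as $t\to 0$ and completes the proof.
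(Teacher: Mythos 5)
Your strategy coincides with the paper's: reduce $\mu(\Omega_{x_0,t})$ to an integral of the Jacobian of $\phi_t$ via $\phi_t=\EXP_{x_0,t}\circ\EXP_{x_0,1}^{-1}$, compute the pointwise order in $t$ of that Jacobian through the Jacobi curve, and integrate using the genericity statements of Propositions~\ref{p:ampledense} and~\ref{p:costanza}. The genuine gap sits exactly where you predicted the main obstacle: your pointwise expansion $\det D_{\lam_0}\EXP_{x_0,t}=c(\lam_0)\,t^{\gd_\lam}(1+o(1))$ is derived from the canonical frame and the Li--Zelenko structural equations, i.e.\ from Theorems~\ref{t:canuniqueness} and~\ref{t:Sasymptotic}, and these apply only to \emph{ample and equiregular} curves. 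You then assert that covectors in $A_{x_0}$ give geodesics that are ``maximal, hence ample and equiregular''. Proposition~\ref{p:ampledense} yields only ampleness and maximality of the growth vector \emph{at} $t=0$; it gives no control on the local constancy of $t\mapsto\mc{G}_\gamma(t)$ near $t=0$, because for $t>0$ the shifted curve $\gamma_t$ emanates from a different base point, so maximality over the fiber $T^*_{x_0}M$ does not bound $k_i(t)$, and lower semicontinuity allows $k_i(t)>k_i(0)$ for arbitrarily small $t$. Generic equiregularity is neither proved in the paper nor obviously true, so your key lemma is not available $\mu$-almost everywhere as claimed.

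The paper closes this hole by avoiding the canonical frame altogether at this step. The statement actually needed is Lemma~\ref{l:ordS}: for a merely \emph{ample} curve, the order of $\det S(t)$ at $t=0$ equals $\gd_\lam$ (and is $+\infty$ otherwise). This is Eq.~\eqref{eq:ordS}, obtained in the proof of Theorem~\ref{t:main} from the factorization $\dot{S}(t)=-V(t)V(t)^*$ with $V$ analytic and the flag $E_1\subset\cdots\subset E_m$ adapted to the derivatives of $V$ at $0$ --- no equiregularity, no normal form, and crucially no leading coefficient: only the \emph{order} of the determinant is computed, which suffices since the asymptotic relation $\sim$ is insensitive to the constant. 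The pointwise step is then completed by the linear-algebraic identity $S(t)=-[G-B(t)C(t)]^{-1}B(t)D(t)$, which shows $\det(d_x\phi_t)$ and $\det S(t)$ have the same order for every $x\in\Sigma_{x_0}$. Replacing your canonical-frame argument by this order computation repairs the proof; note also that you claim more than is needed (a true $(1+o(1))$ expansion with $c(\lam_0)\neq 0$, uniformity on compacta, and an integrable bound for $t^{-\gd_{x_0}}J^{\phi_t}$ on all of $\Omega$ near $\partial\Sigma_{x_0}$), and the last bound is asserted rather than proved --- though to be fair, the paper itself is terse on the interchange of limit and integral for sets not compactly contained in $\Sigma_{x_0}$, and your exhaustion scheme is a reasonable way to make that step explicit once the pointwise order is correctly established.
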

Observe also that homotheties with different center may have different asymptotic exponents. This can happen, for example, in non-equiregular sub-Riemannian structures.

The proof of Proposition~\ref{p:costanza} and Theorem~\ref{t:main4} is postponed to the end of Chapter~\ref{c:jac}.

\begin{example}[Geodesic dimension in contact structures]
Let $(M,\distr,\metr{\cdot}{\cdot})$ be a contact sub-Riemannian structure. In this case, for any $x_0 \in M$, $\dim M = 2\ell+1$ and $\dim \distr_{x_0} = 2\ell$. Any non-trivial geodesic $\g$ is ample with the same growth vector $\mc{G}_{\g} = \{2\ell,2\ell+1\}$. Therefore, by Eq.~\eqref{eq:gdformula}, $\n_{x_0} = 2\ell+3$ (notice that it does not depend on $x_0$). Theorem~\ref{t:main4} is an asymptotic generalization of the results obtained in \cite{Juillet}, where the exponent $2\ell+3$ appears in the context of measure contraction property in the Heisenberg group. For a more recent overview on measure contraction property in Carnot groups, see~\cite{RiffordCarnot}.
\end{example}

\section{Heisenberg group}\label{s:Heis} \index{Heisenberg group}
Before entering into details of the proofs, we repeat the construction introduced in the previous sections for one of the simplest sub-Riemannian structures: the Heisenberg group.  We provide an explicit expression for the geodesic cost function and, applying Definition~\ref{d:curv}, we obtain a formula for the operators $\Qz_\lambda$ and $\RR_\lambda$. 
In particular, we recover by a direct computation the results of Theorems~\ref{t:main}, \ref{t:main2} and \ref{t:main3}.

The Heisenberg group $\mathbb{H}$ is the equiregular sub-Riemannian structure on $\R^3$ defined by the global (orthonormal) frame
\begin{equation}\label{eq:hframe}
X = \partial_x - \frac{y}{2} \partial_z, \qquad Y = \partial_y + \frac{x}{2} \partial_z.
\end{equation}
Notice that the distribution is bracket-generating, for $Z\doteq [X,Y] = \partial_z$. Let us introduce the linear on fibers functions $h_x,h_y,h_z:T^*\R^3 \to \mathbb{R}$
%\begin{equation}
%h_x(\lambda) = \langle \lambda, X(q) \rangle,\qquad h_y(\lambda) = \langle \lambda, Y(q) \rangle,\qquad h_z(\lambda) = \langle \lambda, Z(q) \rangle,
%\end{equation}
\begin{equation}
h_x \doteq  p_x - \frac{y}{2} p_z ,\qquad h_y \doteq  p_y +\frac{x}{2}p_z,\qquad h_z \doteq  p_z,
\end{equation}
where $(x,y,z,p_x,p_y,p_z)$ are canonical coordinates on $T^*\R^3$ induced by coordinates $(x,y,z)$ on $\R^3$. Notice that $h_x,h_y,h_z$ are the linear on fibers functions associated with the fields $X,Y,Z$, respectively (i.e. $h_x(\lambda) = \langle\lambda,X\rangle$, and analogously for $h_y,h_z$).

The sub-Riemannian Hamiltonian is $H = \tfrac{1}{2}(h_x^2 + h_y^2)$ and the coordinates $(x,y,z,h_x,h_y, h_z)$ define a global chart for $T^*M$.
It is useful to introduce the identification $\mathbb{R}^3 = \mathbb{C}\times \mathbb{R}$, by defining the complex variable $w \doteq  x+iy$ and the complex ``momentum'' $h_w\doteq  h_x + i h_y$. Let $q = (w, z)$ and $q' = (w',z')$ be two points in $\mathbb{H}$. The Heisenberg group law, in complex coordinates, is given by
\begin{equation}\label{eq:grouplaw}
q \cdot q' = \left(w + w', z+z' - \frac{1}{2}\Im\left(w\overline{w'}\right)\right).
\end{equation}
Observe that the frame~\eqref{eq:hframe} is left-invariant for the group action defined by Eq.~\eqref{eq:grouplaw}. Notice also that $h_{z}$ is constant along any geodesic due to the identity $[X,Z]=[Y,Z]=0$.

The geodesic $\g(t)=(w(t),z(t))$ starting from $(w_0,z_0)\in \mb{H}$ and corresponding to the initial covector $(h_{w,0}, h_z)$, with $h_z\neq 0$ is given by
\begin{gather}
w(t) = w_0 + \frac{h_{w,0}}{i h_z}\left(e^{i h_z t}-1\right),\\
z(t) = z_0 + \frac{1}{2}\int_0^t \Im(\overline{w} dw).
\end{gather}
In the following, we assume that the geodesic is parametrized by arc length, i.e. $|h_{w,0}|^2 = 1$. We fix $h_{w,0} = i e^{i \phi}$, i.e. $\phi$ parametrizes the (unit) velocity of the geodesic $\dot{\gamma}(0) = -\sin\phi X + \cos\phi Y$.
Finally, the geodesics corresponding to covectors with $h_z = 0$ are straight lines
\begin{gather}
w(t) = w_0 + h_{w,0} t, \\
z(t) = z_0 + \frac{1}{2}\Im(h_{w,0} \overline{w_0}) t.
\end{gather}
In the following, we employ both real $(x,y,z,h_x, h_y,h_z)$ and complex $(w,z,h_w,h_z)$ coordinates when convenient.

\subsection{Distance in the Heisenberg group} \index{Heisenberg group!distance}

Let $\dist_{0}=\dist(0,\cdot):\mathbb{H}\to \mathbb{R}$ be the sub-Riemannian distance from the origin and introduce cylindrical coordinates $(r,\varphi,z)$ on $\mb{H}$ defined by $x= r \cos\varphi$, $y=r\sin \varphi$. In order to write an explicit formula for $\dist$
recall that 
\begin{itemize}
\item[(i)] $\dist_{0}^2 (r,\varphi,z)$ does not depend on $\varphi$.
\item[(ii)] $\dist_{0}^2(\alpha r,\varphi,\alpha^2 z) =\alpha^2 \dist_{0}^2 (r,\varphi,z)$, where $\alpha > 0$.
\end{itemize}
Then, for $r\neq0$, one has
\begin{equation}\label{eq:d0}
\dist_{0}^2(r,\varphi, z) = r^2 \dist_{0}^2\left(1,0,\frac{z}{r^2}\right).
\end{equation}
It is then sufficient to compute the squared distance of the point $q=(1,0,\xi)$ from the origin.

Consider the minimizing geodesic joining the origin with the point $(1,0,\xi)$. Its projection on the $xy$-plane is an arc of circle with radius $\rho$, connecting the origin with the point $(1,0)$. In what follows we refer to notation of Fig.~\ref{f:heisss}.
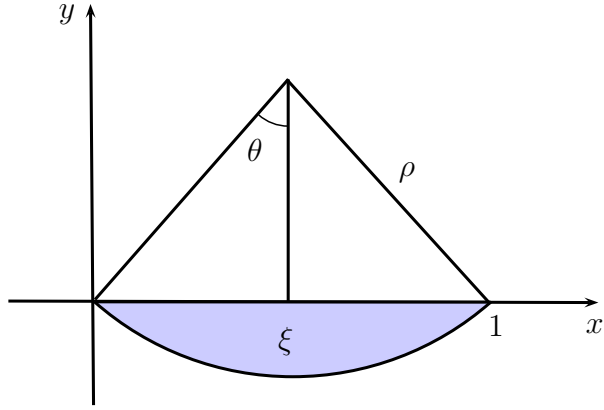
\begin{figure}[!ht]
\centering
\scalebox{1} % Change this value to rescale the drawing.
{
\begin{pspicture}(0,-3.878287)(7.96,1.5201114)
\definecolor{color31b}{rgb}{0.8,0.8,1.0}
\psline[linewidth=0.04cm,arrowsize=0.05291667cm 2.0,arrowlength=1.4,arrowinset=0.4]{->}(1.3,-3.858287)(1.26,1.461713)
\psline[linewidth=0.04cm,arrowsize=0.05291667cm 2.0,arrowlength=1.4,arrowinset=0.4]{->}(0.18,-2.478287)(7.94,-2.498287)
\fontsize{12}{0}
\usefont{T1}{ptm}{m}{n}
\rput(7.891455,-2.8053287){$x$}
\usefont{T1}{ptm}{m}{n}
\rput(6.591455,-2.8053287){$1$}
\usefont{T1}{ptm}{m}{n}
\rput(0.95145507,1.326713){$y$}
\psarc[linewidth=0.04,fillstyle=solid,fillcolor=color31b](3.92,0.43828705){3.92}{228.29869}{311.60147}
\psline[linewidth=0.04](1.3122299,-2.4884748)(6.5226655,-2.4930153)
\psline[linewidth=0.04cm](3.86,-2.498287)(3.86,0.46171296)
\psline[linewidth=0.04cm](3.86,0.46171296)(1.32,-2.458287)
\psline[linewidth=0.04cm](3.84,0.46171296)(6.52,-2.518287)
\usefont{T1}{ptm}{m}{n}
\rput(3.831455,-3.013287){$\xi$}
\psarc[linewidth=0.02](3.86,0.42171296){0.58}{228.01279}{270.0}
\usefont{T1}{ptm}{m}{n}
\rput(3.4214551,-0.47328705){$\theta$}
\usefont{T1}{ptm}{m}{n}
\rput(5.4214551,-0.77328705){$\rho$}
\end{pspicture} 
}
\caption{Projection of the geodesic joining the origin with $(1,0,\xi)$ in $\mb{H}$.} \label{f:heisss}
\end{figure}

The highlighted circle segment has area equal to $\xi$. Observe that $\theta \in (-\pi,\pi)$, with $\theta = 0$ corresponding to $\xi = 0$ and $\theta \to \pm \pi$ corresponding to $\xi \to \pm\infty$. Then
\begin{equation}
\xi = \theta \rho^2 - \frac{\rho \cos\theta}{2}.
\end{equation}
Since $2\rho \sin\theta = 1$, we obtain the following equation
\begin{equation}\label{eq:deftheta}
4\xi = \frac{\theta}{\sin^2\theta}-\cot\theta.
\end{equation}
The right hand side of Eq.~\eqref{eq:deftheta} is a smooth and strictly monotone function of $\theta$, for $\theta\in(-\pi,\pi)$. Therefore the function $\theta:\xi\mapsto\theta(\xi)$ is well defined and  smooth. Moreover
$\theta$ is an odd function and, by  Eq.~\eqref{eq:deftheta}, 
it satisfies the following differential equation
\begin{equation}
\frac{d}{d \xi}\left(\frac{\theta^2}{\sin^2\theta}\right) = 4\theta.
\end{equation}
Finally, the squared distance from the origin of the point $(1,0,\xi)$ is the Euclidean squared length of the arc, i.e.
\begin{equation}\label{eq:d1}
\dist_{0}^2(1,0,\xi) %= (\rho 2\theta)^2 
= \frac{\theta^2(\xi)}{\sin^2\theta(\xi)}.
\end{equation}
Plugging Eq.~\eqref{eq:d1} in Eq.~\eqref{eq:d0}, we obtain the formula for the squared distance:
\begin{equation}\label{d2}
\dist_{0}^2(r,\phi, z) = r^2 \frac{\theta^2(z/r^2)}{\sin^2\theta(z/r^2)}.
\end{equation}

\begin{figure}[!ht]
\begin{center}
\includegraphics[width=12cm]{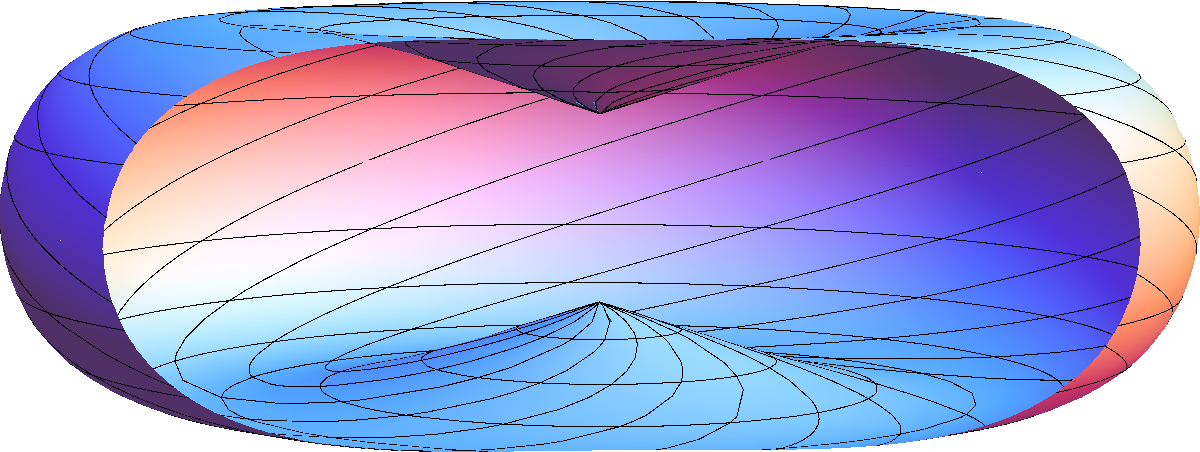}
\caption{A picture of the sub-Riemannian sphere defined by $\dist_0 = 1$.}
\label{f:sfera}
\end{center}
\end{figure} 

\subsection{Asymptotic expansion of the distance}
Next we investigate, for two given geodesics $\gamma_{1},\gamma_{2}$ in $\mb{H}$ starting from the origin and associated with covectors $\lam_{1},\lambda_2 \in T_0^*M$, the regularity of the function
\begin{equation}
C(t,s)\doteq \frac{1}{2}\dist^{2}(\gamma_{1}(t),\gamma_{2}(s)),
\end{equation}
in a neighbourhood of $(t,s)=(0,0)$. By left-invariance, one has
\begin{equation}
C(t,s) = \frac{1}{2}\dist_{0}^2(\gt^{-1}\cdot \gs).
\end{equation}
Let $(W_{t,s}, Z_{t,s})$ be the complex coordinates for the point $\gt^{-1}\cdot \gs\in \mb{H}$. Moreover, let  $R^2_{t,s}\doteq |W_{t,s}|^2$, and $\xi_{t,s}\doteq Z_{t,s}/R^2_{t,s}$. Then, by Eq.~\eqref{d2},
\begin{equation}
C(t,s) = \frac{1}{2} R^2_{t,s} \frac{\theta^2(\xi_{t,s})}{\sin^2\theta(\xi_{t,s})}.
\end{equation}
A long computation, that is sketched in Appendix~\ref{a:proofbrute}, leads to the following result.
\begin{proposition}\label{p:brute}
The function $C(t,s)$ is $C^1$ in a neighbourhood of the origin, but not $C^2$. In particular, the function $\partial_{ss} C(t,0)$ is not continuous at the origin. However, the singularity at $t=0$ is removable, and the following expansion holds, for $t> 0$
\begin{multline}
\frac{\partial^2 C}{\partial s^2}(t,0) = 1 + 3 \sin^2(\phi_2- \phi_1) + \frac{1}{2}[2h_{z,2}\sin(\phi_2-\phi_1)-h_{z,1}\sin(2\phi_2 - 2\phi_1)]t - \\ - \frac{2}{15}h_{z,1}^2\sin^2(\phi_2-\phi_1)t^2+O(t^3).
\end{multline}
If the geodesic $\g_{2}$ is chosen to be a straight line (i.e. $h_{z,2}=0$), then
\begin{equation}\label{eq:dev}
\frac{\partial^2 C}{\partial s^2}(t,0) = 1 + 3 \sin^2(\phi_2- \phi_1) - \frac{h_{z,1}}{2}\sin(2\phi_2 - 2\phi_1)t -\frac{2}{15}h_{z,1}^2\sin^2(\phi_2-\phi_1)t^2+O(t^3),
\end{equation}
where $\lambda_j = (-\sin\phi_j,\cos\phi_j,h_{z,j}) \in T_0^*M$ is the initial covector of the geodesic $\gamma_j$.
\end{proposition}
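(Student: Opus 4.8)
The plan is to obtain the expansion by a direct, if lengthy, computation based on the explicit Heisenberg geodesics and the group law~\eqref{eq:grouplaw}. First I would record the two geodesics issuing from the origin in closed form: with the arc-length normalization $h_{w,0}^{(j)}=ie^{i\phi_j}$ and vertical momenta $h_{z,j}$, the formulas recalled above for $w(t)$ and $z(t)$ express $w_j$ and $z_j$ as elementary functions of their time parameter. Since the group inverse of $(w,z)$ is $(-w,-z)$, the group law gives the complex coordinates of $\gt^{-1}\cdot\gs$ as
\[
W_{t,s}=w_2(s)-w_1(t),\qquad Z_{t,s}=z_2(s)-z_1(t)+\tfrac12\,\Im\big(w_1(t)\overline{w_2(s)}\big),
\]
and hence $R^2_{t,s}=|W_{t,s}|^2$ and $\xi_{t,s}=Z_{t,s}/R^2_{t,s}$, so that $C(t,s)=\tfrac12 R^2_{t,s}\,g(\xi_{t,s})$ with $g(\xi)\doteq\theta^2(\xi)/\sin^2\theta(\xi)$.

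Next I would Taylor-expand $R^2_{t,s}$ and $Z_{t,s}$ jointly in $(t,s)$. The key structural observation is that $Z_{t,s}$ vanishes to cubic order at the origin while $R^2_{t,s}$ vanishes only to quadratic order; in particular $\xi_{t,s}$ is a ratio of a cubic-order numerator and a quadratic-order denominator, so it tends to $0$ as $(t,s)\to(0,0)$ but is genuinely singular there. For the radial profile I would use the implicitly defined function $\theta(\xi)$: from $4\xi=\theta/\sin^2\theta-\cot\theta$ one gets $\theta(\xi)=6\xi+O(\xi^3)$, and the differential equation $\tfrac{d}{d\xi}(\theta^2/\sin^2\theta)=4\theta$ then produces the Taylor expansion of $g$ around $\xi=0$ to the needed order, with $g(0)=1$ and $g'(0)=0$. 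Substituting $\xi_{t,s}$ into $g$ and multiplying by $\tfrac12 R^2_{t,s}$ yields a controlled expansion of $C$ away from the origin.

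For fixed $t>0$ the points $\gt$ and the origin are distinct, so $C(t,\cdot)$ is smooth near $s=0$ and $\partial_{ss}C(t,0)$ is well defined; I would compute it from the expansion above and then expand the resulting function of $t$ as $t\to0^+$, which gives the stated series. The claims about regularity then follow at once: along the slice $t=0$ one has $C(0,s)=\tfrac12 s^2$ by arc-length parametrization, hence $\partial_{ss}C(0,0)=1$, whereas the computation gives $\lim_{t\to0^+}\partial_{ss}C(t,0)=1+3\sin^2(\phi_2-\phi_1)$. These two values disagree whenever $\phi_2\neq\phi_1$, so $\partial_{ss}C$ is discontinuous at the origin and $C$ is not $C^2$, while the singularity of $t\mapsto\partial_{ss}C(t,0)$ at $t=0$ is removable; the same expansion shows the first derivatives extend continuously, so $C$ is $C^1$. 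Finally, setting $h_{z,2}=0$ (so that $\gamma_2$ is a straight line) kills the term proportional to $h_{z,2}$ and produces~\eqref{eq:dev}.

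The main obstacle is precisely the non-smoothness of $C$ at the origin: because $\xi_{t,s}$ is an indeterminate $0/0$ ratio there, one cannot Taylor-expand $C$ jointly at $(0,0)$, and all the delicate terms — in particular the coefficients of $t$ and $t^2$ in $\partial_{ss}C(t,0)$ — must be extracted by working on the slice $s=0$ with $t>0$ and taking one-sided limits. Reaching the $t^2$ coefficient requires carrying the joint expansions of $W_{t,s}$, $Z_{t,s}$ and $g$ to a fairly high combined order, so the real work is the bookkeeping needed to certify that no lower-order contribution has been dropped; this is exactly what is deferred to Appendix~\ref{a:proofbrute}.
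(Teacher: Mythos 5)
Your overall route is the paper's own (Appendix~\ref{a:proofbrute}): express $C(t,s)=\tfrac12 R^2_{t,s}\,\theta^2(\xi_{t,s})/\sin^2\theta(\xi_{t,s})$ via the group law, expand $w_j,z_j$, use $\theta(\xi)=6\xi+O(\xi^3)$ and $\tfrac{d}{d\xi}(\theta^2/\sin^2\theta)=4\theta$, differentiate by the chain rule on the slice where one variable vanishes, and take one-sided limits. However, your ``key structural observation'' is false, and false in a way that is incompatible with the statement you are proving. From $w_j(\tau)=ie^{i\phi_j}\tau+O(\tau^2)$ and $z_j(\tau)=O(\tau^3)$ one gets
\begin{equation}
Z_{t,s}=z_2(s)-z_1(t)+\tfrac12\Im\bigl(w_1(t)\overline{w_2(s)}\bigr)=\tfrac{1}{2}\sin(\phi_1-\phi_2)\,ts+O\bigl((|t|+|s|)^3\bigr),
\end{equation}
so $Z_{t,s}$ vanishes only to \emph{quadratic} order whenever $\phi_1\neq\phi_2$; it is cubic only along the two axes $t=0$ and $s=0$, where the cross term dies. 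Consequently $\xi_{t,s}=Z_{t,s}/R^2_{t,s}$ is a ratio of two quadratics: it is bounded near the origin but has direction-dependent limits (along $s=ct$ it tends to $c\sin(\phi_1-\phi_2)/2(1+c^2-2c\cos(\phi_1-\phi_2))$), and in particular it does \emph{not} tend to $0$ — note also that your sentence ``tends to $0$ \dots but is genuinely singular there'' is self-contradictory as written.

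This is not a cosmetic slip, because the quadratic cross term is precisely what produces the discontinuous zeroth-order term. Indeed $\partial_s Z(t,0)=\tfrac12\Im(w_1(t)\overline{\dot w_2(0)})=\tfrac{t}{2}\sin(\phi_1-\phi_2)+O(t^2)$, hence $\partial_s\xi(t,0)=\sin(\phi_1-\phi_2)/2t+O(1)$, and since $R^2(t,0)=|w_1(t)|^2=t^2+O(t^3)$ and $\dot\theta(0)=6$, the chain-rule contribution $2R^2\,\dot\theta(\xi)(\partial_s\xi)^2$ converges to $3\sin^2(\phi_2-\phi_1)$ as $t\to0^+$ — this is exactly the term ($A_3$, in the paper's notation, with the roles of $t$ and $s$ exchanged) responsible for the jump. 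If your claim $Z_{t,s}=O((|t|+|s|)^3)$ were correct, $\partial_s\xi$ would stay bounded, this term would vanish in the limit, you would conclude $\lim_{t\to0^+}\partial_{ss}C(t,0)=1$, and (for $\phi_1\neq\phi_2$, where $R^2\sim t^2+s^2$ uniformly) $C$ would in fact be $C^2$ at the origin — contradicting the proposition itself. Since you invoke the structural observation precisely to ``certify that no lower-order contribution has been dropped,'' it would lead you to drop the dominant one: you must redo the joint bookkeeping keeping the $ts$ term of $Z_{t,s}$, after which the rest of your plan (slice computation for $t>0$, one-sided limits, specialization $h_{z,2}=0$) coincides with the paper's proof.
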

We stress once again that, for a Riemannian structure, the function $C(t,s)$ (which can be defined in a completely analogous way as the squared distance between two Riemannian geodesics) is smooth at the origin.
	
\subsection{Second differential of the geodesic cost}
We are now ready to compute explicitly the asymptotic expansion of $\QQ_\lam$. Fix $w\in T_{x_0}M$ and let $\alpha(s)$ be any geodesic in $\mb{H}$ such that $\dot{\al}(0)=w$. 
Then we compute the quadratic form $d^{2}_{x_0}\dot c_{t}(w)$ for $t>0$
\begin{equation}
\begin{split}
\metr{\QQ_{\lam}(t) w}{w}&=d^{2}_{x_0}\dot c_{t}(w)=\left.\frac{\partial^{2}}{\partial s^{2}}\right|_{s=0}\frac{\partial}{\partial t} c_{t}(\alpha(s)) =\\
&=\left.\frac{\partial^{2}}{\partial s^{2}}\right|_{s=0}\frac{\partial}{\partial t}\left(-\frac{1}{2t}\dist^{2}(\gamma(t),\al(s))\right) 
=\frac{\partial}{\partial t}\left(-\frac1t \frac{\partial^2 C}{\partial s^2}(t,0)\right) = \\
& = \frac{1}{t^{2}}\left( \lim_{t\to 0^+}\frac{\partial^{2}C}{\partial s^{2}}(t,0)\right)+\frac13 \left(-\frac32 \lim_{t\to 0^+}\frac{\partial^{4}C}{\partial t^{2}{\partial s^{2}}} (t,0)\right)+O(t),
\end{split}
\end{equation}
where, in the second line, we exchanged the order of derivations by smoothness of $C(t,s)$ for $t>0$. It is enough to compute the value of $\QQ_{\lam}(t)$ on an orthonormal basis $v\doteq \dot{\g}(0)$ and $v^{\perp}\doteq \dot{\g}(0)^{\perp}$. By using the results of Proposition~\ref{p:brute}, we obtain
\begin{equation}
\metr{\QQ_{\lam}(t) v}{v}=\frac{1}{t^{2}}+O(t),\qquad \metr{\QQ_{\lam}(t)v^{\perp}}{v^{\perp}}= \frac{4}{t^{2}}+\frac{2}{15}h_{z}^{2}+O(t).
\end{equation}
By polarization we obtain $\metr{\QQ_{\lam}(t) v}{v^{\perp}}=O(t)$. Thus the matrices representing the symmetric operators $\Qz_{\lam}$ and $\RR_{\lam}$ in the basis $\{v^\perp,v\}$ of $\distr_{x_0}$ are
\begin{equation}\label{eq:qzandcurv}
\Qz_{\lam}=
\begin{pmatrix}
4&0\\0&1
\end{pmatrix},
\qquad
\RR_{\lam}=\frac{2}{5}
\begin{pmatrix}
h_{z}^{2}&0\\0&0
\end{pmatrix},
\end{equation}
where, we recall, $\lam$ has coordinates $(h_x,h_y,h_z)$.

Another way to obtain Eq.~\eqref{eq:qzandcurv} is to exploit the connection between the curvature operator and the invariants of the Jacobi curves obtained in the proof of Theorem~\ref{t:main2} (see Eqs.~\eqref{eq:Qlam}--\eqref{eq:Rlam}), in terms of a canonical frame. The latter is not easy to compute, even though, in principle, an algorithmic construction is possible.

\subsection{Sub-Laplacian of the geodesic cost}
By using the results of Proposition~\ref{p:brute}, we explicitly compute the asymptotics of the sub-Laplacian $\lapl_\mu$ of the function $\f_{t} = \frac{1}{2}\dist^2(\cdot,\g(t))$ at $x_{0}$, at the second order in $t$.  In the Heisenberg group, we fix $\mu = dx \wedge dy \wedge dz$ (i.e. the Popp's volume of $\mathbb{H}$), and we suppress the explicit dependence of $\lapl_\mu$ from the volume form.

Since the sub-Riemannian structure of the Heisenberg group is left-invariant, we can reduce the computation of the asymptotic of $\lapl \f_t$ to the case of a geodesic $\gamma$ starting from the origin. Indeed, let us denote by $L_{g}:\mb{H}\to \mb{H}$ the left multiplication by $g\in \mb{H}$. It is easy to show that if $\gamma(t)=\EXP_{x_{0}}(t,\lam)$ is a geodesic, then $\til{\gamma}(t)\doteq L_{g}(\gamma(t))$ is a geodesic too. If $\f_{t}$ and $\til{\f}_{t}$ denote the squared distance along the geodesics $\gamma$ and $\til{\gamma}$, respectively, we have
\begin{equation}
\til{\f}_{t}(L_{g}(x))=\frac{1}{2}\dist^{2}(L_{g}(x),\til{\gamma}(t))=\frac{1}{2}\dist^{2}(L_{g}(x),L_{g}(\gamma(t)))=\frac{1}{2}\dist^{2}(x,\gamma(t))=\f_{t}(x).
\end{equation}
Moreover, by using Proposition~\ref{p:critlam0}, and recalling the relation $c_t = -t \f_t$, it is easy to show that
\begin{equation}
\til{\gamma}(t)=\EXP_{y_{0}}(t,\eta),\qquad \text{where} \qquad y_{0}=L_{g}(x_{0}),\quad \eta=(L_{g}^*)^{-1}\lam \in T_{y_0}^*M.
\end{equation}
Moreover $\lapl$ is left-invariant hence $\lapl (f\circ L_{g})=\lapl f \circ L_{g}$ for every $f\in C^{\infty}(M)$, and we have
\begin{equation}
\lapl \til{\f}_{t}|_{y_{0}}=\lapl \f_{t}|_{x_{0}}.
\end{equation}
In terms of an orthonormal frame, the sub-Laplacian is $\lapl=X^{2}+Y^{2}$ hence
\begin{equation}\label{eq:finH}
\lapl \f_{t}|_{x_0}=\left.\frac{d^{2}}{ds^{2}}\right|_{s=0}\f_{t}(e^{sX}(x_0)) +\left.\frac{d^{2}}{ds^{2}}\right|_{s=0}\f_{t}( e^{sY}(x_0)),
\end{equation}
where $e^{sX}(x_0)$ denote the integral curve of the vector field $X$ starting from $x_0$ (and similarly for $Y$). Observe that the integral curves of the vector fields $X$ and $Y$, starting from the origin, are two orthogonal straight lines contained in the $xy$-plane. Thus we can compute Eq.~\eqref{eq:finH} (where $x_0 = 0$) by summing two copies of Eq.~\eqref{eq:dev} for $\phi_{2}=-\pi/2$ and $\phi_{2}=0$ respectively. By left-invariance we immediately find, for any $x_0 \in \mathbb{H}$
\begin{equation}
\lapl\f_t|_{x_0} = 5 - \frac{2}{15}h_{z}^2 t^2 + O(t^3),
\end{equation}
where, we recall, the initial covector associated with the geodesic $\gamma$ is $\lambda = (h_x,h_y,h_z) \in T_{x_0}^*M$.

\medskip

Another interesting class of examples, of which Heisenberg is the simplest model, are three dimensional contact sub-Riemannian structures.
Clearly, the direct computation of the curvature, analogue to the one carried out for the Heisenberg group, is extremely difficult when there is no general explicit formula for the distance function. Nevertheless, one can still compute it in these cases using the techniques introduced in Chapters~\ref{c:jac} and~\ref{c:proof}.
For this reason, the complete discussion for 3D contact structures is postponed to Section~\ref{s:3Dcomputations}. Explicit computations of higher-dimensional contact sub-Riemannian curvature can be found in \cite{ABR-Contact}.

\section{On the ``meaning'' of constant curvature}\label{s:3Dcontact}
In Riemannian geometry the vanishing of curvature has a basic significance: the metric is locally Euclidean. One can wonder whether a similar interpretation exists in our setting, where one should also take into account the presence of the non-trivial operator $\Qz_{\lam}$. 

For Riemannian structures we proved the formulae
\begin{equation}\label{eq:strascico}
\Qz_\lambda  = \mathbb{I}, \qquad  \RR_{\lam} = R^\nabla(\dot\g,\cdot)\dot\g.
\end{equation}
where $\lambda$ is the initial covector of a geodesic $\g$.
The classical meaning of ``constant curvature'' is  $\RR_{\lam}(w) =kw$ for some $k\in \R$ and every $w\perp \dot \gamma$. In other words $\Qz_\lambda$ and $\RR_{\lam}$ are always constant as a function of $\lam$. 

What about the Heisenberg group? We have proved that the matrices representing the symmetric operators $\Qz_{\lam}$ and $\RR_{\lam}$ in the basis $\{\dot \g^{\perp},\dot\g\}$ of $\distr_{x_0}$ are
\begin{equation} %\label{eq:qzandcurv2}
\Qz_{\lam}=
\begin{pmatrix}
4&0\\0&1
\end{pmatrix},
\qquad
\RR_{\lam}=\frac{2}{5}
\begin{pmatrix}
h_{z}^{2}&0\\0&0
\end{pmatrix},
\end{equation}
where, we recall, $\lam$ has coordinates $(h_x,h_y,h_{z})$. In particular $\Qz_{\lam}$ is the same for any non-trivial geodesic, but $\RR_{\lam}$ is an operator that depends on $\lam$. For example $\RR_{\lam}=0$ for those $\lam$ corresponding to straight lines (i.e. when $h_{z}=0$), but its norm is unbounded with respect to $\lam$.

This situation carries on to more general settings. In fact, in Section \ref{s:3Dcomputations} we prove the following formula for 3D contact sub-Riemannian structures: 
\begin{equation} %\label{eq:qzandcurv3}
\Qz_{\lam}=
\begin{pmatrix}
4&0\\0&1
\end{pmatrix},
\qquad
\RR_{\lam}=\frac{2}{5}
\begin{pmatrix}
r_{\lam}&0\\0&0
\end{pmatrix},
\end{equation}
Observe that $r_{\lam}$ is proportional to the Ricci curvature associated with $\RR_{\lam}$:
\begin{equation}
\Ric(\lambda) = \trace\RR_\lambda = \frac{2}{5}r_\lambda.
\end{equation}
We are not interested in an explicit formula for $r_\lambda$ right now (one can find it in Section~\ref{s:3Dcomputations}); we only anticipate that $r_\lambda$, a priori defined only for covectors associated with ample geodesics, can be extended to a well defined quadratic form $\lambda\mapsto r_{\lambda}$ on the whole fiber $T^{*}_{x}M$, where $x=\pi(\lam)$.

It turns out that the quadratic form $r_{\lam}$ is positive when evaluated on the kernel of the Hamiltonian $\ker H_{x}$. In particular, this defines a splitting of the fiber
\begin{equation}
T^{*}_{x}M=\ker H_{x} \oplus (\ker H_{x})^{\perp},
\end{equation}
where $(\ker H_{x})^{\perp}$ is the orthogonal complement of $\ker H_x$ with respect to the quadratic form $r_{\lam}$. Notice that $\ker H_{x}$ is a one-dimensional subspace and we can define a normalized basis $\alpha_{x}$ of it by requiring that $r_{\al_{x}}=1$.
As a matter of fact, this splitting induces the dual splitting of $T_{x}M$
\begin{equation}
T_{x}M=\mathscr{V}_{x} \oplus \distr_{x},
\end{equation}
where $\distr_{x}$ is the distribution of the sub-Riemannian structure at the point $x$ and $\mathscr{V}_{x}$ is a one-dimensional subspace of $T_{x}M$ that is transversal to $\distr_{x}$. This splitting is smooth with respect to $x$. The vector $X_{0} \in \mathscr{V}_{x}$ normalized such that $\alpha_x(X_0) = 1$, for every $x\in M$, is called the \emph{Reeb vector field}. Indeed $\alpha$ is the normalized \emph{contact form}.

Let us now consider the restriction  $r_{\lam}|_{\distr_{x}^{*}}$ of the quadratic form $r_{\lam}$ on the two dimensional Euclidean plane $\distr_{x}^{*}\doteq (\ker H_{x})^{\perp}$, endowed with the dual inner product induced by the Hamiltonian $H_x$. By construction its trace and its discriminant are two metric invariant of the structure
\begin{equation}
\trace\left(r_{\lam}|_{\distr_{x}^{*}}\right), \qquad \discr\left(r_{\lam}|_{\distr_{x}^{*}}\right).
\end{equation}
Recall that the discriminant of an operator $Q$ defined on a two-dimensional space, is the square of the difference of its eigenvalues, and is computed by the formula $\discr(Q)=\trace^{2}(Q)-4\det(Q)$.

One can prove that the Reeb vector field $X_{0}$ generates a flow of isometries for the sub-Riemannian metric (i.e. it preserves $H$) if and only if $\discr\left(r_{\lam}|_{\distr_{x}^{*}}\right)=0$ for all $x\in M$. 

Under this assumption one can check that the quotient of $M$ by the action of $X_{0}$ defines a two dimensional manifold $N$ (at least locally). Then the projection $\pi:M\to N$ defines a principal bundle and the distribution $\distr$ defines a connection on this bundle. Moreover the sub-Riemannian structure on $M$ induces, by projection, a Riemannian structure on $N$  and the curvature associated with the connection $\distr$ over $M$ coincides with the area form on $N$ defined by the Riemannian structure. In this case, the invariant $\trace\left(r_{\lam}|_{\distr_{x}^{*}}\right)$ is constant along the flow of $X_{0}$ and hence descends to a well-defined function on $N$, that is its \emph{Gaussian curvature} (up to a constant factor).

For these reasons, under the assumption $\discr\left(r_{\lam}|_{\distr_{x}^{*}}\right)=0$ for all $x\in M$, one has that the sub-Riemannian structure is locally isometric to the one defined by the Dido's isoperimetric problem on a Riemannian surface $M$ (see \cite{agrexp}).  In the case when $\trace\left(r_{\lam}|_{\distr_{x}^{*}}\right)$ is constant on all $M$ we have the following result.

\begin{proposition} \label{p:3dh}
Let $M$ be a complete and simply connected 3D contact sub-Riemannian
manifold, and assume that $\discr\left(r_{\lam}|_{\distr_{x}^{*}}\right)=0$ and $\trace\left(r_{\lam}|_{\distr_{x}^{*}}\right)$ is constant on $M$. Then, up to dilations of the metric
\begin{itemize}
\item[(i)] if $\trace\left(r_{\lam}|_{\distr_{x}^{*}}\right)=0$, then $M$ is isometric to the Heisenberg group, 
\item[(ii)] if $\trace\left(r_{\lam}|_{\distr_{x}^{*}}\right)>0$, then $M$ is isometric to the group $SU(2)$ with Killing metric,
\item[(iii)] if $\trace\left(r_{\lam}|_{\distr_{x}^{*}}\right)<0$, then $M$ is isometric to the universal covering of $SL(2)$ with the Killing metric.
\end{itemize}
\end{proposition}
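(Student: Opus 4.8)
The plan is to show that the two hypotheses force the sub-Riemannian structure to be \emph{locally homogeneous}, to extract from this a three–dimensional Lie algebra of infinitesimal symmetries, and finally to globalize using completeness and simple connectivity. First I would pass to a canonical moving frame. By the discussion preceding the statement, $\discr\left(r_{\lam}|_{\distr_{x}^{*}}\right)=0$ is equivalent to the Reeb field $X_{0}$ generating a local flow of isometries; hence $r_{\lam}|_{\distr_{x}^{*}}$ is a scalar multiple $\kappa(x)$ of the inner product induced by $H_{x}$, so that $\trace\left(r_{\lam}|_{\distr_{x}^{*}}\right)=2\kappa(x)$, and the second hypothesis says precisely that $\kappa$ is constant. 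On any 3D contact sub-Riemannian manifold one builds a distinguished orthonormal frame $\{X_{1},X_{2}\}$ of $\distr$ together with the Reeb field $X_{0}$, determined up to a constant rotation of $X_{1},X_{2}$ once the two fundamental invariants $\chi\doteq\tfrac12\sqrt{\discr\left(r_{\lam}|_{\distr_{x}^{*}}\right)}$ and $\kappa$ are fixed (this is the Agrachev--Barilari canonical frame). Its structure functions $c_{ij}^{k}$, defined by $[X_{i},X_{j}]=\sum_{k}c_{ij}^{k}X_{k}$ with $[X_{1},X_{2}]=X_{0}$ modulo $\distr_{x}$, encode $\chi$ and $\kappa$ as universal polynomial expressions.

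The heart of the argument --- and the step I expect to be the main obstacle --- is to establish \emph{local homogeneity}, namely that $\chi\equiv 0$ together with $\kappa$ constant forces all structure functions $c_{ij}^{k}$ to be constant on $M$. I would obtain this from the structural (Jacobi/Bianchi-type) identities satisfied by the canonical frame: the Jacobi identity among $X_{0},X_{1},X_{2}$, combined with the characterization of $X_{0}$ as an infinitesimal isometry (equivalently $\mathcal{L}_{X_{0}}H=0$, equivalently $\chi=0$), expresses the derivatives $X_{i}(c_{jk}^{l})$ as polynomials in the $c_{ij}^{k}$, in $\chi$, in $\kappa$, and in their first derivatives. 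Substituting $\chi\equiv 0$ and $d\kappa=0$ should make all these right-hand sides vanish, yielding $X_{i}(c_{jk}^{l})=0$ for every choice of indices. The delicate point is to check that the relevant identities \emph{close up}, i.e. that no further independent local invariant survives beyond $\chi$ and $\kappa$; this is exactly where one must invoke the completeness of the pair $(\chi,\kappa)$ as a system of local invariants for 3D contact structures.

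Once the $c_{ij}^{k}$ are constant, the fields $\{X_{0},X_{1},X_{2}\}$ span a three-dimensional real Lie algebra $\mathfrak{g}$, and completeness of $M$ guarantees that these vector fields are complete. Since $M$ is connected, complete and simply connected, the orbit map identifies $M$ with the simply connected Lie group $G$ integrating $\mathfrak{g}$, under which the canonical frame becomes left-invariant; hence the whole sub-Riemannian structure is left-invariant and isometric to the corresponding left-invariant structure on $G$. It then remains to classify $\mathfrak{g}$ by the sign of $\kappa$. Writing the normalized bracket relations with $\chi=0$, one reads off (up to the normalizations fixed above and the dilation of the metric allowed in the statement)
\begin{equation*}
[X_{1},X_{2}]=X_{0},\qquad [X_{0},X_{1}]=-\kappa X_{2},\qquad [X_{0},X_{2}]=\kappa X_{1}.
\end{equation*}
For $\kappa=0$ this is the Heisenberg algebra, giving (i); for $\kappa>0$ it is $\mathfrak{su}(2)$, giving (ii); and for $\kappa<0$ it is $\mathfrak{sl}(2,\R)$, whose simply connected group is the universal cover of $SL(2,\R)$, giving (iii). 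In the last two cases the permitted dilation rescales $\kappa$ to $\pm 1$, and a direct comparison of the bracket relations with the Killing form shows that the induced metric is (a multiple of) the Killing metric, completing the identification.
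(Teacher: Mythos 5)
Your overall strategy (canonical frame $\Rightarrow$ constant structure functions $\Rightarrow$ Lie group model, then globalize by completeness and simple connectedness) is not the route the paper takes: the paper does not prove this proposition at all, but cites \cite[Thm.\ 11]{agricm} and \cite[Cor.\ 2]{miosr3d}, and the mechanism sketched in the surrounding text is different — quotient $M$ by the flow of the Reeb field $X_0$ (which is a flow of isometries precisely because $\discr\left(r_{\lam}|_{\distr_{x}^{*}}\right)=0$), observe that $\trace\left(r_{\lam}|_{\distr_{x}^{*}}\right)$ descends to the Gaussian curvature of the quotient surface $N$, so that the structure is locally the Dido isoperimetric problem over a space form. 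Your frame-theoretic route has a genuine gap exactly at the step you yourself flag as delicate. When $\chi\equiv 0$ there is \emph{no} canonical orthonormal frame: the isotropic-frame normalization of the paper requires $\chi(x_0)\neq 0$, and for $\chi\equiv 0$ every oriented orthonormal frame is isotropic, so the frame is determined only up to rotation by an \emph{arbitrary function} $\theta\in C^\infty(M)$, not up to a constant rotation as you assert. Under such a rotation the functions $c_{12}^1,c_{12}^2$ transform inhomogeneously (they pick up derivatives of $\theta$), so they cannot be forced to be constant by any identity involving only $\chi$, $\kappa$ and their derivatives: already on the Heisenberg group a frame rotated by a non-constant $\theta$ has non-constant structure functions while $\chi=\kappa\equiv 0$. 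Moreover, the fact you invoke to close the argument — that $(\chi,\kappa)$ is a complete system of local invariants — is false in general: as the paper notes in its final comments, the Falbel--Gorodski invariants $W_1,W_2$ distinguish non-isometric structures with the same constant $\chi>0$ and $\kappa$. Local uniqueness does hold here, but only via the $\chi=0$ reduction to the Dido problem over a constant-curvature surface (after which one must still \emph{choose} a good local frame, e.g.\ pulled back from the quotient); it is not a consequence of structural identities in an arbitrary frame. (Minor: with the paper's normalization $\discr\left(r_{\lam}|_{\distr_{x}^{*}}\right)=36\chi^2$, your $\chi=\tfrac12\sqrt{\discr}$ should be $\tfrac16\sqrt{\discr}$; harmless since $\discr=0$ here.)

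There is also a sign error in your final identification which swaps conclusions (ii) and (iii). With your displayed relations $[X_1,X_2]=X_0$, $[X_0,X_1]=-\kappa X_2$, $[X_0,X_2]=\kappa X_1$ at $\kappa=1$, one computes $\mathrm{ad}(X_1)\colon X_0\mapsto X_2$, $X_2\mapsto X_0$, so $(\mathrm{ad}\,X_1)^2=\mathrm{id}$ on $\spn\{X_0,X_2\}$: $\mathrm{ad}(X_1)$ has real eigenvalues $\pm 1$, the Killing form is indefinite, and the algebra is $\mathfrak{sl}(2,\R)$, not $\mathfrak{su}(2)$; symmetrically, your $\kappa=-1$ algebra is $\mathfrak{su}(2)$. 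In general, for $[X_1,X_2]=\epsilon X_0$, $[X_0,X_1]=aX_2$, $[X_0,X_2]=-aX_1$ one has $(\mathrm{ad}\,X_1)^2X_0=-a\epsilon X_0$, so the algebra is compact iff $a\epsilon>0$; your choice $\epsilon=1$, $a=-\kappa$ gives compactness iff $\kappa<0$. The convention consistent with the paper ($c_{12}^0=-1$, i.e.\ $[X_1,X_2]=-X_0$ modulo $\distr$, and $\kappa=\tfrac12(c_{02}^1-c_{01}^2)$ in the model) yields $[X_1,X_2]=-X_0$, $[X_0,X_1]=-\kappa X_2$, $[X_0,X_2]=\kappa X_1$, which is $\mathfrak{su}(2)$ for $\kappa>0$ and $\mathfrak{sl}(2,\R)$ for $\kappa<0$, as the proposition requires. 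Finally, your globalization is quicker than it should be: completeness of the sub-Riemannian metric gives completeness of the horizontal fields $X_1,X_2$ directly, but not of $X_0$ without an extra argument (e.g.\ that $X_0$ generates isometries, or the standard development/monodromy argument for complete, simply connected, locally homogeneous structures).
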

Proposition \ref{p:3dh} can be found in \cite[Thm. 11]{agricm} (see also \cite[Cor. 2]{miosr3d}), where it is stated with different language in terms of the invariants $\chi,\kappa$ of a 3D contact sub-Riemannian structure. See Section~\ref{s:interpretation} for a detailed discussion about the curvature of 3D contact sub-Riemannian structure and its relation with these invariants.

Despite the rigidity result stated in Proposition~\ref{p:3dh}, one can wonder it the ``constant curvature'' is achieved in the following sense: does it exist a sub-Riemannian structure such that the curvature operator $\RR_{\lam}$ does not depend on $\lam$? Indeed this would be the real analogue of the Riemannian constant curvature condition (see also Eq.~\eqref{eq:strascico}).
It turns out that, at least in the class of 3D contact sub-Riemannian structures, the curvature operator always depends non-trivially on $\lambda$ (see Proposition~\ref{p:sfiniti} in Section~\ref{s:chikappa}). This suggests that no sub-Riemannian structure has constant curvature in this sense. Still, the computation of our curvature in dimension higher than $3$ is a challenging task.

Even if there are no sub-Riemannian structures with constant curvature in the sense specified above, it is still possible to achieve constant curvature in the larger class of affine optimal control problems. Indeed, as proved in Section~\ref{s:lq}, the operators $\Qz_{\lam}$ and $\RR_{\lam}$ are constant for the so-called linear quadratic optimal control problems.
At the present stage, it is not straightforward how to use these structures as models to investigate purely geometrical aspects of sub-Riemannian manifolds, such as comparison theorem for volumes, distances etc. For other type of comparison, relating curvature bounds to existence (and estimates) of conjugate points along sub-Riemannian geodesics, a connection is possible and has been investigated in \cite{brconj}. In this case, the role of constant curvature models is played by linear quadratic optimal control problems, for which complete conditions for occurrence of conjugate points is well understood (see \cite{ARS-LQ}).

 %SR geometry, ample, regularity, asymptotics

\part{Technical tools and proofs}

\chapter{Jacobi curves} \label{c:jac}
In this chapter we introduce the notion of Jacobi curve associated with a normal geodesic, that is a curve of Lagrangian subspaces in a symplectic vector space. This curve arises naturally from the geometric interpretation of the second derivative of the geodesic cost, and is closely related with the asymptotic expansion of Theorem \ref{t:main}.

We start with a brief description of the properties of curves in the Lagrange Grassmannian. For more details, see \cite{geometryjacobi1,lizel,agrafeedback}.

\section{Curves in the Lagrange Grassmannian} \index{Lagrange Grassmannian} \index{curves in Lagrange Grassmannian}

Let $(\Sigma, \sigma)$ be a $2n$-dimensional symplectic vector space. A subspace $\Lambda  \subset \Sigma$ is called 
\emph{Lagrangian} if it has dimension $n$ and $\sigma|_{\Lambda}\equiv 0.$  The \emph{Lagrange Grassmannian} $L(\Sigma)$ is the set of all $n$-dimensional Lagrangian subspaces of $\Sigma$. 

\bp\label{p:lagrass} $L(\Sigma)$ is a compact $n(n+1)/2$-dimensional submanifold of the Grassmannian of $n$-planes in $\Sigma$.
\ep
\begin{proof}
Let $\Delta \in L(\Sigma)$, and consider the set $\Delta^{\pitchfork}\doteq \{\Lambda \in L(\Sigma)\,|\,  \Lambda \cap \Delta =0 \}$ of all Lagrangian subspaces transversal to $\Delta$.
Clearly, the collection of these sets for all $\Delta \in L(\Sigma)$ is an open cover of $L(\Sigma)$. Then it is  sufficient to find submanifold coordinates on each $\Delta^{\pitchfork}$. 

Let us fix any Lagrangian complement $\Pi$ of $\Delta$ (which always exists, though it is not unique).
Every $n$-dimensional subspace $\Lambda \subset \Sigma$ that is transversal to $\Delta$ is the graph of a linear map from $\Pi$ to $\Delta$. Choose an adapted Darboux basis on $\Sigma$, namely a basis $\{e_i,f_i\}_{i=1}^n$ such that
\begin{gather}
\Delta = \spn\{f_1,\ldots,f_n\}, \qquad \Pi = \spn\{e_1,\ldots,e_n\}, \\
\sigma(e_i,f_j) - \delta_{	ij} = \sigma(f_i,f_j) = \sigma(e_i,e_j) = 0, \qquad i,j=1,\ldots,n.
\end{gather} 
In these coordinates, the linear map is represented by a matrix $S_{\Lambda}$ such that
\begin{equation} 
\Lambda \cap \Delta=0 \Leftrightarrow \Lambda=\{z=(p,S_{\Lambda} p), \,p \in \Pi\simeq \mathbb{R}^{n}\}.  
\end{equation}
Moreover it is easily seen that
$\Lambda \in L(\Sigma) $ if and only if $ S_{\Lambda}=S_{\Lambda}^{*}$.
Hence, the open set $\Delta^{\pitchfork}$ of all Lagrangian subspaces transversal to $\Delta$ is parametrized by the set of symmetric matrices, and this gives smooth submanifold coordinates on $\Delta^\pitchfork$.
This also proves that the dimension of $L(\Sigma)$ is $n(n+1)/2$. Finally, as a closed subset of a compact manifold, $L(\Sigma)$ is compact.
\end{proof}

Fix now $\Lambda \in L(\Sigma)$. The tangent space $T_{\Lambda }L(\Sigma)$ to the Lagrange Grassmannian at the point $\Lambda$ can be canonically identified with the set of quadratic forms on the space $\Lambda$ itself, namely
\begin{equation}
T_{\Lambda}L(\Sigma)\simeq Q(\Lambda).
\end{equation}
Indeed, consider a smooth curve $\Lambda(\cdot)$ in $L(\Sigma)$ such that $\Lambda(0)=\Lambda$, and denote by $\dot{\Lambda}\in T_{\Lambda}L(\Sigma)$ its tangent vector. For any point $z\in \Lambda$ and any smooth extension $z(t)\in \Lambda(t)$, we define the quadratic form
\begin{equation} 
\dot{\Lambda}\doteq  z \mapsto \sigma(z,\dot{z}), 
\end{equation} 
where $\dot{z}\doteq \dot{z}(0)$.
A simple check shows that the definition does not depend on the extension $z(t)$. Finally, if in local coordinates $\Lambda(t)=\{(p,\,S(t)p),\,p\in \R^{n}\}$, the quadratic form $\dot{\Lambda}$ is represented by the matrix $\dot S(0)$. In other words, if $z \in \Lambda$ has coordinates $p \in \mathbb{R}^n$, then $\dot{\Lambda}: p \mapsto p^*\dot{S}(0)p$.

\subsection{Ample, equiregular, monotone curves} \index{curves in Lagrange Grassmannian!ample} \index{curves in Lagrange Grassmannian!equiregular} \index{curves in Lagrange Grassmannian!monotone}

Let $J(\cdot)\in L(\Sigma)$ be a smooth curve in the Lagrange Grassmannian. For $i \in \mathbb{N}$, consider 
\begin{equation}
J^{(i)}(t)=\tx{span}\left\{\frac{d^{j}}{dt^{j}}\ell(t)\bigg| \ \ell(t)\in J(t),\, \ell(t) \text{ smooth},\, 0\leq j \leq i\right\}\subset \Sigma, \qquad i\geq 0.
\end{equation}
\bdeff \label{d:amplestar}
The subspace $J^{(i)}(t)$ is the \emph{i-th extension} of the curve $J(\cdot)$ at $t$. The flag
\begin{equation}
J(t) = J^{(0)}(t)\subset J^{(1)}(t)\subset J^{(2)}(t)\subset \ldots\subset \Sigma,
\end{equation}
is the \emph{associated flag of the curve} at the point $t$. The curve $J(\cdot)$ is called:
\bi
\iii[(i)] \emph{equiregular} at $t$ if $\text{dim }J^{(i)}(\cdot)$ is locally constant at $t$, for all $i \in \N$,
\iii[(ii)] \emph{ample} at $t$ if there exists $N\in \N$ such that $J^{(N)}(t)=\Sigma$,
%\iii[(iii)] \emph{regular} at $t$ if it is ample at $t$ with $N=1$,
\iii[(iii)] \emph{monotone increasing} (resp. \emph{decreasing}) at $t$ if $\dot{J}(t)$ is non-negative (resp. non-positive) as a quadratic form.
\ei
The \emph{step} of the curve at $t$ is the minimal $N \in \N$ such that $J^{(N)}(t) = \Sigma$.
\edeff

In coordinates, $J(t)=\{(p,S(t)p)|\  p\in \R^{n}\}$ for some smooth family of symmetric matrices $S(t)$. The curve is ample at $t$ if and only if there exists $N\in \N$ such that 
\begin{equation}
\text{rank} \{\dot S(t), \ddot S(t),\ldots, S^{(N)}(t)\}=n.
\end{equation}
The \emph{rank} of the curve at $t$ is the rank of $\dot{J}(t)$ as a quadratic form (or, equivalently, the rank of $\dot{S}(t)$). %Observe that a curve is regular at $t$ if and only if it has maximal rank at $t$. 
We say that the curve is equiregular, ample or monotone (increasing or decreasing) if it is equiregular, ample or monotone for all $t$ in the domain of the curve.

In the subsequent sections we show that with any ample (resp. equiregular) geodesic, we can associate in a natural way an ample (resp. equiregular) curve in an appropriate Lagrange Grassmannian. This justifies the terminology introduced in Definition~\ref{d:amplestar}. 

An important property of ample, monotone curves is described in the following lemma.
\begin{lemma}\label{l:ampletrasv}
Let $J(\cdot) \in L(\Sigma)$ be a monotone, ample curve at $t_0$. Then, there exists $\eps > 0$ such that $J(t) \cap J(t_0) = \{0\}$ for $0<|t - t_0| <\eps$.
\end{lemma}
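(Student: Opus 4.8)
The plan is to pass to submanifold coordinates adapted to $\Lambda_0 \doteq J(t_0)$ and reduce the transversality statement to the invertibility of a symmetric matrix. Following the proof of Proposition~\ref{p:lagrass}, I would fix a Lagrangian complement $\Delta$ of $\Lambda_0$ together with an adapted Darboux basis, so that $\Lambda_0 = \spn\{e_1,\dots,e_n\}$ plays the role of $\Pi$ and $\Delta = \spn\{f_1,\dots,f_n\}$. Transversality to $\Delta$ is an open condition and holds at $t_0$ (since $\Lambda_0 \cap \Delta = \{0\}$), so there is a neighbourhood of $t_0$ on which $J(t)=\{(p,S(t)p)\mid p\in\R^n\}$ for a smooth family of symmetric matrices $S(t)$ with $S(t_0)=0$. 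A direct computation then shows $J(t)\cap J(t_0)=\ker S(t)$, so the lemma becomes the claim that $S(t)$ is invertible for $0<|t-t_0|<\eps$. Recall that in these coordinates the tangent vector $\dot J(t)$ is represented by $\dot S(t)$; hence, assuming without loss of generality that $J$ is monotone \emph{increasing} (the decreasing case being symmetric), Definition~\ref{d:amplestar} gives $\dot S(t)\geq 0$ for all $t$, while ampleness at $t_0$ gives $\bigcap_{j\geq 1}\ker S^{(j)}(t_0)=\{0\}$.

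The heart of the argument is a rigidity coming from monotonicity. Since $\dot S\geq 0$ and $S(t_0)=0$, for every fixed vector $v$ the scalar function $t\mapsto v^*S(t)v$ is non-decreasing and vanishes at $t_0$; in particular $S(t)\geq 0$ for $t\geq t_0$. Suppose, for contradiction, that $S(t_*)$ were singular for some $t_*>t_0$ inside the chart, and pick $v\neq 0$ with $S(t_*)v=0$. Then $v^*S(t)v$ is non-decreasing and equals $0$ at both endpoints $t_0$ and $t_*$, hence is identically $0$ on $[t_0,t_*]$. Because $S(t)\geq 0$ there, the elementary fact that a positive semidefinite form annihilated on a vector kills that vector yields $S(t)v=0$ for all $t\in[t_0,t_*]$. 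Differentiating the identity $S(t)v\equiv 0$ in $t$ and letting $t\to t_0^+$, I obtain $S^{(j)}(t_0)v=0$ for every $j\geq 1$, so that $v\in\bigcap_{j\geq 1}\ker S^{(j)}(t_0)$, contradicting ampleness. The same computation with reversed inequalities (where $S(t)\leq 0$ for $t\leq t_0$) handles $t_*<t_0$. This shows $S(t)$ is invertible on a punctured neighbourhood of $t_0$, which furnishes the required $\eps$.

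The only genuinely delicate point to pin down is the faithful translation of the intrinsic monotonicity hypothesis into the matrix inequality $\dot S(t)\geq 0$, namely that the identification of $T_\Lambda L(\Sigma)$ with quadratic forms on $\Lambda$ discussed above sends $\dot J(t)$ to $\dot S(t)$ in the graph chart; this is exactly what lets the sign of the intrinsic form be read off the symmetric matrix $\dot S$. Everything else is routine bookkeeping: the computation $J(t)\cap J(t_0)=\ker S(t)$, the semidefinite linear-algebra lemma ($v^*Av=0$ with $A\geq 0$ forces $Av=0$), and the passage from $S(t)v\equiv 0$ on a nondegenerate interval to the vanishing of all $t$-derivatives at the endpoint $t_0$. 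I therefore expect no real obstacle beyond keeping track of the two sign cases.
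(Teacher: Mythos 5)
Your proposal is correct and follows essentially the same route as the paper's proof: coordinates adapted to a Lagrangian splitting with $S(t_0)=0$, monotonicity forcing $S(t)$ semidefinite, the scalar function $t\mapsto v^*S(t)v$ vanishing identically between $t_0$ and a hypothetical singular time, the semidefinite lemma giving $S(t)v\equiv 0$, and differentiation at $t_0$ contradicting ampleness. The only cosmetic difference is that you spell out explicitly the identifications $J(t)\cap J(t_0)=\ker S(t)$ and $\dot J(t)\leftrightarrow \dot S(t)$, and the equivalence of ampleness with $\bigcap_{j\geq 1}\ker S^{(j)}(t_0)=\{0\}$ (valid by symmetry of the matrices $S^{(j)}$), which the paper leaves implicit.
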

\begin{proof}
Without loss of generality, assume $t_0 =0 $. Choose a Lagrangian splitting $\Sigma = \Lambda \oplus \Pi$, with $\Lambda = J(0)$. For $|t|<\eps$, the curve is contained in the chart defined by such a splitting. In coordinates, $J(t)=\{(p,S(t)p)|\  p\in \R^{n}\}$, with $S(t)$ symmetric and $S(0) = 0$. The curve is monotone, then $\dot{S}(t)$ is a semidefinite symmetric matrix. It follows that $S(t)$ is semidefinite too.

Suppose that, for some $ \tau$, $J(\tau) \cap J(0) \neq \{0\}$ (w.l.o.g. assume $\tau > 0$). This means that $\exists p \in \mathbb{R}^n$ such that $S(\tau)p = 0$. Indeed also $p^* S(\tau) p = 0$.  The function $t \mapsto p^* S(t) p = 0$ is monotone, vanishing at $t = 0$ and $t = \tau$. Therefore $p^* S(t) p = 0$ for all $0\leq t \leq \tau$. Being a semidefinite, symmetric matrix, $p^* S(t) p = 0$ if and only if $S(t)p = 0$. Therefore, we conclude that $p \in \ker S(t)$ for $0\leq t \leq \tau$. This implies that, for any $i \in \N$, $p \in \ker S^{(i)}(0)$, which is a contradiction, since the curve is ample at $0$.
\end{proof}

\brem \index{curves in Lagrange Grassmannian!regular}
Ample curves with $N=1$ are also called \emph{regular}. See in particular \cite{agrafeedback,geometryjacobi1}, where the authors discuss geometric invariants of these curves.
Notice that a curve $J(\cdot)$ is regular at $t$ if and only if its tangent vector at $t$ is a non degenerate quadratic form, i.e. the matrix $\dot S(t)$ is invertible. 
\erem

\subsection{The Young diagram of an equiregular curve} \index{Young diagram} 

Let $J(\cdot) \in L(\Sigma)$ be smooth, ample and equiregular. We can associate in a standard way a Young diagram with the curve $J(\cdot)$ as follows. Consider the restriction of the curve to a neighbourhood of $t$ such that, for all $i \in N$, $\dim J^{(i)}(\cdot)$ is constant. Let $h_i\doteq  \dim J^{(i)}(\cdot)$. By hypothesis, there exists a minimal $N \in \N$ such that $h_i = \dim \Sigma$ for all $i \geq N$. 
{\review
\bl \label{l:flag}
Let $J(\cdot)\in L(\Sigma)$ be smooth, ample and equiregular and denote $h_i= \dim J^{(i)}(\cdot)$. Then we have the inequalities 
\begin{equation} 
h_{i+1} - h_i \leq h_i - h_{i-1},\qquad \all i\geq 0.
\end{equation}  
\el
These inequalities are valid for any equiregular curve in the Grassmannian of a vector space. The proof of Lemma \ref{l:flag} is in Appendix \ref{a:flag}.
\finereview}

Then, we build a Young diagram with $N$ columns, with $h_i - h_{i-1}$ boxes in the $i$-th column. This is the \emph{Young diagram of the curve $J(\cdot)$}. In particular, notice that the number of boxes in the first column is equal to the rank of $J(\cdot)$.

\section{The Jacobi curve and the second differential of the geodesic cost}
Recall that $T^*M$ has a natural structure of symplectic manifold, with the canonical symplectic form defined as the differential of the Liouville form, namely $\sigma = d\varsigma$. In particular, for any $\lam \in T^* M$, $T_\lam (T^*M)$ is a symplectic vector space with the canonical symplectic form $\sigma$. Therefore, we can specify the construction above to $\Sigma\doteq  T_\lam(T^*M)$. In this section we show that the second derivative of the geodesic cost (associated with an ample geodesic $\gamma$ with initial covector $\lambda \in T^*M$) can be naturally interpreted as a curve in the Lagrange Grassmannian of $T_\lambda (T^*M)$, which is ample in the sense of Definition~\ref{d:amplestar}.

\subsection{Second differential at a non critical point}

Let $f \in C^{\infty}(M)$. As we explained in Section \ref{s:2dctdot}, the second differential \index{second differential} of $f$, which is a symmetric bilinear form on the tangent space, is well defined only at critical points of $f$.
If $x \in M$ is not a critical point, it is still possible to define the second differential of $f$, as the differential of $df$, thought as a section of $T^*M$.

\begin{definition}\label{d:secdif}
Let $f \in C^\infty(M)$, and
\begin{equation}
df: M\to T^{*}M, \qquad df: x\mapsto d_{x}f.
\end{equation}
Fix $x \in M$, and let $\lambda\doteq  d_x f \in T^*M$. The \emph{second differential}  of $f$ at $x \in M$ is the linear map
\begin{equation} \label{eq:2dxx}
d^{2}_{x}f\doteq d_{x}(df): T_{x}M\to T_{\lam}(T^{*}M),\qquad d^2_x f: v\mapsto \frac{d}{ds}\bigg|_{s=0} d_{\gamma(s)}f,
\end{equation}
where $\gamma(\cdot)$ is a curve on $M$ such that $\gamma(0)=x$ and $\dot \gamma(0)=v$.
\end{definition}
Definition~\ref{d:secdif} generalizes the concept of \virg{second derivatives} of $f$, as the linearisation of the differential.

%To give an intrinsic meaning to the second differential of $f:M\to \R$ at a non critical point we should think of it as the differential of $df$. In other words we consider the differential of $f$ as a section of $T^{*}M$, i.e. the map between manifolds
%\begin{equation}\label{eq:dersez}
%df: M\to T^{*}M, \qquad df: x\mapsto d_{x}f
%\end{equation}
%and we consider the differential of \eqref{eq:dersez} at a point $x\in M$ (here $\lam=d_{x}f$)
%\begin{equation} \label{eq:2dxx}
%d^{2}_{x}f\doteq d_{x}(df): T_{x}M\to T_{\lam}(T^{*}M),\qquad v\mapsto \frac{d}{ds}\bigg|_{s=0} d_{\gamma(s)}f.
%\end{equation}

%\bdeff The \emph{second differential} of $f$ at a point $x$ is the image of the map \eqref{eq:2dxx}, i.e. the subspace $d^{2}_{x}f(T_{x}M)\subset T_{\lam}(T^{*}M)$.
%\edeff
\brem The image of the differential $df: M\to T^{*}M$ is a Lagrangian submanifold of $T^*M$. Thus, by definition, the image of the second differential $d^{2}_{x}f (T_x M)$ at a point $x$ is the tangent space of $df(M)$ at $\lam=d_{x}f$, which is an $n$-dimensional Lagrangian subspace of $T_{\lam}(T^{*}M)$ transversal to the vertical subspace $T_\lam(T^*_x M)$. 
\erem

By a dimensional argument and the fact that $\pi\circ df=\id_{M}$ (hence $\pi_{*}\circ d^{2}_{x}f=\id_{T_{x}M}$), we obtain the following formula for the image of a subspace through the second differential.
\bl 
Let $f:M\to \R$ and $W\subset T_{x}M$. Then $d^{2}_{x}f(W)=d^{2}_{x}f(T_{x}M)\cap \pi_{*}^{-1}(W)$.
\el

The next lemma describes the affine structure on the space of second differentials.

\bl
Let $\lam \in T_x^*M$. The set $\mc{L}_\lambda\doteq \{d_x^2f |\, f \in C^{\infty}(M), d_x f = \lambda\}$ is an affine space over the vector space $Q(T_{x}M)$ of the quadratic forms over $T_{x}M$.
\el
\begin{proof}
Consider two functions $f_1, f_2$ such that $d_x f_1 = d_x f_2 =\lam$. Then $f_1 - f_2$ has a critical point at $x$. We define the difference between $d^2_x f_1$  and $d^2_x f_2$ as the quadratic form $d^2_x (f_1 - f_2)$.
\end{proof}

\brem When $\lam = 0 \in T_x^* M$, $\mc{L}_\lam$ is the space of the second derivatives of the functions with a critical point at $x$. In this case we can fix a canonical origin in $\mc{L}_\lam$, namely the second differential of any constant function. This gives the identification of $\mc{L}_\lam$ with the space of quadratic forms on $T_xM$, recovering the standard notion of Hessian discussed in Section~\ref{s:2dctdot}.
\erem

\subsection{Second differential of the geodesic cost function}

Let $\gamma: [0,T] \to M$ be a strongly normal geodesic. Let $x = \gamma(0)$. Without loss of generality, we can choose $T$ sufficiently small so that the geodesic cost function $(t,x) \to c_t(x)$ is smooth in a neighbourhood of $(0,T)\times \{x\}\subset \R\times M$, and $d_{x} c_t = \lam$ is the initial covector associated with $\gamma$ (see Definition~\ref{d:geodesiccost}, Theorem~\ref{t:mst} and Proposition~\ref{p:critlam0}).

The second differential of $c_t$ defines a curve in the Lagrange Grassmannian $L( T_\lam (T^* M))$. For any $\lambda \in T^*M$, $\pi(\lambda) = x$, we denote with the symbol $\ve_\lam = T_\lam(T^*_{x} M) \subset T_\lam (T^*M)$ the vertical subspace, namely the tangent space to the fiber $T_{x}^*M$. Observe that, if $\pi : T^*M \to M$ is the bundle projection, $\ve_\lam = \ker \pi_*$.
\begin{definition}
The \emph{Jacobi curve} \index{Jacobi curve} associated with $\gamma$ is the smooth curve $J_\lam: [0,T] \to L(T_\lam (T^* M))$ defined by
\begin{equation}
J_{\lam}(t)\doteq  d^{2}_{x}c_{t}(T_{x}M),
\end{equation}
for $t \in (0,T]$, and $J_\lam(0)\doteq  \ve_\lam$.
\end{definition}
The Jacobi curve is smooth as a consequence of the next proposition, which provides an equivalent characterization of the Jacobi curve in terms of the Hamiltonian flow on $T^*M$.

\bp\label{p:Jproperties} Let $\lambda: [0,T] \to T^*M$ be the unique lift of $\gamma$ such that $\lambda(t) = e^{t\vec{H}} (\lambda)$. Then the associated Jacobi curve satisfies the following properties for all $t,s$ such that both sides of the statements are defined:
\bi
\iii[(i)] $J_{\lam}(t)=e^{-t\vec{H}}_{*} \ve_{\lambda(t)}$, %(T_{\lam(t)}(T^{*}_{\gamma(t)}M))$,
\iii[(ii)] $J_{\lam}(t+s)=e^{-t\vec{H}}_{*} J_{\lam(t)}(s)$,
\iii[(iii)] $\dot J_\lam(0) = - d^2_\lam H_x$ as quadratic forms on $\ve_\lam \simeq T^*_x M$.
\ei
\ep
\begin{proof} In order to prove (i) it is sufficient to show that $\pi_{*}\circ e^{t\vec{H}}_{*}\circ d^{2}_{x}c_{t}=0$. Then, let $v \in T_x M$, and $\alpha(\cdot)$ a smooth arc such that $\alpha(0) = x$, $\dot{\alpha}(0) = v$. Recall that, for $s$ sufficiently small, $d_{\alpha(s)} c_t$ is the initial covector of the unique normal geodesic which connects $\alpha(s)$ with $\gamma(t)$ in time $t$, i.e. $\pi\circ e^{t\vec{H}} \circ d_{\alpha(s)}c_{t}=\gamma(t)$. Then
\begin{equation}
\pi_{*}\circ e^{t\vec{H}}_{*}\circ d^{2}_{x}c_{t} (v) =\frac{d}{ds}\bigg|_{s=0} \pi\circ e^{t\vec{H}} \circ d_{\alpha(s)}c_{t}=0.
\end{equation}
Statement (ii) follows from (i) and the group property of the Hamiltonian flow.
To prove (iii), introduce canonical coordinates $(p,x)$ in the cotangent bundle. Let $\xi\in \ve_\lam$, such that $\xi =  \sum_{i=1}^n \xi_i \partial_{p_i}|_\lam$. By (i), the smooth family of vectors in $\ve_\lam$ defined by
\begin{equation}
\xi(t)\doteq  e^{-t\vec{H}}_*\left(\sum_{i=1}^n \xi^i \partial_{p_i}|_{\lambda(t)}\right),
\end{equation}
satisfies $\xi(0) = \xi$ and $\xi(t)\in J_\lam(t)$. Therefore
\begin{equation}
\dot J_{\lam}(0) \xi=\sigma(\xi,\dot\xi)=-\sum_{i,j=1}^n\frac{\partial^{2} H}{\partial p_i \partial p_j }\xi^i \xi^j = - \langle \xi, (d^2_\lam H_x) \xi \rangle,
\end{equation}
where the last equality follows from the definition of $d^2_\lambda H_x$ after the identification $\ve_\lambda \simeq T^*_xM$ (see Section~\ref{s:Hpp}).
\end{proof}
\begin{remark}
Point (i) of Proposition~\ref{p:Jproperties} can be used to associate a Jacobi curve with any integral curve of the Hamiltonian flow, without any further assumptions on the underlying trajectory on the manifold. In particular we associate with any initial covector $\lambda \in T_x M$ the Jacobi curve $J_\lam(t)\doteq e^{-t\vec{H}}\ve_{\lam(t)}$. Observe that, in general, $\gamma(\cdot)\doteq \pi\circ\lambda(\cdot)$ may be also abnormal.
\end{remark}

Proposition~\ref{p:Jproperties} and the fact that the quadratic form $d^2_\lam H_x$ is non-negative  imply the next corollary.
\begin{corollary} \index{Jacobi curve!monotone}
The Jacobi curve $J_\lam$ is monotone decreasing for every $\lambda \in T^*M$.
\end{corollary}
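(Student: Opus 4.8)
The statement to prove is the Corollary: the Jacobi curve $J_\lam$ is monotone decreasing for every $\lambda \in T^*M$. Let me plan the proof.

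\textbf{Plan.} The proof will be an immediate consequence of the two ingredients explicitly flagged in the text just before the corollary: Proposition~\ref{p:Jproperties} (in particular items (i) and (iii)) and Lemma~\ref{l:image}, which guarantees that $d^2_\lam H_x$ is non-negative definite. The key observation is that monotonicity (Definition~\ref{d:amplestar}(iii)) is a pointwise condition on the tangent vector $\dot{J}_\lam(t)$ at \emph{every} $t$, so it suffices to reduce the computation of $\dot{J}_\lam(t)$ at an arbitrary $t$ to the computation of $\dot{J}_\lam(0)$ already carried out in Proposition~\ref{p:Jproperties}(iii).

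\textbf{Key steps.} First I would fix $\lam \in T^*M$ and an arbitrary time $t_0$ in the domain of the curve, and set $\mu\doteq \lam(t_0) = e^{t_0\vec{H}}(\lam)$, which is itself a covector based at the point $x_0\doteq \pi(\mu)$. By the group property of the Hamiltonian flow and Proposition~\ref{p:Jproperties}(ii), for small $s$ one has $J_\lam(t_0+s) = e^{-t_0\vec{H}}_* J_\mu(s)$. Since $e^{-t_0\vec{H}}_*$ is a \emph{fixed} symplectomorphism (it preserves $\sigma$) and is independent of $s$, its action on the Lagrange Grassmannian sends the curve $s\mapsto J_\mu(s)$ to the curve $s\mapsto J_\lam(t_0+s)$, and the induced map on tangent spaces intertwines the two quadratic-form representatives of the tangent vectors: the tangent vector $\dot{J}_\lam(t_0)$, as a quadratic form on $J_\lam(t_0)$, is carried to $\dot{J}_\mu(0)$, as a quadratic form on $J_\mu(0)=\ve_\mu$, by the pullback along the isomorphism $e^{t_0\vec{H}}_*|_{J_\lam(t_0)}: J_\lam(t_0)\to \ve_\mu$. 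Concretely, for $z\in J_\lam(t_0)$ with smooth extension $z(s)\in J_\lam(t_0+s)$, setting $w(s)\doteq e^{t_0\vec{H}}_* z(s)\in J_\mu(s)$, the invariance $\sigma(z,\dot z)=\sigma(e^{t_0\vec{H}}_*z, e^{t_0\vec{H}}_*\dot z)=\sigma(w(0),\dot w(0))$ shows that the two quadratic forms agree under this identification.

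\textbf{Conclusion.} By Proposition~\ref{p:Jproperties}(iii) applied at the covector $\mu$ we have $\dot{J}_\mu(0) = -d^2_\mu H_{x_0}$ as a quadratic form on $\ve_\mu\simeq T^*_{x_0}M$. Lemma~\ref{l:image} tells us $d^2_\mu H_{x_0}$ is non-negative definite, hence $\dot{J}_\mu(0)\le 0$ as a quadratic form; transporting back by the symplectic isomorphism preserves the sign, so $\dot{J}_\lam(t_0)\le 0$. Since $t_0$ was arbitrary, the tangent vector is non-positive at every point, which is exactly the statement that $J_\lam$ is monotone decreasing.

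\textbf{Main obstacle.} There is no serious analytic difficulty here; the only point requiring care is the bookkeeping in the reduction step, namely checking that the linear symplectomorphism $e^{-t_0\vec{H}}_*$ genuinely carries the tangent-vector-as-quadratic-form at $J_\mu(0)$ to the one at $J_\lam(t_0)$ preserving positivity. This is ensured because $\sigma$ is invariant under the (linear) differential of the Hamiltonian flow, so the canonical identification of Section~\ref{s:2dctdot} of tangent vectors to $L(\Sigma)$ with quadratic forms is natural with respect to symplectic transformations; once this naturality is recorded the sign statement is immediate.
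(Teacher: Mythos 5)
Your proof is correct and follows exactly the paper's intended argument: the paper states the corollary as an immediate consequence of Proposition~\ref{p:Jproperties} and the non-negativity of $d^2_\lam H_x$ (Lemma~\ref{l:image}), and your write-up simply makes explicit the reduction of $\dot J_\lam(t_0)$ to $\dot J_\mu(0)$ via point (ii) and the symplectic invariance of the quadratic-form identification. No discrepancies to report.
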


The following proposition provides the connection between the flag of a normal geodesic and the flag of the associated Jacobi curve.

\begin{proposition}\label{p:twoflags}
Let $\gamma(t) = \pi\circ e^{t \vec{H}}(\lam)$ be a normal geodesic associated with the initial covector $\lam$. The flag of the Jacobi curve $J_\lam$ projects to the flag of the geodesic $\gamma$ at $t=0$, namely
\begin{equation}\label{eq:projection}
\pi_* J^{(i)}_{\lam} (0)  = \DD^i_{\gamma}(0),\qquad \all i \in \N.
\end{equation} 
Moreover, $\dim J^{(i)}_\lam(t) = n + \dim \DD^i_{\gamma}(t)$. Therefore $\gamma$ is ample of step $m$ (resp. equiregular) if and only if $J_\lam$ is ample of step $m$ (resp. equiregular).
\end{proposition}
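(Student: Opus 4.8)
The plan is to reduce everything to the projection identity~\eqref{eq:projection} at $t=0$, which carries the genuine content; the dimension formula and the ample/equiregular equivalences then follow formally from the properties of $J_\lam$ recorded in Proposition~\ref{p:Jproperties} together with the shift invariance of the geodesic flag.

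First I would prove the projection identity at $t=0$. By Proposition~\ref{p:Jproperties}(i) a smooth section $\ell(t)\in J_\lam(t)$ can be written as $\ell(t)=e^{-t\vec{H}}_{*}\xi(t)$ with $\xi(t)\in\ve_{\lambda(t)}$ a smooth vertical field along the extremal; since $\ell(t)$ is then a curve in the fixed vector space $\Sigma=T_\lam(T^*M)$ on which $\pi_*$ is a fixed linear map, one has $\pi_*\ell^{(j)}(0)=\frac{d^j}{dt^j}\big|_{0}\pi_*\ell(t)$. Working in canonical coordinates $(x,p)$, with $\mathcal{M}(t)=e^{t\vec{H}}_{*}$ the fundamental matrix of the variational equation along $\lambda(t)$ and $\xi(t)=(0,w(t))$, this gives $\pi_*\ell(t)=A_{12}(t)w(t)$, where $A_{12}(t)$ is the upper-right $(x,p)$-block of $\mathcal{M}(t)^{-1}$ and $w$ is an arbitrary smooth $\R^n$-valued function. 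Letting $w(0),w'(0),\dots$ run freely and applying Leibniz, one finds
\begin{equation}
\pi_* J^{(i)}_\lam(0)=\spn\{A_{12}^{(l)}(0)\R^n:\ 0\le l\le i\}.
\end{equation}
Because $\mathcal{M}(0)=\id$ we have $A_{12}(0)=0$, so only $l\ge 1$ contributes; moreover differentiating $\mathcal{M}(t)^{-1}$ at $0$ gives $A_{12}'(0)=-(\dot{\mathcal{M}}(0))_{12}=-d^2_\lam H_x$, in agreement with Proposition~\ref{p:Jproperties}(iii), whose image is exactly $\distr_{x_0}=\DD^1_\gamma(0)$ by Lemma~\ref{l:image}.

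The core step is then to identify $\spn\{A_{12}^{(l)}(0):1\le l\le i\}$ with $\DD^i_\gamma(0)$. Here I would use the description $\DD^i_\gamma(0)=\spn\{C(0),\dot C(0),\dots,C^{(i-1)}(0)\}$ from Section~\ref{s:crit}, together with the observation that the $x$-rows of the variational equation of $\vec{H}$ reproduce, after the frame change by $\mathcal{M}(t)^{-1}$, precisely the linearised control system $\dot\zeta=C(t)v(t)$ whose input directions span $\im d^2_\lam H=\distr$. Matching the spans $\spn\{A_{12}^{(l)}(0):1\le l\le i\}$ and $\spn\{C^{(l)}(0):0\le l\le i-1\}$ (the shift of one index accounting for the vanishing of $A_{12}(0)$) yields $\pi_* J^{(i)}_\lam(0)=\DD^i_\gamma(0)$. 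I expect this bookkeeping---verifying that $\pi_*$ intertwines the variational flow with the linearised control system, so that the two filtrations coincide under the index shift---to be the main obstacle; Proposition~\ref{p:Jproperties}(iii) and Lemma~\ref{l:image} furnish exactly the $i=1$ instance, which serves as the base of an induction on $i$.

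Granting the projection identity at $t=0$, the dimension formula follows immediately: since $\ve_\lam=J^{(0)}_\lam(0)\subseteq J^{(i)}_\lam(0)$ and $\ve_\lam=\ker\pi_*$, the restriction of $\pi_*$ to $J^{(i)}_\lam(0)$ has kernel exactly $\ve_\lam$, whence $\dim J^{(i)}_\lam(0)=n+\dim\pi_* J^{(i)}_\lam(0)=n+\dim\DD^i_\gamma(0)$. To pass to arbitrary $t$, I would invoke Proposition~\ref{p:Jproperties}(ii): for fixed $t$ the isomorphism $e^{-t\vec{H}}_{*}$ sends the $i$-th extension of $s\mapsto J_{\lambda(t)}(s)$ at $s=0$ onto $J^{(i)}_\lam(t)$, so that $\dim J^{(i)}_\lam(t)=\dim J^{(i)}_{\lambda(t)}(0)=n+\dim\DD^i_{\gamma_t}(0)$, and by Remark~\ref{r:durezza} $\dim\DD^i_{\gamma_t}(0)=\dim\DD^i_\gamma(t)$, giving $\dim J^{(i)}_\lam(t)=n+\dim\DD^i_\gamma(t)$ for all $t$. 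Finally, $\gamma$ being ample of step $m$ means $\dim\DD^m_\gamma(0)=n$ with $m$ minimal, which by the dimension formula is equivalent to $\dim J^{(m)}_\lam(0)=2n$ with $m$ minimal, i.e. $J_\lam$ ample of step $m$; and equiregularity of $\gamma$, i.e. local constancy of $t\mapsto\dim\DD^i_\gamma(t)$, is equivalent via the same formula to local constancy of $t\mapsto\dim J^{(i)}_\lam(t)$, i.e. equiregularity of $J_\lam$.
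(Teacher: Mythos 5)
Your surrounding reductions coincide with the paper's: the dimension formula via $\ker\bigl(\pi_*|_{J^{(i)}_\lam(0)}\bigr)=\ve_\lam$, the passage to arbitrary $t$ through Proposition~\ref{p:Jproperties}(ii) and Remark~\ref{r:durezza}, and the formal ample/equiregular equivalences are exactly the paper's arguments; moreover your coordinate identity $\pi_* J^{(i)}_\lam(0)=\spn\{A_{12}^{(l)}(0)\R^n:\,0\le l\le i\}$, with $A_{12}(0)=0$ and $A_{12}'(0)=-d^2_\lam H_x$, is a correct version of the base case. The genuine gap is the step you yourself call ``the main obstacle'': the identification $\spn\{A_{12}^{(l)}(0)\R^n:\,1\le l\le i\}=\spn\{C(0),\dots,C^{(i-1)}(0)\}$ is asserted, not proved, and the heuristic you offer for it is inaccurate. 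Writing $N(t)=\mathcal{M}(t)^{-1}$ in blocks along the extremal, one finds $\dot N_{12}=-N_{11}H_{pp}+N_{12}H_{xp}$, where by Eq.~\eqref{eq:hpp} $H_{pp}=B\,(\partial^2_u L)^{-1}B^*$ and $H_{px}=A+B\Phi$ with a feedback term $\Phi$ coming from $\partial\uu/\partial x$. So the vertical space feeds into the base through the \emph{symmetrized} family $B(\partial^2_uL)^{-1}B^*$, not through the input matrix $B$ of the linearised system, and the drift block is the feedback-shifted one, not $A$. The feedback shift is harmless (Proposition~\ref{p:durezza}), but the first discrepancy is serious: matching derivative spans of a family $BQB^*$ with those of $B$ is \emph{false} for raw matrix families (for scalar $b(t)=t$ the derivatives of $bqb$ up to order one vanish at $0$, while those of $b$ already span), so the claim that ``the two filtrations coincide under the index shift'' cannot be settled by bookkeeping alone; it needs a genuine inductive argument in which the lower steps of the flag are quotiented out.

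This inductive congruence is precisely what the paper's proof supplies, by a route your block computation never reaches. The paper expresses the projection through the differential of the end-point map: $\pi_*\circ e^{-t\vec{H}}_*\xi(t)=-\int_0^t (P_{0,\tau})^{-1}_*\overline{f}(v(t,\tau),\gamma(\tau))\,d\tau$, where $v(t,\cdot)=\uu_*\circ e^{-t\vec{H}}_*\xi(t)$ is a \emph{two-parameter} family of control variations with the crucial diagonal identity $v_i(t,t)=\langle\xi(t),f_i\rangle=\xi_i(t)$. Differentiating $i$ times at $t=0$, every contribution except the diagonal one either vanishes or lands in $\DD^{i-1}_\gamma(0)$, which yields Eq.~\eqref{eq:derivatio}: $\tfrac{d^i}{dt^i}\bigl|_{t=0}\,\pi_*\circ e^{-t\vec{H}}_*\xi(t)\equiv-\tfrac{d^{i-1}}{dt^{i-1}}\bigl|_{t=0}\bigl[(P_{0,t})^{-1}_*\sum_{j}\xi_j(t)\overline{f}_j(\gamma(t))\bigr]\bmod\DD^{i-1}_\gamma(0)$. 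The right-hand side is, up to sign, the $(i-1)$-st derivative of a section of $\DD_\gamma(t)$ with smooth coefficients $\xi_j(t)$, so induction on $i$, starting from your (correct) base case, gives Eq.~\eqref{eq:projection}. To complete your route you would have to prove the analogue of this congruence for the blocks, namely that modulo $\DD^{i-1}_\gamma(0)$ only the ``diagonal'' data $B(t)^*\xi(t)$ of the variation contributes to $N_{12}^{(i)}(0)$ --- and that is the entire content of the paper's proof, not a formality that can be deferred.
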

\begin{proof}
The last statement follows directly from Eq.~\eqref{eq:projection}, Proposition~\ref{p:Jproperties} (point (ii)) and the definition of $\DD_{\gamma(s)}(t) = \left(P_{s,s+t}\right)^{-1}_*\distr_{\gamma(s+t)}$. In order to prove Eq.~\eqref{eq:projection}, let $\uu: T^*M \to L^\infty([0,T],\mathbb{R}^k)$ be the map that associates to any covector the corresponding normal control:
\begin{equation}
\uu_i(\lambda)(\cdot) = \langle e^{\cdot\vec{H}}(\lambda), f_i\rangle,\qquad i = 1,\ldots,k,
\end{equation}
where we assume, without loss of generality, that the Hamiltonian field $\vec{H}$ is complete. For any control $v \in L^\infty([0,T],\mathbb{R}^k)$ and initial point $x \in M$, consider the non-autonomous flow $P^v_{0,t}(x)$. We have the following identity, for any $\lambda \in T^*M$ and $t \in [0,T]$
\begin{equation}
\pi \circ e^{t\vec{H}}(\lambda) = P_{0,t}^{\uu(\lambda)} (\pi(\lam)).
\end{equation}
Remember that, as a function of the control, $P^v_{0,t}(x) = E_{x,t}(v)$ (i.e. the endpoint map with basepoint $x$ and endtime $t$). Therefore, by taking the differential at $\lambda$ (such that $\pi(\lambda) = x$), we obtain
\begin{equation}
\pi_* \circ e^{t\vec{H}}_*|_\lambda = \left(  P_{0,t}^{\uu(\lambda)} \right)_* \circ \pi_* |_\lambda +  D_{\uu(\lambda)} E_{x,t} \circ \uu_*|_\lam,
\end{equation}
Then, by the explicit formula for the differential of the endpoint map, we obtain, for any vertical field $\xi(t) \in \ve_{e^{t\vec{H}}(\lam)}$
\begin{equation}
\pi_* \circ e^{-t\vec{H}}_* \xi(t)= - \int_0^t (P_{0,\tau})^{-1}_* \overline{f}(v(t,\tau),\gamma(t))d\tau,
\end{equation}
where $\gamma(t) = \pi \circ e^{t\vec{H}}(\lam)$ is the normal geodesic with initial covector $\lambda$ and, for any $t \in [0,T]$,
\begin{equation}
v_i(t,\cdot) \doteq \uu_* \circ e^{-t\vec{H}}_* \xi(t) = \left(\uu \circ e^{-t\vec{H}}\right)_* \xi(t), \qquad v(t,\cdot) \in L^\infty([0,T],\R^k).
\end{equation}
More precisely, $v(t,\cdot)$ has components 
\begin{equation}
v_i(t,\tau) = \left.\frac{d}{d\varepsilon}\right|_{\varepsilon = 0} \langle e^{(\tau-t)\vec{H}}(\lambda(t) + \varepsilon \xi(t)), f_i\rangle,\qquad i=1,\ldots,k,
\end{equation}
where $\lam(t) = e^{t\vec{H}}(\lam)$, and we identified $\ve_{e^{t\vec{H}}(\lam)} \simeq T_{\gamma(t)}^*M$. Observe that, on the diagonal, $v_i(t,t) = \langle\xi(t),f_i\rangle = \xi_i(t)$. It is now easy to show that, for any positive $i \in \mathbb{N}$
\begin{equation}\label{eq:derivatio}
\left.\frac{d^i}{dt^i}\right|_{t=0} \pi_* \circ e^{-t\vec{H}}_* \xi(t)= -\left.\frac{d^{i-1}}{dt^{i-1}}\right|_{t=0}\left[\left(P_{0,t}\right)^{-1}_* \sum_{j=1}^k \xi_j(t) \overline{f}_j(\gamma(t))\right] \mod \DD_{\gamma}^{i-1}(0).
\end{equation}
By point (i) of Proposition~\ref{p:Jproperties}, any smooth family $\ell(t) \in J_\lam(t)$ is of the form $e^{-t\vec{H}}_* \xi(t)$ for some smooth $\xi(t) \in \ve_{e^{t\vec{H}}(\lam)}$. Therefore, Eq.~\eqref{eq:derivatio} for $i=1$ implies that $J_\lam^{(1)} = \DD_{\gamma}^{1}(0)$. The same equation and an easy induction argument, together with the definitions of the flags show that $J_\lam^{(i)}(0) = \DD_{\gamma}^{i}(0)$ for any positive $i \in \mathbb{N}$.
\end{proof}

\begin{remark}
If $\gamma$ is equiregular, ample of step $m$ with growth vector $\mc{G}_\lam = (k_1,k_2,\ldots,k_m)$, the Young diagram of $J_\lam$ has $m$ columns, with $d_i\doteq  k_i - k_{i-1}$ boxes in the $i$-th column (recall that $k_0 = \dim \DD^0_{\gamma}(t) = 0$).
\end{remark}

\begin{remark}\label{r:rational}
Notice that, by the coordinate representation of $J_\lambda^{(i)}(t)$ and Proposition~\ref{p:twoflags}, we have the following formula:
\begin{equation}
\dim \DD_\gamma^i(0) = \rank\{\dot{S}_\lambda(0),\ddot{S}_\lambda(0),\ldots,S_{\lambda}^{(i)}(0)\}, \qquad \all i \geq 0.
\end{equation}
By point (i) of Proposition~\ref{p:Jproperties} it follows that, for any fibre-wise polynomial Hamiltonian, $S^{(i)}_\lam(0)$ is a rational function of the initial covector $\lam \in T_x^* M$, for any $i \in \mathbb{N}$. In particular, the integer numbers $k_i = \dim \DD_\gamma^i(0)$ are obtained as the rank of a matrix whose entries are rational in the covector $\lambda$.

Finally, we stress that the curve is ample at $t=0$ if and only if there exists $N \in \mathbb{N}$ such that
\begin{equation}
\rank \{\dot{S}_\lam(0),\ddot{S}_\lam(0),\ldots, S^{(N)}_\lam(0)\} = n.
\end{equation}
Therefore, under this polynomial assumption (which is true, for example, in the sub-Riemannian case), $J_\lam(\cdot)$ is ample on an open Zariski subset of the fibre $T_x^* M$.
\end{remark}

\section{The Jacobi curve and the Hamiltonian inner product}
The following is an elementary, albeit very useful property of the symplectic form $\sigma$.
\begin{lemma}\label{l:simplettico}
Let $\xi \in \ve_\lam$ a vertical vector. Then, for any $\eta \in T_\lam(T^*M)$
\begin{equation}
\sigma(\xi,\eta) = \langle \xi, \pi_* \eta \rangle,
\end{equation}
where we employed the canonical identification $\ve_\lam = T_x^* M$.
\end{lemma}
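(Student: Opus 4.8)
The plan is to verify the identity directly in canonical (Darboux) coordinates $(p,x)$ on $T^{*}M$, in which the excerpt already records $\varsigma = \sum_{i} p_{i}\,dx_{i}$ and $\sigma = \sum_{i} dp_{i}\wedge dx_{i}$, and in which the bundle projection is simply $\pi(p,x)=x$. Since both sides of the asserted equality are linear in $\eta$, it suffices to evaluate them on an arbitrary tangent vector $\eta = \sum_{i}\big(a_{i}\,\partial_{p_{i}} + b_{i}\,\partial_{x_{i}}\big) \in T_{\lambda}(T^{*}M)$.

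First I would pin down the coordinate description of the two objects attached to $\xi$. A vertical vector $\xi \in \ve_{\lambda} = T_{\lambda}(T^{*}_{x}M)$ is tangent to the fibre, hence has no $\partial_{x_{i}}$-component: $\xi = \sum_{i}\xi_{i}\,\partial_{p_{i}}$. Under the canonical identification $\ve_{\lambda}\simeq T^{*}_{x}M$ --- the natural isomorphism between the tangent space to a vector space and the vector space itself --- this $\xi$ corresponds precisely to the covector $\sum_{i}\xi_{i}\,dx_{i}\in T^{*}_{x}M$. This identification is the only point requiring care; once it is fixed, the rest is mechanical. The computation then proceeds as follows: on the one hand,
\[
\sigma(\xi,\eta) = \sum_{i}\big(dp_{i}(\xi)\,dx_{i}(\eta) - dp_{i}(\eta)\,dx_{i}(\xi)\big) = \sum_{i}\xi_{i}b_{i},
\]
using $dx_{i}(\xi)=0$ because $\xi$ is vertical; on the other hand $\pi_{*}\eta = \sum_{i}b_{i}\,\partial_{x_{i}}\in T_{x}M$, so pairing with the covector $\xi = \sum_{i}\xi_{i}\,dx_{i}$ gives $\langle \xi, \pi_{*}\eta\rangle = \sum_{i}\xi_{i}b_{i}$. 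The two quantities coincide, which proves the lemma.

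If a coordinate-free presentation is preferred, I would instead split the verification into two observations: for vertical $\eta$ both sides vanish (the left because $\ve_{\lambda}$ is Lagrangian, so $\sigma|_{\ve_{\lambda}}\equiv 0$, and the right because $\pi_{*}\eta = 0$), so both sides factor through $\pi_{*}\eta$; it then remains to check equality on a single Lagrangian complement of $\ve_{\lambda}$, which is exactly the content of the displayed computation. I expect no genuine obstacle here, as the statement is elementary --- the only subtlety worth stating explicitly is the bookkeeping of the identification $\ve_{\lambda}\simeq T^{*}_{x}M$, so that the pairing $\langle\cdot,\cdot\rangle$ on the right-hand side is unambiguous.
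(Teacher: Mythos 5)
Your proof is correct and follows essentially the same route as the paper's: the paper also works in canonical Darboux coordinates $(p,x)$, writes $\xi = \sum_{i}\xi^{i}\partial_{p_i}$, and declares the result immediate from $\sigma = \sum_i dp_i\wedge dx_i$. You merely carry out explicitly the evaluation on a general $\eta$ and the bookkeeping of the identification $\ve_\lambda \simeq T_x^*M$ that the paper leaves to the reader.
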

\begin{proof}
In any Darboux basis induced by canonical local coordinates $(p,x)$ on $T^*M$, we have $\sigma = \sum_{i=1}^n dp_i \wedge dx_i$ and $\xi = \sum_{i=1}^n \xi^i \partial_{p_i}$. The result follows immediately.
\end{proof}
In Section~\ref{s:Hpp} we introduced the Hamiltonian inner product on $\distr_x$, which, in general, depends on $\lambda$. Such an inner product is defined by the quadratic form $d^2_\lam H_x : T_x^*M \to T_x M$ on $\distr_x = \text{Im}(d^2_\lam H_x)$. The following lemma allows the practical computation of the Hamiltonian inner product through the Jacobi curve.
\begin{lemma}\label{l:pistarra}
Let $\xi \in T_x^*M$. Then
\begin{equation}
d_\lam^2 H_x (\xi) = -\pi_* \dot \xi,
\end{equation}
where $\dot\xi$ is the derivative, at $t=0$, of any extension $\xi(t)$ of $\xi$ such that $\xi(0) = \xi$ and $\xi(t) \in J_\lam(t)$.
\end{lemma}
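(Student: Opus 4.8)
The plan is to prove the vector identity $d^2_\lam H_x(\xi) = -\pi_* \dot\xi$ by pairing both sides against an arbitrary test covector $\eta \in \ve_\lam \simeq T_x^*M$ and invoking non-degeneracy of the canonical pairing between $T_x^*M$ and $T_xM$. First I would check that the statement is well posed. Two extensions $\xi(t),\widetilde\xi(t) \in J_\lam(t)$ of $\xi$ differ by a curve $\zeta(t)\in J_\lam(t)$ with $\zeta(0)=0$, so $\dot\zeta(0) \in J_\lam(0) = \ve_\lam = \ker \pi_*$; hence $\pi_* \dot\xi$ is independent of the chosen extension. Equivalently, $\sigma(\eta,\dot\xi)$ is extension-independent because $\eta$ and the ambiguity $\dot\zeta(0)$ are both vertical and $\ve_\lam$ is Lagrangian, so $\sigma$ kills them.

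The key step is a polarization argument at the level of the Lagrange Grassmannian. Recall that the tangent vector $\dot J_\lam(0)$, viewed as a quadratic form on $J_\lam(0)=\ve_\lam$, is $\xi\mapsto \sigma(\xi,\dot\xi)$. Differentiating the identity $\sigma(\xi(t),\eta(t))\equiv 0$, valid because $\xi(t),\eta(t)\in J_\lam(t)$ and $J_\lam(t)$ is Lagrangian, at $t=0$ yields $\sigma(\dot\xi,\eta)+\sigma(\xi,\dot\eta)=0$; thus the bilinear form $B(\xi,\eta)\doteq \sigma(\xi,\dot\eta)$ is symmetric and polarizes the quadratic form $\dot J_\lam(0)$. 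By Proposition~\ref{p:Jproperties}(iii) we have $\dot J_\lam(0)=-d^2_\lam H_x$ as quadratic forms on $\ve_\lam$, and since $d^2_\lam H_x$ is self-adjoint both sides are symmetric; therefore their polarizations coincide, giving $\sigma(\eta,\dot\xi)=-\la \eta, d^2_\lam H_x\,\xi\ra$ for all $\xi,\eta\in\ve_\lam$.

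Finally, I would apply Lemma~\ref{l:simplettico} with the vertical vector $\eta$, obtaining $\sigma(\eta,\dot\xi)=\la \eta,\pi_*\dot\xi\ra$. Combining the two displays gives $\la \eta,\pi_*\dot\xi\ra = -\la \eta, d^2_\lam H_x\,\xi\ra$ for every $\eta\in\ve_\lam\simeq T_x^*M$, and non-degeneracy of the pairing forces $\pi_*\dot\xi=-d^2_\lam H_x\,\xi$, which is the assertion. The only delicate point, and the step I would be most careful about, is the passage from the quadratic-form equality of Proposition~\ref{p:Jproperties}(iii) to the bilinear (hence vector-level) statement; this is supplied precisely by the Lagrangian polarization identity $\sigma(\xi,\dot\eta)=\sigma(\eta,\dot\xi)$. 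Everything else is direct substitution.
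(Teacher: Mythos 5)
Your proof is correct and follows essentially the same route as the paper's: Proposition~\ref{p:Jproperties}(iii), the definition of $\dot{J}_\lam(0)$ as the quadratic form $\xi \mapsto \sigma(\xi,\dot\xi)$, and Lemma~\ref{l:simplettico}. The polarization identity and the well-posedness check you spell out are exactly what the paper compresses into ``identifying the quadratic form with the associated symmetric map,'' so you have simply made its implicit steps explicit.
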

\begin{proof}
By point (iii) of Proposition~\ref{p:Jproperties}, $d^2_\lam H_x = - \dot{J}_\lam(0)$. By definition of $\dot{J}_\lam(0): \ve_\lam \to \R$ as a quadratic form, $\dot{J}_\lam(0)(\xi) = \sigma(\xi,\dot{\xi})$. Then, by Lemma~\ref{l:simplettico}, $\dot{J}_\lam(0)(\xi) = \langle \xi, \pi_* \dot \xi\rangle$. This implies the statement after identifying again the quadratic form with the associated symmetric map.
\end{proof}
By Lemma~\ref{l:pistarra}, for any $v \in \distr_x$ there exists a $\xi \in \ve_\lam$ such that, for any extension $\xi(t) \in J_\lam(t)$, with $\xi(0) = \xi$, we have $v = \pi_* \dot \xi$. Indeed $\xi$ may not be unique. Besides, if $v = \pi_* \dot \xi$ and $w = \pi_* \dot \eta$, the Hamiltonian inner product rewrites \index{Hamiltonian inner product}
\begin{equation}\label{eq:scalprod}
\langle v| w \rangle_\lam =\sigma(\xi,\dot\eta) = -\sigma(\eta,\dot\xi).
\end{equation}
We now have all the tools required for the proof of Theorem~\ref{t:main}.

\section{Proof of Theorem~\ref{t:main}}

The statement of Theorem~\ref{t:main} is related with the analytic properties of the functions $t \mapsto \langle \QQ_\lam(t)v|v\rangle_\lam$ for $v \in \distr_x$. By definition, $\langle \QQ_\lam(t)v|v\rangle_\lam = d^2_x\dot{c}_t(v)$.

As a first step, we compute a coordinate formula for such a function in terms of a splitting $\Sigma = \ve_{\lam} \oplus \hor_{\lam}$, where $\ve_\lam$ is the vertical space and $\hor_\lam$ is any Lagrangian complement.
Observe that $\ve_{\lam} = J_\lam(0)= \ker \pi_*$ and $\pi_{*}$ induces an isomorphism between $\hor_{\lam}$ and $T_{x}M$.  $J_\lam(t)$ is the graph of a linear map $S(t) : \ve_{\lam} \to \hor_{\lam}$. Equivalently, by Lemma~\ref{l:ampletrasv}, for $0<t<\eps$, $J_\lam(t)$ is the graph of  $S(t)^{-1}: \hor_{\lam} \to \ve_{\lam}$. Once a Darboux basis (adapted to the splitting) is fixed, as usual one can identify these maps with the representative matrices.

Fix $v \in \distr_{x}\subset T_xM$ and let $\wt{v} \in \hor_\lam$ be the unique horizontal lift such that $\pi_* \wt{v} = v$. Then, by definition of Jacobi curve, and the standard identification $\ve_\lam \simeq T_x^*M$
\begin{equation}\label{eq:sing}
\langle \QQ_\lam(t) v|v\rangle _\lam = \frac{d}{dt}\sigma( S(t)^{-1} \wt{v},\wt{v}).
\end{equation}

Since $J_{\lam}(0)=\ve_{\lam}$, it follows that $S(t)^{-1}$ is singular at $t=0$. In what follows we prove Theorem~\ref{t:main}, by computing the asymptotic expansion of the matrix $S(t)^{-1}$.  More precisely, from \eqref{eq:sing} it is clear that we need only a \virg{block} of $S(t)^{-1}$ since it acts only on vectors $\til{v}\in \pi_{*}^{-1}(\distr_{x})\cap \hor_{\lam}$. In what follows we build natural coordinates on the space $\Sigma$ in such a way that Eq.~\eqref{eq:sing} is given by the derivative of the first $k\times k$ block of $S(t)^{-1}$ where, we recall, $k=\dim \distr_x$. Notice that this restriction is crucial in the proof since only the aforementioned block has a simple pole. This is not true, in general, for the whole matrix $S(t)^{-1}$. 

\subsection{Coordinate presentation of the Jacobi curve} \index{Jacobi curve!coordinate presentation}
In order to obtain a convenient expression for the matrix $S(t)$ we introduce a  set of coordinates $(p,x)$ induced by a particular Darboux frame adapted to the splitting $\Sigma = \ve_\lam \oplus \hor_\lam$. Namely 
\begin{equation} 
\Sigma=\{(p,x)|\,p,x\in \R^{n}\},\qquad \ve_{\lam}=\{(p,0)|\,p\in \R^{n}\},\qquad \hor_{\lam}=\{(0,x)|\,x\in \R^{n}\}.
\end{equation}
Besides, if $\xi = (p,x)$, $\bar{\xi} = (\bar{p},\bar{x}) \in \Sigma$ the symplectic product is $\sigma(\xi,\bar{\xi}) = p^* \bar{x}-\bar{p}^* x $. In these coordinates, $J_{\lambda}(t)=\{(p,S(t)p)|\, p\in \R^{n}\}$, and $S(0)=0$. The symmetric matrix $S(t)$ represents a monotone Jacobi curve, hence $\dot{S}(t) \leq 0$. Moreover, since the curve is ample, by Lemma \ref{l:ampletrasv}, $S(t)< 0$ for $0< t < \eps$. Moreover we introduce the coordinate splitting $\R^{n}=\R^{k} \oplus \R^{n-k}$ (accordingly we write $p=(p_{1},p_{2})$ and $x=(x_{1},x_{2})$), such that $\pi_*(\R^k) = \distr_x$. In blocks notation
\begin{equation}\label{eq:II}
S(t)=
\begin{pmatrix}
S_{11}(t)&S_{12}(t)\\
S_{12}^{*}(t)&S_{22}(t)\\
\end{pmatrix}
, \qquad \text{with}\quad S_{11}(t), S_{22}(t)<0 \quad \text{for}\quad 0<t<\eps.
\end{equation}
By point (iii) of Proposition~\ref{p:Jproperties}, in these coordinates we also have
\begin{equation}
\dot{S}(0) = \begin{pmatrix}\dot{S}_{11}(0) & 0 \\ 0 & 0\end{pmatrix}, \qquad\text{with}\quad \rank \dot{S}_{11}(0)=\dim \distr_{x}.
\end{equation}

Therefore, we obtain the following coordinate formula for the Hamiltonian inner product. Let $v,w \in \distr_x$, with coordinates $v = (v_1,0)$, $w = (w_1,0)$ then 
\begin{equation}\label{eq:scalprodcoord}
\la v | w \ra_\lam = - v_1^* \dot{S}_{11}(0)^{-1} w_1, \qquad v_1,w_1 \in \R^k,
\end{equation}
\brem \label{r:iden}
In other words, the quadratic form associated with the operator $\id:\distr_x \to \distr_x$ via the Hamiltonian inner product is represented by the matrix $-\dot{S}_{11}(0)^{-1}$.
\erem

Moreover the horizontal lift of $v$ is $\til{v} = ((0,0),(v_1,0))$ and analogously for $w$. Thus, by \eqref{eq:sing}
\begin{equation}\label{eq:Qcoord}
\la \QQ_\lam(t) v|w \ra_\lam = \frac{d}{dt} v^*_1 [S(t)^{-1}]_{11} w_1, \qquad v_1,w_1 \in \R^k, \quad t>0.
\end{equation}
For convenience, for $t>0$, we introduce the smooth family of $k\times k$ matrices $\Sred(t)$ defined by 
\begin{equation}
\Sred(t)^{-1}\doteq [S(t)^{-1}]_{11},\qquad t > 0.
\end{equation}
Then, the quadratic form associated with the operator $\QQ_\lam(t) : \distr_x \to \distr_x$ via the Hamiltonian inner is represented by the matrix $\frac{d}{dt}\Sred(t)^{-1}$.

The proof of Theorem~\ref{t:main} is based upon the following result.

\bt \label{t:aared} 
The map $t\mapsto \Sred(t)^{-1}$ has a simple pole at $t=0$.
\et
\begin{proof}
The expression of $\Sred(t)$ in terms of the blocks of $S(t)$ is given by the following lemma.
\bl \label{l:A11} 
Let $A=\left(\begin{smallmatrix}A_{11}&A_{12}\\A_{21}&A_{22}\end{smallmatrix}\right)$ be a sign definite matrix, and denote by $[A^{-1}]_{11}$ the first block of the inverse of $A$. Then $[A^{-1}]_{11}=(A_{11}-A_{12}A^{-1}_{22}A_{21})^{-1}$.
\el
Then, by definition of $\Sred$, we have the following formula (where we suppress $t$):
\begin{equation}\label{eq:SGamma}
\Sred=S_{11}-S_{12}S_{22}^{-1}S_{12}^{*}.
\end{equation}
\begin{lemma}\label{l:SGamma2}
As quadratic forms on $\R^{k}$, $S_{11}(t) \leq \Sred(t) < 0$ for $t>0$.
\end{lemma}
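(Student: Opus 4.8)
The plan is to recognize $\Sred(t)$ as the \emph{Schur complement} of the block $S_{22}(t)$ in the symmetric matrix $S(t)$, after which both inequalities follow from elementary facts about sign-definite matrices. Throughout I fix $t\in(0,\eps)$ and suppress the dependence on $t$. Recall from~\eqref{eq:II} that $S<0$, so in particular $S_{11}<0$ and $S_{22}<0$; the latter is negative definite, hence invertible, so $\Sred=S_{11}-S_{12}S_{22}^{-1}S_{12}^{*}$ is well defined.

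For the inequality $\Sred<0$, I would argue by completing the square. Since $S_{22}$ is symmetric, so is $S_{22}^{-1}$, and the matrix $S_{12}S_{22}^{-1}S_{12}^{*}$ is symmetric. Given $p_{1}\in\R^{k}$ with $p_{1}\neq0$, set $p=(p_{1},-S_{22}^{-1}S_{12}^{*}p_{1})\in\R^{n}$, which is nonzero. A direct expansion of $p^{*}Sp$ gives
\begin{equation}
p^{*}Sp = p_{1}^{*}\lp S_{11}-S_{12}S_{22}^{-1}S_{12}^{*}\rp p_{1} = p_{1}^{*}\Sred\, p_{1}.
\end{equation}
Since $S<0$ and $p\neq0$, the left-hand side is strictly negative, whence $p_{1}^{*}\Sred\, p_{1}<0$ for all $p_{1}\neq0$, i.e. $\Sred<0$. (Equivalently, one may simply invoke the standard block criterion: a symmetric matrix with invertible lower-right block is negative definite if and only if that block and its Schur complement are both negative definite.)

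For the inequality $S_{11}\leq\Sred$, I would compute the difference directly,
\begin{equation}
\Sred-S_{11}=-S_{12}S_{22}^{-1}S_{12}^{*},
\end{equation}
and observe that it is positive semidefinite: from $S_{22}<0$ we get $S_{22}^{-1}<0$, hence $-S_{22}^{-1}>0$, so that for every $p_{1}\in\R^{k}$
\begin{equation}
p_{1}^{*}\lp\Sred-S_{11}\rp p_{1}=\lp S_{12}^{*}p_{1}\rp^{*}\lp-S_{22}^{-1}\rp\lp S_{12}^{*}p_{1}\rp\geq0.
\end{equation}
This yields $S_{11}\leq\Sred$ and completes the argument. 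Both steps are purely linear-algebraic and rely only on the sign-definiteness of $S(t)$ recorded in~\eqref{eq:II}; there is no genuine obstacle, and since every estimate holds pointwise for each fixed $t\in(0,\eps)$, the stated inequality of families of quadratic forms follows at once.
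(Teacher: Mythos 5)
Your proof is correct. For the inequality $S_{11}(t)\leq\Sred(t)$ your argument coincides with the paper's: both compute $\Sred-S_{11}=-S_{12}S_{22}^{-1}S_{12}^{*}$ from Eq.~\eqref{eq:SGamma} and use that $S_{22}^{-1}<0$. For the strict negativity $\Sred(t)<0$ you take a mildly different route: you complete the square, testing $S$ against the special vector $p=(p_{1},-S_{22}^{-1}S_{12}^{*}p_{1})$ so that $p^{*}Sp=p_{1}^{*}\Sred\,p_{1}<0$, whereas the paper argues from the defining identity $\Sred(t)^{-1}=[S(t)^{-1}]_{11}$, using that $S<0$ implies $S^{-1}<0$ and that a principal block of a negative definite matrix is negative definite. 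These are two faces of the same Schur-complement linear algebra (the identity $[S^{-1}]_{11}=(S_{11}-S_{12}S_{22}^{-1}S_{12}^{*})^{-1}$ is exactly Lemma~\ref{l:A11}): your version works directly from the expression~\eqref{eq:SGamma} without inverting $S$, making it slightly more self-contained, while the paper's is a one-line consequence of the definition of $\Sred$ already in play in the proof of Theorem~\ref{t:aared}. Either way, all the needed sign information ($S(t)<0$ for $0<t<\eps$, hence $S_{22}(t)$ invertible and $\Sred$ well defined) is correctly in place.
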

\begin{proof}[Proof of Lemma \ref{l:SGamma2}]
Let $t>0$. $S(t)$ is symmetric and negative, then also its inverse $S(t)^{-1}$ is symmetric and negative. This implies that $\Sred(t)^{-1}=[S(t)^{-1}]_{11} < 0$ and so is $\Sred(t)$. This proves the right inequality.
By Eq.~\eqref{eq:SGamma} and the fact that $S_{22}(t)$ is negative definite (and so is $S_{22}^{-1}(t)$) one also gets (we suppress $t >0$)
\begin{equation} 
p_1^* (S_{11}-\Sred)p_{1} =p_1^* S_{12}S_{22}^{-1}S_{12}^{*}p_{1} =  (S^*_{12} p_1)^* S_{22}^{-1} (S_{12}^*p_1) \leq 0, \qquad p_1 \in \R^k. \qedhere
\end{equation}

\end{proof}
\begin{lemma}
The map $t \mapsto \Sred(t)$ can be extended by smoothness at $t=0$.
\end{lemma}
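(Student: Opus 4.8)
The plan is to reduce the statement to the smoothness at $t=0$ of the single correction term
\[
W(t)\doteq S_{12}(t)\,S_{22}(t)^{-1}\,S_{12}^{*}(t),
\]
since $\Sred=S_{11}-W$ and $S_{11}$ is already smooth. By Lemma~\ref{l:SGamma2} we have $S_{11}(t)\le \Sred(t)<0$ for $t>0$, and $S_{11}(t)\to 0$ as $t\to 0$, so $\Sred$ already extends \emph{continuously} with $\Sred(0)=0$; the real content is the behaviour of $W$ near $t=0$. The difficulty is visible at once: $S_{22}(t)\to 0$, so $S_{22}(t)^{-1}$ blows up, while $S_{12}(t)\to 0$, and one must show that these two competing effects balance into a smooth limit.

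First I would extract the leading powers of $t$. By point (iii) of Proposition~\ref{p:Jproperties}, the off-diagonal and lower-right blocks of $\dot S(0)$ vanish, so $S_{12}(0)=\dot S_{12}(0)=0$ and $S_{22}(0)=\dot S_{22}(0)=0$; since $S$ is smooth, Taylor's formula with integral remainder yields $S_{12}(t)=t^{2}R_{12}(t)$ and $S_{22}(t)=t^{2}R_{22}(t)$ with $R_{12},R_{22}$ smooth and $R_{22}(0)=\tfrac12\ddot S_{22}(0)$. Consequently $W(t)=t^{2}R_{12}(t)R_{22}(t)^{-1}R_{12}^{*}(t)$. In the favourable situation where the leading block $R_{22}(0)$ is invertible, $R_{22}(t)^{-1}$ is smooth near $0$, hence $W(t)=O(t^{2})$ is manifestly smooth with $W(0)=\dot W(0)=0$; this already gives $\dot{\Sred}(0)=\dot S_{11}(0)$, which is precisely what is needed to read off the simple pole of $\Sred^{-1}$ in the proof of Theorem~\ref{t:aared}.

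The general case is where the work lies, because $R_{22}(0)=\tfrac12\ddot S_{22}(0)$ need not be invertible: the extra directions it fails to see are exactly those recorded by the second step $\DD^{2}_{\gamma}(0)$ of the flag, and for geodesics of step $m>2$ the inverse $R_{22}(t)^{-1}$ itself carries poles. The plan is then to iterate the Schur-complement reduction along the flag $\distr_{x}=\DD^{1}_{\gamma}(0)\subset\DD^{2}_{\gamma}(0)\subset\dots\subset\DD^{m}_{\gamma}(0)=T_{x}M$, which by Proposition~\ref{p:twoflags} is the projection of the flag of the Jacobi curve. Refining the coordinate splitting of $\R^{n}$ to one adapted to this flag turns $S(t)$ into a block matrix in which each block vanishes at $t=0$ to the order prescribed by the flag, the ampleness condition forcing the leading (graded) parts to be non-degenerate; peeling off the blocks of index $2,\dots,m$ by successive Schur complements then makes all the negative powers of $t$ cancel, leaving a smooth limit for $\Sred$. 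The main obstacle is exactly this bookkeeping: establishing the graded vanishing orders of the blocks, and the non-degeneracy of their leading coefficients, directly from the ampleness of the Jacobi curve together with its monotonicity ($\dot S\le 0$), and then checking that both of these structural properties are inherited by the reduced matrix at each stage so that the induction on $m$ closes.
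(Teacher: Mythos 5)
Your reduction to the correction term $W(t)=S_{12}(t)S_{22}(t)^{-1}S_{12}^{*}(t)$ and the step-$2$ case (where $\ddot S_{22}(0)$ is invertible) are correct, but in the general case what you offer is a plan, not a proof: the graded vanishing orders of the blocks in a flag-adapted splitting, the non-degeneracy of their leading coefficients, and the inheritance of both properties under successive Schur complements are precisely the statements that would need to be established, and you leave all three open. This is not innocuous bookkeeping. The fine block asymptotics of $S(t)$ of the kind you invoke are obtained in the paper only for \emph{equiregular} curves, via the canonical frame (Theorem~\ref{t:Sasymptotic}), whereas the present lemma must hold for merely ample curves, where the growth vector can jump at $t=0$ and the clean graded picture you describe is not available; closing your induction would essentially force you to redo the $\dot S(t)=-V(t)V(t)^{*}$ analysis that the paper carries out later, in the proof of Theorem~\ref{t:aared}, for the finer statement about the simple pole of $\Sred(t)^{-1}$.

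The missing idea is that no expansion of $W(t)$ is needed at all: you already hold the two facts that finish the proof softly, and the paper combines exactly these. By Cramer's rule applied to Eq.~\eqref{eq:SGamma}, every entry of $\Sred(t)$ is a smooth function divided by $\det S_{22}(t)$, and ampleness guarantees that $\det S_{22}(t)$ vanishes at $t=0$ to finite order; hence $\Sred(t)=t^{-N}G(t)$ with $G$ smooth, i.e.\ the singularity is at worst a finite-order pole. On the other hand, your own appeal to Lemma~\ref{l:SGamma2} gives $S_{11}(t)\leq \Sred(t)<0$ and therefore $\Sred(t)\to 0$ as $t\to 0$. Boundedness of $t^{-N}G(t)$ forces the Taylor coefficients of $G$ up to order $N-1$ to vanish, so Taylor's formula with integral remainder yields $G(t)=t^{N}K(t)$ with $K$ smooth, and $\Sred=K$ extends smoothly with $\Sred(0)=0$. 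In other words, the monotonicity you quoted as a mere preliminary (the sandwich bound) is, together with the finite-order pole, the entire proof; the hard graded analysis is needed only later, for the residue computation, not for smoothness of $\Sred$ itself.
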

\begin{proof}
Indeed, by the coordinate expression of Eq.~\eqref{eq:SGamma}, it follows that the only term that can give rise to singularities is the inverse matrix $S_{22}^{-1}(t)$. Since, by assumption, the curve is ample, $t\mapsto  \det S_{22}(t)$ has a finite order zero at $t=0$, thus the singularity can be only a finite order pole. On the other hand $S(t)\to 0$ for $t \to 0$, thus $S_{11}(t)\to 0$ as well. Then, by Lemma~\ref{l:SGamma2}, $\Sred(t)\to 0$  for $t\to 0$, hence can be extended by smoothness at $t=0$. 
\end{proof}

We are now ready to prove that $t\mapsto \Sred(t)^{-1}$ has a simple pole at $t=0$. As a byproduct, we obtain an explicit form for its residue. As usual, for $i>0$, we set $k_i\doteq  \dim J_\lam^{(i)}(0) - n$, and $d_i\doteq  k_i - k_{i-1}$. In coordinates, this means that
\begin{equation}
\rank\{\dot{S}(0), \ldots, S^{(i)}(0) \} = k_i, \qquad i=1,\ldots,m.
\end{equation}
By hypothesis, the curve is ample at $t=0$, then there exists $m$ such that $k_m =n$. Since we are only interested in Taylor expansions, we may assume $S(t)$ to be real-analytic in $[0,\varepsilon]$ by replacing, if necessary, $S(t)$ with its Taylor polynomial of sufficient high order. Then, let us consider the analytic family of symmetric matrices $\dot{S}(t)$. For $i=1,\ldots,n$, the family $w_i(t)$ of eigenvectors of $\dot{S}(t)$ (and the relative eigenvalues) are an analytic family (see \cite[Theorem 6.1, Chapter II]{Kato}). Therefore, $\dot{S}(t) = W(t)D(t) W(t)^*$, where $W(t)$ is the $n\times n$ matrix whose columns are the vectors $w_i(t)$, and $D(t)$ is a diagonal matrix. Recall that $\dot{S}(t)$ is non-positive. Then $\dot{S}(t) = -V(t)V(t)^*$, for some analytic family of $n\times n$ matrices $V(t)$. Let $v_i(t)$ denote the columns of $V(t)$.

Now, let us consider the flag $E_1\subset E_2 \subset \ldots \subset E_m = \mathbb{R}^n$ defined as follows
\begin{equation}
E_i = \spn\{v_j^{(\ell)}(0), \, 1\leq j \leq n, \, 0 \leq \ell \leq i-1\}.
\end{equation}
Let $\spn\{A\}$ denote the column space of a matrix $A$. Indeed $\spn\{\dot{S}(t)\} \subseteq \spn\{V(t)\}$. Besides, $\rank\{\dot{S}(t)\} = \rank\{V(t)V(t)^*\} = \rank\{V(t)\} = \dim\spn\{V(t)\}$. Therefore, $\spn\{\dot{S}(t)\} = \spn\{V(t)\}$, for all $|t|<\varepsilon$.
Thus, for $i=1,\ldots,m$
\begin{equation}
E_i = \spn\{ V(0), V^{(1)}(0), \ldots,V^{(i-1)}(0)\} = \spn\{ \dot{S}(0), \ldots, S^{(i)}(0)\}.
\end{equation}
Therefore $\dim E_i = k_i$. Choose coordinates in $\mathbb{R}^n$ adapted to this flag, i.e. $\spn\{e_1,\ldots,e_{k_i}\} = E_i$. In these coordinates, $V(t)$ has a peculiar structure, namely
\begin{equation}\label{eq:Vorder}
V(t) = \begin{pmatrix} \wh{v}_1 \\
t \wh{v}_2\\
\vdots \\
t^{m-1} \wh{v}_m
\end{pmatrix}+ \begin{pmatrix} O(t) \\
O(t^2) \\
\vdots \\
 O(t^m)\end{pmatrix},
\end{equation}
where $\wh{v}_i$ is a $d_i\times n$ matrix of maximal rank (notice that the $\wh{v}_i$ are not directly related with the columns $v_i(t)$ of $V(t)$). Let $\wh{V}(t)$ denote the ``principal part'' of $V(t)$. In other words, $\wh{V}(t) = (\wh v_1, t \wh v_2,\ldots, t^{m-1} \wh v_m)^*$. Then, remember that $S(0) =0$ and 
\begin{equation}
S(t) = \int_0^t \dot S(\tau) d\tau = -\int_0^t V(\tau)V(\tau)^* d\tau = -\int_0^t \wh{V}(\tau)\wh{V}(\tau)^* d \tau + r(t),
\end{equation}
where $r(t)$ is a remainder term. Observe that the matrix
\begin{equation}
\wh{S}(t) =  - \int_0^t \wh{V}(\tau)\wh{V}(\tau)^* d \tau
\end{equation}
is negative definite for $t>0$. In fact, a non trivial kernel for some $t>0$ would contradict the hypothesis $\spn\{V(0),V^{(1)}(0),\ldots,V^{(m-1)}(0)\} = \mathbb{R}^n$. 
In components, we write $S(t)$ as a $m\times m$ block matrix, $S_{ij}(t)$ being a $d_i\times d_j$ block, as follows:
\begin{equation}
S_{ij}(t) = \int_0^t \dot{S}_{ij}(\tau) d\tau = -\left(\frac{ \wh v_i \wh v_j^*}{i+j-1}\right) t^{i+j-1} + O(t^{i+j}) = \chi_{ij}t^{i+j-1} + O(t^{i+j}),
\end{equation}
where we introduced the negative definite constant matrix $\chi\doteq \wh{S}(1) <0$. By computing the determinant of $\wh{S}(t)$, we obtain
\begin{equation}\label{eq:cappuccio}
\det \wh{S}(t) = \det \begin{pmatrix}
t \chi_{11} & t^2 \chi_{12} & \cdots & t^{m} \chi_{1m} \\
t^2 \chi_{21} & t^3\chi_{22} & \cdots & t^{m+1} \chi_{2m} \\
\vdots & \vdots & \ddots & \vdots \\
t^m \chi_{m1} & t^{m+1} \chi_{m2} & \cdots & t^{2m-1} \chi_{mm} 
\end{pmatrix} = t^{d_1 + 3d_2 + \ldots + (2m-1)d_m} \det \chi.
\end{equation}
We now compute the inverse of $S(t)$. First, the inverse of the principal part $\wh{S}(t)$ is
\begin{equation}
\wh{S}(t)^{-1}_{ij} = \frac{(\chi^{-1})_{ij}}{t^{i+j-1}},
\end{equation}
as we readily check:
\begin{equation}
\sum_{\ell=1}^m \wh{S}(t)^{-1}_{i\ell} \wh{S}(t)_{\ell j} = \sum_{\ell=1}^m (\chi^{-1})_{i\ell}\chi_{\ell j}\frac{t^{\ell+j-1}}{t^{i+\ell-1}} = \sum_{\ell=1}^m (\chi^{-1})_{i\ell}\chi_{\ell j} t^{j - i}= \delta_{ij}.
\end{equation}
The (block-wise) principal part of the inverse $S(t)^{-1}$ is equal to the inverse of the (block-wise) principal part of $S(t)$. Then we obtain, in blocks notation, for $i=1,\ldots,m$ 
\begin{equation}
[S(t)^{-1}]_{ij} =  \frac{(\chi^{-1})_{ij}}{t^{i+j-1}} + O\left(\frac{1}{t^{i+j-2}}\right).
\end{equation}
Finally, by definition, $(\Sred)^{-1} = [S^{-1}]_{11}$. Thus
\begin{equation}
\Sred(t)^{-1} = \frac{(\chi^{-1})_{11} }{t} + O(1).
\end{equation}
Thus $\Sred(t)^{-1}$ has a simple pole at $t=0$, with a negative definite residue, as claimed.
\end{proof}

\brem As a consequence of Eq.~\eqref{eq:cappuccio}, the order of $\det S(t)$ at $t=0$ is equal to the order of its principal part $\wh{S}(t)$. Namely
\begin{equation}\label{eq:ordS}
\det S(t) \sim \det \wh{S}(t) \sim t^\mathcal{N},\qquad \mathcal{N} = \sum_{i=1}^m (2i-1)d_i. 
\end{equation}
\erem

\begin{proof}[Proof of the Theorem \ref{t:main}]
It is now clear that, in coordinates
\begin{equation}
\QQ_\lam(t) =  \frac{d}{dt} \Sred(t)^{-1},
\end{equation}
as quadratic forms on $(\distr_x,\la\cdot,\cdot\ra_\lam)$ (see Eq.~\eqref{eq:Qcoord}). By Theorem~\ref{t:aared}, the map $t \mapsto \Sred(t)^{-1}$ has a simple pole at $t=0$, and its residue is a negative definite matrix. Then, $\QQ_\lam(t)$ has a second order pole at $t=0$, and $t^2\QQ_\lam(t)$ can be extended smoothly also at $t=0$. In particular, $\Qz_\lam\doteq \lim_{t\to 0^+}t^2 \QQ_\lam(t)> 0$. 

Besides, by Lemma~\ref{l:SGamma2}, $S_{11}(t) \leq \Sred(t) < 0$, which implies $\Sred(t)^{-1} \leq S_{11}(t)^{-1} <0$. Then,
\begin{equation}
\Qz_\lam = \lim_{t\to 0^+} t^2 \frac{d}{dt} \Sred(t)^{-1} = -\lim_{t\to 0^+} t \Sred(t)^{-1} \geq -\lim_{t\to 0^+} t S_{11}(t)^{-1} = -\dot{S}_{11}(0)^{-1} > 0,
\end{equation}
which, according to Remark \ref{r:iden}, implies $\Qz_\lam \geq \id >0$ as operators on $\distr_x$.

Finally, $\QQ_\lam(t)$ cannot have a term of order $-1$ in the Laurent expansion, which is tantamount to $\left.\frac{d}{dt}\right|_{t=0} t^2\QQ_\lam(t) =0$.
\end{proof}

\section{Proof of Theorem~\ref{t:main4}}
The purpose of this section is the proof of the main result of Section~\ref{s:gd}, namely a formula for the exponent of the asymptotic volume growth of geodesic homotheties. 

Fix $x_0 \in M$ and let $\gamma:[0,1] \to M$ be the geodesic associated with the covector $\lambda \in T_{x_0}^*M$. Moreover, let $J_\lam$ be the associated Jacobi curve. As usual, we  fix a Lagrangian splitting $T_\lam(T^*M) = \ve_\lam \oplus \hor_\lam$, in terms of which $J_\lam(t)$ is the graph of the map $S(t):\ve_\lam \to \hor_\lam$.  The reader can easily check that  the statements that follow do not depend on the choice of the Lagrangian subspaces $\hor_\lam$. 
The following lemma relates $\gd_{\lam}$ with the Jacobi curve.

\begin{lemma}\label{l:ordS}
Assume that $\gamma$ is ample, of step $m$, with growth vector $\mc{G}_\lam = \{k_1,\ldots,k_m\}$ (at $t=0$). Then the order of $\det S(t)$ at $t=0$ is
\begin{equation}
\det S(t) \sim t^{\gd_{\lam}},\qquad \gd_{\lam} = \sum_{i=1}^m (2i-1)(k_i-k_{i-1}).
\end{equation}
If $\gamma$ is not ample, the order of $\det S(t)$ at $t=0$ is $+\infty$.
\end{lemma}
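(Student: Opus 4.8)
The plan is to reduce the statement almost entirely to the determinant computation already carried out in the proof of Theorem~\ref{t:aared}. First I would recall the standing setup: fixing a Lagrangian splitting $T_\lam(T^*M)=\ve_\lam\oplus\hor_\lam$, the Jacobi curve $J_\lam(t)$ is the graph of a smooth family of symmetric matrices $S(t)$ with $S(0)=0$, and, since $J_\lam$ is monotone decreasing, $\dot S(t)\leq 0$. By Proposition~\ref{p:twoflags} one has $\dim J^{(i)}_\lam(0)=n+k_i$, so that in these coordinates $\rank\{\dot S(0),\ldots,S^{(i)}(0)\}=k_i$, and $\gamma$ is ample of step $m$ if and only if $k_m=n$.

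For the ample case, I would observe that the claim $\det S(t)\sim t^{\gd_\lam}$ is nothing but Eq.~\eqref{eq:ordS}. Indeed, replacing $S(t)$ by a Taylor polynomial of sufficiently high order and writing $\dot S(t)=-V(t)V(t)^*$ for an analytic family $V(t)$, the flag $E_i=\spn\{V(0),\ldots,V^{(i-1)}(0)\}=\spn\{\dot S(0),\ldots,S^{(i)}(0)\}$ has $\dim E_i=k_i$; choosing coordinates adapted to it yields the block structure~\eqref{eq:Vorder}, and the determinant of the principal part is computed in~\eqref{eq:cappuccio} to be $\det\wh S(t)=t^{\sum_i(2i-1)d_i}\det\chi$ with $\chi<0$. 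Since $d_i=k_i-k_{i-1}$, this exponent is exactly $\gd_\lam$, and because $S(t)$ and $\wh S(t)$ agree to leading order the same exponent governs $\det S(t)$. Thus the ample case requires only invoking the work already done, with no new computation.

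For the non-ample case, I would prove that the order is infinite by exhibiting a common kernel direction. The subspaces $\spn\{\dot S(0),\ldots,S^{(i)}(0)\}$ form an increasing chain that stabilizes to some $E_\infty$; non-ampleness of $\gamma$ (equivalently, of $J_\lam$, by Proposition~\ref{p:twoflags}) means $E_\infty\subsetneq\R^n$. Choosing $0\neq p\perp E_\infty$ gives $p^*S^{(i)}(0)=0$ for all $i\geq 1$, and together with $S(0)=0$ this forces every Taylor coefficient of the row $p^*S(t)$ to vanish, i.e. $p^*S(t)=o(t^N)$ for all $N$; by symmetry $S(t)p=o(t^N)$ as well. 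Expanding $\det S(t)$ along the column $S(t)p$ in a basis beginning with $p$ then gives $\det S(t)=o(t^N)$ for every $N$, so the order at $t=0$ is $+\infty$.

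The only genuinely delicate point is this last case: I must ensure that non-ampleness produces a \emph{single} direction $p$ annihilated by all the derivatives $S^{(i)}(0)$ simultaneously (the stabilization of the flag $E_i$), rather than merely a rank deficiency at each finite order. Once that stabilization is established the infinite-order vanishing of the determinant is immediate, and everything in the ample case is already contained in the derivation of Eq.~\eqref{eq:ordS}.
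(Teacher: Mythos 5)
Your proposal is correct and follows essentially the same route as the paper: the ample case is exactly the invocation of Eq.~\eqref{eq:ordS} from the proof of Theorem~\ref{t:aared}, and your non-ample argument (a single direction $p$ killed by all $S^{(i)}(0)$, obtained from the stabilization of the increasing flag $E_i$ in $\R^n$) is just a slightly more explicit version of the paper's one-line observation that every Taylor polynomial of $S(t)$ is singular, so $\det S(t) = O(t^N)$ for all $N$. The point you flagged as delicate is indeed harmless: monotonicity of the chain $E_1\subseteq E_2\subseteq\ldots$ in a finite-dimensional space forces stabilization, so a common kernel vector exists exactly when the curve is not ample.
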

\begin{proof}
Indeed the order of $\det S(t)$ does not depend on the choice of the horizontal complement $\hor_\lam$ and Darboux coordinates. Then, for an ample curve, the statement is precisely Eq.~\eqref{eq:ordS}. Finally, if $\gamma$ is not ample, the Taylor polynomial of arbitrary order of $S(t)$ is singular, thus the order of $\det S(t)$ at $t=0$ is $+\infty$.
\end{proof}

%\begin{proof}[Proof of Proposition~\ref{p:costanza}]
%By Corollary~\ref{c:ampledense}, on a sub-Riemannian manifold there always exists at least an ample geodesic $\gamma$, with covector $\lambda$. Then it is well defined 
%\begin{equation}
%\n_{x_0}=\min \{\n_{\lam}\mid  \lam \in T^*_{x_{0}}M\}<+\infty.
%\end{equation} 
%Let $\lam \in T_{x_0}^*M$ be a covector at which the minimum is attained. Then, by definition of ample Jacobi curve
%\begin{equation}
%\rank\{\dot{S}_\lam(0),\ddot{S}_\lam(0),\ldots,S^{(\n_{x_0})}_\lam(0)\} = n,
%\end{equation}
%where $S_\lam(t)$ is the matrix associated with the Jacobi curve $J_\lam(t)$. We already observed (see Remark~\ref{r:rational}) that, for any Hamiltonian that is fibre-wise polynomial, $S^{(i)}_\lam(0)$ is a polynomial function of the initial covector $\lam \in T_{x_0}^*M$. Then $\rank\{\dot{S}_\lam(0),\ddot{S}_\lam(0),\ldots,S^{(\n_{x_0})}_\lam(0)\}< n$ on a closed Zariski subset $\mc{Z} \subset T_{x_0}^*M$, which has indeed zero measure.
%\end{proof}

We are now ready to prove the main result of Section~\ref{s:gd}.

\begin{proof}[Proof of Theorem~\ref{t:main4}]

Without loss of generality, we can assume that $\Omega$ is contained in a single coordinate patch $\{x_i\}_{i=1}^n$. In terms of such coordinates, $\mu = e^a dx^1\wedge\ldots\wedge dx^n$ and
\begin{equation}\label{eq:ultimaformula}
\mu(\Omega_{x_0,t}) = \int_\Omega |\det(d_x \phi_t)| e^{a\circ\phi_t(x)}d x.
\end{equation}
By smoothness, it is clear that the order of $\mu(\Omega_{x_0,t})$ at $t=0$ is equal to the order of the map $t\mapsto\det(d_x \phi_t)$. In the following, $\EXP_{x_0}:T_{x_0}^*M \to M$ denotes the sub-Riemannian exponential map at time $1$. Let us define $\Sigma_{x_0}^*\doteq \EXP_{x_0}^{-1}(\Sigma_{x_0})\subset T_{x_0}^*M$. Indeed, if $\lam \in \Sigma_{x_0}^*$, the associated geodesic $\gamma(t)= \EXP_{x_0}(t\lam)$ is the unique one connecting $x_0$ with $x = \EXP_{x_0}(\lam)$. We now compute the order of the map $t\mapsto\det(d_x \phi_t)$.
%\section{Proof of Theorem~\ref{t:main4}}
%
%The purpose of this section is the proof of the main result of Section~\ref{s:gd}, that is a formula for the geodesic dimension in terms of the geodesic growth vector. 
%
%
%We briefly recall the notation of Section~\ref{s:gd}. Fix $x_0 \in M$, and let  $\lam \in \Sigma_{x_0}^*$. Let $\gamma:[0,1] \to M$ be the unique (stricly normal) geodesic connecting $x_0$ with $x \in \Sigma_{x_0}$, associated with the covector $\lambda \in T_{x_0}^*M$. Moreover, let $J_\lam$ be the associated Jacobi curve. As usual, we fix a Lagrangian splitting $T_\lam(T^*M) = \ve_\lam \oplus \hor_\lam$, in terms of which $J_\lam(t)$ is the graph of the map $S(t):\ve_\lam \to \hor_\lam$. The reader can easily check that all the statements that follow do not depend on the choice of the Lagrangian subspaces $\hor_\lam$.
%\begin{lemma}\label{l:gdord}
%The geodesic dimension $\n_{\lam}$ is equal to the order of $\det S(t)$ at $t=0$.
%\end{lemma}
%\begin{proof}
%By definition, the geodesic dimension $\n_{\lam}$ is the order of the map $t \mapsto \det (d_x \phi_t)$ at $t=0$ written in terms in terms of any basis. 
\bl For every $x\in \Sigma_{x_{0}}$ the order of $t\mapsto\det(d_x \phi_t)$ is equal to $\gd_{\lam}$, where $\lam=\EXP^{-1}_{x_{0}}(x)$.
\el
\begin{proof}
Recall that the order of a family of linear maps does not depend on the choice of the representative matrices. By Eq.~\eqref{eq:homothetymap},
\begin{equation}
d_x \phi_t = \pi_* \circ e^{(t-1)\vec{H}}_* \circ d^2_x \f.
\end{equation}
Let us focus on the linear map $e^{(t-1)\vec{H}}_* \circ d^2_x \f : T_x M \to T_{\lam(t)}(T^*M)$, where $\lam(t) = e^{t\vec{H}}(\lam)$ is the normal lift of $\gamma$. Let us choose a smooth family of Darboux bases $\{E_i|_{\lam(t)},F_i|_{\lam(t)}\}_{i=1}^n$ of $T_{\lam(t)}(T^*M)$, such that $\ve_{\lam(t)} = \spn\{E_i|_{\lam(t)}\}_{i=1}^n$ and $\hor_{\lam(t)} = \spn\{F_i|_{\lam(t)}\}_{i=1}^n$. Let us define the column vectors $E|_{\lam(t)}\doteq (E_1|_{\lam(t)},\ldots,E_n|_{\lam(t)})^*$ and $F|_{\lam(t)}\doteq (F_1|_{\lam(t)},\ldots,F_n|_{\lam(t)})^*$. Observe that the elements of $\pi_* F|_{\lam(t)}$ are a smooth family of bases for $T_{\gamma(t)} M$. Then
\begin{equation}\label{eq:scomposazza0}
e^{(t-1)\vec{H}}_*\circ  d^2_x \f( \pi_*F|_{\lam(1)}) = A(t) E|_{\lam(t)}  +  B(t) F|_{\lam(t)} ,
\end{equation}
for some smooth families of $n\times n$ matrices $A(t)$ and $B(t)$. %In terms of the basis $\pi_* f|_{\lam(1)}$ of $T_x M$ and $\pi_*f|_{\lam(t)}$ of $T_{\gamma(t) M}$, the representative matrix of $d_x\phi_t : T_x M \to T_{\gamma(t)} M$ is $B(t)$. 
Then, by definition, the order of the map $t\mapsto\det(d_x \phi_t)$ is the order of $\det B(t)$ at $t=0$. By acting with $e^{-t\vec{H}}_*$ in Eq.~\eqref{eq:scomposazza0}, we obtain
\begin{equation}\label{eq:scomposazza}
A(t) e^{-t\vec{H}}_* E|_{\lam(t)} =  e^{-\vec{H}}_*\circ  d^2_x \f(\pi_* F|_{\lam(1)}) - B(t) e^{-t\vec{H}}_*F|_{\lam(t)}.
\end{equation}
Notice that $A(0)$ is nonsingular. Then, for $t$ sufficiently close to $0$, the l.h.s. of Eq.~\eqref{eq:scomposazza} is a smooth basis for the Jacobi curve $J_\lam$. We rewrite the r.h.s. of Eq.~\eqref{eq:scomposazza} in terms of the fixed basis $\{E|_{\lam(0)},F|_{\lam(0)}\}$. To this end, observe that 
\begin{gather}
e^{-t\vec{H}}_*F|_{\lam(t)} = C(t) E|_{\lam(0)} + D(t) F|_{\lam(0)}, \\
e^{-\vec{H}}_*\circ  d^2_x\f(\pi_* F|_{\lam(1)}) = G E|_{\lam(0)}.
\end{gather}
For some $n\times n$ smooth matrices $C(t),D(t),G$. Observe that $C(0) = 0$ and $D(t)$ is nonsingular for $t$ sufficiently close to $0$. Moreover, since $x \in \Sigma_{x_0}$ is a regular value for the sub-Riemannian exponential map $\EXP_{x_0} = \pi \circ e^{\vec{H}}$, $G$ is nonsingular. Then
\begin{equation}\label{eq:scomposazza2}
A(t) e^{-t\vec{H}}_* E|_{\lam(t)} = [G - B(t)C(t)]E|_{\lam(0)} - B(t)D(t) F|_{\lam(0)}.
\end{equation}
Therefore, the representative matrix of $J_\lam(t)$ in terms of the basis $\{E|_{\lam(0)},F|_{\lam(0)}\}$ is
\begin{equation}\label{eq:scomposazza3}
S(t) = -[G-B(t)C(t)]^{-1}B(t)D(t), \qquad |t|<\varepsilon.
\end{equation}
By the properties of the matrices $G$, $C(t)$ and $D(t)$ for sufficiently small $t$, $\det S(t) \sim \det B(t)$, and the two determinants have the same order. Then the statement follows from Lemma \ref{l:ordS}.
\end{proof}
By Proposition \ref{p:costanza}, $\gd_{\lam}=\gd_{x_{0}}$ a.e. on $T^{*}_{x_{0}}M$. Then the order of $t\mapsto\det(d_x \phi_t)$ is equal to $\gd_{x_{0}}$ up to a zero measure set on $\Sigma_{x_0}$ and the statement of Theorem~\ref{t:main4} follows from \eqref{eq:ultimaformula}, since $\mu(\Omega)>0$.
\end{proof}

 %Jacobi curves
\chapter{Asymptotics of the Jacobi curve: equiregular case}\label{c:proof} \index{Jacobi curve!asymptotics}

In this chapter, we introduce a key technical tool, the so-called \emph{canonical frame}, associated with a monotone, ample, equiregular curve in the Lagrange Grassmannian $L(\Sigma)$. This is a special moving frame in the symplectic space $\Sigma$ which satisfies a set of differential equations encoding the dynamics of the underlying curve, which has been introduced for the first time in \cite{lizel}.

The main result of this chapter is an asymptotic formula for the curve, written in coordinates induced by the canonical frame. Finally, we exploit this result to prove Theorem~\ref{t:main2}.

\section{The canonical frame} \index{canonical frame}
Let $J(\cdot) \subset L(\Sigma)$ be an ample, monotone nonincreasing, equiregular curve of rank $k$. Suppose that its Young diagram $D$ has $k$ rows, of length $n_a$, for $a=1,\dots,k$. Let us fix some terminology about the frames, indexed by the boxes of the Young diagram $D$. Each box of the diagram is labelled \virg{$ai$}, where $a = 1,\dots,k$ is the row index, and $i=1,\dots,n_a$ is the progressive box number, starting from the left, in the specified row. Indeed $n_a$ is the length of the $a$-th row, and $n_1+\dots+n_k =n = \dim\Sigma$. Briefly, the notation $ai\in D$ denotes a generic box of the diagram.

From now on, we employ letters from the beginning of the alphabet $a,b,c,d,\dots$ for rows, and letters from the middle of the alphabet $i,j,h,k,\dots$ for the position of the box in the row.
According to this notation, a frame $\{E_{ai},F_{ai}\}_{ai \in D}$ for $\Sigma$ is Darboux if, for any $ai,bj \in D$,
\begin{equation}
\sigma(E_{ai},E_{bj}) = \sigma(F_{ai},F_{bj}) = \sigma(E_{ai},F_{bj}) - \delta_{ab}\delta_{ij} = 0,
\end{equation}
where $\delta_{ab}\delta_{ij}$ is the Kronecker delta defined on $D\times D$.% In the sequel it will be convenient to split a frame in subframes, relative to the rows of the Young diagram. For $a=1,\dots,k$, the symbol $E_a$ denotes the $n_a$-dimensional row vector
%\begin{equation}
%E_a = (E_{a1},E_{a2},\dots,E_{an_a}) \in \Sigma^{n_a},
%\end{equation}
%with an analogous notation for $F_a$. Similarly, the symbol $E$, without any index, denotes the $n$-dimensional row vector
%\begin{equation}
%E = (E_1,\dots,E_k) \in \Sigma^{n},
%\end{equation}
%and similarly for $F$.

\subsection{A remark on the notation}
Any Darboux frame indexed by the boxes of the Young diagram defines a Lagrangian splitting $\Sigma = \ve \oplus \hor$, where
\begin{equation}
\ve = \spn\{E_{ai}\}_{ai\in D}, \qquad
\hor = \spn\{F_{ai}\}_{ai\in D}.
\end{equation}
In the following, we deal with linear maps $S: \ve \to \hor$ (and their inverses), written in coordinates induced by the frame. The corresponding matrices have a peculiar block structure, associated with the Young diagram. The $F_{bj}$ component of $S (E_{ai})$ is denoted by $S_{ab,ij}$. As a matrix, $S$ can be naturally thought as a $k\times k$ block matrix. The block $ab$ is a $n_a \times n_b$ matrix. This structure is the key of the  calculations that follow, and we provide an example. Consider the Young diagram $D$, together with the \virg{reflected} diagram $\overline{D}$ in Fig.~\ref{f:y0}.
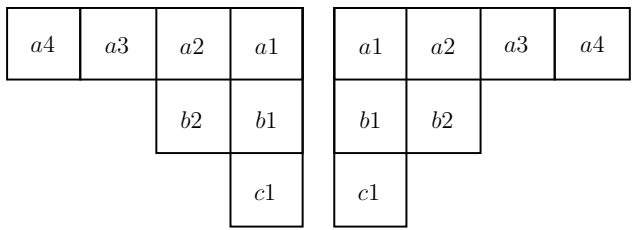
\begin{figure}[h!t]
\begin{center}
\scalebox{0.65} % Change this value to rescale the drawing.
{
\begin{pspicture}(0,-1.85)(12.64,2.25)
\rput{-90.0}(5.27,5.27){\psframe[linewidth=0.04,dimen=outer](7.52,0.75)(3.02,-0.75)}
\psframe[linewidth=0.04,dimen=outer](8.12,2.25)(6.62,-2.25)
\psframe[linewidth=0.04,dimen=outer](9.62,2.25)(6.62,-0.75)
\psframe[linewidth=0.04,dimen=outer](11.12,2.25)(6.62,0.75)
\rput{-90.0}(2.25,5.25){\psframe[linewidth=0.04,dimen=outer](4.5,3.77)(3.0,-0.77)}
\rput{-90.0}(3.77,5.27){\psframe[linewidth=0.04,dimen=outer](6.02,2.25)(3.02,-0.75)}
\fontsize{14}{0}
\usefont{T1}{ptm}{m}{n}
%\rput(1.551455,-0.885){$\overline{D}$}
\usefont{T1}{ptm}{m}{n}
%\rput(10.371455,-0.865){$D$}
\psframe[linewidth=0.04,dimen=outer](1.52,2.25)(0.0,0.75)
\psframe[linewidth=0.04,dimen=outer](12.64,2.25)(11.08,0.75)
\usefont{T1}{ptm}{m}{n}
\rput(7.381455,1.455){$a1$}
\usefont{T1}{ptm}{m}{n}
\rput(7.371455,-0.045){$b1$}
\usefont{T1}{ptm}{m}{n}
\rput(7.341455,-1.525){$c1$}
\usefont{T1}{ptm}{m}{n}
\rput(5.281455,1.455){$a1$}
\usefont{T1}{ptm}{m}{n}
\rput(5.2714553,-0.045){$b1$}
\usefont{T1}{ptm}{m}{n}
\rput(5.241455,-1.525){$c1$}
\usefont{T1}{ptm}{m}{n}
\rput(8.841455,1.455){$a2$}
\usefont{T1}{ptm}{m}{n}
\rput(8.831455,-0.045){$b2$}
\usefont{T1}{ptm}{m}{n}
\rput(3.761455,1.455){$a2$}
\usefont{T1}{ptm}{m}{n}
\rput(3.751455,-0.045){$b2$}
\usefont{T1}{ptm}{m}{n}
\rput(2.241455,1.475){$a3$}
\usefont{T1}{ptm}{m}{n}
\rput(10.321455,1.495){$a3$}
\usefont{T1}{ptm}{m}{n}
\rput(0.7414551,1.495){$a4$}
\usefont{T1}{ptm}{m}{n}
\rput(11.861455,1.495){$a4$}
\end{pspicture} 
}
\end{center}
\caption{The Young diagrams $\overline{D}$ (left) and $D$ (right).} \label{f:y0}
\end{figure}
We labelled the boxes of the diagrams according to the convention introduced above. It is useful to think at each box of the diagram $D$ as a one dimensional subspace of $\ve$, and at each box of the diagram $\overline{D}$ as a one dimensional subspace of $\hor$. Namely, the box $ai \in D$ corresponds to the subspace $\mathbb{R} E_{ai}$ (respectively, the box $bj \in \overline{D}$ corresponds to the subspace $\mathbb{R} F_{bj}$). Then the matrix $S$ has the following block structure. 
\begin{equation}
S = \begin{pmatrix}
S_{\textcolor{black}{a}\textcolor{black}{a}} & S_{\textcolor{black}{a}\textcolor{black}{b}} & S_{\textcolor{black}{a}\textcolor{black}{c}} \\
S_{\textcolor{black}{b}\textcolor{black}{a}} & S_{\textcolor{black}{b}\textcolor{black}{b}} & S_{\textcolor{black}{b}\textcolor{black}{c}}\\
S_{\textcolor{black}{c}\textcolor{black}{a}} & S_{\textcolor{black}{c}\textcolor{black}{b}} & S_{\textcolor{black}{c}\textcolor{black}{c}}
\end{pmatrix},
\end{equation}
where each block is a matrix of the appropriate dimension, e.g. $S_{ab}$ is a $4\times 2$ matrix as explained pictorially in Fig.~\ref{f:y1}.

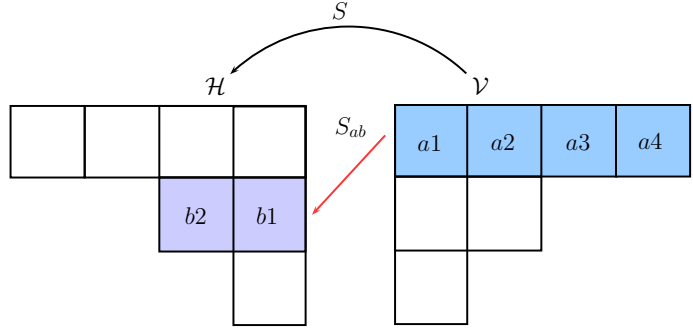
\begin{figure}[h!t]
\begin{center}
\scalebox{0.65} % Change this value to rescale the drawing.
{
\begin{pspicture}(0,-2.4091992)(13.79,3.21191992)
\definecolor{color2367b}{rgb}{0.6,0.8,1.0}
\definecolor{color2368b}{rgb}{0.8,0.8,1.0}
\definecolor{color2512}{rgb}{1.0,0.2,0.2}
\rput{-270.0}(10.910801,-9.169199){\psframe[linewidth=0.04,dimen=outer,fillstyle=solid,fillcolor=color2367b](10.79,3.1308007)(9.29,-1.3891993)}
\psframe[linewidth=0.04,dimen=outer,fillstyle=solid,fillcolor=color2367b](13.79,1.6208007)(12.25,0.12080078)
\rput{-270.0}(10.150801,-8.409199){\psframe[linewidth=0.04,dimen=outer,fillstyle=solid,fillcolor=color2367b](10.03,2.3708007)(8.53,-0.6291992)}
\rput{-270.0}(4.6058006,-2.9041991){\psframe[linewidth=0.04,dimen=outer](4.505,3.1058009)(3.005,-1.4041992)}
\rput{-270.0}(3.8758008,-5.1441994){\psframe[linewidth=0.04,dimen=outer,fillstyle=solid,fillcolor=color2368b](5.285,0.8658008)(3.735,-2.1341991)}
\rput{-270.0}(4.600801,-5.919199){\psframe[linewidth=0.04,dimen=outer](7.51,0.090800785)(3.01,-1.4091992)}
\rput{-270.0}(5.3608007,-3.6591992){\psframe[linewidth=0.04,dimen=outer](5.26,2.3508008)(3.76,-0.64919925)}
\psframe[linewidth=0.04,dimen=outer](1.54,1.6008008)(0.0,0.10080078)
\rput{-270.0}(8.665801,-9.894199){\psframe[linewidth=0.04,dimen=outer](10.055,0.8858008)(8.505,-2.1141992)}
\rput{-270.0}(7.8958006,-9.164199){\psframe[linewidth=0.04,dimen=outer](10.785,0.11580078)(6.275,-1.3841993)}
\fontsize{14}{0}
\usefont{T1}{ptm}{m}{n}
\rput(6.681455,3.5258007){$S$}
\usefont{T1}{ptm}{m}{n}
\rput(4.171455,1.9858007){$\mathcal{H}$}
\usefont{T1}{ptm}{m}{n}
\rput(9.531455,1.9858007){$\mathcal{V}$}
\psarc[linewidth=0.04,arrowsize=0.05291667cm 2.0,arrowlength=1.4,arrowinset=0.4]{->}(6.86,-0.23919922){3.44}{46.206047}{134.17566}
\psline[linewidth=0.04cm,linecolor=color2512,arrowsize=0.05291667cm 2.0,arrowlength=1.4,arrowinset=0.4]{->}(7.6,0.9808008)(6.12,-0.6391992)
\usefont{T1}{ptm}{m}{n}
\rput(6.901455,1.1458008){$S_{ab}$}
\usefont{T1}{ptm}{m}{n}
\rput(8.501455,0.8258008){$a1$}
\usefont{T1}{ptm}{m}{n}
\rput(5.211455,-0.65419924){$b1$}
\usefont{T1}{ptm}{m}{n}
\rput(3.771455,-0.65419924){$b2$}
\usefont{T1}{ptm}{m}{n}
\rput(9.981455,0.84580076){$a2$}
\usefont{T1}{ptm}{m}{n}
\rput(11.501455,0.84580076){$a3$}
\usefont{T1}{ptm}{m}{n}
\rput(12.961455,0.8658008){$a4$}
\end{pspicture}
} 
\end{center}
\caption{The $4\times 2$ block $S_{ab}$ of the map $S$.} \label{f:y1}
\end{figure}

\begin{definition}
A smooth family of Darboux frames $\{E_{ai}(t),F_{ai}(t)\}_{ai \in D}$ is called a \emph{moving frame} of a monotonically nonincreasing curve $J(\cdot)$ with Young diagram $D$ if $J(t) = \spn\{E_{ai}(t)\}_{ai\in D}$ for any $t$, and there exists a one-parametric family of $n \times n$ symmetric matrices $R(t)$ such that the moving frame satisfies the \emph{structural equations}
%\begin{equation}\label{eq:structural}
\begin{align}
\dot{E}_{ai}(t) &= E_{a(i-1)}(t), &a = 1,\dots,k, &\,  i = 2,\dots, n_a, \\
\dot{E}_{a1}(t) &= - F_{a1}(t), &a = 1,\dots,k, \\
\dot{F}_{ai}(t) &=  \displaystyle\sum_{b=1}^k \sum_{j=1}^{n_b} R_{ab,ij}(t)E_{bj}(t) - F_{a(i+1)}(t), & a=1,\dots,k, &\, i = 1,\dots,n_a-1,\\
\dot{F}_{an_a}(t) &= \displaystyle\sum_{b=1}^k \sum_{j=1}^{n_b} R_{ab,n_aj}(t)  E_{bj}(t),  & a = 1,\dots,k.
\end{align}
%\end{equation}
%NON DIRLO FORSE
%The family $R$ satisfies some algebrical properties, in particular, the generic element $\{R_{ai,bj}\}_{ai,bj \in D}$ is necessarily zero, unless at least one of the following conditions holds (assume, w.l.o.g. that $n_a \geq n_b$).
%\begin{itemize}
%\item $n_a = n_b$ and $|i-j| \leq 1$,
%\item $n_a \neq n_b$, and $(i,j) \in \{_{11},(1,2),(2,2),(2,3),\dots, (n_b,n_b), (n_b+1,n_b),\dots, (n_a,n_b)\}$, unless $(i,j)$ is one of the $n_a-n_b -1$ pairs of the sequence.
%\end{itemize}
%Moreover, if $n_a = n_b$ and $|i-j| = 1$, $R_{ai,bj} = -R_{bj,ai}$.
\end{definition}
Notice that the matrix $R(t)$ is labelled according to the convention introduced above. At the end of this section, we also find a formula which connects the curvature operator $\RR_\lam$ of Definition~\ref{d:curv} with some of the symplectic invariants $R(t)$ of the Jacobi curve (see Eq.~\eqref{eq:Rlam}).

\subsection{On the existence and uniqueness of the moving frame} \label{s:uniquenesscanonical} \index{canonical frame!existence and uniqueness}
{\review 
The moving frame for curves in a Lagrange Grassmannian has been introduced for the first time in \cite{lizel}. In the aforementioned reference, the authors prove that such a frame always exists. Moreover, by requiring some algebraic condition on the family $R(t)$, the authors also proved that the moving frame is unique up to orthogonal transformations which, in a sense, preserve the structure of the Young diagram. In this case, the family $R(t)$ (which is said to be \emph{normal}) can be associated with a well defined operator which, together with the Young diagram $D$, completely classify the curve up to symplectic transformations. 
\begin{definition}\label{d:normalmov}
A moving frame $\{E_{ai}(t),F_{ai}(t)\}_{ai \in D}$ such that the family of symmetric matrices $R(t)$ is normal in the sense of \cite{lizel} is called \emph{canonical frame} (or \emph{normal moving frame}).\index{canonical frame!normal conditions}
\end{definition}
See Appendix \ref{a:normal} for the explicit statement of the normal conditions on the family $R(t)$.

In order to state more precisely the uniqueness property of the canonical frame we need to introduce the superboxes of a Young diagram. We say that two boxes $ai,bj \in D$ belong to the same \emph{superbox} of the Young diagram $D$ if and only if $ai$ and $bj$ are in the same column of $D$ and in possibly distinct row but with same length, i.e. if and only if $i=j$ and $n_a = n_b$. We use greek letters $\alpha,\beta,\dots$ to denote superboxes. The \emph{size} of a superbox $\alpha$ is the number of boxes included in $\alpha$. The Young diagram $D$ is then partitioned into superboxes of (possibly) different sizes. See Fig.~\ref{f:Yd2} for an example of such a partition in superboxes.
\begin{figure}
% Generated with LaTeXDraw 2.0.8
\psset{unit=0.5} %scala immagine ma non testo
{
\begin{pspicture}(0,-3.25)(19.888594,3.25)
\definecolor{colour0}{rgb}{1.0,0.4,0.4}
\definecolor{colour1}{rgb}{0.6,0.8,1.0}
\definecolor{colour2}{rgb}{0.4,1.0,0.4}
\psframe[linecolor=black, linewidth=0.1, fillstyle=solid,fillcolor=colour0, dimen=middle](4.4,0.0)(2.8,-1.6)
\psframe[linecolor=black, linewidth=0.1, fillstyle=solid,fillcolor=colour1, dimen=middle](4.4,3.2)(2.8,0.0)
\psframe[linecolor=black, linewidth=0.1, fillstyle=solid,fillcolor=colour1, dimen=middle](2.8,3.2)(1.2,0.0)
\psframe[linecolor=black, linewidth=0.1, fillstyle=solid,fillcolor=colour0, dimen=middle](2.8,0.0)(1.2,-1.6)
\psframe[linecolor=black, linewidth=0.1, fillstyle=solid,fillcolor=colour1, dimen=middle](6.0,3.2)(4.4,0.0)
\psframe[linecolor=black, linewidth=0.1, fillstyle=solid,fillcolor=colour2, dimen=middle](2.8,-1.6)(1.2,-3.2)
\psline[linecolor=black, linewidth=0.02](1.2,1.6)(6.0,1.6)(6.0,1.6)(6.0,1.6)
\rput(0.0,0.0){a)}
\psline[linecolor=black, linewidth=0.02](6.4,3.2)(6.8,3.2)(6.8,0.0)(6.4,0.0)(6.8,0.0)(6.8,-1.6)(6.4,-1.6)(6.8,-1.6)(6.8,-3.2)(6.4,-3.2)
\rput(7.6,1.6){2}
\rput(7.6,-0.8){1}
\rput(7.6,-2.4){$1$}
\psframe[linecolor=black, linewidth=0.02, fillstyle=solid,fillcolor=colour1, dimen=middle](16.4,3.2)(14.8,1.6)
\psframe[linecolor=black, linewidth=0.02, dimen=middle](18.0,3.2)(16.4,1.6)
\psframe[linecolor=black, linewidth=0.02, fillstyle=solid,fillcolor=colour1, dimen=middle](18.0,3.2)(16.4,1.6)
\psframe[linecolor=black, linewidth=0.02, fillstyle=solid,fillcolor=colour0, dimen=middle](16.4,0.0)(14.8,-1.6)
\psframe[linecolor=black, linewidth=0.02, fillstyle=solid,fillcolor=colour0, dimen=middle](16.4,1.6)(14.8,0.0)
\psframe[linecolor=black, linewidth=0.02, fillstyle=solid,fillcolor=colour0, dimen=middle](16.4,-1.6)(14.8,-3.2)
\psline[linecolor=black, linewidth=0.1](14.8,1.6)(18.0,1.6)(18.0,3.2)(16.4,3.2)(16.4,-3.2)(14.8,-3.2)(14.8,3.2)(16.4,3.2)
\rput(13.6,0.0){b)}
\psline[linecolor=black, linewidth=0.02](18.4,3.2)(18.8,3.2)(18.8,1.6)(18.4,1.6)(18.8,1.6)(18.8,-3.2)(18.4,-3.2)(18.8,-3.2)(18.8,-3.2)
\rput(19.6,-0.8){$3$}
\rput(19.6,2.4){1}
\end{pspicture}
}
\caption{Examples of superboxes of a Young diagram for a growth vector a) $\mc{G}_{\gamma}=\{4,5,7\}$ and b) $\mc{G}_\gamma =\{4,5\}$. Superboxes are the groups of boxes delimited by a thick boundary. Superboxes with different size, displayed on the right of each diagram, are painted with different colours.}\label{f:Yd2}
\end{figure}
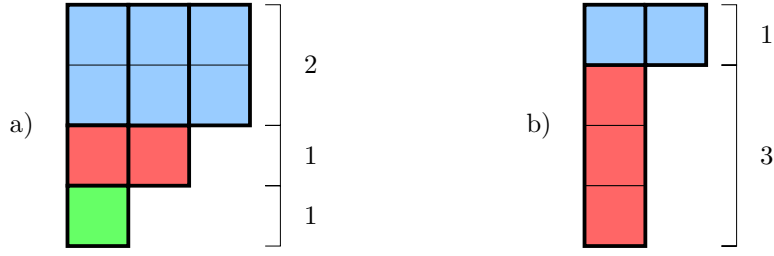

\begin{theorem}[see {\cite[Theorem 1]{lizel}}]\label{t:canuniqueness}
For any monotone nonincreasing ample and equiregular curve $J(\cdot)$ in the Lagrange Grassmannian with Young diagram $D$ there exists a normal moving frame $\{E_{ai}(t), F_{ai}(t)\}_{ai\in D}$. A moving frame $\{\til{E}_{ai}(t), \til{F}_{ai}(t)\}_{ai\in D}$ is a normal moving frame of the curve $J(\cdot)$ if and only if for any superbox $\alpha$ of size $r$ there exists a constant orthogonal $r\times r$ matrix $O^\alpha$ such that
\begin{equation}
\wt{E}_{ai}(t) = \sum_{bj \in \alpha} O^\alpha_{ai,bj} E_{bj}(t), \qquad \wt{F}_{ai}(t) = \sum_{bj \in \alpha} O^\alpha_{ai,bj} F_{bj}(t), \qquad \all ai \in \alpha.
\end{equation}
\end{theorem}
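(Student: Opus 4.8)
This statement is Theorem~1 of \cite{lizel}; I outline the structure of the argument, which is constructive for existence and a rigidity computation for uniqueness. The plan is to build the frame recursively, exploiting the fact that the structural equations force each row of the diagram to be a \emph{differentiation chain}: since $\dot E_{ai}=E_{a(i-1)}$ for $i\geq 2$ and $\dot E_{a1}=-F_{a1}$, once the ``top'' vectors $E_{an_a}(t)$ of each row are fixed, all the remaining $E_{ai}$ and $F_{ai}$ are obtained by iterated differentiation, modulo the curvature corrections carried by $R(t)$. First I would record the ingredients supplied by the hypotheses. By equiregularity the flag of extensions
\[
J(t)=J^{(0)}(t)\subset J^{(1)}(t)\subset\cdots\subset J^{(m)}(t)=\Sigma
\]
has locally constant dimensions, with column increments $d_i$ prescribed by the Young diagram $D$; by monotonicity the quadratic form $-\dot J(t)$ is nonnegative on $J(t)$; and ampleness guarantees that iterated derivatives eventually fill $\Sigma$. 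The form $-\dot J(t)$ descends to a genuine inner product on each graded piece of the flag, and it is precisely this inner product that will make the residual gauge group \emph{orthogonal} rather than general linear.

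For existence I would construct a moving frame column by column of $D$, i.e.\ in order of differentiation. Starting from the rank-$k$ form $-\dot J(t)$, choose a smooth $-\dot J$-orthonormal family spanning a complement of $\ker\dot J(t)$ inside $J(t)$; these span the first column. Differentiating and contracting with the symplectic form $\sigma$ then produces candidate vectors for the next column, which one orthonormalizes again by a symplectic Gram--Schmidt procedure adapted to the flag. Iterating this $m$ times yields a Darboux moving frame $\{E_{ai}(t),F_{ai}(t)\}_{ai\in D}$ whose $E_{ai}$ span $J(t)$ and which satisfies the structural equations for \emph{some} smooth symmetric family $R(t)$. Smoothness of each step follows from equiregularity (constancy of the relevant ranks) together with the smooth-dependence arguments already used elsewhere in the paper.

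The crux, and what I expect to be the main obstacle, is the normalization of $R(t)$ and the rigidity that it produces. The freedom remaining after the construction above is a $t$-dependent change of moving frame preserving both the flag and the Darboux structure; under such a gauge transformation $R(t)$ transforms by an explicit inhomogeneous rule. The hard part is to show that this freedom can be used to bring $R(t)$ into the \emph{normal} form of \cite{lizel}, characterized by the vanishing of prescribed off-diagonal blocks indexed by $D$, and that the normal form then pins down the frame up to exactly the stated stabilizer. I would analyze the transformation law block by block, solving the resulting triangular system of algebraic and differential relations to annihilate the non-normal blocks; here the nonnegativity of $-\dot J$, hence positive-definiteness on the quotients, is what confines the leftover freedom to \emph{constant} transformations. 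Finally, one checks directly from the structural equations that two frames share a normal $R(t)$ if and only if, on each superbox $\alpha$ (boxes lying in the same column and in rows of equal length), they differ by a constant orthogonal matrix $O^\alpha$: such $O^\alpha$ are exactly the linear maps that commute with the shift $\dot E_{ai}=E_{a(i-1)}$ and preserve both the orthonormality induced by $-\dot J$ and the normality of $R(t)$. The detailed verification of these algebraic conditions, organized by the combinatorics of the Young diagram, is carried out in \cite{lizel}, to which I refer for the computation.
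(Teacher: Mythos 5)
The paper does not prove this statement at all: it is imported verbatim as Theorem~1 of the cited reference of Li and Zelenko, and the surrounding text (Section on existence and uniqueness of the moving frame) only records the conclusion. Your proposal is therefore not in competition with an in-paper argument; it is an outline of the external proof, and as such it is consistent with the actual strategy of that reference: build a Darboux moving frame adapted to the flag $J^{(0)}(t)\subset J^{(1)}(t)\subset\cdots$ using the inner products that the monotonicity of $J(\cdot)$ induces on the graded pieces, observe that the structural equations make each row a differentiation chain, and then use the residual gauge freedom to normalize $R(t)$, with the rigidity statement identifying the stabilizer of the normal conditions with constant orthogonal transformations acting superbox-wise.

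One attribution in your sketch is slightly off and worth correcting. You claim that the positive-definiteness of $-\dot J$ on the quotients ``confines the leftover freedom to constant transformations.'' In fact positive-definiteness is what makes the residual gauge group \emph{orthogonal} (it must preserve the induced inner products), while \emph{constancy} comes from a different mechanism: if one conjugates a normal moving frame by a $t$-dependent family $O^\alpha(t)$, differentiation produces extra terms proportional to $\dot O^\alpha(t)$ in the structural equations, which either destroy the chain relations $\dot E_{ai}=E_{a(i-1)}$, $\dot E_{a1}=-F_{a1}$ outright or reappear as non-normal blocks of the transformed $R(t)$; the normality conditions then force $\dot O^\alpha\equiv 0$. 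Since your construction sketch is otherwise compatible with the cited proof and you explicitly defer the block-by-block verification to the reference (exactly as the paper itself does), this is the only substantive point to repair.
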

Thus, the canonical frame is unique up to orthogonal transformations that preserve the superboxes of the Young diagram.
\finereview}

\section{Main result}

Fix a canonical frame, associated with $J(\cdot)$. Let $\ve= \spn\{E_{ai}(0)\}_{ai \in D}$ be the \emph{vertical} subspace, and $\hor=\spn\{F_{bj}(0)\}_{bj\in   D}$ be the \emph{horizontal} subspace of $\Sigma$. Observe that $\ve = J(0)$. %It is useful to think at each box $ai \in D$ as a $1$-dimensional subspace of $\ve$, and at each box $bj \in \overline{D}$ as a $1$-dimensional subspace of $\hor$.
The splitting $\Sigma = \ve \oplus \hor$ induces a coordinate chart in $L(\Sigma)$, such that $J(t) = \{(p,S(t)p)|\, p\in \mathbb{R}^n\}$. Recall that $S(0) =0$ and, being the curve ample, $S(t)$ is invertible for $|t|< \eps$ (see Lemma~\ref{l:ampletrasv}).

%which contains $J(t)$ for $|t|<\eps$. Indeed $J(t)$ is the graph of a linear map $S(t):\ve\to\hor$. We denote with the same symbol $S(t)$ both the map and its representative matrix in terms of the canonical frame. 
%Then $J(t) = \{(p,S(t)p)|\, p\in \mathbb{R}^n\}$. Notice that $S(0) =0$ and, being the curve ample, $S(t)$ is invertible for $|t|< \eps$ (see Lemma~\ref{l:ampletrasv}).

%According to our labelling convention, we think at $S(t)$ as a $k\times k$ block matrix. The $ab$-th block is a $n_a\times n_b$ matrix. The $ab$-th block is denoted by the symbol $S_{ab}$, while the $ij$-th element of the $ab$-th block is denoted by the symbol $S_{ab,ij}$.

We introduce the constant $n\times n$ symmetric matrices, $\wh{S}$, its inverse $\wh{S}^{-1}$ and $C$, defined by
\begin{gather}
\wh{S}_{ab,ij} =\frac{ \delta_{ab} (-1)^{i+j-1}}{(i-1)!(j-1)!(i+j-1)},  \\
\wh{S}^{-1}_{ab,ij} = \frac{ - \delta_{ab}}{i+j-1} \binom{n_a + i -1}{i-1}\binom{n_b + j -1}{j-1}  \frac{(n_a)! (n_b)!}{(n_a-i)!(n_b-j)!}, \\
C_{ab,ij} =  \frac{(-1)^{i+j}(i+j+2)}{(i-1)!(j-1)!(i+j+1)(i+1)(j+1)}.
\end{gather}
where, as usual, $a,b=1,\dots,k$, $i=1,\dots,n_a$, $j=1,\dots,n_b$.
\begin{theorem}\label{t:Sasymptotic}
Let $J(\cdot)$ be a monotone, ample, equiregular curve of rank $k$, with a given Young diagram $D$ with $k$ rows, of length $n_a$, for $a=1,\dots,k$. Then, for $|t| < \eps$
\begin{equation}\label{eq:S}
S_{ab,ij}(t) = \wh{S}_{ab,ij} t^{i+j-1} - R_{ab,11}(0) C_{ab,ij} t^{i+j+1} + O(t^{i+j+2}).
\end{equation}
Moreover, for $0 < |t| < \eps$, the following asymptotic expansion holds for the inverse matrix:
\begin{equation}\label{eq:Sinv}
S^{-1}_{ab,ij}(t) =  \frac{\wh{S}^{-1}_{ab,ij}}{t^{i+j-1}} + R_{ab,11}(0)  \frac{(\wh{S}^{-1}C \wh{S}^{-1})_{ab,ij}}{t^{i+j-3}} + O\left(\frac{1}{t^{i+j-4}}\right).
\end{equation}
\end{theorem}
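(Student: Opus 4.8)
The plan is to read $S(t)$ off the canonical frame and then expand everything in powers of $t$ using the structural equations. Writing each frame vector in the fixed Darboux basis at $t=0$ as $E_{ai}(t) = \sum_{bj}[A_{bj,ai}(t)E_{bj}(0) + B_{bj,ai}(t)F_{bj}(0)]$, the condition that $J(t)=\spn\{E_{ai}(t)\}$ be the graph of $S(t):\ve\to\hor$ gives $S(t) = B(t)A(t)^{-1}$, with $A(0)=\id$ and $B(0)=0$. Since the structural equations form a \emph{linear} system $\dot{\mathbf v} = \Omega(t)\mathbf v$ for the $2n$-tuple $\mathbf v = (E_{ai},F_{ai})$, with $\Omega$ built from the intra-row shifts and the normal matrix $R(t)$, the matrices $A(t),B(t)$ are blocks of the time-ordered exponential of $\int_0^t\Omega$, whose Taylor coefficients are iterated integrals of $\Omega$. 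Thus the whole statement reduces to a controlled Taylor expansion of this fundamental solution, carried to relative order $t^2$.

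First I would compute the leading term by setting $R\equiv 0$. Then $\Omega$ is nilpotent and block-diagonal in the rows: within row $a$ one has $\dot E_{a1}=-F_{a1}$, $\dot E_{ai}=E_{a(i-1)}$, $\dot F_{ai}=-F_{a(i+1)}$, and $\dot F_{an_a}=0$. This decoupled nilpotent system integrates explicitly, $F_{ai}(t)=\sum_{\ell\geq0}\frac{(-t)^\ell}{\ell!}F_{a(i+\ell)}(0)$, and repeated integration of $\dot E_{ai}=E_{a(i-1)}$ starting from $\dot E_{a1}=-F_{a1}$ yields polynomial frame vectors whose $F(0)$-components produce exactly $\wh S_{ab,ij}\,t^{i+j-1}$. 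As a sanity check the horizontal component of $E_{a1}(t)$ is $\sum_j \frac{(-1)^j}{j!}t^j F_{aj}(0)$, matching $\wh S_{aa,1j}t^{j}$, and the general $i$ follows by the shift. Because $\Omega$ is row-block-diagonal here, $\wh S$ is block-diagonal, i.e. $\wh S_{ab,ij}\propto\delta_{ab}$, a fact I will use crucially in the inversion.

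Next I would obtain the correction by treating $R$ as a linear perturbation, using the Duhamel formula $\delta\mathbf v(t)=\int_0^t\Phi_0(t,s)\,\delta\Omega(s)\,\mathbf v_0(s)\,ds$, where $\Phi_0$ is the nilpotent fundamental solution above and $\delta\Omega(s)$ is the $R(s)$-coupling in the $\dot F$ equations. \textbf{This is the main obstacle:} one must show that, after the two integrations produced by $\Phi_0$ and the expansion $R(s)=R(0)+O(s)$, the coefficient of $t^{i+j+1}$ in $S_{ab,ij}(t)$ is exactly $-R_{ab,11}(0)\,C_{ab,ij}$ — in particular that every entry of $R(0)$ other than the top-left $R_{ab,11}(0)$ contributes only at strictly higher order. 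This selection is a consequence of the shift structure, since an entry $R_{ab,ij}$ couples $F_{ai}$ to $E_{bj}$ and is pushed to higher order in $t$ by the nilpotent flow; the explicit rational constant $C_{ab,ij}$ then emerges from the resulting iterated integrals of monomials. I would carry out this bookkeeping one row-block at a time, reducing it to elementary Beta-type integrals $\int_0^t s^p(t-s)^q\,ds$. Combined with the previous paragraph this gives Eq.~\eqref{eq:S}.

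Finally, the inversion is pure algebra. Introduce the diagonal rescaling $\mc{T}(t)$ with $\mc{T}_{ai,ai}=t^{\,i-1/2}$, so that Eq.~\eqref{eq:S} reads $S(t)=\mc{T}(t)\big[\wh S + t^2 M + O(t^3)\big]\mc{T}(t)$ with $M_{ab,ij}=-R_{ab,11}(0)\,C_{ab,ij}$. Then $S(t)^{-1}=\mc{T}(t)^{-1}\big[\wh S^{-1} - t^2\,\wh S^{-1}M\wh S^{-1}+O(t^3)\big]\mc{T}(t)^{-1}$. Because $\wh S^{-1}$ is block-diagonal in the rows, the product collapses to $(\wh S^{-1}M\wh S^{-1})_{ab,ij}=-R_{ab,11}(0)\,(\wh S^{-1}C\wh S^{-1})_{ab,ij}$, pulling the scalar $R_{ab,11}(0)$ outside. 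Reinstating the powers of $t$ from $\mc{T}(t)^{-1}$ yields precisely Eq.~\eqref{eq:Sinv}, while the stated value of $\wh S^{-1}$ is verified directly against the formula for $\wh S$ by a factorial identity. The only remaining care is to track that the $O(t^{i+j+2})$ remainder in $S$ produces the claimed $O(t^{-(i+j-4)})$ remainder in $S^{-1}$, which again follows from the graded structure encoded by $\mc{T}(t)$.
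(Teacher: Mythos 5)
Your proposal is correct and follows essentially the same route as the paper: expand the canonical frame via the structural equations with $R\equiv 0$ to obtain the block-diagonal principal part $\wh{S}_{ab,ij}t^{i+j-1}$, show by order-counting along the shift structure that only $R_{ab,11}(0)$ survives at order $t^{i+j+1}$ (the paper carries out your Duhamel bookkeeping as a direct Taylor expansion of the frame matrices $A(t)$, $B(t)$ together with a Neumann series for $A(t)^{-1}$), and invert via the graded principal part. Your diagonal rescaling $\mc{T}(t)$ is merely a cleaner packaging of the paper's block-wise Neumann inversion of $S(t)$, and your remark that the block-diagonality of $\wh{S}^{-1}$ lets the scalar $R_{ab,11}(0)$ factor out of $\wh{S}^{-1}M\wh{S}^{-1}$ is precisely the mechanism exploited there.
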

Eqs.~\eqref{eq:S} and~\eqref{eq:Sinv} highlight the block structure of the $S$ matrix and its inverse at the leading orders. In particular, they give the leading order of the principal part of $S^{-1}$ on the diagonal blocks (i.e. when $a=b$). The leading order terms of the diagonal blocks of $S$ (and its inverse $S^{-1}$) only depend on the structure of the given Young diagram. Indeed the dependence on $R(t)$ appears in the higher order terms of Eqs.~\eqref{eq:S} and~\eqref{eq:Sinv}.

\subsection{Restriction}
At the end of this section, we apply Theorem~\ref{t:Sasymptotic} to compute the expansion of the family of operators $\QQ_\lam(t)$. According to the discussion that follows Eq.~\eqref{eq:sing}, we only need a block of the matrix $S(t)^{-1}$, namely $\Sred(t)^{-1}$. As we explain below, it turns out that this corresponds to consider only the restriction of $S^{-1}$ to the first columns of the Young diagram $D$ and $\overline{D}$ (see Fig.~\ref{fig:reduction}).
\begin{figure}[h!t]
\begin{center}
\scalebox{0.65} % Change this value to rescale the drawing.
{
\begin{pspicture}(0,-4.1380467)(13.791894,4.1380467)
\definecolor{color2367b}{rgb}{0.6,0.8,1.0}
\definecolor{color2368b}{rgb}{0.8,0.8,1.0}
\rput{-90.0}(5.5603514,4.9796486){\psframe[linewidth=0.04,dimen=outer,fillstyle=solid,fillcolor=color2367b](7.52,0.45964843)(3.02,-1.0403515)}
\psframe[linewidth=0.04,dimen=outer,fillstyle=solid,fillcolor=color2368b](8.12,1.9596485)(6.62,-2.5403516)
\psframe[linewidth=0.04,dimen=outer](9.62,1.9596485)(6.62,-1.0403515)
\psframe[linewidth=0.04,dimen=outer](11.12,1.9596485)(6.62,0.45964843)
\rput{-90.0}(2.5403516,4.9596486){\psframe[linewidth=0.04,dimen=outer](4.5,3.4796484)(3.0,-1.0603516)}
\rput{-90.0}(4.0603514,4.9796486){\psframe[linewidth=0.04,dimen=outer](6.02,1.9596485)(3.02,-1.0403515)}
\rput{-180.0}(12.78,-5.1007032){\psarc[linewidth=0.04,arrowsize=0.05291667cm 2.0,arrowlength=1.4,arrowinset=0.4]{->}(6.39,-2.5503516){0.95}{25.974394}{154.0256}}
\fontsize{14}{0}
\usefont{T1}{ptm}{m}{n}
\rput(6.511455,-3.9153516){$(\Sred)^{-1}$}
\usefont{T1}{ptm}{m}{n}
\rput(6.371455,3.9446485){$S^{-1}$}
%\usefont{T1}{ptm}{m}{n}
%\rput(12.791455,-1.1753516){}
\usefont{T1}{ptm}{m}{n}
\rput(3.751455,2.3846483){$\mathcal{H}$}
\usefont{T1}{ptm}{m}{n}
\rput(9.041455,2.3846483){$\mathcal{V}$}
\psarc[linewidth=0.04,arrowsize=0.05291667cm 2.0,arrowlength=1.4,arrowinset=0.4]{<-}(6.44,0.15964843){3.44}{46.206047}{134.17566}
\psframe[linewidth=0.04,dimen=outer](1.52,1.9596485)(0.0,0.45964843)
\psframe[linewidth=0.04,dimen=outer](12.64,1.9596485)(11.08,0.45964843)
\usefont{T1}{ptm}{m}{n}
\rput(7.381455,1.1246485){$a1$}
\usefont{T1}{ptm}{m}{n}
\rput(7.371455,-0.37535155){$b1$}
\usefont{T1}{ptm}{m}{n}
\rput(7.341455,-1.8553516){$c1$}
\usefont{T1}{ptm}{m}{n}
\rput(5.281455,1.1046485){$a1$}
\usefont{T1}{ptm}{m}{n}
\rput(5.2714553,-0.39535156){$b1$}
\usefont{T1}{ptm}{m}{n}
\rput(5.241455,-1.8753515){$c1$}
\end{pspicture} 
}
\caption{The block $\Sred(t)^{-1}$ of the map $S(t)^{-1}$. Namely $(\Sred)^{-1}_{ab}=S^{-1}_{ab,11}$.}\label{fig:reduction}
\end{center}
\end{figure}
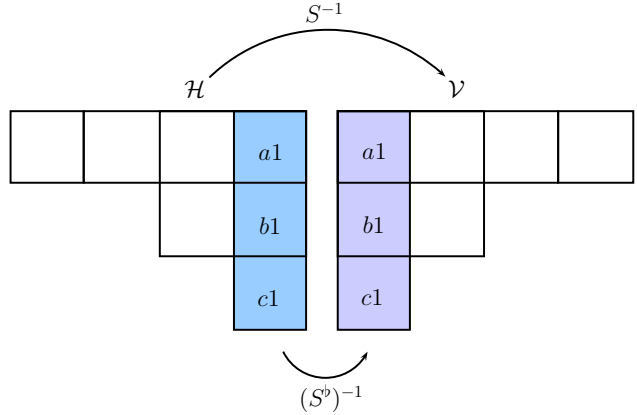
In terms of the frame $\{F_{a1}(0),E_{a1}(0)\}_{a=1}^k$, the map $\Sred(t)^{-1}$ is a $k\times k$ matrix, with entries $\Sred(t)^{-1}_{ab}=(S^{-1})_{ab,11}$. The following corollary is a consequence of Theorem~\ref{t:Sasymptotic}, and gives the principal part of the aforementioned block.

\begin{corollary}\label{c:Sridasymptotic}
Let $J(\cdot)$ be a monotone, ample, equiregular curve of rank $k$, with a given Young diagram $D$ with $k$ rows, of length $n_a$, for $a=1,\dots,k$. %Let $\Gamma\doteq  \ker \dotJ(0)$, and $J^\Gamma(\cdot)$ be the corresponding reduced curve. Let $(\Sred)^{-1}: \hor^\Gamma \to \ve^\Gamma $ be its representative matrix in the coordinates induced by a canonical frame. 
Then, for $0 < |t| < \eps$
\begin{equation}\label{eq:Sridinv}
\Sred(t)^{-1}_{ab} = -\delta_{ab} \frac{n_a^2}{t} + R_{ab,11}(0) \Omega(n_a,n_b) t + O(t^2),
\end{equation}
where
\begin{equation}\label{eq:coefficient}
\Omega(n_a,n_b)= \begin{cases}
0 & |n_a-n_b|\geq 2,  \\
\frac{1}{4(n_a+n_b)} & |n_a-n_b| = 1, \\
\frac{n_a}{4n_a^2-1} & n_a = n_b.
\end{cases}
\end{equation}
\end{corollary}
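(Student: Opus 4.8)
The statement is an immediate specialization of Theorem~\ref{t:Sasymptotic} to the first box of each row, followed by the evaluation of two explicit matrix entries. Since $\Sred(t)^{-1}_{ab} = S^{-1}_{ab,11}(t)$, the plan is to set $i=j=1$ in the expansion~\eqref{eq:Sinv}. For these values $i+j-1 = 1$, $i+j-3 = -1$ and $i+j-4 = -2$, so~\eqref{eq:Sinv} becomes
\begin{equation}
\Sred(t)^{-1}_{ab} = \frac{\wh{S}^{-1}_{ab,11}}{t} + R_{ab,11}(0)\,(\wh{S}^{-1}C\wh{S}^{-1})_{ab,11}\,t + O(t^2).
\end{equation}
Comparing with~\eqref{eq:Sridinv}, the whole corollary reduces to the two identities $\wh{S}^{-1}_{ab,11} = -\delta_{ab}n_a^2$ and $(\wh{S}^{-1}C\wh{S}^{-1})_{ab,11} = \Omega(n_a,n_b)$, with $\Omega$ as in~\eqref{eq:coefficient}.

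First I would compute the pole coefficient, which is elementary. Substituting $i=j=1$ into the formula for $\wh{S}^{-1}$ makes both binomials equal to $1$, reduces the factorial ratio to $\tfrac{n_a!}{(n_a-1)!}\tfrac{n_b!}{(n_b-1)!}=n_a n_b$, and turns the prefactor $-1/(i+j-1)$ into $-1$; together with the $\delta_{ab}$ (which forces $n_a = n_b$) this gives $\wh{S}^{-1}_{ab,11} = -\delta_{ab}n_a^2$, exactly the residue in~\eqref{eq:Sridinv}.

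The substantive step is the linear coefficient. Because the factor $\delta_{ab}$ makes $\wh{S}^{-1}$ block-diagonal, the $(ab,11)$ entry of the triple product collapses to a double sum over the first rows of the two diagonal blocks,
\begin{equation}
(\wh{S}^{-1}C\wh{S}^{-1})_{ab,11} = \sum_{h=1}^{n_a}\sum_{\ell=1}^{n_b} \wh{S}^{-1}_{aa,1h}\,C_{h\ell}\,\wh{S}^{-1}_{bb,\ell 1},
\end{equation}
where I write $C_{h\ell}$ for $C_{ab,h\ell}$, which does not depend on $a,b$ (so the dependence on $n_a,n_b$ enters only through the ranges of summation). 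The plan is to evaluate this bilinear form in closed form and match it to~\eqref{eq:coefficient}. To organise the computation I would exploit the integral representations hidden in the definitions: since $1/(i+j-1) = \int_0^1 x^{i+j-2}\,dx$, each diagonal block of $\wh{S}$ is minus the Gram matrix, in $L^2([0,1])$, of the functions $\psi_i(x) = (-x)^{i-1}/(i-1)!$, so the first row of $\wh{S}^{-1}_{aa}$ encodes the coefficients of the corresponding dual basis; an analogous representation is available for $C_{h\ell}$. In these terms the double sum becomes an integral pairing of two dual-basis functionals against a fixed kernel, which I expect to reduce to the rational expressions of~\eqref{eq:coefficient}.

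The main obstacle is precisely this double-sum evaluation, and within it the \emph{selection rule} $\Omega(n_a,n_b) = 0$ for $|n_a - n_b| \geq 2$. The difficulty is that the summand is a product of binomial coefficients with alternating signs whose cancellation is invisible term by term; the vanishing must come from a global degree or orthogonality argument, namely the dual-basis functional of one block annihilating the low-degree polynomial data of the other block when the row lengths differ by more than one. Once the selection rule is established, only the finite cases $n_a = n_b$ and $|n_a - n_b| = 1$ remain, and these are explicit evaluations of the double sum that reduce to standard binomial identities, yielding $n_a/(4n_a^2 - 1)$ and $1/(4(n_a+n_b))$ respectively, thus completing the comparison with~\eqref{eq:coefficient}.
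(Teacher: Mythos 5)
Your reduction is exactly the paper's: you specialise Eq.~\eqref{eq:Sinv} at $i=j=1$, verify $\wh{S}^{-1}_{ab,11}=-\delta_{ab}n_a^2$ (your computation of the residue is correct), and identify $\Omega(n_a,n_b)=(\wh{S}^{-1}C\wh{S}^{-1})_{ab,11}$, which by the block-diagonality of $\wh{S}^{-1}$ collapses to the double sum you display; note also that your observation that $C_{ab,h\ell}$ depends on $a,b$ only through the summation ranges is right, and your double sum agrees with the expression in the paper's Lemma~\ref{l:lemmacoeff}. Up to this point the proposal and the paper coincide.

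The genuine gap is that this double sum \emph{is} the corollary — everything else is bookkeeping — and you do not evaluate it. You explicitly flag the selection rule $\Omega(n_a,n_b)=0$ for $|n_a-n_b|\geq 2$ as the main obstacle and offer only a conjecture that it follows from a degree/orthogonality argument in your Gram-matrix picture; the two closed-form cases are likewise asserted to "reduce to standard binomial identities" without being carried out. The paper isolates precisely this computation as Lemma~\ref{l:lemmacoeff} and proves it in Appendix~\ref{s:lemmacoeff}, by a mechanism different from the one you sketch: after summing over one index, the summand has the form $(-1)^j\binom{m}{j}Q_i(j)$ with $Q_i$ rational; writing $Q_i(j)=P_i(j)+R_i/(i+j+1)$ with $\deg P_i<m$ and invoking the finite-difference identity $\sum_{j=0}^m(-1)^j\binom{m}{j}P(j)=0$ for $\deg P<m$, only the remainders $R_i$ survive, and evaluating at $j=-i-1$ shows $R_i=0$ exactly for $i\leq m-2$ — that is the selection rule. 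The surviving cases then reduce to $S_k=\sum_{j=0}^m(-1)^j\binom{m}{j}\tfrac{1}{j+k}=\tfrac{m!\,(k-1)!}{(m+k)!}$, obtained by a beta-type integral. Your alternative route is not unreasonable: the first row of $\wh{S}^{-1}_{aa}$ is indeed the coefficient vector of the dual element $\psi_1^*$, a polynomial of degree $n_a-1$ orthogonal on $[0,1]$ with weight $x$ to all polynomials of degree at most $n_a-2$, so an orthogonal-polynomial argument is plausibly available. But the kernel coming from $C$ is not simply $1/(i+j+1)$ — the factor $\tfrac{i+j+2}{(i+1)(j+1)(i+j+1)}$ splits as $\tfrac{1}{(j+1)(i+j+1)}+\tfrac{1}{(i+1)(i+j+1)}$, coupling the indices in two inequivalent ways — so the vanishing does not follow from a naive degree count, and you would have to carry the pairing through explicitly. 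As written, the hard half of the statement, Eq.~\eqref{eq:coefficient}, remains unproved.
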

\begin{remark}
If the Young diagram consists in a single column, with $n$ boxes, $n_a = 1$ for all $a=1,\dots,n$ and
\begin{equation}
\Sred(t)^{-1}_{ab} = - \frac{\delta_{ab}}{t} + \frac{1}{3}R_{ab}(0) t + O(t^2).
\end{equation} 
\end{remark}

\subsection{A remark on the coefficients}\label{s:remarkcoefficients}

Let us discuss the consequences of the peculiar form of the coefficients of Eq.~\eqref{eq:coefficient}. If $|n_a-n_b|\geq 2$, $\Omega(n_a,n_b) = 0$ and the corresponding $R_{ab,11}$ does not appear in the first order asymptotic. Nevertheless, if we assume that $R(t)$ is a \emph{normal family} in the sense of \cite{lizel}, the \virg{missing} entries are precisely the ones that vanish due to the assumptions on $R(t)$. It is natural to expect that some of the $R_{ab,ij}$ do not appear also in the higher orders of the asymptotic expansion. This may suggest the algebraic conditions to enforce on a generic family $R_{ab,ij}$ in order to obtain a truly canonical moving frame for the Jacobi curve (see also Section~\ref{s:uniquenesscanonical}). 

%Finally, observe that when the Young diagram $D$ has only $2$ columns, $|n_a-n_b|\leq 1$, and all the entries associated with the first column of the diagram, namely $R_{ab,11}$, are essential.

\subsection{Examples}
In this section we provide two practical examples of the asymptotic form of $\Sred(t)^{-1}$. We suppress the subscript \virg{$_{11}$} and the evaluation at $t=0$ from each entry $R_{ab,11}(0)$.
\paragraph{A)}
\ytableausetup{smalltableaux}
Consider the $3$-dimensional Jacobi curve with Young diagram: \ytableaushort{\empty\empty,\empty}\begin{equation}
\Sred(t)^{-1} = - \frac{1}{t}\begin{pmatrix}
4 & 0 \\
0 & 1
\end{pmatrix} + \frac{1}{3}\begin{pmatrix} 
\frac{2}{5}R_{11} & \frac{1}{4}R_{12} \\
\frac{1}{4}R_{21} & R_{22}
\end{pmatrix} t
+ O(t^2).
\end{equation}
This corresponds to the case of the Jacobi curve associated with the geodesics of a 3D contact sub-Riemannian structure (see Section~\ref{s:3Dcomputations}).
\paragraph{B)} 
Consider the diagram: \ytableaushort{\empty\empty\empty,\empty\empty,\empty} %We only display the reduced matrix, which is the most relevant one for future applications.
\begin{equation}
\Sred(t)^{-1} = - \frac{1}{t}\begin{pmatrix}
9 & 0 & 0 \\
0 & 4 & 0 \\
0 & 0 & 1
\end{pmatrix} + \frac{1}{3}\begin{pmatrix}
\frac{9}{35}R_{11} & \frac{3}{20}R_{12} & 0 \\
\frac{3}{20}R_{21} & \frac{2}{5}R_{22} & \frac{1}{4}R_{23} \\
0 & \frac{1}{4}R_{23} &  R_{33}
\end{pmatrix} t
+ O(t^2).
\end{equation}
This corresponds to the case of the Jacobi curve associated with a generic ample geodesics of a $(3,6)$ Carnot group. In this example we can appreciate that some of the $R_{ab,11}$ do not appear in the linear term of the reduced matrix. 

\section{Proof of Theorem~\ref{t:Sasymptotic}}
The proof boils down to a careful manipulation of the structural equations, and matrices inversions. We prove Theorem~\ref{t:Sasymptotic} in three steps. 
\begin{enumerate}
\item First, we consider the case of a rank $1$ curve, and we assume $R(t) = 0$. In this case, the Young diagram is a single row and the structural equations are very simple. The canonical frame at time $t$ is a polynomial in terms of the canonical frame at $t=0$, and we compute explicitly the matrix $S(t)$ and its inverse. 

\item Then, we consider a general rank $1$ curve. The canonical frame at time $t$ is no longer a polynomial in terms of the canonical frame at $t=0$, but we can control the higher order terms. The non-vanishing $R(t)$ gives a contribution of higher order in $t$ in each entry of the matrix $S(t)$ and its inverse.

\item Finally, we consider a general rank $k$ curve. We show that, at the leading orders, we can ``split'' the curve in $k$ rank $1$ curves, and employ the results of the previous steps.
\end{enumerate}
%At the end we also prove Corollary~\ref{c:Sridasymptotic}.

\subsection{Rank \texorpdfstring{$1$}{1} curve with vanishing \texorpdfstring{$R(t)$}{R(t)}}

With these assumptions, the canonical frame is $\{E_i(t),F_i(t)\}_{i=1}^n$ (we suppress the row index, as $D$ has a single row). The structural equations are
\begin{align*}
\dot{E}_1(t) &=  - F_1(t),  & \dot{F}_1(t) &= -F_2(t),\\
\dot{E}_2(t) &= E_1(t), & \dot{F}_2(t) &= -F_3(t),\\
&\vdots & &  \vdots \\
\dot{E}_n(t) &= E_{n-1}(t),  & \dot{F}_n(t) &= 0.
\end{align*}
Pictorially, in the double Young diagram the derivative shifts each element of the frame to the left by one box (see Fig.~\ref{fig:shift}).
 
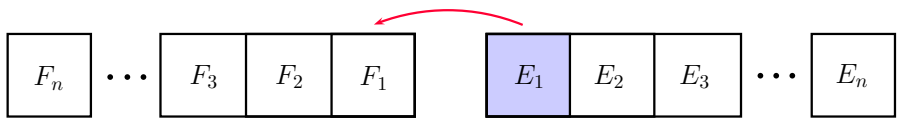
\begin{figure}[h!t]
\centering
\scalebox{0.75} % Change this value to rescale the drawing.
{
\begin{pspicture}(0,1.423047)(15.6,3.46)
\definecolor{color2368b}{rgb}{0.8,0.8,1.0}
\definecolor{color883}{rgb}{1.0,0.0,0.2}
\psframe[linewidth=0.04,dimen=outer,fillstyle=solid,fillcolor=color2368b](9.9,3.04)(8.4,1.54)
\psframe[linewidth=0.04,dimen=outer](11.38,3.04)(8.4,1.54)
\psframe[linewidth=0.04,dimen=outer](12.9,3.04)(8.4,1.54)
\psarc[linewidth=0.04,linecolor=color883,arrowsize=0.05291667cm 2.0,arrowlength=1.4,arrowinset=0.4]{->}(7.74,0.0){3.44}{67.583855}{112.16634}
\psframe[linewidth=0.04,dimen=outer](7.18,3.04)(5.68,1.54)
\psframe[linewidth=0.04,dimen=outer](7.18,3.04)(4.18,1.54)
\psframe[linewidth=0.04,dimen=outer](7.18,3.04)(2.68,1.54)
\fontsize{14}{0}
\usefont{T1}{ptm}{m}{n}
\rput(9.171455,2.265){$E_1$}
\usefont{T1}{ptm}{m}{n}
\rput(10.571455,2.265){$E_2$}
\usefont{T1}{ptm}{m}{n}
\rput(12.071455,2.265){$E_3$}
\psdots[dotsize=0.1](13.22,2.28)
\psdots[dotsize=0.1](13.5,2.28)
\psdots[dotsize=0.1](13.78,2.28)
\psdots[dotsize=0.1](2.4,2.26)
\psdots[dotsize=0.1](2.12,2.26)
\psdots[dotsize=0.1](1.82,2.26)
\psframe[linewidth=0.04,dimen=outer](1.5,3.04)(0.0,1.54)
\psframe[linewidth=0.04,dimen=outer](15.6,3.04)(14.1,1.54)
\usefont{T1}{ptm}{m}{n}
\rput(14.831455,2.285){$E_n$}
\usefont{T1}{ptm}{m}{n}
\rput(6.441455,2.245){$F_1$}
\usefont{T1}{ptm}{m}{n}
\rput(4.961455,2.265){$F_2$}
\usefont{T1}{ptm}{m}{n}
\rput(3.461455,2.265){$F_3$}
\usefont{T1}{ptm}{m}{n}
\rput(0.7414551,2.245){$F_n$}
\end{pspicture} 
}
\caption{The action of the derivative on $E_1$.}\label{fig:shift}
\end{figure}
 
Let $E(t) = (E_1,\dots,E_n)^*$ and $F(t)=(F_1,\dots,F_n)^*$, where each element is computed at $t$. Then there exist one parameter families of $n\times n$ matrices $A(t), B(t)$ such that
\begin{equation}
E(t) = A(t) E(0)  +  B(t) F(0).
\end{equation}
$A(t)$ and $B(t)$ have monomial entries w.r.t. $t$. For $i,j = 1,\dots,n$
\begin{equation}\label{Arank1}
%A = \begin{pmatrix}
%1 & t & \dots & \frac{t^{n-1}}{(n-1)!} \\
%0 & 1 & \dots & \frac{t^{n-2}}{(n-2)!} \\
%\vdots & &\ddots & \vdots \\
%0 & 0 &\dots & 1 
%\end{pmatrix} \quad \Leftrightarrow \quad 
A_{ij}(t) = \frac{t^{i-j}}{(i-j)!} = \wh{A}_{ij} t^{i-j}, \qquad (i\geq j),
\end{equation}
\begin{equation}\label{Brank1}
%B = \begin{pmatrix}
%t & \frac{t^2}{2} & \dots & \frac{t^n}{n!} \\
%\frac{-t^2}{2} & \frac{-t^3}{3!} & \dots & \frac{-t^{n+1}}{(n+1)!} \\
%\vdots & &\ddots & \vdots \\
%\frac{(-1)^{n+1} t^n}{n!} & \frac{(-1)^{n+1} t^{n+1}}{(n+1)!} &\dots & \frac{(-1)^{n+1} t^{2n-1}}{(2n-1)!}
%\end{pmatrix} \quad \Leftrightarrow \quad 
B_{ij}(t) = \frac{(-1)^{j}t^{i+j-1}}{(i+j-1)!} = \wh{B}_{ij} t^{i+j-1}.
\end{equation}
Observe that $A$ is a lower triangular matrix. A straightforward computation shows that
\begin{equation}\label{Ainvrank1}
%A^{-1} = \begin{pmatrix}
%1 & -t & \dots & \frac{(-1)^{1+n}t^{n-1}}{(n-1)!} \\
%0 & 1 & \dots & \frac{(-1)^{2+n}t^{n-2}}{(n-2)!} \\
%\vdots & &\ddots & \vdots \\
%0 & 0 &\dots & 1 
%\end{pmatrix} \quad \Leftrightarrow \quad 
A^{-1}_{ij}(t) = \frac{(-1)^{i-j}t^{i-j}}{(i-j)!} = \wh{A}^{-1}_{ij} t^{i-j}, \qquad (i\geq j).
\end{equation}
Eqs.~\eqref{Arank1},~\eqref{Brank1} and~\eqref{Ainvrank1} implicitly define the constant matrices $\wh{A}$, $\wh{B}$ and $\wh{A}^{-1}$. The matrix $S(t)$ can be computed directly in terms of $A(t)$ and $B(t)$. Indeed $S(t) = A(t)^{-1}B(t)$.
\bp[Special case of Theorem~\ref{t:Sasymptotic}]\label{prop:rank1}
Let $J(\cdot)$ a curve of rank $1$, with vanishing $R(t)$. The matrix $S(t)$, in terms of a canonical frame, is
\begin{equation}\label{Srank1}
S(t)_{ij} = \frac{(-1)^{i+j-1}}{(i-1)!(j-1)!}\frac{t^{i+j-1}}{(i+j-1)} = \wh{S}_{ij} t^{i+j-1}.
\end{equation}
Its inverse is
\begin{equation}\label{Sinvrank1}
S^{-1}(t)_{ij} = 
 \frac{ - 1}{i+j-1} \binom{n + i -1}{i-1}\binom{n+ j -1}{j-1}  \frac{(n!)^2}{(n-i)!(n-j)!} t^{-i-j+1}= \frac{\wh{S}^{-1}_{ij}}{t^{i+j-1}}.
%\frac{(n+i-1)!(n+j-1)!}{(i+j-1)(j-1)!(i-1)!(n-i)!(n-j)!}\frac{1}{t^{i+j-1}} = \frac{\wh{S}^{-1}_{ij}}{t^{i+j-1}} ,\qquad i,j=1,\dots,n .
\end{equation}
\ep
As expected, $S(t)$ is symmetric, since the canonical frame is Darboux. The proof of Proposition~\ref{prop:rank1} is a straightforward but long computation, which can be found in \appendixname~\ref{prop:rank1_proof}.  Eqs.~\eqref{Srank1}~\eqref{Sinvrank1} implicitly define the constant matrix $\wh{S}$ and its inverse $\wh{S}^{-1}$. Observe that the entries of the latter depend explicitly on the dimension $n$.

\subsection{General rank \texorpdfstring{$1$}{1} curve}
Now consider a general rank $1$ curve. Its Young diagram is still a single row but, in general, $R(t) \neq 0$. As a consequence, the elements of the moving frame are no longer polynomial in $t$. However, we can still expand each $E_i(t)$ and obtain a Taylor approximation of its components w.r.t. the frame at $t=0$. 
Each derivative at $t=0$, up to order $i-1$, is still a vertical vector
\begin{equation}
\frac{d^{k} E_i}{d t^k}(0) = E_{i-k}(0),\qquad k=0,\dots, i-1.
\end{equation}
The $i$-th derivative at $t=0$ gives the lowest order horizontal term, i.e.
\begin{equation}
\frac{d^i E_i}{d t^i}(0) =  - F_1(0).
\end{equation}
Henceforth, each additional derivative, computed at $t=0$, gives higher order horizontal terms, but also new vertical terms, depending on $R(t)$. Let us see a particular example, for $E_1(t)$. $\dot{E}_1(0) =  - F_1(0)$, and $\ddot{E}_1(0) = F_2(0) - \sum_{j=1}^n R_{1j}(0)E_j(0)$ (see Fig.~\ref{fig:rnonzero}).

\begin{figure}[h!t]
\begin{center}
\scalebox{0.75} % Change this value to rescale the drawing.
{
\begin{pspicture}(0,2.8223048)(15.6,5.6)
\definecolor{color2368b}{rgb}{0.8,0.8,1.0}
\definecolor{color883}{rgb}{1.0,0.0,0.2}
\psarc[linewidth=0.04,linecolor=color883,arrowsize=0.05291667cm 2.0,arrowlength=1.4,arrowinset=0.4]{<-}(7.8,1.4){3.44}{67.83366}{112.416145}
\psarc[linewidth=0.04,linecolor=color883,arrowsize=0.05291667cm 2.0,arrowlength=1.4,arrowinset=0.4]{<-}(8.4,1.22){3.88}{59.743565}{119.65911}
\psarc[linewidth=0.04,linecolor=color883,arrowsize=0.05291667cm 2.0,arrowlength=1.4,arrowinset=0.4]{<-}(9.23,0.41){4.99}{56.118736}{122.94922}
\psarc[linewidth=0.04,linecolor=color883,arrowsize=0.05291667cm 2.0,arrowlength=1.4,arrowinset=0.4]{->}(5.69,3.59){1.27}{52.471558}{129.34052}
\psframe[linewidth=0.04,dimen=outer](9.9,4.48)(8.4,2.98)
\psframe[linewidth=0.04,dimen=outer](11.38,4.48)(8.4,2.98)
\psframe[linewidth=0.04,dimen=outer](12.9,4.48)(8.4,2.98)
\fontsize{14}{0}
%\usefont{T1}{ptm}{m}{n}
%\rput(4.891455,2.265){$\mathcal{H}$}
%\usefont{T1}{ptm}{m}{n}
%\rput(10.591455,2.245){$\mathcal{V}$}
\psframe[linewidth=0.04,dimen=outer,fillstyle=solid,fillcolor=color2368b](7.18,4.48)(5.68,2.98)
\psframe[linewidth=0.04,dimen=outer](7.18,4.48)(4.18,2.98)
\psframe[linewidth=0.04,dimen=outer](7.18,4.48)(2.68,2.98)
\usefont{T1}{ptm}{m}{n}
\rput(9.171455,3.705){$E_1$}
\usefont{T1}{ptm}{m}{n}
\rput(10.651455,3.725){$E_2$}
\usefont{T1}{ptm}{m}{n}
\rput(12.071455,3.705){$E_3$}
\psdots[dotsize=0.1](13.22,3.72)
\psdots[dotsize=0.1](13.5,3.72)
\psdots[dotsize=0.1](13.78,3.72)
\psdots[dotsize=0.1](2.4,3.7)
\psdots[dotsize=0.1](2.12,3.7)
\psdots[dotsize=0.1](1.82,3.7)
\psframe[linewidth=0.04,dimen=outer](1.5,4.48)(0.0,2.98)
\psframe[linewidth=0.04,dimen=outer](15.6,4.48)(14.1,2.98)
\usefont{T1}{ptm}{m}{n}
\rput(14.831455,3.725){$E_n$}
\usefont{T1}{ptm}{m}{n}
\rput(6.441455,3.685){$F_1$}
\usefont{T1}{ptm}{m}{n}
\rput(4.961455,3.705){$F_2$}
\usefont{T1}{ptm}{m}{n}
\rput(3.461455,3.705){$F_3$}
\usefont{T1}{ptm}{m}{n}
\rput(0.7414551,3.685){$F_n$}
\psarc[linewidth=0.04,linecolor=color883,arrowsize=0.05291667cm 2.0,arrowlength=1.4,arrowinset=0.4]{<-}(10.48,0.0){6.08}{48.2397}{130.8777}
\end{pspicture} 
}
\caption{The action of the derivative of an horizontal element of the frame when $R\neq 0$.}\label{fig:rnonzero}
\end{center}
\end{figure}
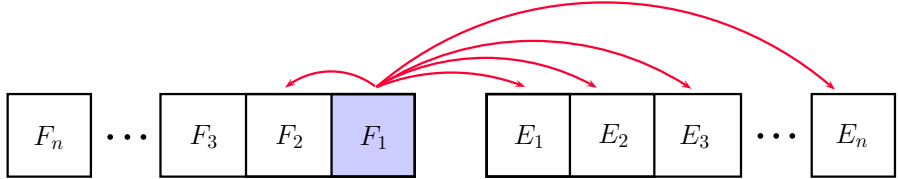

Indeed $E_1(t)$ has a zeroth order term (w.r.t. the variable $t$) in the direction $E_1(0)$. The next term in the direction $E_1(0)$ is of order $2$ or more. Besides, $E_1(t)$ has vanishing zeroth order term in each other vertical direction (i.e. $E_j(0)$, $j\neq 1$), but non vanishing components in each other vertical direction can appear, at orders greater or equal than $2$.
Let us turn to the horizontal components. $E_1(t)$ has a first order term in the direction $F_1(0)$. The next term in the same direction can appear only after two additional derivatives, or more. Therefore, the next term in the direction $F_1(0)$ is of order $3$ or more in $t$. The \virg{gaps} in the orders appearing in a given directions are precisely the key to the proof.

Let $E(t) = (E_1,\dots,E_n)^*$ and $F(t)=(F_1,\dots,F_n)^*$, where each element is computed at $t$. Then, as in the previous step, there exist one parameter families of $n\times n$ matrices $A(t)$, $B(t)$ such that
\begin{equation}
E(t) = A(t) E(0)  + B(t)  F(0).
\end{equation}
The discussion above, and a careful application of the structural equations give us asymptotic formulae for the matrices $A(t)$ and $B(t)$. Let $\wh{A}$ and $\wh{B}$ defined as in Eqs.~\eqref{Arank1}-\eqref{Brank1}, corresponding to the case of a rank $1$ curve with vanishing $R(t)$. Then, for $i,j=1,\ldots,n$
\begin{equation}\label{Arank1R}
A(t)_{ij} = \wh{A}_{ij}t^{i-j} - R_{1j}(0)\frac{ t^{i+1}}{(i+1)!} + O(t^{i+2}),
\end{equation}
\begin{equation}\label{Brank1R}
B(t)_{ij} = \wh{B}_{ij}t^{i+j-1} + R_{11}(0) \frac{(-1)^{j+1} t^{i+j+1}}{(i+j+1)!} +O(t^{i+j+2}).
\end{equation}
The matrix $A$ is no longer triangular, due to the presence of higher order terms in each entry. Besides, the order of the remainder grows only with the row index for $A(t)$ and it grows with both the column and row indices for $B(t)$. This reflects the different role played by the horizontal and vertical terms in the structural equations. We are now ready to consider the general case.

%In order to prove Theorem~\ref{t:Sasymptotic} for the special case of a rank $1$ curve we only need to compute the inverse of $A$, compute $S = BA^{-1}$, and the compute the inverse $S^{-1}$, up to the relevant order in the asymptotic series at $t=0$. Instead of doing that, we directly prove Theorem~\ref{t:Sasymptotic} in full generality.

%\bp\label{prop:rank1R}
%Let $J(\cdot)$ a Jacobi curve of rank $1$. The matrix $S$, in terms of a canonical frame, has the following asymptotic expansion at $t=0$
%\begin{equation}
%S_{ij}(t) = \wh{S}_{ij} t^{i+j-1} - R_{11}(0)C_{ij} t^{i+j+1} +  O(t^{i+j+2}),\qquad i,j=1,\dots,n .
%\end{equation}
%Moreover, the inverse matrix has the following asymptotic expansion, at $t=0$
%\begin{equation}
%S^{-1}_{ij}(t) = \frac{\wh{S}^{-1}_{ij}}{t^{i+j-1}} + R_{11}(0) \frac{(\wh{S}^{-1}D\wh{S}^{-1})_{ij}}{t^{i+j-3}} + O\left(\frac{1}{t^{i+j-4}}\right),\qquad i,j=1,\dots,n.
%\end{equation}
%where $\wh{S}$ and $\wh{S}^{-1}$ are given by Eqs.~\eqref{Srank1}-\eqref{Sinvrank1}, and
%\begin{equation}
%C_{ij} =  \frac{(-1)^{i+j+1}(i+j+2)}{(i-1)!(j-1)!(i+j+1)(i+1)(j+1)}, \qquad i,j=1,\dots,n.
%\end{equation}
%\ep
%The proof is in \appendixname~\ref{prop:rank1R_proof}

\subsection{General rank \texorpdfstring{$k$}{k} curve}

The last step, which concludes the proof of the theorem, is built upon the previous cases. It is convenient to split a frame in subframes, relative to the rows of the Young diagram. For $a=1,\dots,k$, the symbol $E_a$ denotes the $n_a$-dimensional column vector
\begin{equation}
E_a = (E_{a1},E_{a2},\dots,E_{an_a})^* \in \Sigma^{n_a},
\end{equation}
and analogously for $F_a$. Similarly, the symbol $E$ denotes the $n$-dimensional column vector
\begin{equation}
E = (E_1,\dots,E_k)^* \in \Sigma^{n},
\end{equation}
and similarly for $F$. Once again, we express the elements of the Jacobi curves $E(t)$ in terms of the canonical frame at $t=0$. With the notation introduced above
\begin{equation}
E(t)=A(t)E(0)+ B(t)F(0).
\end{equation}
This time, $A(t)$ and $B(t)$ are $k\times k$ block matrices, the $ab$ block being a $n_{a}\times n_b$ matrix. For $a,b=1,\dots,k$, $i=1,\dots,n_a$, $j=1,\dots,n_b$
\begin{equation}\label{Arankk}
A(t)_{ab,ij} = \delta_{ab} \wh{A}_{ij}t^{i-j}  - R_{ab,1j}(0)\frac{t^{i+1}}{(i+1)!}+ O(t^{i+2}),
\end{equation}
\begin{equation}\label{Brankk}
B(t)_{ab,ij} = \delta_{ab} \wh{B}_{ij}t^{i+j-1} + R_{ab,11}(0) \frac{(-1)^{j+1} t^{i+j+1}}{(i+j+1)!} + O(t^{i+j+2}),
\end{equation}
where, once again, the constant matrices $\wh{A}$, $\wh{B}$ correspond to the matrices defined for the rank $1$ and $R(t)=0$ case, of the appropriate dimension. %Namely, the $aa$ diagonal block of $A(t)$ (respectively $B(t)$) is, at the leading orders, equal to the corresponding matrix for a rank $1$, $n_a$-dimensional curve (see Eqs.~\eqref{Arank1R},~\eqref{Brank1R}). 
Notice that we do not need explicitly the leading terms on the off-diagonal blocks. The knowledge of the leading terms on the diagonal blocks is sufficient for our purposes.

Remember that $S(t) = A(t)^{-1}B(t)$. 
%{\red In order to compute the inverse of $A(t)$ at the relevant orders, we need the following elementary lemma.
%\begin{lemma}\label{lem:neumann}
%Let $M \in \text{Mat}(n\times n)$ such that $|M| < 1$ in the operator norm. Then $\id-M$ is invertible and its inverse is given by
%\begin{equation}
%(\id-M)^{-1} = \sum_{n=0}^\infty M^n,
%\end{equation}
%where the series converges in the operator norm.
%\end{lemma}}
In order to compute the inverse of $A(t)$ at the relevant order, we rewrite the matrix $A(t)$ as 
\begin{equation}
A(t) = \wh{A}(t) - M(t),
\end{equation}
where $\wh{A}(t)$ is the matrix corresponding to a rank $k$ curve with vanishing $R(t)$, namely
\begin{equation}
\wh{A}(t)_{ab,ij} = \delta_{ab} \wh{A}_{ij} t^{i-j},\qquad i=1,\ldots,n_a,\quad j=1,\ldots,n_b,
\end{equation}
and, from Eq.~\eqref{Arankk}, we get
\begin{equation}
M(t)_{ab,ij} =  R_{ab,1j}(0) \frac{t^{i+1}}{(i+1)!} + O(t^{i+2}).
\end{equation}
%{\red The application of Lemma~\ref{lem:neumann} gives}
A standard inversion of the Neumann series leads to
\begin{equation}
A(t)^{-1} =  \wh{A}(t)^{-1} + \wh{A}(t)^{-1} M(t) \wh{A}(t)^{-1} + \sum_{n=2}^\infty \left(\wh{A}(t)^{-1}M(t)\right)^n \wh{A}(t)^{-1},
\end{equation}
where the reminder term in the r.h.s. converges uniformly in the operator norm small $t$. Then, a long computation gives
\begin{equation}\label{Ainvrankk}
A(t)^{-1}_{ab,ij} = \delta_{ab} \wh{A}^{-1}_{ij}t^{i-j} - R_{ab,11}(0) \frac{(-1)^i t^{i+1}}{(i+1)(i-1)!} + O(t^{i+2}).
\end{equation}
The matrix $S(t)$ can be computed explicitly, at the leading order, by the usual formula $S(t) = A(t)^{-1}B(t)$, and we obtain, for $a,b=1,\dots,k$, $i=1,\dots,n_a$, $j=1,\dots,n_b$,
\begin{equation}\label{Srankk}
S(t)_{ab,ij} = \wh{S}_{ab,ij} t^{i+j-1} - R_{ab,11}(0) C_{ab,ij} t^{i+j+1} + O(t^{i+j+2}),
\end{equation}
where $\wh{S}_{ab,ij} =\delta_{ab} \wh{S}_{ij}$ of the appropriate dimension, and 
\begin{gather}
C_{ab,ij} =  \frac{(-1)^{i+j}(i+j+2)}{(i-1)!(j-1)!(i+j+1)(i+1)(j+1)},\qquad i=1,\ldots,n_a,\quad j=1,\ldots,n_b.
\end{gather}
The computation of $S(t)^{-1}$ follows from another inversion of the Neumann series, and a careful estimate of the remainder. We obtain
\begin{equation}\label{Sinvrankk}
S^{-1}_{ab,ij}(t) =  \frac{\wh{S}^{-1}_{ab,ij}}{t^{i+j-1}} + R_{ab,11}(0)  \frac{(\wh{S}^{-1}C \wh{S}^{-1})_{ab,ij}}{t^{i+j-3}} + O\left(\frac{1}{t^{i+j-4}}\right),
\end{equation}
where
\begin{equation}
\wh{S}^{-1}_{ab,ij} = \frac{-\delta_{ab}}{i+j-1} \binom{n_a + i -1}{i-1}\binom{n_b+ j -1}{j-1} \frac{n_a! n_b!}{(n_a-i)!(n_b-j)!}.
\end{equation}
This concludes the proof of Theorem~\ref{t:Sasymptotic}. \qed

\subsection{Proof of Corollary~\ref{c:Sridasymptotic}}
Corollary~\ref{c:Sridasymptotic} follows easily from Theorem~\ref{t:Sasymptotic}. The only non-trivial part is the explicit form of the coefficient $\Omega(n_a,n_b)$ in Eq.~\eqref{eq:Sridinv}. By the results of Theorem~\ref{t:Sasymptotic},
\begin{equation}
\Omega(n_a,n_b) = (\wh{S}^{-1}C\wh{S}^{-1})_{ab,11}.
\end{equation}
By replacing the explicit expression of $\wh{S}^{-1}$ and $C$, the proof of Corollary~\ref{c:Sridasymptotic} is reduced to the following lemma, which we prove in Appendix~\ref{s:lemmacoeff}.
\begin{lemma}\label{l:lemmacoeff}
Let $\Omega(n,m)$ be defined by the formula
\begin{equation}
\Omega(n,m) = \frac{n m}{(n+1)(m+1)} \sum_{j=1}^{n}  \sum_{i=1}^{m} (-1)^{i+j}\binom{n+i-1}{i-1}\binom{n+1}{i+1}\binom{m+j-1}{j-1}\binom{m+1}{j+1} \frac{i+j+2}{i+j+1}.
\end{equation}
Then
\begin{equation}
\Omega(n,m)= \begin{cases}
0 & |n-m|\geq 2,  \\
\frac{1}{4(n+m)} & |n-m| = 1, \\
\frac{n}{4n^2-1} & n = m.
\end{cases}
\end{equation}
\end{lemma}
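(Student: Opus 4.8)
The plan is to decouple the double sum into one–dimensional pieces and then recognise those pieces as (shifted, normalised) Jacobi polynomials, whose orthogonality produces the three–case structure. First I would observe that the only factor coupling the indices $i$ and $j$ is $\tfrac{i+j+2}{i+j+1}=1+\tfrac{1}{i+j+1}$, and that $\tfrac{1}{i+j+1}=\int_0^1 t^{i+j}\,dt$. Since $\binom{n+1}{i+1}$ vanishes for $i\ge n+1$ (and likewise $\binom{m+1}{j+1}$ for $j\ge m+1$), the summations are effectively cut off at $i=n$ and $j=m$, so it is convenient to set
\[
P_r(t):=\sum_{i\ge 1}(-1)^i\binom{r+i-1}{i-1}\binom{r+1}{i+1}t^i ,
\]
a polynomial of degree $r$ with $P_r(0)=0$. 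Grouping the $i$–dependent and $j$–dependent factors, the sum in the lemma becomes $P_n(1)P_m(1)+\int_0^1 P_n(t)P_m(t)\,dt$, so that
\[
\Omega(n,m)=\frac{nm}{(n+1)(m+1)}\left(P_n(1)P_m(1)+\int_0^1 P_n(t)P_m(t)\,dt\right).
\]

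Next I would establish the closed form $P_r(t)=-\binom{r+1}{2}\,t\;{}_2F_1(r+1,1-r;3;t)=-t\,P^{(2,-1)}_{r-1}(1-2t)$, which can be checked by matching the term ratio $p_{i+1}/p_i=-(r+i)(r-i)/[i(i+2)]$ against the hypergeometric recursion (or by a short induction). From this I read off the facts I need. By Gauss's summation, ${}_2F_1(r+1,1-r;3;1)$ vanishes for $r\ge 2$ and equals $1$ for $r=1$, hence $P_r(1)=-\delta_{r,1}$; so the term $P_n(1)P_m(1)$ contributes only in the corner $n=m=1$. More importantly, $P^{(2,-1)}_{r-1}(1-2t)$ is a Jacobi polynomial orthogonal on $[0,1]$ with respect to the weight $t^2(1-t)^{-1}$, the degenerate parameter $\beta=-1$ being harmless because $P^{(2,-1)}_{r-1}(-1)=0$ for $r\ge 2$, which restores integrability at the endpoint.

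Orthogonality then yields the banded structure directly: writing $t^{j+1}=[t^2(1-t)^{-1}]\cdot[t^{j-1}(1-t)]$, with the bracketed polynomial of degree $j$, I obtain $\int_0^1 P_n(t)\,t^j\,dt=0$ for $1\le j\le n-2$. Since any $P_m$ with $m\le n-2$ is a linear combination of $t,\dots,t^{n-2}$, this forces $\int_0^1 P_nP_m\,dt=0$ whenever $|n-m|\ge 2$, giving the first case. For the surviving entries I would expand $P_m$ in monomials and keep only the top moments not killed by banded-ness: for $m=n$ only $\mu_{n-1}(n)$ and $\mu_n(n)$ survive, and for $m=n-1$ only $\mu_{n-1}(n)$, where $\mu_j(n):=\int_0^1 P_n t^j\,dt$. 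Each surviving moment reduces, after the substitution $x=1-2t$, to a leading-coefficient-times-norm computation for $P^{(2,-1)}$. Combining these with the explicit top coefficients $p^{(r)}_r=(-1)^r\binom{2r-1}{r-1}$ and $p^{(r)}_{r-1}=(-1)^{r-1}(r+1)\binom{2r-2}{r-2}$, and finally reinstating the prefactor $\tfrac{nm}{(n+1)(m+1)}$, should collapse to $\tfrac{1}{4(n+m)}$ for $|n-m|=1$ and $\tfrac{n}{4n^2-1}$ for $n=m$.

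The main obstacle I anticipate is this last step: correctly evaluating the diagonal integral $\int_0^1 P_n^2\,dt=\tfrac18\int_{-1}^1(1-x)^2[P^{(2,-1)}_{n-1}(x)]^2\,dx$, because the factor $(1-x)^2$ (rather than the exact weight $(1-x)^2(1+x)^{-1}$) introduces an off-weight term $(1+x)$ that must be reabsorbed through the three-term recurrence for $P^{(2,-1)}$, and because the degenerate value $\beta=-1$ requires taking the correct limit of the Jacobi norm and recurrence coefficients. The bookkeeping that this, together with the off-diagonal term, simplifies exactly to the stated rational functions is where all the numerical coincidences of Eq.~\eqref{eq:coefficient} are concentrated; the small cases $n=1$ and $m=1$ (where $P_r$ is not divisible by $(1-t)$) I would verify separately to complete the argument.
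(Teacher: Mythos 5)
Your proposal is correct, and it takes a genuinely different route from the paper's. The paper stays entirely discrete: writing $\Omega(n,m)=v(n)^*M(n,m)v(m)$, it absorbs the inner sum into an alternating binomial sum $\sum_{j=0}^m(-1)^j\binom{m}{j}Q_i(j)$, kills the polynomial part of the rational function $Q_i$ via the finite-difference identity (any polynomial of degree $<m$ is annihilated by $\sum_{j=0}^m(-1)^j\binom{m}{j}$), extracts the remainder $R_i/(i+j+1)$ by evaluating at $j=-i-1$ --- which vanishes except for $i=m-1,m$, giving the band structure --- and finally evaluates the two surviving entries via $\sum_{j=0}^m(-1)^j\binom{m}{j}\tfrac{1}{j+k}=\tfrac{m!(k-1)!}{(m+k)!}$, proved by an integral representation. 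You instead decouple the indices at the outset through $\tfrac{i+j+2}{i+j+1}=1+\int_0^1 t^{i+j}\,dt$, so that the double sum becomes $P_n(1)P_m(1)+\int_0^1 P_nP_m\,dt$, and you identify $P_r(t)=-t\,P^{(2,-1)}_{r-1}(1-2t)$; bandedness is then Jacobi orthogonality instead of finite differences. The two mechanisms are in fact dual: the paper's $S_k$ is exactly the Beta integral $\int_0^1 x^{k-1}(1-x)^m\,dx$, and its finite-difference annihilation is the discrete avatar of your orthogonality. What your route buys is conceptual clarity --- it explains \emph{why} $\Omega$ is tridiagonal (it is essentially a Gram matrix of an orthogonal family) and makes the symmetry $\Omega(n,m)=\Omega(m,n)$ manifest --- while the paper's route is more elementary and self-contained, and reaches the surviving values with less machinery.

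Two caveats. First, your degenerate-orthogonality step needs one more line of justification: $P^{(2,-1)}_{n-1}$ is \emph{not} orthogonal to all polynomials of degree $\le n-2$ with respect to $t^2(1-t)^{-1}$ (the weight is not even integrable against constants); what saves your argument is that your test polynomials $t^{j-1}(1-t)$ vanish at the degenerate endpoint $t=1$ --- equivalently, factor $P^{(2,-1)}_{n-1}(1-2t)\propto(1-t)\,P^{(2,1)}_{n-2}(1-2t)$ and invoke honest $(2,1)$-orthogonality against $t^{j-1}$ of degree $\le n-3$. Second, you stop short of the actual evaluation when $|n-m|\le 1$; the plan is sound (for instance $P_2=3t(t-1)$ gives $\int_0^1 P_2^2=\tfrac{3}{10}$, hence $\Omega(2,2)=\tfrac{4}{9}\cdot\tfrac{3}{10}=\tfrac{2}{15}$, as claimed), but extracting the closed forms $\tfrac{1}{4(n+m)}$ and $\tfrac{n}{4n^2-1}$ from the surviving moments is a computation comparable in length to the paper's evaluation of $S_m$ and $S_{m+1}$, so the proof is a correct plan rather than a finished argument at this point. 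Note also that you silently corrected a typo: the printed summation ranges ($j\le n$, $i\le m$) are swapped relative to the binomials, and your reading ($i$ up to $n$, $j$ up to $m$), consistent with the derivation of $\Omega$ from $(\wh{S}^{-1}C\wh{S}^{-1})_{ab,11}$, is the intended one.
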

The proof of Corollary~\ref{c:Sridasymptotic} is now complete.\qed

\section{Proof of Theorem~\ref{t:main2}}\label{s:main2}

In this section $J_\lam: [0,T] \to L(T_\lam(T^*M))$ is the Jacobi curve associated with an ample, equiregular geodesic $\gamma$, with initial covector $\lam \in T^*_x M$. The next lemma shows that
the projection of the horizontal part of the canonical frame corresponding to the first column of the Young diagram is an orthonormal basis for the Hamiltonian product on the distribution.

\begin{lemma}\label{l:orthframe}
Let $X_a\doteq \pi_* F_{a1}(0) \in T_x M$. Then, the set $\{X_a\}_{a=1}^{k}$ is an orthonormal basis for $(\distr_x,\langle\cdot|\cdot\rangle_\lam)$. 
\end{lemma}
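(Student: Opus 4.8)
The plan is to verify two things about the vectors $X_a = \pi_* F_{a1}(0)$: that they are orthonormal with respect to $\langle\cdot|\cdot\rangle_\lam$, and that they span $\distr_x$ (so that they form a basis, given there are exactly $k = \dim\distr_x$ of them). The natural tool is the characterization of the Hamiltonian inner product via the Jacobi curve given in Eq.~\eqref{eq:scalprod}: if $v = \pi_*\dot\xi$ and $w = \pi_*\dot\eta$ for vertical lifts $\xi,\eta$ extended inside $J_\lam(t)$, then $\langle v|w\rangle_\lam = \sigma(\xi,\dot\eta)$.

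First I would use the structural equations to identify suitable extensions. Recall that the vertical subspace is $\ve_\lam = J_\lam(0) = \spn\{E_{ai}(0)\}$, and that $\dot E_{a1}(t) = -F_{a1}(t)$. Thus the smooth family $t\mapsto E_{a1}(t)$ lies in $J_\lam(t)$, starts at the vertical vector $E_{a1}(0) \in \ve_\lam$, and has derivative $\dot E_{a1}(0) = -F_{a1}(0)$. Applying Lemma~\ref{l:pistarra} with $\xi = E_{a1}(0)$ and the extension $\xi(t) = E_{a1}(t) \in J_\lam(t)$, I get
\begin{equation}
d^2_\lam H_x(E_{a1}(0)) = -\pi_*\dot E_{a1}(0) = \pi_* F_{a1}(0) = X_a.
\end{equation}
This already shows $X_a \in \im(d^2_\lam H_x) = \distr_x$ (by Lemma~\ref{l:image}), and exhibits each $E_{a1}(0)$ as a vertical covector whose image under $d^2_\lam H_x$ is exactly $X_a$.

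Next I would compute the inner products directly. Using Eq.~\eqref{eq:scalprod} with $v = X_a = \pi_*\dot\xi$, $w = X_b = \pi_*\dot\eta$, where $\xi = E_{a1}(0)$, $\eta = E_{b1}(0)$ and the extensions are $E_{a1}(t)$, $E_{b1}(t)$, I obtain
\begin{equation}
\langle X_a | X_b\rangle_\lam = \sigma(E_{a1}(0),\dot E_{b1}(0)) = \sigma(E_{a1}(0),-F_{b1}(0)) = \delta_{ab},
\end{equation}
where the last equality is the Darboux relation $\sigma(E_{ai},F_{bj}) = \delta_{ab}\delta_{ij}$ specialized to $i=j=1$. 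This shows at once orthonormality. Since the $E_{a1}(0)$ for $a=1,\dots,k$ are linearly independent and $d^2_\lam H_x$ maps them to the $X_a$, and since orthonormality forces the $X_a$ to be linearly independent, they form an orthonormal basis of the $k$-dimensional space $\distr_x$.

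I do not expect a serious obstacle here; the result is essentially a bookkeeping consequence of the structural equations together with the symplectic characterization of $\langle\cdot|\cdot\rangle_\lam$. The only point requiring a little care is the correct matching of indices: one must check that the first-column horizontal vectors $F_{a1}$ (for $a=1,\dots,k$, i.e.\ the rank of the curve) are precisely the ones paired by $\sigma$ with the first-column vertical vectors $E_{a1}$, and that there are exactly $k$ of them because $k = \dim\distr_x$ equals the rank of the Jacobi curve (Proposition~\ref{p:twoflags} identifies the rank with $\dim\DD^1_\gamma(0) = \dim\distr_x$). Once this identification is pinned down, the Darboux relations close the argument.
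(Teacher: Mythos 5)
Your route is the same as the paper's: the proof there is precisely the combination of the structural equation $\dot{E}_{a1}(0) = -F_{a1}(0)$, the symplectic expression of the Hamiltonian inner product, and the Darboux relations, and your preliminary step identifying $X_a = d^2_\lam H_x(E_{a1}(0)) \in \distr_x$ via Lemma~\ref{l:pistarra} and Lemma~\ref{l:image} is a correct (and slightly more explicit) version of what the paper leaves implicit. The conclusion $\langle X_a|X_b\rangle_\lam = \delta_{ab}$ is right, as is your closing remark that $k$ orthonormal vectors in the $k$-dimensional space $\distr_x$ must form a basis.

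However, your displayed inner-product chain contains two sign slips that cancel. First, with the extension $\xi(t) = E_{a1}(t)$ one has $\pi_*\dot{\xi}(0) = \pi_*(-F_{a1}(0)) = -X_a$, not $X_a$, so the hypotheses of Eq.~\eqref{eq:scalprod} are satisfied by $-X_a$, $-X_b$ rather than by $X_a$, $X_b$ as you wrote. Second, the Darboux relation gives $\sigma(E_{a1}(0), -F_{b1}(0)) = -\delta_{ab}$, not $+\delta_{ab}$, so your middle equality is literally false as stated. The correct chain --- the one in the paper --- reads $\langle X_a|X_b\rangle_\lam = -\sigma(E_{a1}(0),\dot{E}_{b1}(0)) = \sigma(E_{a1}(0),F_{b1}(0)) = \delta_{ab}$. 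The cleanest repair is already contained in your own first step: having shown $X_a = d^2_\lam H_x(E_{a1}(0))$, apply the definition of the induced inner product from Section~\ref{s:Hpp} together with Lemma~\ref{l:simplettico}:
\begin{equation}
\langle X_a|X_b\rangle_\lam = \langle E_{a1}(0), d^2_\lam H_x(E_{b1}(0))\rangle = \langle E_{a1}(0), \pi_* F_{b1}(0)\rangle = \sigma(E_{a1}(0),F_{b1}(0)) = \delta_{ab},
\end{equation}
which bypasses Eq.~\eqref{eq:scalprod} altogether. Be warned that Eq.~\eqref{eq:scalprod} as printed is itself off by a sign, which is presumably what tripped you up: differentiating the identity $\sigma(\xi(t),\eta(t)) \equiv 0$ along the Lagrangian curve gives $\sigma(\xi,\dot{\eta}) = +\sigma(\eta,\dot{\xi})$ (not the antisymmetric relation printed there), and compatibility with Lemma~\ref{l:pistarra} and with positivity (the curve is monotone decreasing, so $-\sigma(\xi,\dot{\xi}) = -\dot{J}_\lam(0)(\xi) \geq 0$) forces $\langle v|w\rangle_\lam = -\sigma(\xi,\dot{\eta})$.
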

\begin{proof}
First, recall that $F_{a1}(0) = -\dot{E}_{a1}(0)$. Therefore $X_a = - \pi_* \dot{E}_{a1}(0)$. Then, by Eq.~\eqref{eq:scalprod}
\begin{equation}
\langle X_a| X_b \rangle_\lam = - \sigma (E_{a1}(0),\dot E_{b1}(0)) = \sigma (E_{a1}(0), F_{b1}(0)) = \delta_{ab}.  
\end{equation}
where we used the structural equations and the fact that the canonical frame is Darboux.
\end{proof}

We are now ready to prove one of the main results of Section~\ref{s:2dctdot}, namely the one concerning the spectrum of the operator $\Qz_\lam : \distr_x\to \distr_x$.
\begin{proof}[Proof of Theorem~\ref{t:main2}]
Actually, we prove something more: we use the basis $\{X_a\}_{a=1}^k$ obtained above to compute an asymptotic formula for the family $\QQ_\lam(t)$ introduced in Section~\ref{s:2dctdot}. 

Let $\Sigma = \ve_\lam \oplus \hor_\lam$ be the splitting induced by the canonical frame in $\Sigma = T_\lam (T^* M)$. Let $S(t) : \ve_\lam \to \hor_\lam$ be the map which represents the Jacobi curve in terms of the canonical splitting. Then, by definition of Jacobi curve, it follows that, for any $v \in T_x M$ (see also Eq.~\eqref{eq:sing}),
\begin{equation}
\langle \QQ_\lam(t) v|v\rangle_\lam = \frac{d}{dt}\sigma( S(t)^{-1}\wt{v},\wt{v}).
\end{equation}
where $\tilde{v}\in \hor_\lam$ is the unique horizontal lift such that $\pi_*\wt{v} = v$. In particular, if $v = \sum_{a=1}^k v_a X_a \in \distr_x$, we have $\wt{v} = \sum_{a=1}^k v_a F_{a1}(0)$. Thus, 
\begin{equation}
\langle \QQ_\lam(t)v|v\rangle_\lam = \frac{d}{dt} \sum_{a,b=1}^k S(t)^{-1}_{ab,11} v_a v_b = \frac{d}{dt}\sum_{a,b=1}^k \Sred(t)^{-1}_{ab} v_a v_b.
\end{equation}
By Corollary~\ref{c:Sridasymptotic}, we obtain the following asymptotic formula for $\QQ_\lam(t)$.
\begin{equation}\label{eq:Qasympt}
\langle \QQ_\lam(t) v|v\rangle_\lam = \sum_{a,b=1}^k \left(\delta_{ab}\frac{n_a^2}{t^2} +R_{ab,11}(0)\Omega(n_a,n_b)\right)v_a v_b + O(t).
\end{equation}
Equation~\eqref{eq:Qasympt}, together with Lemma~\ref{l:orthframe} imply that, for $a,b=1,\ldots,k$,
\begin{gather}
\Qz_\lam X_a = n_a^2 X_a, \label{eq:Qlam}\\
\RR_\lam X_a = \sum_{b=1}^k 3 R_{ab,11}(0)\Omega(n_a,n_b) X_b. \label{eq:Rlam}
\end{gather}
Equation~\eqref{eq:Qlam} completely characterizes the spectrum and the eigenvectors of $\Qz_\lam$.
\end{proof}
Equation~\eqref{eq:Rlam} is the anticipated formula which connects the curvature operator of Definition~\ref{d:curv} with some of the symplectic invariants of the Jacobi curve, namely the elements of the matrix $R_{ab,ij}$ corresponding to the first column of the Young diagram.

{\review
\section{A worked out example: 3D contact sub-Riemannian structures} \label{s:3Dcomputations} \index{sub-Riemannian!3D contact}

In this section we go through our construction for 3D contact sub-Riemannian structures. The canonical frame and the curvature for these structures have been first explicitly computed in \cite{AAPL}. For the reader's convenience, we report here the details of this construction, following our notation. In particular, we compute the canonical frame associated with ample geodesics and we present an explicit formula for the symplectic invariants $R(t)$ of the canonical frame. In turn, this recovers also the curvature operator $\RR_\lam$. Finally, we discuss the relation of the curvature with the metric invariants of a 3D contact sub-Riemannian structure, studied in \cite{agricm,agrexp,miosr3d,falbel,hughen}.

Let $M$ be a smooth manifold of dimension $\dim M = 3$. A smooth one form $\alpha$ defines a two-dimensional distribution $\distr \doteq \ker \alpha$. We say that $\alpha$ is a \emph{contact form} if $d\alpha|_{\distr}$ is not degenerate. In this case, $\distr$ is called \emph{contact distribution}. The triple $(M,\distr,\langle\cdot|\cdot\rangle)$, where $\distr$ is a contact distribution and $\langle\cdot|\cdot\rangle$ is a smooth scalar product on $\distr$ is called a (3D) \emph{contact sub-Riemannian manifold}. The non-degeneracy assumption implies that $\distr$ has constant rank and that the sub-Riemannian structure defined by $(M,\distr,\langle\cdot|\cdot\rangle)$ satisfies H\"ormander condition.

\begin{definition}
The \emph{Reeb vector field} \index{Reeb vector field} of the contact structure is the unique vector field $X_0 \in \VecM$ such that
\begin{equation}
d\alpha(X_0,\cdot) = 0, \qquad \alpha(X_0) = 1.
\end{equation}
\end{definition}
\begin{remark}
Indeed the contact form $f\alpha$ obtained by rescaling $\alpha$ with any non-vanishing $f \in C^\infty(M)$ defines the same contact distribution.  Then we choose $\alpha$ in such a way that $d\alpha|_{\distr}$ coincides with the volume form induced by the scalar product. This fixes $\alpha$ up to a global sign.
\end{remark}

\subsection{Geodesic flag and growth vector} Let $\gamma$ be any smooth admissible curve. In particular $\dot{\gamma}(t) \in \distr_{\gamma(t)}$ for all $t$. Let $\tanf$ any horizontal extension of the velocity vector $\dot{\gamma}$. Then, for any horizontal section $X \in \Gamma(\distr)$, Cartan's formula leads to
\begin{equation}
\alpha(\mc{L}_\tanf(X)) = \alpha ([\tanf,X]) = -d\alpha(\tanf,X).
\end{equation}
By definition of Reeb vector field, we obtain
\begin{equation}
\mc{L}_\tanf(X) = -d\alpha(\tanf,X) X_0 \mod \distr.
\end{equation}
The non-degeneracy assumption implies that there always exists some smooth section $X$ such that $d\alpha(\tanf,X)\neq 0$. Then, according to the alternative definition of Section~\ref{s:altdef}, the flag of any smooth admissible curve (and, in turn, of any normal geodesic) is
\begin{equation}
\DD^1_{\gamma(t)} = \distr_{\gamma(t)}, \qquad \DD^2_{\gamma(t)} = T_{\gamma(t)}M, \qquad \all t.
\end{equation}
Then, the growth vector is $\mc{G}_{\gamma(t)} = \{2,3\}$ for all $t$. In particular, any non-trivial normal geodesic is ample and equiregular, with geodesic step $m = 2$.

\subsection{The operator \texorpdfstring{$\Qz_\lambda$}{I} and geodesic dimension}
According to the above computations, any non-trivial geodesic has the following Young diagram: 
\begin{equation}
\ytableausetup{nosmalltableaux} \ytableaushort{\empty\empty,\empty},
\end{equation}
with two rows, with length $n_1 =2$ and $n_2 = 1$ respectively. By Theorem~\ref{t:main2}, we readily compute
\begin{equation}
\spec \Qz_{\lambda} = \{4,1\},
\end{equation}
for all $\lambda$ (with $H(\lambda) \neq 0$). Then the geodesic dimension is (see Section~\ref{s:gd})
\begin{equation}
\mathcal{N}_{x_0} = 5, \qquad \all x_0 \in M.
\end{equation}

\subsection{A collection of vector fields}
For any 3D contact sub-Riemannian structure we can choose a (local) orthonormal frame $X_1,X_2$ of horizontal sections, namely
\begin{equation}
\alpha(X_i) = 0, \qquad \langle X_i|X_j\rangle = \delta_{ij}, \qquad i,j=1,2.
\end{equation}
We assume that $X_1,X_2$ is oriented, namely $d\alpha(X_1,X_2) = 1$. Since $X_0$ is always transversal to the distribution, $\{X_0,X_1,X_2\}$ is a local frame of vector fields. In terms of this frame, we define the \emph{structural functions} $c_{ij}^k \in C^\infty(M)$, with $i,j,k=0,1,2$ as follows:
\begin{equation}\label{eq:strcst}
[X_i,X_j] = \sum_{k=0}^2 c_{ij}^k X_k.
\end{equation}
Observe that the following identities hold true for the structural functions as a consequence of the normalization for the contact form and the definition of Reeb vector field
\begin{equation}
c_{12}^0  =  -1 , \qquad c_{i0}^0 = 0, \qquad i=0,1,2.
\end{equation}
Consider the dual frame $\nu_0,\nu_1,\nu_2$ of one-forms. This induces coordinates $h_0,h_1,h_2$ on each fiber of $T^*M$
\begin{equation}
\lambda = (h_0,h_1,h_2) \qquad \Longleftrightarrow \qquad  \lambda = h_0 \nu_0 + h_1\nu_1 + h_2 \nu_2,
\end{equation}
where $h_i(\lambda) = \langle \lambda, X_i \rangle$ are the linear-on-fibers functions associated with $X_i$, for $i=0,1,2$.

Let $\vec{h}_i \in \mathrm{Vec}(T^*M)$ be the Hamiltonian vector fields associated with $h_i\in C^{\infty}(T^*M)$ for $i=0,1,2$, respectively. Moreover, consider the vertical vector fields $\partial_{h_i} \in \mathrm{Vec}(T^*M)$, for $i=0,1,2$. The vector fields
\begin{equation}
\vec{h}_0,\vec{h}_1,\vec{h}_2, \partial_{h_0},  \partial_{h_1},  \partial_{h_2},
\end{equation}
are a local frame of vector fields of $T^*M$. Equivalently, we can introduce cylindrical coordinates $h_0,\rho,\theta$ on each fiber of $T^*M$ by
\begin{equation}
h_1 = \rho \cos\theta, \qquad
h_2 = \rho \sin \theta,
\end{equation}
and employ instead the local frame
\begin{equation}
\vec{h}_0,\vec{h}_1,\vec{h}_2, \partial_{h_0},  \partial_{\theta}, \partial_{\rho}.
\end{equation}
Finally, let the \emph{Euler vector field} be
\begin{equation}
\euler := \sum_{i=0}^2 h_i \partial_{h_i} = \rho \partial_{\rho} +h_0 \partial_{h_0}.
\end{equation}
Notice that $\euler$ is a vertical field on $T^*M$, i.e. $\pi_* \euler = 0$, and is the generator of the dilations $\lambda \mapsto c \lambda$ along the fibers of $T^*M$.
The sub-Riemannian Hamiltonian is
\begin{equation}
H = \frac{1}{2} \left(h_1^2 + h_2^2\right).
\end{equation}
and, therefore, the Hamiltonian vector field is
\begin{equation}
\vec{H} = h_1 \vec{h}_1 + h_2 \vec{h}_2 = \rho\cos\theta \vec{h}_1 + \rho\sin\theta\vec{h}_2. 
\end{equation}
Recall that the Hamiltonian vector fields $\vec{h}_i$ associated with the functions $h_i$ are defined by the formula $dh_i = \sigma(\cdot,\vec{h}_i)$. Thanks to the structural we can write the explicit expression
\begin{equation}\label{eq:harrow}
\vec{h}_i = \wt{X}_i + \sum_{j,k=0}^2 c_{ij}^k h_k \partial_{h_j}.
\end{equation}
Finally, we introduce the following vector field $\vec{H}' \in \mathrm{Vec}(T^*M)$:
\begin{equation}
\vec{H}'\doteq [\partial_\theta, \vec{H}].
\end{equation}
A straightforward but long computation provides an explicit expression for $\vec{H}'$:
\begin{equation}\label{eq:hprime}
\vec{H}' = h_2\vec{h}_1 - h_1\vec{h}_2 - \left(\sum_{j=1}^2 c_{12}^j(\partial_\theta h_j) \right) \partial_\theta + \left(\sum_{i,j=1}^2 h_i c_{i0}^j (\partial_\theta h_j) \right) \partial_{h_0}.
\end{equation}

\subsection{The canonical frame}

We are now ready to compute the normal moving frame for 3D contact structure. Let $\lambda$ be the initial covector of some non-trivial geodesic (that is $H(\lambda) \neq 0$). We employ a lighter notation for labelling the elements of the canonical frame, different from the one introduced in Chapter~\ref{c:proof}. Instead of labelling the elements with respect to their row and columns we employ the following convention:
\begin{equation}\label{eq:notation}
\ytableausetup{nosmalltableaux} \ytableaushort{{a1}{a2},{b1}} \qquad \Rightarrow \qquad \ytableaushort{ac,b}
\end{equation}
Thus, for such a Young diagram, a canonical frame is a smooth family 
\begin{equation}
\{E_a(t),E_b(t),E_c(t),F_a(t),F_b(t),F_c(t)\} \in T_{\lambda}(T^*M),
\end{equation}
with the following properties:
\begin{itemize}
\item[(i)] it is attached to the Jacobi curve, namely $\spn\{E_a(t),E_b(t),E_c(t)\} = J_\lambda(t)$. Notice that, by definition of Jacobi curve, this implies 
\begin{equation}
\pi_* \circ e^{t\vec{H}}_* E_a(t) = \pi_* \circ e^{t\vec{H}}_* E_b(t) =\pi_* \circ e^{t\vec{H}}_* E_c(t)=  0.
\end{equation}

\item[(ii)] They satisfy the structural equations:
\begin{align}
\dot{E}_a(t) &= -F_a(t), \\
\dot{E}_b(t) & = -F_b(t), \\
\dot{E}_c(t) & = E_a(t), \\
\dot{F}_a(t) & = R_{aa}(t) E_a(t) + R_{ab}(t) E_b(t) + R_{ac}(t) E_c(t) - F_c(t), \\
\dot{F}_b(t) & = R_{ba}(t) E_a(t) + R_{bb}(t) E_b(t) + R_{bc}(t) E_c(t), \\
\dot{F}_c(t) & = R_{ca}(t) E_a(t) + R_{cb}(t) E_b(t) + R_{cc}(t) E_c(t).
\end{align}
\item[(iii)] The family of symmetric matrices $R(t)$ is \emph{normal} in the sense of \cite{lizel}. In the 3D contact case, the normality condition is:
\begin{equation}
R_{ac}(t) = R_{ca}(t) = 0.
\end{equation}
\end{itemize}
Once the canonical frame is computed, the symplectic invariants of the Jacobi curve can be obtained through the formula
\begin{equation}\label{eq:formulacurvs}
R_{ij}(t) = \sigma(\dot{F}_i(t),F_j(t)), \qquad i,j \in \{a,b,c\}.
\end{equation}

\begin{remark} In this case, all the superboxes have size $1$, and by Theorem~\ref{t:canuniqueness} the canonical frame is uniquely defined up to a sign. More precisely, a sign for the components labelled with $a,c$ and one for the components labelled with $b$, that can be chosen independently.
\end{remark}
We compute the canonical frame following the general algorithm in \cite{lizel}. \index{canonical frame!3D contact case}
\begin{proposition}\label{p:canonicalframe}
The canonical frame for a 3D contact structure is
\begin{align}
E_c(t) & = \frac{1}{\sqrt{2H}}e^{-t\vec{H}}_*\partial_{h_0}, & 
F_c(t) & = \frac{1}{\sqrt{2H}}e^{-t\vec{H}}_*\left(-[\vec{H},\vec{H}'] + R_{aa}(t) \partial_\theta\right), \\
E_a(t) & = \frac{1}{\sqrt{2H}}e^{-t\vec{H}}_*\partial_\theta, &
F_a(t) & = \frac{1}{\sqrt{2H}}e^{-t\vec{H}}_* \vec{H}', \\ 
E_b(t) & = \frac{1}{\sqrt{2H}}e^{-t\vec{H}}_* \euler, &
F_b(t) & = \frac{1}{\sqrt{2H}}e^{-t\vec{H}}_* \vec{H}.
\end{align}
The only non-vanishing entries of $R(t)$ are
\begin{align}
R_{aa}(t) & = \frac{1}{2H}\sigma([\vec{H},\vec{H}'],\vec{H}'),\\
R_{bb}(t) & =\frac{1}{2H} \sigma([\vec{H},[\vec{H},\vec{H}']],[\vec{H},\vec{H}'])- \frac{1}{(2H)^2}\sigma([\vec{H},\vec{H}'],\vec{H}')^2. \end{align}
where everything is computed along a normal extremal $\lambda(t)$.
\end{proposition}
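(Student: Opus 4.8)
The plan is to produce the six-element frame displayed in the statement and verify that it satisfies the three defining properties of a \emph{normal moving frame}: that it is attached to the Jacobi curve $J_\lam$, that it obeys the structural equations, and that the associated family $R(t)$ is normal in the sense of \cite{lizel}. Once this is done, Theorem~\ref{t:canuniqueness} guarantees that it is \emph{the} canonical frame, unique up to the residual signs noted in the preceding remark. The entire verification reduces to Lie bracket computations with $\vec{H}$ via the elementary identity
\begin{equation}
\frac{d}{dt}\left(e^{-t\vec{H}}_*\, Y|_{\lam(t)}\right) = e^{-t\vec{H}}_*\, [\vec{H},Y]|_{\lam(t)}, \qquad \lam(t) = e^{t\vec{H}}(\lam),
\end{equation}
valid for any vector field $Y$ on $T^*M$, which follows from the group property of the flow and the definition of the Lie derivative $\mc{L}_{\vec{H}} = [\vec{H},\cdot]$. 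Since $\partial_{h_0},\partial_\theta,\euler$ are all vertical and, for $H(\lam)\neq 0$, span $\ve_{\lam}$ (indeed $\euler$ has a nonzero $\partial_\rho$-component while $\partial_{h_0},\partial_\theta$ do not), property~(i) is immediate from point~(i) of Proposition~\ref{p:Jproperties}: each $E_\bullet(t)$ lies in $J_\lam(t) = e^{-t\vec{H}}_*\ve_{\lam(t)}$ and the three span it.

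First I would dispatch the ``first order'' structural equations. By the identity above, $\dot{E}_c$, $\dot{E}_a$, $\dot{E}_b$ equal the (constant) prefactor $1/\sqrt{2H}$ times $e^{-t\vec{H}}_*[\vec{H},\partial_{h_0}]$, $e^{-t\vec{H}}_*[\vec{H},\partial_\theta]$, $e^{-t\vec{H}}_*[\vec{H},\euler]$, the prefactor being genuinely constant since $H$ is conserved along $\lam(t)$. A short computation with \eqref{eq:harrow} and the normalizations $c_{12}^0=-1$, $c_{i0}^0=0$ gives $[\vec{H},\partial_{h_0}] = h_1\partial_{h_2} - h_2\partial_{h_1} = \partial_\theta$, whence $\dot{E}_c = E_a$; the relation $[\vec{H},\partial_\theta] = -\vec{H}'$ is precisely the definition $\vec{H}' = [\partial_\theta,\vec{H}]$, whence $\dot{E}_a = -F_a$; and the fiberwise degree-two homogeneity of $H$ yields $[\euler,\vec{H}] = \vec{H}$, i.e. $[\vec{H},\euler] = -\vec{H}$, whence $\dot{E}_b = -F_b$. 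These three identities simultaneously confirm the stated expressions for $F_a$, $F_b$ and the shift $E_c \mapsto E_a$.

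The substance lies in the ``second order'' equations for $\dot{F}_a,\dot{F}_b,\dot{F}_c$, which produce the curvature. Differentiating $F_a$ gives $\dot{F}_a = \frac{1}{\sqrt{2H}} e^{-t\vec{H}}_*[\vec{H},\vec{H}']$, so I would compute the iterated bracket $[\vec{H},\vec{H}']$ explicitly from \eqref{eq:hprime} and the structural functions, decompose it along the frame, and \emph{define} $F_c$ to absorb the part transverse to $E_a$; matching with $\dot{F}_a = R_{aa}E_a + R_{ab}E_b + R_{ac}E_c - F_c$ then forces $R_{ab}=R_{ac}=0$ and, through \eqref{eq:formulacurvs}, identifies $R_{aa} = \frac{1}{2H}\sigma([\vec{H},\vec{H}'],\vec{H}')$. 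The remaining entries are then read off from \eqref{eq:formulacurvs}, $R_{ij} = \sigma(\dot{F}_i,F_j)$, after computing the next bracket $[\vec{H},[\vec{H},\vec{H}']]$, yielding the stated $R_{bb} = \frac{1}{2H}\sigma([\vec{H},[\vec{H},\vec{H}']],[\vec{H},\vec{H}']) - \frac{1}{(2H)^2}\sigma([\vec{H},\vec{H}'],\vec{H}')^2$ and the vanishing of the other off-diagonal terms. Throughout, the Darboux relations are checked using that $e^{-t\vec{H}}$ is a symplectomorphism, so that $\sigma(E_i(t),E_j(t))$, $\sigma(E_i(t),F_j(t))$ and $\sigma(F_i(t),F_j(t))$ are independent of $t$ and equal to their values computed from the pairings of $\partial_{h_0},\partial_\theta,\euler,\vec{H}',\vec{H}$ at $\lam$.

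I expect the main obstacle to be twofold. The first difficulty is purely computational: the brackets $[\vec{H},\vec{H}']$ and especially $[\vec{H},[\vec{H},\vec{H}']]$ are lengthy and demand careful bookkeeping of the structural functions $c_{ij}^k$. The second, and more delicate, point concerns the length-one string $(E_b,F_b)$: a \emph{naive} chain built from $\euler$ and $\vec{H}$ closes through $[\vec{H},\vec{H}]=0$, so obtaining the genuinely nonzero $R_{bb}$ and, more generally, enforcing the normality conditions of \cite{lizel} (which are exactly what single out the canonical frame via Theorem~\ref{t:canuniqueness}) requires choosing the vertical generators, and correspondingly $F_c$, with the precise lower-order corrections dictated by the Li--Zelenko normalization algorithm. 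Reconciling the apparent triviality of certain brackets with the nontrivial symplectic invariants is therefore the heart of the argument; I would organize the whole computation in the moving coframe $\{\nu_0,\nu_1,\nu_2\}$ with induced fiber coordinates $(h_0,\rho,\theta)$, in which $\vec{H}$, $\vec{H}'$ and $\euler$ take the explicit forms recorded above, so that each cancellation can be tracked directly.
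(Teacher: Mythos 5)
Your overall strategy---check that the displayed sextuple is a Darboux frame attached to $J_\lam(\cdot)$, verify the structural equations with a normal $R(t)$, and invoke Theorem~\ref{t:canuniqueness} for uniqueness up to signs---is sound, and apart from being organized as ``guess and verify'' rather than derivation, it runs on exactly the same computations as the paper's proof: the identities $[\vec{H},\partial_{h_0}]=\partial_\theta$ and $[\vec{H},\euler]=-\vec{H}$, the observation that $\partial_{h_0},\partial_\theta,\euler$ span $\ve_{\lam(t)}$ when $H\neq 0$, and the extraction of the invariants through $R_{ij}(t)=\sigma(\dot{F}_i(t),F_j(t))$ after computing $[\vec{H},\vec{H}']$ and $[\vec{H},[\vec{H},\vec{H}']]$. (The paper instead \emph{derives} $E_c$ from the conditions $E_c(t),\dot{E}_c(t)\in J_\lam(t)$ and $\sigma(\ddot{E}_c,\dot{E}_c)=1$, pins down $E_b$ by $\sigma$-orthogonality to $\spn\{F_a,\dot{F}_a\}$, and then reads off the frame; the content is the same.)

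However, your final paragraph contains a genuine error that would derail the execution. In the labelling of Eq.~\eqref{eq:notation}, $a$ and $c$ are the two boxes of the length-two row and $b$ is the length-one row. Since $F_b(t)=\tfrac{1}{\sqrt{2H}}e^{-t\vec{H}}_*\vec{H}$, one has $\dot{F}_b(t)=\tfrac{1}{\sqrt{2H}}e^{-t\vec{H}}_*[\vec{H},\vec{H}]=0$ \emph{exactly}, so the structural equations force $R_{ba}=R_{bb}=R_{bc}\equiv 0$: the $b$-string is flat, and indeed the Jacobi curve splits off the $\sigma$-orthogonal factor $\spn\{\euler-t\vec{H}\}$. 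There is no tension to ``reconcile'' and no lower-order corrections to $E_b,F_b$ dictated by the Li--Zelenko algorithm---the displayed frame already satisfies the normality condition, which in this Young diagram is just $R_{ac}=R_{ca}=0$. The second nonzero invariant in the statement, despite its subscript, is the entry attached to the \emph{second box of the long row}: it is $R_{cc}(t)=\sigma(\dot{F}_c(t),F_c(t))$, where $F_c=-\dot{F}_a+R_{aa}E_a$ is forced by the structural equation for $\dot{F}_a$ together with $R_{ac}=0$; this is precisely what your middle paragraph computes from $[\vec{H},[\vec{H},\vec{H}']]$, so that part of your plan is correct as written. Had you instead acted on your stated worry and perturbed the $b$-string to manufacture a nonzero $R_{bb}$, the verification of the structural equations would fail. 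One check you should make explicit in your scheme (it replaces the paper's normalization step): with $F_c$ as above, $\dot{F}_c(t)$ must lie in $J_\lam(t)$, i.e.\ the horizontal part of $[\vec{H},[\vec{H},\vec{H}']]$ must be proportional to $\vec{H}'$ with coefficient matched by $R_{aa}$ and $\dot{R}_{aa}$; this follows from the decomposition $[\vec{H},\vec{H}']=-2H(\vec{h}_0+c_{12}^1\vec{h}_1+c_{12}^2\vec{h}_2)+(c_{12}^1h_2-c_{12}^2h_1)\vec{H}' \bmod \ve$, but it is not automatic from your bookkeeping as described.
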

\begin{remark}
As a consequence of the identity $[\vec{H},\euler] = -\vec{H}$ (that holds true for any quadratic-on-fibres Hamiltonian), we can rewrite
\begin{equation}
E_b(t) = \frac{1}{\sqrt{2H}} (\euler - t\vec{H}), \qquad F_b(t) = \frac{1}{\sqrt{2H}} \vec{H}.
\end{equation}
In particular, we observe that the Jacobi curve $J_\lambda(t) = \spn\{E_a(t),E_b(t),E_c(t)\}$ splits in the $\sigma$-orthogonal direct sum of two curves of subspaces of smaller dimension:
\begin{equation}
J_\lambda(t) = \spn\{\euler - t\vec{H}\} \oplus \spn\{E_a(t),E_c(t)\}.
\end{equation}
\end{remark}
\begin{proof}

The computation is presented through a sequence of lemmas. We start by proving some useful identities.
\begin{lemma}
The following identities hold true:
\begin{gather}
[\vec{H},\partial_{h_0}]  = \partial_\theta, \label{eq:comp1} \\
[\vec{H},\euler]  = -\vec{H}. \label{eq:comp2}
\end{gather}
\end{lemma}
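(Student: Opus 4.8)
The two brackets are of a rather different nature: the second is a general feature of quadratic-on-fibres Hamiltonians, while the first genuinely uses the contact normalisation of the structural functions. I will treat them separately.

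For $[\vec{H},\euler]=-\vec{H}$ I will argue intrinsically, exploiting that $\euler$ generates the fibrewise dilations $\delta_c:\lambda\mapsto c\lambda$ of $T^*M$. Since $\pi\circ\delta_c=\pi$, the Liouville form scales as $\delta_c^*\varsigma = c\,\varsigma$, and hence the symplectic form scales as $\delta_c^*\sigma = c\,\sigma$; and since $H$ is homogeneous of degree $2$ on each fibre, $\delta_c^* H = c^2 H$. Plugging these into the defining relation $\iota_{\vec{H}}\sigma = -dH$ (which is the form taken by $d_\lambda H=\sigma(\cdot,\vec{H})$) and using the naturality $\delta_c^*(\iota_X\omega)=\iota_{\delta_c^* X}(\delta_c^*\omega)$, I get $c\,\iota_{\delta_c^*\vec{H}}\sigma = c^2\,\iota_{\vec{H}}\sigma$, whence $\delta_c^*\vec{H} = c\,\vec{H}$ by nondegeneracy of $\sigma$. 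Writing $c=e^s$ and differentiating at $s=0$ yields $\mathcal{L}_\euler\vec{H} = \vec{H}$, i.e. $[\euler,\vec{H}]=\vec{H}$, which is the claim. Equivalently, one may use $\euler(h_i)=h_i$ together with $[\euler,\vec{h}_i]=0$ (the $\vec{h}_i$ being dilation invariant) and expand $\vec{H}=\sum_i h_i\vec{h}_i$.

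For $[\vec{H},\partial_{h_0}]=\partial_\theta$ I will verify the identity by testing both sides as derivations on a generating set of functions on $T^*M$, namely the pullbacks $\pi^*f$ and the fibre-linear coordinates $h_0,h_1,h_2$. In cylindrical coordinates $\partial_\theta = -h_2\partial_{h_1}+h_1\partial_{h_2}$, so the target is vertical, and I must check that $[\vec{H},\partial_{h_0}]$ kills base functions and acts correctly on the $h_m$. On $\pi^*f$ both $\partial_{h_0}(\pi^*f)=0$ and $\partial_{h_0}(\vec{H}(\pi^*f))=0$, the latter because $\vec{H}(\pi^*f)=h_1\pi^*(X_1 f)+h_2\pi^*(X_2 f)$ is $h_0$-independent; so the bracket vanishes there. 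On $h_m$, using that $\partial_{h_0}(h_m)=\delta_{0m}$ is constant, the bracket reduces to $[\vec{H},\partial_{h_0}](h_m) = -\partial_{h_0}\big(\vec{H}(h_m)\big)$. The Poisson identity $\vec{h}_i(h_m)=\sum_k c^k_{im}h_k$, which is exactly \eqref{eq:harrow} read on fibre-linear functions, gives $\vec{H}(h_m)=\sum_k(h_1 c^k_{1m}+h_2 c^k_{2m})h_k$, and applying $-\partial_{h_0}$ extracts the $k=0$ coefficient $-(h_1 c^0_{1m}+h_2 c^0_{2m})$. The normalisations $c^0_{12}=-1$ (hence $c^0_{21}=1$), $c^0_{ii}=0$ and $c^0_{i0}=0$ then produce the values $0,\,-h_2,\,h_1$ for $m=0,1,2$, matching $-h_2\partial_{h_1}+h_1\partial_{h_2}=\partial_\theta$ exactly.

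Neither step involves a serious obstacle; the only points requiring care are bookkeeping. In the first identity one must stay consistent with the pullback convention for vector fields and with the sign in $\iota_{\vec{H}}\sigma=-dH$ forced by the paper's definition $d_\lambda h=\sigma(\cdot,\vec{h})$. In the second, the main subtlety is that $\vec{H}$ is \emph{not} vertical, so a priori the horizontal part of the $\vec{h}_i$ contributes to $\vec{H}(h_m)$; I avoid splitting $\vec{h}_i$ into horizontal and vertical parts by invoking the intrinsic Poisson-bracket formula $\vec{h}_i(h_m)=h_{[X_i,X_m]}$, after which the whole computation collapses to the three structural normalisations.
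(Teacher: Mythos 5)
Your proof is correct, and both halves take routes genuinely different from the paper's. For $[\vec{H},\euler]=-\vec{H}$, the paper expands the bracket directly in coordinates, writing $\euler=\sum_j h_j\partial_{h_j}$, $\vec{H}=\sum_{i=1}^2 h_i\vec{h}_i$ and cancelling terms via the structural functions; it only remarks in passing that a ``more elegant proof'' via homogeneity exists. Your dilation argument is precisely that alternative: from $\delta_c^*\sigma=c\,\sigma$, $\delta_c^*H=c^2H$ and $\iota_{\vec{H}}\sigma=-dH$ (sign handled consistently with the paper's convention $d_\lambda h=\sigma(\cdot,\vec{h})$) you deduce $\delta_c^*\vec{H}=c\,\vec{H}$ and differentiate along the flow of $\euler$; this buys a coordinate-free statement that works verbatim for any fibrewise degree-two homogeneous Hamiltonian, and your fallback expansion using $[\euler,\vec{h}_i]=0$ and $\euler(h_i)=h_i$ is also sound. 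For $[\vec{H},\partial_{h_0}]=\partial_\theta$, the paper computes the bracket from the explicit expression of $\vec{h}_i$ in Eq.~\eqref{eq:harrow}, via $[\vec{H},\partial_{h_0}]=\sum_{i=1}^2 h_i[\vec{h}_i,\partial_{h_0}]$ (legitimate since $\partial_{h_0}h_i=0$ for $i=1,2$) and then reading off the coefficients $c_{ij}^0$; you instead test both sides as derivations on the generating functions $\pi^*f,h_0,h_1,h_2$ and reduce everything to the Poisson identity $\vec{h}_i(h_m)=h_{[X_i,X_m]}=\sum_k c_{im}^k h_k$. The two computations extract the same normalizations $c_{12}^0=-1$, $c_{i0}^0=0$, and your checks ($\partial_\theta=-h_2\partial_{h_1}+h_1\partial_{h_2}$ yielding $0,-h_2,h_1$ on $h_0,h_1,h_2$) are accurate; but your version has the genuine (if small) merit of never needing the horizontal/vertical splitting of $\vec{h}_i$. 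As printed, Eq.~\eqref{eq:harrow} omits the horizontal lift of $X_i$, which is harmless in the paper's bracket computation only because that lift commutes with the vertical field $\partial_{h_0}$ — a point the paper leaves implicit and your derivation-testing argument bypasses entirely.
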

\begin{proof}
We start with Eq.~\eqref{eq:comp1}. By using the explicit expression for $\vec{h}_i$ of Eq.~\eqref{eq:harrow} and the properties of the Lie bracket, we obtain
\begin{equation}
[\vec{H},\partial_{h_0}] = \sum_{i=1}^2 h_i[\vec{h}_i,\partial_{h_0}] = \sum_{i=1}^2 \sum_{j,k=0}^2 h_i c_{ij}^k [h_k \partial_{h_j},\partial_{h_0}] = -\sum_{i=1}^2 \sum_{j,k=0}^2 h_i c_{ij}^0 \partial_{h_j} =  h_1 \partial_{h_2} -h_2 \partial_{h_1} = \partial_\theta.
\end{equation}
For what concerns Eq.~\eqref{eq:comp2} we have
\begin{equation}
\begin{aligned}
[\vec{H},\euler] &  = \sum_{i=1}^2 \sum_{j=0}^2 [h_i\vec{h}_i,h_j\partial_{h_j}] 
 =  \sum_{i=1}^2 \sum_{j=0}^2 h_i\vec{h}_i(h_j) \partial_{h_j} - h_j\partial_{h_j}(h_i)\vec{h}_i + h_i h_j [\vec{h}_i,\partial_{h_j}]  \\
& =  \sum_{i=1}^2 \sum_{j,k=0}^2 h_i c_{ij}^k h_k \partial_{h_j} - \sum_{i=1}^2 h_i \vec{h}_i - \sum_{i=1}^2 \sum_{j,k=0}^2 h_i h_j c_{ik}^j \partial_{h_k}  = - \vec{H}.\\
\end{aligned}
\end{equation}
A more elegant proof using the fact that $\vec{H}$ is homogeneous and $\euler$ is the generator of fiber dilations is indeed possible, and can be found on \cite{nostrolibro}.
\end{proof}
\begin{lemma}
$E_c(t)$ is uniquely specified (up to a sign) by the following conditions:
\begin{itemize}
\item[(i)] $E_c(t) \in J_\lambda(t)$,
\item[(ii)] $\dot{E}_c(t) \in J_\lambda(t)$,
\item[(iii)] $\sigma(\ddot{E}_c(t),\dot{E}_c(t)) = 1$,
\end{itemize}
and, by choosing the positive sign, is given by
\begin{equation}
E_c(t) = \frac{1}{\sqrt{2H}} e^{-t\vec{H}}_* \partial_{h_0}.
\end{equation}
Moreover, one also has
\begin{equation}
E_a(t) = \frac{1}{\sqrt{2H}}e^{-t\vec{H}}_* \partial_\theta.
\end{equation}
and
\begin{equation}
F_a(t) = \frac{1}{\sqrt{2H}} e^{-t\vec{H}}_* \vec{H}'.
\end{equation}
\end{lemma}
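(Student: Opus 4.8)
The plan is to leverage the description $J_\lambda(t) = e^{-t\vec{H}}_* \ve_{\lambda(t)}$ from point (i) of Proposition~\ref{p:Jproperties}, together with the bracket identities \eqref{eq:comp1} and \eqref{eq:comp2}. Since the Hamiltonian $H$ is constant along the extremal $\lambda(t) = e^{t\vec{H}}(\lambda)$ and $H(\lambda) \neq 0$, the curve stays in the region $\{H \neq 0\}$, where the three vertical fields $\partial_{h_0}$, $\partial_\theta$ and $\euler$ are pointwise linearly independent and hence span $\ve_{\lambda(t)}$ along $\gamma$. The computational device throughout is the standard identity $\tfrac{d}{dt}\, e^{-t\vec{H}}_* Y|_{\lambda(t)} = e^{-t\vec{H}}_*[\vec{H}, Y]|_{\lambda(t)}$, valid for every vector field $Y$ on $T^*M$ (for a $t$-dependent $Y$ one adds the term $e^{-t\vec{H}}_* \partial_t Y$): differentiating a section of the Jacobi curve amounts to bracketing the frozen field with $\vec{H}$, modulo the derivatives of its coefficients. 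I will also use repeatedly that $e^{-t\vec{H}}_*$ is a symplectomorphism, so it preserves $\sigma$, and that the vertical subspace is Lagrangian, so $\sigma$ vanishes on pairs of vertical vectors.

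First I would establish the uniqueness. Writing an arbitrary smooth section of $J_\lambda$ as $\ell(t) = e^{-t\vec{H}}_*\big(\alpha\,\partial_{h_0} + \beta\,\partial_\theta + \gamma\,\euler\big)$ with smooth coefficients $\alpha,\beta,\gamma$, the differentiation rule together with \eqref{eq:comp1}, \eqref{eq:comp2} and the identity $[\vec{H}, \partial_\theta] = -\vec{H}'$ (which is just the definition $\vec{H}' \doteq [\partial_\theta, \vec{H}]$) gives
\begin{equation}
\dot\ell = e^{-t\vec{H}}_*\big( \dot\alpha\,\partial_{h_0} + (\dot\beta + \alpha)\,\partial_\theta + \dot\gamma\,\euler - \beta\,\vec{H}' - \gamma\,\vec{H} \big).
\end{equation}
The first three summands are vertical, so $\dot\ell(t) \in J_\lambda(t)$ if and only if the remaining part $-\beta\vec{H}' - \gamma\vec{H}$ is vertical. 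Since the horizontal parts of $\vec{H} = h_1\vec{h}_1 + h_2 \vec{h}_2$ and of $\vec{H}'$ are, up to sign, $h_1\vec{h}_1 + h_2\vec{h}_2$ and $h_2\vec{h}_1 - h_1\vec{h}_2$, whose coefficient determinant is $-2H \neq 0$, they are linearly independent along $\gamma$; hence $\dot\ell \in J_\lambda$ forces $\beta \equiv \gamma \equiv 0$. Therefore $E_c(t) = \alpha(t)\, e^{-t\vec{H}}_*\partial_{h_0}$ is determined by (i)--(ii) up to the single scalar function $\alpha$.

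Next I would fix $\alpha$ using (iii). From $E_c = \alpha\, e^{-t\vec{H}}_* \partial_{h_0}$ and \eqref{eq:comp1} one finds $\dot E_c = \dot\alpha\, e^{-t\vec{H}}_*\partial_{h_0} + \alpha\, e^{-t\vec{H}}_*\partial_\theta$ and $\ddot E_c = \ddot\alpha\, e^{-t\vec{H}}_*\partial_{h_0} + 2\dot\alpha\, e^{-t\vec{H}}_*\partial_\theta - \alpha\, e^{-t\vec{H}}_*\vec{H}'$. Using that $\partial_{h_0},\partial_\theta$ are vertical (so mutually $\sigma$-orthogonal) and that $\sigma(\partial_{h_0}, \vec{H}') = 0$, every term but one drops out and $\sigma(\ddot E_c, \dot E_c) = -\alpha^2\, \sigma(\vec{H}', \partial_\theta)$. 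A direct computation --- in which only the horizontal part of $\vec{H}'$ contributes to the pairing and $\sigma(\partial_\theta, \vec{h}_i) = dh_i(\partial_\theta) = \partial_\theta h_i$, so $\sigma(\partial_\theta,\vec{h}_1) = -h_2$ and $\sigma(\partial_\theta, \vec{h}_2) = h_1$ --- gives $\sigma(\vec{H}', \partial_\theta) = -2H$; equivalently, this is the Darboux normalization $\sigma(E_a, F_a) = 1$. Hence $\sigma(\ddot E_c, \dot E_c) = 2H\alpha^2$, and imposing this to equal $1$ yields $\alpha = \pm 1/\sqrt{2H}$ (recall $\sqrt{2H}$ is constant along the extremal). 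The positive sign produces exactly $E_c(t) = \tfrac{1}{\sqrt{2H}}\, e^{-t\vec{H}}_*\partial_{h_0}$.

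Finally, the expressions for $E_a$ and $F_a$ follow by differentiating and invoking the structural equations $\dot E_c = E_a$ and $\dot E_a = -F_a$: indeed $E_a = \dot E_c = \tfrac{1}{\sqrt{2H}} e^{-t\vec{H}}_*\partial_\theta$ by \eqref{eq:comp1}, and $F_a = -\dot E_a = -\ddot E_c = \tfrac{1}{\sqrt{2H}} e^{-t\vec{H}}_*\vec{H}'$ by $[\vec{H}, \partial_\theta] = -\vec{H}'$. The main obstacle is the sign bookkeeping in the third step: verifying $\sigma(\vec{H}', \partial_\theta) = -2H$ and the independence of the horizontal parts of $\vec{H}$ and $\vec{H}'$ both require careful use of \eqref{eq:hprime} and of the normalization conventions for the structural functions $c_{ij}^k$, and it is precisely this pairing --- equivalent to the Darboux condition $\sigma(E_a, F_a) = 1$ --- that pins down the normalization constant $1/\sqrt{2H}$.
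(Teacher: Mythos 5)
Your proof is correct and follows essentially the same route as the paper's: expand a section of $J_\lambda(t)=e^{-t\vec{H}}_*\ve_{\lambda(t)}$ in a vertical basis, differentiate via the bracket with $\vec{H}$, use condition (ii) to kill all coefficients except the $\partial_{h_0}$ one, fix the normalization by (iii), and read off $E_a(t)$, $F_a(t)$ from the structural equations. The only harmless deviations are your choice of the vertical basis $\{\partial_{h_0},\partial_\theta,\euler\}$ in place of the paper's $\{\partial_{h_0},\partial_{h_1},\partial_{h_2}\}$ --- which trades the trivial linear independence of $X_1,X_2$ for the determinant computation $-2H\neq 0$ --- and your sign $\sigma(\vec{H}',\partial_\theta)=-2H$, which is the one actually consistent with the definition $\vec{H}'\doteq[\partial_\theta,\vec{H}]$ and with the Darboux condition $\sigma(E_a,F_a)=1$, and which leads to the same normalization $\alpha=\pm 1/\sqrt{2H}$ as the paper.
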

\begin{proof}
Condition (i) and the definition of Jacobi curve $J_\lambda(t) = e^{-t\vec{H}} \ve_{\lambda(t)}$ imply that
\begin{equation}
E_c(t) = e^{-t\vec{H}}_*  \sum_{i=0}^2 a_i(t) \partial_{h_i},
\end{equation}
for some smooth functions $a_i(t)$, with $i=0,1,2$. We compute the derivative:
\begin{equation}
\dot{E}_c(t) = e^{-t\vec{H}}_* \left(\sum_{i=0}^2 a_i(t) [\vec{H},\partial_{h_i}] + \dot{a}_i(t)\partial_{h_i}\right).
\end{equation}
Condition (ii) is tantamount to $\pi_* \circ e^{t\vec{H}}_* \dot{E}_c(t) = 0$. Since $\pi_* \partial_{h_i} =0$, we obtain
\begin{equation}\label{eq:Ea1}
0 = \pi_* \sum_{i=0}^2 a_i(t) [\vec{H},\partial_{h_i}].
\end{equation}
Indeed we have, for all $i=0,1,2$
\begin{equation}
[\vec{H},\partial_{h_i}] = \sum_{j=1}^2 [h_j\vec{h}_j,\partial_{h_i}] = \sum_{j=1}^2 h_j[\vec{h}_j,\partial_{h_i}] - \sum_{i=1}^2 \delta_{ij}\vec{h}_j.
\end{equation}
Notice that any Hamiltonian vector field $\vec{h}_i$ is $\pi_*$-related with the corresponding $X_i$ (namely $\pi_* \vec{h}_i = X_i$). Moreover $\pi_*\partial_{h_i} =0$. Then we obtain
\begin{equation}
\pi_* [\vec{H},\partial_{h_i}] = \begin{cases} - X_i & i=1,2, \\ 0 & i=0. \end{cases}
\end{equation}
In particular Eq.~\eqref{eq:Ea1} implies $a_1(t) = a_2(t) = 0$. The remaining function $a_0(t)$ is obtained by condition (iii). Indeed
\begin{equation}
e^{t\vec{H}}_* \dot{E}_c(t) =  a_0(t) [\vec{H},\partial_{h_0}] + \dot{a}_0(t)\partial_{h_0} = a_0(t) \partial_\theta + \dot{a}_0(t)\partial_{h_0},
\end{equation}
where we used Eq.~\eqref{eq:comp1}. Moreover
\begin{equation}
e^{t\vec{H}}_* \ddot{E}_c(t)  = \ddot{a}_0(t) \partial_{h_0} + 2\dot{a}_0(t)  \partial_\theta - a_0(t) \vec{H}'.
\end{equation}
where we used the definition of $\vec{H}' = [\partial_\theta,\vec{H}]$. By using the explicit expression of $\vec{H}'$ of Eq.~\eqref{eq:hprime}, we rewrite condition (iii), after tedious computations, as
\begin{equation}
1 = \sigma_\lambda(\ddot{E}_c(t),\dot{E}_c(t)) = \sigma_{\lambda(t)}(e^{t\vec{H}}_* \ddot{E}_c(t),e^{t\vec{H}}_* \ddot{E}_c(t)) = a_0(t)^2 2H,
\end{equation}
where $2H$ is evaluated on the extremal $\lambda(t)$. This implies
\begin{equation}
E_c(t) = \pm\frac{1}{\sqrt{2H}} e^{-t\vec{H}}_* \partial_{h_0}.
\end{equation}
The explicit expression for $E_a(t)$ and $F_a(t)$ follows directly from the structural equations, indeed
\begin{equation}
E_a(t) = \dot{E}_c(t) = \frac{1}{\sqrt{2H}} e^{-t\vec{H}}_* [\vec{H},\partial_{h_0}] = \frac{1}{\sqrt{2H}} e^{-t\vec{H}}_* \partial_\theta ,
\end{equation}
and
\begin{equation}
F_a(t) = - \dot{E}_a(t) = - \frac{1}{\sqrt{2H}} e^{-t\vec{H}}_* [\vec{H},\partial_\theta] = \frac{1}{\sqrt{2H}} e^{-t\vec{H}}_* \vec{H}'. \qedhere
\end{equation}
\end{proof}

\begin{lemma}
$E_b(t)$ is uniquely specified (up to a sign) by the conditions
\begin{itemize}
\item[(i)] $E_b(t) \in J_\lambda(t)$,
\item[(ii)] $E_b(t) \in \spn\{F_a(t),\dot{F}_a(t)\}^\angle$,
\item[(iii)] $\sigma(\dot{E}_b(t),E_b(t)) = 1$,
\end{itemize}
and, choosing the positive sign, is given by 
\begin{equation}
E_b(t) = \frac{1}{\sqrt{2H}}e^{-t\vec{H}}_* \euler = \frac{1}{\sqrt{2H}}\left(\euler -t\vec{H}\right).
\end{equation}
This, in turn, implies also that
\begin{equation}
F_b(t) = -\dot{E}_b(t) = \frac{1}{\sqrt{2H}} \vec{H}.
\end{equation}
\end{lemma}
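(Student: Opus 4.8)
The plan is to proceed in two stages: first verify that the proposed field $E_b(t) = \tfrac{1}{\sqrt{2H}}e^{-t\vec{H}}_*\euler$ satisfies the three conditions (i)--(iii), and then show that (i)--(iii) determine $E_b(t)$ up to a sign. Throughout I would transport every computation to the fibre over $\lambda(t)$ via the symplectomorphism $e^{t\vec{H}}_*$, exploiting that both $\sigma$ and $\vec{H}$ are invariant under the Hamiltonian flow, and reduce every symplectic product involving a vertical vector to a pairing through Lemma~\ref{l:simplettico}: if $V \in \ve_{\lambda(t)}$ then $\sigma(V,\eta) = \langle V, \pi_*\eta\rangle$. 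The two facts I would use repeatedly are the identity $[\vec{H},\euler] = -\vec{H}$ of Eq.~\eqref{eq:comp2}, which yields $e^{-t\vec{H}}_*\euler = \euler - t\vec{H}$ and hence $\dot{E}_b(t) = -\tfrac{1}{\sqrt{2H}}\vec{H}$, and the projection $\pi_*\vec{H}' = h_2 X_1 - h_1 X_2$ read off from Eq.~\eqref{eq:hprime}. Under the identification $\ve_{\lambda(t)} \simeq T^*_{\gamma(t)}M$ the Euler field corresponds to the covector $\lambda(t)$ itself, so $\langle\euler,v\rangle = \langle\lambda(t),v\rangle$ for every horizontal $v$.

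For the verification, condition (i) is immediate since $\euler$ is vertical, so $e^{-t\vec{H}}_*\euler \in e^{-t\vec{H}}_*\ve_{\lambda(t)} = J_\lambda(t)$. For (ii), using the already established $F_a(t) = \tfrac{1}{\sqrt{2H}}e^{-t\vec{H}}_*\vec{H}'$ and flow-invariance I would get $\sigma(E_b(t),F_a(t)) = \tfrac{1}{2H}\sigma(\euler,\vec{H}') = \tfrac{1}{2H}\langle\lambda(t),h_2 X_1 - h_1 X_2\rangle = \tfrac{1}{2H}(h_2 h_1 - h_1 h_2) = 0$; the second skew-orthogonality I would obtain by differentiating $\sigma(E_b(t),F_a(t))\equiv 0$, giving $\sigma(E_b(t),\dot{F}_a(t)) = -\sigma(\dot{E}_b(t),F_a(t)) = \tfrac{1}{2H}\sigma(\vec{H},\vec{H}') = -\tfrac{1}{2H}\vec{H}'(H)$, which vanishes because $\vec{H}'(H) = [\partial_\theta,\vec{H}](H) = 0$ (as $\vec{H}(H)=0$ and $H$ is independent of $\theta$). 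Condition (iii) is the direct computation $\sigma(\dot{E}_b(t),E_b(t)) = -\tfrac{1}{2H}\sigma(\vec{H},\euler) = \tfrac{1}{2H}\langle\lambda(t),\pi_*\vec{H}\rangle = \tfrac{1}{2H}(h_1^2+h_2^2) = 1$.

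For uniqueness I would write any candidate as $E_b(t) = e^{-t\vec{H}}_* V(t)$ with $V(t)\in\ve_{\lambda(t)}$, as forced by (i). By Lemma~\ref{l:simplettico} and flow-invariance the two requirements in (ii) become the linear conditions $\langle V(t),\pi_*\vec{H}'\rangle = 0$ and $\langle V(t),\pi_*[\vec{H},\vec{H}']\rangle = 0$ on the three-dimensional space $\ve_{\lambda(t)}\simeq T^*_{\gamma(t)}M$; these cut it down to a line precisely when $\pi_*\vec{H}'$ and $\pi_*[\vec{H},\vec{H}']$ are linearly independent, and then (iii) fixes the length and sign, so the explicit solution above must be one of the two resulting vectors. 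The main obstacle is exactly this independence: $\pi_*\vec{H}' = h_2 X_1 - h_1 X_2$ is horizontal, so I must show $\pi_*[\vec{H},\vec{H}']$ has a non-zero component along the Reeb field $X_0$. The leading contribution is $[\pi_*\vec{H},\pi_*\vec{H}'] = -2H\,[X_1,X_2]$, whose Reeb part is $-2H\,c_{12}^0 X_0 = 2H X_0 \neq 0$; the remaining corrections coming from the non-projectability of $\vec{H},\vec{H}'$ (the $\partial_\theta,\partial_{h_0}$ terms in Eq.~\eqref{eq:hprime}) must be shown not to cancel it, which is where the non-degeneracy of $d\alpha|_{\distr}$ enters. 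Conceptually this independence is equivalent to the ampleness of the geodesic, already established via Proposition~\ref{p:twoflags} together with $\DD^2_{\gamma(t)} = T_{\gamma(t)}M$, so I would either invoke that or carry out the bracket estimate directly. Finally, $F_b(t) = -\dot{E}_b(t) = \tfrac{1}{\sqrt{2H}}\vec{H}$ follows at once from the structural equation $\dot{E}_b(t) = -F_b(t)$ and the computation of $\dot{E}_b(t)$ above.
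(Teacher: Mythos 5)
Your proof is correct, and it differs from the paper's mainly in organization rather than substance. The paper runs the uniqueness argument directly: condition (i) lets it write $e^{t\vec{H}}_*E_b(t) = a_\theta(t)\,\partial_\theta + a_\euler(t)\,\euler + a_0(t)\,\partial_{h_0}$; pairing with $F_a(t)$ kills $a_\theta$ (via $\sigma(\vec{H}',\partial_{h_0}) = \sigma(\vec{H}',\euler) = 0$ and $\sigma(\vec{H}',\partial_\theta) = 2H$); pairing with $\dot{F}_a(t)$ kills $a_0$, using the explicit formula $[\vec{H},\vec{H}'] = -2H(\vec{h}_0 + c_{12}^1\vec{h}_1 + c_{12}^2\vec{h}_2) + (c_{12}^1 h_2 - c_{12}^2 h_1)\vec{H}' \bmod \ve$; and (iii) then forces $a_\euler = \pm 1/\sqrt{2H}$, so existence and uniqueness come out together. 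You instead verify the candidate first and then get uniqueness by a dimension count, reading the two conditions in (ii) through Lemma~\ref{l:simplettico} as annihilator conditions against $\pi_*\vec{H}'$ and $\pi_*[\vec{H},\vec{H}']$ on the three-dimensional vertical space. Your differentiation trick $\sigma(E_b,\dot{F}_a) = -\sigma(\dot{E}_b,F_a)$, which reduces the second orthogonality in the verification to $\vec{H}'(H)=0$, is a genuine simplification: it avoids any bracket computation on that side. The two arguments meet at exactly one point: your dimension count needs $\pi_*\vec{H}'$ and $\pi_*[\vec{H},\vec{H}']$ to be linearly independent, which is precisely the information the paper extracts from its bracket formula in order to conclude $a_0 = 0$.

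Concerning the step you leave open, both of your proposed completions do work, so this is a sketched step rather than a dead end. For the direct route, be aware that your computation $[\pi_*\vec{H},\pi_*\vec{H}'] = -2H[X_1,X_2]$ is only heuristic (the $h_i$ are not functions on $M$); the clean statement is that, writing $\vec{H}' = h_2\vec{h}_1 - h_1\vec{h}_2 + V$ with $V$ vertical, one has $[\vec{H},V] = f\,\vec{H}' \bmod \ve$ for some function $f$ (since $[\vec{H},\partial_{h_0}] = \partial_\theta$ and $[\vec{H},\partial_\theta] = -\vec{H}'$), so the correction from non-projectability is horizontal and cannot cancel the Reeb part; the paper's displayed formula then gives $\pi_*[\vec{H},\vec{H}'] = -2H X_0 \bmod \distr_{\gamma(t)}$, which is nonzero for any non-trivial geodesic. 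For the ampleness route, the identity $[\vec{H},[\vec{H},\partial_\theta]] = -[\vec{H},\vec{H}']$ together with Proposition~\ref{p:twoflags} identifies $\DD^2_{\gamma(t)} = \distr_{\gamma(t)} + \spn\{\pi_*[\vec{H},\vec{H}']\}$, and since $\mc{G}_{\gamma(t)} = \{2,3\}$ for every non-trivial geodesic, $\pi_*[\vec{H},\vec{H}'] \notin \distr_{\gamma(t)}$; this is a legitimate shortcut that the paper does not use, but it rests on material already established in the same section.
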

\begin{proof}
Condition (i) and the definition of Jacobi curve $J_\lambda(t) = e^{-t\vec{H}} \ve_{\lambda(t)}$ imply that
\begin{equation}
e^{t\vec{H}}_* E_b(t) = a_\theta(t) \partial_\theta + a_\euler(t) \euler + a_0(t) \partial_{h_0},
\end{equation}
for some smooth functions $a_\theta(t),a_\euler(t),a_0(t)$. Condition (ii) then implies
\begin{equation}
0 = \sigma_\lambda(F_a(t),E_b(t)) = \sigma_{\lambda(t)}(\vec{H}',a_\theta(t) \partial_\theta + a_\euler(t) \euler + a_0(t) \partial_{h_0}).
\end{equation}
A tedius computation using the explicit form of $\vec{H}'$ of Eq.~\eqref{eq:hprime} gives
\begin{equation}
\sigma(\vec{H}',\partial_{h_0}) = \sigma(\vec{H}',\euler) = 0, \qquad \sigma(\vec{H}',\partial_\theta) = 2H. 
\end{equation}
Thus we obtain
\begin{equation}
a_\theta(t) = 0.
\end{equation}
Moreover, again condition (ii) implies
\begin{equation}\label{eq:demmerda}
0 = \sigma_\lambda(\dot{F}_a(t),E_b(t)) = \sigma_{\lambda(t)}([\vec{H},\vec{H}'],a_\euler(t) \euler + a_0(t) \partial_{h_0}).
\end{equation}
An explicit computation shows that
\begin{equation}
[\vec{H},\vec{H}'] = -2H(\vec{h}_0 + c_{12}^1 \vec{h}_1 + c_{12}^2 \vec{h}_2) + \left( c_{12}^1 h_2 - c_{12}^2 h_1 \right)\vec{H}' \mod \ve.
\end{equation}
By replacing this expression in Eq.~\eqref{eq:demmerda}, we obtain after straightforward computation that $a_0(t) = 0$. Then $E_b(t) = a_\euler(t) \euler$. Condition (iii) implies
\begin{equation}
1 = \sigma(\dot{a}_\euler(t) \euler - a_\euler(t) \vec{H},a_\euler(t) \euler) = a_\euler(t)^2 \sigma(\euler,\vec{H}) = a_\euler(t)^2 2H.
\end{equation}
where everything is evaluated along the extremal $\lambda(t)$. Then, by choosing the positive sign
\begin{equation}
E_b(t) =  \frac{1}{\sqrt{2H}} e^{-t\vec{H}}_* \euler.
\end{equation}
Moreover, by the structural equations, we have
\begin{equation}
F_b(t) = -\dot{E}_b(t) = -\frac{1}{\sqrt{2H}} e^{-t\vec{H}}_* [\vec{H},\euler] = \vec{H},
\end{equation}
where we used Eq.~\eqref{eq:comp2}.
\end{proof}

Notice that $\dot{F}_b(t) = \frac{d}{dt} e^{-t\vec{H}}_* \vec{H} = [\vec{H},\vec{H}] = 0$. In particular, this implies, by the structural equations, that the following entries of $R(t)$ vanish:
\begin{equation}
R_{ba}(t) = R_{bb}(t) = R_{bc}(t) = 0.
\end{equation}
Thus, together with the normal condition $R_{bc}(t) = 0$, we observe that $R(t)$ has the following form
\begin{equation}
R(t) = \begin{pmatrix}
R_{aa}(t) & 0 & 0 \\ 0 & R_{cc}(t) & 0 \\ 0 & 0 & 0
\end{pmatrix}.
\end{equation}

With the elements of the canonical frame computed so far, namely $E_a(t),E_b(t),E_c(t),F_a(t),F_b(t)$, it is easy to compute the first non-trivial entry $R_{aa}(t)$. Indeed, using formula~\eqref{eq:formulacurvs}, we have
\begin{equation}
R_{aa}(t) =  \sigma_\lambda(\dot{F}_a(t),F_a(t)) = \sigma_{\lambda(t)}(e^{t\vec{H}}_* \dot{F}_a(t),e^{t\vec{H}}_*F_a(t)) = \frac{1}{2H} \sigma_{\lambda(t)}([\vec{H},\vec{H}'],\vec{H}').
\end{equation}

The normal condition $R_{ac}(t) = 0$ and the structural equations uniquely define the final element of the canonical frame:
\begin{equation}
F_c(t) = -\dot{F}_a(t) + R_{aa}(t) E_a(t) =  \frac{1}{\sqrt{2H}} e^{-t\vec{H}}_*\left(-[\vec{H},\vec{H}'] + R_{aa}(t) \partial_\theta\right),
\end{equation}
where we replaced the explicit expressions of $F_a(t)$ and $E_a(t)$. To obtain the second (and last) non-trivial entry of $R(t)$, we apply once again formula~\eqref{eq:formulacurvs}:
\begin{equation}
\begin{aligned}
R_{cc}(t) & =  \sigma_\lambda(\dot{F}_c(t),F_c(t)) = \sigma_{\lambda(t)}(e^{t\vec{H}}_* \dot{F}_c(t),e^{t\vec{H}}_*F_c(t)) = \\
& = \frac{1}{2H}\sigma_{\lambda(t)}([\vec{H},[\vec{H},\vec{H}']],[\vec{H},\vec{H}'])-\frac{1}{(2H)^2}\sigma_{\lambda(t)}([\vec{H},\vec{H}'],\vec{H}')^2.
\end{aligned}\qedhere
\end{equation}
\end{proof}

\subsection{The curvature of 3D contact structures}
Proposition~\ref{p:canonicalframe} gives the expression of the symplectic invariants $R(t)$ in terms of Lie brackets with the Hamiltonian vector field. Now we use Eq.~\eqref{eq:Rlam} to compute the curvature operator $\RR_\lambda:\distr_{x_0} \to \distr_{x_0}$. The latter, in terms of the notation~\eqref{eq:notation} is:
\begin{equation}\label{eq:Rlam3D}
\RR_\lambda X_i = \sum_{j\in \{a,b\}} 3 \Omega(n_i,n_j)R_{ij}(0) X_j, \qquad i \in \{a,b\}.
\end{equation}
By direct inspection, the orthonormal basis $\{X_a,X_b\}$ for $\distr_{x_0}$ obtained by projection of the canonical frame is
\begin{equation}
X_a = \pi_* F_a(0) = \frac{\dot{\gamma}(0)^\perp}{\|\dot\gamma(0)\|^{\phantom{\perp}}}, \qquad X_b = \pi_* F_b(0) = \frac{\dot{\gamma}(0)}{\|\dot\gamma(0)\|},
\end{equation}
where $\gamma$ is the ample geodesic associated with the initial covector $\lambda$. Thus, replacing formula~\eqref{eq:coefficient} for the coefficients $\Omega(n_i,n_j)$ and the expressions for $R(t)$ obtained in Proposition~\ref{p:canonicalframe}, we finally obtain
\begin{equation}
\RR_{\lambda} \dot{\gamma}  = 0, \qquad \RR_{\lambda} \dot\gamma^\perp = \frac25 r_\lambda \dot\gamma^\perp,
\end{equation}
where we suppressed the explicit evaluation at $t=0$ and we have introduced the shorthand
\begin{equation}
r_\lambda:=\frac{1}{2H}\sigma_\lambda([\vec{H},\vec{H}'],\vec{H}').
\end{equation}
In particular, the matrix representing the operator $\RR_{\lambda}:\distr_{x_0} \to \distr_{x_0}$ in terms of the basis $\{\dot{\gamma}^\perp,\dot{\gamma}\}$ is
\begin{equation}\label{eq:rlam3D}
\RR_\lambda = \frac25\begin{pmatrix}
r_\lambda & 0 \\ 0 & 0
\end{pmatrix}.
\end{equation}

\subsection{Relation with the metric invariants} \label{s:chikappa}

In this section we express the curvature $\RR_\lambda$ in terms of the metric invariants $\chi$, $\kappa$ of 3D contact sub-Riemannian structures, first introduced in \cite{agricm} (where $\kappa$ is called $\rho$).
These invariants have been subsequently employed in \cite{agrexp} to describe  the asymptotic expansion of the exponential map of a 3D contact sub-Riemannian structure and in \cite{miosr3d} in the classification of 3D left-invariant sub-Riemannian structures.

The sub-Riemannian Hamiltonian $H$ and the  linear-on-fibers function $h_{0}$ associated with the Reeb vector field are both independent on the choice of the (local) orthornormal frame of the sub-Riemannian structure.
Thus, their Poisson bracket $\{H,h_0\}$ is an invariant of the sub-Riemannian structure. Moreover, by definition, $\{H,h_0\}$ vanishes everywhere if and only if the flow of the Reeb vector field $e^{tX_{0}}$ is a one-parameter family of sub-Riemannian isometries. A standard computation gives 
\begin{equation} \label{eq:parentesihh0}
\{H,h_0\}=c_{10}^1h_1^2+(c_{10}^2+c_{20}^1)h_1h_2+c_{20}^2h_2^2.
\end{equation}
For every $x\in M$, the restriction of $\{H,h_0\}$ to $T^{*}_{x}M$, that we denote by $\{H,h_0\}_{x}$, is a quadratic form on the dual of the distribution $\distr^{*}_{x}\simeq T^{*}_{x}M/\distr_{x}^{\perp}$, where $\distr_{x}^{\perp}$ is the annihilator of $\distr_x$. Hence $\{H,h_0\}_{x}$ can be interpreted as a symmetric operator on $\distr_{x}$, via the inner product. In particular its determinant and its trace are well defined. Moreover one can show that $\trace \{H,h_0\}_x=c_{10}^1+c_{20}^2=0$, for every $x\in M$.

\begin{remark} 
Notice that here we employ a different sign convention with respect to \cite{agrexp,miosr3d}. This leads to different expressions of the invariants $\chi$ and $\kappa$.
\end{remark}

\begin{definition} 
The first invariant $\chi\in C^{\infty}(M)$ is defined as the positive eigenvalue of $\{H,h_0\}_{x}$:
\begin{equation} \label{eq:defchi}
 \chi \doteq \sqrt{-\det\{H,h_0\}_x}\geq0.
\end{equation}
In terms of the structural functions $\chi$ is written as follows
\begin{equation} \label{eq:defchi2}
 \chi = \sqrt{(c_{01}^{1})^{2}+\frac{1}{4}(c_{02}^{1}+c_{01}^{2})^{2}}.
\end{equation}
The second invariant $\kappa\in C^{\infty}(M)$ is defined via the structural functions \eqref{eq:strcst} as follows:
\begin{equation} \label{eq:defkappa}
\kappa \doteq X_{1}(c_{12}^2)-X_{2}(c_{12}^1)-(c_{12}^1)^2-(c_{12}^2)^2+
\frac{1}{2}(c_{02}^1-c_{01}^2).
\end{equation}  
\end{definition}
One can prove that the expression \eqref{eq:defkappa} is invariant by rotation of the orthonormal frame. 

In the next definition, we employ the above identification of $\{H,h_0\}_x$ with a quadratic form on the distribution $\distr_x$ to define a convenient local frame. Recall that a local orthonormal frame $X_1,X_2$ is oriented (with respect to the given 3D contact sub-Riemannian structure) if $d\alpha(X_1,X_2) = 1$.

\begin{definition} 
We say that an oriented local orthonormal frame $X_{1},X_{2}$, defined in a neighbourhood $U$ of $x_{0}$, is an \emph{isotropic frame} if
\begin{equation}
\{H,h_0\}_x (X_1) = \{H,h_0\}_x(X_2) =0, \qquad \all x \in U,
\end{equation}
and the quadratic form $\{H,h_0\}_{x}$ is positive at $X_{1}|_{x}+X_{2}|_{x}$ for all $x \in U$.
\end{definition}
As showed in \cite[Sec. 4]{agrexp} (see also \cite[Prop. 13]{miosr3d}), under the assumption $\chi(x_{0})\neq 0$, it is always possible to find an isotropic frame, and such a frame is unique, up to a global sign. In terms of an isotropic frame, one has the useful simplification
\begin{equation}
\{H,h_0\}_{x}=2\chi h_{1}h_{2}.
\end{equation}
Observe that, when $\chi=0$ on $M$, the last formula automatically holds for every orthonormal frame (indeed, in this case, any oriented orthonormal frame is isotropic).

Next we provide a formula that expresses the curvature introduced here with the invariants of a 3D contact structure. By Eq.~\eqref{eq:rlam3D}, we only need to compute the symplectic product $\sigma([\vec{H},\vec{H}'],\vec{H}')$ in terms of the structural functions.
\begin{proposition} \label{p:sfiniti} The following formula holds true
\begin{equation}
r_\lambda=h_{0}^{2}+2H \kappa+\frac{3}{2}\partial_{\theta}\{H,h_{0}\}.
\end{equation}
Moreover, in terms of an isotropic frame $X_{1},X_{2}$, the above formula becomes:
\begin{equation}
r_\lambda=h_{0}^{2}+\kappa(h_{1}^{2}+h_{2}^{2})+3\chi (h_{1}^{2}-h_{2}^{2}).
\end{equation}
\end{proposition}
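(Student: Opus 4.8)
The plan is to prove the first (frame-independent) identity by a direct symplectic computation of the invariant $R_{aa}(0)=r_\lambda$, and then to obtain the second identity as an immediate specialization to an isotropic frame. Throughout, the computational engine is the elementary identity for linear-on-fibers functions,
\begin{equation}
\sigma(\vec{h}_i,\vec{h}_j)=\{h_i,h_j\}=\sum_{k=0}^{2}c_{ij}^{k}h_k,
\end{equation}
which follows from \eqref{eq:strcst}, together with the normalizations $c_{12}^0=-1$ and $c_{i0}^0=0$. This identity, combined with the explicit expression \eqref{eq:harrow} for $\vec{h}_i$ and \eqref{eq:hprime} for $\vec{H}'$, reduces every symplectic product of frame vector fields to a polynomial in $h_0,h_1,h_2$ whose coefficients are the structural functions $c_{ij}^k$ and their derivatives.

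First I would compute $[\vec{H},\vec{H}']$ explicitly, starting from $\vec{H}=h_1\vec{h}_1+h_2\vec{h}_2$ and the formula \eqref{eq:hprime} for $\vec{H}'$. The crucial point is that $\vec{H}$ differentiates the structural functions along the horizontal directions, producing terms of the form $h_iX_i(c_{jk}^\ell)$; these are precisely the ingredients of $\kappa$ in \eqref{eq:defkappa}. I would then pair $[\vec{H},\vec{H}']$ with $\vec{H}'$ through $\sigma$, using the identity above and the Jacobi identity for the frame $\{X_0,X_1,X_2\}$ to simplify. (Note that the tempting shortcut via $\mathcal{L}_{\vec{H}}\sigma=0$ is vacuous here, since $\sigma(\vec{H}',\vec{H}')\equiv0$, so the bracket genuinely has to be computed.) After collecting terms, the contributions split into three groups: the Reeb-direction brackets, controlled by $c_{12}^0=-1$, assemble into $h_0^2$; the horizontal-derivative terms assemble, via \eqref{eq:defkappa}, into $2H\kappa$; and the purely angle-dependent remainder assembles, via \eqref{eq:parentesihh0} and $\partial_\theta h_1=-h_2$, $\partial_\theta h_2=h_1$, into $\tfrac32\,\partial_\theta\{H,h_0\}$. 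Dividing by $2H$ then yields the first formula.

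The second formula follows at once. In an isotropic frame one has $\{H,h_0\}_x=2\chi h_1h_2$, with $\chi$ a function of the base point only. Hence $2H\kappa=\kappa(h_1^2+h_2^2)$ since $2H=h_1^2+h_2^2$, while
\begin{equation}
\tfrac32\,\partial_\theta\{H,h_0\}=3\chi\,\partial_\theta(h_1h_2)=3\chi\big(h_1^2-h_2^2\big),
\end{equation}
because $\partial_\theta(h_1h_2)=(\partial_\theta h_1)h_2+h_1(\partial_\theta h_2)=h_1^2-h_2^2$. Substituting into the first identity gives $r_\lambda=h_0^2+\kappa(h_1^2+h_2^2)+3\chi(h_1^2-h_2^2)$, as claimed.

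The main obstacle is purely the bookkeeping of the direct computation: keeping track of the many structural-function and derivative terms generated by $[\vec{H},\vec{H}']$, and recognizing the exact combination that reconstitutes the metric invariant $\kappa$ of \eqref{eq:defkappa}. This recognition step, which requires the Jacobi identity for the structural functions, is where the computation is most delicate; isolating the clean $h_0^2$ and $\partial_\theta\{H,h_0\}$ pieces is comparatively routine once the $\kappa$-term has been identified.
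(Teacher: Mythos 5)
Your proposal is correct and takes essentially the same route as the paper, whose own proof of Proposition~\ref{p:sfiniti} is precisely the ``long but straightforward'' direct computation of $r_\lambda=R_{aa}(0)=\tfrac{1}{2H}\sigma([\vec{H},\vec{H}'],\vec{H}')$ from the explicit expressions \eqref{eq:harrow} and \eqref{eq:hprime} (with the bookkeeping deferred to \cite{AAPL}), followed by exactly your specialization to an isotropic frame via $\{H,h_0\}_x=2\chi h_1h_2$ and $\partial_\theta(h_1h_2)=h_1^2-h_2^2$. One harmless normalization slip in your outline: since $r_\lambda$ already carries the factor $\tfrac{1}{2H}$, the three groups of terms in $\sigma([\vec{H},\vec{H}'],\vec{H}')$ must assemble into $2H$ times $h_0^2$, $2H\kappa$ and $\tfrac{3}{2}\partial_\theta\{H,h_0\}$ respectively \emph{before} the final division by $2H$, not into those expressions themselves.
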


Proposition \ref{p:sfiniti} follows by a long but straightforward computation, using the explicit expressions of $\vec{H}$ and $\vec{H}'$ computed in the previous section. A proof of this fact, using a slightly different notation, can be found in \cite{AAPL} (see also \cite{nostrolibro}).

\subsection{Relation of the curvature with cut and conjugate loci} \label{s:interpretation}
In this section we provide a brief interpretation of the role of the two metric invariants in the small time asymptotics of the exponential map for three-dimensional contact structure. In particular we show how the structure of the cut and the conjugate locus is encoded in the curvature. For more details and proofs of the statement appearing here one can refer to \cite{agrexp} and \cite{nostrolibro}.

Let us fix a point $x_{0}\in M$ and let us parametrize normal geodesics starting from $x_{0}$ by their initial covector $\lam=(h_0,h_{1},h_{2})=(h_0,\rho,\theta)$. In what follows we will consider only length-parametrized geodesic, i.e. with $\rho=2H(\lam)=1$. For every pair $(h_0,\theta)$ we denote by $\text{Con}_{x_0}(h_0,\theta)$ (resp. $\text{Cut}_{x_0}(h_0,\theta)$) the first conjugate (resp. cut) point on the geodesic with initial covector $\lam=(h_0,1,\theta)$ starting at $x_0$.
Recall that $\text{Con}_{x_0}(h_0,\theta)$ is the first singular value of the exponential map along the geodesic with initial covector $\lam=(h_0,1,\theta)$. Moreover, $\text{Cut}_{x_0}(h_0,\theta)$ is defined as the point where the geodesic loses global optimality.
We stress also that on a contact sub-Riemannian manifold, due to the absence of non-trivial abnormal minimizers, the \emph{cut locus}, defined as \index{cut locus} \index{conjugate locus}
\begin{equation}
\text{Cut}_{x_0}=\{\text{Cut}_{x_0}(h_0,\theta) \mid  (h_{0},\theta)\in \R\times S^1\},
\end{equation}
coincides with the set of points where the function $\f=\frac{1}{2}\dist^{2}(x_{0},\cdot)$ is not smooth. Rephrasing, one has
\begin{equation}
\Sigma_{x_{0}}=M\setminus (\text{Cut}_{x_0}\cup\{x_0\}),
\end{equation}
where $\Sigma_{x_0}$ is the set of smooth points of $\f$ (see Theorem~\ref{t:d2sr}).
\begin{theorem} \label{t:3dasconj}
Assume $\chi(x_0)\neq 0$. In any set of coordinates, and in terms of an isotropic frame $X_{1},X_{2}$, we have the following asymptotic expansion
\begin{equation}\label{eq:3dconjpart}
\mathrm{Con}_{x_0}(h_0,\theta)= x_{0}\pm \frac{\pi}{|h_{0}|^{2}} X_{0}|_{x_0} \pm \frac{2\pi\chi(x_0)}{|h_{0}|^{3}}(\cos^{3}\theta X_{2}|_{x_0}-\sin^{3}\theta X_{1}|_{x_0})+O\left(\frac{1}{|h_{0}|^{4}}\right), \qquad h_{0} \to \pm\infty.
\end{equation}
Moreover for the conjugate length we have the expansion
\begin{equation}\label{eq:3dconjl}
\ell_{\text{con}}(\theta, h_{0})= \frac{2\pi}{|h_{0}|}-\frac{\pi \kappa(x_0)}{|h_{0}|^{3}}+O\left(\frac{1}{|h_{0}|^{4}}\right),\qquad h_{0} \to \pm\infty.
\end{equation}
\end{theorem}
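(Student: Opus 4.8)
The plan is to characterise conjugate points through the Jacobi curve $J_\lambda$ and then to extract the large-$h_0$ asymptotics from the explicit canonical frame of Proposition~\ref{p:canonicalframe}. Fix a length-parametrised geodesic, so that $2H(\lambda)=\rho^2=1$ and $\lambda=(h_0,1,\theta)$ in cylindrical coordinates. Since the differential of the exponential map at time $s$ is $\pi_*\circ e^{s\vec{H}}_*$ restricted to the vertical $\ve_\lambda$, and since by point (i) of Proposition~\ref{p:Jproperties} one has $J_\lambda(s)=e^{-s\vec{H}}_*\ve_{\lambda(s)}$, the covector $\lambda$ is a critical point of $\EXP_{x_0,s}$ exactly when $J_\lambda(0)\cap J_\lambda(s)\neq\{0\}$. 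Thus $\mathrm{Con}_{x_0}(h_0,\theta)=\gamma(\ell_{\mathrm{con}})$, where $\ell_{\mathrm{con}}$ is the first positive $s$ at which $\dim\big(J_\lambda(0)\cap J_\lambda(s)\big)>0$.

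First I would reduce this intersection to the two-dimensional row of the Young diagram. Using the $\sigma$-orthogonal splitting $J_\lambda(s)=\spn\{\euler-s\vec{H}\}\oplus\spn\{E_a(s),E_c(s)\}$ of Proposition~\ref{p:canonicalframe}, together with the facts that $\vec{H}=\sqrt{2H}\,F_b(0)$ is purely horizontal while $\euler$ is vertical, the component along $F_b(0)$ of any element of $J_\lambda(s)\cap\ve_\lambda$ forces the $\euler-s\vec{H}$ contribution to vanish for $s\neq0$. Hence the intersection is governed entirely by the curve $t\mapsto\spn\{E_a(t),E_c(t)\}$, and $\ell_{\mathrm{con}}$ is the first positive zero of the $2\times2$ Wronskian $\dot{u}(t)w(t)-u(t)\dot{w}(t)$, where $(u,w)$ are the horizontal coordinates of $E_c(t)$ in the basis $\{F_a(0),F_c(0)\}$. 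From the structural equations these coordinates obey the reduced Jacobi equation $E_c^{(4)}+R_{aa}\ddot{E}_c+\dot{R}_{aa}\dot{E}_c-R_{cc}E_c=0$, whose coefficients are the symplectic invariants computed in Proposition~\ref{p:canonicalframe}.

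Next I would insert the $h_0$-expansion of the coefficients. In an isotropic frame Proposition~\ref{p:sfiniti} gives $R_{aa}(0)=r_\lambda=h_0^2+\kappa+3\chi(\cos^2\theta-\sin^2\theta)$, so $R_{aa}=h_0^2+O(1)$, while $R_{cc}$ and $\dot{R}_{aa}$ are of lower order in $h_0$. The leading equation $E_c^{(4)}+h_0^2\ddot{E}_c=0$ is the constant-coefficient Heisenberg oscillator; solving it with the initial data prescribed by the structural equations gives the Wronskian $h_0^{-4}\big(2-2\cos h_0t-h_0t\sin h_0t\big)$, whose first positive zero is $2\pi/|h_0|$. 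Treating the remaining terms as a regular perturbation of this fast oscillator, after rescaling $\tau=|h_0|s$ so that one full rotation of $\theta$ is $\tau\in[0,2\pi]$, I would expand the Wronskian condition in powers of $1/|h_0|$. The first nonvanishing correction is a period-average of $R_{aa}(t)-h_0^2$: the oscillating part $3\chi\cos2\theta$ integrates to zero while $\kappa$ survives, producing $\ell_{\mathrm{con}}=2\pi/|h_0|-\pi\kappa(x_0)/|h_0|^3+O(|h_0|^{-4})$, which is Eq.~\eqref{eq:3dconjl}.

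Finally, for the conjugate point itself I would Taylor-expand the geodesic $\gamma(s)=\EXP_{x_0}(s,\lambda)$ along the flow of $\vec{H}$, using $\dot\gamma=\cos\theta(s)X_1+\sin\theta(s)X_2$ with $\theta(s)=\theta+h_0s+O(1)$ and the structural constants \eqref{eq:strcst} (in particular $c_{12}^0=-1$, which encodes the Reeb direction, and the isotropic normalisation $\{H,h_0\}_x=2\chi h_1h_2$). Over a full $\theta$-rotation the horizontal displacement cancels at leading order, while the enclosed area yields the Reeb term $\pm\pi|h_0|^{-2}X_0|_{x_0}$; carrying the expansion one order further, the anisotropy $\chi$ generates the contribution $\cos^3\theta\,X_2|_{x_0}-\sin^3\theta\,X_1|_{x_0}$ at order $|h_0|^{-3}$, which after substituting $s=\ell_{\mathrm{con}}$ gives Eq.~\eqref{eq:3dconjpart}. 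The main obstacle is the singular-perturbation bookkeeping: the relevant limit is a fast oscillation of frequency $|h_0|$, so one must rescale time, solve the perturbed reduced Jacobi equation with uniform control of the remainders as $h_0\to\pm\infty$, and then reorganise the many structural-function terms into the two invariants $\chi$ and $\kappa$; the detailed computation follows \cite{agrexp}.
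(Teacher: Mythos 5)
You should first be aware that the paper contains no proof of Theorem~\ref{t:3dasconj}: it explicitly defers to \cite{agrexp} and \cite{nostrolibro}, so there is no internal argument to compare against. That said, your skeleton is the natural one given the machinery of the paper, and its structural steps are correct: the characterisation of conjugate times as the first $s>0$ with $J_\lambda(0)\cap J_\lambda(s)\neq\{0\}$ follows from point (i) of Proposition~\ref{p:Jproperties}; the $\sigma$-orthogonal splitting does eliminate the $\spn\{\euler-s\vec{H}\}$ factor for $s\neq 0$ (since $E_a(t),E_c(t)$ lie in the \emph{fixed} symplectic complement $\spn\{\euler,\vec{H}\}^\angle$, projecting onto $\spn\{\euler,\vec{H}\}$ forces the coefficient of $\euler-s\vec{H}$ to vanish); the reduced equation $E_c^{(4)}+R_{aa}\ddot{E}_c+\dot{R}_{aa}\dot{E}_c-R_{cc}E_c=0$ is what the structural equations give; and I verified that the unperturbed Wronskian is indeed $h_0^{-4}\bigl(2-2\cos h_0 t - h_0 t \sin h_0 t\bigr)$, with first zero $2\pi/|h_0|$.

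The genuine gap is in the perturbative step, in two respects. First, you discard $R_{cc}$ and $\dot{R}_{aa}$ as ``of lower order,'' but after the rescaling $\tau=|h_0|t$ all three corrections enter at the \emph{same} relative order $h_0^{-2}$: one has $\dot{R}_{aa}=-6\chi h_0\sin 2\theta(t)+O(1)$ because $\dot\theta\sim h_0$, so $\dot{R}_{aa}/h_0^{3}=O(h_0^{-2})$, and $R_{cc}$ (which vanishes identically in the Heisenberg model but is in general $O(h_0^{2})$) gives $R_{cc}/h_0^{4}=O(h_0^{-2})$, exactly the order of the $\kappa/h_0^2$ term you keep; both therefore affect $\ell_{\text{con}}$ at the claimed order $|h_0|^{-3}$ and cannot be dropped. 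Second, the averaging argument for killing $3\chi\cos 2\theta(t)$ is invalid as stated: this perturbation oscillates at frequency $2|h_0|$, exactly twice the frequency of the unperturbed oscillator (the $2{:}1$ parametric resonance of the Mathieu equation), and the first-order shift of a conjugate time is an integral of the perturbation against \emph{quadratic} expressions in the unperturbed solutions — e.g.\ $\int_0^{2\pi/\omega}\cos(2\omega s+2\theta)\sin^2(\omega s)\,ds\neq 0$ — so resonant terms do not integrate to zero term-by-term. That such terms genuinely survive at this order is visible in the paper itself: the cut length in Theorem~\ref{t:cut3d} retains $2\chi\sin^2\theta$ at order $|h_0|^{-3}$, and the conjugate \emph{point}~\eqref{eq:3dconjpart} retains the $\chi$-astroid. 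The disappearance of $\chi$ from~\eqref{eq:3dconjl} is thus a nontrivial cancellation among the $\delta R_{aa}$, $\dot{R}_{aa}$ and $R_{cc}$ contributions, which must be computed, not averaged away. Finally, the position expansion~\eqref{eq:3dconjpart} — the coefficient $2\pi\chi/|h_0|^3$ and the $\cos^3\theta,\sin^3\theta$ structure — is asserted rather than derived; it requires the third-order expansion of the exponential map, which you (like the paper) defer to \cite{agrexp}. What you have is a correct and well-organised outline of the known proof, not a self-contained proof.
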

Analogous formulae can be obtained for the asymptotics of the cut locus at a point $x_{0}$.   
 
\begin{theorem}\label{t:cut3d}
Assume $\chi(x_0)\neq 0$. In any set of coordinates, and in terms of an isotropic frame $X_{1},X_{2}$, we have the following asymptotic expansion
\begin{equation}
\mathrm{Cut}_{x_0}(h_0,\theta)= x_{0}\pm  \frac{\pi}{|h_{0}|^{2}} X_{0}|_{x_0}\pm\frac{2\pi\chi(x_0)}{|h_{0}|^{3}} \cos\theta X_{1}|_{x_0} +O\left(\frac{1}{|h_{0}|^{4}}\right), \qquad h_0 \to \pm\infty.
\end{equation}
Finally the cut length satisfies
\begin{equation}
\ell_{\text{cut}}(h_0,\theta)=  \frac{2\pi}{|h_{0}|}-\frac{\pi}{|h_{0}|^{3}}(\kappa(x_0)+2\chi(x_0) \sin^{2}\theta)+O\left(\frac{1}{|h_{0}|^{4}}\right), \qquad h_{0} \to \pm\infty.
\end{equation}
\end{theorem}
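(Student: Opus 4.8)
The plan is to mirror the proof of Theorem~\ref{t:3dasconj} for the conjugate locus, working with length-parametrized normal geodesics $\gamma_{(h_0,\theta)}(t) = \EXP_{x_0}(t,\lambda)$ with initial covector $\lambda = (h_0,1,\theta)$ (so that $2H(\lambda)=1$), and crucially exploiting the fact that on a contact sub-Riemannian manifold there are no non-trivial abnormal minimizers. First I would fix privileged coordinates adapted to an isotropic frame $X_1,X_2$ (together with the Reeb field $X_0$) at $x_0$, and expand the exponential map $\EXP_{x_0}(t,\lambda)$ for $t$ of order $2\pi/|h_0|$ as $|h_0|\to\infty$. To leading order the structure is the nilpotent (Heisenberg) approximation, whose geodesics are helices that all refocus on the Reeb axis $\R X_0|_{x_0}$ at the common return time $t_0 = 2\pi/|h_0|$; this produces the first term $\pm\frac{\pi}{|h_0|^2}X_0|_{x_0}$ and the leading cut length $2\pi/|h_0|$, identical to the conjugate case. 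The invariants enter as the first corrections to this picture, through the expansion~\eqref{eq:parentesihh0} of $\{H,h_0\}$ (which controls $\chi$) and the definition~\eqref{eq:defkappa} of $\kappa$.

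The essential difference from the conjugate computation lies in how the cut time is characterized. Since abnormal minimizers are absent, by Theorem~\ref{t:d2sr} the set $\Sigma_{x_0}$ of smooth points of $\f=\tfrac12\dist^2(x_0,\cdot)$ is exactly the complement of $\mathrm{Cut}_{x_0}\cup\{x_0\}$, so the cut point along each geodesic is the first point at which either a conjugate point occurs or global optimality is lost through the Maxwell set, i.e. the simultaneous arrival of two distinct minimizing geodesics. I would argue that, in the regime $|h_0|\to\infty$ and for $\sin\theta\neq0$, the cut is attained on the Maxwell set strictly before the first conjugate time (consistent with the fact that, comparing with~\eqref{eq:3dconjl}, the cut length is shorter by $-\frac{2\pi}{|h_0|^3}\chi(x_0)\sin^2\theta$). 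The mechanism is the breaking of the $\theta$-rotational symmetry of the Heisenberg model: to leading order all geodesics with a fixed $h_0$ are interchanged by the circle action and focus at the same point, making the Maxwell condition totally degenerate; the $\chi$-term splits this degeneracy and singles out, for each $\theta$, a companion direction $\theta'\neq\theta$ with which $\gamma_{(h_0,\theta)}$ first meets. Concretely, I would impose the Maxwell equation $\EXP_{x_0}(t,(h_0,1,\theta)) = \EXP_{x_0}(t,(h_0,1,\theta'))$ and solve asymptotically for the smallest such $t = \ell_{\text{cut}}(h_0,\theta)$ together with the companion $\theta'=\theta'(h_0,\theta)$.

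Carrying out this asymptotic solution yields the cut length expansion: the zeroth-order term $2\pi/|h_0|$ is unchanged, the $\kappa$-contribution at order $1/|h_0|^3$ is shared with the conjugate length, and the Maxwell pairing contributes the extra $-\frac{2\pi}{|h_0|^3}\chi(x_0)\sin^2\theta$, giving $\ell_{\text{cut}} = \frac{2\pi}{|h_0|} - \frac{\pi}{|h_0|^3}\bigl(\kappa(x_0)+2\chi(x_0)\sin^2\theta\bigr) + O(1/|h_0|^4)$. Substituting $t=\ell_{\text{cut}}(h_0,\theta)$ back into the exponential map expansion and collecting terms in the isotropic frame produces the position formula, the $\cos\theta\,X_1|_{x_0}$ direction arising because the Maxwell point lies on the fixed-point locus of the symmetry-breaking reflection encoded by $\chi$, in contrast to the $\cos^3\theta\,X_2|_{x_0} - \sin^3\theta\,X_1|_{x_0}$ combination of the conjugate point in~\eqref{eq:3dconjpart}. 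At the degenerate directions $\theta=0,\pi$ the $\chi$-term drops out and the cut coincides with the conjugate point to this order, which I would treat as a consistency check rather than a separate case.

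The main obstacle I anticipate is twofold. First, one must obtain the exponential map expansion uniformly to order $1/|h_0|^4$, so as to pin down both the $1/|h_0|^3$ terms and the remainder; this is a tedious but routine bookkeeping of the corrections to the Heisenberg helices in terms of the structural functions $c_{ij}^k$, cleanest when performed after a rescaling by the dilations $\delta_\eps$ that reduce to the nilpotent approximation. Second, and conceptually harder, is the rigorous proof that the cut is realized by the Maxwell set rather than by a conjugate point, and that \emph{global} (not merely local) optimality is lost exactly at $\ell_{\text{cut}}$. This last point is where the absence of abnormal minimizers, together with a compactness and continuity argument for minimizers in the spirit of \cite{agrexp,nostrolibro}, becomes indispensable.
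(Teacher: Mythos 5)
You should first be aware that the paper itself contains no proof of Theorem~\ref{t:cut3d}: it is stated in Section~\ref{s:interpretation} purely as background, with the explicit disclaimer that ``for more details and proofs of the statement appearing here one can refer to \cite{agrexp} and \cite{nostrolibro}''. So there is no internal argument to compare against; what your plan actually reconstructs is the strategy of those cited references, and it does so faithfully: privileged coordinates adapted to an isotropic frame, expansion of $\EXP_{x_0}(t,\lambda)$ around the Heisenberg nilpotent approximation in the regime $t\sim 2\pi/|h_0|$, and the dichotomy (valid because contact structures carry no nontrivial abnormal minimizers, so Theorem~\ref{t:d2sr} identifies $\mathrm{Cut}_{x_0}$ with the non-smoothness locus of $\f$) that the cut point is reached either at a conjugate point or on the Maxwell stratum. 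Your identification of the Maxwell mechanism --- the circle symmetry of the Heisenberg model is broken at order $|h_0|^{-3}$ by the term $\{H,h_0\}=2\chi h_1h_2$, the residual reflection pairs $\theta$ with a companion $\theta'$, and the meeting points sweep a segment in the $\mathrm{span}\{X_0,X_1\}$ plane --- is exactly how the cut locus asymptotics is obtained in the literature, and your two flagged obstacles (uniform-in-$h_0$ expansion to order $|h_0|^{-4}$; proving that \emph{global} optimality is lost at the Maxwell time rather than merely local optimality, which requires ruling out shorter competitors outside the fixed-$h_0$ family via uniform small-ball estimates) are indeed where the real work lies. One refinement: the reflection is an exact symmetry only of the truncated normal form, so the Maxwell equation $\EXP_{x_0}(t,(h_0,1,\theta))=\EXP_{x_0}(t,(h_0,1,\theta'))$ must be solved by an implicit-function argument perturbing off the degenerate Heisenberg focus, uniformly as $|h_0|\to\infty$; you gesture at this but it deserves to be named as a step, since the degeneracy of the unperturbed Maxwell set is total.

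One concrete soft spot: your ``consistency check'' at $\theta=0,\pi$ is not quite what you state. At those angles the $\chi$-term drops out of the cut \emph{length} (since $\sin^2\theta=0$), but it is \emph{maximal} in the cut \emph{position} ($\cos\theta=\pm1$); meanwhile the quoted conjugate formula~\eqref{eq:3dconjpart} at $\theta=0$ points along $X_2$, not $X_1$. Taken at face value, two points on the same unit-speed geodesic whose times agree to order $|h_0|^{-3}$ cannot differ in position at order $|h_0|^{-3}$, so the two displayed formulas cannot both be read with the same angle convention --- and indeed they are quoted from sources with differing parametrizations (compare the $\pi/2$-shifted covector convention $h_{w,0}=ie^{i\phi}$ used in the Heisenberg section). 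The correct check is geometric rather than parameter-wise: the endpoints of the cut segment coincide with the two cusps of the conjugate astroid in the $X_1$-direction (the eigendirection of $r_\lambda|_{\distr^*_x}$ with larger eigenvalue), and the cut time equals the conjugate time precisely for the geodesics landing there. With that adjustment, and the global-optimality step carried out as you indicate, your plan matches the proof in \cite{agrexp,nostrolibro}.
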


We draw a picture of the asymptotic conjugate and cut loci in Figure \ref{f:cutconj3d}.
Indeed all geometrical information about the structure of these sets is encoded in a pair of quadratic forms defined on $T_{x_0}^*M$: the restriction of the sub-Riemannian Hamiltonian $H$ to the fiber $T^{*}_{x_{0}}M$ and the curvature $\RR_{\lam}$, seen as the quadratic form $\lam\mapsto r_{\lam}$.

\begin{figure}[!htbp]
\centering
\scalebox{0.8}{\input{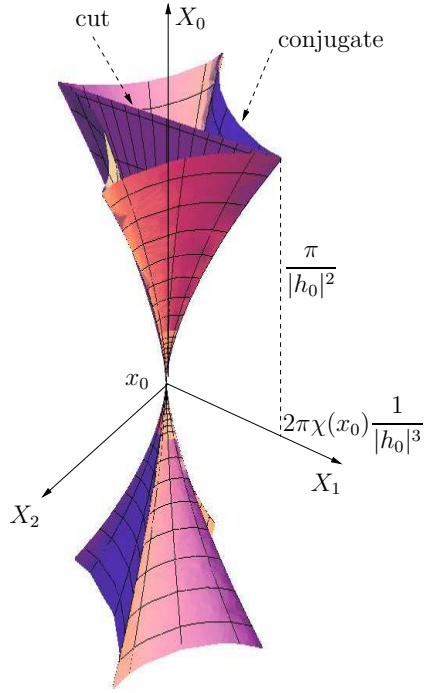}}
\caption{Asymptotic structure of cut and conjugate locus.}
\label{f:cutconj3d}
\end{figure}

Let us consider the kernel of the restriction of sub-Riemannian Hamiltonian to the fiber $T_x^*M$
\begin{equation}
\ker H_{x}= \{\lam\in T^{*}_{x}M \mid \la \lam,v \ra=0, \ \all v\in \distr_{x}\}=\distr^{\perp}_{x}.
\end{equation}
The restriction of $r_{\lam}$ to the 1-dimensional subspace $\distr^{\perp}_{x}$, for every $x\in M$, is the strictly positive quadratic form $r_\lambda|_{\distr^\perp_x} = h_0^2$. Moreover it is equal to $1$ when evaluated on the Reeb vector field. Hence $r_{\lam}$ encodes both the contact form $\alpha$ and its normalization.

Let us consider the orthogonal complement $\distr_{x}^{*}$ of $\distr^{\perp}_{x}$ in the fiber with respect to $r_{\lam}$ (this is indeed isomorphic to the space  of linear functionals defined on $\distr_{x}$). This induces the well-defined splitting
\bqn
T^{*}_{x}M= \distr_{x}^{\perp}\oplus \distr_{x}^{*}=\{\nu_{0}\} \oplus \text{span}\{\nu_{1},\nu_{2}\},
\eqn  
where $\nu_{0}=\alpha$ and $\nu_{1},\nu_{2}$ form a dual basis of $X_{0},X_{1},X_{2}$ (where $X_{1},X_{2}$ is an isotropic frame). Indeed the restriction of $r_{\lam}$ to $\distr^{*}_{x}$ is 
\bqn
r_{\lam}|_{\distr_{x}^{*}}=(\kappa+3\chi)h_{1}^{2}+(\kappa-3\chi)h_{2}^{2}.
\eqn
By using the Euclidean metric induced by $H_x$ on $\distr_x$, it can be identified with a symmetric operator.

From this formulae it is easy to recover the two invariants $\chi, \kappa$ 
\begin{equation}
\trace\left(r_{\lam}|_{\distr_{x}^{*}}\right)=2\kappa, \qquad \discr\left(r_{\lam}|_{\distr_{x}^{*}}\right)=36\chi^{2},
\end{equation}
where the discriminant of an operator $Q$, defined on a two-dimensional space, is defined as the square of the difference of its eigenvalues, and is computed by the formula $\discr(Q)=\trace^{2}(Q)-4\det(Q)$.

The cubic term of the conjugate locus (for a fixed value of $h_{0}$) parametrizes an astroid. The cuspidal directions of the astroid are given by the eigenvectors of $r_{\lam}$ (that correspond to the isotropic directions $X_1,X_2$), and the cut locus intersects the conjugate locus exactly at the cuspidal points in the direction of the eigenvector of $r_{\lam}$ corresponding to the larger eigenvalue (that is $X_1$).
Finally the ``size'' of the cut locus increases for larger values of $\chi$, while $\kappa$ is involved in the length of curves arriving at cut/conjugate locus.

The reader interested in the case when $\chi$ vanishes at $x_{0}$ (but is not constant) is referred to \cite{agrexp}.

\subsection{Final comments}

The study of complete sets of invariants, connected with the problem of equivalence of 3D contact structures, has been considered in different works and contexts with different languages \cite{miosr3d,hughen,falbel}.

Let us introduce a canonical Riemannian metric $g$ on $M$, defined by declaring the Reeb vector field $X_{0}$ to be orthogonal to the distribution and of unit norm. In other words, the metric $g$ satisfies
\begin{equation}
g(X_{i},X_{j})=\delta_{ij}, \qquad \all i,j=0,1,2.
\end{equation}
The purpose of this section is to show how the invariants $\chi$ and $\kappa$ introduced above are related with the curvature of this canonical Riemannian metric and briefly discuss their relation with others invariants introduced in the aforementioned references.

Denote by $\nabla$ the Levi-Civita connection associated with the Riemannian metric $g$. The Christoffel symbols $\Gamma_{ij}^{k}$ of the connections are defined by
\begin{equation}\label{eq:covariant}
\nabla_{X_{i}}X_{j}=\Gamma_{ij}^{k}X_{k}, \qquad \all i,j=0,1,2,
\end{equation} 
and related with the structural functions of the frame by the following formulae:
\begin{equation}
\Gamma_{ij}^{k}=\frac{1}{2}(c_{ij}^{k}-c_{jk}^{i}+c_{ki}^{j}).
\end{equation}
Let us denote by $\mathrm{Sec}(\Pi_x)$ the sectional curvature of the plane $\Pi_x$ generated by two vectors $v,w\in T_{x}M$.
\begin{proposition} \label{p:secR}
The sectional curvature of the plane $\Pi_x = \distr_x$ is
\begin{equation}\label{eq:sec}
\mathrm{Sec}(\distr_{x})=\kappa+\chi^{2}-\frac34.
\end{equation}
\end{proposition}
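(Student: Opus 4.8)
The plan is to compute the sectional curvature directly from its definition, using the canonical $g$-orthonormal frame $\{X_0,X_1,X_2\}$ and the explicit Christoffel symbols. Since $X_1,X_2$ are orthonormal and span $\distr_x$, the sectional curvature reduces to
\begin{equation}
\mathrm{Sec}(\distr_x) = g(R^\nabla(X_1,X_2)X_2,X_1),
\end{equation}
where $R^\nabla(X,Y)Z = \nabla_X\nabla_Y Z - \nabla_Y\nabla_X Z - \nabla_{[X,Y]}Z$. First I would expand this into the three pieces $g(\nabla_{X_1}\nabla_{X_2}X_2,X_1)$, $-g(\nabla_{X_2}\nabla_{X_1}X_2,X_1)$ and $-g(\nabla_{[X_1,X_2]}X_2,X_1)$, using $[X_1,X_2]=\sum_k c_{12}^k X_k$ from \eqref{eq:strcst} so that the last term splits along the frame.

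Next I would substitute $\nabla_{X_i}X_j=\Gamma_{ij}^kX_k$ with $\Gamma_{ij}^k=\tfrac12(c_{ij}^k-c_{jk}^i+c_{ki}^j)$, as in \eqref{eq:covariant}, turning every covariant derivative into a combination of structural functions and their frame derivatives $X_i(c_{\cdot\cdot}^{\cdot})$. Because the frame is $g$-orthonormal the metric coefficients are constant, so no extra derivative-of-metric terms appear and each inner product simply selects the relevant index. The outcome is a polynomial in the $c_{ij}^k$ together with the first derivatives $X_1(\cdot)$, $X_2(\cdot)$ of those functions.

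The decisive simplification comes from imposing the contact normalizations recorded after \eqref{eq:strcst}, namely $c_{12}^0=-1$ and $c_{i0}^0=0$, together with the trace identity $c_{10}^1+c_{20}^2=0$ (equivalently $c_{01}^1=-c_{02}^2$). I expect the contribution of the purely $c_{12}^0$-dependent terms to collapse, via $c_{12}^0=-1$, into the constant $-\tfrac34$; the genuinely differential part $X_1(c_{12}^2)-X_2(c_{12}^1)$ together with the quadratic terms $-(c_{12}^1)^2-(c_{12}^2)^2+\tfrac12(c_{02}^1-c_{01}^2)$ to reassemble exactly the definition \eqref{eq:defkappa} of $\kappa$; and the remaining quadratic combination of $c_{01}^1,c_{02}^2,c_{01}^2,c_{02}^1$ to reduce, after using $c_{01}^1=-c_{02}^2$, to $(c_{01}^1)^2+\tfrac14(c_{02}^1+c_{01}^2)^2=\chi^2$ by \eqref{eq:defchi2}. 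Matching these three groups gives $\mathrm{Sec}(\distr_x)=\kappa+\chi^2-\tfrac34$. A useful sanity check is the Heisenberg group, where $\chi=\kappa=0$ and the formula returns the classical value $-\tfrac34$ for the Riemannian sectional curvature of the horizontal plane.

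The main obstacle is purely the bookkeeping: the raw expansion involves a large number of structural functions, and one must carefully track the antisymmetry $c_{ij}^k=-c_{ji}^k$, apply the contact constraints at the right moment, and verify that all the stray terms not matching $\kappa$, $\chi^2$ or the constant genuinely cancel. A secondary consistency requirement is frame-independence: the final expression must be invariant under rotations of the orthonormal frame $X_1,X_2$, exactly as $\kappa$, $\chi$ and $\mathrm{Sec}(\distr_x)$ are, which provides a further check on the correctness of the computation.
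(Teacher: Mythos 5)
Your proposal takes exactly the same route as the paper: the paper's proof computes $g(\nabla_{X_1}\nabla_{X_2}X_2-\nabla_{X_2}\nabla_{X_1}X_2-\nabla_{[X_1,X_2]}X_2,\,X_1)$ via the Christoffel symbols \eqref{eq:covariant}, arrives at the expression $X_1(c_{12}^2)-X_2(c_{12}^1)-(c_{12}^1)^2-(c_{12}^2)^2+\tfrac12(c_{02}^1-c_{01}^2)+(c_{01}^1)^2+\tfrac14(c_{02}^1+c_{01}^2)^2-\tfrac34$, and identifies the groups with $\kappa$ and $\chi^2$ via \eqref{eq:defkappa} and \eqref{eq:defchi2}, which is precisely your plan including the same grouping of terms. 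Your additional sanity checks (the Heisenberg value $-\tfrac34$ and rotational frame-invariance) are correct and consistent, but the core argument is identical to the paper's.
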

\begin{proof}
It is a long but straightforward computation, using the explicit expression of the covariant derivatives \eqref{eq:covariant}. In terms of an orthonormal frame $X_{1},X_{2}$ for the distribution $\distr_{x}$ we have 
\begin{align*}
\mathrm{Sec}(\distr_{x})&=g(\nabla_{X_{1}}\nabla_{X_{2}}X_{2}-\nabla_{X_{2}}\nabla_{X_{1}}X_{2}
-\nabla_{[X_{1},X_{2}]}X_{2},X_{1})\\
&=X_{1}(c_{12}^{2})-X_{2}(c_{12}^{1})-(c_{12}^{1})^{2}-(c_{12}^{2})^{2}+\frac{1}{2}(c_{02}^{1}-c_{01}^{2})%+\\
%&\quad 
+(c_{01}^{1})^{2}+\frac{1}{4}(c_{02}^{1}+c_{01}^{2})^{2}-\frac{3}{4},
\end{align*}
and \eqref{eq:sec} follows from the explicit expressions \eqref{eq:defchi2} and \eqref{eq:defkappa} of $\chi$ and $\kappa$.
\end{proof}
In \cite{hughen}, using the Cartan's moving frame method, the author introduces the family of generating invariants $a_{1},a_{2}, K \in C^\infty(M)$. In terms of these invariants one has 
\begin{equation}
\mathrm{Sec}(\distr_{x})=K+a_{1}^{2}+a_{2}^{2}-\frac34.
\end{equation}
The author also observe that $K=4W$, where $W$ is the Tanaka-Webster curvature of the CR structure associated with the sub-Riemannian one, see \cite[p.15]{hughen}. Notice that also that $\kappa=4W$ (see \cite{AAPL}), hence $\kappa=K$. This, together with Proposition \ref{p:secR}, gives the following relation between the metric invariants: 
\begin{equation}\label{eq:hugen}
\kappa=K,\qquad \chi =\sqrt{a_{1}^{2}+a_{2}^{2}}.
\end{equation}
With these invariants, the author in \cite{hughen} proved Bonnet-Myers type results for 3D contact structures.

Another approach to the classification problem of 3D sub-Riemannian structures is the one of \cite{falbel}, where the authors employ the existence of a canonical linear connection (with non zero torsion) associated with the sub-Riemannian structure. The authors introduce the family of generating invariants $K,\tau_{0},W_{1},W_{2}$, associated with this connection. It is possible to show that the first two invariants coincides with $\kappa$ and $\chi$ respectively. In the case of left-invariant structures with $\chi>0$, the remaining two invariants can be used to distinguish non-isometric structures with same (constant) value of $\chi$ and $\kappa$, see \cite{falbel,miosr3d}.

\finereview} %Asymptotics of the Jacobi curve
\chapter{Sub-Laplacian and Jacobi curves}\label{c:sublapproof}

Throughout this chapter, we assume $M$ to be an equiregular sub-Riemannian manifold (that is, the rank of the distribution $\distr$ is constant, equal to $k$). Nevertheless, most of the statements of this chapter hold true in the general case, by replacing the sub-Riemannian inner product on $\distr$ with the Hamiltonian inner product. The final goal of this chapter is the proof of Theorem~\ref{t:main3}, that is an asymptotic formula for the sub-Laplacian of the cost function. We start with a general discussion about the computation of the sub-Laplacian at a fixed point.

Let $f \in C^{\infty}(M)$, $x \in M$ and $\lam = d_x f \in T_{x}^* M$. Moreover, let $X_1,\dots,X_k$ be a local orthonormal frame for the sub-Riemannian structure. All our considerations are local, then we assume without loss of generality that the frame $X_1,\dots,X_{n}$ is globally defined. Then, by Eq.~\eqref{eq:sublaplframe}, the sub-Laplacian associated with the volume form $\mu$ writes
\begin{equation}
\lapl_\mu f = \sum_{i=1}^k X_i^2(f) + \dive_\mu (X_i)X_i(f).
\end{equation}
As one can see, the sub-Laplacian is the sum of two terms. The first term, $\sum_{i=1}^k X_i^2(f)$, is a \virg{sum of squares} which does not depend on the choice of the volume form. On the other hand, the second term, namely $\sum_{i=1}^k \dive_\mu (X_i) X_i(f)$ depends on $\mu$ through the divergence operator. When $x$ is a critical point for $f$, the second term vanishes, and the sub-Laplacian can be computed by taking the trace of the ordinary second differential of $f$ (see Lemma~\ref{l:d2lapl}). On the other hand, if $x$ is non-critical, we need to compute both terms explicitly. 

We start with the second term. Let $\theta_1,\dots,\theta_n$ be the coframe dual to $X_1,\dots,X_n$. Namely $\theta_i(X_j) = \delta_{ij}$. Then, there exists a smooth function $g \in C^\infty(M)$ such that $\mu = e^{g} \theta_1\wedge \ldots \wedge \theta_n$. Finally, let $c_{ij}^k \in C^\infty(M)$ be the \emph{structure functions} defined by $[X_i,X_j] = \sum_{k=1}^n c_{ij}^k X_k$. A standard computation using the definition of divergence gives
\begin{equation}
\dive_\mu (X_i) = X_i(g) - \sum_{j=1}^n c_{ij}^j.
\end{equation}
Thus, the second term of the sub-Laplacian is
\begin{equation}\label{eq:slap2}
\sum_{i=1}^k \dive_\mu(X_i)X_i(f) = \langle \grad f| \grad g\rangle - \sum_{i=1}^k\sum_{j=1}^n c_{ij}^j X_i(f).
\end{equation}

The first term of the sub-Laplacian can be computed through the generalized second differential introduced with Definition~\ref{d:secdif}. Recall that the second differential at a non critical point $x$ is a linear map $d_x^2 f : T_x M \to T_\lam (T^*M)$.

\section{Coordinate lift of a local frame}

We introduce a special basis of $T_\lam (T^*M)$, associated with a choice of the local frame $X_1,\dots,X_n$, which is a powerful tool for explicit calculations. We define an associated frame on $T^*M$ as follows. For $i = 1,\dots,n$ let $h_i :T^*M \to \mathbb{R}$ be the linear-on-fibres function defined by $\lambda \mapsto h_i(\lambda) \doteq  \langle \lambda, X_i\rangle$. The action of the derivations on $T^*M$ is completely determined by the action on affine functions, namely functions $a \in C^\infty(T^*M)$ such that $a(\lambda) = \langle \lambda, Y \rangle + \pi^* g$ for some $Y\in\VecM$, $g\in C^\infty(M)$. Then, we define the \emph{coordinate lift of a field} $X \in \VecM$ as the field $\wt{X} \in \mathrm{Vec}(T^*M)$ such that $\wt{X}(h_i) = 0$ for $i=1,\dots,n$ and $\wt{X}(\pi^*g) = X(g)$. This, together with Leibniz rule, characterize the action of $\wt{X}$ on affine functions, and then completely define $\wt{X}$. Indeed, by definition, $\pi_* \wt{X} = X$. On the other hand, we define the (vertical) fields $\partial_{h_i}$ such that $\partial_{h_i} (\pi^* g) = 0$, and $\partial_{h_i} (h_j) = \delta_{ij}$. 
It is easy to check that $\{ \partial_{h_i}, \wt{X}_i\}_{i=0}^{n}$ is a frame on $T^*M$. We call such a frame the \emph{coordinate lifted frame}, and we employ the shorthand $\partial_i\doteq  \partial_{h_i}$. \index{coordinate lifted frame}
Observe that, by the same procedure, we can define the coordinate lift of a vector $X \in T_x M$ (i.e. not necessarily a field) at any point $\lam  \in T_x^*M$.
\begin{remark}\label{r:lift}
Remember  that we  require $X_1,\dots,X_n$ to be \emph{fields} (and not simple vectors in $T_x M$) in order to define the coordinate lift. In particular, the lift $\wt{X}|_\lam \in T_\lam (T^*M)$ depends on the germ at $x$ of the chosen frame $X_1,\dots,X_n$. On the other hand, $\partial_i|_\lam$ depends only on the value of $X_1,\dots,X_n$ at $x$.
\end{remark}

\begin{lemma}\label{l:coordlift}
Let $X \in T_x M$. In terms of a coordinate lifted frame,
\begin{equation}
d^2_x f (X) = \wt{X} + \sum_{i=1}^n X(X_i(f)) \partial_i,
\end{equation}
where $X(X_i(f))$ is understood to be computed at $x$ and $\wt{X},\partial_i \in T_\lam (T^*M)$.
\end{lemma}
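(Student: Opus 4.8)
The plan is to exploit the fact that a tangent vector to $T^{*}M$ at $\lam$ is completely determined by its action on \emph{affine functions}, i.e. functions of the form $a(\lam)=\la\lam,Y\ra+\pi^{*}g$ with $Y\in\VecM$ and $g\in C^{\infty}(M)$. Since $X_{1},\dots,X_{n}$ is a local frame, the functions $h_{1},\dots,h_{n}$ together with the pullbacks $\pi^{*}x^{1},\dots,\pi^{*}x^{n}$ of local coordinates on $M$ form a coordinate system on $T^{*}M$ near $\lam$; as $\{\partial_{i},\wt{X}_{i}\}$ is the dual frame, it suffices to verify that the two sides of the claimed identity act in the same way on every $h_{j}$ and on every $\pi^{*}g$.

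First I would unravel the definition of the second differential. By Definition~\ref{d:secdif}, $d^{2}_{x}f(X)=\frac{d}{ds}\big|_{s=0}d_{\g(s)}f$, where $\g$ is any curve with $\g(0)=x$ and $\dot\g(0)=X$; this is the tangent vector at $s=0$ to the curve $s\mapsto d_{\g(s)}f$ in $T^{*}M$. Hence, for every $F\in C^{\infty}(T^{*}M)$ one has $d^{2}_{x}f(X)(F)=\frac{d}{ds}\big|_{s=0}F(d_{\g(s)}f)$. I would then evaluate on the two families of coordinate functions. For $F=h_{j}$, using $\la d_{p}f,X_{j}|_{p}\ra=X_{j}(f)(p)$,
\begin{equation}
d^{2}_{x}f(X)(h_{j})=\frac{d}{ds}\Big|_{s=0}\la d_{\g(s)}f,X_{j}\ra=\frac{d}{ds}\Big|_{s=0}X_{j}(f)(\g(s))=X(X_{j}(f))|_{x}.
\end{equation}
For $F=\pi^{*}g$, the key point is that $df$ is a section of $T^{*}M$, so $\pi\circ df=\id_{M}$ and $\pi(d_{\g(s)}f)=\g(s)$, giving
\begin{equation}
d^{2}_{x}f(X)(\pi^{*}g)=\frac{d}{ds}\Big|_{s=0}g(\pi(d_{\g(s)}f))=\frac{d}{ds}\Big|_{s=0}g(\g(s))=X(g)|_{x}.
\end{equation}

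Finally I would compute the action of the proposed expression. The defining properties of the coordinate lift yield $\wt{X}(h_{j})=0$, $\wt{X}(\pi^{*}g)=X(g)$, $\partial_{i}(h_{j})=\delta_{ij}$ and $\partial_{i}(\pi^{*}g)=0$, so that $\wt{X}+\sum_{i}X(X_{i}(f))\partial_{i}$ acts on $h_{j}$ as $\sum_{i}X(X_{i}(f))\delta_{ij}=X(X_{j}(f))$ and on $\pi^{*}g$ as $X(g)$, matching the two displays above. Since the two tangent vectors in $T_{\lam}(T^{*}M)$ agree on all the coordinate functions $h_{j}$ and $\pi^{*}g$, they coincide, which proves the lemma. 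The only delicate point is the bookkeeping of evaluation points (the vector $X$ differentiates at $x$, while $h_{j}$ lives on $T^{*}M$ and $X_{j}(f)$ on $M$); once the chain rule and the identity $\pi\circ df=\id_{M}$ are applied correctly, no further obstacle remains, the argument being a coordinate-free restatement of differentiating the section $df$.
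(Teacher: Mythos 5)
Your proof is correct and follows essentially the same route as the paper: both arguments compute the action of $d^2_x f(X) = d_x(df)(X)$ on the affine functions $h_j$ and $\pi^* g$ (obtaining $X(X_j(f))$ and $X(g)$ respectively, the latter via $\pi\circ df = \id_M$) and then match against the defining properties of $\wt{X}$ and $\partial_i$. Your explicit unravelling through a curve $\gamma$ is just a spelled-out version of the paper's one-line chain-rule computation $d^2_x f(X)(h_i) = X(h_i\circ df)$.
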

\begin{proof}
We explicitly compute the action of the vector $d^2_xf (X) \in T_\lam (T^*M)$ on affine functions. First, for any $g \in C^\infty(M)$, $d^2_x f(X) (\pi^*g) = \pi_* \circ d^2_x f (X) (g) = X(g)$. Moreover, $d^2_x f(X)(h_i) = X ( h_i \circ d f) = X(\langle d f, X_i\rangle) = X(X_i(f))$.   
\end{proof}
Lemma~\ref{l:coordlift}, when applied to the vectors $X_1,\ldots,X_k$, completely characterize the second order component of the sub-Laplacian, in terms of the second differential $d^2_x f$.

\section{Sub-Laplacian of the geodesic cost}

Assume $f = c_t$, that is the geodesic cost associated with an ample, equiregular geodesic $\gamma:[0,T]\to M$. As usual, let $x = \gamma(0)$ be the initial point, $\lam = d_x c_t$ the initial covector, and $J_\lam(\cdot)$ the associated Jacobi curve, with Young diagram $D$. As discussed in Chapter~\ref{c:proof}, there is a class of preferred frames in $T_\lam (T^*M)$, namely the canonical moving frame $\{E_{ai}(t),F_{ai}(t)\}_{ai \in D}$. In order to employ the results of Theorem~\ref{t:Sasymptotic} for the computation of $\lapl c_t$, we first relate the canonical frame with a coordinate lifted frame. As a first step, we need the following lemma, which is an extension of Lemma~\ref{l:orthframe} along the geodesic.

\begin{lemma}\label{l:hopefully}
Let $\{E_{ai}(t),F_{ai}(t)\}_{ai \in D}$ be a canonical moving frame for $J_\lam(\cdot)$ and consider the following vector fields along $\gamma$:
\begin{equation}
X_{ai}(t)\doteq \pi_*\circ e^{t\vec{H}}_* F_{ai}(t) \in T_{\gamma(t)} M, \qquad ai \in D.
\end{equation}
The set $\{X_{ai}(t)\}_{ai\in D}$ is a basis for $T_{\gamma(t)} M$. Moreover $\{X_{a1}(t)\}_{a=1}^k$ is an orthonormal basis for $\distr_{\gamma(t)}$ along the geodesic. Finally, consider any smooth extension of $\{X_{ai}(t)\}_{ai\in D}$ in a neighbourhood of $\gamma$, and the associated coordinate lifted frame. Then
\begin{equation}
E_{ai}(t) = e^{-t\vec{H}}_* \partial_{ai}|_{\lambda(t)},
\end{equation}
\end{lemma}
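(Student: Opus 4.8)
The plan is to reduce the whole statement to the already-established initial-time case (Lemma~\ref{l:orthframe}) by exploiting the behaviour of the canonical frame under the Hamiltonian flow, and then to identify vertical vectors with covectors through Lemma~\ref{l:simplettico}. The basic computational fact I would establish first is the \emph{shift property}: setting $\hat{E}_{ai}(s)\doteq e^{t\vec{H}}_* E_{ai}(t+s)$ and $\hat{F}_{ai}(s)\doteq e^{t\vec{H}}_* F_{ai}(t+s)$, the family $\{\hat{E}_{ai}(s),\hat{F}_{ai}(s)\}_{ai\in D}$ is a canonical frame for the Jacobi curve $J_{\lambda(t)}(\cdot)$ based at $\lambda(t)=e^{t\vec{H}}(\lambda)$. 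Indeed, by point (ii) of Proposition~\ref{p:Jproperties} one has $J_{\lambda(t)}(s)=e^{t\vec{H}}_* J_\lam(t+s)=\spn\{\hat{E}_{ai}(s)\}_{ai\in D}$; since $e^{t\vec{H}}_*$ is a symplectomorphism independent of $s$, it preserves the Darboux relations and commutes with $\tfrac{d}{ds}$, so the structural equations hold with the shifted (still normal) family $\hat{R}(s)=R(t+s)$.

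Granting this, Parts 1 and 2 follow quickly. For Part 1, the map $e^{t\vec{H}}_*$ sends the basis $\{E_{ai}(t),F_{ai}(t)\}$ of $\Sigma=T_\lam(T^*M)$ to a basis $\{\hat{E}_{ai}(0),\hat{F}_{ai}(0)\}$ of $T_{\lambda(t)}(T^*M)$. By point (i) of Proposition~\ref{p:Jproperties} the vectors $\hat{E}_{ai}(0)$ span $J_{\lambda(t)}(0)=\ve_{\lambda(t)}=\ker\pi_*$, so the projections $\pi_*\hat{F}_{ai}(0)=X_{ai}(t)$ descend to a basis of $T_{\gamma(t)}M\simeq T_{\lambda(t)}(T^*M)/\ker\pi_*$. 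For Part 2, since $X_{a1}(t)=\pi_*\hat{F}_{a1}(0)$ and $\{\hat{E}_{ai}(0),\hat{F}_{ai}(0)\}$ is a canonical frame for $J_{\lambda(t)}$ at its initial time, Lemma~\ref{l:orthframe} applied at the base point $\gamma(t)$ yields that $\{X_{a1}(t)\}_{a=1}^k$ is an orthonormal basis of $(\distr_{\gamma(t)},\langle\cdot|\cdot\rangle_{\lambda(t)})$, which in the sub-Riemannian case is the sub-Riemannian inner product.

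For Part 3 I would prove the equivalent identity $\hat{E}_{ai}(0)=\partial_{ai}|_{\lambda(t)}$. Both are vertical vectors, and I compare them as covectors under the canonical identification $\ve_{\lambda(t)}\simeq T^*_{\gamma(t)}M$. On one side, the coordinate lift associated with any extension of the frame $\{X_{bj}(t)\}$ gives, under this identification, exactly the dual coframe element, characterized by $\langle\partial_{ai}|_{\lambda(t)},X_{bj}(t)\rangle=\delta_{ab}\delta_{ij}$; by Remark~\ref{r:lift} this value depends only on the frame at $\gamma(t)$, so the choice of extension is immaterial. On the other side, Lemma~\ref{l:simplettico} together with the $\sigma$-invariance of $e^{t\vec{H}}_*$ gives
\begin{equation*}
\langle\hat{E}_{ai}(0),X_{bj}(t)\rangle=\sigma(\hat{E}_{ai}(0),\hat{F}_{bj}(0))=\sigma(E_{ai}(t),F_{bj}(t))=\delta_{ab}\delta_{ij},
\end{equation*}
the last equality being the Darboux property of the canonical frame. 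Hence $\hat{E}_{ai}(0)$ and $\partial_{ai}|_{\lambda(t)}$ agree as covectors, thus as vertical vectors, and applying $e^{-t\vec{H}}_*$ gives $E_{ai}(t)=e^{-t\vec{H}}_*\partial_{ai}|_{\lambda(t)}$.

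The main obstacle is conceptual rather than computational: one must carefully track the three identifications in play — the pushforward by the Hamiltonian flow, the canonical identification of the vertical fibre $\ve_{\lambda(t)}$ with $T^*_{\gamma(t)}M$, and the duality between the coordinate-lift vertical fields $\partial_{ai}$ and the frame $X_{ai}$ — and verify, via Remark~\ref{r:lift}, that $\partial_{ai}|_{\lambda(t)}$ depends only on the pointwise frame $\{X_{bj}(t)\}$, so that extending $\{X_{ai}(t)\}$ off $\gamma$ arbitrarily is harmless. Once the shift property is in hand, the rest is bookkeeping with no genuine computation remaining.
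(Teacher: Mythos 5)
Your proposal is correct and follows essentially the same route as the paper: the shift property via point (ii) of Proposition~\ref{p:Jproperties} to reduce to the initial time, Lemma~\ref{l:orthframe} for the orthonormality, and the identification of $E_{ai}(t)$ with $e^{-t\vec{H}}_*\partial_{ai}|_{\lambda(t)}$ by checking the Darboux pairings against the $F_{bj}$'s. Your use of Lemma~\ref{l:simplettico} to compute $\sigma(\hat{E}_{ai}(0),\hat{F}_{bj}(0))=\langle \hat{E}_{ai}(0),X_{bj}(t)\rangle$ is just a repackaging of the paper's coordinate computation $\sigma(\partial_{ai},F_{bj})=\theta_{ai}(\pi_*F_{bj})=\delta_{ab}\delta_{ij}$, so no genuinely different idea is involved.
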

Lemma~\ref{l:hopefully} states that the projection of the horizontal elements of the canonical frame (the ``$F$''s) corresponding to the first column of the Young diagram are an orthonormal frame for the sub-Riemannian distribution along the geodesic. Moreover, if we complete the frame with the projections of the other horizontal elements, and we introduce the associated coordinate lifted frame along the extremal $e^{t\vec{H}}(\lam)$, the vertical elements of the canonical frame (the ``$E$''s) have a simple expression. Observe that, according to Remark~\ref{r:lift}, the last statement of the lemma does not depend on the choice of the extension of the vectors $X_{ai}(t)$ in a neighbourhood of $\gamma$.

\begin{proof}
Assume first that the statement is true at $t=0$. Then, let $0<t < T$. Point (ii) of Proposition~\ref{p:Jproperties} gives the relation between the Jacobi curves \virg{attached} at different points $\lambda(t)= e^{t\vec{H}}(\lam)$ along the lift of $\gamma$. Namely
\begin{equation}
J_{\lam(t)}(\cdot) = e^{t\vec{H}}_* J _\lam(t+\cdot).
\end{equation}
As a consequence of this, and the definition of canonical frame, if $\{E_{ai}(\cdot),F_{ai}(\cdot)\}_{ai \in D}$ is a canonical frame for the Jacobi curve $J_\lam(\cdot )$, it follows that, for any fixed $t$,
\begin{gather}
\wt{E}_{ai}(\cdot)\doteq  e^{t\vec{H}}_* E_{ai}(t+\cdot),\\
\wt{F}_{ai}(\cdot)\doteq  e^{t\vec{H}}_* F_{ai}(t+\cdot),
\end{gather}
is a canonical frame for the Jacobi curve $J_{\lam(t)}(\cdot)$. In particular, $X_{ai}(t) = \pi_* \wt{F}_{ai}(0)$, and the statements now follow from the assumption that the lemma is true at the initial time of the Jacobi curve $J_{\lam(t)}(\cdot)$.

Then, we only need to prove the statement at $t=0$. For clarity, we suppress the explicit evaluation at $t=0$. As usual, let $\hor_\lam = \spn\{F_{ai}\}_{ai\in D}$ be the horizontal subspace and $\ve_\lam = \spn\{E_{ai}\}_{ai \in D}$ be the vertical subspace. By definition of canonical frame, $T_\lam (T^*M) = \hor_\lam \oplus \ve_\lam$. Since $\ve_\lam = \ker \pi_*$, and $\pi_*$ is a submersion, $\pi_* \hor_\lam = T_x M$. Thus $\{X_{ai}\}_{ai \in D}$ is a basis for $T_x M$. By Lemma~\ref{l:orthframe}, the set $\{X_{a1}\}_{a=1}^k$ is an orthonormal frame for the Hamiltonian inner product $\langle\cdot \, |\cdot\, \rangle_\lam$ which, in the sub-Riemannian case, does not depend on $\lam$ and coincides with the sub-Riemannian inner product (see Remark~\ref{r:hamsub}). Now, we show that $E_{ai} = \partial_{ai}|_\lam$. Since the canonical frame is Darboux, this is equivalent to $\sigma(\partial_{ai},F_{bj}) = \delta_{ab}\delta_{ij}$. Indeed, in terms of the coframe $\{\theta_{ai}\}_{ai \in D}$, dual to $\{X_{ai}\}_{ai \in D}$
\begin{equation}
\sigma = \sum_{ai \in D} dh_{ai} \wedge \pi^* \theta_{ai} + h_{ai} \pi^* d\theta_{ai}.
\end{equation}
Therefore
\begin{equation}
\sigma(\partial_{ai},F_{bj}) = \theta_{ai}(\pi_* F_{bj}) = \theta_{ai}(X_{bj}) = \delta_{ab}\delta_{ij}.\qedhere
\end{equation}
\end{proof}

\section{Proof of Theorem~\ref{t:main3}}
We now have all the tools we need in order to prove Theorem~\ref{t:main3}, concerning the asymptotic behaviour of $\lapl c_t$.

The idea is the compute the \virg{hard} term of $\lapl c_t$, namely the sum of squares term, through the coordinate representation of the Jacobi curve. By Lemma~\ref{l:coordlift}, written in terms of the frame $X_{ai}\doteq  X_{ai}(0) = \pi_* F_{ai}(0)$ of $T_x M$, and its coordinate lift, we have
\begin{equation}\label{eq:slap1early}
d^2_x c_t(X_{\rho}) = \wt{X}_{\rho}  +\sum_{\nu \in D} X_{\rho}(X_{\nu}(c_t)) \partial_{\nu},
\end{equation}
where we used greek letters as a shorthand for boxes of the Young diagram $D$. When $\rho$ belongs to the first column of the Young diagram $D$, namely $\rho = a1$ (in this case, we simply write $a$), we have, as a consequence of Lemma~\ref{l:hopefully} and the structural equations
\begin{equation}
F_{a}(0) = -\dot{E}_{a}(0) = - [\vec{H},\partial_{a}] = \wt{X}_{a} +  \sum_{\nu \in D}\left(\sum_{\kappa \in D} c_{a\nu}^\kappa h_\kappa +\sum_{b=1}^k h_b c_{b\nu}^a \right) \partial_\nu,
\end{equation}
where everything is evaluated at $\lam$. Therefore, from Eq.~\eqref{eq:slap1early}, we obtain
\begin{equation}
d^2_x c_t(X_a) = F_a(0) + \sum_{\nu \in D} \left( X_a (X_\nu(c_t))-\sum_{\kappa \in D}c_{a\nu}^\kappa h_\kappa - \sum_{b=1}^k h_b c_{b\nu}^a \right) E_\nu(0).
\end{equation}
Recall that $S(t)^{-1}: \hor_\lam \to \ve_\lam$ is the matrix that represents the Jacobi curve in the coordinates induced by the canonical frame (at $t=0$). More explicitly
\begin{equation}
d^2_x c_t(X_\rho) = F_\rho(0) + \sum_{\nu \in D} S(t)^{-1}_{\rho\nu} E_\nu(0).
\end{equation}
Moreover, since we restricted $d^2_x c_t$ to elements of $\distr_x$, we obtain
\begin{equation}\label{eq:slap1almost}
\sum_{a= 1}^k X_a^2(c_t) =  \sum_{a=1}^k \Sred(t)^{-1}_{aa} + \sum_{a=1}^k \sum_{b=1}^k h_a c_{ab}^b.
\end{equation}
Now observe that, if $\rho$ \emph{does not} belong to the first column of the Young diagram, we have
\begin{equation}
\dot{E}_\rho(0) = [\vec{H},\partial_\rho] = \sum_{a=1}^k\sum_{\nu \in D} h_a c_{a\nu}^\rho E_\nu(0).
\end{equation}
On the other hand, by the structural equations, $\dot{E}_\rho(0)$ is a vertical vector that does not have $E_\rho(0)$ components. Then, when $\rho$ is not in the first column of $D$, $\sum_{a=1}^k h_a c_{a\rho}^\rho = 0$. Thus we rewrite Eq.~\eqref{eq:slap1almost} as
\begin{equation}\label{eq:slap1}
\sum_{a= 1}^k X_a^2(c_t) =  \sum_{a=1}^k \Sred(t)^{-1}_{aa} + \sum_{a=1}^k \sum_{\rho \in D} h_a c_{a\rho}^\rho.
\end{equation}
By taking the sum of Eq.~\eqref{eq:slap2} and Eq.~\eqref{eq:slap1}, we obtain
\begin{equation}
\lapl_{\mu} c_t|_x =  \sum_{a=1}^k \Sred(t)^{-1}_{aa} + \langle \nabla_x c_t| \nabla_x g \rangle,
\end{equation}
where we recall that the function $g$ is implicitly defined (in a neighbourhood of $\gamma$) by $\mu = e^g \theta_1\wedge\ldots\wedge\theta_n$. Remember that, at $x = \gamma(0)$, $\nabla_x c_t = \dot{\gamma}(0)$. Then 
\begin{equation}
\lapl_{\mu} c_t|_x =  \sum_{a=1}^k \Sred(t)^{-1}_{aa} + \left.\frac{d}{dt}\right|_{t=0} g(\gamma(t)).
\end{equation}
\begin{remark}\label{r:parallelotopo}
Observe that if $P_t\doteq X_1(t)\wedge\ldots\wedge X_n(t) \in \bigwedge^n T_{\gamma(t)}M$ is the parallelotope whose edges are the elements of the frame $\{X_i(t)\}_{i=1}^n$, then $g(\gamma(t)) = \log |\mu(P_t)|$, that is the logarithm of the volume of the parallelotope $P_t$.
\end{remark}
Thus, by replacing the results of Corollary~\ref{c:Sridasymptotic} about the asymptotics of the reduced Jacobi curve, we obtain
\begin{equation}
\lapl_{\mu} c_t|_x = - \frac{\trace \Qz_\lam}{t} + \dot{g}(0)  +  \frac{1}{3} \Ric(\lam)t + O(t^2),
\end{equation}
where $\dot{g}(0)\doteq \left.\frac{d}{dt}\right|_{t=0} g(\gamma(t))$. Since $\f_t = -t c_t$, we obtain
\begin{equation}
\lapl_{\mu} \f_t|_x = \trace \Qz_\lam - \dot{g}(0) t -\frac{1}{3}\Ric(\lam)t^2 + O(t^3),
\end{equation}
which is the sought expansion, valid for small $t$.

\subsection{Computation of the linear term}
Recall that, for any equiregular smooth admissible curve $\gamma :[0,T] \to M$, the Lie derivative in the direction of the curve defines surjective linear maps
\begin{equation}
\mc{L}_\tanf : \DD^{i}_{\gamma(t)} / \DD^{i-1}_{\gamma(t)} \to \DD^{i+1}_{\gamma(t)} / \DD^i_{\gamma(t)}, \qquad i \geq 1,
\end{equation}
as defined in Section~\ref{s:slowgrowth}. In particular, notice that $\mc{L}_\tanf^i : \distr_{\gamma(t)} \to \DD^{i+1}_{\gamma(t)} / \DD^i_{\gamma(t)}$, for $i \geq 1$ is a well defined, surjective linear map from the distribution (see also Lemma~\ref{l:flag0}).
\begin{lemma}\label{l:adrecover}
For $t \in [0,T]$, we recover the projections $X_{ai}(t)= e^{t\vec{H}}_*F_{ai}(t) \in T_{\gamma(t)} M$ as
\begin{equation}
X_{ai}(t) = (-1)^{i-1} \mc{L}^{i-1}_\tanf(X_{a1}(t)) \bmod \DD^{i-1}_{\gamma(t)}, \qquad a=1,\ldots,k, \quad i = 1,\ldots,n_a.
\end{equation}
\end{lemma}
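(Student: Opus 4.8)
The plan is to reduce the statement to $t=0$ and then establish the one-step recursion
\[
X_{a(i+1)}(0) \equiv -\,\mc{L}_\tanf(X_{ai})\big|_{\gamma(0)} \bmod \DD^i_\gamma(0),\qquad i<n_a,
\]
whose iteration produces the claimed formula. (Throughout I read $X_{ai}(t)=\pi_*\circ e^{t\vec{H}}_*F_{ai}(t)\in T_{\gamma(t)}M$, as in Lemma~\ref{l:hopefully}.) For the reduction: by point (ii) of Proposition~\ref{p:Jproperties} the shifted frame $e^{t_0\vec{H}}_*E_{ai}(t_0+\cdot),\,e^{t_0\vec{H}}_*F_{ai}(t_0+\cdot)$ is a canonical frame for $J_{\lam(t_0)}$ (exactly as in the proof of Lemma~\ref{l:hopefully}), and its associated fields satisfy $\pi_*\,e^{s\vec{H}}_*\big(e^{t_0\vec{H}}_*F_{ai}(t_0+s)\big)=X_{ai}(t_0+s)$. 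Since $\DD^i_{\gamma_{t_0}}(0)=\DD^i_\gamma(t_0)$ by Remark~\ref{r:durezza} and $\mc{L}_\tanf$ is local, the statement at time $t_0$ for $\gamma$ is equivalent to the statement at time $0$ for the shifted geodesic. Hence it suffices to argue at $t=0$.

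Next I will identify the flag. Using the structural equations, a direct induction shows that the extension flag of the Jacobi curve is $J^{(i)}_\lam(0)=J_\lam(0)\oplus\spn\{F_{a\ell}(0): 1\le\ell\le i\}$: differentiating $E_{a\ell}$ shifts it leftward inside $J_\lam(0)$ until $\dot E_{a1}=-F_{a1}$, and each further derivative of an $F_{a\ell}$ produces $-F_{a(\ell+1)}$ modulo $J_\lam(0)$, the $R$-terms being vertical. Projecting by $\pi_*\circ e^{t\vec{H}}_*$, using Lemma~\ref{l:hopefully} (so the $E$'s map to $0$) together with Proposition~\ref{p:twoflags}, I obtain $\DD^i_\gamma(0)=\spn\{X_{a\ell}(0):\ell\le i\}$. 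In particular $X_{ai}(0)\in\DD^i_\gamma(0)$, and extending $X_{ai}$ along $\gamma$ by its values $X_{ai}(s)$ gives $X_{ai}(s)\in\DD^i_\gamma(s)$, so the iterated Lie derivative of Remark~\ref{r:family} is well posed.

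For the one-step recursion I will write $\mc{L}_\tanf(X_{ai})|_{\gamma(0)}=\frac{d}{ds}\big|_{s=0}(P_{0,s})^{-1}_*X_{ai}(s)\bmod\DD^i_\gamma(0)$ and expand $(P_{0,s})^{-1}_*X_{ai}(s)$ via the commutation identity $\pi_*\,e^{s\vec{H}}_*|_\lam=(P_{0,s})_*\pi_*|_\lam+D_{\uu(\lam)}\End_{x,s}\circ\uu_*|_\lam$ proved inside the proof of Proposition~\ref{p:twoflags}, together with the endpoint-map formula~\eqref{eq:duev} and $(P_{0,s})^{-1}_*(P_{\tau,s})_*=(P_{0,\tau})^{-1}_*$. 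This yields
\[
(P_{0,s})^{-1}_*X_{ai}(s)=\pi_*F_{ai}(s)+\int_0^s(P_{0,\tau})^{-1}_*\overline{f}_{w_s(\tau)}(\gamma(\tau))\,d\tau,\qquad w_s=\uu_*F_{ai}(s).
\]
Differentiating at $s=0$, the integral contributes only its boundary value $\overline{f}_{w_0(0)}(x)\in\distr_x\subseteq\DD^i_\gamma(0)$, hence vanishes modulo $\DD^i_\gamma(0)$; the first term gives $\pi_*\dot F_{ai}(0)$, and by the structural equations together with $\pi_*E_{bj}(0)=0$ (Lemma~\ref{l:hopefully}) this equals $-\pi_*F_{a(i+1)}(0)=-X_{a(i+1)}(0)$ for $i<n_a$. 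This is precisely the desired one-step recursion.

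Finally, iterating from $i=1$, and using that $\mc{L}_\tanf$ is well defined on the quotients $\DD^i_\gamma/\DD^{i-1}_\gamma$ (Remark~\ref{r:family}) so that it may be applied to the class of $-X_{a(i+1)}$ represented along $\gamma$, I obtain $X_{ai}(0)=(-1)^{i-1}\mc{L}_\tanf^{i-1}(X_{a1}(0))\bmod\DD^{i-1}_\gamma(0)$; the reduction step then promotes this to arbitrary $t$. The main obstacle is the third step: invoking the commutation relation of Proposition~\ref{p:twoflags} at the level of the \emph{horizontal} frame elements $F_{ai}$, and checking that the $s$-derivative of the endpoint-map correction term lands in $\distr_x$ and is therefore negligible modulo $\DD^i_\gamma(0)$, so that both the sign and the leading contribution $-X_{a(i+1)}$ arise exactly from the structural equations.
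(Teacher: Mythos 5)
Your proof is correct, and it shares the paper's skeleton --- an induction on $i$ built on the one-step identity $X_{a(i+1)}(t) = -\mc{L}_\tanf(X_{ai})|_{\gamma(t)} \bmod \DD^{i}_{\gamma(t)}$, which is precisely the displayed equation in the paper's proof --- but you establish that identity by a genuinely different computation. The paper views $F_{ai}|_{\lambda(t)} = e^{t\vec{H}}_* F_{ai}(t)$ as a field along the extremal, writes $X_{a(i+1)} = -\pi_*[\vec{H},F_{ai}]$ via the structural equations, and dispatches the identity with ``a quick computation in terms of a coordinate lifted frame,'' i.e.\ the cotangent-bundle machinery of Chapter~\ref{c:sublapproof} set up in Lemma~\ref{l:hopefully}. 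You instead work downstairs on $M$, rerunning for the \emph{horizontal} frame elements the endpoint-map computation from the proof of Proposition~\ref{p:twoflags}: the commutation identity $\pi_*\circ e^{s\vec{H}}_*|_\lam = (P_{0,s})_*\circ\pi_*|_\lam + D_{\uu(\lam)}\End_{x,s}\circ\uu_*|_\lam$ together with Eq.~\eqref{eq:duev}, noting that the endpoint-map correction contributes at $s=0$ only its boundary value $\overline{f}_{w_0(0)}(x)\in\distr_x\subseteq\DD^i_\gamma(0)$. What your route buys is a self-contained derivation within the variational framework, plus two pieces of bookkeeping the paper's four-line proof suppresses: the explicit reduction to $t=0$ via the shifted canonical frame, and the verification that $X_{ai}(s)\in\DD^i_{\gamma(s)}$ along the geodesic, without which the quotient maps of Remark~\ref{r:family} could not be composed in the iteration. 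What the paper's route buys is brevity, since the bracket $[\vec{H},F_{ai}]$ in a lifted frame yields the identity in one stroke. Two points in your write-up deserve a line of justification rather than assertion. First, the identification $\mc{L}_\tanf(X_{ai})|_{\gamma(0)}=\frac{d}{ds}\big|_{s=0}(P_{0,s})^{-1}_*X_{ai}(s)\bmod\DD^i_\gamma(0)$: the generator of $P_{0,s}$ at $s=0$ is the frozen-control field $y\mapsto f(y,u(0))$, which differs from $\tanf$ by a \emph{horizontal} field vanishing at $\gamma(0)$, and the computation in Proposition~\ref{p:doesnotdepend} shows the resulting bracket discrepancy lies in $\distr_x\subseteq\DD^i_\gamma(0)$, so the identification is legitimate. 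Second, in your flag identification the claim that ``the $R$-terms are vertical'' is only true for a single derivative: iterated derivatives of $\sum_{bj} R_{ab,\ell j}E_{bj}$ do produce horizontal pieces (since $\dot{E}_{b1}=-F_{b1}$), but these lie in earlier extensions, so the conclusion $J^{(i)}_\lam(0)=J_\lam(0)\oplus\spn\{F_{a\ell}(0):\ell\leq i\}$ --- and hence $\DD^i_\gamma(0)=\spn\{X_{a\ell}(0):\ell\leq i\}$, with the dimension count supplied by Proposition~\ref{p:twoflags} --- stands. With these two clarifications made explicit, the argument is complete.
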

\begin{proof}
Fix $a=1,\ldots,k$. For $i=1$ the statement is trivial. Assume the statement to be true for $j\leq i$. Recall that we can see $F_{ai}|_{\lambda(t)} = e^{t\vec{H}}_* F_{ai}(t)$ as a field along the extremal $\lambda(t)$. Then, by the structural equations for the canonical frame, $X_{a(i+1)} = -\pi_*[\vec{H},F_{ai}]$. A quick computation in terms of a coordinate lifted frame proves that
\begin{equation}
X_{a(i+1)}(t) = -[\tanf, X_{ai}]|_{\gamma(t)} \bmod \DD^{i}_{\gamma(t)},
\end{equation}
for an admissible extension $\tanf$ of $\dot{\gamma}$. Thus, by induction, we obtain the statement.
\end{proof}

\begin{proof}[Proof of Theorem~\ref{t:slowgrowth}]
We consider equiregular distributions and ample geodesics $\gamma$ that obey the growth condition
\begin{equation}\label{eq:gcproof}
\dim \DD^i_{\gamma(t)}= \dim \distr^i, \qquad \all i \geq 0.
\end{equation}
We only need to compute explicitly the term $\dot{g}(0)$ of the asymptotic expansion in Theorem~\ref{t:main3}. Recall that, according to the proof of Theorem~\ref{t:main3}, the coefficient of the linear term is given by the following formula (see Remark~\ref{r:parallelotopo})
\begin{equation}\label{eq:linearterm}
\dot{g}(0) = \left.\frac{d}{dt}\right|_{t=0} \log|\mu(P_t)|,
\end{equation}
where $P_t$ is the parallelotope whose edges are the projections $\{X_{ai}(t)\}_{ai \in D}$ of the horizontal part of the canonical frame $X_{ai} = \pi_* \circ e^{t\vec{H}}_* F_{ai}(t) \in T_{\gamma(t)} M$, namely
\begin{equation}\label{eq:parallelotope}
P_t=\bigwedge_{ai \in D} X_{ai}(t).
\end{equation}
By definition of canonical frame, Proposition~\ref{p:twoflags}, and the growth condition~\eqref{eq:gcproof} we have that the elements $\{X_{ai}(t)\}_{ai \in D}$ are a frame along the curve $\gamma(t)$ adapted to the flag of the distribution. More precisely
\begin{equation}
\distr^i_{\gamma(t)} = \spn\{X_{aj}(t)|\,aj \in D,\,1 \leq j \leq i \}.
\end{equation}
By Lemma~\ref{l:adrecover} we can write the adapted frame $\{X_{ai}\}_{ai \in D}$ in terms of the smooth linear maps $\mc{L}_\tanf$, and we obtain the following formula for the parallelotope
\begin{equation}
P_t = \bigwedge_{i=1}^m \bigwedge_{a_i = 1}^{d_i} X_{a_i i}(t) = \bigwedge_{i=1}^m \bigwedge_{a_i = 1}^{d_i} \mc{L}^{i-1}_\tanf( X_{a_i 1}(t)).
\end{equation}
Then, a standard linear algebra argument and the very definition of Popp's volume leads to
\begin{equation}
|\mu(P_t)| = \sqrt{\prod_{i=1}^m \det M_i(t)},
\end{equation}
where the smooth families of operators $M_i(t)$, for $i=1,\ldots,m$ are the one defined in Eq.~\eqref{eq:Mt}. This, together with Eq.~\eqref{eq:parallelotope} completes the computation of the linear term of Theorem~\ref{t:main3} for any ample geodesic satisfying the growth condition~\eqref{eq:gcproof}.
\end{proof}

 %Sub-Laplacian and Jacobi curves

\part{Appendix}

\appendix

\chapter{Smoothness of value function (Theorem~\ref{t:smoothness})} \label{a:proofsmoothness}

The goal of this section is to prove Theorem~\ref{t:smoothness} on the smoothness of the value function. All the relevant definitions can be found in Chapter~\ref{c:affcs}.
As a first step, we generalize the classical definition of conjugate points to our setting.
\begin{definition}
Let $\gamma:[0,T] \to M$ be a strictly normal trajectory, such that $x_0 = \gamma(0)$ and $\gamma(t) = \EXP_{x_0}(t,\lam_0)$. We say that $\gamma(t)$ is \emph{conjugate with $x_0$ along $\gamma$} if $\lam_0$ is a critical point for $\EXP_{x_0,t}$.
\end{definition}
Observe that the relation \virg{being conjugate with} is not reflexive in general. Indeed, even if $\gamma(t)$ is conjugate with $x_0$, there might not even exist an admissible curve starting from $\gamma(t)$ and ending at $x_0$.

We stress that, if $\gamma$ is also abnormal, any $\gamma(t)$ is a critical value of the sub-Riemannian exponential map. Indeed, this is a consequence of the inclusion $\im D_{\lam_0} \EXP_{x_0,t} \subset \im D_u E_{x_0,t} \neq T_{x_0} M$ for abnormal trajectories; being strongly normal is a necessary condition for the absence of critical values along a normal trajectory. Actually, a converse of this statement is true.
\begin{proposition}\label{p:strongnoconj}
Let $\gamma:[0,T] \to M$ be a strongly normal trajectory. Then, there exists an $\eps >0$ such that $\gamma(t)$ is not conjugate with $\gamma(0)$ along $\gamma$ for all $t \in (0,\eps)$.
\end{proposition}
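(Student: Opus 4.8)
The plan is to establish Proposition~\ref{p:strongnoconj} by relating conjugate points along $\gamma$ to the non-surjectivity of the differential of the exponential map and showing that this differential is surjective for small $t>0$ precisely because $\gamma$ is strongly normal. First I would recall the definition: $\gamma(t)$ is conjugate with $\gamma(0)$ exactly when $\lambda_0$ is a critical point of $\EXP_{x_0,t}$, i.e. when $D_{\lambda_0}\EXP_{x_0,t}: T^*_{x_0}M \to T_{\gamma(t)}M$ fails to be surjective. Since $\gamma$ is strongly normal, by Proposition~\ref{p:amplestrn} (or directly by Proposition~\ref{p:amplecontr}) the linearised system along $\gamma$ is controllable in time $t$ for every $t\in(0,T]$, equivalently $\tx{span}\{\DD_\gamma(s), s\in[0,t]\} = T_{x_0}M$ for all $0<t\leq T$; this is the key micro-local information I want to exploit.

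The heart of the argument is to show that the initial covector $\lambda_0$ is a regular point of $\EXP_{x_0,t}$ for small $t>0$. The cleanest route is to factor the exponential map as $\EXP_{x_0,t} = \pi \circ e^{t\vec{H}}$ and compute its differential at $\lambda_0$. Using the Hamiltonian flow description and the formula for the differential of the end-point map (Eq.~\eqref{eq:duev}), one finds that the image of $D_{\lambda_0}\EXP_{x_0,t}$ is controlled by the span $\tx{span}\{(P_{s,t})_*\distr_{\gamma(s)}, s\in[0,t]\}$, which by Remark~\ref{r:abnormal} and Proposition~\ref{p:amplestrn}(ii) equals $T_{\gamma(t)}M$ for all $0<t\le T$ under the strong normality hypothesis. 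More precisely, the strongest and most quantitative statement available is the smoothness result of Theorem~\ref{t:smoothness} (equivalently Theorem~\ref{t:mst}), which asserts that for a strongly normal trajectory there exist $\eps>0$ and a neighbourhood $U$ on which $\SS_t$ is smooth and on which $\gamma$ is the unique minimizer. The plan is to derive the absence of conjugate points from this smoothness: on the region where $\SS_t(x_0,\cdot)$ is smooth, Lemma~\ref{l:lambdat} shows that $\gamma$ is strictly normal and that a smooth local right inverse $\Theta$ for the exponential map exists (exactly as constructed in the proof of Lemma~\ref{l:lambdat}), which forces $\gamma(t)$ to be a regular value of $\EXP_{x_0,t}$ and hence $\lambda_0$ a regular point.

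Concretely, I would argue as follows. By Theorem~\ref{t:mst} applied to the strongly normal geodesic $\gamma$, choose $\eps>0$ so that the geodesic cost $(t,x)\mapsto c_t(x) = -\SS_t(x,\gamma(t))$ is smooth on a neighbourhood of $(0,\eps)\times\{x_0\}$ and $\gamma|_{[0,t]}$ is the unique minimizer connecting $x_0$ with $\gamma(t)$. Fix $t\in(0,\eps)$. Since $x\mapsto \SS_t(x,\gamma(t))$ is smooth at $x_0$ and $\gamma|_{[0,t]}$ is an optimal trajectory joining $x_0$ to $\gamma(t)$, the reversed-dynamics argument of Proposition~\ref{p:critlam0} together with Lemma~\ref{l:lambdat} applies: $\gamma|_{[0,t]}$ is strictly normal and the map $\Theta: y \mapsto e^{-t\vec{H}}(d_y \SS_t(x_0,\cdot))$ is a smooth local right inverse of $\EXP_{x_0,t}$ near $\gamma(t)$. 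The existence of such a right inverse makes $\gamma(t)$ a regular value of $\EXP_{x_0,t}$, so $D_{\lambda_0}\EXP_{x_0,t}$ is surjective and $\lambda_0$ is not a critical point; equivalently $\gamma(t)$ is not conjugate with $x_0$ along $\gamma$. As $t\in(0,\eps)$ was arbitrary, this proves the claim. The main obstacle I anticipate is purely bookkeeping: one must carefully match the time-reversal and covector-sign conventions of Lemma~\ref{l:ll} and Proposition~\ref{p:critlam0} so that the right inverse $\Theta$ is genuinely built from the forward geodesic $\gamma$ and its initial covector $\lambda_0$, rather than from the reversed geodesic, and to confirm that regularity of $\EXP_{x_0,t}$ at $\lambda_0$ (a statement about the covector) follows from regularity of the value $\gamma(t)$. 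Aside from this, the result is essentially a repackaging of the smoothness theorem and Lemma~\ref{l:lambdat}, so no genuinely new estimate should be required.
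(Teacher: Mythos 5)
Your argument is circular within this paper's logical architecture. The smoothness results you lean on — Theorem~\ref{t:smoothness} and its rephrasing Theorem~\ref{t:mst} — are proved in Appendix~\ref{a:proofsmoothness}, and that proof \emph{begins} by invoking Proposition~\ref{p:strongnoconj}: one first chooses $\eps>0$ so that $\gamma(t)$ is not conjugate with $\gamma(0)$ for $t\in(0,\eps)$, which makes $D_{\lam_0}\EXP_{x_0,t}$ of maximal rank, and only then applies the inverse function theorem to $\phi(t,\lam)=(t,\pi(\lam),\EXP_{\pi(\lam)}(t,\lam))$ and the sufficient-optimality Lemma~\ref{l:suffcond} to obtain uniqueness of minimizers and smoothness of $\SS_t$. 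So the implication genuinely flows in the direction ``no conjugate points for small $t$ $\Rightarrow$ smoothness of the value function,'' and you cannot run it backwards without an independent proof of Theorem~\ref{t:smoothness}. The paper itself does not prove Proposition~\ref{p:strongnoconj} by your route (or at all): it defers to \cite{nostrolibro} and \cite{agrachevbook}, where the statement is established directly from strong normality, and treats it as an input to the smoothness theorem, not a consequence.

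Two smaller remarks. First, the ``bookkeeping'' obstacle you flag about time reversal is in fact harmless: conjugacy of $\gamma(t)$ with $x_0$ is equivalent to $e^{t\vec{H}}_*\bigl(T_{\lam_0}(T^*_{x_0}M)\bigr)\cap T_{\lam_t}(T^*_{\gamma(t)}M)\neq\{0\}$, a condition manifestly symmetric under applying $e^{-t\vec{H}}_*$, so regularity for the reversed system at $-\lam_t$ does translate into regularity of $\EXP_{x_0,t}$ at $\lam_0$; that part of your plan is fine and your quoted Lemma~\ref{l:lambdat} mechanism (the right inverse $\Theta$) would work \emph{if} smoothness were available. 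Second, a non-circular proof must extract the conclusion directly from strong normality, i.e.\ from $\tx{span}\{\DD_\gamma(s),\,s\in[0,t]\}=T_{x_0}M$ for all $t>0$ (Proposition~\ref{p:amplestrn}(ii)), or equivalently controllability of the linearised system on every $[0,t]$ (Proposition~\ref{p:amplecontr}(ii)). In the language of Chapter~\ref{c:jac}, conjugacy at time $t$ reads $J_\lam(t)\cap J_\lam(0)\neq\{0\}$, and the monotonicity argument of Lemma~\ref{l:ampletrasv} shows that a vector $p$ in this intersection satisfies $S(s)p=0$ for all $s\in[0,t]$; in the ample case this contradicts ampleness immediately, while in the strongly normal case one must push further and show that such a $p$ annihilates $\DD_\gamma(s)$ for all $s\in[0,t]$, contradicting strong normality. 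This extra step (or an equivalent second-variation/Gram-matrix argument) is the actual content of the proposition, and it is missing from your proposal.
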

The proof of Proposition~\ref{p:strongnoconj} in the sub-Riemannian setting can be found in~\cite{nostrolibro} and can be adapted to a general affine optimal control system. See also \cite{agrachevbook} for a more general approach.

We are now ready to prove Theorem~\ref{t:smoothness} about smoothness of the value function which, for the reader's convenience, we restate here. Recall that $M'\subset M$ is the relatively compact subset chosen for the definition of the value function.
\begin{theorem*}
Let $\gamma:[0,T] \to M'$ be a strongly normal trajectory. Then there exists an $\eps >0$ and an open neighbourhood $U \subset (0,\eps) \times M' \times M'$ such that:
\begin{itemize}
\item[(i)] $(t,\gamma(0),\gamma(t)) \in U$ for all $t \in (0,\eps)$,
\item[(ii)] For any $(t,x,y) \in U$ there exists a unique (normal) minimizer of the cost functional $J_t$, among all the admissible curves that connect $x$ with $y$ in time $t$, contained in $M'$,
\item[(iii)] The value function $(t,x,y)\mapsto S_t(x,y)$ is smooth on $U$.
\end{itemize}
\end{theorem*}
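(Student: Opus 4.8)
The plan is to prove the three assertions essentially in one stroke by constructing an explicit smooth inverse to a suitable endpoint-type map and invoking the inverse function theorem, while carefully exploiting the assumption that $\gamma$ is \emph{strongly} (not merely strictly) normal. The starting observation is that strong normality of $\gamma$ lets us apply Proposition~\ref{p:strongnoconj}: there is $\eps>0$ such that $\gamma(t)$ is not conjugate to $\gamma(0)$ along $\gamma$ for $t\in(0,\eps)$. Fix such a $t$ and let $\lambda_0=\lambda(0)\in T^*_{\gamma(0)}M$ be the initial covector of the normal lift, so that $\gamma(t)=\EXP_{\gamma(0),t}(\lambda_0)$. Non-conjugacy says precisely that $D_{\lambda_0}\EXP_{\gamma(0),t}$ is an isomorphism.

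First I would upgrade this pointwise invertibility to a statement about the joint map in all three variables. Consider the smooth map
\begin{equation}
\Phi:(t,x,\lambda)\mapsto \big(t,\,x,\,\pi\circ e^{t\vec H}(\lambda)\big),\qquad \lambda\in T^*_xM,
\end{equation}
defined on an open subset of $\R\times T^*M$. Its differential at $(t,\gamma(0),\lambda_0)$, with $t$ and $x$ held fixed, reduces to $D_{\lambda_0}\EXP_{\gamma(0),t}$ in the fibre direction, which is invertible by non-conjugacy; a block-triangular computation then shows $D\Phi$ is invertible there. By the inverse function theorem, $\Phi$ is a local diffeomorphism onto an open neighbourhood of $(t,\gamma(0),\gamma(t))$. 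Since this holds for every $t\in(0,\eps)$ and the base curve $t\mapsto(t,\gamma(0),\gamma(t))$ is compactly contained (using $M'$ relatively compact), I would patch these local diffeomorphisms to obtain an open neighbourhood $U$ of the graph $\{(t,\gamma(0),\gamma(t)):t\in(0,\eps)\}$ on which a smooth inverse $(t,x,y)\mapsto(t,x,\lambda(t,x,y))$ exists. Shrinking $\eps$ if necessary, this gives assertion (i) and produces, for every $(t,x,y)\in U$, a candidate normal geodesic connecting $x$ to $y$ in time $t$ with smoothly-varying initial covector, hence a smoothly-varying candidate cost $J_t$. This candidate cost is manifestly smooth in $(t,x,y)$, which will yield (iii) \emph{once} we know it equals the value function.

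The main obstacle is (ii): the inverse function theorem only gives a \emph{critical point} of $J_t$ among admissible curves with the prescribed endpoints, and one must promote it to the unique \emph{global minimizer} contained in $M'$. This is the genuinely hard step and cannot be handled by soft arguments alone. The plan here is a standard but delicate compactness/contradiction scheme: suppose assertion (ii) fails along a sequence $(t_n,x_n,y_n)\to(t,\gamma(0),\gamma(t))$, so that for each $n$ there is a minimizer $\gamma_n$ (existence via the Tonelli assumptions (A2) and Filippov-type compactness, using relative compactness of $M'$) which is \emph{not} the candidate geodesic produced above. By the superlinear growth (A2.b) the controls $u_n$ are uniformly bounded in $L^\infty$, so up to subsequences $\gamma_n$ converges uniformly and the limit is a minimizer connecting $\gamma(0)$ to $\gamma(t)$; one argues it must be $\gamma$ itself, and that the covectors of the $\gamma_n$ converge to $\lambda_0$. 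For large $n$ this forces $\gamma_n$ into the neighbourhood where $\Phi$ is a diffeomorphism, contradicting the assumption that $\gamma_n$ differs from the unique local critical point. The semicontinuity of the cost and lower-semicontinuity of the action along the minimizing sequence are the technical ingredients to control.

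I would close by assembling the pieces: strong normality $\Rightarrow$ no conjugate points on $(0,\eps)$ (Proposition~\ref{p:strongnoconj}) $\Rightarrow$ local invertibility of $\Phi$ $\Rightarrow$ a smooth family of candidate geodesics on $U$, and the compactness argument $\Rightarrow$ these candidates are the unique minimizers in $M'$. Then $S_t(x,y)=J_t(\text{candidate})$ is smooth on $U$, giving (iii), and uniqueness gives (ii). I expect the inverse-function-theorem and patching steps to be routine; the real work—and the place where the hypotheses (A1), (A2), strong normality, and relative compactness of $M'$ all get used simultaneously—is the passage from \emph{local critical point} to \emph{unique global minimizer}. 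The full details of this last step are exactly what is deferred to Appendix~\ref{a:proofsmoothness}, and my proposal mirrors the structure one finds there, leaning on \cite{nostrolibro} and \cite{agrachevbook} for the compactness of minimizers and the absence of conjugate points near the initial time.
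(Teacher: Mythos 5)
The first half of your proposal coincides with the paper's proof: strong normality gives, via Proposition~\ref{p:strongnoconj}, absence of conjugate points on $(0,\eps)$; one then applies the inverse function theorem to the map $\phi(t,\lam)=(t,\pi(\lam),\EXP_{\pi(\lam)}(t,\lam))$ (your $\Phi$ is the same map), whose differential is block-triangular with $D_{\lam_0}\EXP_{x_0,t}$ as the only nontrivial block, and takes $U$ to be the union of the resulting charts $I_t\times U_t\times V_t$; smoothness of $S_t$ then follows by composition once the candidate is known to be the minimizer. Where you diverge — and where the gap lies — is step (ii). The paper never runs a compactness/contradiction argument. It instead invokes an \emph{a priori} sufficient optimality condition (Lemma~\ref{l:suffcond}, i.e.\ \cite[Theorem 17.1]{agrachevbook}): if the translates $\mc{L}_\tau=e^{\tau\vec{H}}(\mc{L}_0)$ of the graph $\mc{L}_0=\{d_xa\,|\,x\in M\}$ of a smooth function $a$ project diffeomorphically onto $M$ for $\tau\in[0,\eps]$, then every normal trajectory emanating from $\mc{L}_0$ is a \emph{strict} minimizer of the cost. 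Constructing the compact family of calibration functions $a^{(\lam_0)}(y)=p_0^*y$, $\lam_0\in K$, yields a uniform $\eps(K)>0$ such that every normal trajectory with initial covector in $K$ is a strict minimizer on $[0,t]$ for all $t\leq\eps(K)$; choosing the inverse-function-theorem charts with $W_t\subset K$ then delivers existence, uniqueness and (iii) simultaneously, with no limiting argument and no need to produce minimizers for arbitrary nearby endpoints.

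Your compactness scheme, as written, is circular at the identification of the limit. To get a contradiction you need the competing minimizers $\gamma_n$ to eventually enter the chart where $\Phi$ is a diffeomorphism, i.e.\ their initial covectors must converge to $\lam_0$; for this you assert that the limit minimizer ``must be $\gamma$ itself''. But uniqueness of the minimizer joining $\gamma(0)$ to $\gamma(t)$ is precisely part of the statement being proved: non-conjugacy only gives uniqueness of critical covectors \emph{near} $\lam_0$, not among all competitors, and Lemma~\ref{l:lambdat}, which would give uniqueness, presupposes smoothness of $S_t$ — conclusion (iii) — so cannot be used here. Moreover the $\gamma_n$ could a priori be abnormal, or normal with Lagrange multipliers escaping to infinity: superlinearity of $L$ bounds minimizing controls only in an integral sense and gives no compactness of covectors, and even existence of minimizers among curves confined to the open set $M'$ is delicate (they may only be attained on $\overline{M'}$ and then fail the unconstrained necessary conditions). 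These are exactly the holes the calibration lemma closes, by certifying strict optimality of the whole smooth family of candidates in advance. So, contrary to your closing remark, Appendix~\ref{a:proofsmoothness} does not mirror your structure: the ``genuinely hard step'' is handled there by a field-of-extremals argument, not by compactness, and your proposal is incomplete without an ingredient of that kind.
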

\begin{proof}
We first prove the theorem in the case $M'=M$ compact. We need the following sufficient condition for optimality of normal trajectory. Let $a \in C^\infty(M)$. The graph of its differential is a smooth  submanifold $\mc{L}_0 \doteq  \{d_x a|\, x \in M\} \subset T^*M$, $\dim \mc{L}_0 = \dim M$. Translations of $\mc{L}_0$ by the flow of the Hamiltonian field $\mc{L}_\tau = e^{\tau\vec{H}}(\mc{L}_0)$ are also smooth submanifolds of the same dimension.
\begin{lemma}[see {\cite[Theorem 17.1]{agrachevbook}}]\label{l:suffcond} 
Assume that the restriction $\pi : \mc{L}_\tau \to M$ is a diffeomorphism for any $\tau \in [0,\eps]$. Then, for any $\lam_0 \in \mc{L}_0$, the normal trajectory
\begin{equation}
\gamma(\tau) = \pi \circ e^{\tau\vec{H}}(\lam_0),\qquad \tau \in [0,\eps],
\end{equation}
is a strict minimum of the cost functional $J_{\eps}$ among all admissible trajectories connecting $\gamma(0)$ with $\gamma(\eps)$ in time $\eps$.
\end{lemma}
Lemma~\ref{l:suffcond} is a sufficient condition for the optimality of a single normal trajectory. By building a suitable family of smooth functions $a \in C^\infty(M)$, one can prove that, for any sufficiently small compact set $K \subset T^*M$, we can find a $\eps= \eps(K) >0$ sufficiently small such that, for any $\lam_0 \in K$,  and for any $t \leq \eps$, the normal trajectory
\begin{equation}
\gamma(\tau) = \pi \circ e^{\tau\vec{H}}(\lam_0),\qquad \tau \in [0,t], \qquad t \leq \eps
\end{equation}
is a strict minimum of the cost functional $J_{t}$ among all admissible curves connecting $\gamma(0)$ with $\gamma(t)$ in time $t$.	

We sketch the explicit construction of such a family. Let $K \subset T^*M$ sufficiently small such that it is contained in a trivial neighbourhood $\R^n \times U\subset T^*M$. Let $(p,x)$ be coordinates on $K$ induced by a choice of coordinates $x$ on $O \subset M$. Then, consider the function $a:K \times O \to \R$, defined in coordinates by $a(p_0,x_0;y) = p_0^* y$. Extend such a function to $a:K\times M \to \R$. For any $\lam_0 \in K$, denote by $a^{(\lam_0)} = a(\lam_0;\cdot) \in C^\infty (M)$. Indeed, for $x_0 = \pi(\lam_0)$, we have $\lam_0 = d_{x_0} a^{(\lam_0)}$. In other words we can recover any initial covector in $K$ by taking the differential at $x_0$ of an appropriate element of the family. Therefore, let $\mc{L}_0^{(\lam_0)}\doteq \{d_x a^{(\lam_0)}|\,x\in M\}$, and $\mc{L}_\tau^{(\lam_0)}\doteq e^{\tau\vec{H}}(\mc{L}_0^{(\lam_0)})$. $M$ is compact, then there exists $\eps(K) = \sup\{\tau\geq 0|\,\pi:\mc{L}^{(\lam_0)}_s \to M \text{ is a diffeomorphism for all } s \in [0,\tau],\, \lam_0 \in K  \} > 0$.

Let us go back to the proof. Set $x_0 = \gamma(0)$, and let $\gamma(t) = \EXP_{x_0}(t,\lam_0)$. By Proposition~\ref{p:strongnoconj}, we can assume that $\gamma(t)$ is not conjugate with $\gamma(0)$ along $\gamma$ for all $t \in (0,\eps)$. In particular, $D_{\lam_0}\EXP_{x_0,t}$ has maximal rank for all $t \in (0,\eps)$. Without loss of generality, assume that $\vec{H}$ is complete. Then, consider the map $\phi: \R^+\times T^*M \to \R^+ \times M \times M$, defined by
\begin{equation}
\phi(t,\lam) = (t,\pi(\lam),\EXP_{\pi(\lam)}(t,\lam)).
\end{equation}
The differential of $\phi$, computed at $(t,\lam_0)$, is
\begin{equation}
D_{(t,\lam_0)}\phi = \begin{pmatrix}
1 & 0 & 0 \\
0 & \id & 0 \\
* & * & D_{\lam_0}\EXP_{x_0,t}
\end{pmatrix}, \qquad \all t \in (0,\eps),
\end{equation}
which has maximal rank. Therefore, by the inverse function theorem, for each $t \in (0,\eps)$, there exist an interval $I_t$ and open sets $W_t,U_t,V_t$ such that
\begin{gather}
t \in I_t \subset (0,\eps),\qquad \lam_0 \in W_t \subset T^*M,\qquad \gamma(0) \in U_t \subset M,\qquad \gamma(t) \in V_t \subset M,
\end{gather}
and such that the restriction
\begin{equation}
\phi: I_t\times W_t \to I_t \times U_t \times V_t
\end{equation}
is a smooth diffeomorphism. In particular, for any $(\tau,x,y) \in I_t \times U_t\times V_t$ there exists an unique initial covector $\lam_0(\tau,x,y)\doteq \phi^{-1}(\tau,x,y)$ such that the corresponding normal trajectory starts from $x$ and arrives at $y$ in time $\tau$, i.e. $\EXP_x(\tau,\lam_0(\tau,x,y)) = y$. Moreover, we can choose $W_t \subset K$. Then such a normal trajectory is also a strict minimizer of $J_\tau$ among all the admissible curves connecting $x$ with $y$ in time $\tau$. In particular, it is unique.

As a consequence of the smoothness of the local inverse, the value function $(t,x,y) \mapsto S_t(x,y)$ is smooth on each open set $I_t \times U_t\times V_t$. Indeed, for any $(\tau,x,y) \in I_t \times U_t\times V_t$, $S_t(x,y)$ is equal to the cost $J_\tau$ of the unique (normal) minimizer connecting $x$ with $y$ in time $\tau$, namely
\begin{equation}
S_\tau(x,y) = \int_0^\tau L(\EXP_{x_0}(s,\lam_0(\tau,x,y)), \bar{u}(e^{s\vec{H}}(\lam_0(\tau,x,y))))ds, \qquad (\tau,x,y) \in I_t \times U_t\times V_t,
\end{equation}
where $\bar{u}:T^*M \to \R^k$ is the smooth map which recovers the control associated with the lift on $T^*M$ of the trajectory (see Theorem~\ref{t:pmp}). Therefore the value function is smooth on $I_t \times U_t\times V_t$, as a composition of smooth functions. We conclude the proof by defining the open set
\begin{equation}
U\doteq  \bigcup_{t \in (0,\eps)} I_t \times U_t \times V_t \subset (0,\eps) \times M \times M,
\end{equation}
which is indeed open and contains $(t,\gamma(0),\gamma(t))$ for all $t \in (0,\eps)$.

In the general case the proof follows the same lines, although the optimality of small segments of geodesics is only among all the trajectories not leaving $M'$. If we choose a different relatively compact $M''\subset M$, we find a common $\varepsilon$ such that the restriction to the interval $[0,\varepsilon]$ of all the normal geodesics with initial covector in $K$ is a strict minimum of the cost function among all the admissible trajectories not leaving $M''\cup M'$. Therefore, the value functions associated with the two different choices of the relatively compact subset agree on the intersection of the associated domains $U$.

\end{proof}

\chapter{Convergence of approximating Hamiltonian systems (Proposition~\ref{p:catapulta})}\label{a:proofcatapulta}

The goal of this section is the proof of Proposition~\ref{p:catapulta}. Actually, we discuss a more general statement for the associated Hamiltonian system. All the relevant definitions can be found in Section~\ref{s:approxtraj}.

Let $\lambda = (p,x) \in T^*\mathbb{R}^n = \mathbb{R}^{2n}$ any initial datum. Let $\phi^\eps$ and $\wh{\phi}$, respectively, the Hamiltonian flow of the $\eps$-approximated system and of the nilpotent system, respectively. A priori, these local flows are defined in a neighbourhood of the initial condition and for small time which, in general, depend on $\eps$. Notice that, by abuse of notation $\phi^0 = \wh{\phi}$.
\begin{lemma*}
For $\eps\geq 0$ sufficiently small, there exist common neighbourhood $I_0 \subset \mathbb{R}$ of $0$ and $O_{\lam _0}\subset \mathbb{R}^{2n}$ of $\lambda_0$, such that $\phi^\eps: I_0\times O_{\lam_0} \to \mathbb{R}^{2n}$ is well defined. Moreover, $\phi^\eps \to \wh{\phi}$ in the $C^{\infty}$ topology of uniform convergence of all derivatives on $I_0\times O_{\lam_0}$.
\end{lemma*}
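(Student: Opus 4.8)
The plan is to deduce the statement from the classical theorem on the dependence of solutions of an ODE on a parameter, once we record that the relevant Hamiltonian vector fields converge in $C^\infty$, uniformly on compact sets. Recall that the maximized Hamiltonian of the $\eps$-approximating system is the fibre-wise quadratic form $H^\eps(\lam) = \frac{1}{2}\sum_{i=1}^k \langle \lam, X_i^\eps(x)\rangle^2$, and similarly $\wh{H}(\lam) = \frac{1}{2}\sum_{i=1}^k \langle \lam, \wh{X}_i(x)\rangle^2$, with $x = \pi(\lam)$. By Lemma~\ref{l:convframe}, $X_i^\eps \to \wh{X}_i$ in the $C^\infty$ topology of uniform convergence of all derivatives on compact sets of $\R^n$. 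Since $H^\eps$ is a polynomial of degree two in the fibre variables whose coefficients are smooth functions built from the $X_i^\eps$, it follows that $H^\eps \to \wh{H}$, and hence $\vec{H}^\eps \to \vec{\wh{H}}$, in the $C^\infty$ topology of uniform convergence of all derivatives on compact subsets of $T^*\R^n = \R^{2n}$. In particular $\eps \mapsto \vec{H}^\eps$ is continuous at $\eps = 0$ with values in the Fr\'echet space of smooth vector fields on $\R^{2n}$, with $\vec{H}^0 = \vec{\wh{H}}$.

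First I would fix a common domain of definition. The flow $\wh{\phi}$ of $\vec{\wh H}$ is defined on a neighbourhood of $\{0\}\times\{\lam_0\}$; choose a compact interval $I_0 \ni 0$ and a relatively compact neighbourhood $O_{\lam_0}$ of $\lam_0$ such that $\wh\phi$ is defined on a neighbourhood of $I_0 \times \overline{O_{\lam_0}}$, with image contained in a compact set $K \subset \R^{2n}$, which we enlarge to a compact neighbourhood of that image. A standard continuation argument based on Gronwall's inequality shows that, since $\vec{H}^\eps \to \vec{\wh H}$ uniformly together with its first derivatives on $K$, for $\eps$ sufficiently small the flow $\phi^\eps$ of $\vec{H}^\eps$ is defined on all of $I_0 \times O_{\lam_0}$ (after possibly shrinking $O_{\lam_0}$), takes values in $K$, and satisfies $\phi^\eps \to \wh\phi$ uniformly on $I_0 \times O_{\lam_0}$.

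The core of the argument is the upgrade of this $C^0$ convergence to $C^\infty$ convergence. The cleanest route is to introduce $\eps$ as an additional state variable: on $\R \times \R^{2n}$ consider the field $\mathcal{X}(\eps,\lam) \doteq (0,\vec{H}^\eps(\lam))$. From the weighted expansion $X_i^\eps = \wh{X}_i + \eps X_i^{(0)} + \eps^2 X_i^{(1)} + \cdots$ of the $\eps$-approximating frame in privileged coordinates (which follows from $X_i^\eps = \eps\,\delta_{1/\eps *}X_i$ and $\delta_{\al *}X^{(h)} = \al^{-h}X^{(h)}$), the field $\mathcal{X}$ extends smoothly across $\eps = 0$ on a neighbourhood of $\{0\}\times K$. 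Its flow is then smooth jointly in all arguments, and its restriction to a fixed $\eps$ is exactly $\phi^\eps$; joint smoothness yields at once that all derivatives of $\phi^\eps$ in $(t,\lam)$ converge, uniformly on $I_0 \times O_{\lam_0}$, to those of $\phi^0 = \wh\phi$. Alternatively, avoiding any smoothness claim in $\eps$, one argues by induction on the order of differentiation: the derivatives $\partial^\alpha_{(t,\lam)}\phi^\eps$ solve the equations of variation, linear ODEs along $\phi^\eps$ whose coefficients are polynomial in the derivatives of $\vec{H}^\eps$ evaluated along the flow; since these coefficients converge uniformly on $K$ and the lower-order derivatives converge by the inductive hypothesis, a further Gronwall estimate propagates convergence to order $|\alpha|$.

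I expect the main obstacle to be the passage from $C^0$ to $C^\infty$ convergence on a domain that is \emph{uniform in} $\eps$: both the continuation argument controlling the domain of existence and the inductive Gronwall estimates on the equations of variation must be carried out with bounds independent of $\eps$, which is precisely what the $C^\infty$ convergence of $\vec{H}^\eps$ on the fixed compact set $K$ supplies. Once the lemma is established, Proposition~\ref{p:catapulta} is immediate: statement (i) is the convergence of $\EXP^\eps = \pi\circ\phi^\eps|_{I_0 \times T_0^*\R^n}$, statement (ii) is the special case $\lam = \lam_0$, and (iii) follows by combining the convergence of $\phi^\eps(\cdot,\lam_0)$ with that of the frame in $u_i^\eps(t) = \langle \phi^\eps(t,\lam_0), X_i^\eps\rangle$.
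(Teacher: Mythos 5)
Your \virg{cleanest route} is precisely the paper's own proof: the paper likewise adjoins $\eps$ as a frozen state variable in $\R^{2n+1}$, observes that $H(\eps,\lam)\doteq H^\eps(\lam)$ is smooth in both variables (which your weighted expansion of $X_i^\eps$ in privileged coordinates justifies explicitly, filling in what the paper asserts only \virg{by construction}), and obtains the common domain $I_0\times O_{\lam_0}$ together with $C^\infty$ convergence in one stroke from classical smooth dependence of the extended flow $\Phi(t;\lam_0,\eps)$. The preliminary Gronwall/$C^0$ continuation step and the variational-equations induction you sketch as an alternative are correct but redundant, since the extended-system argument already supplies both the uniform domain of definition and the convergence of all derivatives.
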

\begin{proof}
Indeed, for any $\eps \geq 0$, the Hamiltonian flow $\phi^\eps$ is associated with the Cauchy problem
\begin{equation}
\dot{\lambda}(t) = H^\eps(\lambda(t)),\qquad \lambda(0) = \lambda_0.
\end{equation}
Moreover, $\phi^\eps$ is well defined and smooth in a neighbourhood $I^\eps_0 \times O_{\lam_0}^\eps \subset\mathbb{R}\times \mathbb{R}^{2n}$ (that depends on $\eps$). To find a common domain of definition, consider the associated Cauchy problem in $\mathbb{R}^{2n+1}$.
\begin{equation}\label{eq:cauchy}
\begin{pmatrix}
\dot{\lambda}(t) \\
\dot{\eps}(t)
\end{pmatrix} 
=
\begin{pmatrix}
H(\eps(t),\lambda(t)) \\
0
\end{pmatrix}, \qquad \begin{pmatrix} \lambda(0) \\ \eps(0)\end{pmatrix} = \begin{pmatrix} \lambda_0 \\ \eps_0 \end{pmatrix},
\end{equation}
where $H(\eps,\lam) \doteq H^\eps(\lam)$ is smooth in both variables by construction. We denote by $\Phi(t;\lambda_0,\eps_0)$ the flow associated with the Cauchy problem \eqref{eq:cauchy}. By classical ODE theory, there exists a neighbourhood $I_0 \subset \mathbb{R}$ of $0$ and $U_{\lambda_0,\eps_0} \subset \mathbb{R}^{2n+1}$ of $(\lam_0,\eps_0)$ such that $\Phi: I_0 \times U_{\lambda_0,\eps_0} \to \mathbb{R}^{2n+1}$ is well defined and smooth. Indeed $\Phi(t;\lambda_0,\eps) = \phi^\eps(t;\lambda_0)$ and $\Phi(t;\lambda_0,0) = \wh{\phi}(t;\lambda_0)$. Then, we can find an open neighbourhood $O_{\lam_0} \subset \mathbb{R}^{2n}$ of $\lam_0$ such that $O_{\lam_0} \times [0,\delta] \subset U_{\lambda_0,0}$. Thus, the sought common domain of definition for all the $\phi^\eps$, with $0\leq \eps\leq\delta$, is $I_0 \times O_{\lam_0}$.

Finally, $\Phi$ is smooth on $I_0 \times U_{\lam_0,0}$. Then $\phi^\eps$ (and all its derivatives) converge to $\wh{\phi}$ (and all the corresponding derivatives) on $I_0\times O_{\lam_0}$. Up to restricting the domain of definition of $\Phi$, we can always assume $I_0$ and $O_\lambda$ to be compact, hence the convergence is also uniform.
\end{proof}

Without loss of generality, by homogeneity, we can always reduce to $I_0 = [0,T]$. Now Proposition~\ref{p:catapulta} easily follows, since the exponential map is the projection of the Hamiltonian flow, restricted to the fiber $T_0^*\mathbb{R}^n$.

\chapter{Invariance of geodesic growth vector by dilations (Lemma~\ref{l:situa2})}\label{a:proofsitua2}

{\review
For the reader's convenience, we recall the statement of Lemma~\ref{l:situa2}. We refer to Section~\ref{s:ample} for all the relevant definitions.

\begin{lemma*} Fix $\eps>0$ and let $\gamma$ be a normal geodesic for the $\eps$-approximating system. Then the curve $\eta:=\delta_{\eps}(\gamma)$ is a normal geodesic for the original system with the same growth vector of $\g$.
\end{lemma*}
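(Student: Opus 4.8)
The plan is to exploit the homogeneity of the $\eps$-approximating system under the dilation $\delta_\eps$ in order to transfer the geodesic structure and the growth vector from $\gamma$ to $\eta = \delta_\eps(\gamma)$. Recall that, by definition, the $\eps$-approximating vector fields are $X_i^\eps = \eps\, \delta_{1/\eps*} X_i$, so that $\delta_{\eps*} X_i^\eps = \eps X_i$. The first step is to verify that $\eta$ is an admissible curve for the original system. Indeed, if $\gamma$ solves $\dot\gamma = \sum_i u_i X_i^\eps(\gamma)$ for some control $u$, then differentiating $\eta = \delta_\eps\circ\gamma$ and applying $\delta_{\eps*}$ to the right-hand side gives $\dot\eta(t) = \delta_{\eps*}\dot\gamma(t) = \sum_i u_i(t)\, \delta_{\eps*}X_i^\eps(\gamma(t)) = \eps\sum_i u_i(t)\, X_i(\eta(t))$. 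Hence $\eta$ is admissible for the original system, associated with the rescaled control $\eps\, u$.

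Next I would check that $\eta$ is a \emph{normal} geodesic. Since $H^\eps$ is the maximized Hamiltonian for the $\eps$-approximating system, and the systems differ by the diffeomorphism $\delta_\eps$ together with a constant rescaling of the vector fields, I expect the relation $H^\eps = \text{const}\cdot (H\circ \delta_\eps^*)$ to hold at the level of Hamiltonians, so that the normal extremals are mapped to normal extremals. Concretely, I would show that $\delta_\eps$ (lifted to the cotangent bundle as $(\delta_\eps^{-1})^*$) conjugates the Hamiltonian flow of $H^\eps$ to a time-reparametrized Hamiltonian flow of $H$; by Theorem~\ref{t:pmp}, the projection $\eta$ of such a flow is then a normal geodesic for the original structure. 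The constant rescaling of the controls only affects the parametrization (speed) of the geodesic, which does not alter its nature as a normal extremal trajectory.

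Finally, and this is the essential point, I would prove that the growth vector is preserved. The growth vector is defined through the dimensions of the subspaces $\DD^i_\gamma(t) = \spn\{C(t),\dot C(t),\ldots,C^{(i-1)}(t)\}$, or equivalently via $\rank\{B_1(t),\ldots,B_i(t)\}$ as in Section~\ref{s:crit}. Since $\delta_\eps$ is a diffeomorphism of $M$, and the flag of an admissible curve is invariant under diffeomorphisms (a pure state-feedback transformation, by Proposition~\ref{p:durezza} and Corollary~\ref{c:durezza}), the growth vector of $\eta = \delta_\eps(\gamma)$ coincides with that of $\gamma$. The rescaling $X_i^\eps \mapsto \eps X_i$ is merely a constant linear change of generators, which is a pure feedback transformation with $\Psi = \eps\,\id$, hence also leaves the flag invariant by the same results.

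The main obstacle I anticipate is handling the cotangent lift carefully: one must track precisely how $\delta_\eps$ acts on covectors and on the Hamiltonian, since the dilation is anisotropic (it scales the coordinates $x_i$ by $\eps^i$), and verify that the maximized Hamiltonian transforms correctly so that normal extremals are sent to normal extremals. Once the admissibility and the normality are settled, the invariance of the growth vector is essentially a corollary of the diffeomorphism- and feedback-invariance established earlier in the paper, so the bulk of the work lies in the bookkeeping of the symplectic/Hamiltonian transformation under the anisotropic dilation. For the reader's convenience the detailed verification can be relegated to the appendix, as indicated in the main text.
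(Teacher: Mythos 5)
Your proof is correct, and for two of its three steps it takes a genuinely different route from the paper's. The admissibility computation (differentiating $\eta=\delta_\eps\circ\gamma$ to get $\dot\eta(t)=\sum_i \eps u_i(t) X_i(\eta(t))$, so that $\eta$ is associated with the control $\eps u$) is identical to the paper's. For the growth vector, the paper performs an explicit matrix computation: with $M=\eps\,\delta_{1/\eps*}$ (a constant matrix, since $\delta_\eps$ is linear) it shows $A^\gamma(t)=MA^\eta(t)M^{-1}$ and $B^\gamma(t)=MB^\eta(t)$, whence by induction $B^\gamma_i(t)=MB^\eta_i(t)$ for all $i$ and the ranks agree; you instead observe that $(\phi,\psi)=(\delta_\eps,\eps\,\id)$ is a state-feedback transformation between the $\eps$-approximating and the original system and invoke Proposition~\ref{p:durezza} and Corollary~\ref{c:durezza}. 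This is legitimate and arguably cleaner---it is exactly the remark made in Section~\ref{s:ample} just before the proof is deferred to the appendix, and the paper's computation is essentially a specialization of the invariance argument. For normality, the paper sidesteps your cotangent-bundle bookkeeping entirely: since $\delta_\eps$ is a bijection between admissible curves of the two systems and $J_T(\delta_\eps\gamma)=\eps^2 J_T(\gamma)$, normal (resp.\ abnormal) geodesics correspond to normal (resp.\ abnormal) geodesics, because a positive constant rescaling of the cost merely rescales the Lagrange multiplier in Proposition~\ref{p:lmr}. Your Hamiltonian route does work---the precise relation is $H^\eps=\eps^2\, H\circ(\delta_{1/\eps})^*$, the cotangent lift of $\delta_\eps$ is a symplectomorphism, and a constant multiple of a fiberwise-quadratic Hamiltonian has the same extremal trajectories up to rescaling of the covector---but it is the heaviest part of your plan and the one you left as a sketch, whereas the cost-scaling observation settles normality in one line and removes the anisotropic-dilation bookkeeping you identified as the main obstacle.
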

\begin{proof} The map $\delta_{\eps}$ maps admissible curves of the $\eps$-approximating system into admissible curves of the original one. Indeed if $\gamma$ is an admissible curve for the $\eps$-approximating system, associated with the control $u$, namely
\bqn
\dot \gamma(t)=\sum_{i=1}^{k}u_{i}(t)X^{\eps}_{i}(\gamma(t)),
\eqn
then the curve $\eta(t):=\delta_{\eps}(\gamma(t))$ satisfies
\bqn
\dot \eta(t)=\sum_{i=1}^{k}u_{i}(t)(\delta_{\eps*}X^{\eps}_{i})(\delta_{\eps}\gamma(t))=\sum_{i=1}^{k}\eps u_{i}(t)X_{i}(\eta(t)),
\eqn
where we used the identity $X_{i}^{\eps}=\eps \delta_{1/\eps*}X_{i}$.
In particular, if $\gamma$ is associated with the control $u$ in $\eps$-approximating  system, then $\eta$ is associated with the control $\eps u$ in the original one. Moreover\begin{equation}
J_{T}(\eta)=J_{T}(\delta_{\eps}\g)=\eps^{2}J_{T}(\g).
\end{equation}
It follows that $\delta_{\eps}$ is a one-to-one map between normal (resp. abnormal) geodesics of the $\eps$-approximating system and normal (resp. abnormal) geodesics of the original one. 

To show that $\g$ and $\eta$ have the same growth vector we proceed as in the proof of Lemma \ref{l:situa1}. Let us introduce the matrices $A^{\gamma}(t)$ and $B^{\gamma}(t)$ (resp. $A^{\eta}(t)$ and $B^{\eta}(t)$) associated with the two curves. We prove that there exists a matrix $M=M(\eps)$ such that, for all $t$, we have
\begin{equation}\label{eq:relations}
A^{\gamma}(t)=M A^{\eta}(t)M^{-1},\qquad B^{\gamma}(t)=MB^{\eta}(t).
\end{equation}  
We denote by $b^{\gamma}_{i}(t)$ (resp. $b^{\eta}_{i}(t)$) the columns of $B^{\gamma}(t)$ (resp. $B^{\eta}(t)$). Namely
\begin{gather}
B^{\gamma}(t)=\{b^{\gamma}_{1}(t),\ldots,b^{\gamma}_{k}(t)\},\qquad % \dfrac{\partial f}{\partial u}(\gamma_{u}(t),u(t))=
b^{\gamma}_{i}(t)=X^{\eps}_{i}(\gamma(t)),\\
B^{\eta}(t)=\{b^{\eta}_{1}(t),\ldots,b^{\eta}_{k}(t)\},\qquad % \dfrac{\partial f}{\partial u}(\gamma_{u}(t),u(t))=
b^{\eta}_{i}(t)=X_{i}(\eta(t)).
\end{gather}
We prove the second relation of \eqref{eq:relations} by a direct computation:
\begin{align}
b_{i}^{\gamma}(t)&= X_{i}^{\eps}(\gamma(t))=\eps(\delta_{1/\eps*}X_{i})(\delta_{1/\eps}\eta(t))=\eps \delta_{1/\eps*}b_{i}^{\eta}(t)=M b_{i}^{\eta}(t).
\end{align}
where $M$ is the matrix representing the invertible linear map $\eps \delta_{1/\eps*}$. We stress that $M$ does not depend on $t$.
We now prove the first relation of \eqref{eq:relations}:  
\begin{align}
A^{\gamma}(t)&= \sum_{i=1}^{k}u_{i}(t)\frac{\partial X^{\eps}_{i}}{\partial x}(\gamma(t))= 
\eps \delta_{1/\eps*} \sum_{i=1}^{k}u_{i}(t) \frac{\partial X_{i}\circ \delta_{\eps}}{\partial x}(\gamma(t))\\
&=\eps \delta_{1/\eps*} \sum_{i=1}^{k}u_{i}(t)\frac{\partial X_{i}}{\partial x} (\delta_{\eps}\gamma(t))\delta_{\eps*} \\
&=\eps \delta_{1/\eps*} \left( \sum_{i=1}^{k}\eps u_{i}(t)\frac{\partial X_{i}}{\partial x} (\delta_{\eps}\gamma(t)) \right)\frac{1}{\eps}\delta_{\eps*} 
=M A^{\eta}(t)M^{-1},
\end{align}
where we recall that $\eta(t)=\delta_{\eps}\gamma(t)$ is associated with the control $\eps u$ (in the original system). 
An induction step and the fact that $M$ does not depend on $t$ implies  
\bqn
B_{i}^{\gamma}(t)=M B_{i}^{\eta}(t),\qquad \all t, \all i\geq 1.
\eqn
Here $B_{i}^{\gamma}(t)$ (resp.\ $B_{i}^{\eta}(t)$) are the matrices defined in Eq.~\eqref{eq:BB0}, associated with the geodesic $\gamma$ of the $\eps$-approximating system (resp.\ $\eta$ of the original system).
Then the criterion of Section~\ref{s:crit} implies  
\begin{equation} 
\dim \DD^{i}_{\gamma}(t)=\rank\{B^{\gamma}_{1}(t),\ldots,B^{\gamma}_{i}(t)\}=\rank\{B^{\eta}_{1}(t),\ldots,B^{\eta}_{i}(t)\}=\dim \DD^{i}_{\eta}(t),\qquad \all t, \all i\geq 1. \qedhere
\end{equation}
\end{proof}

\finereview}

\chapter{Regularity of $C(t,s)$ for the Heisenberg group (Proposition~\ref{p:brute})}\label{a:proofbrute}

For the reader's convenience, we briefly recall the statement of Proposition~\ref{p:brute}. We refer to Section~\ref{s:Heis} for all the relevant definitions. 
\begin{proposition*}
The function $C(t,s)$ is $C^1$ in a neighbourhood of the origin, but not $C^2$. In particular, the function $\partial_{ss} C(t,0)$ is not continuous at the origin. However, the singularity at $t=0$ is removable, and the following expansion holds, for $t> 0$:
\begin{multline}
\frac{\partial^2 C}{\partial s^2}(t,0) = 1 + 3 \sin^2(\phi_2- \phi_1) + \frac{1}{2}[2h_{z,2}\sin(\phi_2-\phi_1)-h_{z,1}\sin(2\phi_2 - 2\phi_1)]t - \\ - \frac{2}{15}h_{z,1}^2\sin^2(\phi_2-\phi_1)t^2+O(t^3).
\end{multline}
If the geodesic $\g_{2}$ is chosen to be a straight line (i.e. $h_{z,2}=0$), then
\begin{equation}\label{eq:devapp}
\frac{\partial^2 C}{\partial s^2}(t,0) = 1 + 3 \sin^2(\phi_2- \phi_1) - \frac{h_{z,1}}{2}\sin(2\phi_2 - 2\phi_1)t -\frac{2}{15}h_{z,1}^2\sin^2(\phi_2-\phi_1)t^2+O(t^3).
\end{equation}
where $\lambda_j = (ie^{i\phi_j},h_{z,j}) = (-\sin\phi_j,\cos\phi_j,h_{z,h}) \in T_0^*M$ is the initial covector of the geodesic $\gamma_j$.
\end{proposition*}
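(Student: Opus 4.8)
The starting point is the left-invariance reduction already recorded in the text, $C(t,s)=\tfrac12\dist_0^2(\gt^{-1}\cdot\gs)$, together with the explicit formula \eqref{d2} for the squared distance from the origin. Writing $\gamma_j(\cdot)=(w_j(\cdot),z_j(\cdot))$ and using the group law \eqref{eq:grouplaw} together with $(w,z)^{-1}=(-w,-z)$, I would first compute the complex coordinates of $\gt^{-1}\cdot\gs$ explicitly,
\[W_{t,s}=w_2(s)-w_1(t),\qquad Z_{t,s}=z_2(s)-z_1(t)+\tfrac12\Im\big(w_1(t)\overline{w_2(s)}\big),\]
so that $C(t,s)=\tfrac12 R_{t,s}^2\,\Theta(\xi_{t,s})$ with $R^2=|W|^2$, $\xi=Z/R^2$ and $\Theta(\xi)=\theta(\xi)^2/\sin^2\theta(\xi)$. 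The whole proposition then becomes a Taylor expansion in $s$ at $s=0$, followed by an expansion in $t$, of this single scalar expression.

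Two exact identities make the computation feasible. First, the geodesic equations yield the closed forms $R_{t,0}^2=|w_1(t)|^2=4\sin^2(h_{z,1}t/2)/h_{z,1}^2$ and $z_1(t)=(h_{z,1}t-\sin h_{z,1}t)/2h_{z,1}^2$; comparing $\xi_{t,0}=-z_1(t)/R_{t,0}^2$ with the defining relation \eqref{eq:deftheta} reveals the remarkable fact that $\theta(\xi_{t,0})=-h_{z,1}t/2$ identically (whence $C(t,0)=\tfrac12 t^2$, as it must be). Second, I would exploit the differential equation $\Theta'(\xi)=4\theta(\xi)$ recorded after \eqref{eq:deftheta}, which expresses all $\xi$-derivatives of $\Theta$ through $\theta$ and $d\theta/d\xi$, the latter being controlled at the base point by the explicit parametrization $\theta=-h_{z,1}t/2$. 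Together these let me evaluate $\Theta$, $\Theta'$ and $\Theta''$ at $\xi_{t,0}$ as explicit analytic functions of $t$ (in particular $\Theta'(\xi_{t,0})=-2h_{z,1}t$ and $\Theta''(\xi_{t,0})\to24$ as $t\to0$).

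Next I differentiate $C=\tfrac12 R^2\Theta(\xi)$ twice in $s$ and set $s=0$, obtaining
\[\partial_{ss}C=\tfrac12\big[\partial_{ss}R^2\,\Theta+2\,\partial_s R^2\,\Theta'\,\partial_s\xi+R^2\big(\Theta''(\partial_s\xi)^2+\Theta'\,\partial_{ss}\xi\big)\big].\]
All $s$-derivatives of $R^2$ and of $\xi=Z/R^2$ at $s=0$ reduce to $w_2'(0)=ie^{i\phi_2}$, $w_2''(0)=-h_{z,2}e^{i\phi_2}$ and $z_2'(0)=z_2''(0)=0$ (the latter two vanishing since $z_2(s)=O(s^3)$), paired with $w_1(t)$, $R_{t,0}^2$ and $Z_{t,0}=-z_1(t)$. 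A power count in $t$ gives $\partial_s\xi=O(t^{-1})$ and $\partial_{ss}\xi=O(t^{-2})$, so the individual terms carry genuine poles; the key point, which I expect to be the heart of the argument, is that these poles cancel against the factors $R^2\sim t^2$ and $\Theta'\sim t$, leaving a finite limit. Extracting the leading order gives $\partial_{ss}C(t,0)\to\tfrac12(2+6\sin^2(\phi_2-\phi_1))=1+3\sin^2(\phi_2-\phi_1)$, where the $6\sin^2$ comes from $R^2\Theta''(\partial_s\xi)^2$ with $\partial_s\xi\sim-\sin(\phi_2-\phi_1)/2t$.

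To reach the full three-term expansion I would carry each ingredient—$w_1(t)$, $R_{t,0}^2$, $z_1(t)$, and $\Theta,\Theta',\Theta''$ at $\xi_{t,0}$—to order $t^2$ beyond its leading term, substitute, and collect powers of $t$; the heavy but mechanical bookkeeping of this cancellation is the main obstacle and is what the appendix computation carries out. The straight-line case $h_{z,2}=0$ follows by specialization, since then $w_2''(0)=0$ and $z_2\equiv0$ kill the $h_{z,2}$-term and yield \eqref{eq:devapp}. Finally, the regularity assertions are read off from the result: the map $t\mapsto\partial_{ss}C(t,0)$ extends analytically across $t=0$ (removable singularity) with value $1+3\sin^2(\phi_2-\phi_1)$, which depends on the directions $\phi_1,\phi_2$, whereas along $t=0$ one has $C(0,s)=\tfrac12 s^2$ and hence $\partial_{ss}C(0,s)=1$. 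The mismatch $1+3\sin^2(\phi_2-\phi_1)\neq1$ shows simultaneously that $\partial_{ss}C$ is discontinuous at the origin and that $C$, whose first derivatives extend continuously, fails to be $C^2$ there.
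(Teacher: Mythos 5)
Your proposal is correct and follows the same brute-force strategy as the paper's Appendix D: both reduce $C$ by left-invariance to $\tfrac12 R^2\,\theta^2(\xi)/\sin^2\theta(\xi)$, differentiate twice by the Leibniz rule, verify the cancellation of the poles in $\partial_s\xi = O(t^{-1})$ and $\partial_{ss}\xi = O(t^{-2})$ against $R^2\sim t^2$ and $\Theta'\sim t$, extract the zeroth-order constant $1+3\sin^2(\phi_2-\phi_1)$ explicitly, and deduce the discontinuity (hence failure of $C^2$) by comparing with $\partial_{ss}C(0,0)=1$ from $C(0,s)=s^2/2$; like the paper, you defer the linear and quadratic coefficients to a mechanical higher-order expansion, so your level of completeness matches the printed proof. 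Two genuine variations are worth noting. First, you expand $\partial_{ss}C(t,0)$ directly, whereas the paper computes $\partial_{tt}C(0,s)$ and lets $s\to0$; these are symmetric variants (the distance is symmetric, and $\sin^2(\phi_1-\phi_2)$ is invariant under swapping), but yours is more directly aligned with the statement and avoids having to invoke that symmetry for the higher-order, non-symmetric terms. Second, and more substantially, your exact identity $\theta(\xi_{t,0})=-h_{z,1}t/2$ — which I checked against \eqref{eq:deftheta} using $4\xi_{t,0}=-(h_{z,1}t-\sin h_{z,1}t)/(2\sin^2(h_{z,1}t/2))$ — together with $\Theta'=4\theta$ gives closed analytic forms for $\Theta,\Theta',\Theta''$ along the base geodesic (and $C(t,0)=t^2/2$ for free), whereas the paper works only with the Taylor germ $\theta(x)=6x+O(x^3)$ and order-by-order expansions of $R^2$ and $Z$. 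Your identities make the bookkeeping for the $t$ and $t^2$ coefficients cleaner and less error-prone, so this is a mild but genuine improvement on the printed argument rather than a different proof.
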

\begin{proof}
The proof is essentially a brute force computation. In the following, we show the relevant calculation to obtain the zeroth order term in Eq.~\eqref{eq:devapp}, which is sufficient to prove the non-continuity of the function $t \mapsto \partial_{ss}C(t,0)$ at $t=0$. Indeed, since $C(0,s) = s^2/2$, we obtain $\partial_{ss} C(0,0)=1$, while from Eq.~\eqref{eq:devapp}, $\lim_{t\to0^+} \partial_{ss} C(0,s) =1 + 3 \sin^2(\phi_2- \phi_1)$. For $i=1,2$, let $\gamma_i(\tau) = (w_i(\tau),z_i(\tau))$. Then
\begin{gather}
w_i(\tau) = \frac{e^{i\phi_i}}{a_i}\left(e^{ia_i \tau} -1\right) = i e^{i\phi_i}\tau - \frac{1}{2} a_i e^{i\phi_i}\tau^2 + O(\tau^3), \\
z_i(\tau) = \frac{a_i \tau - \sin(a_i \tau)}{2a_i^2} = O(\tau^3).
\end{gather}
For $(t,s) \neq (0,0)$, dropping the subscripts from $R_{t,s}$ and $\xi_{t,s}$, we have
\begin{multline}\label{eq:dev2}
\partial_{tt} C (t,s) = \frac{1}{2}\partial_{tt}R^2 \frac{\theta^2(\xi)}{\sin^2\theta(\xi)} + 4\partial_t R^2 \theta(\xi)\partial_t\xi+2R^2\dot{\theta}(\xi) (\partial_t\xi)^2 + 2R^2\theta(\xi)\partial_{tt}\xi = \\ = A_1(t,s) + A_2(t,s) + A_3(t,s) + A_4(t,s),
\end{multline}
where $A_i$ are the four addends of the upper line of Eq.~\eqref{eq:dev2}. In order to compute Eq.~\eqref{eq:dev2}, we employ the following calculations
\begin{gather}
R^2_{t,s} = |w_2(s)-w_1(t)|^2, \\
\partial_t R^2_{t,s} = \dot{w}_1(t)[\overline{w}_1(t) - \overline{w}_2(s)] + [w_1(t) - w_2(s)]\dot{\overline{w}}_1(t), \\
\partial_{tt}R^2_{t,s} = \ddot{w}_1(t)[\overline{w}_1(t) - \overline{w}_2(s)] + 2|\dot{w}_1(t)|^2 + \ddot{\overline{w}}_1(t)[w_1(t)-w_2(s)],\\
Z_{t,s} = -z_1(t) + z_2(s) + \frac{1}{2}\Im(w_1(t)\overline{w}_2(s)),\\
\partial_t Z_{t,s} = -\dot{z}_1(t) + \frac{1}{2}\Im(\dot{w}_1(t)\overline{w}_2(s)),\\
\partial_{tt}Z_{t,s} = -\ddot{z}_1(t) + \frac{1}{2}\Im(\ddot{w}_1(t)\overline{w}_2(s)),\\
\xi_{t,s} = Z_{t,s} / R^2_{t,s},\\
\partial_t\xi_{t,s} = \frac{\partial_t Z}{R^2}- \frac{Z}{R^4}\partial_t R^2,\\
\partial_{tt}\xi_{t,s} = \frac{\partial_{tt}Z}{R^2}-2\frac{\partial_t Z}{R^4}\partial_t R^2-\frac{Z}{R^4}\partial_{tt}R^2 + 4\frac{Z}{R^6}(\partial_t R^2)^2,
\end{gather}
where $\Im$ is the imaginary part, the overline is the complex conjugate, and the dot is the derivative w.r.t. the argument. Moreover, the Taylor series for $\theta$ is
\begin{equation}
\theta(x) = 6 x + O(x^3).
\end{equation}
By computing everything at $t=0$, and then taking the limit $s\to 0$, we obtain
\begin{gather}
\lim_{s\to 0}A_1(0,s) = 1, \\
\lim_{s\to 0}A_2(0,s) = 0, \\
\lim_{s\to 0}A_3(0,s) = 3\sin^2(\phi_1-\phi_2), \\
\lim_{s\to 0}A_4(0,s) = 0,
\end{gather}
therefore $\lim_{s\to0}\partial_{tt}C(0,s) = 1+ 3\sin^2(\phi_1-\phi_2)$, which is the zeroth order term of Eq.~\eqref{eq:devapp}. The term arising from the addend $A_3(0,s)$ is responsible for the discontinuity of $\partial_{tt}C(0,s)$ at $s=0$. The remaining terms can be obtained by taking expansions up to the fourth order of $R^2, Z, \theta$, and replacing them in Eq.~\eqref{eq:dev2}.
\end{proof}

\chapter{Basics on curves in Grassmannians (Lemma~\ref{l:flag0} and~\ref{l:flag})}\label{a:flag}

{\review
Let $W(\cdot)$ be a smooth curve in the Grassmanian $G_{k}(E)$ where $E$ is a vector space of dimension $n$. In other words $W(\cdot)$ is a smooth family of $k$-dimensional subspaces of $E$. 
A smooth section of $W(\cdot)$ is a smooth curve $t\mapsto w(t)$ in $E$ such that $w(t)\in W(t)$ for all $t$.  

Without loss of generality (all our considerations are local in $t$) we assume also that the family of subspaces is generated by a moving frame, namely one can find smooth sections $e_{1}(\cdot),\ldots,e_{k}(\cdot)$ such that, for all $t$, we have
\begin{equation}
W(t)=\tx{span}\{e_{1}(t),\ldots,e_{k}(t)\}.
\end{equation}

\bl \label{l:tgr}
For every fixed $t$, the differentiation of sections defines a linear map
\begin{equation} \label{eq:tgr}
\delta:W(t) \to E/W(t), \qquad \bar w\mapsto \dot{w}(t) \mod  W(t).
\end{equation}
where $w(\cdot)$ is a smooth section of $W(\cdot)$ such that $w(t)=\bar w\in W(t)$.
\el
\begin{proof}
We have to prove that the map \eqref{eq:tgr} is a well defined linear map. Let us consider a moving frame $\{e_{1}(s),\ldots,e_{k}(s)\}$ in $E$ such that for every $s$ one has
\begin{equation}
W(s)=\tx{span}\{e_{1}(s),\ldots,e_{k}(s)\}.
\end{equation}
Consider now two different smooth sections $w_{1}(\cdot),w_{2}(\cdot)$ of $W(\cdot)$ satisfying $w_{1}(t)=w_{2}(t)=\bar w$.
Their difference can be written as a linear combination, with smooth coefficients, of the frame $\{e_{1}(s),\ldots,e_{k}(s)\}$
\begin{equation}
w_{2}(s)-w_{1}(s)=\sum_{i=1}^{k}\alpha_{i}(s) e_{i}(s),
\end{equation}
where $\alpha_{i}(s)$ are smooth functions such that $\alpha_{i}(t)=0$ for every $i=1,\ldots,k$. It follows that
\begin{align} \label{eq:tgr2}
\dot{w}_{2}(s)-\dot{w}_{1}(s)=\sum_{i=1}^{k}\dot{\alpha}_{i}(s) e_{i}(s)+\sum_{i=1}^{k}\alpha_{i}(s) \dot{e}_{i}(s),
\end{align}
and evaluating \eqref{eq:tgr2} at $s=t$ one has
\begin{align*}
\dot{w}_{2}(t)-\dot{w}_{1}(t)=\sum_{i=1}^{k}\dot{\alpha}_{i}(t) e_{i}(t)\in W(t).
\end{align*}
This shows that $\dot w_{2}(t) = \dot w_{1}(t) \mod W(t)$, hence the map \eqref{eq:tgr} is well defined.
Analogously, one can prove that the map does not depend on the moving frame defining $W(t)$. Finally, the linearity of the map \eqref{eq:tgr} is evident.
\end{proof}
\brem The proof of Lemma \ref{l:tgr} shows that actually the tangent space to the Grassmannian $G_{k}(E)$ at a point $W$ is isomorphic with the set $\tx{Hom}(W,E/W)$.
\erem

Let us now consider a smooth curve $V(\cdot)$ in the Grassmanian $G_{k}(E)$
% (see Chapter~\ref{c:jac}). 
and define the flag for $E$ at each time $t$ as follows:
\begin{equation}
V^{(i)}(t) := \spn\left\lbrace \frac{d^j}{dt^j} v(t) \, \Bigg| \, v(t) \in V(t), \, v(t) \text{ smooth},\,0\leq j \leq i\right\rbrace \subset E, \qquad i \geq 0.
\end{equation}
In particular this defines a filtration of subspaces for all $t$:
\begin{equation}
V(t) = V^{(0)}(t) \subset V^{(1)}(t) \subset V^{(2)}(t) \subset \ldots \subset E.
\end{equation}

\begin{remark} \label{r:tgr} 
Notice that, following the notation just introduced, the image of the linear map \eqref{eq:tgr} is $W^{(1)}(t)/W(t)$. This shows that Lemma \ref{l:tgr} can be restated by saying that there exists a well-defined \emph{surjective} linear map 
\begin{equation}
\delta:W(t) \to W^{(1)}(t)/W(t).
\end{equation}
\end{remark}

In what follows we assume that the curve $V(\cdot)$ is \emph{equiregular} for all $t$, namely the dimensions $h_i(\cdot):=\dim V^{(i)}(\cdot)$ are constant.
\begin{proposition}
Let $V(\cdot)$ be an equiregular curve in $G_{k}(E)$. For every $i\geq 0$ the derivation of sections induces surjective linear maps
\begin{equation}
\delta_{i} : V^{(i)}(t)/V^{(i-1)}(t) \to V^{(i+1)}(t)/V^{(i)}(t), \qquad \all t.
\end{equation}
In particular, the following inequalities for the dimensions $h_i = \dim V^{(i)}$ hold true:
\begin{equation}
h_{i+1}-h_i \leq h_i - h_{i-1}, \qquad \all i\geq 0.
\end{equation}
\end{proposition}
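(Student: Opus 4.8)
The plan is to deduce the Proposition from the single-curve statement already established in Lemma~\ref{l:tgr} and Remark~\ref{r:tgr}, applied not to $V(\cdot)$ itself but to each of its extensions $V^{(i)}(\cdot)$. First I would record that, by the equiregularity hypothesis, $\dim V^{(i)}(\cdot)=h_i$ is constant, so each $V^{(i)}(\cdot)$ is a genuine smooth curve in the Grassmannian $G_{h_i}(E)$; in particular it is a smooth subbundle of the trivial bundle, it admits smooth local frames, and every vector of $V^{(i)}(t)$ is the value at $t$ of a smooth section of $V^{(i)}(\cdot)$. Lemma~\ref{l:tgr} and Remark~\ref{r:tgr} then provide, for each $i$, a well-defined surjective linear map
\begin{equation}
\delta : V^{(i)}(t) \longrightarrow (V^{(i)})^{(1)}(t)/V^{(i)}(t), \qquad \bar{w} \mapsto \dot{w}(t) \bmod V^{(i)}(t),
\end{equation}
where $w(\cdot)$ is any smooth section of $V^{(i)}(\cdot)$ with $w(t)=\bar{w}$.

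The key identification, which I expect to be the main obstacle, is the equality $(V^{(i)})^{(1)}(t)=V^{(i+1)}(t)$. The inclusion $V^{(i+1)}(t)\subseteq (V^{(i)})^{(1)}(t)$ is easy: for a smooth section $v$ of $V(\cdot)$ the curve $v^{(i)}(\cdot)$ is a smooth section of $V^{(i)}(\cdot)$ with derivative $v^{(i+1)}$, so each generator $v^{(i+1)}(t)$ of $V^{(i+1)}(t)$ already lies in $(V^{(i)})^{(1)}(t)$. For the reverse inclusion I would fix a smooth frame $e_1(\cdot),\dots,e_k(\cdot)$ of $V(\cdot)$; the vectors $\{e_a^{(j)}(s)\}_{1\le a\le k,\,0\le j\le i}$ span $V^{(i)}(s)$ at every $s$, and by constancy of $h_i$ one can extract, near $t$, a smooth subframe $f_1(\cdot),\dots,f_{h_i}(\cdot)$, each $f_l=e_{a_l}^{(j_l)}$ with $j_l\le i$. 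Writing an arbitrary smooth section $\ell$ of $V^{(i)}(\cdot)$ as $\ell=\sum_l \psi_l f_l$ with smooth coefficients and differentiating, the term $\sum_l \dot\psi_l f_l$ stays in $V^{(i)}$ while $\sum_l \psi_l \dot f_l$ lands in $V^{(i+1)}$, since $\dot f_l=e_{a_l}^{(j_l+1)}$ with $j_l+1\le i+1$; hence $\dot\ell(t)\in V^{(i+1)}(t)$, giving $(V^{(i)})^{(1)}(t)\subseteq V^{(i+1)}(t)$. The delicate points are precisely the smooth extractability of the subframe and the smooth decomposition of $\ell$, both of which rest on equiregularity.

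With this identification, $\delta$ becomes a surjective map $V^{(i)}(t)\to V^{(i+1)}(t)/V^{(i)}(t)$, and it remains only to check that it annihilates $V^{(i-1)}(t)$, so that it descends to the claimed $\delta_i$ on the quotient $V^{(i)}(t)/V^{(i-1)}(t)$. Given $\bar{w}\in V^{(i-1)}(t)$, I would choose a smooth section $w(\cdot)$ of the \emph{subbundle} $V^{(i-1)}(\cdot)$ through $\bar{w}$ (possible by equiregularity); since $w$ is in particular a section of $V^{(i)}(\cdot)$, the well-definedness in Lemma~\ref{l:tgr} lets me compute $\delta(\bar{w})$ from it, and $\dot{w}(t)\in (V^{(i-1)})^{(1)}(t)=V^{(i)}(t)$ forces $\delta(\bar{w})=0$. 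Thus $V^{(i-1)}(t)\subseteq\ker\delta$ and $\delta$ factors through a surjective linear map $\delta_i : V^{(i)}(t)/V^{(i-1)}(t)\to V^{(i+1)}(t)/V^{(i)}(t)$. Surjectivity immediately yields $\dim\big(V^{(i+1)}(t)/V^{(i)}(t)\big)\le \dim\big(V^{(i)}(t)/V^{(i-1)}(t)\big)$, that is $h_{i+1}-h_i\le h_i-h_{i-1}$, with the convention $V^{(-1)}=\{0\}$ covering the case $i=0$ (which is Lemma~\ref{l:tgr} and Remark~\ref{r:tgr} themselves).
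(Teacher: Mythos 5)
Your proposal is correct and follows essentially the same route as the paper: apply Lemma~\ref{l:tgr} (and Remark~\ref{r:tgr}) to $W(\cdot)=V^{(i)}(\cdot)$, viewed via equiregularity as a smooth curve in $G_{h_i}(E)$, identify $(V^{(i)})^{(1)}(t)=V^{(i+1)}(t)$, check $V^{(i-1)}(t)\subset\ker\delta$, and descend to the quotient. The only difference is that you spell out in full the two steps the paper dispatches with ``Notice that'' and ``for the same reason'' (the extraction of a smooth subframe from the $e_a^{(j)}$ and the vanishing on $V^{(i-1)}$), and your arguments for these are sound.
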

\begin{proof}
Since the curve $V(\cdot)$ is equiregular, we can apply Lemma \ref{l:tgr} with $W(\cdot)=V^{(i)}(\cdot)$ in the Grassmannian $G_{h_{i}}(E)$. Notice that, $W^{(1)}(t)=(V^{(i)}(\cdot))^{(1)}(t)=V^{(i+1)}(t)$, i.e., the $(i+1)$-th extension coincides with the space generated by derivatives of sections of the $i$-th extension (see also Remark \ref{r:tgr}).

Thus we have well defined surjective linear maps 
\begin{equation} \label{eq:vi}
\delta_{i} : V^{(i)}(t) \to V^{(i+1)}(t)/V^{(i)}(t).
\end{equation}
For the same reason $V^{(i-1)}(t)\subset \ker \delta_{i}$ for every $i$. Hence \eqref{eq:vi} descends to a surjective linear map
\begin{equation}
\delta_{i} : V^{(i)}(t)/V^{(i-1)}(t) \to V^{(i+1)}(t)/V^{(i)}(t).\qedhere
\end{equation}
\end{proof}
\finereview}

\chapter{Normal conditions for the canonical frame}\label{a:normal}
Here we rewrite the \emph{normal} condition for the matrix $R(t)$ mentioned in Definition \ref{d:normalmov} (and defined in \cite{lizel}) according to our notation.\index{canonical frame!normal conditions}

\begin{definition} The matrix $R_{ab,ij}$ is \emph{normal} if it satisfies:
\begin{itemize}
\item[(i)] global symmetry: for all $ai,bj\in D$
\[R_{ab,ij}=R_{ba,ji}.\]
\item[(ii)] partial skew-symmetry: for all $ai,bi\in D$ with $n_{a}=n_{b}$ and $i<n_{a}$ 
\[R_{ab,i(i+1)}=R_{ba,i(i+1)}.\]
%{\red ($ai,bi$ belong to  the same superblock, excluded the last of each level)}
\item[(iii)] vanishing conditions: the only possibly non vanishing entries $R_{ab,ij}$ satisfy
\begin{itemize}
\item[(iii.a)] $n_{a}=n_{b}$ and $|i-j|\leq 1$,
\item[(iii.b)] $n_{a}>n_{b}$ and $(i,j)$ belong to the last $2n_{b}$ elements of Table \ref{t:normaltable}.
\begin{table}[htdp]
\begin{center}
\caption{Vanishing conditions.}
\begin{tabular}{|c||c|c|c|c|c|c|c|c|c|c|c|c|c|}
\hline
$i$ & $1$ & $1$ & $2$ & $\cdots$ & $\ell$ & $\ell$ & $\ell+1$ & $\cdots$ & $n_{b}$ & $n_{b}+1$ & $\cdots$ & $n_a-1$ & $n_a$ \\
\hline
$j$ & $1$ & $2$ & $2$ & $\cdots$ & $\ell$ & $\ell+1$ & $\ell+1$ & $\cdots$ & $n_b$ & $n_b$ & $\cdots$ & $n_b$ & $n_b$ \\
\hline
\end{tabular}
\label{t:normaltable}
\end{center}
\end{table}
\end{itemize}
\end{itemize}
The sequence is obtained as follows: starting from $(i,j)=(1,1)$ (the first boxes of the rows $a$ and $b$), each next even pair is obtained from the previous one by increasing $j$ by one (keeping $i$ fixed). Each next odd pair is obtained from the previous one by increasing $i$ by one (keeping $j$ fixed). This stops when $j$ reaches its maximum, that is $(i,j) = (n_b,n_b)$. Then, each next pair is obtained from the previous one by increasing $i$ by one (keeping $j$ fixed), up to $(i,j) = (n_a,n_b)$. The total number of pairs appearing in the table is $n_b+n_a-1$.
\end{definition}

\chapter{Coordinate representation of flat, rank 1 Jacobi curves (Proposition~\ref{prop:rank1})}\label{prop:rank1_proof}
\begin{proposition*}[Special case of Theorem~\ref{t:Sasymptotic}]
Let $\Lambda(\cdot)$ a Jacobi curve of rank $1$, with vanishing $R(t)$. The matrix $S$, in terms of the canonical frame, is
\begin{equation}
S_{ij}(t) = \frac{(-1)^{i+j-1}}{(i-1)!(j-1)!}\frac{t^{i+j-1}}{(i+j-1)} = \wh{S}_{ij} t^{i+j-1},\qquad i,j=1,\dots,n .
\end{equation}
Its inverse is
\begin{equation}
S^{-1}(t)_{ij} = 
 \frac{ - 1}{i+j-1} \binom{n + i -1}{i-1}\binom{n+ j -1}{j-1}  \frac{(n!)^2}{(n-i)!(n-j)!} = \frac{\wh{S}^{-1}_{ij}}{t^{i+j-1}},\qquad i,j=1,\dots,n .
\end{equation}
\end{proposition*}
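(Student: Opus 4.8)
The plan is to solve the structural equations explicitly, which is possible because with $R(t)\equiv 0$ and a single-row Young diagram they reduce to a pure shift. First I would establish the closed forms for $A(t)$ and $B(t)$ in Eqs.~\eqref{Arank1}--\eqref{Brank1}. Writing $E(t)=A(t)E(0)+B(t)F(0)$ and $F(t)=C(t)E(0)+D(t)F(0)$, the structural equations give the decoupled recursions $\dot A_{ij}=A_{(i-1)j}$ and $\dot B_{ij}=B_{(i-1)j}$ for $i\geq2$, together with $\dot A_{1j}=-C_{1j}$ and $\dot B_{1j}=-D_{1j}$, while the $F$-equations (which do not involve the $E_i$ when $R=0$) give $C\equiv0$ and $D_{ij}(t)=(-1)^{j-i}t^{j-i}/(j-i)!$ for $j\geq i$. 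I would then verify by direct differentiation that the claimed monomials satisfy these recursions and the initial conditions $A(0)=\id$, $B(0)=0$; by uniqueness of solutions of linear ODEs this proves \eqref{Arank1}--\eqref{Brank1}. Equivalently, $A,B$ are the $E$-rows of the matrix exponential $e^{t\mathcal{N}}$ of the nilpotent structure matrix $\mathcal{N}$, which is automatically polynomial in $t$.

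Next I would invert the lower-triangular matrix $A(t)$, obtaining \eqref{Ainvrank1}, namely $A^{-1}_{ij}(t)=(-1)^{i-j}t^{i-j}/(i-j)!$; the check $\sum_k A^{-1}_{ik}A_{kj}=\delta_{ij}$ reduces to the binomial identity $\sum_{l=0}^m\binom{m}{l}(-1)^{m-l}=\delta_{m0}$. Then $S(t)=A(t)^{-1}B(t)$ is computed entrywise, and after factoring out $(-1)^{i+j}t^{i+j-1}$ and shifting the summation index, the problem collapses to the single scalar identity
\begin{equation}
\sum_{k=0}^{n}\frac{(-1)^{k}}{(n-k)!(k+j)!}=\frac{1}{n!\,(j-1)!\,(n+j)},\qquad n=i-1 .
\end{equation}
This I would prove using the Beta integral $k!\,(j-1)!/(k+j)!=\int_0^1 x^{k}(1-x)^{j-1}\,dx$: multiplying the sum by $n!\,(j-1)!$, writing $n!/[(n-k)!(k+j)!]=\binom{n}{k}k!/(k+j)!$, and exchanging sum and integral gives $\int_0^1(1-x)^{n+j-1}\,dx=1/(n+j)$. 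This yields exactly \eqref{Srank1}, and the manifest symmetry $\wh{S}_{ij}=\wh{S}_{ji}$ provides a consistency check, since the canonical frame is Darboux and hence $S$ must be symmetric.

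Finally, for the inverse I would exploit the factorization $S(t)=T(t)\,\wh{S}\,T(t)$ with the diagonal matrix $T(t)=\operatorname{diag}\!\big(t^{\,i-1/2}\big)$, so that $S(t)^{-1}=T(t)^{-1}\wh{S}^{-1}T(t)^{-1}$ automatically produces the announced power structure $S^{-1}_{ij}(t)=\wh{S}^{-1}_{ij}\,t^{-(i+j-1)}$. It then only remains to identify the constant matrix $\wh{S}^{-1}$. Here I would write $\wh{S}=-D\,H\,D$, where $H_{ij}=1/(i+j-1)$ is the $n\times n$ Hilbert matrix and $D=\operatorname{diag}\!\big((-1)^{i-1}/(i-1)!\big)$, so that $\wh{S}^{-1}=-D^{-1}H^{-1}D^{-1}$, and invoke the classical closed form for the inverse of the Hilbert matrix. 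Matching the result with the stated formula for $\wh{S}^{-1}$ is the main obstacle: it is the one genuinely non-elementary step, requiring either the Hilbert-inverse identity or, alternatively, a direct verification of $\sum_k \wh{S}_{ik}\wh{S}^{-1}_{kj}=\delta_{ij}$ through a terminating hypergeometric summation. Everything else is bookkeeping with factorials.
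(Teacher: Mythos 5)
Your proof is correct, and its overall architecture --- solve the structural equations for $A(t),B(t)$, invert the triangular $A(t)$, form $S=A^{-1}B$, then invert $S$ --- is the same as the paper's; the two computational lemmas inside it are handled differently, and both of your versions check out. For the entries of $S$, where the paper reduces the convolution $\sum_k A^{-1}_{ik}B_{kj}$ to the alternating partial binomial sum $\sum_{\ell=0}^{i-1}(-1)^\ell\binom{i+j-1}{\ell}=(-1)^{i-1}\binom{i+j-2}{j-1}$, you use the Beta integral $k!\,(j-1)!/(k+j)! = \int_0^1 x^k(1-x)^{j-1}\,dx$ and sum the binomial series under the integral; multiplying your identity by $n!\,(j-1)!$ indeed yields $\int_0^1(1-x)^{n+j-1}dx=1/(n+j)$, and the sign bookkeeping $(-1)^{n+j}=(-1)^{i+j-1}$ reproduces $\wh{S}_{ij}$ exactly. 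For the inverse, your diagonal factorization $S(t)=T(t)\,\wh{S}\,T(t)$ is a genuine shortcut the paper does not use: it makes the power structure $S^{-1}_{ij}(t)=\wh{S}^{-1}_{ij}t^{-(i+j-1)}$ immediate, whereas the paper extracts it from Cramer's rule (for $t<0$ the half-integer powers in $T(t)$ are awkward; write instead $S(t)=t^{-1}D_t\,\wh{S}\,D_t$ with $D_t=\mathrm{diag}(t^i)$, which works for all $t\neq 0$). The routes truly diverge only at the constant-matrix inversion: you recognize $\wh{S}=-DHD$ with $H_{ij}=1/(i+j-1)$ the Hilbert matrix, $D=\mathrm{diag}\bigl((-1)^{i-1}/(i-1)!\bigr)$, and invoke the classical closed form for $H^{-1}$, while the paper proves the same fact from scratch, applying Cramer's rule to the Cauchy-type matrix $[1/(x_\ell+y_k)]$ and performing row/column eliminations at generic nodes before specializing $x_\ell=y_\ell=\ell-\tfrac{1}{2}$ --- which \emph{is} the Hilbert matrix, so the paper is in effect rederiving the identity you quote. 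Your version is shorter and modular at the price of an external citation (or the terminating hypergeometric verification you offer as fallback); the paper's is self-contained. The matching step you flag as the main obstacle does go through: substituting $(H^{-1})_{ij}=(-1)^{i+j}(i+j-1)\binom{n+i-1}{n-j}\binom{n+j-1}{n-i}\binom{i+j-2}{i-1}^2$ into $\wh{S}^{-1}=-D^{-1}H^{-1}D^{-1}$ and simplifying via $[(i+j-1)!]^2=(i+j-1)^2[(i+j-2)!]^2$ gives precisely $\wh{S}^{-1}_{ij}=\frac{-1}{i+j-1}\,\frac{(n+i-1)!\,(n+j-1)!}{(i-1)!\,(j-1)!\,(n-i)!\,(n-j)!}$, which is the stated formula.
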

\begin{proof}
From Eqs.~\eqref{Brank1} and~\eqref{Ainvrank1}, we obtain
\begin{multline}
S_{ij}(t) = \sum_{k=1}^n A^{-1}_{ik}B_{kj} 
= \sum_{k=1}^i\frac{(-1)^{i-k}t^{i-k}}{(i-k)!}\frac{(-1)^{j}t^{k+j-1}}{(k+j-1)!} 
= (-1)^{j}t^{i+j-1}\sum_{k=1}^i \frac{(-1)^{i-k}}{(k+j-1)!(i-k)!} = \\
= (-1)^{j} t^{i+j-1}\sum_{\ell=0}^{i-1}\frac{(-1)^\ell}{(i+j-1-\ell)!\ell!}  
= \frac{(-1)^{j} t^{i+j-1}}{(i+j-1)!}\sum_{\ell=0}^{i-1} \binom{i+j-1}{\ell}(-1)^\ell = \\
= \frac{(-1)^{i+j-1}t^{i+j-1}}{(i+j-1)!}\binom{i+j-2}{j-1} = 
 \frac{(-1)^{i+j-1}}{(i-1)!(j-1)!}\frac{t^{i+j-1}}{(i+j-1)}.
\end{multline}
By Cramer's rule, the inverse of $S(t)$ is
\begin{equation}\label{Sinvtemp}
S^{-1}_{ij}(t) = \frac{(-1)^{i+j}\det\left[\dfrac{(-1)^{\ell+k-1}}{(\ell-1)!(k-1)!}\dfrac{t^{\ell+k-1}}{(\ell+k-1)}\right]_{\substack{\ell\neq j \\ k\neq i}}}{\det\left[\dfrac{(-1)^{\ell+k-1}}{(\ell-1)!(k-1)!}\dfrac{t^{\ell+k-1}}{(\ell+k-1)}\right]} = \frac{-(i-1)!(j-1)!\det\left[\dfrac{1}{\ell+k-1}\right]_{\substack{\ell\neq j \\ k\neq i}}}{t^{i+j-1}\det\left[\dfrac{1}{\ell+k-1}\right]}.
\end{equation}
Now we compute the ratio of determinants in the last factor of Eq.~\eqref{Sinvtemp}. Consider a generic matrix of the form $H_{\ell k} = \frac{1}{x_\ell + x_k}$, for $\ell,k=1,\dots,n$. For fixed $i,j \in \{1,\dots,n\}$, we can express the determinant of $H$ in terms of the the $i,j$-th minor, by rows and columns operations as follows. First, subtract the $i$-th column from each other column. We obtain a new matrix, $H^\prime$, whose $i$-th column is the same of $H$, while, for $k\neq i$
\begin{equation}
H^\prime_{\ell k} = \frac{1}{x_\ell+y_k}- \frac{1}{x_\ell+y_i}= \frac{y_i-y_k}{(x_\ell+y_i)(x_\ell+y_k)},\qquad \ell,k=1,\dots,n.
\end{equation}
Indeed $\det H^\prime = \det H$. Then, we collect the factor $\frac{1}{x_\ell + y_i}$ from each row, and the factor $(y_i - y_k)$ from each column but the $i$-th. We obtain
\begin{equation}
\det\left[\dfrac{1}{x_\ell+x_k}\right] = \prod_{\ell=1}^n \frac{1}{x_\ell + y_i}\prod_{\substack{k=1 \\ k\neq i}}^n (y_i-y_k)\det \begin{bmatrix}
\frac{1}{x_1 + y_1} & \frac{1}{x_1 + y_2} & \dots & 1 & \dots &\frac{1}{x_1 + y_n} \\
\frac{1}{x_2 + y_1} & \frac{1}{x_2 + y_2} & \dots & 1 & \dots  &\frac{1}{x_2 + y_n} \\
\vdots & \vdots & & \vdots &  & \vdots \\
\frac{1}{x_n + y_1} & \frac{1}{x_n + y_2} & \dots & 1 & \dots & \frac{1}{x_n + y_n}
\end{bmatrix},
\end{equation}
where the entries of the $i$-th column are equal to $1$. Now, subtract the $j$-th row from each other row, but the $j$-th itself. Collect again the common factors. We obtain
\begin{equation}\label{Hilbertmatrix}
\det\left[\dfrac{1}{x_\ell+x_k}\right] = (-1)^{i+j} \prod_{\ell=1}^n \frac{1}{x_\ell + y_i}\prod_{\substack{k=1 \\ k\neq i}}^n (y_i-y_k)\prod_{\substack{k=1 \\ k\neq i}}^n \frac{1}{x_j + y_k}\prod_{\substack{\ell=1 \\ \ell\neq j}}^n (x_j-x_\ell)\det\left[\dfrac{1}{x_\ell+x_k}\right]_{\substack{\ell \neq j \\ k \neq i}}.
\end{equation}
Now we apply the result of Eq.~\eqref{Hilbertmatrix} to our case, i.e. $x_\ell = y_\ell = \ell - \tfrac{1}{2}$. Therefore we obtain
\begin{multline}\label{detratio}
\frac{\det\left[\dfrac{1}{\ell+k-1}\right]_{\substack{\ell\neq j \\ k\neq i}}}{\det\left[\dfrac{1}{\ell+k-1}\right]} = 
 (-1)^{i+j} \prod_{\ell=1}^n (\ell + i -1)\prod_{\substack{k=1 \\ k\neq i}}^n\frac{1}{i-k}\prod_{\substack{k=1 \\ k\neq i}}^n (j + k-1)\prod_{\substack{\ell=1 \\ \ell\neq j}}^n \frac{1}{j-\ell} = \\ = \frac{1}{i+j-1}\frac{(n!)^2}{(i-1)!(j-1)!}\binom{i+n-1}{i-1}\binom{j+n-1}{j-1}.
 \end{multline}
Eq.~\eqref{Sinvtemp} and Eq.~\eqref{detratio}, together, give the desired formula.
\end{proof}

\chapter{A binomial identity (Lemma~\ref{l:lemmacoeff})}\label{s:lemmacoeff}
\begin{lemma*}
Let
\begin{equation}\label{original}
\Omega(n,m) = \frac{n m}{(n+1)(m+1)} \sum_{j=1}^{n}  \sum_{i=1}^{m} (-1)^{i+j}\binom{n+i-1}{i-1}\binom{n+1}{i+1}\binom{m+j-1}{j-1}\binom{m+1}{j+1} \frac{i+j+2}{i+j+1}.
\end{equation}
Then
\begin{equation}\label{final}
\Omega(n,m)= \begin{cases}
0 & |n-m|\geq 2,  \\
\frac{1}{4(n+m)} & |n-m| = 1, \\
\frac{n}{4n^2-1} & n = m.
\end{cases}
\end{equation}
\end{lemma*}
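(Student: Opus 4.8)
The plan is to exploit the fact that the summand factorises, apart from the coupling term $\tfrac{i+j+2}{i+j+1}$, into a function of $(n,i)$ times a function of $(m,j)$. Writing $p_i=(-1)^i\binom{n+i-1}{i-1}\binom{n+1}{i+1}$ and $q_j=(-1)^j\binom{m+j-1}{j-1}\binom{m+1}{j+1}$, and noting that the binomials restrict the effective summation to $1\le i\le n$ and $1\le j\le m$, I would first split $\tfrac{i+j+2}{i+j+1}=1+\tfrac{1}{i+j+1}$ and use $\tfrac{1}{i+j+1}=\int_0^1 x^{i+j}\,dx$. Introducing the generating polynomials $P(x)=\sum_i p_i x^i$ and $Q(x)=\sum_j q_j x^j$, the double sum collapses to
\begin{equation}
\sum_{i,j} p_i q_j \frac{i+j+2}{i+j+1} = P(1)Q(1) + \int_0^1 P(x)Q(x)\,dx.
\end{equation}

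The second step is to recognise $P$ and $Q$ as shifted Jacobi polynomials. A direct computation gives the term ratio $p_{i+1}/p_i=\tfrac{(i+n)(i-n)}{i(i+2)}$, which shows that $P(x)/(p_1 x)$ is the terminating Gauss series ${}_2F_1(1+n,1-n;3;x)$; via the standard hypergeometric representation of Jacobi polynomials this yields the closed form $P(x)=-x\,P_{n-1}^{(2,-1)}(1-2x)$, and likewise $Q(x)=-x\,P_{m-1}^{(2,-1)}(1-2x)$. In particular $P(1)=-P_{n-1}^{(2,-1)}(-1)$, and since $P_k^{(\alpha,-1)}(-1)=(-1)^k\binom{k-1}{k}$, this equals $-1$ for $n=1$ and $0$ for $n\ge 2$; thus the boundary term $P(1)Q(1)$ contributes only in the degenerate case $n=m=1$.

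Then, substituting $y=1-2x$ turns the integral into $\int_0^1 P Q\,dx=\tfrac18\int_{-1}^1 (1-y)^2 P_{n-1}^{(2,-1)}(y)P_{m-1}^{(2,-1)}(y)\,dy$, an inner product against the Jacobi weight $(1-y)^2$ with parameters $(\alpha,\beta)=(2,0)$. To avoid the degenerate $\beta=-1$ weight I would not invoke orthogonality of the $P^{(2,-1)}$ directly, but rewrite them through the contiguous relation
\begin{equation}
P_{k}^{(2,-1)}(y) = \frac{k+2}{2(k+1)}\Big(P_{k}^{(2,0)}(y) + P_{k-1}^{(2,0)}(y)\Big).
\end{equation}
After this, orthogonality of the regular family $\{P_k^{(2,0)}\}$, together with the norms $\int_{-1}^1 (1-y)^2\big(P_k^{(2,0)}\big)^2 dy=\tfrac{8}{2k+3}$, reduces the integral to the four overlaps $\langle P_{n-1}^{(2,0)}\!+\!P_{n-2}^{(2,0)},\,P_{m-1}^{(2,0)}\!+\!P_{m-2}^{(2,0)}\rangle$, scaled by the product of connection coefficients $\tfrac{(n+1)(m+1)}{4nm}$. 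These overlaps are controlled by Kronecker deltas that are nonzero precisely when $n=m$ or $|n-m|=1$: for $|n-m|\ge 2$ all four vanish and $\Omega=0$; for $n=m$ the surviving pair gives $\tfrac{8}{2n+1}+\tfrac{8}{2n-1}$; and for $|n-m|=1$ (using $\Omega(n,m)=\Omega(m,n)$, take $m=n+1$) a single overlap $\tfrac{8}{2n+1}$ survives.

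The final step is purely arithmetic: combining the connection factor, the $\tfrac18$ from the change of variables, and the prefactor $\tfrac{nm}{(n+1)(m+1)}$, and simplifying $\tfrac{1}{2n-1}+\tfrac{1}{2n+1}=\tfrac{4n}{4n^2-1}$, produces $\tfrac{n}{4n^2-1}$ in the diagonal case and $\tfrac{1}{4(2n+1)}=\tfrac{1}{4(n+m)}$ in the $|n-m|=1$ case, while $n=m=1$ is recovered by adding the boundary term $P(1)Q(1)=1$. I expect the main obstacle to be Steps 2–3: correctly identifying the generating polynomials with the \emph{exceptional} $\beta=-1$ Jacobi family and circumventing its non-integrable weight, which the contiguous relation resolves by transferring the entire computation to the regular $(2,0)$ family; once this is in place, the three-case structure of the claim falls out automatically from the vanishing pattern of the orthogonality deltas.
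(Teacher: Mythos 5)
Your proposal is correct, but it takes a genuinely different route from the paper's proof. The paper argues by elementary finite differences: writing $\Omega(n,m)=v(n)^*M(n,m)v(m)$, it observes that the inner sum has the form $\sum_{j=0}^m(-1)^j\binom{m}{j}Q_i(j)$ where $Q_i$ is a rational function of $j$ whose only pole is at $j=-i-1$; splitting $Q_i(j)=P_i(j)+\tfrac{R_i}{i+j+1}$ with $\deg P_i<m$, the polynomial part is annihilated by the identity $\sum_{j=0}^m(-1)^j\binom{m}{j}P(j)=0$, and the remainder $R_i$ vanishes except for $i=m-1,m$ — this is the paper's mechanism for the vanishing when $|n-m|\geq 2$ — after which everything follows from the evaluation $\sum_{j=0}^m(-1)^j\binom{m}{j}\tfrac{1}{j+k}=\tfrac{m!(k-1)!}{(m+k)!}$, itself proved by integrating $x^{k-1}(1+x)^m$. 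You instead diagonalize the bilinear form: your identifications all check out — the term ratio $p_{i+1}/p_i=\tfrac{(i+n)(i-n)}{i(i+2)}$ does give $P(x)=p_1x\,{}_2F_1(n+1,1-n;3;x)=-x\,P_{n-1}^{(2,-1)}(1-2x)$ since $p_1=-\binom{n+1}{2}$; the contiguous relation is DLMF 18.9.3 with $\alpha=2$, $\beta=-1$, where both coefficients indeed collapse to $\tfrac{k+2}{2(k+1)}$; and the norm $\tfrac{8}{2k+3}$ is the standard Jacobi norm for $(\alpha,\beta)=(2,0)$ — and the band structure $|n-m|\leq 1$ falls out of the disjointness of the index sets $\{n-1,n-2\}$ and $\{m-1,m-2\}$, with the final arithmetic reproducing $\tfrac{n}{4n^2-1}$ and $\tfrac{1}{4(n+m)}$ exactly as you state (I verified the bookkeeping, including $\Omega(1,2)=\tfrac{1}{12}$ and $\Omega(1,3)=0$). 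At bottom the two proofs share the same analytic kernel — your $\int_0^1 x^{i+j}\,dx$ is the Beta integral underlying the paper's summation identity — but the structural ideas differ: the paper's argument is entirely self-contained, using nothing beyond the binomial theorem, while yours explains conceptually \emph{why} $\Omega$ is supported on a tridiagonal band (the $v(n)$ are coefficient vectors of near-orthogonal polynomials against the weight $(1-y)^2$) and would generalize to other kernels of the form $\tfrac{1}{i+j+c}$. The one delicate point, which you did handle, is the degenerate case $n=m=1$: the boundary term $P(1)Q(1)$ survives only there (since $P_k^{(2,-1)}(-1)=0$ for $k\geq 1$), and together with the convention $P_{-1}^{(2,0)}=0$ it makes the small cases come out right — mirroring the paper, which assumes $m\geq 2$ and checks $n=m=1$ by direct computation.
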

\begin{proof}
It is clear that $\Omega(n,m) = \Omega(m,n)$, then we can assume without loss of generality that $n \leq m$. The case $m=n=1$ can be easily proved by a direct computation. Then, we also assume $m \geq 2$. Let us write $\Omega(n,m)$ in a more compact form. In order to do that, let $M(n,m)$ be the $n\times m$ matrix of components
\begin{equation}
M(n,m)_{ij} \doteq  (-1)^{i+j} \frac{i+j+2}{i+j+1},\qquad i=1,\ldots,n,\quad j=1,\ldots,m.
\end{equation}
and let $v(m)$ be the $m$-dimensional column vector of components
\begin{equation}
v(m)_j = \frac{m}{m+1}\binom{m+1}{j+1}\binom{m+j-1}{j-1},\qquad j=1,\ldots,m.
\end{equation}
Then 
\begin{equation}
\Omega(n,m) = v(n)^* M(n,m) v(m).
\end{equation} 
Consider first the $i-th$ component of the $n$-dimensional vector $w(n,m)\doteq  M(n,m)v(m)$, namely
\begin{equation}
w(n,m)_i = \sum_{j=1}^m (-1)^{i+j} \frac{i+j+2}{i+j+1} \frac{m}{m+1}\binom{m+1}{j+1}\binom{m+j-1}{j-1} =\frac{(-1)^i}{(m-1)!} \sum_{j=0}^m (-1)^j \binom{m}{j}Q_i(j),
\end{equation}
where, for each $i=1,\dots,n$, $Q_i(j)$ is a rational function (in the variable $j$) defined by
\begin{equation}
Q_i(j) =  \frac{(m+j-1)!}{(j-1)!(j+1)} \frac{i+j+2}{i+j+1} = j(j+2)(j+3) \dots (j+m-1)	\frac{i+j+2}{i+j+1}.
\end{equation}
Notice that the factor $(j+1)$ does not appear (remember also that $m \geq 2$). The idea is to exploit the following beautiful identity.
\begin{lemma}\label{lem}
Let $m\geq 2$. Let $P(x)$ be any polynomial of degree smaller than $m$, then
\begin{equation}
\sum_{j=0}^m (-1)^j \binom{m}{j}P(j) = 0.
\end{equation}
\end{lemma}
\begin{proof}
It is sufficient to prove the statement for $P(x) = x^i$, with $0\leq i < m$, since any polynomial of degree smaller than $m$ is a linear combination of such monomials. By Newton's binomial formula, we have
\begin{equation}
(x-1)^m = (-1)^m \sum_{j=0}^m (-1)^{j} 	\binom{m}{j} x^{j}.
\end{equation}
The result easily follows observing that any derivative of order strictly smaller than $m$, evaluated at $x=1$ vanishes.
\end{proof}
We will see that, for many values of $i$, the denominator of $Q_i(j)$ factors the numerator, and then $Q_i(j)$ is actually a polynomial of degree $m-1$ in the variable $j$. Then we apply Lemma~\ref{lem} to show that $w(n,m)_i \neq 0$ only if $i =m-1,m$. In particular, since $w(n,m)$ is a $n$-dimensional vector, if $n \leq m-2$ then $w(n,m) = 0$ and $\Omega(n,m)$ vanishes too. Then we will explicitly compute the coefficient for $n=m-1$ and $n=m$.

Observe that, for each $i=1,\dots,n$, the numerator of $Q_i(j)$ is a polynomial of degree $m$ in the variable $j$. Therefore there exists a polynomial $P_i(j)$ (of degree strictly smaller than $m$) and a number $R_i$ such that
\begin{equation}
Q_i(j) = P_i(j) + \frac{R_i}{i+j+1}.
\end{equation}
It is easy to compute the remainder. Observe that
\begin{equation}
R_i = -(i+j+1)P_i(j) + Q_i(j) (i+j+1).
\end{equation}
Then, evaluating at $j = -i-1$, we obtain
\begin{equation}\label{R}
R_i = \begin{cases}
0 & i=1,2,\dots,m-2, \\
\displaystyle (-1)^{m-1}\frac{m!}{m-1} & i =m-1,\\
\displaystyle (-1)^{m-1}\frac{(m+1)!}{m} & i =m .
\end{cases}
\end{equation}
By Lemma~\ref{lem} we have
\begin{equation}
w(n,m)_i = \frac{(-1)^i}{(m-1)!} \sum_{j=0}^m (-1)^j \binom{m}{j}\frac{R_i}{i+j+1},
\end{equation}
which, by Eq.~\eqref{R}, is indeed zero if $i=1,2,\dots,m-2$. Then, since $\Omega(n,m) = v(n)^* w(n,m)$, we obtain after some straightforward computations the following formula:
\begin{equation}\label{cases}
\Omega(n,m) = \begin{cases}
0 & m-n > 2, \\
\displaystyle\binom{2m -3}{m-2} \sum_{j=0}^m  \binom{m}{j}\frac{(-1)^j}{j+m} & n = m-1,\\
\displaystyle\binom{2m-2}{m-2} (m+1) \sum_{j=0}^m  \binom{m}{j}\frac{(-1)^j}{j+m} - \binom{2m-1}{m-1} m \sum_{j=0}^m  \binom{m}{j}\frac{(-1)^j}{j+m+1} & n=m.
\end{cases}
\end{equation}
In order to obtain the result, it only remains to compute the sums appearing in Eq.~\eqref{cases}. Indeed these are of the form
\begin{equation}
S_k \doteq  \sum_{j=0}^m (-1)^j\binom{m}{j}\frac{1}{j+k},
\end{equation}
where $k$ is a positive integer. We have the following, remarkable identity.
\begin{equation}\label{identity}
\sum_{j=0}^m (-1)^j\binom{m}{j}\frac{1}{j+k} = \frac{m! (k-1)!}{(m+k)!}.
\end{equation}
By plugging Eq.~\eqref{identity} in Eq.~\eqref{cases} we obtain the result. Then we only need to prove Eq.~\eqref{identity}. Indeed, for $k$ a positive integer, let us define the following function
\begin{equation}
f_k(x)\doteq \sum_{j=0}^m (-1)^j \binom{m}{j}\frac{(-x)^{j+k}}{j+k}.
\end{equation}
Indeed $S_k = f_k(-1)$. Let us compute the derivative of $f_k$.
\begin{equation}
\frac{d f_k}{dx} = - \sum_{j=0}^m (-1)^j\binom{m}{j}(-x)^{j+k-1} = (-1)^k x^{k-1}(1+x)^m.
\end{equation}
where we used Newton's binomial formula. Then 
\begin{equation}
S_k = f_k(-1) = (-1)^k \int_0^{-1}  x^{k-1}(1+x)^m.
\end{equation}
By integrating by parts $k-1$ times, we obtain the result
\begin{equation}
S_k = f_k(-1) = \frac{m! (k-1)!}{(m+k)!}.\qedhere
\end{equation}
\end{proof}

\chapter{A geometrical interpretation \texorpdfstring{of $\dot{c}_t$}{}} \label{s:ctdot} \index{geodesic cost!geometrical interpretation}

{\review
In this appendix we provide a geometrical interpretation of the derivative $\dot{c}_t$ of the geodesic cost. 

In what follows, for simplicity, we restrict to the case of a geodesic cost induced by a Riemannian distance $\dist:M\times M \to \R$, namely
\begin{equation}
c_t(x)=-\frac{1}{2t} \dist^2(x,\gamma(t)),
\end{equation} 
where $\gamma(t) = \exp_{x_0}(tv)$ is a Riemannian geodesic starting at $x_0$ with initial vector $v\in T_{x_{0}}M$.

In the following, for any $x,y \in M$, the symbol $\Sigma_x \in M$ is the usual domain of smoothness of the function $y \mapsto \dist^2(x,y)$ (which, in the Riemannian setting, is precisely the complement of the cut locus). Thus, let us define $W_{x,y}^t \in T_y M$ as the tangent vector at time $t$ of the unique geodesic connecting $x$ with $y$ in time $t$. We have the identities
\begin{equation}\label{eq:utilitaria}
\frac{1}{2} \nabla_y \dist^2(x,y) = W_{x,y}^1 = t W_{x,y}^t,\qquad \dist^2(x,y)=\|W_{x,y}^1\|=t^{2}\|W_{x,y}^t\|.
\end{equation}
where $\nabla_y$ denotes the Riemannian gradient w.r.t. $y$. Next we compute, for every $t>0$
\begin{equation}
\begin{aligned}
\dot{c}_t(x) = \frac{d}{dt} c_t(x) & = \frac{1}{t^2}\dist^2(x,\gamma(t)) - \frac{1}{2t}\frac{d}{dt} \dist^2(x,\gamma(t))  \\
& = \frac{1}{2}\|W_{x,\gamma(t)}^t\|^2 - \langle\dot\gamma(t)| W^t_{x,\gamma(t)}\rangle  \\
& = \frac{1}{2}\|\dot{\gamma}(t) - W_{x,\gamma(t)}^t\|^2 -\frac{1}{2}\|\dot{\gamma}(t)\|^2.
\end{aligned}
\end{equation}
where we used \eqref{eq:utilitaria} and the Euclidean identity $\|v-w\|^{2}-\|v\|^{2}=\|w^{2}\|-2 \langle v|w\rangle$.

Let us rewrite the last expression. Since $\g$ is a geodesic, one has that $\|\dot{\gamma}(t)\|=\|v\|$ is constant. Moreover, by definition of $W_{x,y}^{t}$, we have $\dot \gamma(t) = W_{x_0,\gamma(t)}^t$. Thus, up to an additive constant (that does not change the fact that $\dot{c}_t$ has a critical point at $x_0$), we have
\begin{equation}\label{eq:ctdot1}
\dot{c}_t(x) = \frac{1}{2}\|W_{x_0,\gamma(t)}^t - W_{x,\gamma(t)}^t\|^2.
\end{equation}
\begin{remark}
There is no difference whatsoever in the sub-Riemannian case, replacing the initial vector $v$ of the geodesic by its initial covector $\lam$ and its squared norm $\|v\|^{2}$ by $2H(\lam)$. In this case, the Riemannian exponential map $\exp_{x_{0}}$ is naturally replaced by the sub-Riemannian exponential map $\EXP_{x_{0}}$. In Hamiltonian terms, if $H$ denotes the (sub)-Riemannian Hamiltonian and $\lambda_{x,y}^t$ is the covector at time $t$ of the unique minimizer connecting $x$ with $y$ in time $t$, we have (again, up to an additive constant):
\begin{equation}\label{eq:ctdot2}
\dot{c}_t(x) = H(\lambda_{x_0,\gamma(t)}^t - \lambda_{x,\gamma(t)}^t).
\end{equation}
\end{remark}

Formulae~\eqref{eq:ctdot1}-\eqref{eq:ctdot2} have a natural physical interpretation as follows. Suppose that two guys $A$ and $B$ live on a curved (sub)-Riemannian manifold, at points $x_A$ and $x_B$ respectively (see Figure~\ref{fig:interpr}). Then $A$ chooses a geodesic $\gamma(t)$, starting from $x_A$, and tells $B$ to meet at some point $\gamma(t)$ (at time $t$). The guy $B$ must choose carefully his initial velocity (or covector) in order to meet $A$ at the point $\gamma(t)$ starting from $x_B$, following a geodesic for time $t$. When they meet at $\gamma(t)$ at time $t$, they compare their velocities (or their covectors) by computing the length of their difference (or the energy of the difference of the covectors). This is the value of the function $\dot{c}_t$, up to a constant (see Figure~\ref{fig:interpr}).

If $A$ and $B$ live in a positively (resp. negatively) curved Riemannian manifold they experience that their vectors (when compared at the point of meeting $\gamma(t)$) are more (resp. less) divergent w.r.t. the flat case (see Figure~\ref{fig:interpr}).
The curvature hides in the behaviour of this function for small $t$ and $x$ close to $x_{0}$. 
\brem
Notice that we do not need any parallel transport (the guys meet at the point $\gamma(t)$ and make \emph{there} their comparison) and we only used the concept of ``optimal trajectory'' and ``difference of the cost''. This interpretation indeed works for a general optimal control system.
\erem
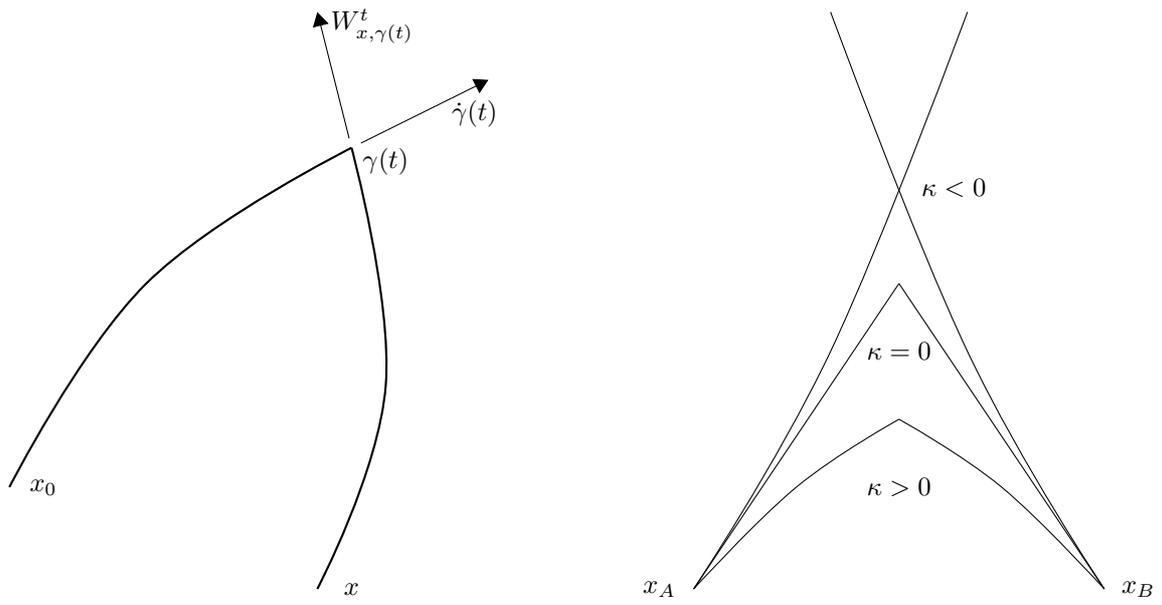
\begin{figure}
\centering
\begin{tikzpicture}[scale=0.9]
\draw[thick]  plot[smooth, tension=.7] coordinates {(-2,-2.5) (0,0.5) (3,2.5)};
\draw[thick]  plot[smooth, tension=.7] coordinates {(3,2.5) (3.5,-1) (2.5,-4)};
\draw (3,2.5) node (v1) {};
\draw [-triangle 60](v1) -- (2.5,4.5);
\draw [-triangle 60](v1) -- (5,3.5);
\node at (-1.5,-2.5) {$x_0$};
\node at (3,-4) {$x$};
\node at (3.5,2.3) {$\gamma(t)$};
\node at (4.8,3) {$\dot\gamma(t)$};
\node at (3.3,4.3) {$W_{x,\gamma(t)}^{t}$};
\draw (11,0.5) -- (8,-4) node (v2) {};
\draw (11,0.5) -- (14,-4) node (v3) {};
\draw  plot[smooth, tension=.7] coordinates {(v2)};
\draw  plot[smooth, tension=.7] coordinates {(v2) (9.5,-2.5) (11,-1.5)};
\draw  plot[smooth, tension=.7] coordinates {(v3) (12.5,-2.5) (11,-1.5)};
\draw  plot[smooth, tension=.7] coordinates {(8,-4) (10,-0.5) (12,4.5)};
\draw  plot[smooth, tension=.7] coordinates {(v3) (12,-0.5) (10,4.5) };
\node at (7.5,-4) {$x_A$};
\node at (14.5,-4) {$x_B$};
\node at (11,-2.5) {$\kappa>0$};
\node at (11,-0.5) {$\kappa=0$};
\node at (11.8,1.9) {$\kappa<0$};
\end{tikzpicture}
\caption{A geometrical interpretation for the function $\dot{c}_t$.}\label{fig:interpr}
\end{figure}

\finereview}

\backmatter

\bibliographystyle{amsalpha}
\bibliography{../biblio/Curv-Biblio}

\printindex

\end{document}